\documentclass[leqno,11pt]{amsart}
\usepackage{amssymb,amsmath,latexsym,amsfonts,amsbsy,amsthm,mathtools,graphicx,color,subeqnarray,cases,tikz} 

\usepackage{amsfonts}
\usepackage{color}
\usepackage{amssymb,amsthm,amsmath,hyperref}
\usepackage{epsfig}
\usepackage{graphicx,float}
\usepackage{CJK}
\usepackage{setspace}
\usepackage{cancel}
\usepackage{verbatim}
\usepackage{ulem}
\setlength{\oddsidemargin}{0mm} \setlength{\evensidemargin}{0mm}
\setlength{\topmargin}{-15mm} \setlength{\textheight}{220mm}
\setlength{\textwidth}{160mm}

 \newtheorem{thm}{Theorem}[section]
 \newtheorem{coro}[thm]{Corollary}
 \newtheorem{lem}[thm]{Lemma}
 \newtheorem{prop}[thm]{Proposition}
 \theoremstyle{definition}
 
 \newtheorem{rem}[thm]{Remark}
 \numberwithin{equation}{section}

\def\dl{\delta}
\def\tl{\tilde}

\def\Dl{\Delta}

\def\sig{\sigma}

\def\N{\mathbb{N}}
\def\T{\mathbb{T}}
\def\R{\mathbb{R}}
\def\Z{\mathbb{Z}}

\def\nn{\nonumber}

\def\sq{\sqrt}
\def\eps{\epsilon}

\def\fr{\frac}
\def\al{\alpha}

\def\la{\langle}
\def\ra{\rangle}

\def\pr{\partial}
\def\nb{\nabla}

\def\les{\lesssim}
\def\lm{\lambda}

\def\om{\omega}

\def\l|{\left\|}
\def\r|{\right\|}

\newcommand{\beq}{\begin{eqnarray}}
\newcommand{\eeq}{\end{eqnarray}}
\newcommand{\beqno}{\begin{eqnarray*}}
\newcommand{\eeqno}{\end{eqnarray*}}
\newcommand{\be}{\begin{equation}}
\newcommand{\ee}{\end{equation}}
\newcommand{\beno}{\begin{equation*}}
\newcommand{\eeno}{\end{equation*}}
\newtheorem{theorem}{Theorem}[section]

\newtheorem{Lemma A.1}{Lemma A.1}
\theoremstyle{definition}

\theoremstyle{remark}


\newcommand{\brak}[1]{\left\langle #1 \right\rangle} 
\newcommand{\norm}[1]{\left\| #1 \right\|}
\newcommand{\abs}[1]{\left| #1 \right|}

\allowdisplaybreaks[4]

\topmargin       -0.40in \oddsidemargin    0.08in \evensidemargin
0.08in \marginparwidth   0.00in \marginparsep     0.00in \textwidth
15.5cm \textheight 23.5cm

\begin{document}
\title[Vlasov-Poisson-Landau equation]{Landau damping and the long-time collisionless limit of the Vlasov-Poisson-Landau Equation}

\author{Jacob Bedrossian}
\address[J. Bedrossian]{Department of Mathematics, University of California, Los Angeles, CA 90095, USA}
\email{jacob@math.ucla.edu}

\author{ Weiren Zhao}
\address[W. Zhao]{Department of Mathematics, New York University Abu Dhabi, Saadiyat Island, P.O. Box 129188, Abu Dhabi, United Arab Emirates.}
\email{zjzjzwr@126.com, wz19@nyu.edu}

\author{Ruizhao Zi}
\address[R. Zi]{School of Mathematics and Statistics, and Key Laboratory of Nonlinear Analysis \& Applications (Ministry of Education), Central China Normal University, Wuhan,  430079,  P. R. China.}
\email{rzz@ccnu.edu.cn}

\date{\today}

\begin{abstract}
In this paper, we study the Vlasov-Poisson-Landau Equations on $\mathbb{T}^3\times \mathbb{R}^3$ with small collision frequency $\nu\ll 1$.  
We prove that for $\nu$-independent perturbations of the global Maxwellians in Gevrey-$2_-$, solutions display uniform-in-$\nu$ Landau damping and enhanced dissipation. Moreover, the collisionless limit holds, that is, as $\nu\to 0_+$ for $0<t\ll \nu^{-\frac{1}{3}}$, the $\nu > 0$ solutions converge uniformly (and in much stronger norms) to the solution of the Vlasov-Poisson equation with the same initial data. To our knowledge, this work is hence the first justification that the collisionless prediction matches those of collisional plasmas in the nonlinear equations.    

The interaction between Landau damping and collisions requires several new ideas: 
\begin{enumerate}
    \item an infinite-regularity commuting vector field method, merged with Guo's weighted energy methods for the Landau operator and hypocoercivity to extract the enhanced dissipation; 
    \item A novel nearly-physical side treatment of the collisionless Vlasov echoes; 
    \item A new set of decomposition methods to treat the effects of the nonlinear collisions in the Volterra equation for the density (i.e., the ``collisional echoes'') 
    \item A new quasi-linearization method for treating the effect of the slowly evolving homogeneous modes over long times.
\end{enumerate}
As a side result, we also prove Landau damping and enhanced dissipation of $O(\epsilon\nu^{1/3})$ Sobolev-space perturbations of homogeneous distributions that are only $O(\epsilon)$ perturbations of global Maxwellians, generalizing the recent results of Chaturvedi, Luk, and Nguyen. 
As another side result, our methods also provide a nearly-completely physical-side proof of Mouhot and Villani's theorem in the full range of Gevrey-$3_-$.

\end{abstract}

\maketitle

\setcounter{tocdepth}{1}
{\small\tableofcontents}

\section{Introduction}
The Vlasov–Poisson–Landau (VPL) equation is one of the fundamental kinetic equations that describe the evolution of a collisional plasma, incorporating both long-range electrostatic mean-field effects (through the self-generated electric field) and Coulomb collision effects between charged particles (through the Landau collision operator); see the physics textbook \cite{goldston2020introduction} for a more complete discussion. 

In this paper, we consider the (single-species) VPL equation in the weakly collisional regime with Coulomb potential on $\mathbb{T}^3_x\times\mathbb{R}^3_v$:
\be\label{VPL}
\begin{cases}
\pr_tF+v\cdot\nb_xF+E(t,x)\cdot\nb_vF=\nu {\bf Q}(F,F)(t,x,v),\\[1mm]
\displaystyle E(t,x)=-\nb_x(-\Dl_x)^{-1}\left(\int_{\mathbb{R}^3}F(t,x,v)dv-\fr{1}{(2\pi)^3}\int_{\mathbb {T}^3}\int_{\mathbb{R}^3}F(t,x,v)dvdx\right),\\[1mm]
F(t=0, x, v)=F_{\mathrm{in}}(x,v),
\end{cases}
\ee
where $\mathbb{T}_x$ is normalized to length $2\pi$, and the bilinear operator ${\bf Q}$ is the Landau collision operator, given by 
\begin{align}
{\bf Q}(G,F)(t,x,v)\nn=&\pr_{v_i}\int_{\R^3}\Phi^{ij}(v-\tl{v})\left[G(t,x,\tl{v})(\pr_{v_j}F)(t,x,v)-F(t,x,v)(\pr_{v_j}G)(t,x,\tl{v})\right]d\tl{v}\\
\nn=&\pr_{v_i}\left[(\Phi^{ij}*G)(t,x,v)(\pr_{v_j}F)(t,x,v)\right]-\pr_{v_i}\left[ (\Phi^{ij}*\pr_{v_j}G)(t,x,v)F(t,x,v)\right],
\end{align}
with
\beno
\Phi^{ij}(z)=\fr{1}{|z|}\left(\dl_{ij}-\fr{z_iz_j}{|z|^2}\right).
\eeno
Here, we will normalize the collision invariants of the initial data as: for $i=1,2,3$, 
\begin{align}\label{eq:normalization}
\begin{aligned}
    &\int_{\mathbb{T}^3\times \R^3}F_{\rm in}(x,v)dvdx=(2\pi)^3\pi^{\fr32}
,\quad  \int_{\mathbb{T}^3\times \R^3}v_iF_{\rm in}(x,v)dvdx=0, \\  &\int_{\mathbb{T}^3\times \R^3}|v|^2F_{\rm in}(x,v)dvdx=\fr32\pi^{\fr32}(2\pi)^3. 
\end{aligned}
\end{align}
The coefficient $0<\nu\ll 1$ is the inverse Knudsen number $K_n$, which is the ratio of the mean free path to a characteristic length scale, i.e., essentially the collision frequency. In many applications of interest in plasma physics, this parameter is very small \cite{boyd2003physics,goldston2020introduction}.
Global existence of smooth solutions near Maxwellians was proved by Guo in \cite{guo2012vlasov}, although these results do not hold uniformly $\nu$. 

The $\nu = 0$ equation is called \emph{Vlasov-Poisson} and is frequently studied as a simplification of VPL when $\nu$ is small. 
In 1946, Landau predicted the phenomenon known as \emph{Landau damping} in the linearized VP equations, wherein analytic initial perturbations to a global Maxwellian in $\mathbb T^d$ results in exponentially decaying electric fields; in fact 
\begin{align*}
 \abs{\hat{E}(t,k)} \lesssim e^{-\lambda_0 \abs{kt}},    
\end{align*}
with finite regularity initial perturbations resulting in $\brak{kt}^{-\sigma}$ decay rates. 
Essentially, one expects decay at basically the same rate as the kinetic free transport equation $\partial_t f + v \cdot \nabla_x f = 0$. 
This was later proved to hold in the nonlinear Vlasov-Poisson equations for all sufficiently smooth and sufficiently small perturbations of stable homogeneous equilibria in a periodic box by Mouhot-Villani \cite{MouhotVillani2011}; see also \cite{BMM2016, grenier2021landau, ionescu2024nonlinear} for perturbations in sharper Gevrey regularity classes (see also \cite{caglioti1998time,hwang2009existence} for earlier results starting from $t \pm \infty$). 
Note that the nonlinear oscillations known as plasma echoes rule out such kinds of weakly-nonlinear results in finite regularity \cite{bedrossian2020nonlinear}; see also the experiments and discussions in the physics literature \cite{gould1967plasma}. 

Collisions in kinetic theory are not simple dissipative regularizations. Indeed, collisional effects lead to hydrodynamic behavior (i.e., local Maxwellians with hydrodynamic fields solving compressible Euler), not kinetic, and a priori, there is no reason to believe that Landau damping will necessarily hold for $\nu$ moderate/large or, even for all time with $0 < \nu \ll 1$, as one could worry that the tendency towards hydrodynamic behavior may inhibit the Landau damping on long enough time-scales. 
Indeed, numerical evidence strongly suggests that even weak collisions can slow down the rate of Landau damping (See Figure 4.5 (a), (b) in \cite{ye2024energy}). 
Determining to what degree this is possible is necessary to be certain that the collisionless predictions really correspond to the experimental observations. 
Previous works on Vlasov-Fokker-Planck \cite{bedrossian2017suppression} and VPL \cite{chaturvedi2023vlasov} showed nonlinear dynamics agree with linear predictions for small, Sobolev regularity perturbations $\epsilon \lesssim \nu^{1/3}$ and moreover that the interaction of phase mixing and collisions leads to an enhanced dissipation effect, as predicted in plasmas by physicists in \cite{lenard1958plasma, o1968effect, su1968collisional} and as observed for passive scalars \cite{bedrossian2017enhanced, zelati2020relation, wei2021diffusion} and hydrodynamics \cite{bedrossian2016enhanced, coti2020enhanced, li2023metastability, lizhao2024asymptotic, Kelvin1887, WeiZhangZhao2020} already. 
However, this does not imply anything about the much larger initial conditions studied by Mouhot and Villani. 
For this reason, we are interested in proving a \emph{long-time collisionless limit} and global-in-time stable, quantitative decay estimates. 

In this paper, we focus on the mathematical justification of the collisionless limit from the VPL equation to the VP equation over long time-scales in order to justify Landau damping uniformly in small collision parameters and to verify the predicted long-time behavior as $\nu\to 0_+$. We prove that for times $t \ll \nu^{-1/3}$, solutions converge to the solution of the Vlasov--Poisson equation in suitably strong norms (and so exhibit the kind of Landau damping predicted by the linearized VP equation for long times). Over even longer times, the enhanced dissipation kicks in and overpowers the solution. Landau damping also continues, but possibly at a much slower rate (though the enhanced dissipation is quite powerful by this time anyway).   
We studied this problem for the case of the Vlasov-Fokker-Planck equation in \cite{BZZ2024VPFP}; however, the case of Landau collisions is much harder and requires a completely different technique.

\subsection{Main results}
In this paper, we consider the initial data $F_{\rm in}$ close to the global Maxwellian 
$
\mu(v)=e^{-|v|^2},
$
which is a steady state solution to \eqref{VPL} satisfying \eqref{eq:normalization}. Let $f$ be such that
\[
F=\mu+\sqrt{\mu}f,
\]
with initial condition
$
f|_{t=0}=f_{\rm in},
$
which by \eqref{eq:normalization} satisfies
\[
\int_{\mathbb{T}^3}\int_{\R^3}f_{\rm in}\sq{\mu}dvdx=\int_{\mathbb{T}^3}\int_{\R^3}f_{\rm in}v_j\sqrt{\mu}dvdx=\int_{\mathbb{T}^3}\int_{\R^3}f_{\rm in}|v|^2\sqrt{\mu}dvdx+\int_{\mathbb{T}^3}|E_{\rm in}|^2dx=0.
\]
Rewriting the nonlinearity as follows 
\begin{align*}
{\bf Q}(\mu+\mu^\fr12f,\mu+\mu^\fr12f)=&\mu^\fr12\big(\mathcal{K}[f]+\mathcal{A}[f]+\Gamma(f,f)\big)=\mu^\fr12\big(-Lf+\Gamma(f,f)\big),\\
\nn E(t,x)\cdot\nb_vF
=&\mu^\fr12\big( E(t,x)\cdot\nb_vf-E(t,x)\cdot v f-2(E(t,x)\cdot v)\mu^\fr12\big), 
\end{align*} 
the VPL equation can be rewritten as 
\begin{align}\label{pVPL}
\begin{cases}
\pr_tf+v\cdot\nb_xf+ E(t,x)\cdot\nb_vf-E(t,x)\cdot v f-2E(t,x)\cdot v\mu^\fr12+\nu Lf=\nu\Gamma(f,f),\\[3mm]
\displaystyle E(t,x)=-\nb_x\phi(t,x), \quad -\Dl_x\phi=\int_{\R^3}f(t,x,v)\mu^\fr12(v)dv=\rho(t,x),
\end{cases}
\end{align}
where
\be\label{def-Gamma}
\begin{aligned}
\Gamma(g_1, g_2)=&\pr_{v_i}\left[[\Phi^{ij}*(\mu^\fr12g_1)]\pr_{v_j}g_2\right]-[\Phi^{ij}*(v_i\mu^\fr12g_1)]\pr_{v_j}g_2\\
&-\pr_{v_i}\left[\Phi^{ij}*(\mu^\fr12\pr_{v_j}g_1)g_2 \right] +\left[\Phi^{ij}*(v_i\mu^\fr12\pr_{v_j}g_1) \right]g_2
\end{aligned}
\ee
is the nonlinear (quadratic) Landau collision operator and
$
L=-\mathcal{K}-\mathcal{A}
$ is the linearized Landau collision operator, decomposed into the nonlocal part $\mathcal{K}$ and the local part $\mathcal{A}$ with
\begin{align}\label{def-K}
&\mathcal{K}[g_1]=-\mu^{-\fr12}(v)\pr_{v_i}\left[\mu(v)\int_{\R^3}\Phi^{ij}(v-\tl{v})\mu^{\fr12}(\tl{v})\left(\pr_{v_j}g_1(t,x,\tl{v})+\tl{v}_jg_1(t,x,\tl{v}) \right)d\tl{v}\right],\\
\label{def-A}
&\mathcal{A}[g_2]=\pr_{v_i}\Big[\sig^{ij}\pr_{v_j}g_2\Big]-v_iv_j\sig^{ij}g_2+\pr_{v_i}\sig^ig_2.
\end{align}
Here, the coefficients are given by
\be\label{def-sig}
\sig^{ij}=\Phi^{ij}*\mu,\quad \sig^i=\Phi^{ij}*(v_j\mu)\overset{\rm by\ \eqref{concel1}}{=}v_j\Phi^{ij}*\mu=v_j\sig^{ij}.
\ee
Provided solutions remain smooth and sufficiently well-localized in velocity, it is easy to verify that the following conservation laws hold for all $t\geq 0$,
\begin{gather}
\label{conser-mass}\int_{\mathbb{T}^3}\int_{\R^3}f(t,x,v)\sq{\mu}dvdx=\int_{\mathbb{T}^3}\int_{\R^3}f_{\rm in}\sq{\mu}dvdx,\\
\label{conser-momen}\int_{\mathbb{T}^3}\int_{\R^3}f(t,x,v)v_j\sq{\mu}dvdx=\int_{\mathbb{T}^3}\int_{\R^3}f_{\rm in}v_j\sqrt{\mu}dvdx,\\
\label{conser-en}\int_{\mathbb{T}^3}\int_{\R^3}f(t,x,v)|v|^2\sqrt{\mu}dvdx+\int_{\T^3}|E|^2dx=\int_{\mathbb{T}^3}\int_{\R^3}f_{\rm in}|v|^2\sqrt{\mu}dvdx+\int_{\T^3}|E_{\rm in}|^2dx.
\end{gather}
Our first theorem describes the long-time dynamics in $\nu \to 0$ and $t \to \infty$.
Since we are interested in proving that the linearized predictions of Landau damping hold (essentially) uniformly in $\nu$, we must work in Gevrey regularity. 
Specifically, we work in Gevrey regularity until $t \sim \nu^{-1/2}$, at which point the problem is collision dominated and we describe the remainder of the relaxation process in Sobolev regularity. 
The initial $t\lesssim \nu^{-1/2}$ regime makes up the majority of the work in this paper. 
As the theorem is rather technical, we will summarize the main results; See section \ref{sec:Outline} for the detailed statements. 
\begin{theorem}\label{Thm: main}
    Let $\frac12<s<\frac{2}{3}$. Let $\lambda>0$. There exist $\eps_0,\nu_0>0$ and $\ell_0$ such that if $\eps \in(0,\eps_0)$, $\nu \in (0,\nu_0)$ and $\ell \geq \ell_0$, if the initial data $f_{\mathrm{in}}$ satisfies
\begin{align}\label{conserv-initial}
    \int_{\T^3\times\R^3}f_{\rm in}\sqrt{\mu}dvdx=\int_{\T^3\times\R^3}f_{\rm in}v_j\sqrt{\mu}dvdx =\int_{\T^3\times\R^3}f_{\rm in}|v|^2\sqrt{\mu}dvdx+\|E_{\rm in}\|_{L^2_x}^2 = 0, 
\end{align}
    and 
    \begin{align*}
    \|\brak{v}^\ell e^{\lambda \abs{\nabla}^s}f_{\mathrm{in}}\|_{L^2} = \eps \leq \eps_0,
    \end{align*}
    then the VPL equation \eqref{pVPL} with all $0 \leq \nu<\nu_0$ has a global smooth solution $(f, E)$ which satisfies the following (with all implicit constants independent of $\eps,\nu,$ and $t$),
    \begin{itemize} \item For $t\in [0,\nu^{-\fr12}]$: 
    \begin{itemize}
    \item Uniform Landau damping:
$$\|e^{\lambda'\brak{\nabla_x t}^s} E(t)\|_{L^2_x}\lesssim \epsilon,$$
     for some $\lambda > \lambda'>0$ independent of $\nu$.
     \item Enhanced dissipation: denoting $f_{\neq} := f - \frac{1}{(2\pi)^3}\int_{\mathbb T^d} f(x,v) dx$, 
     \begin{align*}
     \norm{f_{\neq}}_{L^2} \lesssim \epsilon\brak{\nu^{1/3} t}^{-\ell/6}.    
     \end{align*}
     \end{itemize}
    \item For $t\geq \nu^{-\fr12}$, for any $N_1$, there holds the Sobolev rate Landau damping and enhanced dissipation
    \begin{align*}
    & \norm{\brak{t \nabla_x}^{N_1} E(t)}_{L^2_x}  \lesssim \epsilon, \\ 
    & \norm{f_{\neq}}_{L^2}  \lesssim \epsilon \brak{\nu^{1/3} t}^{-\ell/6}. 
    \end{align*}
    \end{itemize}
Here, $\langle v\rangle=\Big(1+\sum\limits_{i=1}^3v_i^2\Big)^{\frac{1}{2}}$. 
\end{theorem}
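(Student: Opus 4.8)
The plan is to run a continuity/bootstrap argument in two matched stages: a Gevrey-regularity stage on $[0,\nu^{-1/2}]$ (which is the bulk of the work and is the only stage when $\nu=0$), and a Sobolev-regularity stage on $[\nu^{-1/2},\infty)$. Existence is then obtained by combining these a priori estimates with Guo's local theory for $\nu>0$ (and Vlasov--Poisson local theory for $\nu=0$) and a standard continuation argument. In Stage 1 I would introduce the vector fields adapted to the linearized free transport $\partial_t+v\cdot\nabla_x$: besides $\nabla_x$, the key object is the moving derivative $\nabla_v^t:=\nabla_v+t\nabla_x$, together with Guo's velocity weight $\langle v\rangle^{\ell}$. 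The energy functional $\mathcal E(t)$ is built from Gevrey-$s$ norms (in $x$, and in the $\nabla_v^t$ directions) of $\langle v\rangle^{\ell'}(\nabla_v^t)^a\nabla_x^b f$ with a time-decreasing Gevrey radius $\lambda(t)=\lambda_0-\int_0^t|\dot\lambda|$, exactly as in the Mouhot--Villani/BMM approach to Landau damping, augmented by a hypocoercive correction of Villani--Guo type mixing $\nabla_x$ and $\nabla_v^t$ derivatives in order to produce the enhanced dissipation.

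\textbf{Stage 1, the linear/energy part.}
Differentiating $\mathcal E$ in time along \eqref{pVPL}: the transport term is killed by the commuting structure up to the $\dot\lambda$ loss; the linear collision term $\nu Lf$, together with the hypocoercive correction, yields a coercive contribution $\gtrsim \nu\|f\|_{\sigma}^2+\nu^{1/3}(\text{mixed term})$ whose time-integration, combined with the $\langle v\rangle^{\ell}$ weight, produces the enhanced-dissipation decay $\|f_{\neq}\|_{L^2}\lesssim\epsilon\langle\nu^{1/3}t\rangle^{-\ell/6}$; the weight growth generated by commuting $\langle v\rangle^{\ell}$ through $\mathcal A$ and $\mathcal K$ is absorbed into the Landau dissipation exactly as in Guo's weighted energy method. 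The zero mode $f_0$ carries no phase mixing and no enhanced dissipation, so it is merely kept bounded (and, by the conservation laws \eqref{conserv-initial} and the normalization, it relaxes to $0$ by the plain $O(\nu)$ Landau dissipation).

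\textbf{Stage 1, the reaction and echoes, and Stage 2.}
The core of Stage 1 is closing the force terms $E\cdot\nabla_v f$, $-E\cdot v f$, and $-2E\cdot v\sqrt\mu$. Writing the Volterra equation for $\rho=\int f\sqrt\mu\,dv$ and iterating, the reaction term $\int_0^t K(t,\tau)\rho(\tau)\,d\tau$ generates plasma echoes whose summability forces the Gevrey-$2_-$ threshold $s>1/2$ rather than the collisionless $s>1/3$. I would treat the collisionless part of the echo on the nearly-physical side (as announced in item (2) of the introduction), and the genuinely new \emph{collisional echoes} --- the contributions to the Volterra kernel coming from $\nu Lf$ and $\nu\Gamma(f,f)$ --- by the decomposition of item (3): split $f=f_0+f_{\neq}$ and further decompose the collision contributions so that each piece is either small by enhanced dissipation or controllable by the Volterra machinery. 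The nonlinear collision operator $\nu\Gamma(f,f)$ is second order in $v$, hence costs $\langle t\rangle^2$ in the moving frame $\nabla_v^t$; this is beaten by the prefactor $\nu$ together with the smallness gained from enhanced dissipation once $t\gtrsim 1$. The slowly drifting homogeneous mode $f_0$ is handled by the quasi-linearization of item (4): the equation for $f_{\neq}$ is written with $f_0$-dependent coefficients treated as a slowly varying background, absorbed into a modified linearized operator. For Stage 2, the closed bootstrap at $t=\nu^{-1/2}$ gives $\|f_{\neq}\|_{L^2}\lesssim\epsilon\nu^{c}$ for some $c>0$ and $\|\langle t\nabla_x\rangle^{N_1}E\|_{L^2_x}\lesssim\epsilon$; taking this as new data, the problem is collision dominated and a standard Guo-type weighted Sobolev energy estimate of order $N_1$ for $\langle t\nabla_x\rangle^{N_1}f$ (still with the hypocoercive correction) propagates the damping bound on $E$ (which only sees $\rho_{\neq}$, already tiny) and continues the $\langle\nu^{1/3}t\rangle^{-\ell/6}$ decay, closing the global argument.

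\textbf{Main obstacle.}
The principal difficulty is the simultaneous compatibility of three structures pulling in different directions: the shrinking Gevrey radius and commuting-vector-field bookkeeping needed for Landau damping; Guo's velocity-weighted energy method, which is natively adapted to Sobolev regularity and a static velocity frame; and the hypocoercive correction needed to extract the $\nu^{1/3}$ enhanced dissipation. Making these coexist --- in particular controlling the second-order-in-$v$ operator $\nu\Gamma(f,f)$ in the moving-frame Gevrey norms, and isolating the precise new collisional-echo contributions to the Volterra kernel and showing they do not spoil the $s>1/2$ closure --- is where essentially all of the work lies.
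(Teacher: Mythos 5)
Your proposal captures the overall architecture (two-stage time splitting, commuting vector fields $Y=\nabla_v+t\nabla_x$, Gevrey energies with shrinking radius $\lambda(t)$, hypocoercive correction for enhanced dissipation, Volterra equation for $\rho$ with echo control), but there are two points where the sketch as written would not close.

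First, you list $-E\cdot v f$ among the force terms to be closed but never explain how it is handled. This term is not a perturbation of the same type as $E\cdot\nabla_v f$: after conjugating by $\sqrt{\mu}$ it represents electric-field acceleration of the high-velocity tail, and it produces a weight growth that the Landau dissipation alone cannot absorb uniformly in $\nu$ (it would require an extra factor of $\nu$ that is not available). The paper's cure, following Guo, is to work with the good unknown $g=e^{\phi}f$, for which $e^{\phi}(v\cdot\nabla_x f - E\cdot vf)=v\cdot\nabla_x(e^\phi f)$ and the problematic term disappears entirely; the price paid is a benign $(\partial_t\phi)g$ source and the need for composition estimates of $e^{\pm\phi}$ in the vector-field Gevrey norms. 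Without this change of unknown the Stage 1 weighted energy estimate does not close.

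Second, your Stage 2 description is self-contradictory and underestimates the difficulty. You correctly note a quasi-linearization is needed for the slowly drifting zero mode, but then assert that a "standard Guo-type weighted Sobolev energy estimate" propagates the bounds. The problem is that at $t=\nu^{-1/2}$ the homogeneous component $f_0$ is still $O(\epsilon)$ away from Maxwellian, not $O(\nu^{1/3})$ as in Chaturvedi--Luk--Nguyen, so the standard argument (which linearizes around the Maxwellian and treats $\nu\Gamma(f_0,\cdot)$ as perturbatively small) does not apply. The paper instead evolves a background $f_0^L$ solving the homogeneous nonlinear Landau equation, treats $f_0^L$ as part of the linearization (yielding the two-parameter propagator $\mathbb{S}(t,\tau)$ rather than a semigroup), and must separately show that the difference $\mathbb{S}(t,\tau)[v\sqrt\mu]-S(t-\tau)[v\sqrt\mu]$ is small and sufficiently regular so that the Penrose-type invertibility of the Volterra equation survives. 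This perturbation-of-the-Volterra-kernel step is essential and is absent from your sketch.

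A further, more minor, point: to control $E\cdot\nabla_v f$ in the enhanced-dissipation energy (which projects onto $f_{\neq}$ and so loses the exact transport structure) the paper must include an extra component $(1+t)^{-2}\|\langle v\rangle^{\ell-2}Yg^{(n)}\|^2$ in the hypocoercive functional; without this, the term $\nabla_x E\cdot\nabla_v g$ loses a $v$-derivative. Your proposal does not flag this, though it is more a technical device than a missing idea.
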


Our second result studies the collisionless limit on the sharp $\nu^{-1/3}$ time-scale. 
\begin{theorem}\label{thm:limit}
    Let $(f^{(\nu)}, E^{(\nu)})$ be the solution obtained in Theorem \ref{Thm: main} with $\nu\in (0,\nu_0)$. Let $(f^{(0)}, E^{(0)})$ be the solution to the Vlasov-Poisson equation with the same initial data $f_{\mathrm{in}}$, namely, $\nu=0$ in \eqref{pVPL}. Then there exists $c_0$ independent of $\nu$, such that for all $t\in [0,c_0\nu^{-\frac{1}{3}}]$, it holds that
    \begin{align*}
        \|f^{(\nu)}-f^{(0)}\|_{L^2}\lesssim \nu t^3.
    \end{align*}
\end{theorem}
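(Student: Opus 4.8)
The plan is to control the difference $g:=f^{(\nu)}-f^{(0)}$ directly. Writing $E_g:=E^{(\nu)}-E^{(0)}=-\nb_x\phi_g$ with $-\Dl_x\phi_g=\rho_g:=\int_{\R^3}g\,\mu^{1/2}\,dv$ and subtracting the copy of \eqref{pVPL} with collision parameter $\nu$ from the one with $\nu=0$, one obtains
\begin{align*}
\pr_t g+v\cdot\nb_x g&+E^{(\nu)}\cdot\nb_v g+E_g\cdot\nb_v f^{(0)}-E^{(\nu)}\cdot v\,g-E_g\cdot v\,f^{(0)}-2E_g\cdot v\,\mu^{1/2}\\
&=\nu\big(\Gamma(f^{(\nu)},f^{(\nu)})-Lf^{(\nu)}\big),
\end{align*}
with $g|_{t=0}=0$. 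Everything on the left is linear in $g$, with coefficients built from the two solutions produced by Theorem \ref{Thm: main}, which I therefore treat as known and as obeying all the uniform Gevrey, Landau-damping and velocity-weighted bounds of that theorem on $[0,\nu^{-1/2}]\supset[0,c_0\nu^{-1/3}]$. The right-hand side is the only inhomogeneity and it carries a factor $\nu$. Two structural facts drive the argument: the Landau operator conserves the hydrodynamic moments, so $\int_{\R^3}\big(\Gamma(f^{(\nu)},f^{(\nu)})-Lf^{(\nu)}\big)\mu^{1/2}\,dv=0$ and the driver does not force $\rho_g$ directly; and, because the Landau operator is second order in $v$ and $v$-differentiation interacts with phase mixing, the size of the driver in the weighted $L^2$ norm of the conclusion grows like $\langle t\rangle^2$, i.e.\ $\|\nu(\Gamma(f^{(\nu)},f^{(\nu)})-Lf^{(\nu)})(t)\|\lesssim\nu\eps\langle t\rangle^2$ --- this is affordable precisely on $t\lesssim\nu^{-1/3}$, and it is the same mechanism that forces the Gevrey regime of Theorem \ref{Thm: main} to terminate at $t\sim\nu^{-1/2}$.

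I would then run the energy method of Theorem \ref{Thm: main} for $g$, keeping the explicit $\nu$ and tracking the resulting $t$-dependence, under the bootstrap hypothesis $\|g(t)\|\le 2C_0\,\nu t^3$ in a norm no stronger than the weighted $L^2$ of the statement (one keeps the $x$-Gevrey multiplier of Theorem \ref{Thm: main} available, at no cost, to treat the reaction terms, but does not use it to absorb the $t$-growth of the driver). The contributions of the left-hand side are all subordinate on $[0,c_0\nu^{-1/3}]$: the transport $v\cdot\nb_x g$ is skew; the transport--reaction terms $E^{(\nu)}\cdot\nb_v g$ and $E^{(\nu)}\cdot v\,g$ produce only an $e^{C\eps}$ factor because $E^{(\nu)}$ Landau damps, with $\int_0^{\nu^{-1/2}}\|E^{(\nu)}(\tau)\|\,d\tau\lesssim\eps$; the collisional self-interaction $\nu(-Lg+\Gamma(g,f^{(\nu)})+\Gamma(f^{(0)},g))$ contributes $\lesssim\nu\langle t\rangle^2\|g\|$, whose integral over $[0,c_0\nu^{-1/3}]$ is $\lesssim c_0^3\sup\|g\|$ and is absorbed by taking $c_0$ small; and the reaction terms $E_g\cdot\nb_v f^{(0)},\ E_g\cdot v\,f^{(0)},\ E_g\cdot v\,\mu^{1/2}$ --- which cannot be treated by a crude energy estimate, since $\nb_v f^{(0)}$ itself grows like $\langle t\rangle$ --- are handled through the Volterra equation satisfied by $\widehat{\rho_g}(t,k)$, whose kernel is built from the Penrose-stable, small-Gevrey homogeneous background $f^{(0)}_0$, so that the density-response and nearly-physical-side echo estimates of Theorem \ref{Thm: main} apply; together with moment conservation (which makes the source of this Volterra equation $O(\|g\|\times\text{small})$ rather than $O(\nu\langle t\rangle^2)$) they keep $\rho_g$, hence $E_g$ and the reaction feedback, within the $\nu t^3$ budget. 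A Grönwall argument then gives $\|g(t)\|\lesssim\int_0^t\nu\langle\tau\rangle^2\,d\tau\lesssim\nu t^3$ on $[0,c_0\nu^{-1/3}]$, where the bootstrap closes because $\nu t^3\lesssim c_0^3\ll1$; in particular $\|f^{(\nu)}-f^{(0)}\|_{L^2}\lesssim\nu t^3$.

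I expect the main obstacle to be this last reaction step: showing that the linearized Vlasov dynamics about the large, only Gevrey-smooth, Landau-damped solutions $f^{(\nu)},f^{(0)}$ is stable enough in the weak (weighted $L^2$) norm used for $g$ that the reaction terms stay within the $\nu t^3$ bound --- concretely, re-running the Volterra / plasma-echo analysis of Theorem \ref{Thm: main} with only weak control on $g$ itself, leaning entirely on the background's Landau damping and on the moment conservation of the collision operator, while carefully bookkeeping the small Gevrey-radius and velocity-weight losses. The restriction $t\le c_0\nu^{-1/3}$ is used exactly so that the cumulative collisional growth $\nu\langle t\rangle^2$, and the reaction feedback it induces, remain $O(c_0^3)$.
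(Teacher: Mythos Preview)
Your overall strategy matches the paper's sketch (Section~2.4): derive the equation for $f^{(\delta)}=f^{(\nu)}-f^{(0)}$, bound the collision forcing by $\nu\langle t\rangle^2$ via $\nabla_v=Y-t\nabla_x$ against the uniform Gevrey bounds of Theorem~\ref{Thm: main}, establish Landau damping of $E^{(\delta)}$ to absorb the $\langle t\rangle$ growth in the reaction $E^{(\delta)}\cdot\nabla_v f^{(0)}$, and Gr\"onwall to $\nu t^3$.

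Two points to correct. First, moment conservation does not make the collision contribution to the Volterra source for $\widehat{\rho_g}$ small: the identity $\int(\Gamma-Lf^{(\nu)})\sqrt{\mu}\,dv=0$ holds pointwise, but the Volterra source pairs the driver against $e^{-i(t-\tau)k\cdot v}\sqrt{\mu}$ (or the dual semigroup applied to $\sqrt{\mu}$), which is not a collision invariant for $\tau<t$; the contribution remains of size $\nu\langle\tau\rangle^2$ in the Gevrey density norm. This is harmless --- it still yields $\|\rho_g\|_{\mathcal{G}}\lesssim\nu t^2$ and hence rapid decay of $E_g$ in $L^2_x$ by trading regularity --- but the mechanism that closes the reaction is Landau damping, not moment conservation. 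Second, and relatedly, a bootstrap purely in weighted $L^2$ on $g$ is not enough: to get the Landau damping of $E^{(\delta)}$ that absorbs the $\langle t\rangle$ from $\nabla_v f^{(0)}$, you must propagate a weaker Gevrey/vector-field norm on $f^{(\delta)}$ in parallel, exactly as the paper states (``A weaker Gevrey regularity estimate involving the vector field $Y$ follows similarly''). Your parenthetical about keeping the Gevrey multiplier ``available'' is pointing in the right direction, but it is not optional --- it is the core of the reaction step.
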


The following remarks are in order:
\begin{rem}
    By improving the regularity estimates of coefficients appearing in the linear Landau operator (see Lemma \ref{lem: sig_ij}), one can extend our results for Gevrey index $s\in (\fr12,1)$. In this paper, we mainly focus on the case that $s$ is close to the endpoint $1/2$.
\end{rem}
\begin{rem}
    The results hold for the collisionless case, namely, $\nu=0$. Indeed, contained in our work is an almost-entirely physical-side proof of Mouhot and Villani's theorem all the way down to $s > 1/3$, also without the standard $x \mapsto x+tv$ change of variables (i.e., Gevrey regularity is not quantified with Fourier multipliers on the shifted unknowns but instead with infinite series of commuting vector fields). The restriction $s > 1/2$ only arises from the collision terms.
\end{rem}
\begin{rem}
    In plasma physics, Landau damping is analogous to \emph{inviscid damping} in fluid dynamics. For recent results on inviscid damping in shear flows---both for the Euler equation and the inhomogeneous Euler equation---we refer to \cite{BM2015, IonescuJia2020cmp, IJ2020, MasmoudiZhao2020} and \cite{chen2025nonlinear, zhao2025inviscid}. A similar corresponding long-time, inviscid limit has been proved in a periodic channel with Navier-type boundary conditions in \cite{bedrossian2025pseudo,bedrossian2024uniform,bedrossian2025stability}, however this work contends with largely different difficulties and with largely different methods. 
\end{rem}
\begin{rem}
Like the more recent proofs of Landau damping in Vlasov-Poisson \cite{BMM2016,grenier2021landau} and the work of Yan Guo on Vlasov-Poisson-Landau \cite{guo2012vlasov}, we believe our proof should extend in a straightforward manner to a two-species model (ion and electrons) provided that both have the same temperature (in energy units) and the same collision frequency (or at least roughly the same). The case of significantly different collision frequencies and initial temperatures may require some further new ideas. 
\end{rem}

\begin{rem}
The rigorous theorems stated in Section \ref{sec:Outline} contain much more precise estimates on $(f,E)$. 
\end{rem}
\begin{rem}
One naturally conjectures that the result may hold down to the sharp Gevrey index $s>\frac{1}{3}$. However, some new ideas seem to be required. We discuss this problem in the Appendix. Similarly, it would be a challenging problem to obtain the same results with the sharp Gevrey index $s=\frac{1}{3}$. We refer to a recent work \cite{ionescu2024LandauDampingsharp} for the collisionless case.  
Indeed, the requirement $s>1/2$ seems technical, as without the Vlasov nonlinear term $E\cdot \nabla_v$ (i.e., just the Landau equations), one can prove an analogous result in Sobolev regularity \cite{bedrossian2024taylor}.
\end{rem}
\begin{rem}
We refer to \cite{han2021asymptotic,huang-NHNguyen2022sharp,bedrossian2018landau} for Vlasov with screened Coulomb interactions on $\mathbb R^d \times \mathbb R^d$ (this model is suitable for ions) and to \cite{han2021linearized,bedrossian2022linearized,ionescu2024nonlinear,nguyen2024new,nguyen2024landau} for progress on Vlasov-Poisson on $\mathbb R^d \times \mathbb R^d$, which remains open in the nonlinear case for Maxwellian backgrounds. 
It is interesting to study the VPL with small collision coefficient $\nu$ on $\mathbb R^d \times \mathbb R^d$. We refer to \cite{strain2013vlasov} for the global stability result when $\nu=1$ and $d=3$. 
\end{rem}
\begin{rem}
    For $\nu$-dependent perturbations, the asymptotic stability threshold problem was studied in \cite{chaturvedi2023vlasov} for Sobolev perturbations. Specifically, they proved similar results but assuming $O(\nu^{1/3})$ sized initial data. This is expected to be optimal in Sobolev regularity as otherwise collisions are not strong enough to dominate the plasma echoes (known to exist in the collisionless equations \cite{bedrossian2020nonlinear}).  
    Contained in the proof of Theorem \ref{Thm: main} is an extension of their result to the case when the homogeneous background can be a full $O(\eps)$ away from Maxwellian (as opposed to $\nu^{1/3}$). This is essentially the content of our proof for $t \gtrsim \nu^{-1/2}$. It is not a straightforward generalization of their argument; see Section \ref{sec:Outline} for more discussions.
\end{rem}
\begin{rem}
    We refer to \cite{bedrossian2020nonlinear,tristani2017landau,luo2024weak, BZZ2024VPFP} for linear and nonlinear Fokker-Planck collisions. 
\end{rem}
\begin{rem}
    It is interesting to consider the stability threshold problem and study the relationship between regularity and perturbation size. It is well-studied for the two-dimensional Couette flow in a periodic cylinder $\mathbb T \times \mathbb R$. We refer to \cite{BVW2018, BMV2016, liMasmoudiZhao2023dynamical, liMasmoudiZhao2022asymptotic, MasmoudiZhao2020cpde, MasmoudiZhao2019, wei2023nonlinear} and references therein. 
\end{rem}

\subsection{Notations}\label{sec: notations}
In this paper, we denote by $\Z$ the set of integers, $\Z^3=\Z\times\Z\times\Z$, and $\Z^3_*=\Z^3\setminus\{\bf 0\}$, where ${\bf 0}=(0,0,0)\in\Z^3$. Moreover, we denote by $\N$ the set of nonnegative integers, namely, $\N=\{0,1,2,3,\cdots\}$. 
For $n=(n_1,n_2,n_3, n_4, n_5, n_6)\in \mathbb{N}^6$, we define $\tl{n}=(n_1, n_2, n_3, 0, 0, 0)$,  $\bar{n}=(0,0,0, n_4, n_5, n_6)$. To lighen  the notation, we also use $\bar{n}$ and $\tl{n}$ to denote $(n_4,n_5,n_6)$ and $(n_1,n_2,n_3)$, respectively, when no confusion arises.

For $m,n\in \mathbb{N}^6$, we define $n<m$ (or $n\leq m$) if $n_i<m_i$ (or $n_i\leq m_i$) for all $i=1,2,...,6$. If $n\le m$, we denote the multivariate combination numbers $C_{m}^{n}=\prod_{i=1}^6\fr{m_i!}{(m_i-n_i)!n_i!}$, $C_{m}^{\bar{n}}=\prod_{i=4}^6\fr{m_i!}{(m_i-n_i)!n_i!}=C^{\bar{n}}_{\bar{m}}$, and $C_{m}^{\tl{n}}=\prod_{i=1}^3\fr{m_i!}{(m_i-n_i)!n_i!}=C^{\tl{n}}_{\tl{m}}$.   
For $n\in\N^6$, we also denote
\[
C_{|n|}^{n}:=C_{|n|}^{n_1}C_{|n|-n_1}^{n_2}\cdots C_{|n|-(n_1+n_2+\cdots n_4)}^{n_5}=\fr{|n|!}{n_1!n_2!\cdots n_6!}.
\]
Then there holds
$1\le C_{|n|}^n\le 6^{|n|}$.

For $n\in\N$, we use the notation
\[
\Gamma_{s}(n)=2^{-25}(n!)^{\fr{1}{s}}(1+n)^{-12}.
\]
For $n\in\N^6$, we define $\Gamma_s(n)=\prod_{i=1}^6\Gamma_{s}(n_i)$.

Throughout this paper, we denote 
\begin{align}
a_{m,\lm,s}(t):=\fr{\lm^{|m|}(t)}{\Gamma_s(|m|)}C_{|m|}^m,\quad b_{m,n,s}:=\fr{C_m^nC_{|m|}^m}{C_{|n|}^nC_{|m-n|}^{m-n}}\fr{\Gamma_s(|n|)\Gamma_s(|m-n|)}{\Gamma_s(|m|)}, \label{def:a}
\end{align}

\[
 b_{m,n,n',s}:=\fr{C_m^n C_{m-n}^{n'}C_{|m|}^m}{C_{|n|}^nC_{|n'|}^{n'}C_{|m-n-n'|}^{m-n-n'}}\fr{\Gamma_s(|n|)\Gamma_s(|n'|)\Gamma_s(|m-n-n'|)}{\Gamma_s(|m|)},
\]
and
\begin{align}
 b_{m,n,n', n'',s}:=\nn&\fr{C_m^n C_{m-n}^{n'}C_{n'}^{n''}C_{|m|}^m}{C_{|n|}^nC_{|n''|}^{n''}C_{|n'-n''|}^{n'-n''}C_{|m-n-n'|}^{m-n-n'}}\fr{\Gamma_s(|n|)\Gamma_s(|n''|)\Gamma_s(|n'-n''|)\Gamma_s(|m-n-n'|)}{\Gamma_s(|m|)}\\
 =&\nn\Big(\fr{|n|!|n''|!|n'-n''|!|m-n-n'|!}{|m|!}\Big)^{\fr{1}{s}-1}\fr{2^{-25}(|m|+1)^{12}}{(|m-n|+1)^12(|n|+1)^12}\\
 &\times \fr{2^{-25}(|m-n|+1)^{12}}{(|m-n-n'|+1)^12(|n'|+1)^12}\fr{2^{-25}(|n'|+1)^{12}}{(|n''|+1)^12(|n'-n''|+1)^12}.
\end{align}
We use $a_{m,s}(t)$ to stand for $a_{m,\lm,s}(t)$ whenever without causing confusion. 
\begin{rem}\label{Rmk: coefficients a-b}
It holds that,
    \begin{gather*}
    a_{m,\lm,s}(t)C_m^n=b_{m,n,s}a_{n,\lm,s}(t)a_{m-n,\lm,s}(t),\\
    a_{m,\lm,s}(t)C_m^nC_{m-n}^{n'}=b_{m,n,n',s}a_{n,\lm,s}(t)a_{n',\lm,s}(t)a_{m-n-n',\lm,s}(t),\\
    a_{m,\lm,s}(t)C_m^nC_{m-n}^{n'}C_{n'}^{n''}=b_{m,n,n',n''}a_{n,\lm,s}(t)a_{n'',\lm,s}(t)a_{n'-n'',\lm,s}(t)a_{m-n-n',\lm,s}(t).
\end{gather*}
\end{rem}

For $f, g\in L^2(\mathbb{T}^3\times\mathbb{R}^3)$, let us denote by $\la f, g\ra_{x,v}$ the inner product of $f$ and $g$ in $L^2(\mathbb{T}^3\times\mathbb{R}^3)$. Moreover, for $p\in[1,\infty]$, we use $\|\cdot\|_{L^p_{x,v}}$ to denote $\|\cdot\|_{L^p(\T^3\times\R^3)}$. 
\begin{itemize}
    \item If the domain $\mathbb{T}^3\times\mathbb{R}^3$ is replaced by $\mathcal{\R}^3$($\T^3$), the notations $\la\cdot,\cdot\ra_{x,v}$ and $\|\cdot\|_{L^p(\T^3\times\R^3)}$ will be changed to be $\la\cdot,\cdot\ra_{v}$ and $\|\cdot\|_{L^p_v}(\|\cdot\|_{L^p_x})$, respectively; 
    \item Provided that no ambiguity arises, for $\ell\in\R$, we shall abbreviate   the weighted  inner product $\big\la \la v\ra^{\ell}f,\la v\ra^{\ell}g \big\ra_{x,v}$ and the weighted $L^2$ norm $\|\la v\ra^{\ell}f\|_{L^2_{x,v}}$ as   $\la f,g\ra_{\ell}$ and $\|f\|_{L^2_{x,v}(\ell)}$, respectively. 
    \item Sometimes, we use the notation $\|f\|^2_{L^2_{x,v}(|v|\le \zeta)}$ to denote $\int_{|v|\le\zeta}|f(x,v)|^2dxdv$ for given $\zeta>0$.
\end{itemize}

We also define the Fourier transform of $f$ by 
\[
\hat{f}_k(t,\eta)=\mathcal{F}_{x,v}[f]_k(\eta)=\fr{1}{(2\pi)^3}\int_{\mathbb{T}^3\times\R^3}f(t, x, v) e^{-ik\cdot x}e^{-i\eta\cdot v}dxdv,
\]
and
\[
\mathcal{F}_{x}[f]_k(t,v)=\fr{1}{(2\pi)^3}\int_{\mathbb{T}^3}f(t, x, v) e^{-ik\cdot x}dx,\quad \mathcal{F}_{v}[f](t, x,\eta)=\int_{\mathbb{R}^3}f(t, x, v) e^{-i\eta\cdot v}dv.
\]
To lighten the notation, sometimes we also use $\hat{f}_k(t,v)$ to denote $\mathcal{F}_{x}[f]_k(t,v)$ without causing confusion.

Throughout this paper, let us denote by ${\rm P}_0$ the projection operator on the null space of the linear Landau operator $L$, namely,
\begin{align}\label{projection}
    {\rm P}_0f(t,x,v)=a(t,x)\sqrt{\mu}+\sum_{j=1}^3b_j(t,x)v_j\sqrt{\mu}+c(t,x)|v|^2\sqrt{\mu},
\end{align}
where 
\begin{align*}
    a(t,x)&=\frac{\int_{\mathbb{R}^3}f(t,x,v)\sqrt{\mu}(v)dv}{\int_{\mathbb{R}^3}\mu(v)dv}-\fr32c(t,x),\\
    b_j(t,x)&=\frac{\int_{\mathbb{R}^3}f(t,x,v)v_j\sqrt{\mu}(v)dv}{\int_{\mathbb{R}^3}v_j^2\mu(v)dv},\\
    c(t,x)&=\frac{\int_{\mathbb{R}^3}f(t,x,v)(|v|^2-\fr32)\sqrt{\mu}(v)dv}{\int_{\mathbb{R}^3}(|v|^2-\fr32)^2\mu(v)dv}. 
\end{align*}

\section{Outline of the proof and main ideas} \label{sec:Outline}
In this section, we discuss the main difficulties and the new ideas required for the proof. 

\subsection{Norms and vector fields}
As observed in \cite{MouhotVillani2011}, one cannot quantify higher regularity using usual derivatives. 
Here we employ the method of commuting vector fields, specifically vector fields which commute with $\partial_t + v \cdot \nabla_x$. 
Hence, for $\al\in\N^3$, we define the vector fields
$Y^\al=Y_1^{\al_1}Y_2^{\al_2}Y_3^{\al_3}$ with $Y_i=\partial_{v_i}+t\partial_{x_i}, i=1, 2, 3$. We also denote $Y=\nb_v+t\nb_x$, $Y^*=\nb_v-t\nb_x$ and $Z=(\nb_x,Y)$.  Sometimes, we use the notation that $\tilde{Y}=\nabla_v+ikt$ and $\tl{Y}^{\al}=(\pr_{v_1}+ik_1t)^{\al_1}(\pr_{v_2}+ik_2t)^{\al_2}(\pr_{v_3}+ik_3t)^{\al_3}$.
For $m\in\N^6$, we define the family of vector fields 
$Z^m:=(\nabla_x, Y)^m=\pr_{x}^{\tl{m}}Y^{\bar{m}}=\partial_{x_1}^{m_1}\partial_{x_2}^{m_2}\partial_{x_3}^{m_3}Y_1^{m_4}Y_2^{m_5}Y_3^{m_6}$. For a smooth function $f$ defined on $[0,T]\times \mathbb{T}^3\times \mathbb{R}^3$, we denote $f^{(m)}(t, x, v)=Z^mf(t, x, v)$. Then it is easy to check that
\begin{align}\label{Z-commute}
    Z^m(\partial_t+v\cdot\nabla_x)f=(\partial_t+v\cdot\nabla_x)f^{(m)}. 
\end{align}
Note that if $f$ is independent of $v$, then 
\begin{align*}
   f^{(m)}(t, x)= (\nabla_x, t\nabla_x)^mf(t, x)=t^{n_4+n_5+n_6}\partial_{x_1}^{n_1+n_4}\partial_{x_2}^{n_2+n_5}\partial_{x_3}^{n_3+n_6} f(t, x).
\end{align*}

In this paper, for a given small constant $a\in(0,\fr{s}{10})$, we choose 
\begin{align}\label{def-lm}
\lm(t):=\lm_\infty+\fr{\tl\dl}{(1+t)^{a}},
\end{align}
with $\lm_\infty$ and $\tl{\dl}$ two positive constants to be determined later. Then it is easy to see that $\lm_\infty\le\lm(t)\le\lm(0)=\lm_\infty+\tl\dl$. By the mean value theorem, it is not difficult to verify that 
\begin{align}\label{lm-gap}
\lm(\tau)-\lm(t)\ge a\tl{\dl}\fr{t-\tau}{(1+t)(1+\tau)^{a}}, \quad{\rm for\ \ all\ \ } \tau<t.
\end{align}
This gap plays an important role in dealing with the \underline{plasma echoes}, as was used in \cite{BMM2016,grenier2021landau} (although here we will use the trick slightly differently).

Next, we recall the norms used in \cite{Guo2002} for measuring the dissipative terms coming from the linearized Landau equations. For $\ell\in\R$, we denote 
\begin{align}\label{L-norm}
        |g|_{\sigma,\ell}^2=\int_{\mathbb{R}^3}\langle v\rangle^{2\ell}
        \left[\sigma^{ij}\partial_{v_i}g\partial_{v_j}g+\sigma^{ij}v_iv_j |g|^2\right]dv. 
\end{align}
Similarly,
\begin{align}\label{L-norm'}
        \|g\|_{\sigma,\ell}^2=\int_{\mathbb{T}^3}\int_{\mathbb{R}^3}\langle v\rangle^{2\ell}
        \left[\sigma^{ij}\partial_{v_i}g\partial_{v_j}g+\sigma^{ij}v_iv_j |g|^2\right]dvdx. 
\end{align}
In particular, if $\ell=0$, we simply write $|\cdot|_\sig=|\cdot|_{\sig,0}$, and $\|\cdot\|_\sig=\|\cdot\|_{\sig,0}$.
We refer to the Appendix for more discussion of the basic properties of this norm.

Now we define the Gevrey norms associated with the vector fields $Z$. For any smooth function $f$ defined on $[0,T]\times \mathbb{T}^3\times \mathbb{R}^3$ and weight function $w=w(t,x,v)$, we define
\begin{align}\label{def-Gnorm}
    \|f\|_{\mathcal{G}_{s, w}^{\lambda,N}}^2
    &=\sum_{|\beta|\leq N} \kappa^{2|\beta|}\sum_{0\leq m\in \mathbb{N}^6}a_{m,\lm,s}^2(t)\big\|w(t, x,v)f^{(m+\beta)}\big\|_{L^2_{x,v}}^2,
\end{align}
where $\lambda$ can be a constant or a decreasing function of $t$ chosen as in \eqref{def-lm},  $\kappa\in(0,1)$ is a constant, $N\in\N$, and the combinatorial coefficients $a_{m,\lambda,s}(t)$ are defined in \eqref{def:a}. 
We also use the following conventions:
\begin{itemize}
    \item if $N=0$, we use $\|\cdot\|_{\mathcal{G}^{\lm}_{s,\ell}}$ to denote $\|\cdot\|_{\mathcal{G}^{\lm,0}_{s,\la v\ra^\ell}}$;

    \item if the $L^2_{x,v}$ norm $\|\cdot\|_{L^2_{x,v}}$ on the right hand side of \eqref{def-Gnorm} is replaced by $\|\cdot\|_{L^2_v}$, we use the notation $\|\cdot\|_{\tl{\mathcal{G}}^{\lm,N}_{s,w}}$ to denote the corresponding norm;
    
    \item if $w(t,x,v)=\la v\ra^\ell$, we use the notation $\|\cdot\|_{\mathcal{G}^{\lm,N}_{s,\ell}}$ to denote $\|\cdot\|_{\mathcal{G}^{\lm,N}_{s,\la v\ra^\ell}}$, and similar convention applies to $\|\cdot\|_{\tl{\mathcal{G}}^{\lm,N}_{s,\la v\ra^\ell}}$;

    \item if the $L^2_{x,v}$ norm $\|\cdot\|_{L^2_{x,v}}$ with weight $\la v\ra^\ell$ on the right hand side of \eqref{def-Gnorm} is replaced by $\|\cdot\|_{\sig,\ell}$ defined in \eqref{L-norm'}, we use the notation $\|\cdot\|_{\mathcal{G}^{\lm,N}_{s,\sig,\ell}}$ to denote the corresponding norm;
    \item if $f$ is independent of $v$, then the weight function $w(t,x,v)$ is taken to be 1, and we use the notation $\|\cdot\|_{\mathcal{G}^{\lm,N}_s}$ to denote the corresponding norm of $f$, namely,  
    \begin{align}\label{def-Gnorm'}
    \|f\|_{\mathcal{G}_{s}^{\lambda,N}}^2
    =\sum_{|\beta|\le N}\kappa^{2|\beta|}\sum_{0\leq m\in \mathbb{N}^6}a_{m,\lm,s}^2(t)\big\|(\nabla_x,t\nabla_x)^{\beta+m}f\big\|_{L^2_{x}}^2.
    \end{align}

    \item We define a variant norm in terms of the Fourier transform in $x$, that is if we write  
    $f(x,v)=\sum_{k\in\Z^3_*}e^{ik\cdot x}f_k(v)$ then we define the Gevrey norm $\bar{\mathcal{G}}_{s, \ell}^{\lambda,N}$ with time-independent radius $\lambda$ by
\begin{align}\label{def-varGnorm}
    \|f\|_{\bar{\mathcal{G}}_{s, \ell}^{\lambda,N}}^2
    &=\sup_{k\in\Z^3_*}\sum_{\beta\in\N^3,|\beta|\leq N} \kappa^{2|\beta|}\sum_{0\leq m\in \mathbb{N}^3}a_{m,\lm,s}^2(0)\big\|\la v\ra^{\ell}(Y^*)^{m+\beta}f_k\big\|_{L^2_{v}}^2.
\end{align}
See Section \ref{sec: notations} for the Fourier analysis conventions. 
\end{itemize}

\subsection{Time-splitting}
We split into two time regions $[0,\nu^{-\fr{1}{2}}]$ and $[\nu^{-\fr12},\infty)$.

The reason for the splitting is that we lack a single argument that can treat the nonlinear collisions in all regimes at once. 
In the first time region, we propagate Gevrey regularity control, and the main aim is to prove uniform Landau damping and enhanced dissipation. However, after a certain point, the nonlinear collision terms need a more precise treatment. 
Unfortunately, the precise treatment we employ would be very hard to reconcile with the Gevrey regularity required to control the Vlasov nonlinearity in the early time regimes.

Hence, at time $t=\nu^{-\fr12}$ (well after the enhanced dissipation has eliminated most of the $x$-dependence), we move to the global time regime, which we approach in Sobolev regularity. At this time, we have the following Sobolev estimates due to the enhanced dissipation
\begin{align*}
    \|f_0(\nu^{-\fr12},v)\|_{H^{2\sigma}}\lesssim \epsilon,\quad \|f_{\neq}(\nu^{-\fr12},x,v)\|_{H^{\sigma+1}}\leq \epsilon \nu^{\ell/36-\sigma/2}.
\end{align*}
Here 
\begin{align*}
    f_0(t,v)=\frac{1}{(2\pi)^3}\int_{\mathbb{T}^3}f(t,x,v)dx,\quad \text{and}\quad f_{\neq}(t,x,v)=f-f_0.
\end{align*}  
For the global time regime, $t\geq \nu^{-\fr12}$, we can consider it as a secondary initial value problem. Let $t'=t-\nu^{-\fr12}$ and $f'(t',x,v)=f(t,x,v)$, then $f'$ solves \eqref{pVPL} with initial data
\begin{align*}
    f'(0,x,v)=f(\nu^{-\fr12},x,v).
\end{align*}
We drop the notation `prime' and keep using the notation $f(t,x,v)$. The global stability problem is now reduced to studying the stability of homogeneous states that are close to the global Maxwellian in a $\nu$-independent manner with inhomogeneous perturbations in Sobolev regularity of size $O(\nu^{\ell/36-\sigma/2})$. 
More precisely, we specifically consider the initial value problem with initial condition $F=\mu+\sqrt{\mu}({\rm I}-\mathrm{P}_0)f_0(\nu^{-\fr12},v)+\sqrt{\mu}\tilde{f}$, where the perturbation $\tilde{f} = f_{\neq}(\nu^{-\fr12},x,v)+\mathrm{P}_0 f_0(\nu^{-1/2},v)$ is of size $O(\nu^{1/3})$ in a weighted Sobolev space (here $\mathrm{P}_0$ is the projection onto the kernel of $L$). 
Hence, this long-time regime is a generalization of the work of Chaturvedi-Luk-Nguyen \cite{chaturvedi2023vlasov} to cover the case of initial data which has a homogeneous component that is close to Maxwellian only in a $\nu$-independent manner (rather than $\nu^{1/3}$ as in \cite{chaturvedi2023vlasov}).  
This result is analogous to some corresponding results in fluid mechanics \cite{BMV2016, bedrossian2024uniform, liMasmoudiZhao2022asymptotic}, although we will need to employ very different methods to treat this case. 
A similar time-splitting was also used in the long-time inviscid limit result \cite{bedrossian2025pseudo,bedrossian2024uniform} (using \cite{bedrossian2025stability} to treat the long times) although again, for somewhat different reasons and with entirely different methods.

\subsection{Landau damping and enhanced dissipation regime: $t \leq \nu^{-1/2}$}
Let us focus on the first time region $t\in [0,\nu^{-\fr12}]$. Due to the absence of $\nu$-dependent smallness, roughly speaking, the main difficulty is coming up with a method that can treat both the plasma echoes and the Landau collision operator at the same time. 
What we prove precisely is the following (See section \ref{sec: notations} for the notation regarding some of the technical quantities): 
\begin{theorem}\label{Thm:shorttime}
    Let $\frac12<s<\fr23$. Let $\lambda_{\mathrm{in}}>0$. There exist $\lambda_{\infty}>0$, $\nu_0>0,\, N_0>0,\, \ell_0>0,\, \kappa_0>0,\, \sigma_0>0$, and $ \epsilon_0=\epsilon_0(s, \lambda_{\mathrm{in}},\lambda_{\infty}, N_0,\ell_0,\kappa_0)>0$ such that for all $N\geq N_0$, $\ell\geq \ell_0$, $0<\kappa<\kappa_0$ and $0<\epsilon<\epsilon_0$, if the initial data $f_{\mathrm{in}}$ satisfies \eqref{conserv-initial} and 
    \begin{align*}
    \|f_{\mathrm{in}}\|_{\mathcal{G}_{s, \ell}^{\lambda_{\mathrm{in}},N}}\leq \epsilon,
    \end{align*}
    then the VPL equation \eqref{pVPL} with all $0<\nu<\nu_0$ has a smooth solution $(f, E)$ defined on $t \in [0,\nu^{-1/2}]$ which satisfies the following uniformly on that time interval, 
    \begin{itemize}
    \item Boundedness of the homogeneous part:
    $$\|f_0(t,v)\|_{\mathcal{G}_{s,\ell}^{\lambda_{\infty},N}}\lesssim \epsilon$$
    \item Enhanced dissipation: 
$$     \|f_{\ne}(t, x+tv, v)\|_{\mathcal{G}_{s,\ell/4}^{\lambda_{\infty},N}}\lesssim \frac{\epsilon}{\langle \nu^{\frac{1}{3}}t\rangle^{\ell/6}},$$
    \item Uniform Landau damping:
$$
\|e^{c_0 \lambda_\infty \abs{t \nabla_x}^s}E(t)\|_{L^2_x}\lesssim \epsilon$$
     for some $c_0>0$ independent of $\nu$. 
    \end{itemize}
\end{theorem}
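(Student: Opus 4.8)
The proof is a continuity argument on $[0,\nu^{-1/2}]$. Its central object is a time-dependent energy built from the commuting vector fields $Z=(\nb_x,Y)$ and Guo's velocity weights, schematically
\[
\mathcal{E}(t)=\|f(t)\|_{\mathcal{G}^{\lambda(t),N}_{s,\ell}}^2+\|f_0(t)\|_{\mathcal{G}^{\lambda(t),N}_{s,\ell}}^2+\brak{\nu^{1/3}t}^{\ell/3}\|f_{\ne}(t,x+tv,v)\|_{\mathcal{G}^{\lambda(t),N}_{s,\ell/4}}^2,
\]
with $\lambda(t)$ as in \eqref{def-lm} decreasing from $\lambda(0)\le\lambda_{\mathrm{in}}$ to $\lambda_\infty$. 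One assumes on a maximal interval $[0,T]\subset[0,\nu^{-1/2}]$ that $\mathcal{E}(t)\le 4C\eps^2$ and $\|e^{c_0\lambda_\infty\abs{t\nb_x}^s}E(t)\|_{L^2_x}\le 4C\eps$, and aims to improve both $4C$ to $C$, which forces $T=\nu^{-1/2}$. The decay of $\lambda(t)$ generates a negative Cauchy--Kovalevskaya term which, together with the gap \eqref{lm-gap}, is the nearly-physical-side substitute for the resonance/echo analysis of \cite{BMM2016,grenier2021landau}, carried out here with vector fields rather than Fourier multipliers on the $x\mapsto x+tv$ shifted unknown.

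\textbf{Interior energy estimate.} Applying $Z^m$ to \eqref{pVPL} and using \eqref{Z-commute} the free transport term drops; pairing $Z^{m+\beta}f$ with $\brak{v}^{2\ell}a_{m,\lambda,s}^2Z^{m+\beta}f$ and summing over $m\in\N^6$, $\abs{\beta}\le N$, the nontrivial terms are: (i) \emph{linear Landau} $\nu Lf$: Guo-type coercivity $\brak{\brak{v}^{2\ell}Lg,g}_v\gt\abs{g}_{\sig,\ell}^2-C\|g\|_{L^2_v(\abs v\le C)}^2$, together with its $(\mathrm{I}-\mathrm{P}_0)$-refinement, produces the good dissipation $\nu\sum_m a_{m,\lambda,s}^2\|(\mathrm{I}-\mathrm{P}_0)Z^{m+\beta}f\|_{\sig,\ell}^2$; the commutators $[Z^m,L]$ involve only $v$-derivatives of $\sig^{ij}=\Phi^{ij}*\mu$ and are absorbed by the decay estimates of Lemma \ref{lem: sig_ij} without spoiling $\nu$-uniformity (the constraint $s>1/2$ enters precisely here). (ii) \emph{Vlasov terms} $E\cdot\nb_vf$, $-(E\cdot v)f$, $-2(E\cdot v)\mu^{1/2}$: $Z^m$ distributes by Leibniz, and since $E$ depends only on $x$ each $E^{(m)}$ is $t^{\abs{\bar m}}$ times a pure $x$-derivative of $E$; the ``reaction'' contribution (all derivatives on $f$, one $x$-derivative moved onto $E$) is the dangerous one and is absorbed by the Cauchy--Kovalevskaya term and \eqref{lm-gap}, the ``transport'' contribution is controlled by the $E$ bound, and the remaining terms are lower in the Gevrey hierarchy. (iii) \emph{nonlinear collision} $\nu\Gamma(f,f)$: Gevrey product estimates together with the extra factor $\nu$ and one power of $\eps$ from the bootstrap render this perturbative for the interior energy.

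\textbf{Density equation and collisional echoes.} The field $E$ is recovered from $\rho=\int_{\R^3}f\sqrt\mu\,dv$, which obeys a Volterra equation $\hat\rho_k(t)=S_k(t)+\int_0^t\mathbf{K}_k(t,\tau)\hat\rho_k(\tau)\,d\tau$. The kernel $\mathbf K$ is the classical linearized Landau-damping kernel; inserting the Gevrey weights and using \eqref{lm-gap} yields (Penrose-type) absolute summability and invertibility of the Volterra operator. The source $S_k$ gathers the free evolution of the data, the quadratic Vlasov term, and---crucially---the contributions of $\nu Lf$ and $\nu\Gamma(f,f)$, the ``collisional echoes''. Since $\nu\Gamma(f,f)$ is neither small in Gevrey norm uniformly in $t$ nor concentrated near $\mathrm{P}_0$, one needs a dedicated decomposition: split $f=\mathrm{P}_0f+(\mathrm{I}-\mathrm{P}_0)f$ and further decompose the collision output by the number of velocity derivatives and by frequency shells, exploiting that the $\sig$-dissipation from (i) controls precisely the velocity-derivative pieces while $\mathrm{P}_0f$ is governed by the macroscopic conservation laws \eqref{conser-mass}--\eqref{conser-en} and the already-controlled $E$. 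Solving the Volterra equation then closes $\|e^{c_0\lambda_\infty\abs{t\nb_x}^s}E\|_{L^2_x}\les\eps$, hence also $\|f_0\|_{\mathcal{G}^{\lambda_\infty,N}_{s,\ell}}\les\eps$ through \eqref{conser-mass}--\eqref{conser-en}.

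\textbf{Enhanced dissipation and the main obstacle.} Writing $g_k(t,v)$ for the $k$-th Fourier mode of $f_{\ne}$ in the frame $x\mapsto x+tv$, the velocity derivatives inside $L$ become $\nb_v+ikt$, so the Landau dissipation controls $\brak{kt}^2\abs{g_k}^2$ once a hypocoercivity correction (of the type $\alpha\nu\,\mathrm{Re}\,\brak{(\nb_v+ikt)g_k,ikt\,g_k}$) removes the degeneracy at $kt=0$; integrating $\frac{d}{dt}\|g_k\|^2\les-\nu\brak{kt}^2\|g_k\|^2+(\text{nonlinear})$ mode by mode, and trading the weight $\brak{v}^\ell$ down to $\brak{v}^{\ell/4}$ against the velocity-moment growth, produces the stated decay $\brak{\nu^{1/3}t}^{-\ell/6}$; improving all constants then closes the bootstrap and gives $T=\nu^{-1/2}$. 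The main obstacle---and the reason the argument must split at $t=\nu^{-1/2}$---is that the two technologies pull in opposite directions: the echo/Volterra analysis wants Fourier-localized, shifted-variable estimates with a finely tuned $\lambda(t)$, whereas Guo's weighted energy method for $L$ is intrinsically physical-side with $\brak{v}^\ell$ weights and commuting vector fields. Controlling every commutator $[Z^m,\nu L]$ and every occurrence of $\Gamma$---in both the interior energy and the density equation---without sacrificing $\nu$-uniformity or Gevrey summability, and in particular taming the nonlinear collision term inside the Volterra equation where there is no obvious smallness, is the technical heart of the proof.
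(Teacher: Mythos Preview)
Your overall architecture---bootstrap, CK term from decreasing $\lambda(t)$, Guo coercivity for $L$, Volterra equation for $\rho$---matches the paper, but two steps as written would not close.

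First, your treatment of the ``collisional echoes'' misses the paper's central new device. You propose to control $\nu\Gamma(f,f)$ in the Volterra source by the $\mathrm P_0/(\mathrm I-\mathrm P_0)$ split and the $\sigma$-dissipation. The dangerous term, however, is the low--high interaction $\nu\Gamma(f_0,f_{\ne})$ inside the density equation: after passing to the dual semigroup $\mathfrak g_k=S_k^*[\sqrt\mu]$, the convolution against $\Phi^{ij}*(\sqrt\mu f_0)$ produces an extra growth of order $\tau^{(3+\varepsilon)(1+a-s)}$ that can only be absorbed by the \emph{enhanced dissipation} of $f_{\ne}$, and only when $s>\tfrac12$. (This, not the $[Z^m,L]$ commutators, is where $s>\tfrac12$ enters; the commutators impose the \emph{upper} bound $s<\tfrac23$ via Lemma~\ref{lem: sig_ij}.) To make this close the paper introduces a genuine \emph{density decomposition} $\rho=\rho^{(1)}+\rho^{(2)}$: $\rho^{(1)}$ carries the collisionless sources and is controlled in the strong norm $\|\la t\ra^b|\nb_x|^{3/2}\rho^{(1)}\|_{L^2_t\mathcal G^{\lambda,N-1}_s}$, while $\rho^{(2)}$ carries the collisional sources in the weaker $\|\la t\ra^{(3+a-s)/2}\rho^{(2)}\|_{L^2_t\mathcal G^{\lambda,N}_s}$ and its weighted variants; the two pieces are then fed back into the energy estimate for $f$ with different time weights. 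A $\mathrm P_0/(\mathrm I-\mathrm P_0)$ split of $f$ does not provide this and the bootstrap would not close.

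Second, your enhanced-dissipation mechanism (``$\nb_v\mapsto\nb_v+ikt$ in $L$, hence $\tfrac{d}{dt}\|g_k\|^2\les-\nu\la kt\ra^2\|g_k\|^2$'') works for Fokker--Planck but not for Landau: the diffusion $\sig^{ij}\pr_{v_i}\pr_{v_j}$ degenerates like $\la v\ra^{-1}$ at infinity, so no clean $\nu|kt|^2$ lower bound survives the shift. The paper instead builds the $\la\nu^{1/3}t\ra^{-\ell/6}$ decay into the energy by carrying a tower of \emph{time-weighted} energies $w_\iota^2(t)\mathcal E_{\ell-2\iota}(g_\ne)$ with $w_\iota(t)=(\kappa_0\nu^{1/3}(1+t))^{\iota/2}$, trading velocity localization against the time weight via Lemma~\ref{lem: E_l<D_l-2}, together with a physical-side hypocoercivity cross term $\la\nb_xg^{(n)},\nb_vg^{(n)}\ra$ and an additional $(1+t)^{-2}\|Yg^{(n)}\|^2$ piece in each $\mathcal E_\ell^n$. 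A final necessary ingredient you omit is Guo's good unknown $g=e^{\phi}f$, which removes the $-E\cdot vf$ term; without it the velocity-weighted estimates do not close uniformly in $\nu$.
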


We now outline the proof. 

\subsubsection{Fourier multipliers vs vector fields}
In order to treat $\nu$-independent perturbations, we need to study the nonlinear interactions carefully. The by-now classical way to deal with the collisionless nonlinear interaction is to study the plasma echoes through a nonlinear Volterra equation written on the frequency side, as in \cite{BMM2016} and \cite{grenier2021landau}, since the critical times are defined in terms of Fourier modes. However, the linear Landau collision operator does not have a good representation on the frequency side due to the inhomogeneity of the collisions in $v$. Thus, to make our energy well-adapted to the Landau collision operator, we quantify Gevrey regularity using a commuting vector field method instead of time-dependent Fourier multipliers as in previous works \cite{BMM2016, BZZ2024VPFP}. 
Specifically by observing that $\nabla_x$ and $Y = \nabla_v + t \nabla_x$ commute with the free transport operator $\partial_t + v \cdot \nabla_x$. 
Note that these derivatives are equivalent to the $\nabla_z,\nabla_v$ derivatives used in e.g. \cite{BMM2016} after the change of variables $x \mapsto z = x + tv$. 

\subsubsection{Loss of velocity localization}
The difficulty of controlling high velocities in the Vlasov-Poisson-Landau equations is a classical issue. We recall two ideas from Yan Guo \cite{Guo2002,guo2012vlasov} to deal with this issue. The first problem is that, due to the degeneracy of the collisions at high velocities, we employ the $\|\cdot\|_{\sigma,\ell}$ norm from \cite{Guo2002}, which we recalled in \eqref{L-norm}  and \eqref{L-norm'}.  

The second problem is the $-E\cdot vf$ term that appears when conjugating the equation with the Maxwellian, which corresponds to the difficulty of controlling the acceleration of particles in the tail by the electric field. For this, we use the idea from \cite{guo2012vlasov} and introduce the new variable
\begin{align}\label{def-g}
g(t,x,v):=e^{\phi}f(t,x,v),
\end{align}
which satisfies the somewhat better equation
\begin{align*}
\pr_tg+v\cdot\nb_xg+E\cdot\nb_vg+\nu Lg= \nu e^{-\phi}\Gamma(g,g)+2e^{\phi}E\cdot v\sqrt{\mu}+\pr_t\phi g.    
\end{align*}
where 
\begin{align}\label{pt-phi}
 \pr_t\phi=-(-\Dl_x)^{-1}\nb_x\cdot M,
\end{align}
with the first moment defined by
$M(t,x):=\int_{\R^3}vf(t,x,v)\mu^{\fr12}(v)dv$. 
This is due to the observations that for $E=-\nb_x\phi$ and $-\Dl_x\phi=\rho$, there holds
\begin{align*}
    e^{\phi}(v\cdot\nabla_xf-E\cdot vf)=v\cdot\nabla_x (e^{\phi}f),
\end{align*}
and 
\begin{align*}
\pr_t\rho=-\nb_x\cdot\int_{\R^3}v f(t,x,v)\sqrt{\mu}dv=-\nb_x\cdot M(t,x).
\end{align*}

\subsubsection{Enhanced dissipation and Gevrey regularity hypocoercivity}
The enhanced collisional dissipation is one of the key stability mechanisms for times $t \gtrsim \nu^{-1/3}$. To obtain this enhanced dissipation, we use a hypocoercivity energy method using vector fields (see e.g. \cite{Coti2020,bedrossian2024taylor} and \cite{chaturvedi2023vlasov} for finite regularity examples). 
Here we adapt this method to Gevrey regularity. 
Let us denote the $(n,\ell)$ level energy by (for universal parameters ${\rm A}_0 > 1$, $\kappa < 1$ to be chosen by the proof)
\begin{align}\label{eq:E_l^n}
\begin{aligned}
    \mathcal{E}_{\ell}^{n}(g(t))=&{\rm A}_0\sum_{|\al|\le1}\left\|\la v\ra^{\ell-2|\al|}\pr_x^{\al}g^{(n)}\right\|_{L^2_{x,v}}^2
    +\sum_{1\leq |\alpha|\leq 2}\left\|\la v\ra^{\ell-2|\alpha|}\kappa^{|\alpha|}\nu^{\frac{|\alpha|}{3}}\pr_v^{\alpha}g^{(n)}\right\|_{L^2_{x,v}}^2\\
    &+2\kappa\nu^\fr13\left\la\nb_{x}g^{(n)},\la v\ra^{2\ell-4}\nb_{v}g^{(n)} \right\ra_{x,v}+\kappa^2 (1+ t)^{-2}\left\|\la v\ra^{\ell-2}Yg^{(n)}\right\|_{L^2_{x,v}}^2.
\end{aligned}
\end{align}
At each fixed level of regularity, this is similar to the standard hypocoercivity schemes, found in e.g. \cite{villani2009hypocoercivity,beck2013metastability,chaturvedi2023vlasov}, with three adjustments. The first is the slight adjustment to the velocity localization based on $x$-derivatives as  in \cite{Guo2002,guo2012vlasov}. The second is the inclusion of $(\nu^{1/3}\partial_v)^2$ (also used in \cite{chaturvedi2023vlasov}) and the third is the inclusion of $\kappa^2 (1+t)^{-2}\left\|\la v\ra^{\ell-2}Yg^{(n)}\right\|_{L^2_{x,v}}^2$, which is to balance the $\nb_x$ derivatives with the vector field $Y$ and to take advantage of the nonlinear transport structure. More precisely, without this term, in the energy estimate, the term $\langle \nabla_x E\cdot \nabla_v g^{(n)}, \nabla_x g^{(n)}\rangle$ loses one derivative in $v$. By adding this new norm to the energy, this term is now under control
\begin{align*}
\langle \nabla_x E\cdot \nabla_v g^{(n)}, \nabla_x g^{(n)}\rangle
=&\langle (1+t)\nabla_x E\cdot Y/(1+t)g^{(n)}, \nabla_x g^{(n)}\rangle\\
&+\left\langle \fr{t}{1+t}\nabla_x E\cdot \nabla_x g^{(n)}, \nabla_x g^{(n)}\right\rangle.
\end{align*}
We refer to \eqref{est-TLH-E32} for detailed estimates. 
Such a norm is not required in \cite{chaturvedi2023vlasov} since the additional $\nu^{1/3}$ smallness implies that it suffices to take  $\nu^{1/3}\nb_v$ instead of $\nb_v$. Since our perturbation size is $\nu$-independent, it is crucial that the norm respects the transport structure in order to control the Vlasov nonlinearity independently of $\nu$. At last, we remark that we use $Y/(1+t)$, rather than $Y$, due to issues with the Landau collision operator. We refer to Remark \ref{rem-Y/t} and \eqref{low-order-Y} for more detailed explanations. Correspondingly, we have the dissipation terms
\begin{align}\label{def-dissipation-nl}
    \nn\mathcal{D}^{n}_\ell(g(t))=&{\rm A}_0\nu^{\fr23}\sum_{|\al|\le1}\left\|\pr_x^\al g^{(n)}\right\|_{\sig,\ell-2|\al|}^2+\kappa\left\|\la v\ra^{\ell-2}\nb_x g^{(n)}\right\|_{L^2_{x,v}}^2\\
    &
    +\sum_{1\leq |\alpha|\leq 2}\nu^{\fr23}\left\|\kappa^{|\alpha|}\nu^{\fr{|\alpha|}{3}}\pr_v^{\alpha}g^{(n)}\right\|_{\sig,\ell-2|\alpha|}^2 
    +\kappa^2\nu^\fr23(1+t)^{-2}\left\|Yg^{(n)}\right\|_{\sig,\ell-2}^2.
\end{align}
We now define the $\ell$-level weighted Gevrey energy with $\ell\geq \ell_0$
\begin{align}\label{energy-l}
    \mathcal{E}_{\ell}(g(t)):=\sum_{m\in\N^6}\sum_{\substack{\beta\in\N^6, |\beta|\le N}}\kappa^{2|\beta|}a_{m,\lm,s}^2(t)\mathcal{E}_{\ell}^{m+\beta}(g(t)),
\end{align}
where the coefficients $a_{m,\lambda,s}$ are defined in \eqref{def:a}. 

We also define the associated dissipation terms,
\begin{align}\label{dissipation-l}
\mathcal{D}_{\ell}(g(t)):=\sum_{m\in\N^6}\sum_{\substack{\beta\in\N^6, |\beta|\le N}}\kappa^{2|\beta|}a_{m,\lm,s}^2(t)\mathcal{D}_{\ell}^{m+\beta}(g(t)),
\end{align}
and the `Cauchy-Kowalewskaya' type terms (CK terms)
\begin{align}\label{CK-l}
    \mathcal{CK}_{\ell}(g(t)):=-2\fr{\dot{\lm}(t)}{\lm(t)}\sum_{m\in\N^6}\sum_{\substack{\beta\in\N^6, |\beta|\le N}}\kappa^{2|\beta|}|m|a_{m,\lm,s}^2(t)\mathcal{E}_{\ell}^{m+\beta}(g(t)).
\end{align}
We also use the notation
\begin{align}\label{var-CK-l}
    \mathfrak{CK}_{\ell}[g(t)]:=-2\fr{\dot{\lm}(t)}{\lm(t)}\sum_{m\in\N^6}\sum_{\substack{\beta\in\N^6, |\beta|\le N}}\kappa^{2|\beta|}|m|a_{m,\lm,s}^2(t)\big\|g^{(m+\beta)}(t)\big\|^2_{L^2_{x,v}(\ell)},
\end{align}
for the sake of presentation. Recalling \eqref{eq:E_l^n}, we have
\begin{align*}
    \mathcal{CK}_{\ell}(g(t))\approx &\sum_{|\al|\le1}\mathfrak{CK}_{\ell-2|\al|}\big[\sqrt{\rm A}_0\pr_x^\al g(t)\big]
    +\sum_{1\le|\al|\le 2}\mathfrak{CK}_{\ell-2|\al|}\big[\kappa^{|\al|}\nu^\fr{|\al|}{3}\pr_v^\al g(t)\big]\\
    &+\mathfrak{CK}_{\ell-2}\big[\kappa(1+t)^{-1}Y g(t)\big].
\end{align*}

We will be trading velocity localization for enhanced dissipation, for which the following lemma is useful. 
\begin{lem}\label{lem: E_l<D_l-2}
    It holds for $\iota= 1, 2, ..., [\ell/3]+1$ and $\alpha\in \mathbb{N}^3$ with $|\alpha|=1,2$ that
    \begin{align}\label{E_l<D_l-2-1}
        \sum_{|\al|\le1}\big\|\pr_x^{\al}g_{\neq}^{(n)}\big\|_{L^2_{x,v}(\ell-2\iota-2|\al|)}^2+\nu^{\frac{2|\alpha|}{3}}\big\|\pr_v^{\alpha}g_{\neq}^{(n)}\big\|_{L^2_{x,v}(\ell-2\iota-2|\alpha|)}^2
        \lesssim \mathcal{D}^{n}_{\ell-2\iota+2}(g_{\neq}(t)).
    \end{align}
    It holds for $\iota=3,4,...,[\ell/3]+1$ that
    \begin{align}\label{E_l<D_l-2-2}
        \nn&\nu^{\fr{\iota}{3}}(1+t)^{\iota-3}\big\|Yg_{\neq}^{(n)}\big\|_{L^2_{x,v}(\ell-2\iota-2)}^2\\
        \lesssim& \nu^{\fr13}\big(\nu^{\fr13}(1+t)\big)^{\iota-3}\mathcal{D}^{n}_{\ell-2\iota+6}(g_{\neq}(t))+\nu^{\fr13}\big(\nu^{\fr13}(1+t)\big)^{\iota-1}\mathcal{D}^{n}_{\ell-2\iota+2}(g_{\neq}(t)). 
    \end{align}
\end{lem}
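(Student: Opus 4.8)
The plan is to derive both inequalities by producing each term on the left from a \emph{single} term of the dissipation functional $\mathcal D^n$, spending the $2\iota$ powers of velocity localization we are allowed to surrender. Two elementary ingredients are needed. First, the ellipticity of the Landau coefficient matrix in the Coulomb case, $\sigma^{ij}(v)\xi_i\xi_j\gtrsim \langle v\rangle^{-3}|\xi|^2$ (a standard property, recorded in the Appendix), which gives, for every $h$ and every $r\in\mathbb R$, $\|h\|_{\sigma,r}^2\gtrsim \|\nabla_v h\|_{L^2_{x,v}(r-\frac32)}^2$. Second, since $g_{\neq}$ has zero $x$-average, the Poincar\'e inequality on $\mathbb T^3$ applied pointwise in $v$ gives $\|g_{\neq}^{(n)}\|_{L^2_{x,v}(r)}\lesssim \|\nabla_x g_{\neq}^{(n)}\|_{L^2_{x,v}(r)}$, while, because $\langle v\rangle\ge 1$, the norms $\|\cdot\|_{L^2_{x,v}(r)}$, $\|\cdot\|_{\sigma,r}$ and $\mathcal D^n_r$ are all monotone in $r$ (so, in particular, bounding by $\mathcal D^n_{\ell-2\iota+2}$ or $\mathcal D^n_{\ell-2\iota+6}$ is a sharper statement than bounding by $\mathcal D^n_\ell$, and is legitimate as soon as $\iota\ge1$, resp. $\iota\ge3$). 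I would use the weight slack in two ways simultaneously: to absorb the loss of $\frac32$ powers in the $\sigma$-lower bound, and to descend to the exact weight required; the $\nu$-powers then match automatically because each $\sigma$-term of $\mathcal D^n$ carries an extra $\nu^{2/3}$, which is exactly what upgrades a $\partial_v^{\,j-1}$ dissipation term (prefactor $\nu^{2(j-1)/3}$) to a bound for a $\partial_v^{\,j}$ quantity (prefactor $\nu^{2j/3}$).

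For \eqref{E_l<D_l-2-1}, the $\partial_x$-derivative terms (order $\le1$) are handled by Poincar\'e (for the zeroth-order term) and weight monotonicity, which dominate them by $\kappa\|\langle v\rangle^{\ell-2\iota}\nabla_x g_{\neq}^{(n)}\|_{L^2_{x,v}}^2$, which is a term of $\mathcal D^n_{\ell-2\iota+2}(g_{\neq})$. For the $\partial_v^\alpha$ term with $|\alpha|=j\in\{1,2\}$, I would write $\partial_v^\alpha=\partial_{v_i}\partial_v^{\alpha-e_i}$ with $e_i$ the $i$-th coordinate vector, set $r:=(\ell-2\iota+2)-2(j-1)$, and note $r-\frac32\ge \ell-2\iota-2j$; the $\sigma$-lower bound and monotonicity then give $\|\partial_v^\alpha g_{\neq}^{(n)}\|_{L^2_{x,v}(\ell-2\iota-2j)}^2\lesssim \|\partial_v^{\alpha-e_i}g_{\neq}^{(n)}\|_{\sigma,r}^2$, and the right side is, up to the constant $\kappa^{2(j-1)}$, the $|\alpha|=j-1$ entry of the $\partial_v$-sum inside $\mathcal D^n_{\ell-2\iota+2}(g_{\neq})$, with the convention that $j-1=0$ names the term ${\rm A}_0\nu^{2/3}\|g_{\neq}^{(n)}\|_{\sigma,\ell-2\iota+2}^2$. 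Multiplying by $\nu^{2j/3}=\nu^{2/3}\nu^{2(j-1)/3}$ then yields \eqref{E_l<D_l-2-1}; the restriction $\iota\le[\ell/3]+1$ only serves to keep $\ell-2\iota+2$ bounded below (by roughly $\ell/3$) so that the dissipation level remains meaningful.

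For \eqref{E_l<D_l-2-2}, I would split $Y=\nabla_v+t\nabla_x$ and match each piece to one of the two terms on the right. For $\nabla_v$: using $\ell-2\iota-2\le (\ell-2\iota+6)-\frac32$ with the $\sigma$-lower bound and monotonicity, $\|\nabla_v g_{\neq}^{(n)}\|_{L^2_{x,v}(\ell-2\iota-2)}^2\lesssim \|g_{\neq}^{(n)}\|_{\sigma,\ell-2\iota+6}^2\lesssim \nu^{-2/3}\mathcal D^n_{\ell-2\iota+6}(g_{\neq})$, which requires $\ell-2\iota+6\le\ell$, i.e.\ $\iota\ge3$ --- exactly the stated range; multiplying by $\nu^{\iota/3}(1+t)^{\iota-3}$ and rewriting $\nu^{\iota/3-2/3}(1+t)^{\iota-3}=\nu^{1/3}\big(\nu^{1/3}(1+t)\big)^{\iota-3}$ produces the first term on the right. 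For $t\nabla_x$: bound $t^2\le(1+t)^2$, use Poincar\'e/monotonicity as above to get $\|\nabla_x g_{\neq}^{(n)}\|_{L^2_{x,v}(\ell-2\iota-2)}^2\lesssim \mathcal D^n_{\ell-2\iota+2}(g_{\neq})$, and multiply by $\nu^{\iota/3}(1+t)^{\iota-1}=\nu^{1/3}\big(\nu^{1/3}(1+t)\big)^{\iota-1}$ to obtain the second term. Adding the two pieces gives \eqref{E_l<D_l-2-2}.

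The proof is essentially bookkeeping, so I do not anticipate a deep obstacle; the single genuine idea is to control a $v$-derivative at a given weight by the $\sigma$-coercivity of one fewer $v$-derivative at a \emph{higher} weight (paying the $\frac32$-power loss out of the localization budget, which is exactly what supplies the missing $\nu^{2/3}$). The step requiring the most care is the exponent arithmetic in \eqref{E_l<D_l-2-2}: one must check that the $(\nu,\,1+t,\,\langle v\rangle)$-weights of the $\nabla_v$-piece and of the $t\nabla_x$-piece land precisely on the two stated terms, and that the hypothesis $\iota\ge3$ is exactly what makes $\mathcal D^n_{\ell-2\iota+6}$ a level no larger than $\mathcal D^n_\ell$, whereas $\iota\ge1$ is all that is needed for \eqref{E_l<D_l-2-1}.
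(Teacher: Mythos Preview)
Your proof is correct and follows essentially the same route as the paper: Poincar\'e for the $x$-derivative terms, the $\sigma$-coercivity lower bound \eqref{coercive} to trade a $v$-derivative for weight, and the splitting $Y=\nabla_v+t\nabla_x$ for \eqref{E_l<D_l-2-2}. The only cosmetic difference is that the paper's own argument actually lands on the smaller quantity $\mathcal D^n_{\ell-2\iota}$ in \eqref{E_l<D_l-2-2} (which implies the stated bound by monotonicity), whereas you target the stated indices $\ell-2\iota+6$ and $\ell-2\iota+2$ directly; your remark that $\iota\ge3$ is ``required'' is therefore slightly misplaced --- the inequality itself holds for all $\iota\ge1$, and the restriction is only so that the right-hand levels fit inside the total dissipation when the lemma is applied.
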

\begin{proof}
    For $\iota=1, 2, \cdots, [\ell/3]+1$, by Poincar\'e's inequality in $x$ and \eqref{coercive}, recalling \eqref{def-dissipation-nl}, we have
    \begin{align*}
        &\sum_{|\al|\le1}\big\|\pr_x^{\al}g_{\neq}^{(n)}\big\|_{L^2_{x,v}(\ell-2\iota-2|\al|)}^2+\nu^{\frac{2|\alpha|}{3}}\big\|\pr_v^{\alpha}g_{\neq}^{(n)}\big\|_{L^2_{x,v}(\ell-2\iota-2|\alpha|)}^2\\
        \les&\big\|\nb_xg_{\neq}^{(n)}\big\|_{L^2_{\ell-2\iota}}^2+\nu^{\fr{2}{3}}\big\|g_{\neq}^{(n)}\big\|_{{\sig,\ell-2\iota}}^2+\nu^{\fr43}\big\|\nb_vg_{\neq}^{(n)}\big\|_{{\sig,\ell-2\iota-2}}^2\les \mathcal{D}^{n}_{\ell-2\iota+2}(g_{\ne}(t)),
    \end{align*}
    which gives \eqref{E_l<D_l-2-1}.
For $\iota=3, 4, \cdots, [\ell/3]+1$, using  \eqref{coercive} again,
\begin{align*}
    &\nu^{\fr{\iota}{3}}(1+t)^{\iota-3}\big\|Yg_{\neq}^{(n)}\big\|_{L^2_{x,v}(\ell-2\iota-2)}^2\\
    \les& \nu^{\fr13}\big(\nu^{\fr13}(1+t)\big)^{\iota-3}\Big(\nu^{\fr23}\big\|\nb_vg_{\neq}^{(n)}\big\|_{L^2_{x,v}(\ell-2\iota-2)}^2\Big)+\nu^{\fr13}\big(\nu^{\fr13}(1+t)\big)^{\iota-1}\big\|\nb_xg_{\neq}^{(n)}\big\|_{L^2_{x,v}(\ell-2\iota-2)}^2\\
    \les& \nu^{\fr13}\big(\nu^{\fr13}(1+t)\big)^{\iota-3}\mathcal{D}^{n}_{\ell-2\iota}(g_{\ne}(t))+\nu^{\fr13}\big(\nu^{\fr13}(1+t)\big)^{\iota-1}\mathcal{D}^{n}_{\ell-2\iota}(g_{\ne}(t)).
\end{align*}
    Then \eqref{E_l<D_l-2-2} follows immediately.
\end{proof}

We now define the total energy,  incorporating both the unweighted  lowest-order energy component and the enhanced dissipation effects:
\begin{align}\label{total-en}
    \mathcal{E}(g(t))={\rm B}_0\|g(t)\|^2_{L^2_{x,v}}
    +\mathcal{E}_{\ell}(g(t))
    +\sum_{\iota=1}^{[\ell/3]+1} w_{\iota}^2(t)\mathcal{E}_{\ell-2\iota}(g_{\neq}(t)),
\end{align}
where $w_{\iota}(t):=\big(\kappa_0\nu^{\fr13}(1+t)\big)^{\fr{\iota}{2}}$.
We also define the associated total dissipation term and CK terms
\begin{align}
    \label{total-dis}&\mathcal{D}(g(t))={ \nu^{\fr23}\dl{\rm B_0}\|({\rm I}-{\rm P}_0)g(t)\|^2_{\sig}}+\mathcal{D}_{\ell}(g(t))+\sum_{\iota=1}^{[\ell/3]+1}w_{\iota}^2(t)\mathcal{D}_{\ell-2\iota}(g_{\neq}(t)),\\
    \label{total-CK}&\mathcal{CK}(g(t))=\mathcal{CK}_{\ell}(g(t))+\sum_{\iota=1}^{[\ell/3]+1}w_{\iota}^2(t)\mathcal{CK}_{\ell-2\iota}(g_{\neq}(t)).
\end{align}
Due to the time-weight $\big(\kappa_0\nu^{\fr13}(1+t)\big)^{\iota}$, there are `bad terms' when the time-derivatives hit the time-weight, which by Lemma \ref{lem: E_l<D_l-2} can be controlled by the dissipation terms, namely, 
\begin{align*}
&\quad{|\text{bad terms}|}\\
&\leq
    C\kappa_0\nu^{\frac13}\sum_{\iota=1}^{[\ell/3]+1}\iota\big(\kappa_0\nu^{\fr13}(1+t)\big)^{\iota-1}\Big(\sum_{|\al|\le1}\big\|\pr_x^{\al}g_{\neq}^{(n)}\big\|_{L^2_{\ell-2\iota-2|\al|}}^2+\nu^{\frac{2}{3}}\big\|\nb_vg_{\neq}^{(n)}\big\|_{L^2_{\ell-2\iota-2}}^2\Big)\\
    &\quad +C\sum_{\iota=3}^{[\ell/3]+1}(\iota-2)\kappa_0^{\iota}\nu^{\fr{\iota}{3}}(1+t)^{\iota-3}\big\|Yg_{\neq}^{(n)}\big\|_{L^2_{\ell-2\iota-2}}^2\\
    &\leq C\kappa_0 \nu^{\fr13}\sum_{\iota=0}^{[\ell/3]} \big(\kappa_0\nu^{\fr13}(1+t)\big)^{\iota}\mathcal{D}_{\ell-2\iota}^n(g_{\neq}(t)),
\end{align*}
See more details in \eqref{dt-(1+t)}.
Note that by taking the projection to the non-zero modes, for $\iota\geq 1$, we cannot keep the transport structure, namely, the interaction  $E\cdot\nabla_vf_{0}$ will lead to derivative losses. 
To overcome this, we need to take advantage of the fast decay of $E$ in lower regularity and the extra $\nu^{\frac{\iota}{3}}$ smallness, namely, we use the following estimates for $W\in \{\nabla_x, \nu^{\fr{|\alpha|}{3}}\nabla_v^{\alpha}, Y/(1+t)\}$ with $\alpha\in \mathbb{N}^3$ and $1\leq |\alpha|\leq 2$,
\begin{align*}
    &\left|\nu^{\fr{\iota}{3}}t^{\iota}\langle E\cdot \nabla_vWZ^{m} g_0, \langle v\rangle^{2\ell-4\iota-4}WZ^{m}g_{\neq}\rangle\right|\\
    \lesssim & \langle t\rangle^{\iota+4}\|E\|_{H^5}\left\|\nu^{\fr12}\nabla_vWY^{m}g_0\right\|_{L^2}^{\fr{1-s}{1-s/2}}\mathcal{CK}_{\ell}(g_0(t))^{\fr{s/2}{2-s}}\mathcal{CK}_{\ell}(g_{\neq}(t))^{\fr12}+\text{lower order terms}.
\end{align*}
Here we use the fact that $\nu^{\frac{1}{2}\frac{1-s}{1-s/2}}\ge \nu^{\fr13}\ge \nu^{\iota/3}$ for $s>\fr12$ and $\iota\geq 1$, see more details in \eqref{est-Eg0-LH-iota}. We do not view this as a fundamental place where $s>1/2$ is used as there is likely a more elegant solution. 

\subsubsection{Propagation of Gevrey regularity for the linear Landau equation}
The classical approach to obtaining the density estimate relies on a Volterra-type equation. A crucial step in this analysis is the study of the following linearized Landau equation:
\begin{align}\label{eq:linearized 1}
    \partial_tf+v\cdot \nabla_x f+\nu Lf=0,
\end{align}
and establishing two key properties of the semigroup $S(t)$ (or $S_k(t)$ --- its Fourier transform in $x$): the enhanced dissipation estimate and the propagation of vector field Gevrey regularity. In \cite{chaturvedi2023vlasov}, the enhanced dissipation estimate was derived along with the propagation of Sobolev regularity. The main technique employed was hypocoercivity, leveraging the dissipative properties of the linear Landau operator, as studied in \cite{Guo2002, guo2012vlasov}, within Sobolev spaces. 
In this work, we extend this analysis to the Gevrey regularity setting. We refer to section \ref{sec: Linear Landau operator} for more details of the linear Landau operator in Gevrey class and section \ref{sec: Linear estimates} for the estimates of the semigroup $S(t)$. 
The density can now be written in terms of a Volterra equation (see also \eqref{rho_k}-\eqref{N_k})
\begin{align}
\hat{\rho}(t,k) + \int_0^t K_k(t-\tau) \hat{\rho}(\tau,k) d\tau =N_a+ N_b +N_c,
\end{align}
with 
\begin{align*}
K_k(t)=\fr{2ki}{|k|^2}\int_{\R^3}S_k(t)[v\mu^{\fr12}(v)]\mu^{\fr12}(v) dv.
\end{align*}
The non-homogeneous terms are given by
\begin{itemize}
    \item the initial contribution
    \begin{align*}
        N_a=\int_{\mathbb{R}^3}S_k(t)[\hat{f}_{\rm in}]\sqrt{\mu}(v)dv,
    \end{align*}
    \item the collisionless effects
    \begin{align*}
        N_b = \int_0^t\int_{\mathbb{R}^3}S_k(t-\tau)\mathcal{F}_{x}[E\cdot\nabla_vf]_k(\tau, v)\sqrt{\mu}(v)dvd\tau,
    \end{align*}
    \item and the collisional effects
    \begin{align}
        N_c= \nu \int_0^t\int_{\mathbb{R}^3}S_k(t-\tau)\mathcal{F}_{x}[\Gamma(f,f)]_k(\tau, v)\sqrt{\mu}(v)dvd\tau. \label{def:N2}
    \end{align}
\end{itemize}
After obtaining good properties on the semigroup and using the Penrose condition as in the collisionless case, the Volterra equation can be solved, and the density estimate (which is the primary difficulty) is reduced to getting good space-time estimates on these two key nonlinear interactions; see Proposition \ref{prop:kernel G} for details. 

\subsubsection{Plasma echoes and their control in Gevrey class on the physical side}
The main difficulty in estimating $N_b$ is caused by the plasma echoes, which are nonlinear effects localized in time-frequency space; see \cite{BMM2016,bedrossian2020nonlinear} for more in-depth discussions of the effect. 
As such, the previous studies \cite{BMM2016,grenier2021landau,ionescu2024LandauDampingsharp} rely on the techniques developed for expanding the term in the Fourier transform and studying the time-dependent Fourier multiplier $e^{\lambda(t)|k,kt|^s}$. To control the plasma echoes, a slowly decaying $\lambda(t)$ is chosen (see \eqref{def-lm}), for some small $a$, and then the localization of the plasma frequencies to very specific times as a function of frequency means that 
\begin{align}\label{eq:e/e}
    e^{(\lambda(t)-\lambda(\tau))|k,k\tau|^s}\lesssim e^{-c_0\frac{t-\tau}{\tau^{1+a}}|k,k\tau|^s},\quad\text{for}\quad t/2\leq \tau\leq t
\end{align}
can gain a significant amount of regularity in the nonlinear estimate. This was introduced in \cite{BMM2016} and is also used in \cite{grenier2021landau, ionescu2024LandauDampingsharp} (the method in \cite{MouhotVillani2011} instead sheds this regularity per iteration in the Newton scheme). 
However, due to the Landau collision operator, such a Fourier multiplier is not well-adapted to the VPL equations. We instead use the Gevrey class defined on the physical side. Thus, for the $m$-th derivative $(k,kt)^m$, we have the ratio $\Big(\frac{\lambda(t)}{\lambda(\tau)}\Big)^{|m|}$. The new idea is that instead of achieving time decay directly through the Fourier multiplier \eqref{eq:e/e}, we gain $|m|$, which corresponds to regularity. Since the regularity $(k,kt)^m$ is linked to time $t$, obtaining additional regularity leads to extra decay. We refer to \eqref{lm-gap1}, \eqref{lm-gap2}, \eqref{eq:gain m}, \eqref{lm-gap3}, and \eqref{lm-gap5} for more details and the estimates that follow illustrate how they are used. 

Regardless of the norm we choose however, we do need to estimate the plasma echo contributions via the Fourier transform. 
The Landau collision mixes the frequencies of $v$-variables, and as a consequence, how the semigroup $S_k(t)$ moves the frequencies is unclear. 
Hence, we introduce the dual operator $S_k^*(t)$. 
This allows us to write the nonlinear contribution on the Fourier side as  
\begin{align*}
    \sum_{l\in \mathbb{Z}^3}\int_{\mathbb{R}^3}i\widehat{E}_l(\tau)\cdot\xi\hat{f}_{k-l}(t,\xi)\bar{\hat{\mathfrak{g}}}_k(t-\tau, \xi)d\xi
\end{align*}
where $\hat{\mathfrak{g}}_k(t-\tau, \xi)$ is the Fourier transform of $S_k^*(t-\tau)[\sqrt{\mu}]$ which behaves well enough, namely we prove, 
\begin{align*}
    |\hat{\mathfrak{g}}_k(t-\tau, \xi)|\lesssim e^{-c_0|\xi-k(t-\tau)|^{s_L}}e^{-c_0(\nu^{\fr13}(t-\tau))^{\fr{1}{3}}}. 
\end{align*} 

\subsubsection{Collision effects and the density decomposition}\label{sec:density decomposition}
The control of the collisional nonlinearity is much harder, despite the extra $\nu$-dependent smallness. To determine what kind of control is required, we need to trace back to the energy estimate of the distribution and study the reaction term, more precisely, for $W\in \{\partial_x, (1+t)^{-1}Y\}$ (ignoring the velocity weights for now)
\begin{align*}
    \left\langle \partial_x(\partial_x,t\partial_x)^{m}E\cdot \nabla_v f_{\neq}, WY^mf\right\rangle_{x,v}\quad \text{and}\quad
    \left\langle \partial_x(\partial_x,t\partial_x)^{m}E\cdot \nabla_v f_{0}, WY^mf\right\rangle_{x,v}.
\end{align*}
One can formally rewrite them on the Fourier side,  
\begin{align*}
    I_{\neq}=\sum_{k\in\Z^3}\sum_{l\in\Z^3_*,l\neq k}\int_{\mathbb{R}^3} l(l,lt)^{m}\hat{E}_l(t)\cdot \eta \hat{f}_{k-l}(\eta)\overline{\widehat{Wf^{(m)}}_k(t,\eta)}d\eta,
\end{align*}
and
\begin{align*}
    I_0=\sum_{k\neq 0}\int_{\mathbb{R}^3} k(k,kt)^{m}\hat{E}_k\cdot \widehat{\nabla f}_{0}(\eta)\overline{\widehat{Wf^{(m)}}_k(t,\eta)}d\eta. 
\end{align*}
For the first term, we focus on the worst-case scenario: $|k-l|t\approx |\eta|\leq \frac{1}{100}|lt|$. 
Notice that for $t\leq \nu^{-\frac12}$, the `CK' term is stronger than the enhanced dissipation as long as $s>\fr13$, namely, 
\begin{align*}
    \frac{\langle lt\rangle^{s/2}}{\langle t\rangle^{\fr{1+a}{2}}}\geq |l|^{s/2}\nu^{\frac{1}{4}(1+a-s)}\geq\nu^{\fr16}.
\end{align*}
In order to integrate the contribution of $I_{\neq}$, it heuristically makes sense to attempt 
\begin{align*}
\norm{I_{\neq}}_{L^1_t} \lesssim \epsilon\norm{\frac{\abs{\nabla}^{s/2}}{\langle t \rangle^{\frac{1+a}{2}}} \langle \nabla \rangle^m f}_{L^2_t L^2_{x,v}} \left\|\frac{1}{|l|}\frac{\langle t\rangle^{\fr{1+a}{2}}}{\langle lt\rangle^{s/2}}\langle t\rangle l(l,lt)^{m}\hat{\rho}_l\right\|_{L^2_tL^2_l},   
\end{align*}
(of course the real estimates are  harder). 
The factor involving $\rho$ is hence the kind of quantity we really need to control, as the first factor will be estimated using the CK terms in the energy estimate on $f$. 

The main problem in obtaining the required estimate on $\rho$ is the collision term $N_c$ (recall \eqref{def:N2}) in which the low-high interaction with the zero mode is the worst, namely, we would need to estimate the following quantity
\begin{align*}
    \left\|\frac{\langle t\rangle^{\frac{3+a-s}{2}}}{|k|^{1+\fr{s}{2}}} k(k,kt)^{m}\nu \int_0^t\int_{\mathbb{R}^3}S_k(t-\tau)\partial_{v_i}\mathcal{F}_{x}[\Phi_{ij}\ast(\sqrt{\mu}f_{0,\text{low}})\partial_{v_j}f_{\neq,\text{high}})]_k(\tau)\sqrt{\mu}dvd\tau \right\|_{L_t^2l_k^2}, 
\end{align*}
where `High' and `Low' refer to a relative frequency decomposition in the manner of a para-product. 
By summing up all $m$ level estimates, we can obtain Gevrey regularity. Let us formally write everything in terms of Fourier multipliers (although this will not be possible rigorously), and so if we use the duality argument mentioned above and suitable estimates on the convolution with $\Phi_{ij}$ (see \eqref{eq:Fourier-Phi} below), then we would need to estimate 
\begin{align*}
    \left\|\frac{\langle t\rangle^{\frac{3+a-s}{2}}}{|k|^{\fr{s}{2}}} e^{\lambda(t)|k,kt|^{s}}\nu \int_0^t\int_{\mathbb{R}^6}\mathbf{1}_{|\eta-\xi|\leq\fr{|\xi|}{10}}\fr{\xi\eta}{|\xi-\eta|^2}\widehat{(\sqrt{\mu}f_{0})}(\xi-\eta)\hat{f}_{k}(\tau,\eta)\hat{\mathfrak{g}}_k(\xi) d\eta d\xi d\tau \right\|_{L_t^2l_k^2}. 
\end{align*} 
The worst contribution is $|\xi-\eta|\leq \fr{|\xi|}{10}\approx |k(t-\tau)|\leq \frac{1}{1000}|kt|$. By the heuristic \eqref{eq:e/e}, we should mainly only need to focus on the case $|k||t-\tau|\approx |\xi|\lesssim |k|\frac{\langle t\rangle^{1+a}}{|k t|^s}$. This contribution then becomes the following
\begin{align*}
    &\bigg\|\frac{\langle t\rangle^{\frac{3+a-s}{2}}}{|k|^{\fr{s}{2}}} \nu \int_{\fr{9t}{10}}^t\int_{\mathbb{R}^6}e^{-c_0\frac{|\xi|}{|k|\tau^{1+a}}|k,k\tau|^s}\mathbf{1}_{|\eta-\xi|\leq\fr{|\xi|}{10}}\fr{\xi\eta}{|\xi-\eta|^2}e^{c\lambda(\tau)|\xi-\eta|^s}\widehat{(\sqrt{\mu}f_{0})}(\xi-\eta)\\
    &\quad\quad\quad \quad \quad 
    \times \frac{\langle \tau\rangle^{\frac{1+a}{2}}}{\langle k\tau\rangle^{\frac{s}{2}}}\frac{\langle \eta+k\tau\rangle^{\frac{s}{2}}}{\langle \tau\rangle^{\frac{1+a}{2}}}e^{\lambda(\tau)|k,\eta+k\tau|^s}|\hat{f}_{k}(\tau,\eta)|\frac{(|\xi|/|k|)^{1+\varepsilon}}{\langle t-\tau\rangle^{1+\varepsilon}}\\
    &\quad\quad\quad\quad \quad \times e^{\lambda(\tau)|\xi-k(t-\tau)|^s}\hat{\mathfrak{g}}_k(\xi) d\eta d\xi d\tau\bigg\|_{L^2_tl_k^2}\\
    &\lesssim \epsilon \left\|\frac{|\nb_x,Y|^{\fr{s}{2}}}{\langle t\rangle^{\fr{1+a}{2}}} \nu t^{2+a-s+2(1+a-s)+(1+\epsilon)(1+a-s)} f_{\neq}\right\|_{L_t^2L^2_{x,v}}. 
\end{align*}
In order to hope to benefit from the enhanced dissipation estimates, we would need $(4+\varepsilon)(1+a-s)+1\leq 3$ for $a, \varepsilon>0$ small enough, which produces the requirement $s>\fr{1}{2}$. This is where we fundamentally use the Gevrey-$2_-$ regularity requirement. The above estimates are quite different from the density estimates used to estimate $N_b$, i.e., the Vlasov nonlinearity. 
Hence, its natural to decompose the density into two contributions, 
\begin{align}
    \rho=\rho^{(1)}+\rho^{(2)},
\end{align}
where $\rho^{(1)}$ captures mainly collisionless contributions,  while $\rho^{(2)}$ captures mainly collisional contributions. We may expect that $\rho^{(1)}$ behaves the same as VP which decays faster than $\rho^{(2)}$. More precisely, by applying the semi-group $S_k$, we have 
\begin{align*}
    (\mathrm{Id+\mathfrak{L}_t^{app}})\rho=\mathcal{N}(t)
\end{align*}
where $\mathcal{N}(t)$ represents non-homogeneous terms and 
\begin{equation}\label{eq:Fourier of Lrho}
    \widehat{\mathfrak{L}_t^{\mathrm{app}}\rho}_k=\int_0^tK_k(t-\tau)\hat{\rho}_k(\tau)d\tau,
\end{equation}
with 
\begin{align*}
K_k(t)=\fr{2ki}{|k|^2}\int_{\R^3}S_k(t)[v\mu^{\fr12}(v)]\mu^{\fr12}(v) dv.
\end{align*}
We refer to \eqref{rho_k} for more details. By the Penrose stability condition, we obtain that $(\mathrm{Id+\mathfrak{L}_t^{app}})$ is invertible, we refer to Proposition \ref{prop:kernel G} for more details. The above-mentioned decomposition is given by
\begin{align}
\hat{\rho}^{(1)}_k(t)=\mathcal{N}^{(1)}_k(t)+\int_0^tG_k(t-\tau)\mathcal{N}^{(1)}_k(\tau)d\tau,
\end{align}
and 
\begin{align}
\hat{\rho}^{(2)}_k(t)=\mathcal{N}^{(2)}_k(t)+\int_0^tG_k(t-\tau)\mathcal{N}^{(2)}_k(\tau)d\tau,
\end{align}
respectively, where $G$ is the solution kernel to the Volterra equation (given in Proposition \ref{prop:kernel G}), and (using the notation defined in Section \ref{sec-Bony} for the paraproduct decompositions)
\begin{align*}
    \mathfrak{N}^{(1)}(t,x,v)=&-E_1(t,x)\cdot\nb_vf-\mathcal{T}'_{E_2^j}\pr_{v_j}f+E_1(t,x)\cdot v f+\mathcal{T}'_{E_2^j}v^jf ,\\
    \mathfrak{N}^{(2)}(t,x,v)=&-\mathcal{T}_{\pr_{v_j}f}E^j_2+\mathcal{T}_{v_jf}E^j_2+\nu\Gamma(f,f),\\
    \mathcal{N}_k^{(1)}(t)=&\int_{\R^3}S_k(t)[\hat{f}_{\rm in}(v)]\mu^{\fr12}(v)dv+\int_0^t\int_{\R^3}S_k(t-\tau)[\hat{\mathfrak{N}}^{(1)}_k(\tau,v)]\mu^{\fr12}(v)dvd\tau,\\
    \mathcal{N}_k^{(2)}(t)=&\int_0^t\int_{\R^3}S_k(t-\tau)[\hat{\mathfrak{N}}^{(2)}_k(\tau,v)]\mu^{\fr12}(v)dvd\tau,
\end{align*}
and $E_i=-\nb_x(-\Dl_x)^{-1}\rho^{(i)}$, $i=1,2$. 
We also note that to close the estimate, we need the enhanced dissipation weight $\nu t^3$, which will introduce a $\langle v\rangle^{12}$ loss of velocity localization. 
The enhanced dissipation, of course, cannot help in the treatment of $I_0$; however, this term does not have the additional power of $t$ loss. That is, we only need to estimate the following 
\begin{align*}
    &\left\|\frac{1}{|k|}\frac{\langle t\rangle^{\fr{1+a}{2}}}{\langle kt\rangle^{s/2}} (\nu^{\fr13}t)^{\fr{\ell'}{2}} k(k,kt)^{m}\widehat{\rho}_k\right\|_{L_t^2l_k^2}\\
    &\lesssim  \bigg\|\frac{\langle t\rangle^{\frac{1+a-s}{2}}}{|k|^{1+\fr{s}{2}}} \nu \int_{\fr{9t}{10}}^t\int_{\mathbb{R}^6}
    e^{-c_0\frac{|\xi|}{|k|\tau^{1+a}}|k,k\tau|^s}\mathbf{1}_{|\eta-\xi|\leq\fr{|\xi|}{10}}\fr{\xi\eta}{|\xi-\eta|^2}e^{c\lambda(\tau)|\xi-\eta|^s}\widehat{(\sqrt{\mu}f_{0})}(\xi-\eta)\\
    &\quad\quad\quad \quad \quad 
    \times \frac{\langle \tau\rangle^{\frac{1+a}{2}}}{\langle k\tau\rangle^{\frac{s}{2}}}\frac{\langle \eta+k\tau\rangle^{\frac{s}{2}}}{\langle \tau\rangle^{\frac{1+a}{2}}}e^{\lambda(\tau)|k,\eta+k\tau|^s} (\nu^{\fr13}t)^{\fr{\ell'}{2}} |\widehat{f}_{k}(\tau,\eta)|\frac{(|\xi|/|k|)^{1+\epsilon}}{\langle t-\tau\rangle^{1+\epsilon}}\\
    &\quad\quad\quad\quad \quad \times e^{\lambda(\tau)|\xi-k(t-\tau)|^s}\mathfrak{g}_k(\xi) d\eta d\xi d\tau\bigg\|_{L^2_tl_k^2} + \text{easy terms} \\
    &\lesssim \epsilon \left\|\frac{|\nb_x,Y|^{\fr{s}{2}}}{\langle t\rangle^{\fr{1+a}{2}}} \nu t^{(4+\epsilon)(1+a-s)}(\nu^{\fr13}t)^{\fr{\ell'}{2}} f_{\neq}\right\|_{L_t^2L^2_{x,v}}+ \text{easy terms}.
\end{align*}
Thus we need $\nu t^{(4+\epsilon)(1+a-s)}\leq \nu t^2\leq 1$, namely, $t\leq \nu^{-\fr12}$. This is why we obtain the Gevrey control only for $t\in [0,\nu^{-\fr12}]$. We refer to Proposition \ref{prop-rho2-iota} for more details.

\subsubsection{Bootstrap argument for the proof of Theorem \ref{Thm:shorttime}}
Let us now precisely outline the bootstrap argument that completes the proof of Theorem \ref{Thm:shorttime}.   We define the following controls referred to in the sequel as \emph{the bootstrap hypotheses} for $0\leq t\leq T\le\nu^{-\fr12}$:
\begin{align}
\label{bd-en}&\sup_{t\in [0,T]}\mathcal{E}(g(t))
+\fr14\nu^{\fr13}\int_0^T\mathcal{D}(g(t))dt
+\int_0^T\mathcal{CK}(g(t))dt\leq 8{\rm C}_g\epsilon^2,\\
        \label{H-rho1}&\big\|\langle t\rangle^b|\nabla_x|^{\fr32}(\rho^{(1)}, M^{(1)})\big\|^2_{L^2_t\mathcal{G}_s^{\lambda,N-1}}\le 8{\rm C}_{0} \epsilon^2,\\
        \label{bd-rho2-1}&\big\|\langle t\rangle^{\frac{3+a-s}{2}}(\rho^{(2)},M^{(2)})\big\|^2_{L^2_t\mathcal{G}_s^{\lambda,N}}\le 8\epsilon^2,\\
        \label{bd-rho2-2}&\big\|\langle t\rangle^{\frac{1+a-s}{2}}w_{\iota}(t)(\rho^{(2)},M^{(2)})\big\|^2_{L^2_t\mathcal{G}_s^{\lambda,N}}\le 8\epsilon^2, \quad{\rm for}\quad\iota=1, 2,\cdots, [\ell/3]+1,\\
        \label{H-phi}& \sup_{t\in[0,T]}\|\phi(t)\|_{\mathcal{G}^{\lm,N}_s}\le\fr12,
\end{align}
for universal constants $b$, ${\rm C}_{0}$, and ${\rm C}_g$ chosen by the proof. 
\begin{prop}\label{prop:shorttime}
Let  $M_\mu, M_\sig$ and $\dl$ be the constants appearing in \eqref{e-prmu}, \eqref{eq: est sigma_ij} and \eqref{coercive-0}, respectively, and $\ell, b\in \N$ satisfy
\begin{align}\label{res-final-1}
    \ell>18, \quad b\ge \fr{[\ell/3]}{2}+2, \quad N\ge \max\{2b+12, 2[\ell/3]+8\},
\end{align}
and $s, a\in\R$ satisfy
\begin{align}\label{res-final-2}
    \fr12<s<\fr23,\quad 0<a<s-\fr12,\quad{\rm and}\quad a\le \fr{s}{b+4}.
\end{align}
Assume that the estimates \eqref{bd-en}--\eqref{H-phi} hold for all $t\in [0,T]$ with $T\leq \nu^{-\fr{1}{2}}$. 
 Then there exist positive constants $\underline{\lm}, \underline{\kappa}, \nu_0$ and $\eps_0$ small enough, and ${\rm A}_0, {\rm B}_0$ large enough, depending on $M_\mu, M_\sig, s, N$, $\delta$, and $\ell$, such that for all $\lm(t), \kappa, \kappa_0$ and $\nu$  satisfying
 \begin{equation}\label{restriction: lm-kappa-nu}
     \lm(0)\le \underline{\lm},\quad \kappa\le\underline{\kappa}, \quad \nu\le\nu_0,  \quad \quad \kappa_0\ll\fr{\kappa}{\ell {\rm A}_0},
 \end{equation}
and $0<\epsilon<\epsilon_0$,  these same estimates hold with all the occurrences of 8 on
the right-hand side of \eqref{bd-en}--\eqref{bd-rho2-2} replaced by 4, and the occurrence of $\fr12$ on
the right-hand side of \eqref{H-phi} replaced by $\fr14$. As a consequence, the above estimates hold until  $T=\nu^{-\frac{1}{2}}$. 
\end{prop}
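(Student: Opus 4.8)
\textbf{Proof proposal for Proposition \ref{prop:shorttime}.}

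The plan is a continuity/bootstrap argument: since all the quantities in \eqref{bd-en}--\eqref{H-phi} are continuous in $T$ and hold with strict inequality at $T=0$ (by the smallness of the data $\eps$ and the choice of $\lambda(0)$), it suffices to show that on any interval $[0,T]$ with $T\le\nu^{-1/2}$ on which they hold, they in fact improve with $8\mapsto 4$ and $\tfrac12\mapsto\tfrac14$. I would organize the improvement around four coupled estimates, proved in the following order. \emph{Step 1 (potential estimate \eqref{H-phi}):} this is the softest; from $-\Delta_x\phi=\rho$ and elliptic regularity in Gevrey class, $\|\phi\|_{\mathcal{G}^{\lambda,N}_s}\lesssim \|\abs{\nabla_x}^{-2}\rho\|_{\mathcal{G}^{\lambda,N}_s}$, and using $\rho=\rho^{(1)}+\rho^{(2)}$ together with \eqref{H-rho1}--\eqref{bd-rho2-1} (after integrating the time-weight $\langle t\rangle^{b}$, resp.\ $\langle t\rangle^{(3+a-s)/2}$, which is summable because $b\ge\frac{[\ell/3]}{2}+2\ge 2>1/2$ and $(3+a-s)/2>1/2$), one gets $\sup_t\|\phi\|_{\mathcal{G}^{\lambda,N}_s}\lesssim\eps\le\frac14$ for $\eps_0$ small. \emph{Step 2 (density estimates \eqref{H-rho1}, \eqref{bd-rho2-1}, \eqref{bd-rho2-2}):} here I would invoke the Volterra machinery — Proposition \ref{prop:kernel G} gives the solution kernel $G$ and the invertibility of $\mathrm{Id}+\mathfrak{L}^{\mathrm{app}}_t$ under the Penrose condition, reducing each $\rho^{(i)}$ to its nonhomogeneous term $\mathcal N^{(i)}$. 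For $\rho^{(1)}$ (collisionless) one estimates $\mathfrak N^{(1)}=-E_1\cdot\nabla_v f-\mathcal T'_{E_2}\nabla_v f+\cdots$ using the semigroup bounds on $S_k(t-\tau)$ (enhanced dissipation plus Gevrey propagation, Section \ref{sec: Linear estimates}), the echo gain \eqref{eq:e/e}/\eqref{lm-gap} through the ratio $(\lambda(t)/\lambda(\tau))^{|m|}$, and the bootstrap bounds on $g$ via \eqref{bd-en}; this is the near-physical-side plasma-echo analysis sketched in Section \ref{sec:density decomposition}. For $\rho^{(2)}$ (collisional) one estimates $\mathfrak N^{(2)}=-\mathcal T_{\nabla_v f}E_2+\mathcal T_{v f}E_2+\nu\Gamma(f,f)$; the worst term is the low-high collisional interaction with the zero mode, which is controlled by the estimates displayed in Section \ref{sec:density decomposition}, using the factor $\nu t^{(4+\varepsilon)(1+a-s)}\le \nu t^2\le 1$ on $[0,\nu^{-1/2}]$ (this is exactly where $T\le\nu^{-1/2}$, $s>1/2$, and the restrictions \eqref{res-final-1}--\eqref{res-final-2} enter), together with the CK terms in $\mathcal{CK}_\ell(g)$ to absorb the $\abs{\nabla}^{s/2}/\langle t\rangle^{(1+a)/2}$ factor; the weighted version \eqref{bd-rho2-2} is the same estimate carrying the extra enhanced-dissipation weight $w_\iota(t)=(\kappa_0\nu^{1/3}(1+t))^{\iota/2}$.

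\emph{Step 3 (energy estimate \eqref{bd-en}):} this is the core. I would differentiate the total energy $\mathcal E(g(t))$ from \eqref{total-en} in time, using the conjugated equation for $g=e^{\phi}f$. The linear Landau operator $\nu Lg$, together with the hypocoercivity structure of $\mathcal{E}^n_\ell$ (the $2\kappa\nu^{1/3}\langle\nabla_x g^{(n)},\langle v\rangle^{2\ell-4}\nabla_v g^{(n)}\rangle$ cross term and the $\kappa^2(1+t)^{-2}\|\langle v\rangle^{\ell-2}Yg^{(n)}\|^2$ term), produces the dissipation $\nu^{2/3}\mathcal{D}_\ell(g)$ plus the coercive lower bound \eqref{coercive} on $(\mathrm I-\mathrm P_0)g$ — this is where Guo's $\|\cdot\|_{\sigma,\ell}$ norm and the basic properties from the Appendix are used, now lifted to every vector-field level $m+\beta$ and summed with the weights $\kappa^{2|\beta|}a^2_{m,\lambda,s}(t)$. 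The time derivative of $\lambda(t)$ generates the good term $-\mathcal{CK}_\ell(g)$ (with sign, since $\dot\lambda<0$), which I use to absorb commutator and transport losses. The remaining terms are: (i) the Vlasov reaction $\langle \partial_x(\partial_x,t\partial_x)^m E\cdot\nabla_v f, W Y^m f\rangle$ for $W\in\{\partial_x,(1+t)^{-1}Y\}$ — split into $I_{\neq}$ and $I_0$ as in Section \ref{sec:density decomposition}, bounded in $L^1_t$ by $\eps$ times CK-type factors on $f$ times the weighted density norms controlled in Step 2; (ii) the transport error $E\cdot\nabla_v g$ at top order, handled by the $Y/(1+t)$ decomposition $\langle\nabla_x E\cdot\nabla_v g^{(n)},\nabla_x g^{(n)}\rangle=\langle(1+t)\nabla_x E\cdot Y g^{(n)}/(1+t),\nabla_x g^{(n)}\rangle+\langle \tfrac{t}{1+t}\nabla_x E\cdot\nabla_x g^{(n)},\nabla_x g^{(n)}\rangle$ so that no $v$-derivative is lost (see \eqref{est-TLH-E32}); (iii) the forcing $2e^{\phi}E\cdot v\sqrt\mu$ and $\partial_t\phi\,g$, controlled by \eqref{pt-phi}, the density estimates, and $\|\phi\|_{\mathcal{G}^{\lambda,N}_s}\le\frac12$; (iv) the nonlinear collisions $\nu e^{-\phi}\Gamma(g,g)$, estimated by the Gevrey product/commutator bounds for $\Gamma$ in the $\|\cdot\|_{\sigma,\ell}$ norm, absorbed into $\nu^{2/3}\mathcal D_\ell(g)$ using the $\nu$-smallness and $\eps_0$; (v) the ``bad terms'' from $\partial_t$ hitting the weights $w_\iota(t)$, which by Lemma \ref{lem: E_l<D_l-2} and the displayed computation in Section \ref{sec:Outline} are bounded by $C\kappa_0\nu^{1/3}\sum_\iota (\kappa_0\nu^{1/3}(1+t))^\iota\mathcal D^n_{\ell-2\iota}(g_{\neq})$, hence absorbed into $\nu^{1/3}\mathcal D(g)$ by taking $\kappa_0\ll\kappa/(\ell\mathrm A_0)$. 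Collecting, with $\mathrm A_0,\mathrm B_0$ large and $\underline\lambda,\underline\kappa,\nu_0,\eps_0$ small, one gets $\frac{d}{dt}\mathcal E(g)+\frac14\nu^{1/3}\mathcal D(g)+\mathcal{CK}(g)\le C\eps\big(\mathcal{CK}(g)+\nu^{1/3}\mathcal D(g)\big)^{1/2}\big(\text{density norms}\big)+\cdots$, and integrating in time with Step 2 and Cauchy--Schwarz closes \eqref{bd-en} with the constant improved from $8\mathrm C_g$ to $4\mathrm C_g$.

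\emph{Step 4 (closing the loop and conclusion).} Steps 2 and 3 are coupled — the density estimates use the energy bootstrap bound on $g$, and the energy estimate uses the density bootstrap bounds — so strictly one runs them together within a single Gr\"onwall/continuity argument on $[0,T]$, choosing the hierarchy of constants ($\mathrm B_0\gg$ everything, then $\mathrm A_0$, then $\mathrm C_0,\mathrm C_g$, then $\kappa,\kappa_0$, then $\lambda(0),\nu_0,\eps_0$) so that every ``$\le C\eps\times(\text{bootstrap})$'' term is at most a small fraction of the target and the improvement from $8$ to $4$ (and $\frac12$ to $\frac14$) goes through. A standard continuity argument then shows the improved estimates persist for all $T$ up to $\nu^{-1/2}$, since the bootstrap set $\{T: \eqref{bd-en}$--$\eqref{H-phi} \text{ hold with }8,\tfrac12\}$ is closed (continuity of the norms), open (by the improvement just proved), and nonempty (contains $0$), hence is all of $[0,\nu^{-1/2}]$. \emph{Main obstacle:} by far the hardest step is the collisional density estimate inside Step 2 — controlling $N_c$ (the term \eqref{def:N2}) in the low-high regime with the zero mode, where one must simultaneously exploit the semigroup Gevrey-propagation/enhanced-dissipation bounds, the echo gain through $\lambda(t)/\lambda(\tau)$ converted into regularity (rather than decay), the paraproduct structure of $\Gamma$, and the loss of $\langle v\rangle^{12}$ localization caused by the $\nu t^3$ weight, all while staying within the $t\le\nu^{-1/2}$ budget that the power-counting $\nu t^{(4+\varepsilon)(1+a-s)}\le\nu t^2\le1$ forces; the restriction $s>\frac12$ is precisely the price of making this term summable.
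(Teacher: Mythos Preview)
Your overall plan is the paper's plan: a continuity/bootstrap argument, with the improvements of \eqref{H-phi}, the density hypotheses \eqref{H-rho1}--\eqref{bd-rho2-2}, and the energy hypothesis \eqref{bd-en} carried out respectively in Sections~\ref{sec: relationship}, \ref{sec: density est}, and \ref{sec: est f}, and then closed together. Your Steps~2--4 match the paper's arguments (Propositions~\ref{prop-rho1}, \ref{prop-rho2-0}, \ref{prop-rho2-iota} for the densities; Propositions~\ref{prop-g}, \ref{prop-g-nonlinear} with Lemmas~\ref{lem-Linear contributions}--\ref{lem-nonlinear-collision} for the energy) quite faithfully, including your identification of the collisional low--high term in $N_c$ as the hardest step and the reason for $s>\tfrac12$ and $T\le\nu^{-1/2}$.

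There is, however, a genuine gap in your Step~1. The hypotheses \eqref{H-rho1}--\eqref{bd-rho2-1} are \emph{$L^2_t$-weighted} bounds on $\rho^{(1)},\rho^{(2)}$, and an $L^2_t$ bound with polynomial weight does not control $\sup_t\|\phi(t)\|_{\mathcal{G}^{\lambda,N}_s}$; ``integrating the time-weight'' is not a meaningful operation here. The paper does not use the density decomposition to improve \eqref{H-phi} at all. Instead (Proposition~\ref{lem-Linfty-rho}) it bounds $\|\rho(t)\|_{\mathcal{G}^{\lambda,N}_s}$ pointwise in $t$ directly from the $L^\infty_t$ energy bootstrap \eqref{bd-en} on $g$: writing $\rho=\int f\sqrt\mu\,dv$ and $f=(1+q(\phi))g$, the product estimate and Lemma~\ref{lem-compose} give
\[
\|\rho(t)\|_{\mathcal{G}^{\lambda,N}_s}\le C\bigl(1+\|\phi(t)\|_{\mathcal{G}^{\lambda,N}_s}\bigr)\|g(t)\|_{\mathcal{G}^{\lambda,N}_{s,0}}\le C\sqrt{{\rm C}_g}\,\eps\bigl(1+\|\rho(t)\|_{\mathcal{G}^{\lambda,N}_s}\bigr),
\]
which is self-closing for $\eps$ small and yields $\|\rho\|_{L^\infty_t\mathcal{G}^{\lambda,N}_s}\lesssim\eps$, hence $\|\phi\|_{L^\infty_t\mathcal{G}^{\lambda,N}_s}\lesssim\eps\le\tfrac14$. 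This $L^\infty_t$ control on $\rho$ (and on $\nabla_x\rho$, cf.\ \eqref{Linfty-rho}) is also used throughout your Steps~2--3, e.g.\ in the product estimates passing between $f$ and $g$ (Corollary~\ref{coro-compose}) and in the transport and collision lemmas, so you need it anyway and should obtain it before, not after, the density and energy improvements.
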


Before proceeding any further, let us outline the following two estimates that follow immediately from the bootstrap hypotheses \eqref{bd-en}--\eqref{bd-rho2-2}.

\begin{prop}
Assume that \eqref{res-final-1} and \eqref{res-final-2} hold.    Under the bootstrap hypotheses \eqref{bd-en}--\eqref{bd-rho2-2}, for $t\in[0,\nu^{-\fr12}]$, there hold 
    \begin{align}
    &\label{bd-rho-1}
        \big\|\la t\ra^{\fr{3+a-s}{2}}(\rho, M)\big\|_{L^2_t\mathcal{G}^{\lm,N}_s}\les \eps,\\
    &\label{bd-rho-2}
    \big\|\langle t\rangle^{\frac{1+a-s}{2}}w_{\iota}(t)(\rho,M)\big\|_{L^2_t\mathcal{G}_s^{\lambda,N}}\les \eps, \quad{\rm for}\quad\iota=1, 2,\cdots, [\ell/3]+1,.
    \end{align}
\end{prop}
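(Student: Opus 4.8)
The plan is to combine the density decomposition $\rho=\rho^{(1)}+\rho^{(2)}$, $M=M^{(1)}+M^{(2)}$ with the bootstrap hypotheses \eqref{H-rho1}, \eqref{bd-rho2-1}, \eqref{bd-rho2-2}, handling the two pieces by completely different means. The collisional pieces $(\rho^{(2)},M^{(2)})$ require no argument: \eqref{bd-rho2-1} and \eqref{bd-rho2-2} are precisely the desired bounds \eqref{bd-rho-1} and \eqref{bd-rho-2} restricted to the superscript-$(2)$ part. Thus the entire content is to upgrade the collisionless control \eqref{H-rho1} of $(\rho^{(1)},M^{(1)})$ to the norms in \eqref{bd-rho-1}--\eqref{bd-rho-2}. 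The bound \eqref{H-rho1} is \emph{stronger} than what we need both in its time weight $\la t\ra^{b}$ and in its surplus factor $|\nb_x|^{3/2}$, but it is one commuting vector field short ($\mathcal{G}^{\lm,N-1}_s$ rather than $\mathcal{G}^{\lm,N}_s$); the point is that on $[0,\nu^{-1/2}]$ this surplus of time decay and of spatial regularity exactly pays for the missing vector field.

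The one elementary ingredient I would isolate is the following trade inequality. If $h=h(t,x)$ depends on $x$ only and is mean-zero (i.e.\ $\widehat h_0\equiv0$), which is the case for $\rho^{(1)}$ and $M^{(1)}$ by the conservation laws \eqref{conser-mass}--\eqref{conser-momen} and the fact that $\hat\rho^{(1)}_k,\hat M^{(1)}_k$ are only defined for $k\in\Z^3_*$, then
\be\label{eq:rho-trade}
\|h\|_{\mathcal{G}^{\lm,N}_s}\les\la t\ra\,\big\||\nb_x|^{3/2}h\big\|_{\mathcal{G}^{\lm,N-1}_s}.
\ee
This follows directly from the definition \eqref{def-Gnorm'}: in the summands with $|\beta|\le N-1$ one uses $|k|\ge1$ to absorb $\|(\nb_x,t\nb_x)^{\beta+m}h\|_{L^2_x}\le\|(\nb_x,t\nb_x)^{\beta+m}|\nb_x|^{3/2}h\|_{L^2_x}$, while in the summands with $|\beta|=N$ one writes $\beta=\beta'+e$ with $|\beta'|=N-1$ and $e$ a unit vector, and observes that $(\nb_x,t\nb_x)^{e}$ is either $\pr_{x_j}$ or $t\pr_{x_j}$ — in both cases the genuine derivative $\pr_{x_j}$ is absorbed by $|\nb_x|^{3/2}\ge|\nb_x|$ on the $k\ne0$ modes, and at most one factor $\la t\ra$ is paid (the mismatch $\kappa^{2|\beta|}$ vs.\ $\kappa^{2|\beta'|}$ producing a harmless $\kappa^2\le1$).

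Given \eqref{eq:rho-trade}, for \eqref{bd-rho-1} I would estimate, using \eqref{H-rho1},
\beqno
\big\|\la t\ra^{\fr{3+a-s}{2}}(\rho^{(1)},M^{(1)})\big\|_{L^2_t\mathcal{G}^{\lm,N}_s}
&\les&\big\|\la t\ra^{\fr{3+a-s}{2}+1}|\nb_x|^{3/2}(\rho^{(1)},M^{(1)})\big\|_{L^2_t\mathcal{G}^{\lm,N-1}_s}\\
&\les&\big\|\la t\ra^{b}|\nb_x|^{3/2}(\rho^{(1)},M^{(1)})\big\|_{L^2_t\mathcal{G}^{\lm,N-1}_s}\les\eps,
\eeqno
where the middle step uses $\fr{3+a-s}{2}+1=\fr{5+a-s}{2}<\fr52\le b$, valid by $a<s$ and $b\ge\fr{[\ell/3]}{2}+2$ from \eqref{res-final-1}--\eqref{res-final-2}. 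Combined with \eqref{bd-rho2-1} for the $(2)$-part this gives \eqref{bd-rho-1}. For \eqref{bd-rho-2} I would moreover use $w_\iota(t)=\big(\kappa_0\nu^{1/3}(1+t)\big)^{\iota/2}\le\la t\ra^{\iota/2}$ (since $\kappa_0\nu^{1/3}<1$) to get, again by \eqref{eq:rho-trade} and \eqref{H-rho1},
\beqno
\big\|\la t\ra^{\fr{1+a-s}{2}}w_\iota(t)(\rho^{(1)},M^{(1)})\big\|_{L^2_t\mathcal{G}^{\lm,N}_s}
&\les&\big\|\la t\ra^{\fr{3+a-s}{2}+\fr{\iota}{2}}|\nb_x|^{3/2}(\rho^{(1)},M^{(1)})\big\|_{L^2_t\mathcal{G}^{\lm,N-1}_s}\\
&\les&\eps,
\eeqno
since $\fr{3+a-s}{2}+\fr{\iota}{2}\le\fr{3+a-s}{2}+\fr{[\ell/3]+1}{2}\le b$, which reduces to $a\le s$ once $b\ge\fr{[\ell/3]}{2}+2$ is used. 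Combined with \eqref{bd-rho2-2} for the $(2)$-part this yields \eqref{bd-rho-2}.

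The only genuinely delicate point is the trade inequality \eqref{eq:rho-trade} together with the accompanying arithmetic: one must check that upgrading from $N-1$ to $N$ commuting vector fields on a function of $x$ alone costs \emph{exactly} one power of $\la t\ra$ — the $|\nb_x|^{3/2}$ surplus absorbs the spatial derivative inside the extra $Y_j=\pr_{v_j}+t\pr_{x_j}$, leaving only the factor $t$ — and that $b$ chosen via \eqref{res-final-1} is large enough to absorb this loss together with the time weights $\la t\ra^{(3+a-s)/2}$, $\la t\ra^{(1+a-s)/2}$ and $w_\iota(t)$, uniformly for $t\le\nu^{-1/2}$. Everything else is a direct quotation of the bootstrap hypotheses, so I expect no further difficulty.
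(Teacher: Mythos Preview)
Your proof is correct and is essentially the approach the paper has in mind: the paper simply states that these estimates ``follow immediately from the bootstrap hypotheses'' without giving details, and you have supplied exactly those details via the decomposition $\rho=\rho^{(1)}+\rho^{(2)}$, $M=M^{(1)}+M^{(2)}$ together with the trade \eqref{eq:rho-trade}. Your arithmetic checking that the surplus weight $\la t\ra^b$ in \eqref{H-rho1} absorbs the $\la t\ra$ lost in \eqref{eq:rho-trade} plus the weights $\la t\ra^{(3+a-s)/2}$ and $\la t\ra^{(1+a-s)/2}w_\iota(t)$ is accurate under \eqref{res-final-1}--\eqref{res-final-2}.
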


The first part of this paper is to prove the improvement of \eqref{bd-en}---\eqref{H-phi}. We refer to section \ref{sec: est f} for the improvement of \eqref{bd-en}, refer to section \ref{sec: density est} for the improvement of \eqref{H-rho1}, \eqref{bd-rho-1}, and \eqref{bd-rho-2}, and refer to section \ref{sec: relationship} for the improvement of \eqref{H-phi}.

\subsection{The collisionless limit}
By taking $\nu=0$, our proof works directly for the VP equation. Let $(f^{(\nu)}, E^{(\nu)})$ and $(f^{(0)}, E^{(0)})$ be the solutions to the VPL equation and VP equation with the same initial data respectively. We then have $f^{(\delta)}=f^{(\nu)}-f^{(0)}$ and $E^{(\delta)}$ solve 
\begin{align}\label{VPL-VP}
\begin{cases}
\pr_tf^{(\delta)}+v\cdot\nb_xf^{(\delta)}+ E^{(\nu)}(t,x)\cdot\nb_vf^{(\delta)}+E^{(\delta)}\cdot \nabla_vf^{(0)}-E^{(\nu)}\cdot v f^{(\delta)}\\
-E^{(\delta)}(t,x)\cdot v f^{0}
-2E^{(\delta)}(t,x)\cdot v\mu^\fr12+\nu Lf^{\delta}=-\nu Lf^{(0)}+\nu\Gamma(f^{\nu},f^{\nu}),\\[3mm]
\displaystyle E^{(\delta)}(t,x)=-\nb_x\phi^{(\delta)}(t,x), \quad -\Dl_x\phi^{(\delta)}=\int_{\R^3}f^{(\delta)}(t,x,v)\mu^\fr12(v)dv=\rho^{(\delta)}(t,x).
\end{cases}
\end{align}
The collisionless limit can be proved by following the same argument as in \cite{BZZ2024VPFP} with the fact that $\nb_v=Y-t\nb_x$ which implies that
\begin{align*}
    &\|\nu L f^{0}\|_{L^2}\lesssim \epsilon \nu \langle t\rangle^2, \quad
    \|\nu \Gamma(f^{\nu},f^{\nu})\|_{L^2}\lesssim \epsilon^2\nu  \langle t\rangle^2,\quad
    \|\langle \nb_x,Y\rangle^{N}\nb_vf^{(0)}\|_{L^2}\lesssim \epsilon \langle t\rangle.
\end{align*}
A weaker Gevrey regularity estimate involving the vector field $Y$ follows similarly. 
The last inequality is to treat $E^{(\delta)}\cdot \nabla_vf^{(0)}$ and the linear growth is absorbed by the Landau damping of $E^{(\delta)}$, which is similar to the treatment of high-low interactions in Lemma \ref{lem-transport}. 
We omit the details for brevity, leaving them to interested readers.

\subsection{Long-time relaxation regime: $t \geq \nu^{-1/2}$}

\subsubsection{Quasi-linearization}
By the time $t=\nu^{-\fr12}$, 
the enhanced dissipation ensures $\nu$-dependent smallness of the non-zero modes in lower regularities, while we only have $\nu$-independent smallness of the zero mode (i.e., it is still $\approx \eps$). 
To handle this regime, we will need to keep track of this regularity discrepancy.  
The main idea is to use a quasi-linearization around $f_0$, treating the non-zero modes essentially as a small perturbation of $\mu + f_0$.  
Here, for $t\geq \nu^{-\frac{1}{2}}$ we let $f(t, x, v)=f_0^L(t-\nu^{-\fr12},v)+\tilde{f}(t-\nu^{-\fr12},x,v)$ where $(f_0^L(t,v), \tilde{f}(t,x,v))$ solves
\begin{align}\label{eq:quasi-linearized}
    \left\{
    \begin{aligned}
        &\partial_{t}f_0^L+\nu Lf_0^L-\nu \Gamma (f_0^L,f_0^L)=0,\\
        &\partial_{t}\tilde{f}+v\cdot \nabla_x\tilde{f}+\tilde{E}\cdot\nb_v \tilde{f}-\tilde{E}\cdot v\tilde{f}-2\tilde{E}\cdot v\mu^{\fr12}+\tilde{E}\cdot \nabla_v f_0^{L}-\tilde{E}\cdot v f_0^L
        +\nu L \tilde{f}\\
        &\quad\quad\quad\quad\quad\quad\quad 
        =\nu \Gamma(\tilde{f},f_0^L)+\nu \Gamma(f_0^L,\tilde{f})+\nu \Gamma(\tilde{f},\tilde{f}),\\
        &\tilde{E}=-\nabla_x\tilde{\phi}=-\nb_x(-\Delta_x)^{-1}\tilde{\rho}=-\nb_x(-\Delta_x)^{-1}\int_{\mathbb{R}^3}\tilde{f}(t,x,v)\sqrt{\mu}(v)dv,\\
        &f_0^{L}(0,v)=({\rm Id}-\mathrm{P}_0)f_{0}(\nu^{-\fr12},v),\quad \tilde{f}(0,x,v)=f_{\ne}(\nu^{-\fr12},x,v)+\mathrm{P}_0f_0(\nu^{-1/2},v).
    \end{aligned}
    \right.
\end{align}
Recall that here $\mathrm{P}_0$ is the projection to the subspace ${\rm span} \{\mu^{\fr12}, v_i\mu^{\fr12}, (|v|^2-3/2)\mu^{\fr12}\}$ with $i=1,2,3$. Moreover, by the conservation laws \eqref{conser-mass}--\eqref{conser-en} and \eqref{conserv-initial}, one deduces that
\begin{align}\label{eq: P_0f_0}
    \mathrm{P}_0f_0(\nu^{-\fr12},v)=-\fr{\|E(\nu^{-\fr{1}{2}})\|_{L^2_x}^2}{\int_{\R^3}\big[(|v|^2-\fr32)\mu^{\fr12}\big]^2dv}(|v|^2-\fr{3}{2})\mu^{\frac{1}{2}}(v),
\end{align}
which note is extremely small due to the Landau damping. 
\begin{rem}\label{Rmk: P_0f_0}
     It holds that for all $N'\in\N$ and $\ell'>0$, with $\fr{N}{4}-\fr{N'}{2}\ge\fr13$ and  $\fr{\ell}{36}-\fr{N'}{2}\geq \fr13$, $\ell'\le \fr{\ell}{3}-2$ that 
     \begin{align*}
         \|\tilde{\rho}(0, \cdot)\|_{H^{N'+1}}
         +\|(e^{\tilde{\phi}}-1)(0,\cdot)\|_{H^{N'+1}}+\|\tilde{f}(0,\cdot,\cdot)\|_{H^{N'+1}_{\ell'}}\leq \epsilon \nu^{\fr13}. 
     \end{align*}
     Indeed, recalling \eqref{def-g}, we have 
     $f_{\ne}=\big((e^{-\phi}-1)g\big)_{\ne}+g_{\ne}$. Then using the continuity estimate \eqref{est-compose} of the composite function $e^{-\phi}-1$ in Gevrey norm, the decay estimate of density \eqref{decay-rhoM},   and the bootstrap hypothesis \eqref{bd-en}, we find that
     \begin{align*}
     &\sum_{|\al|+|\beta|\le N'}\big\|\la v\ra^{\ell'}\pr_x^\al \pr_v^\beta f_{\ne}(\nu^{-\fr12},\cdot,\cdot)\big\|_{L^2_{x,v}}\\
     \les&\sum_{|\al|+|\beta|\le N'}\sum_{\beta'\le\beta}\big\|\la v\ra^{\ell'}\pr_x^\al (\pr_v+\nu^{-\fr12}\pr_x)^{\beta-\beta'}(\nu^{-\fr12}\pr_x)^{\beta'} f_{\ne}(\nu^{-\fr12},\cdot,\cdot)\big\|_{L^2_{x,v}}\\
     \les&\nu^{-\fr{N'}{2}}\sum_{|\al|+|\beta|\le N'}\sum_{\beta'\le\beta}\big\|\la v\ra^{\ell'}\pr_x^{\al+\beta'} (\pr_v+\nu^{-\fr12}\pr_x)^{\beta-\beta'}f_{\ne}(\nu^{-\fr12},\cdot,\cdot)\big\|_{L^2_{x,v}}\\
     \les& \nu^{-\fr{N'}{2}}\|f_{\ne}(\nu^{-\fr12})\|_{\mathcal{G}^{\lm,N/4}_{s,\ell-2[\ell/3]-2}}\\
     \les&\nu^{-\fr{N'}{2}}\|\rho(\nu^{-\fr12})\|_{\mathcal{G}^{\lm,N/4}_s}\|g(\nu^{-\fr12})\|_{\mathcal{G}^{\lm,N/4}_{s,\ell-2[\ell/3]-2}}+\nu^{-\fr{N'}{2}}\|g_{\ne}(\nu^{-\fr12})\|_{\mathcal{G}^{\lm,N/4}_{s,\ell-2[\ell/3]-2}}\\
     \les&\nu^{-\fr{N'}{2}}(\nu^{\fr12})^{\fr{N}{2}}\eps^2+\nu^{-\fr{N'}{2}}(\nu^{\fr16})^{\fr{[\ell/3]+1}{2}}\eps\les \nu^{\fr{\ell}{36}-\fr{N'}{2}}\eps\les \nu^{\fr13}\eps.
     \end{align*}
\end{rem}
Hence, $\mu + \sqrt{\mu}f^L_0$ solves the homogeneous (nonlinear) Landau equation while $\tilde{f}$ is being treated as the small perturbation. The kernel contribution ${\rm P}_0 f_0$ must be included in $\tilde{f}$, as otherwise $f_0^L$ would not have the correct asymptotic dynamics and the background $\mu + \sqrt{\mu} f^L_0$ would converge to a Maxwellian with the wrong kinetic energy. Note that ${\rm P}_0f_0$ is extremely small.

The $f_0^L$ retains its high regularity and velocity localization uniformly in  (a consequence of e.g., \cite{Guo2002}), while $\tilde{f}$ has $\nu$-dependent smallness but we will propagate much lower regularity and weaker localization. 

For clarity, we introduce the following notation: First, for $f_0^L$, we use the following norms:
 for $\ell\in \R$, $N\in\N$ and $\beta\in\N^3$, let us denote
\begin{align}
   \big\|f_0^L\big\|_{H^N_\ell}^2=\sum_{|\beta|\le N} \big\|\la v\ra^\ell\pr_v^\beta f_0^L\|_{L^2_v}^2+{\rm B}_1\big\|f_0^L\|_{L^2_v}^2,
\end{align}
and
\begin{align}
    \|f_0^L\|^2_{H^N_{\sig,\ell}}=\sum_{|\beta|\le N} \big|\pr_v^\beta f_0^L|_{\sig, \ell}^2+{\rm B}_1\dl\big|({\rm I}-{\rm P}_0)f_0^L|_{\sig}^2,
\end{align}
where ${\rm B}_1$ is a large constant determined in the proof, depending on $N$ and $\dl$ appearing in  \eqref{coercive-0}. By the construction of $f_0^L(0,v)$ (see \eqref{eq:quasi-linearized}) and the conservation laws, we have $\mathrm{P}_0f_0^L(t,v)\equiv 0$ for all $t\geq 0$. Thus, it holds that 
\begin{align*}
    \big|({\rm I}-{\rm P}_0)f_0^L|_{\sig}
=\big|f_0^L|_{\sig}.
\end{align*}

The energy functional for $\tl{f}$ is defined as follows:
\begin{align}\label{eq:tlE_l^n}
    \tl{\mathcal{E}}_{\ell}^{n}(\tl{f}(t))=\nn&{\rm A}_0\sum_{|\al|\le1}\left\|e^{\phi}\la v\ra^{\ell-2|\al|-2|n|}\pr_x^{\al}\tl{f}^{(n)}\right\|_{L^2_{x,v}}^2
    +\sum_{1\leq |\alpha|\leq 2}\left\|e^{\phi}\la v\ra^{\ell-2|\alpha|-2|n|}\kappa^{|\alpha|}\nu^{\frac{|\alpha|}{3}}\pr_v^{\alpha}\tl{f}^{(n)}\right\|_{L^2_{x,v}}^2\\
    &+2\kappa\nu^\fr13\left\la\nb_{x}\tl{f}^{(n)},e^{2\phi}\la v\ra^{2\ell-4-4|n|}\nb_{v}\tl{f}^{(n)} \right\ra_{x,v}.
\end{align}
Compared with \eqref{eq:E_l^n}, the $Y/(1+t)$ involved term is absent in \eqref{eq:tlE_l^n}. Moreover, we allow  the power of the $v$-weight to decrease as the order of the vector field $Z$ increases in \eqref{eq:tlE_l^n}. Correspondingly, the dissipation at $(n,\ell)$ level is given by
\begin{align}\label{def-tlD-nl}
    \nn\tl{\mathcal{D}}^{n}_\ell(\tl{f}(t))=\nn&{\rm A}_0\nu^{\fr23}\sum_{|\al|\le1}\left\|e^{\phi}\pr_x^\al \tl{f}^{(n)}\right\|_{\sig,\ell-2|\al|-2|n|}^2
    +\kappa\left\|e^{\phi}\la v\ra^{\ell-2-2|n|}\nb_x\tl{f}^{(n)}\right\|_{L^2_{x,v}}^2
   \\
    &+\sum_{1\leq |\alpha|\leq 2}\nu^{\fr23}\left\|e^{\phi}\kappa^{|\alpha|}\nu^{\fr{|\alpha|}{3}}\pr_v^{\alpha}\tl{f}^{(n)}\right\|_{\sig,\ell-2|\alpha|-2|n|}^2.
\end{align}
The total energy and dissipation for $\tl{f}$ are defined as follows.
\begin{align*}
    {\mathbb{E}}_{N,\ell}(\tl{f}(t))={\rm B}_0\|e^{\phi}\tl{f}(t)\|^2_{L^2_{x,v}}+\sum_{|n|\le N}\kappa^{2|n|}\tl{\mathcal{E}}_{\ell}^{n}(\tl{f}(t)),
\end{align*}
and
\begin{align*}
   {\mathbb{D}}_{N,\ell}(\tl{f}(t))=\nu^{\fr23}{\rm B}_0\big\|({\rm I}-{\rm P}_0)e^{\phi}\tl{f}(t)\big\|_{\sig}^2&+\sum_{|n|\le N}\kappa^{2|n|}\tl{\mathcal{D}}_{\ell}^{n}(\tl{f}(t)).
\end{align*}

Finally, for the density $\rho$, we introduce the following space: for $N\in\N$,  let us define 
\begin{align}\label{def-tlHN}
\tl{H}^N(\T^3)=\Big\{\rho\in L^2(\T^3): \sum_{\substack{\beta\in\N^6, |\beta|\le N}} \kappa^{2|\beta|}\|Z^\beta \rho\|^2_{L^2(\T^3)}<\infty\Big\}.
\end{align}
In the following, we will abbreviate $\tl{H}^N(\T^3)$ as $\tl{H}^N_x$.

The precise theorem we prove is the following, which is of independent interest, as it generalizes the results of \cite{chaturvedi2023vlasov} to backgrounds which are $O(\eps)$ (as opposed to $O(\eps \nu^{1/3})$) away from Maxwellian. 
\begin{theorem} \label{thm:longtime}
Let $\mathfrak{b}\geq \fr{1}{3}$. 
    There exist $\nu_0>0,\, \tilde{N}_0>0,\, \tilde{\ell}_0>0,\,\tilde{\kappa}_0>0,$ and $\epsilon_0>0$ such that for all $\tilde{N}\geq \tilde{N}_0,\, \tilde{\ell}\geq \tilde{\ell}_0$ and $0\leq \epsilon <\epsilon_0$, if the initial data $(f_0^{L}(0,v), \tilde{f}(0,x,v))$ satisfies
    \begin{align*}
        &\int_{\mathbb{T}^3\times \mathbb{R}^3}(f_0^{L}(0,v), \tilde{f}(0,x,v))\sqrt{\mu}(v)dvdx=\int_{\mathbb{R}^3}f_0^{L}(0,v) |v|^2\sqrt{\mu}(v)dv=0,\\
        &\int_{\mathbb{T}^3\times \mathbb{R}^3}(f_0^{L}(0,v), \tilde{f}(0,x,v))v_j\sqrt{\mu}(v)dvdx=0,\quad \text{for}\quad j=1,2,3,\\
        &\|\langle v\rangle^{\tilde{\ell}+6}\langle \nb_v\rangle^{\tilde{N}+6}f_0^{L}(0,v)\|_{L^2}\leq \epsilon,\quad 
        \|\langle v\rangle^{\tilde{\ell}}\langle \nb_{x,v}\rangle^{\tilde{N}+1}\tilde{f}(0,x,v)\|_{L^2(\mathbb{T}^3\times \mathbb{R}^3)}\leq \epsilon \nu^{\mathfrak{b}},
    \end{align*}
    then the solution $(f_0^{L},\tilde{f})$ to \eqref{eq:quasi-linearized} satisfies for all $t\geq 0$,
    \begin{align*}
        &\|\langle v\rangle^{\tilde{\ell}+6}\langle \nb_v\rangle^{\tilde{N}+6}f_0^{L}(t,v)\|_{L^2}\leq \epsilon,\\
        &{\mathbb{E}}_{\tl{N},\tl{\ell}}(\tl{f}(t))\leq \epsilon^2\nu^{2\mathfrak{b}}(\min\{1+t,\nu^{-\fr13}\})^2, \quad
        {\mathbb{E}}_{\tl{N}-2,\tl{\ell}}(\tl{f}(t))\leq \epsilon^2\nu^{2\mathfrak{b}}.
    \end{align*}
    Moreover, the Landau damping holds
    \begin{align*}
        \|\langle t\nb_x\rangle^{\tilde{N}/2}E(t)\|_{L^2}\lesssim \epsilon \nu^{\mathfrak{b}}. 
    \end{align*}
\end{theorem}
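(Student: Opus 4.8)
\textbf{Proof proposal for Theorem \ref{thm:longtime}.}

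The plan is to run a bootstrap/continuity argument in Sobolev regularity on the quasi-linearized system \eqref{eq:quasi-linearized}, treating $f_0^L$ and $\tilde f$ completely separately and coupling them only through the forcing terms. First, for the homogeneous piece $f_0^L$: since $\mathrm{P}_0 f_0^L\equiv 0$ for all $t$, the background $\mu+\sqrt{\mu}f_0^L$ solves the spatially homogeneous nonlinear Landau equation with small, well-localized data, so I would invoke the (by now standard) weighted energy method of Guo \cite{Guo2002,guo2012vlasov}: propagate $\|\langle v\rangle^{\tilde\ell+6}\langle\nabla_v\rangle^{\tilde N+6}f_0^L\|_{L^2}\le\epsilon$ uniformly in $t$ using the coercivity \eqref{coercive-0} of $L$ on $({\rm I}-{\rm P}_0)$, the commutator estimates for $\partial_v^\beta$ against $\mathcal{A},\mathcal{K}$ from Section \ref{sec: Linear Landau operator}, and the nonlinear estimates on $\Gamma$ from \eqref{def-Gamma}. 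The dissipation $\nu^{2/3}\|f_0^L\|_{\sigma}^2$ gives a Grönwall-type closure; nothing here is $\nu$-uniform in a delicate way because this is a genuinely dissipation-dominated regime and the data are $O(\epsilon)$ but smooth.

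Next, for $\tilde f$, define the bootstrap hypotheses
\begin{align*}
  \mathbb{E}_{\tilde N,\tilde\ell}(\tilde f(t)) &\le 4\epsilon^2\nu^{2\mathfrak{b}}(\min\{1+t,\nu^{-1/3}\})^2,\qquad
  \mathbb{E}_{\tilde N-2,\tilde\ell}(\tilde f(t)) \le 4\epsilon^2\nu^{2\mathfrak{b}},
\end{align*}
together with a Landau-damping bound $\|\langle t\nabla_x\rangle^{\tilde N/2}E(t)\|_{L^2}\le 4\epsilon\nu^{\mathfrak{b}}$. The key structural points: (i) conjugating by $e^\phi$ as in \eqref{def-g} removes the worst $-\tilde E\cdot v\tilde f$ term, so the weighted energies $\tilde{\mathcal{E}}_\ell^n$ in \eqref{eq:tlE_l^n} are natural; (ii) the hypocoercivity multiplier (the $2\kappa\nu^{1/3}\langle\nabla_x\tilde f^{(n)},\dots\nabla_v\tilde f^{(n)}\rangle$ cross term, plus the $(\nu^{1/3}\partial_v)^\alpha$ weights) converts the phase mixing $v\cdot\nabla_x$ against the collisional $\nu L$ into the enhanced dissipation rate $\nu^{1/3}$, exactly as in \cite{chaturvedi2023vlasov,bedrossian2024taylor}, and this is what produces the $\min\{1+t,\nu^{-1/3}\}$ envelope: before $t\sim\nu^{-1/3}$ the energy can grow linearly (driven by the $-2\tilde E\cdot v\sqrt{\mu}$ source and the $\tilde E\cdot\nabla_vf_0^L$ coupling), after which enhanced dissipation pins it; (iii) because we only have $\nu$-independent smallness of $f_0^L$ (size $\epsilon$, not $\epsilon\nu^{1/3}$ as in \cite{chaturvedi2023vlasov}), the coupling terms $\tilde E\cdot\nabla_v f_0^L$, $\nu\Gamma(\tilde f,f_0^L)$, $\nu\Gamma(f_0^L,\tilde f)$ must each be controlled using the decay of $\tilde E$ (Landau damping) and the high regularity/localization of $f_0^L$ — this is the analogue of the high–low interaction control and is where one must be careful, spending a derivative on $f_0^L$ to put $\langle t\rangle$-growth onto $\tilde E$, which the damping absorbs. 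For the linear free-transport term $v\cdot\nabla_x\tilde f$ one allows the velocity weight to drop by $2$ per vector field (the $-2|n|$ in the exponent of \eqref{eq:tlE_l^n}) so that $v\cdot\nabla_x$ against $\langle v\rangle^{\ell-2|\alpha|-2|n|}$ is absorbed into the localized dissipation, exactly the Guo trick. The density $\rho$ is closed via the Volterra equation as in Section \ref{sec: density est} restricted to Sobolev norms — here there are no echoes strong enough to destroy finite regularity because of the $\nu^{1/3}$ smallness of the non-zero data, so one simply inverts $({\rm Id}+\mathfrak{L}_t^{\rm app})$ using the Penrose condition (Proposition \ref{prop:kernel G}) and estimates $\mathcal{N}$ term by term; the source $-2\tilde E\cdot v\sqrt{\mu}$ is handled by noting it feeds back through the resolved Volterra kernel.

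The main obstacle I anticipate is the simultaneous management of (a) the $\nu$-uniform treatment of the coupling terms between $\tilde f$ and $f_0^L$ — in particular $\tilde E\cdot\nabla_v f_0^L$, which a priori loses a $v$-derivative on $\tilde f$ and would cost a power of $\langle t\rangle$, needing to be reabsorbed either by the enhanced-dissipation time-envelope or by the Landau damping of $\tilde E$ — and (b) keeping the two different energy levels $\mathbb{E}_{\tilde N,\tilde\ell}$ and $\mathbb{E}_{\tilde N-2,\tilde\ell}$ consistent: the lower-regularity energy must stay bounded by $\epsilon^2\nu^{2\mathfrak{b}}$ with no time growth (it is this bound, via interpolation with the linearly-growing top energy, that actually yields the $\min\{1+t,\nu^{-1/3}\}$ shape and the uniform-in-time Landau damping), so the estimates must be arranged so that every loss of a derivative is paid for out of the two-derivative gap rather than out of the time budget. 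A secondary technical point is verifying that $\mathrm{P}_0 f_0^L\equiv 0$ is genuinely preserved — this follows from the conservation laws \eqref{conser-mass}–\eqref{conser-en} applied to the homogeneous Landau flow and the choice of initial data in \eqref{eq:quasi-linearized}, together with \eqref{eq: P_0f_0} which makes the kernel piece that was shuffled into $\tilde f$ of size $O(\epsilon\nu^{\mathfrak{b}})$ by the Landau damping at $t=\nu^{-1/2}$; with that in hand the coercivity of $L$ applies cleanly to $f_0^L$. Once the bootstrap closes, Theorem \ref{thm:longtime} follows by a standard continuation argument, and chaining it with Theorem \ref{Thm:shorttime} at $t=\nu^{-1/2}$ (using Remark \ref{Rmk: P_0f_0} to verify the data hypotheses with $\mathfrak{b}=1/3$) yields the $t\ge\nu^{-1/2}$ part of Theorem \ref{Thm: main}.
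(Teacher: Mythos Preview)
Your treatment of $f_0^L$ and of the energy estimate for $\tilde f$ (the hypocoercive functional \eqref{eq:tlE_l^n}, the handling of $\nu\Gamma(f_0^L,\tilde f)$ via the dissipation and Gr\"onwall using $\nu\int\|f_0^L\|_{\sigma}^2\,dt\lesssim\epsilon^2$) matches the paper. The genuine gap is in the density estimate. You propose to write the Volterra equation using the standard one-parameter semigroup $S_k(t-\tau)$ of \eqref{eq:linearized 1} and to put all coupling terms --- in particular $\nu\Gamma(f_0^L,\tilde f)$ and $\nu\Gamma(\tilde f,f_0^L)$ --- into the forcing $\mathcal{N}$, then ``simply invert $(\mathrm{Id}+\mathfrak{L}_t^{\rm app})$.'' This is exactly the step that breaks when $f_0^L$ is only $O(\epsilon)$ rather than $O(\epsilon\nu^{1/3})$. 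In \cite{chaturvedi2023vlasov} the collision nonlinearity in the density Volterra equation is quadratic in the $O(\epsilon\nu^{1/3})$ perturbation, and the extra $\nu^{1/3}$ is what closes the time integral against the enhanced-dissipation scale. Here $\nu\Gamma(f_0^L,\tilde f)$ is only \emph{linear} in $\tilde f$ and carries an $O(\epsilon)$ coefficient; the resulting contribution to the density does not close globally in time. (This is the same mechanism that, in the Gevrey regime, forced $s>1/2$ and $t\le\nu^{-1/2}$ in the paper's Section~\ref{sec:density decomposition}.)

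The paper's fix --- which you do not anticipate --- is to move $\nu\Gamma(f_0^L,\cdot)+\nu\Gamma(\cdot,f_0^L)$ onto the left-hand side, defining the quasi-linearized equation \eqref{eq:linearized 2} and its \emph{two-parameter} solution operator $\mathbb{S}_k(t,\tau)$ (two-parameter because $f_0^L$ depends on $t$). Duhamel \eqref{exp-tl-f-k} and the density representation \eqref{exp-rho-k} are then written in terms of $\mathbb{S}_k$, so the forcing $\mathbf{N}$ of \eqref{eq:N} contains only the genuinely quadratic $\nu\Gamma(\tilde f,\tilde f)$ and the terms $E\cdot\nabla_vf_0^L$, $E\cdot v f_0^L$ (controlled, as you say, via the high regularity of $f_0^L$ and the damping of $E$). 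The price is that the Volterra kernel is no longer exactly $K_k(t-\tau)$: the correction $\tilde{\mathcal{N}}_k^{(2)}$ in \eqref{def-tlNk2} measures $\mathbb{S}_k(t,\tau)[v\sqrt\mu]-S_k(t-\tau)[v\sqrt\mu]$. Section~\ref{sec:quasi-linear} (Propositions~\ref{prop:solution-operator} and~\ref{prop:difference}) shows this difference is $O(\epsilon)$ with full vector-field regularity, and Proposition~\ref{prop:perturb-rho} then absorbs $\tilde{\mathcal{N}}_k^{(2)}$ back into $\rho$. The paper flags this construction as the ``main new difficulty'' relative to \cite{chaturvedi2023vlasov}; without it the bootstrap on $\rho$ does not close.
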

\begin{rem} 
    To prove Theorem \ref{Thm: main}, by choosing $\tilde{N}_0$ and $\tilde{\ell}_0$ in a proper way, we can make $\mathfrak{b}\geq 1$, which can lead to an easier proof of Theorem \ref{thm:longtime}. However, we chose to fully generalize the results in \cite{chaturvedi2023vlasov} and reach the optimal $\nu^{\fr13}$ for perturbations in Sobolev spaces. 
\end{rem}
\begin{rem}\label{Rmk: enhanced dissipation}
Once we obtain the global control of the Sobolev norm with respect to the vector field, it is easy to prove the following enhanced dissipation estimate
\begin{align*}
    \|f_{\neq}\|_{L^2}\lesssim \epsilon\left\langle \nu^{\fr13}t\right\rangle^{-\ell/6},
\end{align*}
by using the enhanced dissipation type energy estimate, i.e., 
\begin{align*}
    \mathbb{E}^{ed}(g(t))=\sum_{\iota=1}^{[\ell/3]+1}w_{\iota}^2(t)\mathcal{E}_{\ell-2\iota}^0(g_{\neq}),
\end{align*}
where the energy function $\mathcal{E}_{\ell}^{0}$ is given in \eqref{eq:E_l^n} with $n=0$. 
\end{rem}
We conclude this section by proving Theorem \ref{Thm: main} assuming Theorems \ref{Thm:shorttime} and \ref{thm:longtime}. 
\begin{proof}[Proof of Theorem \ref{Thm: main}]
    By combining Theorem \ref{Thm:shorttime} (for $t\leq \nu^{-\fr12}$), Remark \ref{Rmk: P_0f_0} (at $t=\nu^{-\fr12}$), Theorem \ref{thm:longtime} with $\tilde{N}_0>N_1$ (for $t\geq \nu^{-\fr12}$), and Remark \ref{Rmk: enhanced dissipation} (enhanced dissipation), we proved Theorem \ref{Thm: main}. 
\end{proof}
\subsubsection{The new solution operator}
Although the $f_0^L$ is small, as discussed in section \ref{sec:density decomposition}, the term $\Gamma(f_0^L,\tilde{f})$ could cause the main growth. To deal with it precisely, we must incorporate it into the linearized equation. This is the main new difficulty that we have to confront relative to \cite{chaturvedi2023vlasov}, i.e., everywhere $S_k(t)$ appeared, we need to replace this with the two-parameter linear propagator of the following linearized equation:
\begin{align}\label{eq:linearized 2}
    \partial_t {\rm f}+v\cdot \nabla_x {\rm f}+\nu L{\rm f}-\nu \Gamma(f_0^L,{\rm f})-\nu \Gamma({\rm f},f_0^L)=0.
\end{align}
Compared with \eqref{eq:linearized 1},  the nonlinear collisions associated with the zero mode interactions, namely, $\Gamma(f_0^L,{\rm f})$ and $ \Gamma({\rm f},f_0^L)$ are involved. 
We would like to emphasize that the solution operator $\mathbb{S}(t,\tau)$ is a \emph{two-parameter} semigroup, because here $f_0^L$ is time-dependent. 

One can  recover the density $\rho$ with the aid of the solution operator $\mathbb{S}(t,\tau)$. To this end, note first that the conservation of mass and our choice of initial data ensure that $f_0^L$ contributes nothing to the density, namely, 
\begin{align}\label{exp-tl-rho}
\rho(t,x)=\tl{\rho}(t,x)=\int_{\R^3}\tl{f}(t,x,v)\mu^{\fr12}(v)dv.
\end{align}
Then
we apply the solution operator $\mathbb{S}(t,\tau)$ (or $\mathbb{S}_k(t,\tau)$ -- its Fourier transform in $x$) to the second equation of \eqref{eq:quasi-linearized} and obtain the expression for $\tl{f}$:
\begin{align*}
    \tilde{f}(t)=\mathbb{S}(t,0)[\tilde{f}(0)]+2\int_0^t \tilde{E}(\tau)\mathbb{S}(t,\tau)[v\mu^{\fr12}]d\tau+\int_0^t\mathbb{S}(t,\tau)[{\bf N}(\tau)]d\tau,
\end{align*}
where 
\begin{align}\label{eq:N}
    {\bf N}(t,x,v)=-E\cdot\nb_v \tilde{f}+E\cdot v\tilde{f}-E\cdot \nabla_v f_0^{L}+E\cdot v f_0^{L}+\nu\Gamma(\tilde{f},\tilde{f}),
\end{align}
we have used $E$ to replace $\tl{E}$ in \eqref{eq:N} due to the fact $\rho=\tl{\rho}$. Accordingly, we infer from \eqref{exp-tl-rho} that 
\begin{align}\label{ref-rho}
    \rho(t)+\mathfrak{L}_t[\rho(\tau)]=\mathfrak{I}+
    \int_{\mathbb{R}^3}\int_0^t\mathbb{S}(t,\tau)[{\bf N}(\tau)] \mu^{\fr12}(v)d\tau dv, 
\end{align}
where 
\[
\mathfrak{L}_t[\rho(\tau)]=-2\int_{\mathbb{R}^3}\int_0^t \nabla_x(-\Delta_x)^{-1}\rho(\tau)\mathbb{S}(t,\tau)[v\mu^{\fr12}]\mu^{\fr12}(v)d\tau dv
\]
 and $\mathfrak{I}=\int_{\mathbb{R}^3}\mathbb{S}(t,0)[\tilde{f}(0)]\mu^{\fr12}dv$. Note that this is no longer quite a standard Volterra equation. 

\subsubsection{Invertibility and stability}
To solve for $\rho$ from \eqref{ref-rho}, we have to show that the operator ${\rm Id}+\mathfrak{L}_t$ is invertible for all $t\geq 0$. As mentioned above, without the two quasi-linear terms $\Gamma(f_0^L,{\rm f})$ and $ \Gamma({\rm f},f_0^L)$, the solution operator $S(t)$ associated with \eqref{eq:linearized 1} is a semi-group and the operator 
\[
\mathfrak{L}_t^{\mathrm{app}}[\rho]=-2\int_{\mathbb{R}^3}\int_0^t \nabla_x(-\Delta_x)^{-1}\rho(\tau)S(t-\tau)[v\mu^{\fr12}]\mu^{\fr12}(v)d\tau dv
\]
is a convolution with respect to the time variable, whose Fourier transform is given in \eqref{eq:Fourier of Lrho}. Thanks to the Penrose condition, the invertibility of ${\rm Id}+\mathfrak{L}^{\mathrm{app}}_t$ can be proved  using the Laplace transform; we refer to section \ref{sec:penrose} for more details. Here we use the smallness of $f_0^L$ and show that the error of $\mathbb{S}(t,\tau)[v\mu^{\fr12}]-S(t-\tau)[v\mu^{\fr12}]$ is under control, namely, we have
\begin{align*}
    \rho(t)
    =&({\rm Id}+\mathfrak{L}^{\mathrm{app}}_t)^{-1}\left[\mathfrak{I}+
    \int_{\mathbb{R}^3}\int_0^t\mathbb{S}(t,\tau)[{\bf N}(\tau)] \mu^{\fr12}(v)d\tau dv\right]\\
    &+({\rm Id}+\mathfrak{L}^{\mathrm{app}}_t)^{-1}\left[2\int_{\mathbb{R}^3}\int_0^t \nabla_x(-\Delta_x)^{-1}\rho(\tau)\big(\mathbb{S}(t,\tau)[v\mu^{\fr12}]-S(t-\tau)[v\mu^{\fr12}]\big)\mu^{\fr12}(v)d\tau dv\right]. 
\end{align*}
A key observation is that the difference $\mathbb{S}(t,\tau)[v\mu^{\fr12}]-S(t-\tau)[v\mu^{\fr12}]$ is not only sufficiently small but also possesses adequate vector field regularity (depending on $f_0^L$), enabling us to obtain sufficiently fast decay through Landau damping. Consequently, the second term in the representation of the  density $\rho$ above can be  absorbed by the left-hand side as a perturbation. See more details in Proposition \ref{prop:perturb-rho}.

\subsubsection{Bootstrap argument for the proof of Theorem \ref{thm:longtime}}
In this section, we introduce the bootstrap argument for the proof of Theorem \ref{thm:longtime}. Let us now introduce the bootstrap hypotheses. We assume that for $t\leq T^*$, there holds
    \begin{align}
    \label{H-f_0^L}&\sup_{t\le T^*}\|f_0^L(t)\|_{H^{\tl N+4}_{\tl{\ell}+3}}^2+\fr12\nu\int_0^{T^*}\|f_0^L(t')\|_{H^{\tl N+4}_{\sig,\tl{\ell}+3}}^2dt'\le 8{\rm C}_L\eps^2,\\
        \label{H-tlf-H}&\sup_{t\le T^*}{\mathbb{E}}_{\tl{N},\tl{\ell}}(\tl{f}(t))+\nu^{\fr13}\int_0^{T^*}\mathbb{D}_{\tl{N},\tl{\ell}}(\tl{f}(t))dt\leq 8{\rm C}_{\tl f,1}\epsilon^2\nu^{2\mathfrak{b}}(\min\{1+t,\nu^{-\fr13}\})^2,\\
        \label{H-tlf-L}&
        \sup_{t\le T^*}{\mathbb{E}}_{\tl{N}-2,\tl{\ell}}(\tl{f}(t))+\nu^{\fr13}\int_0^{T^*}\mathbb{D}_{\tl{N}-2,\tl{\ell}}(\tl{f}(t))dt\leq 8{\rm C}_{\tl{f},2}\epsilon^2\nu^{2\mathfrak{b}},\\
        \label{H-tlrho}&\sup_{t\le T^*}\|\rho(t)\|^2_{\tl{H}^{\tl{N}}_x}+\nu^{\fr13}\int_0^{T^*}\|\rho(t)\|^2_{H^{\tl{N}}_x}dt\le 8{\rm C}_\rho \eps^2\nu^{2\mathfrak{b}}.
    \end{align}
\begin{rem}
    The estimate of $f_0^L$ is independent of the bootstrap hypotheses \eqref{H-tlf-H}-\eqref{H-tlrho}. 
\end{rem}
We have the following bootstrap proposition. 
\begin{prop}
    Assume that the estimates \ref{H-f_0^L}---\ref{H-tlrho} hold for $t\in [0,T]$. Then there exist $\tilde{\kappa}_0$, $\epsilon_0$ and $\nu_0$ small enough so that these estimates hold with all occurrences of $8$ on the right-hand side of \ref{H-f_0^L}---\ref{H-tlrho} replaced by $4$. As a consequence, the above estimates hold for $t\in [0,\infty)$. 
\end{prop}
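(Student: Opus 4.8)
## Proof Strategy for the Long-Time Bootstrap Proposition

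The plan is to improve each of the four bootstrap hypotheses \eqref{H-f_0^L}--\eqref{H-tlrho} in sequence, exploiting the hierarchy that \eqref{H-f_0^L} is self-contained, the density estimate \eqref{H-tlrho} feeds the energy estimates, and the high/low regularity split \eqref{H-tlf-H}--\eqref{H-tlf-L} is closed by an interpolation-with-decay mechanism. First I would dispatch \eqref{H-f_0^L}: since $f_0^L$ solves the spatially homogeneous nonlinear Landau equation with $\mathrm{P}_0 f_0^L \equiv 0$, Guo's weighted energy method (as recalled in Section~\ref{sec: Linear Landau operator}) together with the coercivity estimate \eqref{coercive-0} gives $\frac{d}{dt}\|f_0^L\|_{H^{\tilde N+4}_{\tilde\ell+3}}^2 + \nu\dl\|f_0^L\|_{H^{\tilde N+4}_{\sig,\tilde\ell+3}}^2 \lesssim \nu\|f_0^L\|_{H^{\tilde N+4}_{\tilde\ell+3}}\|f_0^L\|_{H^{\tilde N+4}_{\sig,\tilde\ell+3}}^2$, and since the initial datum is $O(\eps)$ and $\eps$ is small, absorbing the cubic term into the dissipation closes this estimate with a constant ${\rm C}_L$ independent of $\nu$ and $t$ (the $\nu$'s cancel after dividing through).

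Next I would improve the density hypothesis \eqref{H-tlrho}. Using the new two-parameter solution operator $\mathbb{S}(t,\tau)$ built from \eqref{eq:linearized 2}, the density satisfies the generalized Volterra relation \eqref{ref-rho}. I invert $\mathrm{Id}+\mathfrak{L}^{\mathrm{app}}_t$ via the Penrose condition (Section~\ref{sec:penrose}), treating the discrepancy $\mathbb{S}(t,\tau)[v\mu^{1/2}] - S(t-\tau)[v\mu^{1/2}]$ perturbatively: by Proposition~\ref{prop:perturb-rho} this difference is both small (it is $O(\nu \cdot \|f_0^L\|)$ by Duhamel applied to the $\Gamma(f_0^L,\cdot)$ terms) and carries enough vector-field regularity to produce rapid Landau-damping decay, so the correction term is absorbed on the left. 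The nonhomogeneous contribution splits as in the density decomposition of Section~\ref{sec:density decomposition}: the term $\mathfrak{I}$ is controlled by the initial smallness $\eps\nu^{\mathfrak b}$; the Vlasov nonlinearity $E\cdot\nabla_v\tilde f - E\cdot v \tilde f$ is handled by the echo/reaction estimates (now in Sobolev regularity, which suffices because $\tilde f$ is $O(\nu^{\mathfrak b})$); and the collisional nonlinearity $\nu\Gamma(\tilde f,\tilde f)$ plus the coupling $E\cdot\nabla_v f_0^L - E\cdot v f_0^L$ are controlled using the extra $\nu$ and the smallness of $f_0^L$ from \eqref{H-f_0^L}. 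Crucially, because we are past the collisional time $\nu^{-1/2}$, the enhanced dissipation $\nu^{1/3}\int\|\rho\|^2_{H^{\tilde N}_x}dt$ is available and the $\langle\nu^{1/3}t\rangle^{-1}$-type weights close the $L^2_t$ bounds uniformly.

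For the energy estimates \eqref{H-tlf-H} and \eqref{H-tlf-L}, I would run the hypocoercive energy estimate on $\tilde{\mathcal E}^n_\ell$ summed over $|n|\le \tilde N$ (resp. $\tilde N-2$), using the structure of \eqref{eq:tlE_l^n}: the conjugation by $e^\phi$ eliminates the $-E\cdot v\tilde f$ term exactly as in \eqref{def-g}, the mixed term $2\kappa\nu^{1/3}\langle\nabla_x\tilde f^{(n)}, e^{2\phi}\langle v\rangle^{2\ell-4-4|n|}\nabla_v\tilde f^{(n)}\rangle$ generates the enhanced dissipation via the standard Villani-type cross-term computation, and the decreasing velocity weight $\ell - 2|n|$ absorbs the growth from commutators of $\langle v\rangle$-weights with the $Z^n$ vector fields. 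The coupling terms $\nu\Gamma(\tilde f,f_0^L) + \nu\Gamma(f_0^L,\tilde f)$ are controlled by \eqref{H-f_0^L} and the extra $\nu$; the transport nonlinearity $\tilde E\cdot\nabla_v\tilde f$ uses the density decay from the just-improved \eqref{H-tlrho}; and the term $\tilde E\cdot\nabla_v f_0^L$ is the analogue of the reaction term and uses Landau damping of $\tilde E$ against the high regularity/localization of $f_0^L$. The factor $(\min\{1+t,\nu^{-1/3}\})^2$ in \eqref{H-tlf-H} appears because the forcing $2\tilde E\cdot v\mu^{1/2}$ integrates against the propagator to give linear-in-$t$ growth until the enhanced dissipation saturates at $t\sim\nu^{-1/3}$; the low-norm estimate \eqref{H-tlf-L} avoids this growth by spending two derivatives to gain the extra decay.

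The main obstacle I expect is the treatment of $\Gamma(f_0^L,\tilde f)$ and its incorporation into $\mathbb{S}(t,\tau)$: unlike the one-parameter semigroup $S(t)$ of \eqref{eq:linearized 1}, the two-parameter operator must be shown to still propagate vector-field Gevrey/Sobolev regularity and enhanced dissipation \emph{uniformly}, despite the time-dependent, only-Sobolev-regular (not Gevrey) coefficient $f_0^L$; establishing that $\mathbb{S}(t,\tau)[v\mu^{1/2}] - S(t-\tau)[v\mu^{1/2}]$ is small with enough regularity for the Penrose perturbation argument (Proposition~\ref{prop:perturb-rho}) is where the genuinely new work lies relative to \cite{chaturvedi2023vlasov}. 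Everything else is a careful but by-now-standard combination of Guo's weighted energy method, hypocoercivity, and the Volterra/Penrose machinery, all now operating with the favorable $\nu$-dependent smallness of $\tilde f$.
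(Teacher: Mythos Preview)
Your proposal is correct and follows essentially the same approach as the paper: the improvement of \eqref{H-f_0^L} via Guo's homogeneous Landau energy method (Proposition~\ref{prop-f_0^L}), the quasi-linear hypocoercive energy estimate for $\tilde f$ (Proposition~\ref{prop-tlf}), the construction and regularity propagation for the two-parameter operator $\mathbb{S}(t,\tau)$ together with the difference estimate (Propositions~\ref{prop:solution-operator} and~\ref{prop:difference}), and the perturbative Penrose argument for the density (Propositions~\ref{prop:perturb-rho} and~\ref{prop:est-tlNk1}). One minor imprecision: the difference $\mathbb{S}_k(t,\tau)[v\sqrt{\mu}]-S_k(t-\tau)[v\sqrt{\mu}]$ is not $O(\nu\|f_0^L\|)$ but rather $O(\|f_0^L\|)=O(\eps)$ after time integration (see \eqref{en-difference}), which is still sufficient for absorption on the left in \eqref{Nk2-L2-small}.
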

The second part of this paper is to prove the improvement of \eqref{H-f_0^L}---\eqref{H-tlrho}. We refer to section \ref{sec:f_0^L} for the improvement of \eqref{H-f_0^L}, refer to section \ref{sec: tlf} for the improvement of \eqref{H-tlf-H} and \eqref{H-tlf-L}, and refer to section \ref{sec: density} for the improvement of \eqref{H-tlrho}.

{\Large \part{Landau damping and enhanced dissipation regime $t \leq \nu^{-1/2} $}}

\section{The relation between the distribution function and the good unknown}\label{sec: relationship}
As mentioned in section \ref{sec:Outline}, we introduce Guo's good unknown $g=e^{\phi}f$. In this section, we study the Gevrey regularity of $f$ and $g$ under the bootstrap hypotheses. 
\begin{lem}\label{lem-compose}
Let $q(\phi)=e^{-\phi}-1$. If $\|\phi\|_{\mathcal{G}^{\lm,N}_s}\le\fr12$, we have
\begin{align}\label{est-compose}
    \|q(\phi)\|_{\mathcal{G}^{\lm,N}_{s}}\le C\|\phi\|_{\mathcal{G}^{\lm,N}_{s}}.
\end{align}
\end{lem}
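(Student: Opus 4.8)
The plan is to reduce \eqref{est-compose} to an algebra (product) estimate for the norm $\|\cdot\|_{\mathcal{G}^{\lambda,N}_s}$ of \eqref{def-Gnorm'} and then expand $q(\phi)=e^{-\phi}-1$ as a power series. First I would establish the bound
\begin{align}\label{eq:alg-prop}
\|fg\|_{\mathcal{G}^{\lambda,N}_s}\le C_0\,\|f\|_{\mathcal{G}^{\lambda,N}_s}\,\|g\|_{\mathcal{G}^{\lambda,N}_s}
\end{align}
for a universal constant $C_0$ (valid once $N\ge N_0$), using crucially that $f,g$ here depend only on $(t,x)$. Granting \eqref{eq:alg-prop}, an immediate induction gives $\|\phi^j\|_{\mathcal{G}^{\lambda,N}_s}\le C_0^{\,j-1}\|\phi\|_{\mathcal{G}^{\lambda,N}_s}^{\,j}$ for every $j\ge1$; since $\|\phi\|_{\mathcal{G}^{\lambda,N}_s}\le\frac12$, the series $\sum_{j\ge1}\frac{(-1)^j}{j!}\phi^j$ is Cauchy, hence convergent, in the complete space $\mathcal{G}^{\lambda,N}_s$, and its sum is $q(\phi)$. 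Therefore
\begin{align*}
\|q(\phi)\|_{\mathcal{G}^{\lambda,N}_s}\le\sum_{j\ge1}\frac{C_0^{\,j-1}}{j!}\|\phi\|_{\mathcal{G}^{\lambda,N}_s}^{\,j}=\frac{1}{C_0}\Big(e^{C_0\|\phi\|_{\mathcal{G}^{\lambda,N}_s}}-1\Big)\le e^{C_0/2}\,\|\phi\|_{\mathcal{G}^{\lambda,N}_s},
\end{align*}
where the last inequality uses $e^x-1\le x e^x$ for $x\ge0$. This proves \eqref{est-compose} with $C=e^{C_0/2}$.

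For \eqref{eq:alg-prop}, since $Z^m$ acts here as $(\nabla_x,t\nabla_x)^m$ it obeys the Leibniz rule, so $Z^{m+\beta}(fg)=\sum_{p\le m+\beta}C_{m+\beta}^p(Z^pf)(Z^{m+\beta-p}g)$. In each term I would place the lower-order factor in $L^\infty_x$ via the Sobolev embedding $\|Z^pf\|_{L^\infty_x}\lesssim\sum_{|\gamma|\le2,\ \gamma\in\mathbb{N}^3}\|Z^{p+\gamma}f\|_{L^2_x}$ on $\mathbb{T}^3$ — with the two spare derivatives $\gamma$ taken purely in $x$ so as not to disturb the $t$-weights — and keep the other factor in $L^2_x$; this is a para-product-type split that costs only a bounded number of derivatives, which I would absorb into the $|\beta|\le N$ budget after writing $p=p'+p''$ with $p'\le m$, $p''\le\beta$. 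Multiplying by $a_{m,\lambda,s}(t)\kappa^{|\beta|}$ and using the factorization $a_{m,\lambda,s}(t)C_m^n=b_{m,n,s}\,a_{n,\lambda,s}(t)\,a_{m-n,\lambda,s}(t)$ from Remark \ref{Rmk: coefficients a-b}, the sum over $p$ turns into a discrete convolution in the multi-index $m$; taking $\ell^2_m$ and applying Young's inequality then closes \eqref{eq:alg-prop}, provided the relevant sums of the coefficients $b_{m,n,s}$ are bounded uniformly in $m$.

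That combinatorial bound is the part I expect to be the main obstacle; everything else is routine bookkeeping of the kind used in the bilinear estimates elsewhere in the paper. A direct computation from \eqref{def:a} and the definition of $\Gamma_s$, using $|m-n|=|m|-|n|$ and the identity $\frac{C_m^n\,C_{|m|}^m}{C_{|n|}^n\,C_{|m-n|}^{m-n}}=\binom{|m|}{|n|}$, gives
\begin{align*}
b_{m,n,s}=2^{-25}\binom{|m|}{|n|}^{1-\frac1s}\frac{(1+|m|)^{12}}{(1+|n|)^{12}(1+|m-n|)^{12}}.
\end{align*}
Because $s<1$ we have $1-\tfrac1s<0$, hence $\binom{|m|}{|n|}^{1-1/s}\le1$, and $(1+|m|)\le(1+|n|)(1+|m-n|)$ gives $b_{m,n,s}\le 2^{-25}$; moreover in the para-product split only the low index (say $|n|\le\tfrac12|m|$, up to the bounded $\beta$-shift) occurs in each piece, and there the polynomial weight $(1+|n|)^{-12}$ against the $O\big((1+|n|)^{5}\big)$ multi-indices $n\in\mathbb{N}^6$ of a given length makes $\sum_{n}b_{m,n,s}$ converge uniformly in $m$. (If one prefers, \eqref{eq:alg-prop} can instead be read off from the classical Gevrey-$\tfrac1s$ algebra property on $\mathbb{T}^3$, after noting that for $x$-only functions $\sum_{m}a_{m,\lambda,s}^2(t)\,|(k,kt)^m|^2$ is comparable to $e^{2\lambda(t)\langle k,kt\rangle^{s}}$ up to polynomial factors.)
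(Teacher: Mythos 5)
Your power-series argument is correct and proves the lemma, but it goes by a genuinely different route than the paper, which applies Fa\`a di Bruno's formula to $Z^m(e^{-\phi}-1)$ directly and then invokes only the classical algebra property of the auxiliary Sobolev space $\tl{H}^N_x$ (with fixed $N$ derivatives), rather than an algebra property of the full Gevrey norm $\mathcal{G}^{\lambda,N}_s$. Your key input, the algebra estimate \eqref{eq:alg-prop}, is essentially the scalar special case of the paper's product Lemma~\ref{lem-product}: taking both factors $v$-independent, $\iota=0$, and observing that $\|\la\nb_x\ra^2 g\|_{\mathcal{G}^{\lambda,N/2}_s}\lesssim_{\kappa,N}\|g\|_{\mathcal{G}^{\lambda,N}_s}$ because two extra $\nabla_x$'s land back in the $Z^\beta$-budget once $N\ge N_0$. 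The combinatorial bound you identify as the main obstacle is exactly Lemma~\ref{lem-summable}, $\sup_m\sum_{n\le m}b_{m,n,s}\le 1$ (your simplification $b_{m,n,s}=2^{-25}\binom{|m|}{|n|}^{1-1/s}(1+|m|)^{12}(1+|n|)^{-12}(1+|m-n|)^{-12}$ matches the calculation inside that proof), so nothing new is needed there, and the final step is sound: for any $C_0$ the series $\sum_j C_0^{j-1}\|\phi\|^j/j!$ converges, the partial sums converge in the complete space $\mathcal{G}^{\lambda,N}_s$, and the limit is identified with $q(\phi)$ by, say, the embedding into $L^2_x$.

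The trade-off is worth noting. Your route is conceptually cleaner — one algebra lemma plus an entire-function expansion — but it iterates the algebra constant as $C_0^{j-1}$, so the final constant is $e^{C_0/2}$ and inherits a potentially poor dependence on $N$ and $\kappa$ (both hidden in $C_0$, the latter appearing when you trade $\la\nb_x\ra^2$ for $\kappa^{-4}$). The paper's Fa\`a di Bruno route never forms iterated Gevrey products: the derivatives of $q$ are uniformly bounded ($q^{(k)}(\phi)=(-1)^ke^{-\phi}$), and the partition multinomials carry $1/\prod\pi(\gamma)!$ factors, so each factor $Z^\gamma\phi$ enters the sum with the \emph{Sobolev} algebra constant $C^*_{N,\kappa}$ of $\tl H^N_x$ rather than the Gevrey one. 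Since the statement does not quantify the $N$- or $\kappa$-dependence of $C$, both are acceptable here, but your exponential-in-$C_0$ constant would become an issue in any estimate that needs to track the dependence on $N$. Two small points of hygiene: the bound $\|\phi\|_{\mathcal{G}^{\lambda,N}_s}\le\tfrac12$ is only used for the final numerical bound $e^{C_0\|\phi\|}\le e^{C_0/2}$, not for convergence of the series (the $j!$ already guarantees that); and in the para-product split, the low/high dichotomy is on the finite $\beta$-budget only — the Gevrey sum over $n\le m$ is handled outright by $\sup_m\sum_n b_{m,n,s}\le1$ and Young, with no need to restrict $|n|\le|m|/2$ as your last paragraph appears to suggest.
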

\begin{proof}
Recall the definition of $\tl{H}^N_x$ in \eqref{def-tlHN}.
   Clearly, if $N>\fr32$, $\tl{H}^N_x$ is an algebra, and we denote by $C^*_{N,\kappa}\ge1$    the  constant appearing in the product estimate $\|fg\|_{\tl{H}^N_x}\le C^*_{N,\kappa}\|f\|_{\tl{H}^N_x}\|g\|_{\tl{H}^N_x}$. Recalling \eqref{def-Gnorm'}, one can write
\begin{align*}
    \|q(\phi)\|_{\mathcal{G}_{s}^{\lambda,N}}^2
    =\sum_{ m\in \mathbb{N}^6}a_{m,\lm,s}^2(t)\big\|Z^{m}q(\phi)\big\|_{\tl{H}^N_{x}}^2.
\end{align*}
For any $m\in \N^6\setminus\{\bf{0}\}$, let us denote $D_{m}=\{\gamma\in\N^6\setminus\{{\bf0}\}: \gamma\le m\}$. We define a partition of $m$ by a map $\pi: D_{m}\rightarrow \N$  satisfying $\sum_{\gamma\in D_m}\gamma\pi(\gamma)=m$.
Thanks to Fa\`{a} di Bruno's formula (see \cite{TBN20} for more details), we have
\begin{align}\label{Bruno}
    Z^{m}(q\circ\phi)=\nn&\sum_{\sum_{\gamma\in D_{m}}\gamma\pi(\gamma)=m}\fr{m!}{\prod_{\gamma\in D_{m}}\pi(\gamma)!}q^{(\sum_{\gamma\in D_m}\pi(\gamma))}(\phi)\prod_{\gamma\in D_m}\Big(\fr{Z^{\gamma}\phi}{\gamma!}\Big)^{\pi(\gamma)}\\
    =&\sum_{k=1}^{|m|}q^{(k)}(\phi)\sum_{\substack{\sum_{\gamma\in D_m}\gamma\pi(\gamma)=m\\ \sum_{\gamma\in D_m}\pi(\gamma)=k}}\fr{m!}{\prod_{\gamma\in D_m}\pi(\gamma)!}\prod_{
    \substack{\gamma\in D_m}}\Big(\fr{Z^{\gamma}\phi}{\gamma!}\Big)^{\pi(\gamma)}.
\end{align}
Consequently, recalling that $\Gamma_s(|m|)=2^{-25}(|m|!)^{\fr1s}(|m|+1)^{-12}$, we have
\begin{align*}
    &\fr{\lm^{|m|}}{\Gamma_s(|m|)}C_{|m|}^{m}Z^m(q\circ\phi)\\
    =&2^{25}\fr{(|m|+1)^{12}}{(|m|!)^{\fr1s-1}}\sum_{k=1}^{|m|}q^{(k)}(\phi)\sum_{\substack{\sum_{\gamma\in D_m}\gamma\pi(\gamma)=m\\ \sum_{\gamma\in D_m}\pi(\gamma)=k}}\fr{1}{\prod_{\gamma\in D_m}\pi(\gamma)!}\prod_{
    \substack{\gamma\in D_m}}\left(\fr{\lm^{|\gamma|} Z^{\gamma}\phi}{\gamma!}\right)^{\pi(\gamma)}\\
    =&\sum_{k=1}^{|m|}\fr{1}{2^{25(k-1)}}q^{(k)}(\phi)\sum_{\substack{\sum_{\gamma\in D_m}\gamma\pi(\gamma)=m\\ \sum_{\gamma\in D_m}\pi(\gamma)=k}}\fr{1}{\prod_{\gamma\in D_m}\pi(\gamma)!}\prod_{
    \substack{\gamma\in D_m}}\left(\fr{\lm^{|\gamma|} }{\Gamma_s(|\gamma|)} C_{|\gamma|}^{\gamma}Z^{\gamma}\phi\right)^{\pi(\gamma)}\\&\times \Big(\fr{\prod_{\gamma\in D_m}(|\gamma|!)^{\pi(\gamma)}}{|m|!}\Big)^{\fr{1}{s}-1}\fr{(|m|+1)^{12}}{\prod_{\gamma\in D_{m}}(|\gamma|+1)^{12\pi(\gamma)}}.
\end{align*}
Then for any $N'\in\N\setminus\{0\}$, using the algebra property of $\tl{H}^N_x$, we have
\begin{align*}
    &\sum_{\substack{m\in\N^6\\1\le|m|\le N'}}\left\|\fr{\lm^{|m|}}{\Gamma_s(|m|)}C_{|m|}^{m}Z^{m}(q\circ\phi)\right\|_{\tl{H}^N_x}^2\\
    \le&\sum_{\substack{m\in\N^6\\1\le|m|\le N'}}\Bigg(\sum_{k=1}^{|m|}\fr{(C^*_{N,\kappa})^k}{2^{25(k-1)}}\| q^{(k)}(\phi)\|_{\tl{H}^N_x}\\
    &\times \sum_{\substack{\sum_{\gamma\in D_m}\gamma\pi(\gamma)=m\\ \sum_{\gamma\in D_m}\pi(\gamma)=k}}\fr{1}{\prod_{\gamma\in D_m}\pi(\gamma)!}\prod_{\gamma\in D_m}\left\|\fr{\lm^{|\gamma|} }{\Gamma_s(|\gamma|)} C_{|\gamma|}^{\gamma}Z^{\gamma}\phi\right\|_{\tl{H}^N_x}^{\pi(\gamma)}\Bigg)^2\\
    \le&\|e^{-\phi}\|_{\tl{H}^N_x}^2\sum_{\substack{m\in\N^6\\1\le|m|\le N'}}\Bigg(\sum_{k=1}^{|m|}\Big(\fr{(C^*_{N,\kappa})^k}{2^{25(k-1)}}\Big)^2\fr{1}{k!}\sum_{\substack{\sum_{\gamma\in D_m}\gamma\pi(\gamma)=m\\ \sum_{\gamma\in D_m}\pi(\gamma)=k}}\fr{1}{\prod_{\gamma\in D_m}\pi(\gamma)!}\Bigg)\\
    &\times \sum_{k=1}^{|m|}\sum_{\substack{\sum_{\gamma\in D_m}\gamma\pi(\gamma)=m\\ \sum_{\gamma\in D_m}\pi(\gamma)=k}}\fr{k!}{\prod_{\gamma\in D_m}\pi(\gamma)!}\prod_{\gamma\in D_m}\left\|\fr{\lm^{|\gamma|} }{\Gamma_s(|\gamma|)} C_{|\gamma|}^{\gamma}Z^{\gamma}\phi\right\|_{\tl{H}^N_x}^{2\pi(\gamma)}\\ 
    \le&C\|e^{-\phi}\|_{\tl{H}^N_x}^2\sum_{k=1}^{N'}\sum_{\substack{m\in\N^6\\k\le|m|\le N'}}\sum_{\substack{\sum_{\gamma\in D_m}\gamma\pi(\gamma)=m\\ \sum_{\gamma\in D_m}\pi(\gamma)=k}}\fr{k!}{\prod_{\gamma\in D_m}\pi(\gamma)!}\prod_{\gamma\in D_m}\left\|\fr{\lm^{|\gamma|} }{\Gamma_s(|\gamma|)} C_{|\gamma|}^{\gamma}Z^{\gamma}\phi\right\|_{\tl{H}^N_x}^{2\pi(\gamma)}\\
    \le&C\|e^{-\phi}\|_{\tl{H}^N_x}^2\sum_{k=1}^{N'}\left(\sum_{1\le|\gamma|\le N'}\left\|\fr{\lm^{|\gamma|} }{\Gamma_s(|\gamma|)} C_{|\gamma|}^{\gamma}Z^{\gamma}\phi\right\|_{\tl{H}^N_x}^{2}\right)^k. 
\end{align*}
Using the elementary inequality $|e^x-1|\le |x|e^{|x|}$  and \eqref{Bruno} again, noting that $\|\phi\|_{\mathcal{G}^{\lm,N}_s}<1$, we find that  
\begin{align*}
    \|q\circ\phi\|_{\tl{H}^N_x}^2=&\|q\circ\phi\|_{L^2_x}^2+\sum_{1\le|\beta|\le N}\|(\kappa Z)^\beta (q\circ\phi)\|_{L^2_x}^2\\
    \le&e^{2\|\phi\|_{L^\infty_x}}\|\phi\|_{L^2_x}^2+C_N\|e^{-\phi}\|_{L^\infty_x}^2\|\phi\|^2_{\tl{H}^N_x}\\
    \le&C_N\|\phi\|^2_{\tl{H}^N_x},
\end{align*}
and thus
\begin{align*}
    \|e^{-\phi}\|_{\tl{H}^N_x}^2\le C_N.
\end{align*}
It follows that
\begin{align*}
    \|q(\phi)\|_{\mathcal{G}^{\lm,N}_s}=&\|q\circ\phi\|_{\tl{H}^N_x}^2+\sum_{\substack{0<m\in\N^6}}\left\|\fr{\lm^{|m|}}{\Gamma_s(|m|)}C_{|m|}^{m}Z^{m}(q\circ\phi)\right\|_{\tl{H}^N_x}^2\\
    \le&C_N\|\phi\|^2_{\tl{H}^N_x}+C\|e^{-\phi}\|_{\tl{H}^N_x}^2\sum_{k=1}^{\infty}\left(\sum_{0<\gamma\in \N^6}\left\|\fr{\lm^{|\gamma|} }{\Gamma_s(|\gamma|)} C_{|\gamma|}^{\gamma}Z^{\gamma}\phi\right\|_{\tl{H}^N_x}^{2}\right)^k\\
    \le&C_N\|\phi\|_{\mathcal{G}^{\lm,N}_s}, 
\end{align*}
due to $\|\phi\|_{\mathcal{G}^{\lm,N}_s}\le\fr12$.
\end{proof}

\begin{coro}\label{coro-compose}
   Let $\ell\in\R$. Under the conditions of Lemma \ref{lem-compose},  there hold for $\alpha\in \mathbb{N}^3$
    \begin{align}\label{fg1}
    \|\nb_v^{\alpha}f\|_{\mathcal{G}^{\lm,N}_{s,\ell}}\les \Big(1+\| \phi\|_{\mathcal{G}^{\lm,N}_{s}}\Big)\|\nb_v^{\alpha}g\|_{\mathcal{G}^{\lm,N}_{s,\ell}},
    \end{align}
    
    \begin{align}\label{fg2}
    \|\nb_xf\|_{\mathcal{G}^{\lm,N}_{s,\ell-2}}\les \Big(1+\| \phi\|_{\mathcal{G}^{\lm,N}_{s}}\Big)\Big(\|\nb_x\phi\|_{\mathcal{G}^{\lm,N}_{s}}\|g\|_{\mathcal{G}^{\lm,N}_{s,\ell}}+\|\nb_xg\|_{\mathcal{G}^{\lm,N}_{s,\ell-2}}\Big),
    \end{align}
    
    \begin{align}\label{fg3}
    \|Yf\|_{\mathcal{G}^{\lm,N}_{s,\ell-2}}\les \Big(1+\| \phi\|_{\mathcal{G}^{\lm,N}_{s}}\Big)\Big(t\|\nb_x\phi\|_{\mathcal{G}^{\lm,N}_{s}}\|g\|_{\mathcal{G}^{\lm,N}_{s,\ell}}+\|Yg\|_{\mathcal{G}^{\lm,N}_{s,\ell-2}}\Big),
    \end{align}

    \begin{align}\label{fg4}
    \|\nb_v^{\alpha}f\|_{\mathcal{G}^{\lm,N}_{s,\sig,\ell}}\les \Big(1+\| \phi\|_{\mathcal{G}^{\lm,N}_{s}}\Big)\|\nb_v^{\alpha}g\|_{\mathcal{G}^{\lm,N}_{s,\sig,\ell}},
    \end{align}
    
    \begin{align}\label{fg5}
    \|\nb_xf\|_{\mathcal{G}^{\lm,N}_{s,\sig,\ell-2}}\les \Big(1+\| \phi\|_{\mathcal{G}^{\lm,N}_{s}}\Big)\Big(\|\nb_x\phi\|_{\mathcal{G}^{\lm,N}_{s}}\|g\|_{\mathcal{G}^{\lm,N}_{s,\sig,\ell}}+\|\nb_xg\|_{\mathcal{G}^{\lm,N}_{s,\sig,\ell-2}}\Big),
    \end{align}
    
    \begin{align}\label{fg6}
    \|Yf\|_{\mathcal{G}^{\lm,N}_{s,\sig,\ell-2}}\les \Big(1+\| \phi\|_{\mathcal{G}^{\lm,N}_{s}}\Big)\Big(t\|\nb_x\phi\|_{\mathcal{G}^{\lm,N}_{s}}\|g\|_{\mathcal{G}^{\lm,N}_{s,\sig,\ell}}+\|Yg\|_{\mathcal{G}^{\lm,N}_{s,\sig,\ell-2}}\Big).
    \end{align}
\end{coro}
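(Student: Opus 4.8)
The plan is to invert Guo's substitution $g=e^{\phi}f$ and reduce everything to a single multiplier estimate for $(t,x)$-dependent Gevrey functions. Writing $q(\phi)=e^{-\phi}-1$ as in Lemma~\ref{lem-compose}, we have $f=e^{-\phi}g=(1+q(\phi))g$, and the decisive point is that $\phi$, hence $q(\phi)$, depends only on $(t,x)$. Therefore the vector fields obey a Leibniz rule in which $\nb_v$ never differentiates $q(\phi)$:
\begin{align*}
\nb_v^{\al}f&=(1+q(\phi))\,\nb_v^{\al}g,\\
\nb_xf&=(1+q(\phi))\,\nb_xg+(\nb_xq(\phi))g=(1+q(\phi))\,\nb_xg-(1+q(\phi))(\nb_x\phi)g,\\
Yf&=(1+q(\phi))\,Yg+t(\nb_xq(\phi))g=(1+q(\phi))\,Yg-t(1+q(\phi))(\nb_x\phi)g,
\end{align*}
where I have used that each $Y_i=\pr_{v_i}+t\pr_{x_i}$ is a derivation, $Yq(\phi)=(\nb_v+t\nb_x)q(\phi)=t\nb_xq(\phi)$, and $\nb_xq(\phi)=-e^{-\phi}\nb_x\phi=-(1+q(\phi))\nb_x\phi$. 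The same three identities, read with $\nb_v^{\al}g$, $\nb_xg$, $Yg$ measured in the $\sig$-weighted norm, are exactly what will produce \eqref{fg4}--\eqref{fg6}; the factor $t$ appearing in \eqref{fg3} and \eqref{fg6} is precisely the one generated by $Yq(\phi)=t\nb_xq(\phi)$.

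The heart of the matter is the following product (multiplier) estimate: for any $(t,x)$-dependent function $a$, any $h$, and $N$ large,
\begin{align*}
\|a\,h\|_{\mathcal{G}^{\lm,N}_{s,\ell}}\les \|a\|_{\mathcal{G}^{\lm,N}_s}\|h\|_{\mathcal{G}^{\lm,N}_{s,\ell}},\qquad \|a\,h\|_{\mathcal{G}^{\lm,N}_{s,\sig,\ell}}\les \|a\|_{\mathcal{G}^{\lm,N}_s}\|h\|_{\mathcal{G}^{\lm,N}_{s,\sig,\ell}}.
\end{align*}
Its proof mirrors that of Lemma~\ref{lem-compose}. Since the components of $Z=(\nb_x,Y)$ mutually commute, the multinomial Leibniz rule gives
\begin{align*}
Z^{m+\beta}(ah)=\sum_{n'\le m+\beta}C^{n'}_{m+\beta}\,(Z^{n'}a)(Z^{m+\beta-n'}h),
\end{align*}
and because $a$ is $v$-independent the $\la v\ra$-weight (and, for the $\sig$-norm, the coefficients $\sig^{ij}(v)$) remain attached to the $h$-factor when the norm is unpacked. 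One then splits $n'=n+\beta'$ along the regularity threshold $N$, uses $a_{m,\lm,s}(t)C^n_m=b_{m,n,s}\,a_{n,\lm,s}(t)a_{m-n,\lm,s}(t)$ from Remark~\ref{Rmk: coefficients a-b} together with $b_{m,n,s}\les 1$ to factor the double series into $\|a\|_{\mathcal{G}^{\lm,N}_s}\|h\|_{\mathcal{G}^{\lm,N}_{s,\ell}}$ (resp. the $\sig$-weighted norm), and absorbs the finitely many low-order ($|\beta'|\le N$) derivatives landing on $a$ in $L^\infty_x$ via the $\tl H^N_x$ algebra property, exactly as in the proof of Lemma~\ref{lem-compose}. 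For the $\sig$-weighted version one additionally uses $\pr_{v_j}(ah)=a\,\pr_{v_j}h$, so the $\sig$-norm density of $ah$ is simply $|a|^2$ times that of $h$.

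With these two ingredients the corollary is routine. For \eqref{fg1} and \eqref{fg4}, apply the multiplier estimate with $a=q(\phi)$ to $\nb_v^{\al}f=(1+q(\phi))\nb_v^{\al}g$ and invoke Lemma~\ref{lem-compose} to replace $\|q(\phi)\|_{\mathcal{G}^{\lm,N}_s}$ by $C\|\phi\|_{\mathcal{G}^{\lm,N}_s}$. For \eqref{fg2}, \eqref{fg3}, \eqref{fg5}, \eqref{fg6}, apply the multiplier estimate once with $a=q(\phi)$ to the $(1+q(\phi))\nb_xg$ (resp. $(1+q(\phi))Yg$) term, and twice---peeling off first $1+q(\phi)$ and then $\nb_x\phi$, both $v$-independent---to the term $(1+q(\phi))(\nb_x\phi)g$; then use $1+\|q(\phi)\|_{\mathcal{G}^{\lm,N}_s}\les 1+\|\phi\|_{\mathcal{G}^{\lm,N}_s}$ and discard weight where permitted (since $\la v\ra\ge1$), i.e. $\|g\|_{\mathcal{G}^{\lm,N}_{s,\ell-2}}\le\|g\|_{\mathcal{G}^{\lm,N}_{s,\ell}}$ and $\|g\|_{\mathcal{G}^{\lm,N}_{s,\sig,\ell-2}}\le\|g\|_{\mathcal{G}^{\lm,N}_{s,\sig,\ell}}$. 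The hypothesis $\|\phi\|_{\mathcal{G}^{\lm,N}_s}\le\fr12$ enters only through Lemma~\ref{lem-compose}. The one genuine point of work is the multiplier estimate---keeping the velocity weight entirely on $h$ while transferring the $v$-independent factor $a$ through $L^\infty_x$, and resumming the combined unbounded ($m$) and bounded ($\beta$) multi-indices using the $b$-coefficient bounds---but this is the same mechanism already executed for a composed single factor in Lemma~\ref{lem-compose}, so no essentially new difficulty arises; everything else is algebra.
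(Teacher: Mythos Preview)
Your proposal is correct and follows essentially the same approach as the paper: you write out the identical algebraic decompositions for $f$, $\nb_xf$, and $Yf$ in terms of $q(\phi)$ and $g$, and then appeal to a Gevrey product estimate together with Lemma~\ref{lem-compose}. The only cosmetic difference is that the paper invokes the ready-made product estimates \eqref{product1} and \eqref{product2} from Lemma~\ref{lem-product} rather than sketching a bespoke multiplier estimate, but the mechanism (Leibniz expansion, $b_{m,n,s}$ combinatorics, and putting the $v$-independent factor in $L^\infty_x$) is the same.
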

\begin{proof}
    Since $g=e^{\phi}f$, we have $[\nb_v^{\alpha},e^{\phi}]=0$, 
\begin{align}\label{decom-f}
    f=q(\phi)g+g,\quad \nb_xf=-q(\phi)\nb_x\phi g-\nb_x\phi g+q(\phi)\nb_xg+\nb_xg,
\end{align}
and
\begin{align}
     Yf=-tq(\phi)\nb_x\phi g-t\nb_x\phi g+q(\phi)Yg+Yg.
\end{align}
Then \eqref{fg1}--\eqref{fg3} follows from the  the product estimate  \eqref{product1} and Lemma \ref{lem-compose}, and \eqref{fg4}--\eqref{fg6} follow from the  the product estimate  \eqref{product2} and Lemma \ref{lem-compose}, respectively.
\end{proof}

\begin{prop}\label{lem-Linfty-rho}
Assume that \eqref{res-final-1} and \eqref{res-final-2} hold. Then  under the bootstrap hypotheses \eqref{bd-en}--\eqref{H-phi},  for $t\in[0,\nu^{-\fr12}]$, there holds 
    \begin{align}\label{Linfty-rho}
        \|(\nb_x\rho, \nb_xM)\|_{L^\infty_t\mathcal{G}^{\lm,N}_s}\les \epsilon.
    \end{align}

    \begin{align}\label{decay-rhoM}
    &\|(\rho(t),M(t))\|_{\mathcal{G}_s^{\lambda,N/4}}\lesssim \fr{\epsilon}{\langle t\rangle^{N/2}},
    \end{align}
    and
    \begin{align}
        \|E(t)\|_{L^2}\lesssim \epsilon e^{-c_0\lambda_{\infty}\langle t\rangle^s}. 
    \end{align}
    As a consequence, the constant $\fr12$ on the right hand side of the bootstrap hypothesis \eqref{H-phi} can be improved to be  $\fr14$. 
\end{prop}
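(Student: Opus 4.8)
The plan is to recover all four conclusions from the Volterra representation of the density together with the enhanced-dissipation energy estimate \eqref{bd-en}, the density bootstrap bounds \eqref{bd-rho-1}--\eqref{bd-rho-2}, and the Penrose-stability solvability of the kernel equation (Proposition \ref{prop:kernel G}). The essential point is that \eqref{bd-rho-1}--\eqref{bd-rho-2} already give $L^2_t$-control of $\langle t\rangle^{\fr{3+a-s}{2}}(\rho,M)$ and of $\langle t\rangle^{\fr{1+a-s}{2}}w_\iota(t)(\rho,M)$ in $\mathcal{G}^{\lm,N}_s$, so the task is really an interpolation/Gr\"onwall bookkeeping exercise plus an elliptic gain of one $x$-derivative to pass from $\rho$ to $M$ and to $\nb_x\phi$.

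First I would establish \eqref{Linfty-rho}. Going back to the Volterra equation $\rho + \mathfrak{L}^{\mathrm{app}}_t\rho = \mathcal{N}(t)$, inverting via the kernel $G$ from Proposition \ref{prop:kernel G}, one gets $\rho(t) = \mathcal{N}(t) + \int_0^t G_k(t-\tau)\mathcal{N}(\tau)d\tau$, and similarly for the first moment $M$ (which satisfies an analogous closed equation after using $\pr_t\rho = -\nb_x\cdot M$). The right-hand side $\mathcal{N}$ is built from $N_a, N_b, N_c$; using the semigroup estimates of Section \ref{sec: Linear estimates} together with the product/paraproduct estimates and the bootstrap hypotheses on $g$ (via \eqref{bd-en}) and on the densities (via \eqref{bd-rho-1}--\eqref{bd-rho-2}), one controls $\|\langle t\rangle^{\frac{3+a-s}{2}}\mathcal{N}\|_{L^2_t\mathcal{G}^{\lm,N}_s}\lesssim\epsilon$; convolution with the integrable kernel $G$ then gives the same $L^2_t$ bound for $\rho$, reproving \eqref{bd-rho-1}. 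To upgrade to the $L^\infty_t$ bound \eqref{Linfty-rho} with one derivative, I would differentiate the Volterra equation in $x$ (equivalently work with $|\nb_x|\rho$, using that $|\nb_x|^{\frac32}$-control is what \eqref{H-rho1} supplies), and use that $\frac{3+a-s}{2}>\frac12$ so that $\langle t\rangle^{\frac{3+a-s}{2}}(\rho,M)\in L^2_t$ together with control of its time derivative (again via the equation) yields a uniform-in-time bound by the standard $\|h\|_{L^\infty_t}^2\lesssim \|h\|_{L^2_t}\|\pr_t h\|_{L^2_t}+\|h\|^2_{L^2_t}$ trick; the elliptic relation $E=-\nb_x(-\Dl_x)^{-1}\rho$ gains the derivative needed to compare with $M$.

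Next, \eqref{decay-rhoM}: here I would exploit that the enhanced-dissipation weights $w_\iota(t)=(\kappa_0\nu^{1/3}(1+t))^{\iota/2}$ in \eqref{bd-rho-2} are, for $t\le\nu^{-1/2}$, not yet exploited for decay — but one can instead lose Gevrey regularity to gain polynomial-in-$t$ decay, which is the standard "trade regularity for decay" in this circle of ideas: since $\rho$ is controlled in $\mathcal{G}^{\lm,N}_s$ and $N$ is large, the vector-field structure $Z^m=(\nb_x, Y)^m$ acting on a $v$-independent function picks up a factor $t^{|\bar m|}$ (as recorded right after \eqref{Z-commute}), so bounding $\|\rho\|_{\mathcal{G}^{\lm,N/4}_s}$ in terms of $\|\rho\|_{\mathcal{G}^{\lm,N}_s}$ costs $N/4$ derivatives but yields the factor $\langle t\rangle^{-N/2}$. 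Combined with \eqref{bd-rho-1} this gives \eqref{decay-rhoM}. The electric field bound then follows from $\|E\|_{L^2}\lesssim\||\nb_x|^{-1}\rho\|_{L^2}\lesssim\|\rho\|_{L^2}$ and the stronger fact (implicit in propagating $\mathcal{G}^{\lm,N}_s$ with the echo gain \eqref{lm-gap}) that $\rho$ enjoys a damping bound $\|e^{c_0\lambda_\infty\langle t\rangle^s}\rho\|_{L^2}\lesssim\epsilon$; concretely, one writes $e^{c_0\lambda_\infty\langle t\rangle^s}\lesssim\sum_m a_{m,\lm,s}(t)$-type series with $t^{|m|}$ matched against $\langle t\rangle^s$ Gevrey growth. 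Finally, improving $\frac12$ to $\frac14$ in \eqref{H-phi} is immediate once \eqref{Linfty-rho} holds: $\phi=(-\Dl_x)^{-1}\rho$ so $\|\phi\|_{\mathcal{G}^{\lm,N}_s}\lesssim\|\nb_x\rho\|_{\mathcal{G}^{\lm,N}_s}\lesssim\epsilon$, which is $\le\frac14$ for $\epsilon\le\epsilon_0$ small.

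The main obstacle I anticipate is not any single estimate but the bookkeeping of the Gevrey norms through the Volterra inversion: one must check that the kernel $G$ from Proposition \ref{prop:kernel G} does not degrade the Gevrey radius $\lambda(t)$ or the number of vector fields $N$, i.e., that $\|\int_0^t G(t-\tau)\mathcal{N}(\tau)d\tau\|_{\mathcal{G}^{\lm,N}_s}\lesssim\|\mathcal{N}\|_{L^2_t\mathcal{G}^{\lm,N}_s}$ with the time-weight $\langle t\rangle^{\frac{3+a-s}{2}}$ moved correctly past the convolution (this uses that $G$ decays exponentially, from Penrose, so the weight ratio $\langle t\rangle/\langle\tau\rangle$ is harmless). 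A secondary subtlety is that $M$ does not satisfy a clean Volterra equation on its own; one handles it by noting $\pr_t\rho=-\nb_x\cdot M$ so $\nb_x M$ is recovered from $\pr_t\rho$ and the equation, and the missing component of $M$ (the one not seen by $\nb_x\cdot$) is estimated directly from the semigroup representation $M = \int_0^t\int S_k(t-\tau)[\cdots]d\tau + \text{(initial)}$ exactly as for $\rho$. Once these two points are dispatched, the four displayed conclusions follow by combining the pieces above.
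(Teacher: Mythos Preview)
Your approach is substantially more complicated than the paper's and, as written, has a derivative-counting gap. The paper does \emph{not} go through the Volterra equation or the $L^2_t$ density bounds at all for this proposition. Instead it argues directly, pointwise in $t$: since $\rho(t,x)=\int_{\R^3} f(t,x,v)\sqrt{\mu}\,dv$, distributing $(k,kt)^{m+\beta}$ across $f$ and $\sqrt{\mu}$ via Plancherel gives immediately
\[
\|\rho(t)\|_{\mathcal{G}^{\lm,N}_s}\lesssim \|f_{\ne}(t)\|_{\mathcal{G}^{\lm,N}_{s,0}}\|\sqrt{\mu}\|_{\mathcal{G}^{\lm,N}_{s,0}}.
\]
Then the conjugation $f=(e^{-\phi}-1)g+g$ together with Lemma~\ref{lem-compose} and the product estimate yields $\|f_{\ne}\|_{\mathcal{G}^{\lm,N}_{s,0}}\lesssim (1+\|\phi\|_{\mathcal{G}^{\lm,N}_s})\|g\|_{\mathcal{G}^{\lm,N}_{s,0}}$, so by \eqref{bd-en} one gets $\|\rho(t)\|_{\mathcal{G}^{\lm,N}_s}\le C\sqrt{{\rm C}_g}\,\eps\,(1+\|\rho(t)\|_{\mathcal{G}^{\lm,N}_s})$, and absorption for $\eps$ small closes. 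For $\nb_x\rho$ one repeats with $\nb_x f$, using that $\|\nb_x g\|_{\mathcal{G}^{\lm,N}_{s,\ell-2}}$ is already part of the energy $\mathcal{E}_\ell$ in \eqref{eq:E_l^n}. This gives $L^\infty_t$ control in one step, with no Volterra inversion, no time-derivative trick, and no appeal to \eqref{bd-rho-1}--\eqref{bd-rho-2}.

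Your $L^2_t\to L^\infty_t$ upgrade would require $\partial_t(\nb_x\rho)=-\nb_x\nb_x\cdot M$ in $L^2_t\mathcal{G}^{\lm,N}_s$, i.e., two extra $x$-derivatives on $M$ at top regularity; \eqref{bd-rho-1} provides no such control, and \eqref{H-rho1} only reaches $\mathcal{G}^{\lm,N-1}_s$. So the approach as written loses a derivative precisely where the paper's direct route does not. Your treatment of \eqref{decay-rhoM} by trading regularity for decay is correct and matches the paper's recursion $\|(\rho,M)\|_{\mathcal{G}^{\lm,N'}_s}\le\kappa^{-1}\langle t\rangle^{-1}\|(\rho,M)\|_{\mathcal{G}^{\lm,N'+1}_s}$ (from $\langle t\rangle^2\le |k,kt|^2$), and the improvement of \eqref{H-phi} is fine once \eqref{Linfty-rho} is in hand.
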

\begin{proof}
Let us first  bound $\|\rho(t)\|_{\mathcal{G}^{\lm,N}_s}$.
    Recalling that
    \[
    \rho(t,x)=\int_{\R^3}f(t,x,v)\sqrt{\mu}dv,
    \]
    for any $m,\beta\in\N^6$ with $|\beta|\le N$, by Plancherel's theorem,  we have
\begin{align*}
    &\big|(k,kt)^{m+\beta}\hat{\rho}_k(t)\big|\\
    =&\Big|\sum_{n\le m,\beta'\le\beta}C_{m}^nC_{\beta}^{\beta'}\int_{\R^3}(k,\eta+kt)^{n+\beta}\mathcal{F}_{x,v}[f]_k(t,\eta)(0,-\eta)^{m-n+\beta-\beta'}\overline{\mathcal{F}_v[\sqrt{\mu}]}(\eta)d\eta\Big|\\
    \le&\sum_{n\le m,\beta'\le\beta}C_{m}^n C_{\beta}^{\beta'}\big\|\mathcal{F}_x[f^{(n+\beta)}]_k\big\|_{L^2_{v}}\big\|\pr_v^{\bar{m}-\bar{n}+\bar{\beta}-\bar{\beta'}}\sqrt{\mu}\big\|_{L^2_v}.
\end{align*}
Consequently, recalling \eqref{def-Gnorm'}, using Lemma \ref{lem-summable}, and noting that 
\begin{align*}
    a_{m,\lm,s}(t)\les a_{\bar{m},\lm,s}(t),\quad {\rm for\ \ all}\quad 0<s\le1,
\end{align*}
one deduces that
\begin{align}\label{up:rho-f}
    \|\rho(t)\|^2_{\mathcal{G}^{\lm,N}_s}
\le\nn&C\sum_{k\in\N^3_*}\sum_{m\in\N^6,|\beta|\le N}\bigg(\sum_{n\le m,\beta'\le\beta}b_{m,n,s}\Big(\kappa^{|\beta'|}a_{n,\lm,s}(t)\big\|\mathcal{F}_x[f^{(n+\beta')}]_k(t)\big\|_{L^2_{v}}\Big)\\
\nn&\times  \Big(\kappa^{|\beta-\beta'|}a_{m-n,\lm,s}(t)\big\|\pr_v^{\bar{m}-\bar{n}+\bar{\beta}-\bar{\beta'}}\sqrt{\mu}\big\|_{L^2_v}\Big)\bigg)^2\\
\le\nn&C\sum_{k\in\N^3_*}\sum_{m\in\N^6,|\beta|\le N}\kappa^{2|\beta|}a^2_{m,\lm,s}(t)\big\|\mathcal{F}_x[f^{(m+\beta)}]_k(t)\big\|^2_{L^2_{v}} \|\sqrt{\mu}\|^2_{\mathcal{G}^{\lm, N}_{s,0}}\\
\le&C\|f_{\ne}(t)\|^2_{\mathcal{G}^{\lm,N}_{s,0}}.
\end{align}
The bootstrap hypothesis \eqref{H-phi} enables us to use \eqref{fg1} to bound $\|f_{\ne}(t)\|_{\mathcal{G}^{\lm,N}_{s,0}}$. Then thanks to the bootstrap hypothesis \eqref{bd-en},  we are led to
\begin{align*}
    \|\rho(t)\|_{\mathcal{G}^{\lm,N}_s}\le C\big(1+\| \phi\|_{\mathcal{G}^{\lm,N}_{s}}\big)\|g\|_{\mathcal{G}^{\lm,N}_{s,0}}\le C\sqrt{{\rm C}_g}\eps \big(1+\|\rho(t)\|_{\mathcal{G}^{\lm,N}_s}\big).
\end{align*}
Then we have
\begin{align}\label{bd-Linfty-rho}
    \|\rho\|_{L^\infty_t\mathcal{G}^{\lm,N}_s}\le 2C\sqrt{{\rm C_g}}\eps,
\end{align}
provided $\eps$ is so small that $C\sqrt{{\rm C}_g}\eps\le \fr12$. Clearly, \eqref{bd-Linfty-rho} implies that 
\begin{align}\label{bd-Linfty-phi}
\|\nb_x\phi\|_{L^\infty_t\mathcal{G}^{\lm,N}_s}\les\eps.
\end{align}
On the other hand, similar to \eqref{up:rho-f}, we arrive at
\begin{align*}
    \|\nb_x\rho(t)\|_{\mathcal{G}^{\lm,N}_{s}}\les \|\nb_xf(t)\|_{\mathcal{G}^{\lm,N}_{s,0}}.
\end{align*}
Combining this with \eqref{fg2}, and using \eqref{bd-Linfty-phi}, we obtain the bound for $\nb_x\rho$ in \eqref{Linfty-rho} immediately. For $\nb_xM$, recalling that $M(t,x)=\int_{\R^3}f(t,x,v)v\sqrt{\mu}dv$, it is clear to see that the above estimates for $\nb_x\rho$ applies to $\nb_xM$, and thus \eqref{Linfty-rho} holds.

To prove \eqref{decay-rhoM}, recalling \eqref{def-Gnorm'}, and noting that 
\[
\la t\ra^2\le|k,kt|^2=\sum_{j=1}^6 (k,kt)^{2e_j}, \quad{\rm for}\quad k\in\Z^3_*,
\]
one easily deduces that
\begin{align}\label{decay-rho-recursion}
    \|(\rho(t),M(t))\|_{\mathcal{G}^{\lm,N'}_s}\le\fr{\kappa^{-1}}{\la t\ra}\|(\rho(t),M(t))\|_{\mathcal{G}^{\lm,N'+1}_s},\quad {\rm for\ \ any}\quad N'\in\N.
\end{align}
Combining this with \eqref{Linfty-rho}, \eqref{decay-rhoM} follows immediately.
\end{proof}

\section{The Landau operator in vector-field Gevrey regularity}

In this section, we introduce some basic estimates of the Landau operator in the Gevrey norms built from the vector fields. The aim of the following lemma is to obtain estimates on the linear collision operator that arises when studying the time-derivative of energies. 

Let us first recall some basic properties of $\Phi^{ij}$ in the Landau operator. 
\begin{rem}
    For any fixed $i$ or $j$, there holds
\begin{align}\label{concel1}
\sum_{j}\Phi^{ij}(v-\tl{v})(v_j-\tl{v}_j)=\sum_{i}\Phi^{ij}(v-\tl{v})(v_i-\tl{v}_i)=0.
\end{align}
Moreover, a direct calculation gives us that the Fourier transform of $\Phi^{ij}$ is 
\begin{align}\label{eq:Fourier-Phi}
    \widehat{\Phi^{ij}}(\xi)=8\pi \frac{\xi_i\xi_j}{|\xi|^4}. 
\end{align}
\end{rem}

The main coercive estimate for the linear Landau operator $L$ in Gevrey space is stated as follows.
\begin{lem}\label{lem-coercive-L}
    Let $\zeta>0, M_1,M_2\in\N$,  $\ell>2N_1$,  and $\al\in\N^3, \beta\in\N^6, m\in\N^6$.  Then for all $0<\lm\le\fr{1}{2e}\max\{M_{\sig},M_{\mu}\}$ and $0<\kappa\le 1$, there exist positive constants $C_{s,N_1,N_2, M_\sig, M_\mu}$, $C_{N_1,N_2}$ and $C_{\zeta}$, such that
\begin{align}\label{coer-Landau}
    \nn&\sum_{m\in \N^6}\sum_{|\al|\le N_{1},|\beta|\le N_{2}}\kappa^{2|\al|+2|\beta|}a_{m,\lm,s}^2(t)\left\la\pr_v^\al Z^{\beta+m}Lg, \pr_v^\al g^{(m+\beta)} \right\ra_{\ell-2|\al|} \\
    \nn\ge&\big(1-\lm C_{s,N_1,N_2, M_\sig, M_\mu}-\kappa{C_{N_1,N_2}}-\zeta\big)\\
    \nn&\times\sum_{m\in \N^6}\sum_{|\al|\le N_{1},|\beta|\le N_{2}}\kappa^{2|\al|+2|\beta|}a_{m,\lm,s}^2(t)\big\|\pr_v^\al  g^{(m+\beta)}\big\|_{\sig,\ell-2|\al|}^2\\
    &-C_\zeta \sum_{m\in \N^6}\sum_{|\beta|\le N_{2}}\kappa^{2|\beta|}a_{m,\lm,s}^2(t)\big\|  g^{(m+\beta)}\big\|_{L^2_{x,v}(|v|\le2\zeta)}^2.
\end{align}

\end{lem}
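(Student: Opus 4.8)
The plan is to reduce everything to the single-level coercivity estimate for the linear Landau operator already established in the Sobolev setting (Guo's coercivity, quantified in the appendix as \eqref{coercive-0}), and then to sum over the vector-field hierarchy. The key structural point is the commutation identity \eqref{Z-commute}: since each $Z = (\nabla_x, Y)$ commutes with $\partial_t + v\cdot\nabla_x$, it does \emph{not} commute with the Landau operator $L$, because $L$ acts in $v$ and $Y = \nabla_v + t\nabla_x$ contains $\nabla_v$. So the first step is to write, for fixed multi-indices $\al,\beta,m$,
\[
\pr_v^\al Z^{\beta+m} L g = \pr_v^\al L Z^{\beta+m}g + \pr_v^\al [Z^{\beta+m},L]g,
\]
and to analyze the commutator. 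Because $\nabla_x$ commutes with $L$ entirely, only the $Y$-part of $Z^{\beta+m}$ contributes, and since $Y = \nabla_v + t\nabla_x$, the commutator $[Y_i, L]$ is a first-order-in-$v$ operator with coefficients built from $\sigma^{ij}, \sigma^i$ and their $v$-derivatives (and the nonlocal piece $\mathcal K$). The crucial input here is Lemma \ref{lem: sig_ij}: the coefficients $\sigma^{ij}$ are smooth with Gevrey-type control on their derivatives, with the geometric decay built into $\Gamma_s$; this is exactly what lets one absorb the commutator into the $|\cdot|_{\sigma,\ell}$-dissipation at the cost of factors $\lm C_{s,\dots}$ and $\kappa C_{\dots}$, or into the compactly-supported $L^2(|v|\le 2\zeta)$ error term.

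Concretely, I would proceed as follows. \emph{Step 1:} Expand $Z^{\beta+m}Lg$ via the Leibniz rule into a main term $L(Z^{\beta+m}g) = L g^{(m+\beta)}$ plus commutator terms; each commutator term distributes some of the $Y$-derivatives onto the coefficients of $L$ and some onto $g$. \emph{Step 2:} For the main term, apply the fixed-level coercivity estimate \eqref{coercive-0} together with the velocity-weight manipulations of the appendix (the inequalities relating $\langle v\rangle^\ell$-weighted $L$-inner-products to $|\cdot|_{\sigma,\ell}$ modulo a $\zeta$-localized remainder); the weight $\langle v\rangle^{\ell - 2|\al|}$ and the extra $\pr_v^\al$ are handled exactly as in Guo's weighted estimates, producing the leading $\big(1 - \zeta\big)\|\pr_v^\al g^{(m+\beta)}\|_{\sigma,\ell-2|\al|}^2$ term minus the $C_\zeta\|g^{(m+\beta)}\|_{L^2(|v|\le 2\zeta)}^2$ term. \emph{Step 3:} For the commutator terms, use the Gevrey-coefficient bounds from Lemma \ref{lem: sig_ij} and the combinatorial identities of Remark \ref{Rmk: coefficients a-b} (which convert $a_{m,\lm,s}(t)C_m^n$ into products $b_{m,n,s}a_{n,\lm,s}(t)a_{m-n,\lm,s}(t)$, and the summability Lemma \ref{lem-summable}) to see that each transferred derivative on a coefficient comes with a factor $\lm$ (from one power of $\lm(t)$ reassigned to the coefficient's Gevrey norm) or $\kappa$ (from one power of $\kappa$ reassigned in the finite-order hierarchy $|\beta|\le N_2$, $|\al|\le N_1$); Cauchy–Schwarz in the $\sigma$-norm then bounds the whole commutator contribution by $(\lm C_{s,N_1,N_2,M_\sig,M_\mu} + \kappa C_{N_1,N_2})$ times the dissipation sum plus, again, a $\zeta$-localized $L^2$ remainder. \emph{Step 4:} Collect terms to obtain \eqref{coer-Landau}.

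The main obstacle is \textbf{Step 3}: controlling the commutator $[Z^{\beta+m}, L]$ uniformly in the full vector-field hierarchy while keeping the Gevrey radius $\lm(t)$ essentially frozen. The subtlety is twofold. First, the nonlocal part $\mathcal K$ of $L$ involves convolution against $\Phi^{ij}*(\mu^{1/2}\,\cdot\,)$ and $\Phi^{ij}*(\mu^{1/2}\pr_{v_j}\,\cdot\,)$, so commuting $Y$ past $\mathcal K$ produces terms where $\nabla_v$ lands on the Gaussian-weighted kernel; one must show these are actually \emph{gaining} in velocity localization (thanks to $\mu^{1/2}$) so that they fall into the compact-support error, not into the top-order dissipation — this is where $\ell > 2N_1$ is used, to have enough polynomial weight to spare. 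Second, one must verify that the combinatorial coefficients genuinely close: each time a derivative is moved from $g^{(m+\beta)}$ onto a $\sigma$-coefficient, the ratio of $\Gamma_s$-factors and combination numbers must be summable against the geometric series coming from the $\lm$ (or $\kappa$) powers, uniformly in $m$; this is precisely the content of the definitions of $b_{m,n,s}$ and Lemma \ref{lem-summable}, but assembling it cleanly across both the $\pr_v^\al$ ($|\al|\le N_1$) and $Z^\beta$ ($|\beta|\le N_2$) layers and the infinite $m$-sum simultaneously requires care. Everything else is a routine, if lengthy, adaptation of Guo's weighted energy estimates to the $\langle v\rangle^{\ell-2|\al|}$-weighted, vector-field-conjugated setting.
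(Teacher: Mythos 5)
Your plan follows essentially the same route as the paper's proof: split off the commutator $[\pr_v^\al Z^{m+\beta}, L]$, apply Guo's weighted coercivity to the main term $L\pr_v^\al g^{(m+\beta)}$, and absorb the commutator via the Gevrey bounds on $\sigma^{ij},\sigma^i$ (Lemma \ref{lem: sig_ij}) together with the summability in the combinatorial weights (Lemma \ref{lem-summable}). The paper indeed decomposes the commutator exactly as you indicate in Step~3, into $\pr_v^\al Z^\beta[Z^m,L]g$ (whose derivatives are paid for by factors of $\lm$ via the $a_{m,\lm,s}$ coefficients) and $[\pr_v^\al Z^\beta, L]g^{(m)}$ (paid for by powers of $\kappa$ since $|\al|\le N_1$, $|\beta|\le N_2$ are finite), which is what produces the $\lm C_{s,N_1,N_2,M_\sig,M_\mu} + \kappa C_{N_1,N_2}$ prefactor.

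Two clarifications where your description departs from what actually happens. First, the $\mathcal K$-commutator terms are \emph{not} absorbed into the compactly supported $L^2$ error; because of the $\mu^{1/2}$ kernels, the Cauchy--Schwarz step in $(v,\tl v)$ (cf.\ the display culminating in \eqref{es-IK}) lands directly in the $\sigma$-dissipation with the $\la v\ra^{-1}$ decay already supplied by the Gaussians, so they join the $\lm$- and $\kappa$-absorbable pieces. The hypothesis $\ell>2N_1$ is just to keep the weight exponents $\ell-2|\al|$ admissible for the $|\cdot|_{\sigma,\ell-2|\al|}$-norms, not to create slack for the nonlocal part. Second, your sketch glosses over a genuine subtlety in the $\mathcal A$-commutator when exactly one derivative lands on $\sigma^{ij}$ (the case $|\al'|+|\beta'|+|n|=1$): that single term does not have enough polynomial decay in $v$ to bound directly by $\sigma$-norms, and the paper recovers it via an extra integration by parts using the symmetry $\sigma^{ij}=\sigma^{ji}$ to move a derivative onto $\la v\ra^{2\ell-4|\al|}\pr_{v_l}\sigma^{ij}$ (see the computations leading to \eqref{est-IA11'} and \eqref{i-b-p}); without this step the bound would lose a half-power of $\la v\ra$ relative to the dissipation. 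You should make this explicit rather than fold it into ``routine adaptation.''
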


\begin{proof} 
To begin with, we write
\begin{align*}
    &\left\la \pr_v^\al Z^{m+\beta} Lg, \pr_v^\al g^{(m+\beta)} \right\ra_{\ell-2|\al|}\\
    =&\left\la L\pr_v^\al g^{(m+\beta)}, \pr_v^\al g^{(m+\beta)} \right\ra_{\ell-2|\al|}+\left\la [\pr_v^\al Z^{m+\beta}, L] g, \pr_v^\al g^{(m+\beta)} \right\ra_{\ell-2|\al|}.
\end{align*}
If $|\al|+|\beta|+|m|=0$, the second term on the right-hand side above vanishes. Otherwise, we further write
\begin{align}\label{commutator-L}
    [\pr_v^\al Z^{m+\beta}, L] g\nn=&\pr_v^\al  Z^\beta Z^m Lg-\pr_v^\al Z^\beta  L  Z^m g+\pr_v^\al Z^\beta  L  Z^m g-L\pr_v^\al Z^\beta Z^m  g\\
    =&\pr_v^\al Z^\beta[ Z^m, L]g+[\pr_v^\al Z^\beta, L]g^{(m)}.
\end{align}
Recalling the definitions of $\mathcal{A}$ and $\mathcal{K}$, and noting that $L=-\mathcal{A}-\mathcal{K}$, by Lemma 5 of \cite{Guo2002}, we have
\begin{align}\label{L2-Landau}
    \nn&\left\la L\pr_v^\al g^{(m+\beta)}, \pr_v^\al g^{(m+\beta)} \right\ra_{\ell-2|\al|}\\
  \nn=&\left\|\pr_v^\al  g^{(m+\beta)}\right\|_{\sig,\ell-2|\al|}^2+\int_{\T^3\times\R^3} \sig^{ij}\pr_{v_j}\pr_v^\al  g^{(m+\beta)} \pr_v^\al  g^{(m+\beta)} \pr_{v_i}\la v\ra^{2\ell-4|\al|}dvdx\\
  \nn&-\int_{\T^3\times\R^3} \pr_{v_i}\sig^{i}\pr_v^\al  g^{(m+\beta)} \pr_v^\al  g^{(m+\beta)} \la v\ra^{2\ell-4|\al|}dvdx\\
  \nn&-\int_{\T^3\times\R^3}\mathcal{K}\big[\pr_v^\al g^{(m+\beta)}\big]\pr_v^\al g^{(m+\beta)}\la v\ra^{2\ell-4|\al|}dvdx\\
  \ge&(1-\fr{\zeta}{2})\big\|\pr_v^\al  g^{(m+\beta)}\big\|_{\sig,\ell-2|\al|}^2-C_{\zeta}\big\|\bar{\chi}_{\zeta}\pr_v^\al  g^{(m+\beta)}\big\|_{L^2_{x,v}}^2,
\end{align}
where $\bar{\chi}$ is chosen as \eqref{def-chi}.

If $|\al|>0$,  by Sobolev space interpolation and \eqref{coercive}, we are led to
\begin{align}\label{low-order}
    \big\|\bar{\chi}_{\zeta}\pr_v^\al  g^{(m+\beta)}\big\|_{L^2_{x,v}}^2
    \le\zeta'\big\|\pr_v^{\al}g^{(m+\beta)}\big\|_{\sig,\ell-2|\al|}^2+C_{\zeta'}\big\| g^{(m+\beta)}\big\|_{L^2_{v}(|v|\le2\zeta)}^2.
\end{align}

\noindent{\bf Step I: treatments of $\pr_v^\al Z^\beta[ Z^m, L]g$.}
More precisely, we write
\begin{align*}
    [Z^m,L]g=\sum_{1\le i\le2}{\bf c}_{\mathcal{A},i}^{m}+\sum_{1\le i\le4}{\bf c}_{\mathcal{K},i}^{m},
\end{align*}
where
\begin{align*}
    {\bf c}_{\mathcal{A},1}^{m}[g]&=-\sum_{\substack{n\le m\\
    |n|\ge1}}C_{m}^n\pr_{v_i}\big(Z^n\sig^{ij}\pr_{v_j}g^{(m-n)}\big),\\
    {\bf c}_{\mathcal{A},2}^{m}[g]&=\sum_{\substack{n\le m\\
    |n|\ge1}}C_{m}^n Z^n\big( \sig^{ij}v_iv_j - \pr_{v_i}\sig^{i} \big) g^{(m-n)},
\end{align*}
and
\begin{align*}
    {\bf c}_{\mathcal{K},1}^{m}[g]&=\sum_{\substack{0<n\in m\\ n'\le n}}C_m^nC_n^{n'}\pr_{v_i}\int_{\R^3}\Phi^{ij}(v-\tl{v})Z^{n'}\mu^{\fr12}(v)Z^{n-n'}\mu^{\fr12}(\tl{v})\pr_{v_j}g^{(m-n)}(t,x,\tl{v})d\tl{v},\\
    {\bf c}_{\mathcal{K},2}^{m}[g]&=\sum_{\substack{0<n\in m\\ n'\le n}}C_m^nC_n^{n'}\pr_{v_i}\int_{\R^3}\Phi^{ij}(v-\tl{v})Z^{n'}\mu^{\fr12}(v)Z^{n-n'}\big(\tl{v}_j\mu^{\fr12}(\tl{v})\big)g^{(m-n)}(t,x,\tl{v})d\tl{v},\\
    {\bf c}_{\mathcal{K},3}^{m}[g]&=-\sum_{\substack{0<n\in m\\ n'\le n}}C_m^nC_n^{n'}\int_{\R^3}\Phi^{ij}(v-\tl{v})Z^{n'}\big(v_i\mu^{\fr12}(v)\big)Z^{n-n'}\mu^{\fr12}(\tl{v})\pr_{v_j}g^{(m-n)}(t,x,\tl{v})d\tl{v},\\
    {\bf c}_{\mathcal{K},4}^{m}[g]&=-\sum_{\substack{0<n\in m\\ n'\le n}}C_m^nC_n^{n'}\int_{\R^3}\Phi^{ij}(v-\tl{v})Z^{n'}\big(v_i\mu^{\fr12}(v)\big)Z^{n-n'}\big(\tl{v}_j\mu^{\fr12}(\tl{v})\big) g^{(m-n)}(t,x,\tl{v})d\tl{v}.
\end{align*}
{$\diamond$ \it Estimates for $\left\la \pr_v^\al Z^\beta {\bf c}_{\mathcal{A},1}^{m}[g], \pr_v^\al g^{(m+\beta)} \right\ra_{\ell-2|\al|}$.} Integrating by parts, we have
\begin{align*}
    &\left\la \pr_v^\al Z^\beta {\bf c}_{\mathcal{A},1}^{m}[g], \pr_v^\al g^{(m+\beta)} \right\ra_{\ell-2|\al|}\\
    =&\sum_{\substack{n\le m\\
    |n|\ge1}}C_{m}^n\sum_{\substack{\al'\le\al\\
    \beta'\le\beta}}C_\al^{\al'}C_\beta^{\beta'}\int_{\mathbb{T}^3\times\R^3}\pr_v^{\al'}Z^{n+\beta'}\sig^{ij}\pr_{v_j}\pr_v^{\al-\al'} g^{(m-n+\beta-\beta')}\\ &\qquad\qquad\qquad\qquad\qquad\qquad\times\pr_{v_i}\pr_v^\al g^{(m+\beta)}\la v\ra^{2\ell-4|\al|}dvdx\\
    &+\sum_{\substack{n\le m\\
    |n|\ge1}}C_{m}^n\sum_{\substack{\al'\le\al\\
    \beta'\le\beta}}C_\al^{\al'}C_\beta^{\beta'}\int_{\mathbb{T}^3\times\R^3}\pr_v^{\al'}Z^{n+\beta'}\sig^{ij}\pr_{v_j}\pr_v^{\al-\al'} g^{(m-n+\beta-\beta')}\\ &\qquad\qquad\qquad\qquad\qquad\qquad\times\pr_v^\al g^{(m+\beta)} \pr_{v_i}\la v\ra^{2\ell-4|\al|}dvdx\\
    =&{\rm I}_{\mathcal{A},1;(1)}^m+{\rm I}_{\mathcal{A},1;(2)}^m.
\end{align*}
Note that for all $\al\in\N^3, \beta\in\N^6$ with $|\al|+|\beta|\le N_*:=N_1+N_2$, and $\al'\le\al, \beta'\le\beta$, there exists $C_{s, N_*}\ge (\fr{3N_*}{s})^{\fr{3N_*}{s}}$, such that
\begin{align}\label{losing ridus}
\fr{\Gamma_s(\al'+\bar{\beta'}+\bar{n})}{\Gamma_s(n)}\le C\prod_{i=1}^3\left(\fr{(n_{i+3}+\al'_i+\beta'_{i+3})!}{n_{i+3}!}\right)^{\fr{1}{s}}\fr{(n_{i+3}+1)^{12}}{(n_{i+3}+\al'_i+\beta'_{i+3}+1)^{12}}\le C_{s, N_*} e^{|n|}.
\end{align}
In the following, the constant $C_{s, N_*}$ may change from line to line, but always depends only on $N_*$ and $s$.

If $|\al'|+|\beta'|+|n|\ge2$, combining \eqref{eq: est sigma_ij} with \eqref{losing ridus} and \eqref{equiv-gamma}, we find that
\begin{align*}
    \left|{\rm I}_{\mathcal{A},1;(1)}^m\right|\le& C_{s,N_*}\sum_{\substack{\al'\le\al\\
    \beta'\le\beta}}C_\al^{\al'}C_\beta^{\beta'} M^{|\al'|+|\beta'|}_{\sig}\sum_{\substack{n\le m\\
    |n|\ge1}}C_{m}^n (eM_{\sig})^{|n|}\fr{\Gamma_s(|n|)}{C_{|n|}^n}\\
    &\times \left\|\la v\ra^{-\fr32}\nb_v\pr_v^{\al-\al'} g^{(m-n+\beta-\beta')} \right\|_{L^2_{x,v}(\ell-2|\al|)} \left\|\la v\ra^{-\fr32}\nb_v\pr_v^{\al} g^{(m+\beta)} \right\|_{L^2_{x,v}(\ell-2|\al|)}.
\end{align*}
It follows from this, \eqref{coercive}, and Lemmas \ref{lem-summable}, \ref{lem-convolution} that
\begin{align}\label{est-IA11}
    \nn&\sum_{\substack{m\in\N^6}}a_{m,\lm,s}^2(t)\big|{\rm I}_{\mathcal{A},1;(1)}^m\big|\\
    \nn\le&C_{s,N_*}\sum_{\substack{\al'\le\al\\
    \beta'\le\beta}}C_\al^{\al'}C_\beta^{\beta'} M^{|\al'|+|\beta'|}_{\sig}\sum_{\substack{m\in\N^6}}\sum_{\substack{n\le m\\
    |n|\ge1}}b_{m,n,s} (\lm eM_{\sig})^{|n|}\\
    \nn&\times \Big(a_{m-n,\lm,s}(t)\left\|\pr_v^{\al-\al'} g^{(m-n+\beta-\beta')} \right\|_{\sig,\ell-2|\al|+2|\al'|}\Big)
    \Big(a_{m,\lm,s}(t)\left\|\pr_v^{\al} g^{(m+\beta)} \right\|_{\sig,\ell-2|\al|}\Big)\\
    \nn\le&C_{s,N_*}\sum_{\substack{\al'\le\al\\
    \beta'\le\beta}}C_\al^{\al'}C_\beta^{\beta'} M^{|\al'|+|\beta'|}_{\sig}\Bigg(\sum_{\substack{n\in\N^6, |n|\ge1}}(\lm eM_{\sig})^{2|n|}\Bigg)^{\fr12}\\
    &\times \left\|a_{m,\lm,s}(t)\big\|\pr_v^{\al-\al'} g^{(m+\beta-\beta')} \big\|_{\sig,\ell-2|\al|+2|\al'|}\right\|_{L^2_m} \left\|a_{m,\lm,s}(t)\big\|\pr_v^{\al} g^{(m+\beta)} \big\|_{\sig,\ell-2|\al|}\right\|_{L^2_m}.
\end{align}

If $|\al'|+|\beta'|+|n|=1$, keeping in mind that $|n|\ge1$, we must have $|\al'|=|\beta'|=0$, $|n|=1$ and $Z^n=\pr_{v_l}$ for some $l\in \{1, 2, 3\}$. Thus, thanks to the fact that  $\sig^{ij}=\sig^{ji}$, integrating by parts, and using \eqref{eq: est sigma_ij} and \eqref{coercive}, we are led to
\begin{align*}
{\rm I}^m_{\mathcal{A},1;(1)}=&\sum_{|n|=1}C_m^n\int_{\mathbb{T}^3\times\mathbb{R}^3}\pr_{v_l}\sig^{ij}\pr_{v_j}\pr_v^\al g^{(m-n+\beta)}\pr_{v_l}\pr_{v_i}\pr_v^\al  g^{(m-n+\beta)}\la v\ra^{2\ell-4|\al|}dvdx\\
=&-\fr12\sum_{|n|=1}C_m^n\int_{\mathbb{T}^3\times\mathbb{R}^3}\pr_{v_l}\left(\pr_{v_l}\sig^{ij} \la v\ra^{2\ell-4|\al|}\right) \left(\pr_{v_j}\pr_v^\al g^{(m-n+\beta)}\pr_{v_i}\pr_v^\al  g^{(m-n+\beta)}\right)dvdx\\
\le&M_\sig^2\sum_{|n|=1}C_m^n\Gamma_s(2n)\big\|\la v\ra^{-\fr32}\nb_v \pr_v^\al  g^{(m-n+\beta)}\big\|_{L^2_{x,v}(\ell-2|\al|)}^2\\
\le&C M_\sig^2\sum_{|n|=1}C_m^n\Gamma_s(2n)\big\| \pr_v^\al  g^{(m-n+\beta)}\big\|_{\sig, \ell-2|\al|}^2.
\end{align*}
Then
\begin{align}\label{est-IA11'}
\nn&\sum_{\substack{m\in\N^6}}a_{m,\lm,s}^2(t)\big|{\rm I}_{\mathcal{A},1;(1)}^m\big|\\
\nn\le&C (\lm M_\sig)^2\sum_{\substack{m\in\N^6}}\sum_{\substack{n\le m, |n|=1}}\fr{C_m^n (C^m_{|m|})^2}{(C_{|m-n|}^{m-n})^2}\fr{\Gamma_s(2n)\Gamma_s(|m-n|)^2}{\Gamma_s(|m|)^2} \\
\nn&\times a_{m-n,\lm,s}^2(t)\big\| \pr_v^\al  g^{(m-n+\beta)}\big\|_{\sig, \ell-2|\al|}^2\\
\nn\le&C (\lm M_\sig)^2\sum_{\substack{|n|=1\\ n\in\in\N^6}}\sum_{\substack{m\ge n}} a_{m-n,\lm,s}^2(t)\big\| \pr_v^\al  g^{(m-n+\beta)}\big\|_{\sig, \ell-2|\al|}^2\\
\le&C (\lm M_\sig)^2\sum_{\substack{m\in\N^6 }} a_{m,\lm,s}^2(t)\big\| \pr_v^\al  g^{(m+\beta)}\big\|_{\sig, \ell-2|\al|}^2,
\end{align}
where we have used
\begin{align*}
   \sup_{m\ge n, |n|=1} \fr{C_m^n (C^m_{|m|})^2}{(C_{|m-n|}^{m-n})^2}\fr{\Gamma_s(2n)\Gamma_s(|m-n|)^2}{\Gamma_s(|m|)^2}\le C.
\end{align*}

For the estimates of ${\rm I}_{\mathcal{A},1;(2)}^m$, similar to \eqref{est-IA11}, we have
\begin{align}\label{est-IA12}
    \nn&\sum_{\substack{m\in\N^6}}a_{m,\lm,s}^2(t)\big|{\rm I}_{\mathcal{A},1;(2)}^m\big|\\
    \nn\le&C_{s,N_*}\sum_{\substack{\al'\le\al\\
    \beta'\le\beta}}C_\al^{\al'}C_\beta^{\beta'} M^{|\al'|+|\beta'|}_{\sig}\Bigg(\sum_{\substack{n\in\N^6,|n|\ge1}}(\lm eM_{\sig})^{2|n|}\Bigg)^{\fr12}\\
    \nn&\times\left\|a_{m,\lm,s}(t)\big\|\la v\ra^{-\fr32}\nb_v\pr_v^{\al-\al'} g^{(m+\beta-\beta')} \big\|_{L^2_{x,v}(\ell-2|\al|)}\right\|_{L^2_m}\\
    \nn&\times\left\|a_{m,\lm,s}(t)\big\|\la v\ra^{-\fr32}\pr_v^{\al} g^{(m+\beta)} \big\|_{L^2_{x,v}(\ell-2|\al|)}\right\|_{L^2_m}\\
    \nn\le&\lm eM_{\sig}C_{s,N_*}\left\|a_{m,\lm,s}(t)\big\|\pr_v^{\al} g^{(m+\beta)} \big\|_{\sig,\ell-2|\al|}\right\|_{L^2_m}\\
    &\times\sum_{\substack{\al'\le\al\\
    \beta'\le\beta}}C_\al^{\al'}C_\beta^{\beta'} M^{|\al'|+|\beta'|}_{\sig}\left\|a_{m,\lm,s}(t)\big\|\pr_v^{\al-\al'} g^{(m+\beta-\beta')} \big\|_{\sig,\ell-2|\al|+2|\al'|}\right\|_{L^2_m},
\end{align}
provided $\lm eM_\sig\le\fr12$.

\noindent{$\diamond$ \it Estimates for $\left\la \pr_v^\al Z^\beta {\bf c}_{\mathcal{A},2}^{m}[g], \la v\ra^{2\ell-4|\al|}\pr_v^\al Z^{\beta}g^{(m)} \right\ra_{x,v}$.} We write
\begin{align*}
    {\rm I}^m_{\mathcal{A},2}=&\sum_{\substack{n\le m\\
    |n|\ge1}}C_{m}^n\sum_{\substack{\al'\le\al\\
    \beta'\le\beta}}C_\al^{\al'}C_\beta^{\beta'}\int_{\mathbb{T}^3\times\R^3}\pr_v^{\al'}Z^{n+\beta'}\big(\sig^{ij}v_iv_j-\pr_{v_i}\sig^i\big)\pr_v^{\al-\al'} g^{(m-n+\beta-\beta')}\\ &\qquad\qquad\qquad\qquad\qquad\qquad\times\pr_v^\al g^{(m+\beta)}\la v\ra^{2\ell-4|\al|}dvdx.
\end{align*}
Thanks to \eqref{eq: est sigma_ij} and \eqref{coercive}, similar to \eqref{est-IA11} and \eqref{est-IA12}, we arrive at
\begin{align}\label{est-IA2}
    \nn&\sum_{\substack{m\in\N^6}}a_{m,\lm,s}^2(t)\big|{\rm I}_{\mathcal{A},2}^m\big|\\
    \nn\le&C_{s,N_*}\sum_{\substack{\al'\le\al\\
    \beta'\le\beta}}C_\al^{\al'}C_\beta^{\beta'} M^{|\al'|+|\beta'|}_{\sig}\Bigg(\sum_{\substack{n\in\N^6,|n|\ge1}}(\lm eM_{\sig})^{2|n|}\Bigg)^{\fr12}\\
    \nn&\times\left\|a_{m,\lm,s}(t)\big\|\la v\ra^{-1}\pr_v^{\al-\al'} g^{(m+\beta-\beta')} \big\|_{L^2_{x,v}(\ell-2|\al|)}\right\|_{L^2_m}\\
    \nn&\times\left\|a_{m,\lm,s}(t)\big\|\la v\ra^{-1}\pr_v^{\al} g^{(m+\beta)} \big\|_{L^2_{x,v}(\ell-2|\al|)}\right\|_{L^2_m}\\
    \nn\le&\lm eM_{\sig}C_{s,N_*}\left\|a_{m,\lm,s}(t)\big\|\pr_v^{\al} g^{(m+\beta)} \big\|_{\sig,\ell-2|\al|}\right\|_{L^2_m}\\
    &\times\sum_{\substack{\al'\le\al\\
    \beta'\le\beta}}C_\al^{\al'}C_\beta^{\beta'} M^{|\al'|+|\beta'|}_{\sig}\left\|a_{m,\lm,s}(t)\big\|\pr_v^{\al-\al'} g^{(m+\beta-\beta')} \big\|_{\sig,\ell-2|\al|+2|\al'|}\right\|_{L^2_m},
\end{align}
provided $\lm eM_\sig\le\fr12$.

\noindent{$\diamond$ \it Estimates for $\left\la \pr_v^\al Z^\beta{\bf c}_{\mathcal{K},1}^{m}[g],\pr_v^\al g^{(m+\beta)}\right\ra_{\ell-2|\al|}$.} We further split it into two parts by integrating by parts:
\begin{align*}
    &\left\la \pr_v^\al Z^\beta{\bf c}_{\mathcal{K},1}^{m}[g],\pr_v^\al g^{(m+\beta)}\right\ra_{\ell-2|\al|}\\
    =&-\sum_{\substack{\al'\le\al,\al''\le\al'\\
    \beta'\le\beta,\beta''\le\beta'}}C_\al^{\al'}C_{\al'}^{\al''}C_\beta^{\beta'}C_{\beta'}^{\beta''}\sum_{\substack{0<n\le m\\ n'\le n}}C_m^nC_n^{n'}\int_{\mathbb{T}^3\times\mathbb{R}^3}\int_{\R^3}\Phi^{ij}(v-\tl{v})\\
    &\times\pr_v^{\al''}Z^{n'+\beta''}\mu^{\fr12}(v)\pr_v^{\al'-\al''}Z^{n-n'+\beta'-\beta''}\mu^{\fr12}(\tl{v})\\
    &\times\pr_{v_j}\pr_v^{\al-\al'} g^{(m-n+\beta-\beta')}(t,x,\tl{v})d\tl{v} \pr_{v_i}\pr_v^\al g^{(m+\beta)}(t,x,v) \la v\ra^{2\ell-4|\al|}dvdx\\
    &-\sum_{\substack{\al'\le\al,\al''\le\al'\\
    \beta'\le\beta,\beta''\le\beta'}}C_\al^{\al'}C_{\al'}^{\al''}C_\beta^{\beta'}C_{\beta'}^{\beta''}\sum_{\substack{0<n\le m\\ n'\le n}}C_m^nC_n^{n'}\int_{\mathbb{T}^3\times\mathbb{R}^3}\int_{\R^3}\Phi^{ij}(v-\tl{v})\\
    &\times\pr_v^{\al''}Z^{n'+\beta''}\mu^{\fr12}(v)\pr_v^{\al'-\al''}Z^{n-n'+\beta'-\beta''}\mu^{\fr12}(\tl{v})\\
    &\times\pr_{v_j}\pr_v^{\al-\al'} g^{(m-n+\beta-\beta')}(t,x,\tl{v})d\tl{v} \pr_v^\al g^{(m+\beta)}(t,x,v) \pr_{v_i}\la v\ra^{2\ell-4|\al|}dvdx\\
    =&{\rm I}_{\mathcal{K},1;(1)}^m+{\rm I}_{\mathcal{K},1;(2)}^m.
\end{align*}
In view of \eqref{losing ridus}, \eqref{e-prmu}, and \eqref{bound-product-Gamma}, for all $|\al|+|\beta|\le N_*, n\in\N^6$, and $\al'\le\al, \beta'\le\beta$
\begin{align}
\nn&\sum_{\substack{\al''\le \al'\\
\beta''\le\beta'\\
n'\le n}}C_{\al'}^{\al''}C_{\beta'}^{\beta''}C_n^{n'}\left|\pr_v^{\al''}Z^{n'+\beta''}\mu^{\fr12}(v)\pr_v^{\al'-\al''}Z^{n-n'+\beta'-\beta''}\mu^{\fr12}(\tl{v})\right|\\
    \nn\le& \sum_{\substack{\al''\le \al'\\
\beta''\le\beta'\\
n'\le n}}C_{\al'}^{\al''}C_{\beta'}^{\beta''}C_n^{n'}M_{\mu}^{|\al'|+|\beta'|+|n|}\Gamma_1(\al''+\tl{\beta}''+\tl{n}')\Gamma_1(\al'-\al''+\tl{\beta}'-\tl{\beta}''+\tl{n}-\tl{n}')\mu^{\fr14}(v)\mu^{\fr14}(\tl{v})\\
    \nn\le&C_{s,N_*}M_{\mu}^{|\al'|+|\beta'|+|n|} \sum_{n'\le n}C_n^{n'}\Gamma_1(n')\Gamma_1(n-n')e^{|n'|+|n-n'|}\mu^{\fr14}(v)\mu^{\fr14}(\tl{v})\\
    \nn\le&C_{s,N_*}M_{\mu}^{|\al'|+|\beta'|+|n|}\Gamma_1(n)e^{|n|}\mu^{\fr14}(v)\mu^{\fr14}(\tl{v}).
\end{align}
Thus,  by Cauchy-Schwarz inequality first w.r.t. the $(v,\tl{v})$ variable and then to the $x$ variable, we get
\begin{align}\label{cs-vtlv}
    \big|{\rm I}_{\mathcal{K},1;(1)}^m \big|
    \nn\le&C_{s,N_*}\sum_{\substack{\al'\le\al\\
    \beta'\le\beta}}C_\al^{\al'}C_\beta^{\beta'}M_{\mu}^{|\al'|+|\beta'|}\sum_{\substack{0<n\le m}}C_m^n \left(eM_{\mu}\right)^{|n|}\Gamma_1(n)\Bigg\|\fr{\mu^{\fr18}(v)\mu^{\fr18}(\tl{v})}{|v-\tl{v}|}
    \Bigg\|_{L^2(\R^3\times\R^3)}\\
    \nn&\times \left\|\big|\nb_{v}\pr_v^{\al-\al'} g^{(m-n+\beta-\beta')}(t,x,\tl{v})\big| \right.\\
    \nn&\qquad\qquad\qquad\cdot\left.\big|\nb_v\pr_v^\al g^{(m+\beta)}(t,x,v)\big|  \la v\ra^{2\ell-4|\al|} \mu^{\fr18}(v)\mu^{\fr18}(\tl{v})\right\|_{L^1(\T^3;L^2(\R^3\times\R^3))}\\
    \nn\le&C_{s,N_*}\sum_{\substack{\al'\le\al\\
    \beta'\le\beta}}C_\al^{\al'}C_\beta^{\beta'}M_{\mu}^{|\al'|+|\beta'|}\sum_{\substack{0<n\le m}}C_m^n \left(eM_{\mu}\right)^{|n|}\Gamma_1(n)\\
    &\times \big\|\pr_v^{\al-\al'} g^{(m-n+\beta-\beta')} \big\|_{\sig,\ell-2|\al|+2|\al'|}\big\|\pr_v^{\al} g^{(m+\beta)} \big\|_{\sig,\ell-2|\al|}.
\end{align}
Then similar to \eqref{est-IA11}, for $\lm eM_{\mu}\fr12$, we have
\begin{align}\label{es-IK}
    \nn&\sum_{m\in\N^6}a_{m,\lm,s}^2(t)\big|{\rm I}_{\mathcal{K},1;(1)}^m \big|\\
    \nn\le&(\lm eM_{\mu})C_{s,N_*} \left\|a_{m,\lm,s}(t)\big\|\pr_v^{\al} g^{(m+\beta)} \big\|_{\sig,\ell-2|\al|}\right\|_{L^2_m}\sum_{\substack{\al'\le\al\\
    \beta'\le\beta}}C_\al^{\al'}C_\beta^{\beta'} M_{\mu}^{|\al'|+|\beta'|}\\
    &\times\left\|a_{m,\lm,s}(t)\big\|\pr_v^{\al-\al'} g^{(m+\beta-\beta')} \big\|_{\sig,\ell-2|\al|+2|\al'|}\right\|_{L^2_m}.
\end{align}
The rest commutator terms stemming from $\mathcal{K}$ can be treated in a similar manner, and  all of them can be bounded by the right hand side of \eqref{es-IK}.  We omit the details to avoid unnecessary repetition. 

Combining the estimates \eqref{est-IA11}--\eqref{est-IA2} and \eqref{es-IK}, for $\lm e\max\{M_\sig,M_\mu\}\le\fr12$, there exists a constant $C_{s,N_1,N_2,M_\sig, M_\mu}$, depending on $s, N_1, N_2, M_\sig$ and $M_\mu$, such that 
\begin{align}\label{es-comZm}
    \nn&\sum_{m\in \N^6}\sum_{|\al|\le N_{1},|\beta|\le N_{2}}\kappa^{2|\al|+2|\beta|}a_{m,\lm,s}^2(t)\left\la\pr_v^\al Z^{\beta}[ Z^m, L]g,  \pr_v^\al g^{(m+\beta)} \right\ra_{\ell-2|\al|}\\
    \nn\le&\lm C_{s,N_*}\max\{M_\sig,M_\mu\}\sum_{\substack{|\al|\le N_1,
    |\beta|\le N_2}}\Big(\kappa \max\{M_{\sig},M_{\mu}\}\Big)^{|\al|+|\beta|} \\
    \nn &\times\sum_{m\in \N^6}\sum_{|\al|\le N_{1},|\beta|\le N_{2}}\kappa^{2|\al|+2|\beta|}a_{m,\lm,s}^2(t)\big\|\pr_v^{\al} g^{(m+\beta)} \big\|_{\sig,\ell-2|\al|}^2\\
    \le&\lm C_{s,N_1,N_2, M_\sig,M_\mu} \sum_{m\in \N^6}\sum_{|\al|\le N_{1},|\beta|\le N_{2}}\kappa^{2|\al|+2|\beta|}a_{m,\lm,s}^2(t)\big\|\pr_v^{\al} g^{(m+\beta)} \big\|_{\sig,\ell-2|\al|}^2.
\end{align}

\noindent{\bf Step II: treatments of $[\pr_v^\al Z^\beta, L]g^{(m)}$.} Now
\begin{align*}
[\pr_v^\al Z^\beta, L]g^{(m)}=\sum_{1\le i\le 2}{\bf c}_{\mathcal{A},i}^{\al,\beta}+\sum_{1\le i\le4}{\bf c}_{\mathcal{K},i}^{\al,\beta},
\end{align*}
where
\begin{align*}
    {\bf c}_{\mathcal{A},1}^{\al,\beta}=&-\sum_{\substack{\al'\le\al,\beta'\le\beta\\
    |\al'|+|\beta'|\ge1}}C_{\al}^{\al'}C_{\beta}^{\beta'}\pr_{v_i}\Big(\pr_v^{\al'}Z^{\beta'}\sig^{ij}\pr_{v_j}\pr_v^{\al-\al'}g^{(m+\beta-\beta')}\Big),\\
    {\bf c}_{\mathcal{A},2}^{\al,\beta}=&\sum_{\substack{\al'\le\al,\beta'\le\beta\\
    |\al'|+|\beta'|\ge1}}C_{\al}^{\al'}C_{\beta}^{\beta'}\pr_v^{\al'}Z^{\beta'}\Big(\sig^{ij}v_iv_j-\pr_{v_i}\sig^{i}\Big)\pr_v^{\al-\al'}g^{(m+\beta-\beta')},
\end{align*}
and
\begin{align*}
    {\bf c}_{\mathcal{K},1}^{\al,\beta}&=\sum_{\substack{\al'\le\al,\beta'\le\beta\\
    |\al'|+|\beta'|\ge1}}\sum_{\substack{\al''\le\al'\\
    \beta''\le\beta'}}C_{\al}^{\al'}C_{\al'}^{\al''}C_{\beta}^{\beta'}C_{\beta'}^{\beta''}\pr_{v_i}\int_{\R^3}\Phi^{ij}(v-\tl{v})\pr_v^{\al''}Z^{\beta''}\mu^{\fr12}(v)\pr_v^{\al'-\al''}Z^{\beta'-\beta''}\mu^{\fr12}(\tl{v})\\
    &\qquad\qquad\qquad\qquad\qquad\qquad\qquad\qquad\times\pr_{v_j}\pr_v^{\al-\al'}g^{(m+\beta-\beta')}(t,x,\tl{v})d\tl{v},\\
    {\bf c}_{\mathcal{K},2}^{\al,\beta}&=\sum_{\substack{\al'\le\al,\beta'\le\beta\\
    |\al'|+|\beta'|\ge1}}\sum_{\substack{\al''\le\al'\\
    \beta''\le\beta'}}C_{\al}^{\al'}C_{\al'}^{\al''}C_{\beta}^{\beta'}C_{\beta'}^{\beta''}\pr_{v_i}\int_{\R^3}\Phi^{ij}(v-\tl{v})\pr_v^{\al''}Z^{\beta''}\mu^{\fr12}(v)\pr_v^{\al'-\al''}Z^{\beta'-\beta''}\big(\tl{v}_j\mu^{\fr12}(\tl{v})\big)\\
    &\qquad\qquad\qquad\qquad\qquad\qquad\qquad\qquad\times \pr_v^{\al-\al'}g^{(m+\beta-\beta')}(t,x,\tl{v})d\tl{v},\\
    {\bf c}_{\mathcal{K},3}^{\al,\beta}&=-\sum_{\substack{\al'\le\al,\beta'\le\beta\\
    |\al'|+|\beta'|\ge1}}\sum_{\substack{\al''\le\al'\\
    \beta''\le\beta'}}C_{\al}^{\al'}C_{\al'}^{\al''}C_{\beta}^{\beta'}C_{\beta'}^{\beta''}\int_{\R^3}\Phi^{ij}(v-\tl{v})\pr_v^{\al''}Z^{\beta''}\big(v_i\mu^{\fr12}(v)\big)\pr_v^{\al'-\al''}Z^{\beta'-\beta''}\mu^{\fr12}(\tl{v})\\
    &\qquad\qquad\qquad\qquad\qquad\qquad\qquad\qquad\times\pr_{v_j}\pr_v^{\al-\al'}g^{(m+\beta-\beta')}(t,x,\tl{v})d\tl{v},\\
    {\bf c}_{\mathcal{K},4}^{\al,\beta}&=-\sum_{\substack{\al'\le\al,\beta'\le\beta\\
    |\al'|+|\beta'|\ge1}}\sum_{\substack{\al''\le\al'\\
    \beta''\le\beta'}}C_{\al}^{\al'}C_{\al'}^{\al''}C_{\beta}^{\beta'}C_{\beta'}^{\beta''}\int_{\R^3}\Phi^{ij}(v-\tl{v})\pr_v^{\al''}Z^{\beta''}\big(v_i\mu^{\fr12}(v)\big)\pr_v^{\al'-\al''}Z^{\beta'-\beta''}\big(\tl{v}_j\mu^{\fr12}(\tl{v})\big)\\
    &\qquad\qquad\qquad\qquad\qquad\qquad\qquad\qquad\times\pr_v^{\al-\al'}g^{(m+\beta-\beta')}(t,x,\tl{v})d\tl{v}.
\end{align*}

Similar to the proof of Lemma 4.6 in \cite{chaturvedi2023vlasov}, we have
\begin{align}\label{es-cA1-beta}
    \nn&\left\la {\bf c}_{\mathcal{A},1}^{\al,\beta}, \pr_v^{\al}g^{(m+\beta)}\right\ra_{\ell-2|\al|}\\
   \nn=& \sum_{\substack{\al'\le\al,\beta'\le\beta\\
    |\al'|+|\beta'|\ge1}}C_{\al}^{\al'}C_{\beta}^{\beta'}\int_{\T^3\times\R^3}\pr_v^{\al'}Z^{\beta'}\sig^{ij}\pr_{v_j}\pr_v^{\al-\al'}g^{(m+\beta-\beta')}\pr_{v_i}\pr_v^\al g^{(m+\beta)} \la v\ra^{2\ell-4|\al|}dvdx\\
    \nn&+\sum_{\substack{\al'\le\al,\beta'\le\beta\\
    |\al'|+|\beta'|\ge1}}C_{\al}^{\al'}C_{\beta}^{\beta'}\int_{\T^3\times\R^3}\pr_v^{\al'}Z^{\beta'}\sig^{ij}\pr_{v_j}\pr_v^{\al-\al'}g^{(m+\beta-\beta')}\pr_v^\al g^{(m+\beta)} \pr_{v_i}\la v\ra^{2\ell-4|\al|}dvdx\\
   \nn \le& C\sum_{\substack{\al'\le\al,\beta'\le\beta\\
    |\al'|+|\beta'|\ge1}}\big\|\pr_v^{\al-\al'}g^{(m+\beta-\beta')}\big\|_{\sig,\ell-2|\al|{ +2|\al'|}}\big\|\pr_v^{\al}g^{(m+\beta)}\big\|_{\sig,\ell-2|\al|}\\
    &+C\sum_{\al'\le\al,|\al'|=1}\big\|\pr_v^{\al-\al'} g^{(m+\beta)}\big\|_{\sig,\ell-2|\al|{ +2|\al'|}}^2+C\sum_{\beta'\le\beta,|\beta'|=1}\big\|\pr_v^{\al} g^{(m+\beta-\beta')}\big\|_{\sig,\ell-2|\al|}^2,
\end{align}
and
\begin{align}\label{es-cA2-beta}
    \nn&\left\la {\bf c}_{\mathcal{A},2}^{\al,\beta}, \pr_v^{\al}g^{(m+\beta)}\right\ra_{\ell-2|\al|}\\
    \nn\le&C\sum_{\substack{\al'\le\al,\beta'\le\beta\\
    |\al'|+|\beta'|\ge1}}\int_{\mathbb{T}^3\times\R^3}\la v\ra^{2\ell-4|\al|-2}\big|\pr_v^{\al-\al'}g^{(m+\beta-\beta')}\big|\big|\pr_v^{\al}g^{(m+\beta)}\big| dvdx\\
    \le&C\sum_{\substack{\al'\le\al,\beta'\le\beta\\
    |\al'|+|\beta'|\ge1}}\big\|\pr_v^{\al-\al'}  g^{(m+\beta-\beta')}\big\|_{\sig, \ell-2|\al|{ +2|\al'|}}\big\|\pr_v^\al  g^{(m+\beta)}\big\|_{\sig, \ell-2|\al|}.
\end{align}
Thus, for $i=1,2$, there exists a constant $C_{N_1,N_2}$ depending only on $N_1$ and $N_2$, such that
\begin{align}\label{es-cAi}
    \nn&\sum_{m\in\N^6}\sum_{|\al|\le N_{1},|\beta|\le N_{2}}\kappa^{2|\al|+2|\beta|}a_{m,\lm,s}^2(t)\left\la {\bf c}_{\mathcal{A},i}^{\al,\beta}, \pr_v^\al g^{(m+\beta)}\right\ra_{\ell-2|\al|}\\
    \le&\kappa C_{N_1,N_2}\sum_{m\in \N^6}\sum_{|\al|\le N_{1},|\beta|\le N_{2}}\kappa^{2|\al|+2|\beta|}a_{m,\lm,s}^2(t)\big\|\pr_v^\al  g^{(m+\beta)}\big\|_{\sig, \ell-2|\al|}^2.
\end{align}

Similar to \eqref{cs-vtlv}, for $i=1,2,3,4$, we find that
\begin{align}\label{es-cK-beta}
    \nn&\left\la {\bf c}_{\mathcal{K},i}^{\al,\beta}, \pr_v^{\al}g^{(m+\beta)}\right\ra_{\ell-2|\al|}\\
    \le& C\sum_{\substack{\al'\le\al,\beta'\le\beta\\
    |\al'|+|\beta'|\ge1}}\big\|\pr_v^{\al-\al'}  g^{(m+\beta-\beta')}\big\|_{\sig,\ell-2|\al|+2|\al'|}\big\|\pr_v^\al  g^{(m+\beta)}\big\|_{\sig,\ell-2|\al|}.
\end{align}
Thus, \eqref{es-cAi} still holds with ${\bf c}_{\mathcal{A},i}^{\al,\beta}$ replaced by ${\bf c}_{\mathcal{K},i}^{\al,\beta}$, $i=1,2,3,4$.

Taking $\zeta'=\fr{\zeta}{2C_{\zeta}}$ in \eqref{low-order} with $C_{\zeta}$ the constant appearing in \eqref{L2-Landau}, then we conclude  from \eqref{L2-Landau}, \eqref{low-order}, \eqref{es-comZm} and \eqref{es-cAi}  that \eqref{coer-Landau} holds.
\end{proof}

\begin{rem}\label{rem-Y/t}
If $\pr_v^\al$ is replaced by $(1+t)^{-1}Y^\al$ with $|\al|=1$, then \eqref{coer-Landau} still holds with minor changes. Indeed, one can follow {\bf Step I} and {\bf Step II}  line by line to get the corresponding estimates for the commutators $(1+t)^{-1}Y^\al Z^\beta[Z^m,L]g$ and $(1+t)^{-1}[Y^\al Z^\beta,L]Z^m g$. The only difference lies in the treatment of the lower-order term in \eqref{L2-Landau}. More precisely, similar to \eqref{low-order}, using the fact $Y^\al=\pr_v^\al+t\pr_x^\al$ due to $|\al|=1$, similar to \eqref{low-order},   we have
\begin{align}\label{low-order-Y}
   \nn &(1+t)^{-2}\big\|\bar{\chi}_{\zeta}Y^\al  g^{(m+\beta)}\big\|_{L^2_{v}}^2\le 2(1+t)^{-2}\big\|\bar{\chi}_{\zeta}\pr_v^\al  g^{(m+\beta)}\big\|_{L^2_{v}}^2+2\big\|\bar{\chi}_{\zeta}\pr_x^\al  g^{(m+\beta)}\big\|_{L^2_{v}}^2 \\
    \nn\le&\fr{\zeta'}{2}(1+t)^{-2}\big|\pr_v^{\al} g^{(m+\beta)}\big|_{\sig,\ell-2|\al|}^2+C_{\zeta'}\big\| g^{(m+\beta)}\big\|_{L^2_{v}(|v|\le2\zeta)}^2+2\big\|\nb_x  g^{(m+\beta)}\big\|_{L^2_{v}(|v|\le2\zeta)}^2\\
    \nn\le& \zeta'(1+t)^{-2}\big|Y^{\al} g^{(m+\beta)}\big|_{\sig,\ell-2|\al|}^2+\zeta'\big|\pr_x^{\al} g^{(m+\beta)}\big|_{\sig,\ell-2|\al|}^2\\
    &+C_{\zeta'}\big\| g^{(m+\beta)}\big\|_{L^2_{v}(|v|\le2\zeta)}^2+2\big\|\nb_x  g^{(m+\beta)}\big\|_{L^2_{v}(|v|\le2\zeta)}^2.
\end{align}
\end{rem}

\begin{coro}\label{coro-cross}
Let $\ell>2$, and $\lm$ and $\kappa$ be chosen as in Lemma \ref{lem-coercive-L}. Then  there exist  positive constants $C_{s,N,M_\sig,M_\mu}$, and $C_N$, and a universal constant $C$, such that
    \begin{align}\label{es-inner}
\nn&\sum_{m\in\N^6}\sum_{\substack{\beta\in\N^6\\|\beta|\le N}}\kappa^{2|\beta|+1}a_{m,\lm,s}^2(t)\bigg(\left\la Z^{m+\beta}L\nb_{x}g,  \nb_{v}g^{(m+\beta)}\right\ra_{\ell-2}\\
\nn&\quad\quad\quad\quad\quad\quad\quad\quad\quad+ \left\la \nb_{v}Z^{m+\beta} Lg, \nb_{x}g^{(m+\beta)}\right\ra_{\ell-2}\bigg)\\
\le&\big(C+\lm C_{s,N,M_\sig, M_\mu}+\kappa C_N\big) \|\nb_xg\|_{\mathcal{G}^{\lm,N}_{s,\sig,\ell-2}}\sum_{|\al|\le1}\|\kappa\pr_v^\al g\|_{\mathcal{G}^{\lm,N}_{s,\sig,\ell-2|\al|}}.
    \end{align}
\end{coro}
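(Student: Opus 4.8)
The plan is to reduce the corollary directly to Lemma \ref{lem-coercive-L}. Observe that the two inner products being summed are exactly the cross-term one obtains when one applies the coercive estimate for $L$ with the vector field $\pr_v^\al$, $|\al|=1$, replaced by the ``mixed'' quantity $\nb_v$ paired against $\nb_x$. Concretely, for each fixed $m,\beta$ I would first symmetrize: using $\la Z^{m+\beta}L\nb_x g,\nb_v g^{(m+\beta)}\ra_{\ell-2}+\la \nb_v Z^{m+\beta}Lg,\nb_x g^{(m+\beta)}\ra_{\ell-2}$ and moving $Z^{m+\beta}$ onto the other factor where needed, this equals
$$
\la L\nb_x g^{(m+\beta)},\nb_v g^{(m+\beta)}\ra_{\ell-2}+\la L\nb_v g^{(m+\beta)},\nb_x g^{(m+\beta)}\ra_{\ell-2}+\text{(commutator terms)}.
$$
The commutator terms are of exactly the form $\la \nb_v Z^\beta[Z^m,L]g,\nb_x g^{(m+\beta)}\ra$, $\la[\nb_v Z^\beta,L]g^{(m)},\nb_x g^{(m+\beta)}\ra$ and their mirror images, which were estimated term-by-term in Steps I and II of the proof of Lemma \ref{lem-coercive-L} (see \eqref{es-comZm}, \eqref{es-cAi}, \eqref{es-cK-beta}); after summing against $\kappa^{2|\beta|+1}a_{m,\lm,s}^2(t)$ and applying Cauchy–Schwarz in $m$ (using Lemmas \ref{lem-summable}, \ref{lem-convolution} as in \eqref{est-IA11}–\eqref{es-IK}), they contribute at most $(\lm C_{s,N,M_\sig,M_\mu}+\kappa C_N)\|\nb_x g\|_{\mathcal{G}^{\lm,N}_{s,\sig,\ell-2}}\sum_{|\al|\le1}\|\kappa\pr_v^\al g\|_{\mathcal{G}^{\lm,N}_{s,\sig,\ell-2|\al|}}$, which is absorbed into the right-hand side of \eqref{es-inner}.

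It remains to handle the two ``leading'' terms $\la L\nb_x g^{(m+\beta)},\nb_v g^{(m+\beta)}\ra_{\ell-2}$ and $\la L\nb_v g^{(m+\beta)},\nb_x g^{(m+\beta)}\ra_{\ell-2}$. For these I would invoke the pointwise structure of $L=-\mathcal{A}-\mathcal{K}$ exactly as in \eqref{L2-Landau}: integrating by parts, each such inner product is bounded by $C|\nb_x g^{(m+\beta)}|_{\sig,\ell-2}\,|\nb_v g^{(m+\beta)}|_{\sig,\ell-2}$ plus lower-order pieces controlled by $|\nb_x g^{(m+\beta)}|_{\sig,\ell-2}\,|g^{(m+\beta)}|_{\sig,\ell-2}$ (the nonlocal $\mathcal{K}$ part is handled with the same Hilbert–Schmidt bound on $\mu^{1/8}(v)\mu^{1/8}(\tl v)/|v-\tl v|$ used in \eqref{cs-vtlv}, and the $\mathcal A$ part with \eqref{eq: est sigma_ij} and \eqref{coercive}). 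This is where the universal constant $C$ on the right-hand side of \eqref{es-inner} comes from — it is not small, but that is acceptable since in the energy estimate this cross term carries the small prefactor $\kappa\nu^{1/3}$ (cf. \eqref{eq:E_l^n}). Summing $\kappa^{2|\beta|+1}a_{m,\lm,s}^2(t)$ over $m$, $|\beta|\le N$ and applying Cauchy–Schwarz in the summation variables turns the product of $(n,\ell)$-level $\sigma$-norms into the product of the two Gevrey $\sigma$-norms appearing in \eqref{es-inner}, using $\|g\|_{\mathcal{G}^{\lm,N}_{s,\sig,\ell-2}}\lesssim \sum_{|\al|\le1}\|\kappa\pr_v^\al g\|_{\mathcal{G}^{\lm,N}_{s,\sig,\ell-2|\al|}}$ for the lower-order piece.

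The main obstacle I anticipate is purely bookkeeping: one must be careful that when $Z^{m+\beta}$ is moved across $L$ to produce the ``clean'' leading terms, the vector fields $\nb_x$ and $\nb_v$ are not of the same type as the $\pr_v^\al$ used in Lemma \ref{lem-coercive-L} (in particular $\nb_x$ does not commute nicely with the velocity weights $\la v\ra^{2\ell-4}$, but since $\nb_x$ acts only in $x$ it commutes with $\la v\ra$, which is what saves us), and that the asymmetry between the $\ell-2$ and $\ell$ weights on the $\nb_v^{\al}$ pieces is tracked consistently — this is exactly the weight shift $\ell-2|\al|$ built into \eqref{def-dissipation-nl}. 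Beyond this, no new analytic input is needed: every inequality required has already been established in the proof of Lemma \ref{lem-coercive-L} and Remark \ref{rem-Y/t}.
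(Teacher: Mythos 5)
Your proposal is correct and follows essentially the same route as the paper: decompose into leading terms (bounded directly via the structure of $L$, as in Corollary \ref{coro-upL-1}) plus commutator terms (estimated by the techniques of Steps I and II of Lemma \ref{lem-coercive-L}), with the universal constant $C$ on the leading term being harmless thanks to the $\kappa\nu^{1/3}$ prefactor in the energy. The one subtlety worth flagging is that the $|n|=1$ (or $|\beta'|=1$) piece of the $\mathcal{A}$-commutator is \emph{not} literally a term from Step I: because the cross pairing $\pr_{x_l}g$ against $\pr_{v_l}g$ is not symmetric, one must add the two mirror-paired contributions ${\bf C}^m_{\mathcal{A},1;(1)}+{\bf C}^m_{\mathcal{A},1;(2)}$ (cf.\ \eqref{i-b-p}) before integrating by parts via $\sigma^{ij}=\sigma^{ji}$ — a small but genuinely new step compared with Lemma \ref{lem-coercive-L}, where both factors carried the same $\pr_v^\alpha$.
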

\begin{proof}
We first write the cross terms as follows:
    \begin{align}\label{cross-terms}
    \nn&\left\la Z^{m+\beta}L\pr_{x_j}g,  \la v\ra^{2\ell-4}\pr_{v_j}g^{(m+\beta)}\right\ra_{x,v}+ \left\la \pr_{v_j}Z^{m+\beta} Lg, \la v\ra^{2\ell-4}\pr_{x_j}g^{(m+\beta)}\right\ra_{x,v}\\
    \nn=&\left\la L\pr_{x_j}g^{(m+\beta)},  \la v\ra^{2\ell-4}\pr_{v_j}g^{(m+\beta)}\right\ra_{x,v}+ \left\la \pr_{v_j} Lg^{(m+\beta)}, \la v\ra^{2\ell-4}\pr_{x_j}g^{(m+\beta)}\right\ra_{x,v}\\
    \nn&+\left\la Z^\beta[Z^{m},L]\pr_{x_j}g,  \la v\ra^{2\ell-4}\pr_{v_j}g^{(m+\beta)}\right\ra_{x,v}+\left\la [Z^{\beta},L]\pr_{x_j}g^{(m)},  \la v\ra^{2\ell-4}\pr_{v_j}g^{(m+\beta)}\right\ra_{x,v}\\
    &+ \left\la \pr_{v_j}Z^\beta[Z^{m}, L]g, \la v\ra^{2\ell-4}\pr_{x_j}g^{(m+\beta)}\right\ra_{x,v}+ \left\la \pr_{v_j}[Z^{\beta}, L]g^{(m)}, \la v\ra^{2\ell-4}\pr_{x_j}g^{(m+\beta)}\right\ra_{x,v}.
\end{align}
By Lemma \ref{coro-upL-1}, we have
\begin{align*}
    &\left\la L\pr_{x_j}g^{(m+\beta)},  \pr_{v_j}g^{(m+\beta)}\right\ra_{\ell-2}+ \left\la \pr_{v_j} Lg^{(m+\beta)}, \pr_{x_j}g^{(m+\beta)}\right\ra_{\ell-2}\\
    \le&C \big\| \pr_{x_j}g^{(m+\beta)}\big\|_{\sig,\ell-2}\left(\big\|g^{(m+\beta)}\big\|_{\sig,\ell-2}+\big\| \nb_{v}g^{(m+\beta)}\big\|_{\sig,\ell-2}\right).
\end{align*}
Then 
\begin{align}\label{cross-main}
    \nn& \sum_{m\in\N^6}\sum_{\substack{\beta\in\N^6\\ |\beta|\le N}}\kappa^{2|\beta|+1}a_{m,\lm,s}^2(t)\Big(\big\la L\nb_{x}g^{(m+\beta)},  \nb_{v}g^{(m+\beta)}\big\ra_{\ell-2}\\
    \nn&\quad\quad\quad\quad\quad\quad\quad\quad\quad\quad\quad +\big\la \nb_{v} Lg^{(m+\beta)}, \nb_{x}g^{(m+\beta)}\big\ra_{\ell-2}\Big)\\
    \le&C \|\nb_xg\|_{\mathcal{G}^{\lm,N}_{s,\sig,\ell-2}}\sum_{|\al|\le1}\|\kappa\pr_v^\al g\|_{\mathcal{G}^{\lm,N}_{s,\sig,\ell-2|\al|}}.
\end{align}

For the commutators in the cross terms \eqref{cross-terms}, we use the notations in Lemma \ref{lem-coercive-L} to write
\begin{align*}
    &\left\la Z^\beta[Z^{m},L]\pr_{x_l}g,  \la v\ra^{2\ell-4}\pr_{v_l}g^{(m+\beta)}\right\ra_{x,v}+ \left\la \pr_{v_l}Z^\beta[Z^{m}, L]g, \la v\ra^{2\ell-4}\pr_{x_l}g^{(m+\beta)}\right\ra_{x,v}\\
    =&\sum_{1\le \iota\le2}\int_{\T^3\times\R^3}\left(Z^\beta{\bf c}_{\mathcal{A},\iota}^m[\pr_{x_l}g]\pr_{v_l}g^{(m+\beta)}+\pr_{v_l}Z^\beta{\bf c}_{\mathcal{A},\iota}^m[g]\pr_{x_l}g^{(m+\beta)}\right)\la v\ra^{2\ell-4}dvdx\\
    &+\sum_{1\le \iota\le4}\int_{\T^3\times\R^3}\left(Z^\beta{\bf c}_{\mathcal{K},\iota}^m[\pr_{x_l}g]\pr_{v_l}g^{(m+\beta)}+\pr_{v_l}Z^\beta{\bf c}_{\mathcal{K},\iota}^m[g]\pr_{x_l}g^{(m+\beta)}\right)\la v\ra^{2\ell-4}dvdx\\
    =&\sum_{1\le\iota \le2}{\bf C}^m_{\mathcal{A},\iota}+\sum_{1\le\iota\le 4}{\bf C}^m_{\mathcal{K},\iota},
\end{align*}
and similarly,
\begin{align*}
    &\left\la [Z^{\beta},L]\pr_{x_l}g^{(m)},  \la v\ra^{2\ell-4}\pr_{v_l}g^{(m+\beta)}\right\ra_{x,v}+ \left\la \pr_{v_l}[Z^{\beta}, L]g^{(m)}, \la v\ra^{2\ell-4}\pr_{x_l}g^{(m+\beta)}\right\ra_{x,v}\\
    =&\sum_{1\le \iota\le2}\int_{\T^3\times\R^3}\left({\bf c}_{\mathcal{A},\iota}^{\beta}\big[\pr_{x_l}g^{(m)}\big]\pr_{v_l}g^{(m+\beta)}+\pr_{v_l}{\bf c}_{\mathcal{A},\iota}^{\beta}\big[g^{(m)}\big]\pr_{x_l}g^{(m+\beta)}\right)\la v\ra^{2\ell-4}dvdx\\
    &+\sum_{1\le \iota\le4}\int_{\T^3\times\R^3}\left({\bf c}_{\mathcal{K},\iota}^{\beta}\big[\pr_{x_l}g^{(m)}\big]\pr_{v_l}g^{(m+\beta)}+\pr_{v_l}{\bf c}_{\mathcal{K},\iota}^{\beta}\big[g^{(m)}\big]\pr_{x_l}g^{(m+\beta)}\right)\la v\ra^{2\ell-4}dvdx\\
    =&\sum_{1\le\iota \le2}{\bf C}^{\beta}_{\mathcal{A},\iota}+\sum_{1\le\iota \le4}{\bf C}^{\beta}_{\mathcal{K},\iota}.
\end{align*}
{\bf Treatments of ${\bf C}_{\mathcal{A},\iota}^m$ and ${\bf C}_{\mathcal{K},\iota}^m$.} Similar to the treatments of $\left\la \pr_v^\al Z^\beta {\bf c}_{\mathcal{A},i}^{m}, \pr_v^\al g^{(m+\beta)} \right\ra_{\ell-2|\al|}, i=1,2$ in Lemma \ref{lem-coercive-L}, we have
\begin{align*}
{\bf C}^m_{\mathcal{A},1}
=&\sum_{\substack{n\le m,|n|\ge1\\\beta'\le\beta}}C_m^nC_\beta^{\beta'}\int_{\T^3\times\R^3}Z^{n+\beta'}\sig^{ij}\pr_{v_j}\pr_{x_l}g^{(m-n+\beta-\beta')}\pr_{v_i}\pr_{v_l}g^{(m+\beta)}\la v\ra^{2\ell-4}dvdx\\
&+\sum_{\substack{n\le m,|n|\ge1\\\beta'\le\beta}}C_m^nC_\beta^{\beta'}\int_{\T^3\times\R^3}Z^{n+\beta'}\sig^{ij}\pr_{v_j}\pr_{v_l}g^{(m-n+\beta-\beta')}\pr_{v_i}\pr_{x_l}g^{(m+\beta)}\la v\ra^{2\ell-4}dvdx\\
&+\sum_{\substack{n\le m,|n|\ge1\\\beta'\le\beta}}C_m^nC_\beta^{\beta'}\int_{\T^3\times\R^3}\pr_{v_l}Z^{n+\beta'}\sig^{ij}\pr_{v_j}g^{(m-n+\beta-\beta')}\pr_{v_i}\pr_{x_l}g^{(m+\beta)}\la v\ra^{2\ell-4}dvdx\\
&+\sum_{\substack{n\le m,|n|\ge1\\\beta'\le\beta}}C_m^nC_\beta^{\beta'}\int_{\T^3\times\R^3}Z^{n+\beta'}\sig^{ij}\pr_{v_j}\pr_{x_l}g^{(m-n+\beta-\beta')}\pr_{v_l}g^{(m+\beta)}\pr_{v_i}\la v\ra^{2\ell-4}dvdx\\
&-\sum_{\substack{n\le m,|n|\ge1\\\beta'\le\beta}}C_m^nC_\beta^{\beta'}\int_{\T^3\times\R^3}Z^{n+\beta'}\sig^{ij}\pr_{v_j}g^{(m-n+\beta-\beta')}\pr_{v_l}\pr_{x_l}g^{(m+\beta)}\pr_{v_i}\la v\ra^{2\ell-4}dvdx\\
&-\sum_{\substack{n\le m,|n|\ge1\\\beta'\le\beta}}C_m^nC_\beta^{\beta'}\int_{\T^3\times\R^3}Z^{n+\beta'}\sig^{ij}\pr_{v_j}g^{(m-n+\beta-\beta')}\pr_{x_l}g^{(m+\beta)}\pr_{v_l}\pr_{v_i}\la v\ra^{2\ell-4}dvdx\\
=&\sum_{1\le\iota\le6}{\bf C}_{\mathcal{A},1;(\iota)}^m,
\end{align*}
and
\begin{align*}
    {\bf C}_{\mathcal{A},2}^m=&\sum_{\substack{n\le m,|n|\ge1\\\beta'\le\beta}}C_m^nC_\beta^{\beta'}\int_{\T^3\times\R^3}Z^{n+\beta'}\big(\sig^{ij}v_iv_j-\pr_{v_i}\sig^i\big)\pr_{x_l}g^{(m-n+\beta-\beta')}\pr_{v_l}g^{(m+\beta)}\la v\ra^{2\ell-4}dvdx\\
    &-\sum_{\substack{n\le m,|n|\ge1\\\beta'\le\beta}}C_m^nC_\beta^{\beta'}\int_{\T^3\times\R^3}Z^{n+\beta'}\big(\sig^{ij}v_iv_j-\pr_{v_i}\sig^i\big)g^{(m-n+\beta-\beta')}\pr_{v_l}\pr_{x_l}g^{(m+\beta)}\la v\ra^{2\ell-4}dvdx\\
    &-\sum_{\substack{n\le m,|n|\ge1\\\beta'\le\beta}}C_m^nC_\beta^{\beta'}\int_{\T^3\times\R^3}Z^{n+\beta'}\big(\sig^{ij}v_iv_j-\pr_{v_i}\sig^i\big)g^{(m-n+\beta-\beta')}\pr_{x_l}g^{(m+\beta)}\pr_{v_l}\la v\ra^{2\ell-4}dvdx.
\end{align*}

If $|\beta'|+|n|=1$, then $|\beta'|=0$ and $|n|=1$ since $|n|\ge1$. We infer from this, together with  the fact  $\sig^{ij}=\sig^{ij}$ and \eqref{eq: est sigma_ij}, that
\begin{align}\label{i-b-p}
    \nn&{\bf C}^m_{\mathcal{A},1;(1)}+{\bf C}^m_{\mathcal{A},1;(2)}\\
    \nn=&\sum_{n\le m,|n|=1}C_m^n\int_{\T^3\times\R^3}Z^n\sig^{ij}\Big(\pr_{v_j}\pr_{x_l} g^{(m-n+\beta)}Z^n\pr_{v_i}\pr_{v_l} g^{(m-n+\beta)}\\
    \nn&\qquad\qquad\qquad\qquad\qquad\qquad+\pr_{v_i}\pr_{v_l}Z^\beta g^{(m-n)}Z^n\pr_{v_j}\pr_{x_l} g^{(m-n+\beta)}\Big)\la v\ra^{2\ell-4}dvdx\\
    \nn=&-\sum_{n\le m,|n|=1}C_m^n\int_{\T^3\times\R^3}Z^n\big(\la v\ra^{2\ell-4}Z^n\sig^{ij}\big)\pr_{v_j}\pr_{x_l}Z^\beta g^{(m-n)}\pr_{v_i}\pr_{v_l}Z^\beta g^{(m-n)}dvdx\\
    \le&CM_{\sig}^2\sum_{n\le m,|n|=1}C_m^n\Gamma_s(2n)\big\|\pr_{x_l}g^{(m-n+\beta)}\big\|_{\sig,\ell-2}\big\|\pr_{v_l}g^{(m-n+\beta)}\big\|_{\sig,\ell-2}.
\end{align}
Then similar to \eqref{est-IA11'}, we find that
\begin{align}\label{es-CmA1}
\nn&\sum_{|\beta|\le N}\kappa^{2|\beta|+1}\sum_{\substack{m\in\N^6}}a_{m,\lm,s}^2(t)\big|{\bf C}^m_{\mathcal{A},1;(1)}+{\bf C}^m_{\mathcal{A},1;(2)}\big|\\
\le&C(\lm M_{\sig})^2\|\nb_xg\|_{\mathcal{G}^{\lm,N}_{s,\sig,\ell-2}} \|\kappa\nb_vg\|_{\mathcal{G}^{\lm,N}_{s,\sig,\ell-2}}.
\end{align}

If $|\beta'|+|n|\ge2$, all the integrands in ${\bf C}_{\mathcal{A},1}^m$ possess enough decay in $v$ without resorting to integrating by parts as did for ${\bf C}^m_{\mathcal{A},1;(1)}+{\bf C}^m_{\mathcal{A},1;(2)}$, and so does ${\bf C}_{\mathcal{A},2}^m$. Thus, similar to \eqref{es-comZm}, now we have for $\iota=1,2$, there exist a constant $C_{s,N,M_{\sig}, M_{\mu}}$ depending on $s, N, M_{\sig}$ and $M_{\mu}$, such that
\begin{align}\label{es-CmAK}
\sum_{|\beta|\le N}\kappa^{2|\beta|+1}\sum_{\substack{m\in\N^6}}a_{m,\lm,s}^2(t)\left|{\bf C}_{\mathcal{A},\iota}^m\right|
\le C_{s,N,M_{\sig}, M_{\mu}}\lm \|\nb_xg\|_{\mathcal{G}^{\lm,N}_{s,\sig,\ell-2}}\sum_{|\al|\le1}\|\kappa\pr_v^\al g\|_{\mathcal{G}^{\lm,N}_{s,\sig,\ell-2|\al|}}.
\end{align}
For ${\bf C}_{\mathcal{K},\iota}^m, 1\le\iota\le4$, integrating by parts, we have
\begin{align*}
    &\int_{\T^3\times\R^3}\left(Z^\beta{\bf c}_{\mathcal{K},\iota}^m[\pr_{x_l}g]\pr_{v_l}g^{(m+\beta)}+\pr_{v_l}Z^\beta{\bf c}_{\mathcal{K},\iota}^m[g]\pr_{x_l}g^{(m+\beta)}\right)\la v\ra^{2\ell-4}dvdx\\
    =&2\int_{\T^3\times\R^3}Z^\beta{\bf c}_{\mathcal{K},\iota}^m[\pr_{x_l}g]\pr_{v_l}g^{(m+\beta)}\la v\ra^{2\ell-4}dvdx\\
    &+\int_{\T^3\times\R^3}Z^\beta{\bf c}_{\mathcal{K},\iota}^m[\pr_{x_l}g]g^{(m+\beta)}\pr_{v_l}\la v\ra^{2\ell-4}dvdx.
\end{align*}
Recalling the definitions of ${\bf c}_{\mathcal{K},\iota}^m[\cdot]$ in Lemma \ref{lem-coercive-L}, in view of \eqref{cs-vtlv} and \eqref{es-IK}, it is not difficulty to verify that \eqref{es-CmAK} still holds with ${\bf C}_{\mathcal{A},\iota}^m$ replaced by ${\bf C}_{\mathcal{K},\iota}^m$.

\noindent{\bf Treatments of ${\bf C}_{\mathcal{A},\iota}^\beta$ and ${\bf C}_{\mathcal{K},\iota}^\beta$.} Now we turn to estimate ${\bf C}_{\mathcal{A},\iota}^\beta$ and ${\bf C}_{\mathcal{K},\iota}^\beta$. Like the treatments of ${\bf C}_{\mathcal{A},\iota}^m$ and ${\bf C}_{\mathcal{K},\iota}^m$, the most tricky one is ${\bf C}^\beta_{\mathcal{A},1}$, which is written explicitly as follows
\begin{align*}
{\bf C}^\beta_{\mathcal{A},1}
=&\sum_{\substack{\beta'\le\beta,|\beta'|\ge1}}C_\beta^{\beta'}\int_{\T^3\times\R^3}Z^{\beta'}\sig^{ij}\pr_{v_j}\pr_{x_l}g^{(m+\beta-\beta')}\pr_{v_i}\pr_{v_l}g^{(m+\beta)}\la v\ra^{2\ell-4}dvdx\\
&+\sum_{\substack{\beta'\le\beta,|\beta'|\ge1}}C_\beta^{\beta'}\int_{\T^3\times\R^3}Z^{\beta'}\sig^{ij}\pr_{v_j}\pr_{v_l}g^{(m+\beta-\beta')}\pr_{v_i}\pr_{x_l}g^{(m+\beta)}\la v\ra^{2\ell-4}dvdx\\
&+\sum_{\substack{\beta'\le\beta,|\beta'|\ge1}}C_\beta^{\beta'}\int_{\T^3\times\R^3}\pr_{v_l}Z^{\beta'}\sig^{ij}\pr_{v_j}g^{(m+\beta-\beta')}\pr_{v_i}\pr_{x_l}g^{(m+\beta)}\la v\ra^{2\ell-4}dvdx\\
&+\sum_{\substack{\beta'\le\beta,|\beta'|\ge1}}C_\beta^{\beta'}\int_{\T^3\times\R^3}Z^{\beta'}\sig^{ij}\pr_{v_j}\pr_{x_l}g^{(m+\beta-\beta')}\pr_{v_l}g^{(m+\beta)}\pr_{v_i}\la v\ra^{2\ell-4}dvdx\\
&-\sum_{\substack{\beta'\le\beta,|\beta'|\ge1}}C_\beta^{\beta'}\int_{\T^3\times\R^3}Z^{\beta'}\sig^{ij}\pr_{v_j}g^{(m+\beta-\beta')}\pr_{v_l}\pr_{x_l}g^{(m+\beta)}\pr_{v_i}\la v\ra^{2\ell-4}dvdx\\
&-\sum_{\substack{\beta'\le\beta,|\beta'|\ge1}}C_\beta^{\beta'}\int_{\T^3\times\R^3}Z^{\beta'}\sig^{ij}\pr_{v_j}g^{(m+\beta-\beta')}\pr_{x_l}g^{(m+\beta)}\pr_{v_l}\pr_{v_i}\la v\ra^{2\ell-4}dvdx\\
=&\sum_{1\le \tilde{j}\le6}{\bf C}_{\mathcal{A},1;(\tilde{j})}^\beta.
\end{align*}
If $|\beta'|=1$, we compute ${\bf C}_{\mathcal{A},1;(1)}^\beta$ and ${\bf C}_{\mathcal{A},1;(2)}^\beta$ together, and use integrating by parts  as in \eqref{i-b-p}. Otherwise, ${\bf C}^{\beta}_{\mathcal{A},i}, i=1,2$ and ${\bf C}^{\beta}_{\mathcal{K},\iota}, \iota=1, 2, 3, 4$ can be treated in a direct manner as \eqref{es-cA1-beta}, \eqref{es-cA2-beta} and \eqref{es-cK-beta}. Therefore, similar to \eqref{es-cAi}, for $(\#,*)=(\mathcal{A},i),i=1,2$ or $(\#,*)=(\mathcal{K},\iota),\iota=1,2,\cdots,6$,  we have
\begin{align}\label{es-Cbeta}
\sum_{|\beta|\le N}\kappa^{2|\beta|+1}\sum_{\substack{m\in\N^6}}a_{m,\lm,s}^2(t)\big|{\bf C}_{\#,*}^\beta\big|
\le \kappa C_N \|\nb_xg\|_{\mathcal{G}^{\lm,N}_{s,\sig,\ell-2}}\sum_{|\al|\le1}\|\kappa\pr_v^\al g\|_{\mathcal{G}^{\lm,N}_{s,\sig,\ell-2|\al|}},
\end{align}
for some constant $C_N$ depending only on $N$.
It follows from  \eqref{cross-main}, \eqref{es-CmA1}, \eqref{es-CmAK}, \eqref{es-Cbeta} that \eqref{es-inner} holds. 
\end{proof}

 The following lemma is used to handle the truncated lowest-order term that appeared on the right-hand side of \eqref{coer-Landau}.
\begin{lem}\label{lem-lower order}
    Under the bootstrap hypotheses, for any $\zeta>0$, it holds that
    \begin{align}\label{bd-g-low}
    \| g\|_{L^2_{x,v}(|v|\le 2\zeta)}
    &\les \|E\|^2_{L^2_x}
    +\|({\rm I}-{\rm P}_0)g\|_{\sig}
    +\|\nb_xg\|_{L^2_{x,v}(|v|\le2\zeta)}.
\end{align}
\end{lem}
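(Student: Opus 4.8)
The plan is to control the low-velocity mass of $g$ by splitting $g = \mathrm{P}_0 g + (\mathrm{I}-\mathrm{P}_0)g$ and estimating the hydrodynamic (kernel) part separately. The second piece is immediate: on the ball $|v|\le 2\zeta$ the weight $\sigma^{ij}$ is uniformly elliptic (this is exactly the content of the coercivity estimate \eqref{coercive} / \eqref{coercive-0} restricted to a compact velocity set, since $\sigma^{ij}$ is bounded below there), so $\|(\mathrm{I}-\mathrm{P}_0)g\|_{L^2_{x,v}(|v|\le 2\zeta)} \lesssim \|(\mathrm{I}-\mathrm{P}_0)g\|_{\sigma}$. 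Hence it remains to bound $\|\mathrm{P}_0 g\|_{L^2_{x,v}(|v|\le 2\zeta)}$, and since $\mathrm{P}_0 g = a\sqrt{\mu} + b_j v_j\sqrt{\mu} + c|v|^2\sqrt{\mu}$ with Gaussian velocity profiles, this reduces to bounding the $L^2_x$ norms of the hydrodynamic fields $a,b,c$.

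The key point is to use the constraint that $g$ has the same macroscopic moments as $f$ (recall $g = e^{\phi}f$, and the conservation laws \eqref{conser-mass}--\eqref{conser-en} together with the normalization \eqref{conserv-initial} pin down the $x$-averages of $\int g\sqrt\mu\,dv$, $\int g v_j\sqrt\mu\,dv$, $\int g|v|^2\sqrt\mu\,dv$). First I would relate the zero-in-$x$ modes of $a,b,c$ to $\|\nabla_x g\|$: the nonzero-in-$x$ part of each hydrodynamic field is controlled by $\|\nabla_x g\|_{L^2_{x,v}(|v|\le 2\zeta)}$ via Poincaré's inequality in $x$ (the velocity integration against $\langle v\rangle^{O(1)}\sqrt\mu$ localized near $|v|\le 2\zeta$ is harmless, up to enlarging the velocity ball or absorbing a rapidly-decaying tail). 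Then the only genuinely uncontrolled objects are the $x$-averages $\bar a, \bar b_j, \bar c$. By mass and momentum conservation and the choice of initial data, $\bar a$ and $\bar b_j$ are essentially zero — more precisely, $\int_{\T^3}\!\int_{\R^3} g\sqrt\mu\,dvdx$ and $\int_{\T^3}\!\int_{\R^3} g v_j\sqrt\mu\,dvdx$ differ from the corresponding $f$-moments only through the factor $e^{\phi}-1$, and by the bootstrap hypothesis \eqref{H-phi} together with Lemma \ref{lem-compose} this contributes a term controlled by $\|\phi\|\cdot\|g\|$, which via $-\Delta_x\phi = \rho$ and Proposition \ref{lem-Linfty-rho} one recognizes as lower order (or one simply observes $\bar a=\bar b_j = 0$ if one works directly with the conserved quantities of $f$ and uses $\rho$ to transfer back). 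The energy conservation \eqref{conser-en} gives $\int g|v|^2\sqrt\mu\,dvdx + \|E\|_{L^2_x}^2 = 0$ up to the same $e^\phi$-type correction, which is precisely where the $\|E\|_{L^2_x}^2$ on the right-hand side of \eqref{bd-g-low} enters; this bounds $\bar c$ by $\|E\|_{L^2_x}^2$.

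The main obstacle I anticipate is bookkeeping the $e^{\phi}-1$ corrections cleanly: one must check that replacing moments of $g$ by moments of $f$ (to invoke the exact conservation laws, which hold for $f$, not $g$) produces only terms of the form $\|\phi\|_{\mathcal G^{\lambda,N}_s}\|g\|$ or $\|\rho\|\cdot\|g\|$, which are either genuinely higher order in $\epsilon$ and absorbable, or already of the stated form after using $\|E\|_{L^2_x}\lesssim\|\rho\|$ and Young's inequality $\|\rho\|\|g\| \lesssim \|\rho\|^2 + \|g\|^2$ — but the latter is circular unless the $\|g\|^2$ piece is small, so one really wants to keep these as products and note $\|\rho\| = \|E\|_{H^1}$-type quantities are controlled. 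The cleanest route is: (i) work with $f$'s conserved moments, which are exactly $0$ by \eqref{conserv-initial}; (ii) express the $v$-moments of $g$ against the truncated Gaussians in terms of $v$-moments of $f$ plus $(e^\phi - 1)$-remainders estimated by the composition Lemma \ref{lem-compose} and the density bound \eqref{Linfty-rho}; (iii) collect: the $a,b$ averages vanish, the $c$ average is $\|E\|_{L^2_x}^2$, the $x$-dependent parts are $\lesssim \|\nabla_x g\|_{L^2_{x,v}(|v|\le 2\zeta)}$, and the remainders are $\lesssim \epsilon\|(\mathrm I - \mathrm P_0)g\|_\sigma + \epsilon\|E\|_{L^2_x}^2$-type and hence absorbed. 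Summing these pieces yields \eqref{bd-g-low}.
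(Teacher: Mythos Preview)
Your proposal is correct and converges to essentially the paper's argument. Your ``cleanest route'' (i)--(iii) is precisely what the paper does: it passes from $g=e^{\phi}f$ to $f$, splits $f=f_0+f_{\ne}$, applies Poincar\'e in $x$ pointwise in $v$ to bound $\|f_{\ne}\|_{L^2_{x,v}(|v|\le 2\zeta)}$ by $\|\nabla_x f\|_{L^2_{x,v}(|v|\le 2\zeta)}$, then decomposes $f_0=\mathrm{P}_0 f_0+(\mathrm{I}-\mathrm{P}_0)f_0$ and reads off $\bar a=\bar b_j=0$, $|\bar c|\le\|E\|_{L^2_x}^2$ from the exact conservation laws for $f$; the conversion of $\|\nabla_x f\|$ and $\|(\mathrm{I}-\mathrm{P}_0)f\|_\sigma$ back to $g$ uses only $e^{\phi}\sim 1$ and the fact that $\mathrm{P}_0$ commutes with multiplication by $e^{\phi}(x)$, so $(\mathrm{I}-\mathrm{P}_0)g=e^{\phi}(\mathrm{I}-\mathrm{P}_0)f$.

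One small remark: your first framing (project $g$ directly and estimate its hydrodynamic fields $a,b,c$) is workable but slightly less clean than you suggest, because extracting $a(x)=\int g\sqrt\mu\,dv/\int\mu\,dv$ integrates over \emph{all} $v$, so controlling $\|\nabla_x a\|_{L^2_x}$ by $\|\nabla_x g\|_{L^2_{x,v}(|v|\le 2\zeta)}$ would indeed require handling the Gaussian tail separately. The paper sidesteps this by applying Poincar\'e to $f_{\ne}$ pointwise in $v$ on the ball (no velocity integration needed), and only afterwards isolating the constants $\bar a,\bar b,\bar c$ via $\mathrm{P}_0 f_0$. This is exactly the route you identify as cleanest, so there is no gap.
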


\begin{proof}
Noting that $\nb_xf=(e^{-\phi}-1)(e^{\phi}\nb_xf)+e^{\phi}\nb_xf$, and $|e^{-\phi}-1|\le |\phi| e^{|\phi|}$,
by using the decomposition $f=f_0+f_{\ne}$ and Poincar\'e inequality, recalling that $g=e^{\phi}f$, we have
\begin{align}
    \|\nb_xf\|_{L^2_{x,v}(|v|\le2\zeta)}\nn\les&\big(1+\|\phi\|_{L^\infty_x}e^{\|\phi\|_{L^\infty_x}}\big)\|e^{\phi}\nb_xf\|_{L^2_{x,v}(|v|\le2\zeta)}\\
    \les\nn& \|\nb_x(e^{\phi}f)\|_{L^2_{x,v}(|v|\le2\zeta)}+\|e^{\phi}\nb_x\phi f\|_{L^2_{x,v}(|v|\le2\zeta)}\\
    \les\nn&\|\nb_xg\|_{L^2_{x,v}(|v|\le2\zeta)}+\|\nb_x\phi\|_{L^\infty_x}\|f_0\|_{L^2_{x,v}(|v|\le2\zeta)}\\
    \nn&+\|\nb_x\phi\|_{L^\infty_x}\|\nb_xf_{\ne}\|_{L^2_{x,v}(|v|\le2\zeta)}.
\end{align}
Then, \eqref{Linfty-rho} together with the fact that $\|\nb_x\phi\|_{L^\infty_x}\lesssim \|\rho\|_{H^3}\lesssim \epsilon$ ensures that
\begin{align}\label{bd-nb_xf}
    \|\nb_xf\|_{L^2_{x,v}(|v|\le2\zeta)}
    \les\|\nb_xg\|_{L^2_{x,v}(|v|\le2\zeta)}+\|f_0\|_{L^2_{x,v}(|v|\le2\zeta)}.
\end{align}

On the other hand, thanks to the decomposition 
\begin{align*}
f={\rm P_0}f_0+({\rm I}-{\rm P}_0)f_0+f_{\ne},
\end{align*}
where  ${\rm P}_0$ is defined in \eqref{projection}, we can use Poincar\'e inequality, \eqref{bd-nb_xf} and
 \eqref{coercive}    to obtain
\begin{align*}
    \| e^{\phi}f\|_{L^2_{x,v}(|v|\le 2\zeta)}\les&\big(1+\|\phi\|_{L^\infty_x}e^{\|\phi\|_{L^\infty_x}}\big) \Big(\|  f_0\|_{L^2_{x,v}(|v|\le 2\zeta)}+\| \nb_xf\|_{L^2_{x,v}(|v|\le 2\zeta)}\Big)\\
    \les&\| {\rm P}_0f_0\|_{L^2_{x,v}}+\|({\rm I}-{\rm P}_0)f_0\|_{L^2_{x,v}(|v|\le 2\zeta)}+\|\nb_xg\|_{L^2_{x,v}(|v|\le 2\zeta)}\\
    \les&|(\bar{a},\bar{b},\bar{c})|^2+\|e^{\phi}({\rm I}-{\rm P}_0)f\|_{\sig}+\|\nb_xg\|_{L^2_{x,v}(|v|\le 2\zeta)},
\end{align*}
where we have used the fact 
\begin{align}\label{bd-P0f0}
    \|{\rm P}_0f_0\|_{L^2_{x,v}}^2\les |(\bar{a},\bar{b},\bar{c})|^2,
\end{align}
with
\begin{align*}
    |(\bar{a},\bar{b},\bar{c})|^2=|\bar{a}|^2+\sum_{j=1}^3|\bar{b}_j|^2+|\bar{c}|^2,
\end{align*}
and
\begin{align*}
    \bar{a}=\int_{\T^3\times\R^3}f\sqrt{\mu}dvdx,\quad 
    \bar{b}_j=\int_{\T^3\times\R^3}f v_j\sqrt{\mu}dvdx,\quad \bar{c}=\int_{\T^3\times\R^3}f|v|^2\sqrt{\mu}dvdx.
\end{align*}

To prove \eqref{bd-P0f0}, direct calculations show that
\begin{align*}
    \|{\rm P_0}f\|_{L^2_{x,v}}^2
    \les&\int_{\mathbb{T}^3}|a(t,x)|^2dx+\sum_{j=1}^3\int_{\mathbb{T}^3}|b_j(t,x)|^2dx+\int_{\mathbb{T}^3}|c(t,x)|^2dx.
\end{align*}
On the other hand, $a(t,x), b_j(t,x), 1\le j\le3$ and $c(t,x)$ can be solved as linear combinations of $\int_{\R^3}f\sqrt{\mu}dv, \int_{\R^3}fv_j\sqrt{\mu}dv, 1\le j\le3$ and $\int_{\R^3}f|v|^2\sqrt{\mu}dv$, we thus further have
\begin{align*}
    \|{\rm P_0}f\|_{L^2_{x,v}}^2\les \int_{\mathbb{T}^3}\left|\int_{\R^3} f\sqrt{\mu}dv\right|^2dx+\sum_{j=1}^3\int_{\mathbb{T}^3}\left|\int_{\R^3} fv_j\sqrt{\mu}dv\right|^2dx+\int_{\mathbb{T}^3}\left|\int_{\R^3} f|v|^2\sqrt{\mu}dv\right|^2dx.
\end{align*}
In particular,
\begin{align*}
    \|{\rm P_0}f_0\|_{L^2_{x,v}}^2
    \les&\left|\int_{\R^3} f_0\sqrt{\mu}dv\right|^2+\sum_{j=1}^3\left|\int_{\R^3} f_0v_j\sqrt{\mu}dv\right|^2+\left|\int_{\R^3} f_0|v|^2\sqrt{\mu}dv\right|^2\\
    \les&|\bar{a}|^2+\sum_{j=1}^3|\bar{b}_j|^2+|\bar{c}|^2=|(\bar{a},\bar{b},\bar{c})|^2.
\end{align*}

By conservation laws,
\[
\int_{\mathbb{T}^3}\int_{\R^3}f\sq{\mu}dvdx=\int_{\mathbb{T}^3}\int_{\R^3}fv_j\sqrt{\mu}dvdx=\int_{\mathbb{T}^3}\int_{\R^3}f|v|^2\sqrt{\mu}dvdx+\int_{\T^3}|E|^2dx=0.
\]
Thus, $\bar{a}=\bar{b}_j=0, j=1,2,3$ and $|\bar{c}|\le \int_{\mathbb{T}^3}|E|^2dx$. Then \eqref{bd-g-low} follows immediately.
\end{proof}

\section{Estimates on $f$ for $t\leq \nu^{-\fr12}$}\label{sec: est f}
To begin with, let us recall the first equation of \eqref{pVPL} 
\begin{align}
    \pr_tf+v\cdot\nb_xf+E\cdot\nb_vf-E\cdot vf+\nu Lf=2E\cdot v\sqrt{\mu}+\nu\Gamma(f,f).
\end{align}
Recalling that $g(t,x,v)=e^{\phi}f(t,x,v)$, then $g(t,x,v)$ solves
\begin{align}\label{eq-g-0}
\pr_tg+v\cdot\nb_xg+E\cdot\nb_vg+\nu Lg= \nu e^{-\phi}\Gamma(g,g)+2e^{\phi}E\cdot v\sqrt{\mu}+\pr_t\phi g.    
 \end{align}
For $n\in\N^6$, noting that $[Z,\pr_t+v\cdot\nb_x]=0$, it is easy to see that $g^{(n)}:=Z^n g$ solves
\begin{align}\label{eq-g}
\nn&\pr_tg^{(n)}+v\cdot\nb_xg^{(n)}+Z^{n}\big(E\cdot\nb_vg\big)+\nu Z^n Lg\\
=& \nu Z^n \big(e^{-\phi}\Gamma(g,g)\big)+2Z^n\big(e^{\phi}E\cdot v\sqrt{\mu}\big)+Z^n\left(\pr_t\phi g\right).
 \end{align}
Then we have
\begin{align}\label{eq-gxj}
\nn&\pr_t\pr_{x_j}g^{(n)}+v\cdot\nb_x\pr_{x_j}g^{(n)}+\pr_{x_j}Z^{n}\left(E\cdot\nb_vg\right)+\nu Z^n L\pr_{x_j}g\\
=&\nu\pr_{x_j}Z^n\big(e^{-\phi}\Gamma(g,g)\big)+2\pr_{x_j}Z^n\big(e^{\phi}E\cdot v\sqrt{\mu}\big)+\pr_{x_j}Z^n\left(\pr_t\phi g\right),
\end{align}
and
 \begin{align}\label{eq-gvj}
\nn&\pr_t\pr_{v_j}g^{(n)}+v\cdot\nb_x \pr_{v_j}g^{(n)}+\pr_{x_j}g^{(n)}+\pr_{v_j}Z^{n}\left(E\cdot\nb_vg\right)+\nu \pr_{v_j}Z^n Lg\\
=&\nu\pr_{v_j}Z^n\big(e^{-\phi}\Gamma(g,g)\big)+2\pr_{v_j}Z^n\big(e^{\phi}E\cdot v\sqrt{\mu}\big)+\pr_{v_j}Z^n\left(\pr_t\phi g\right).
 \end{align}

For $\iota=0, 1, 2, \cdots,[\ell/3]+1$, we make the following notational conventions: for any function ${\rm f}={\rm f}(t,x,v)$, 
\begin{align}\label{f-iota*}
{\rm f}_{\iota*}=
\begin{cases}
{\rm f},\quad\quad\quad\quad\qquad\ {\rm if}\quad\iota=0;\\
{\rm f}_{\iota\ne}=w_{\iota}(t){\rm f}_{\ne},\quad {\rm if}\quad \iota=1, 2, \cdots,[\ell/3]+1.
\end{cases}
\end{align}
To simplify  the presentation, we also introduce the following notation:
\begin{align*}
    L[\mu]_{\iota;1)}:=&\sum_{m\in\N^6,|\beta|\le N}\kappa^{2|\beta|}a_{m,\lm,s}^2(t)\bigg(\sum_{|\al|\le1}2{\rm A}_0\left\la \pr_{x}^\al Z^{m+\beta}\big(e^{\phi}E\cdot v\sqrt{\mu}\big)_{\iota*},\pr_x^{\al}g^{(m+\beta)}_{\iota*}\right\ra_{\ell-2|\al|-2\iota}\\
    &+2\kappa^2(1+t)^{-2}\sum_{|\al|=1}\left\la Y^{\al} Z^{m+\beta}\big(e^{\phi}E\cdot v\sqrt{\mu}\big)_{\iota*},Y^{\al}g^{(m+\beta)}_{\iota*}\right\ra_{\ell-2-2\iota}\\
&+2\sum_{1\le|\al|\le2}\kappa^{2|\al|}\nu^{\fr{2|\al|}{3}}\left\la \pr_v^\al Z^{m+\beta}\big(e^{\phi}E\cdot v\sqrt{\mu} \big)_{\iota*},\pr_v^\al g^{(m+\beta)}_{\iota*}\right\ra_{\ell-2|\al|-2\iota} \bigg),   
\end{align*}

\begin{align*}
    L[\mu]_{\iota;2)}:=&\kappa\nu^{\fr13}\sum_{m\in\N^6,|\beta|\le N}\kappa^{2|\beta|}a_{m,\lm,s}^2(t)\bigg(2\left\la \nb_{x} Z^{m+\beta}\big(e^{\phi}E\cdot v\sqrt{\mu}\big)_{\iota*},\nb_{v}g^{(m+\beta)}_{\iota*}\right\ra_{\ell-2-2\iota}\\
    &+2\left\la \nb_{v} Z^{m+\beta}\big(e^{\phi}E\cdot v\sqrt{\mu}\big)_{\iota*},\nb_{x}g^{(m+\beta)}_{\iota*}\right\ra_{\ell-2-2\iota} \bigg),  
\end{align*}

\begin{align*}
    N[\pr_t\phi]_{\iota;1)}:=&\sum_{m\in\N^6,|\beta|\le N}\kappa^{2|\beta|}a_{m,\lm,s}^2(t)\Bigg[{\rm A}_0\sum_{|\al|\le1}\left\la \pr_{x}^\al Z^{m+\beta}\left(\pr_t\phi g\right)_{\iota*},\pr_x^{\al}g^{(m+\beta)}_{\iota*}\right\ra_{\ell-2|\al|-2\iota}\\
    &+\kappa^2(1+t)^{-2}\sum_{|\al|=1}\left\la Y^{\al} Z^{m+\beta}\left(\pr_t\phi g\right)_{\iota*},Y^{\al}g^{(m+\beta)}_{\iota*}\right\ra_{\ell-2-2\iota}\\
    &+\sum_{1\le|\al|\le 2}\kappa^{2|\al|}\nu^{\fr{2|\al|}{3}}\left\la \pr_v^\al Z^{m+\beta}\left(\pr_t\phi g\right)_{\iota*},\pr_v^\al g^{(m+\beta)}_{\iota*}\right\ra_{\ell-2|\al|-2\iota}\Bigg],
\end{align*}
 
\begin{align*}
    N[\pr_t\phi]_{\iota;2)}:=&\kappa\nu^{\fr13}\sum_{m\in\N^6,|\beta|\le N}\kappa^{2|\beta|}a_{m,\lm,s}^2(t)\Bigg[\left\la \nb_{x} Z^{m+\beta}\left(\pr_t\phi g\right)_{\iota*},\nb_{v}g^{(m+\beta)}_{\iota*}\right\ra_{\ell-2-2\iota}\\
    &+\left\la \nb_{v} Z^{m+\beta}\left(\pr_t\phi g\right)_{\iota*},\nb_{x}g^{(m+\beta)}_{\iota*}\right\ra_{\ell-2-2\iota}\Bigg],
\end{align*}

\begin{align*}
    N[E]_{\iota;1)}:=&-\sum_{m\in\N^6,|\beta|\le N}\kappa^{2|\beta|}a_{m,\lm,s}^2(t)\Bigg[{\rm A}_0\sum_{|\al|\le1}\left\la \pr_{x}^\al Z^{m+\beta}\left(E\cdot\nb_v g\right)_{\iota*},\pr_x^{\al}g^{(m+\beta)}_{\iota*}\right\ra_{\ell-2|\al|-2\iota}\\
    &+\kappa^2(1+t)^{-2}\sum_{|\al|=1}\left\la Y^{\al} Z^{m+\beta}\left(E\cdot\nb_v g\right)_{\iota*},Y^{\al}g^{(m+\beta)}_{\iota*}\right\ra_{\ell-2-2\iota}\\
    &+\sum_{1\le|\al|\le 2}\kappa^{2|\al|}\nu^{\fr{2|\al|}{3}}\left\la \pr_v^\al Z^{m+\beta}\left(E\cdot\nb_v g\right)_{\iota*},\pr_v^\al g^{(m+\beta)}_{\iota*}\right\ra_{\ell-2|\al|-2\iota}\Bigg],
\end{align*}

\begin{align*}
    N[E]_{\iota;2)}:=&-\kappa\nu^{\fr13}\sum_{m\in\N^6,|\beta|\le N}\kappa^{2|\beta|}a_{m,\lm,s}^2(t)\Bigg[\left\la \nb_{x} Z^{m+\beta}\left(E\cdot\nb_v g\right)_{\iota*},\nb_{v}g^{(m+\beta)}_{\iota*}\right\ra_{\ell-2-2\iota}\\
    &+\left\la \nb_{v} Z^{m+\beta}\left(E\cdot\nb_v g\right)_{\iota*},\nb_{x}g^{(m+\beta)}_{\iota*}\right\ra_{\ell-2-2\iota}\Bigg],
\end{align*}

    \begin{align*}
    N[\Gamma]_{\iota;1)}:=&\nu\sum_{m\in\N^6,|\beta|\le N}\kappa^{2|\beta|}a_{m,\lm,s}^2(t)\Bigg[{\rm A}_0\sum_{|\al|\le1}\left\la \pr_{x}^\al Z^{m+\beta}\left(e^{-\phi}\Gamma(g,g)\right)_{\iota*},\pr_x^{\al}g^{(m+\beta)}_{\iota*}\right\ra_{\ell-2|\al|-2\iota}\\
    &+\kappa^2(1+t)^{-2}\sum_{|\al|=1}\left\la Y^{\al} Z^{m+\beta}\left(e^{-\phi}\Gamma(g,g)\right)_{\iota*},Y^{\al}g^{(m+\beta)}_{\iota*}\right\ra_{\ell-2-2\iota}\\
    &+\sum_{1\le|\al|\le 2}\kappa^{2|\al|}\nu^{\fr{2|\al|}{3}}\left\la \pr_v^\al Z^{m+\beta}\left(e^{-\phi}\Gamma(g,g)\right)_{\iota*},\pr_v^\al g^{(m+\beta)}_{\iota*}\right\ra_{\ell-2|\al|-2\iota}\Bigg],
\end{align*}
and
\begin{align*}
    N[\Gamma]_{\iota;2)}:=&\kappa\nu^{\fr43}\sum_{m\in\N^6,|\beta|\le N}\kappa^{2|\beta|}a_{m,\lm,s}^2(t)\Bigg[\left\la \nb_{x} Z^{m+\beta}\left(e^{-\phi}\Gamma(g,g)\right)_{\iota*},\nb_{v}g^{(m+\beta)}_{\iota*}\right\ra_{\ell-2-2\iota}\\
    &+\left\la \nb_{v} Z^{m+\beta}\left(e^{-\phi}\Gamma(g,g)\right)_{\iota*},\nb_{x}g^{(m+\beta)}_{\iota*}\right\ra_{\ell-2-2\iota}\Bigg].
\end{align*}

We first establish the following  energy estimates for $g$.

\begin{prop}\label{prop-g}
 Let \eqref{res-final-1}--\eqref{restriction: lm-kappa-nu} hold. Assume that $g$ is the solution to \eqref{eq-g}, and the bootstrap hypotheses \eqref{bd-en}--\eqref{H-phi} hold. Then 
 there holds
\begin{align}\label{en-ineq-Linear-g}
    \nn&\fr{d}{dt}\mathcal{E}(g(t))+\fr{1}{4}\nu^{\fr13}\mathcal{D}(g(t))+\mathcal{CK}(g(t))\\
    \nn\le& C\nu {\rm A}_0\|E\|_{L^2_x}^4+C \fr{\sqrt{{\rm B}_0}}{{\rm A}_0}\big(\sqrt{{\rm B}_0}\|g\|_{L^2_{x,v}}\big)\big(\nu {\rm A}_0\|g\|_{\sig}^2\big)\\
    \nn&+{\rm B}_0\|\pr_t\phi\|_{L^\infty_x}\|g\|_{L^2_{x,v}}^2+C{\rm B}_0\| E\|_{L^2_x}\|g\|_{L^2_{x,v}}\\
    \nn&+2\sum_{\iota=0}^{[\ell/3]+1}\Big[L[\mu]_{\iota;1)}+L[\mu]_{\iota;2)}+N[\pr_t\phi]_{\iota;1)}+N[\pr_t\phi]_{\iota;2)}\\
    &+N[E]_{\iota;1)}+N[E]_{\iota;2)}+N[\Gamma]_{\iota;1)}+N[\Gamma]_{\iota;2)}\Big].
\end{align}

\end{prop}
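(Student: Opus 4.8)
\emph{Proof strategy.} Inequality \eqref{en-ineq-Linear-g} is a differential energy identity rearranged into convenient pieces: the plan is to compute $\fr{d}{dt}\mathcal{E}(g(t))$ using \eqref{eq-g-0} and its vector-field consequences \eqref{eq-g}, \eqref{eq-gxj}, \eqref{eq-gvj}, and to sort the resulting terms into three groups --- the linear collisional and phase-mixing structure, which after summation over the vector-field multi-indices reproduces the left-side terms $\tfrac14\nu^{\fr13}\mathcal{D}(g)$ and $\mathcal{CK}(g)$; the lowest-order scalar remainders, estimated directly; and the higher-order transport, Landau-nonlinearity and forcing contributions, which we merely \emph{name} as the eight families $L[\mu]_{\iota;1)},\dots,N[\Gamma]_{\iota;2)}$ according to the conventions fixed just before the statement and leave to the next sections. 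For the base term ${\rm B}_0\|g\|_{L^2_{x,v}}^2$: the transport $v\cdot\nb_x$ is skew-adjoint and drops; $-\nu\langle Lg,g\rangle_{x,v}$ contributes $-\nu\delta{\rm B}_0\|({\rm I}-{\rm P}_0)g\|_\sig^2$ together with a truncated low-velocity remainder via \eqref{coercive-0}; $\langle E\cdot\nb_v g,g\rangle_{x,v}=0$ since $E=E(t,x)$; $\langle\pr_t\phi\,g,g\rangle_{x,v}\le\|\pr_t\phi\|_{L^\infty_x}\|g\|_{L^2_{x,v}}^2$; $2\langle e^{\phi}E\cdot v\sqrt\mu,g\rangle_{x,v}\les\|E\|_{L^2_x}\|g\|_{L^2_{x,v}}$ using $\|e^{\phi}\|_{L^\infty}\les1$ from \eqref{H-phi}; and $\nu{\rm B}_0\langle e^{-\phi}\Gamma(g,g),g\rangle_{x,v}$ is bounded by the standard trilinear estimate for $\Gamma$, which after restoring the powers of ${\rm A}_0$ produces the cubic remainder.

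\emph{Weighted energies and hypocoercivity.} I would next differentiate $\mathcal{E}_\ell(g)$ and each $w_\iota^2(t)\mathcal{E}_{\ell-2\iota}(g_{\neq})$, level by level. For a fixed multi-index $m+\beta$ the transport contributions cancel in pairs: against the velocity-only weights $\la v\ra^{2\ell-\cdots}$ the operator $v\cdot\nb_x$ is skew-adjoint, so the $\pr_x^\al$ and $Y^\al$ components contribute nothing; in the $\pr_v^\al$ component the commutator $[v\cdot\nb_x,\pr_{v_i}]=-\pr_{x_i}$ creates a coupling term which, paired with the time derivative of the cross term $2\kappa\nu^{\fr13}\langle\nb_x g^{(m+\beta)},\la v\ra^{2\ell-4}\nb_v g^{(m+\beta)}\rangle_{x,v}$ (whose own transport flux again integrates to zero), yields the enhanced-dissipation gain $\kappa\nu^{\fr13}\|\la v\ra^{\ell-2}\nb_x g^{(m+\beta)}\|_{L^2_{x,v}}^2$ with favorable sign. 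The factor $-2\tfrac{\dot\lm(t)}{\lm(t)}|m|$ from $\tfrac{d}{dt}a_{m,\lm,s}^2(t)$ is exactly $\mathcal{CK}_\ell$ and, since $\dot\lm<0$, is nonnegative and kept on the left. The Landau terms $-\nu\langle\pr_v^\al Z^{m+\beta}Lg,\pr_v^\al g^{(m+\beta)}\rangle_{\ell-2|\al|}$ --- and their $(1+t)^{-1}Y^\al$ analogues, cf.\ Remark \ref{rem-Y/t} --- are controlled after summation by Lemma \ref{lem-coercive-L}, which yields $(1-\lm C-\kappa C-\zeta)\,\nu$ times the collisional dissipation plus truncated low-velocity terms, while the Landau cross terms are estimated by Corollary \ref{coro-cross}; choosing $\lm(0)\le\underline{\lm}$, $\kappa\le\underline{\kappa}$, $\zeta$ small, and ${\rm A}_0$ large so that these cross terms and the auxiliary inner-product slots in \eqref{eq:E_l^n} are strictly subordinate recovers a clean $\tfrac12\nu^{\fr13}\mathcal{D}_\ell$.

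\emph{Low-velocity remainders, time weights, and the collected source terms.} The truncated terms $\|g^{(m+\beta)}\|_{L^2_{x,v}(|v|\le2\zeta)}^2$ produced by Lemma \ref{lem-coercive-L} (and by \eqref{low-order-Y} in the $Y/(1+t)$ slot) are handled by Lemma \ref{lem-lower order}: summed over levels they are bounded by $\|E\|_{L^2_x}^4+\|({\rm I}-{\rm P}_0)g\|_\sig^2+\|\nb_x g\|_{L^2_{x,v}(|v|\le2\zeta)}^2$, whose first piece feeds the $\nu{\rm A}_0\|E\|^4$ term, whose second is absorbed by $\nu^{\fr23}\delta{\rm B}_0\|({\rm I}-{\rm P}_0)g\|_\sig^2$ in $\mathcal{D}$ for ${\rm B}_0$ large, and whose third is absorbed by the enhanced-dissipation piece $\kappa\|\la v\ra^{\ell-2}\nb_x g^{(m+\beta)}\|_{L^2}^2$ of $\mathcal{D}$. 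For the weighted energies one must additionally control the term in which $\pr_t$ hits $w_\iota^2(t)=(\kappa_0\nu^{\fr13}(1+t))^\iota$; by Lemma \ref{lem: E_l<D_l-2} this ``bad term'' is dominated by $C\kappa_0\nu^{\fr13}\sum_\iota(\kappa_0\nu^{\fr13}(1+t))^\iota\mathcal{D}_{\ell-2\iota}^n(g_{\neq})$ and is absorbed into $\tfrac14\nu^{\fr13}\mathcal{D}$ once $\kappa_0\ll\kappa/(\ell{\rm A}_0)$ (the $(1+t)^{-2}$ weight in the $Yg$ slot only helps, being decreasing). All remaining contributions --- the forcings $2e^{\phi}E\cdot v\sqrt\mu$ and $\pr_t\phi\,g$, the Vlasov term $E\cdot\nb_v g$, and $\nu e^{-\phi}\Gamma(g,g)$, at the levels $\pr_x^\al,Y^\al,\pr_v^\al$ with $1\le|\al|\le2$ and in the two cross-term slots --- are precisely what the families $L[\mu]_{\iota;j)},N[\pr_t\phi]_{\iota;j)},N[E]_{\iota;j)},N[\Gamma]_{\iota;j)}$ ($j=1,2$) denote, producing the final sum $2\sum_{\iota=0}^{[\ell/3]+1}[\cdots]$.

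\emph{Main difficulty.} The delicate point is not any single estimate but the hypocoercivity bookkeeping: one must verify that \emph{after} the Gevrey summation over all vector-field multi-indices the enhanced dissipation $\nu^{\fr13}\mathcal{D}(g)$ survives with a definite positive constant while every Landau-commutator loss, every $v$-weight mismatch, every cross-term coupling, and every bad time-weight contribution is strictly subordinate --- and to keep the nonlinear transport structure one must work with $Y/(1+t)$ rather than $Y$, which is itself forced by the Landau operator (Remark \ref{rem-Y/t}). This is exactly what dictates the parameter hierarchy $\kappa_0\ll\kappa/(\ell{\rm A}_0)$, $\kappa\le\underline{\kappa}$, $\lm(0)\le\underline{\lm}$, $\nu\le\nu_0$, with ${\rm A}_0,{\rm B}_0$ large, of \eqref{restriction: lm-kappa-nu}.
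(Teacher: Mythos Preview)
Your proposal follows the same architecture as the paper's proof and is correct in outline. Two points deserve correction.

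First, Lemma \ref{lem-lower order} bounds only $\|g\|_{L^2_{x,v}(|v|\le2\zeta)}$ and cannot be applied to $\|g^{(m+\beta)}\|_{L^2_{x,v}(|v|\le2\zeta)}$ for $|m+\beta|>0$, since its proof uses the conservation laws through ${\rm P}_0f_0$, which are not available for higher vector-field iterates. The paper treats these higher-order truncations differently (cf.\ \eqref{est-low1}--\eqref{est-low2}): for $g_{\ne}^{(m+\beta)}$ Poincar\'e converts the truncated norm into $\|\nabla_x g^{(m+\beta)}\|$, already part of $\mathcal{D}$; for $g_0^{(m+\beta)}$ with $|m+\beta|>0$ one writes $g_0^{(m+\beta)}=\partial_{v_l}g_0^{(m+\beta-\bar e_l)}$ for some $l$ and invokes \eqref{coercive} to bound it by $\|g^{(m+\beta-\bar e_l)}\|_{\sigma,\ell}$, the index shift producing a factor $\max\{\kappa^2,\lambda^2\}$ that makes the whole sum absorbable into $\tfrac18\nu^{1/3}\mathcal{D}_\ell$. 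Only the single base term $|m+\beta|=0$ actually invokes Lemma \ref{lem-lower order}.

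Second, two small slips of description: the enhanced-dissipation gain $\kappa\nu^{1/3}\|\langle v\rangle^{\ell-2}\nabla_x g^{(m+\beta)}\|^2$ arises directly from the cross-term evolution \eqref{en-cross} (the commutator $[\partial_{v_j},v\cdot\nabla_x]=\partial_{x_j}$ acting in its second slot), while the commutator appearing in the $\|\partial_v^\alpha g^{(m+\beta)}\|^2$ evolution is a separate \emph{bad} term, handled in \eqref{bad-cross} by estimation rather than cancellation. And for $\iota\ge3$ the combined weight $(1+t)^{-2}w_\iota^2(t)\propto(1+t)^{\iota-2}$ is increasing, so its time derivative is among the bad terms absorbed via Lemma \ref{lem: E_l<D_l-2} rather than ``only helping''.
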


\begin{proof}
Taking the $L^2$ inner product of \eqref{eq-g} with $g$ yields
\begin{align*}
    \fr{1}{2}\fr{d}{dt}\| g \|_{L^2_{x,v}}^2+\nu\left\la Lg, g\right\ra_{x,v}
    =&\int_{\mathbb{T}^3\times \R^3}\pr_t\phi g^2   dvdx\\
    &+\int_{\mathbb{T}^3\times\R^3 }\left(\nu e^\phi\Gamma(f,f) +2e^{\phi}E\cdot v\mu^{\fr12}\right) g dvdx.
\end{align*}
By \eqref{coercive-0}, we arrive at
\begin{align}\label{en-0}
\nn&\fr12\fr{d}{dt}\| g \|_{L^2_{x,v}}^2+\dl\nu\|({\rm I}-{\rm P}_0)g\|_{\sig}^ 2\\
\le&\|\pr_t\phi\|_{L^\infty_x}\|g\|_{L^2_{x,v}}^2+\left|\int_{\mathbb{T}^3\times\R^3}\left(\nu e^\phi\Gamma(f,f) +2e^{\phi}E\cdot v\mu^{\fr12}\right) g dvdx\right|.
\end{align}
Recalling that $f=q(\phi)g+g$, by using Theorem 3  of \cite{Guo2002} and the bootstrap hypotheses, one deduces that
\begin{align}\label{est-Gamma0}
\nn&\nu{\rm B}_0\left|\int_{\mathbb{T}^3\times\R^3} e^\phi\Gamma(f,f)  g dvdx\right|\\
\le&\nu{\rm B}_0\Big(\|f\|_{L^2_{x,v}}\|g\|_{\sig}^2+\|g\|_{L^2_{x,v}}\|f\|_{\sig}\|g\|_{\sig}\Big)\le  C{\rm B}_0 \|g\|_{L^2_{x,v}}\big(\nu \|g\|_{\sig}^2\big),
\end{align}
and
\begin{align}\label{est-linear0}
    &{\rm B}_0\left|\int_{\mathbb{T}^3\times\R^3}2e^{\phi}E\cdot v\mu^{\fr12} g dvdx\right|\le C {\rm B}_0\|e^\phi E\|_{L^2_x}\|g\|_{L^2_{x,v}}\le C\| E\|_{L^2_x}\big({\rm B}_0\|g\|_{L^2_{x,v}}\big).
\end{align}

For $\al=(\al_1,\al_2,\al_3)$ with $|\al|\le1$, and $\ell>4, \iota=0, 1, 2, \cdots,[\ell/3]+1$, taking the inner product of the equation for $\pr_x^{\al}g^{(m+\beta)}_{\iota*}$ with $\la v\ra^{2\ell-4|\al|-4\iota}\pr_x^{\al}g^{(m+\beta)}_{\iota*}$, we are led to
\begin{align}\label{en-pr_xg}
    \nn&\fr12\fr{d}{dt}\big\|\pr_x^{\al}g^{(m+\beta)}_{\iota*}\big\|_{L^2_{x,v}(\ell-2|\al|-2\iota)}^2+\nu\Big\la Z^{m+\beta} L\pr^{\al}_{x}g_{\iota*},\pr_x^{\al}g^{(m+\beta)}_{\iota*}\Big\ra_{\ell-2|\al|-2\iota}\\
    =\nn&\fr{\iota \kappa_0\nu^{\fr13}}{2}\big(\kappa_0\nu^{\fr13}(1+t)\big)^{-1} \big\|\pr_x^{\al}g^{(m+\beta)}_{\iota*}\big\|_{L^2_{x,v}(\ell-2|\al|-2\iota)}^2\\
    \nn&-\left\la \pr_{x}^\al Z^{m+\beta}\left(E\cdot\nb_vg\right)_{\iota*},\pr_x^{\al}g^{(m+\beta)}_{\iota*}\right\ra_{\ell-2|\al|-2\iota}\\
    \nn&+\nu\left\la \pr_{x}^\al Z^{m+\beta}\big(e^{-\phi}\Gamma(g,g)\big)_{\iota*},\pr_x^{\al}g^{(m+\beta)}_{\iota*}\right\ra_{\ell-2|\al|-2\iota}\\
    \nn&+2\left\la \pr_{x}^\al Z^{m+\beta}\big(e^{\phi}E\cdot v\sqrt{\mu}\big)_{\iota*},\pr_x^{\al}g^{(m+\beta)}_{\iota*}\right\ra_{\ell-2|\al|-2\iota}\\
    &+\left\la \pr_{x}^\al Z^{m+\beta}\left(\pr_t\phi g\right)_{\iota*},\pr_x^{\al}g^{(m+\beta)}_{\iota*}\right\ra_{\ell-2|\al|-2\iota}.
\end{align}
Similarly, for $|\al|=1$,
\begin{align}\label{en-Yg}
    \nn&\fr12\fr{d}{dt}\big\|(1+t)^{-1}Y^{\al}g^{(m+\beta)}_{\iota*}\big\|_{L^2_{x,v}(\ell-2|\al|-2\iota)}^2\\
    \nn&+\nu (1+t)^{-2}\left\la Y^{\al} Z^{m+\beta} Lg_{\iota*},Y^{\al}g^{(m+\beta)}_{\iota*}\right\ra_{\ell-2|\al|-2\iota}\\
    =\nn&\fr{(\iota-2) (1+t)^{-3}}{2} \big\|Y^{\al}g^{(m+\beta)}_{\iota*}\big\|_{L^2_{x,v}(\ell-2|\al|-2\iota)}^2\\
    \nn&-(1+t)^{-2}\left\la Y^{\al} Z^{m+\beta}\left(E\cdot\nb_vg\right)_{\iota*},Y^{\al}g^{(m+\beta)}_{\iota*}\right\ra_{\ell-2|\al|-2\iota}\\
    \nn&+\nu(1+t)^{-2}\left\la Y^{\al} Z^{m+\beta}\big(e^{-\phi}\Gamma(g,g)\big)_{\iota*},Y^{\al}g^{(m+\beta)}_{\iota*}\right\ra_{\ell-2|\al|-2\iota}\\
    \nn&+2(1+t)^{-2}\left\la Y^{\al} Z^{m+\beta}\big(e^{\phi}E\cdot v\sqrt{\mu}\big)_{\iota*},Y^{\al}g^{(m+\beta)}_{\iota*}\right\ra_{\ell-2|\al|-2\iota}\\
    &+(1+t)^{-2}\left\la Y^{\al} Z^{m+\beta}\left(\pr_t\phi g\right)_{\iota*},Y^{\al}g^{(m+\beta)}_{\iota*}\right\ra_{\ell-2|\al|-2\iota}.
\end{align}

For $1\le|\al|\le 2$, taking the inner product of the equation for $\pr_{v}^{\al}g^{(m+\beta)}_{\iota*}$ with $\la v\ra^{2\ell-4|\al|-4\iota}\pr_v^\al g^{(m+\beta)}_{\iota*}$ yields
\begin{align}\label{en-pr_vg}
    \nn&\fr12\fr{d}{dt}\big\|\pr_v^\al g^{(m+\beta)}_{\iota*}\big\|_{L^2_{x,v}(\ell-2|\al|-2\iota)}^2+{  \left\la\big[\pr_v^\al, v\cdot\nb_x g^{(m+\beta)}_{\iota*}\big],  \pr_v^\al g^{(m+\beta)}_{\iota*}\right\ra_{\ell-2|\al|-2\iota}}\\
    \nn&+\nu \left\la \pr_v^\al Z^{m+\beta} Lg_{\iota*},\pr_v^\al g^{(m+\beta)}_{\iota*}\right\ra_{\ell-2|\al|-2\iota}\\
    =\nn&\fr{\iota \kappa_0\nu^{\fr13}}{2} \big(\kappa_0\nu^{\fr13}(1+t)\big)^{-1} \big\|\pr_v^\al g^{(m+\beta)}_{\iota*}\big\|_{L^2_{x,v}(\ell-2|\al|-2\iota)}^2\\
    \nn&-\left\la \pr_v^\al Z^{m+\beta}\left(E\cdot\nb_vg\right)_{\iota*},\pr_v^\al g^{(m+\beta)}_{\iota*}\right\ra_{\ell-2|\al|-2\iota}\\
    \nn&+\nu\left\la \pr_v^\al Z^{m+\beta}\big(e^{-\phi}\Gamma(g,g)\big)_{\iota*},\pr_v^\al g^{(m+\beta)}_{\iota*}\right\ra_{\ell-2|\al|-2\iota}\\
    \nn&+2\left\la \pr_v^\al Z^{m+\beta}\big(e^{\phi}E\cdot v\sqrt{\mu} \big)_{\iota*},\pr_v^\al g^{(m+\beta)}_{\iota*}\right\ra_{\ell-2|\al|-2\iota}\\
    &+\left\la \pr_v^\al Z^{m+\beta}\left(\pr_t\phi g\right)_{\iota*},\pr^\al_vg^{(m+\beta)}_{\iota*}\right\ra_{\ell-2|\al|-2\iota}.
\end{align}
Moreover, the evolution of the inner product of $\pr_{x_j}g^{(m+\beta)}_{\iota*}$ and $\la v\ra^{2\ell-4-4\iota}\pr_{v_j}g^{(m+\beta)}_{\iota*}$ takes the form of
\begin{align}\label{en-cross}
    \nn&\fr{d}{dt}\left\la\pr_{x_j}g^{(m+\beta)}_{\iota*},\pr_{v_j}g^{(m+\beta)}_{\iota*} \right\ra_{\ell-2-2\iota}+\left\| \pr_{x_j}g^{(m+\beta)}_{\iota*}\right\|_{L^2_{x,v}(\ell-2-2\iota)}^2\\
    \nn&+\nu\left\la Z^{m+\beta}L\pr_{x_j}g_{\iota*},  \pr_{v_j}g^{(m+\beta)}_{\iota*}\right\ra_{\ell-2-2\iota}+\nu \left\la \pr_{v_j}Z^{m+\beta} Lg_{\iota*}, \pr_{x_j}g^{(m+\beta)}_{\iota*}\right\ra_{\ell-2-2\iota}\\
    =\nn&\iota\kappa_0\nu^{\fr13}\big(\kappa_0\nu^{\fr13}(1+t)\big)^{-1}\left\la\pr_{x_j}g^{(m+\beta)}_{\iota*},\pr_{v_j}g^{(m+\beta)}_{\iota*} \right\ra_{\ell-2-2\iota}\\
    \nn&-\left\la \pr_{x_j} Z^{m+\beta}\left(E\cdot\nb_vg\right)_{\iota*},\pr_{v_j}g^{(m+\beta)}_{\iota*}\right\ra_{\ell-2-2\iota}\\
    \nn&-\left\la \pr_{v_j} Z^{m+\beta}\left(E\cdot\nb_vg\right)_{\iota*},\pr_{x_j}g^{(m+\beta)}_{\iota*}\right\ra_{\ell-2-2\iota}\\
    \nn&+\nu\left\la \pr_{x_j} Z^{m+\beta}
    \big(e^{-\phi}\Gamma(g,g)\big)_{\iota*},\pr_{v_j}g_{\iota*}^{(m+\beta)}\right\ra_{\ell-2-2\iota}\\
    \nn&+\nu\left\la \pr_{v_j} Z^{m+\beta}\big(e^{-\phi}\Gamma(g,g)\big)_{\iota*},\pr_{x_j}g^{(m+\beta)}_{\iota*}\right\ra_{\ell-2-2\iota}\\
    \nn&+2\left\la \pr_{x_j} Z^{m+\beta}\big(e^{\phi}E\cdot v\sqrt{\mu}\big)_{\iota*},\pr_{v_j}g^{(m+\beta)}_{\iota*}\right\ra_{\ell-2-2\iota}\\
    \nn&+2\left\la \pr_{v_j} Z^{m+\beta}\big(e^{\phi}E\cdot v\sqrt{\mu}\big)_{\iota*},\pr_{x_j}g^{(m+\beta)}_{\iota*}\right\ra_{\ell-2-2\iota}\\
    \nn&+\left\la \pr_{x_j} Z^{m+\beta}\left(\pr_t\phi g\right)_{\iota*},\pr_{v_j}g^{(m+\beta)}_{\iota*}\right\ra_{\ell-2-2\iota}\\
    &+\left\la \pr_{v_j} Z^{m+\beta}\left(\pr_t\phi g\right)_{\iota*},\pr_{x_j}g^{(m+\beta)}_{\iota*}\right\ra_{\ell-2-2\iota}.
\end{align}
By Poincar\'e's inequality and \eqref{coercive}, in view of  Lemma \ref{lem: E_l<D_l-2}, we have
\begin{align}\label{dt-(1+t)}
    \nn&\sum_{\iota=1}^{[\ell/3]+1}\fr{\iota \kappa_0\nu^{\fr13}}{2}\big(\kappa_0\nu^{\fr13}(1+t)\big)^{-1}\bigg({\rm A}_0\sum_{|\al|\le1}\big\|\pr_x^{\al}g^{(m+\beta)}_{\iota\ne}\big\|_{L^2_{x,v}(\ell-2|\al|-2\iota)}^2\\
    \nn&+\sum_{1\le|\al|\le2}\big\|\kappa^{|\al|}\nu^{\fr{|\al|}{3}}\pr_v^{\al}g^{(m+\beta)}_{\iota\ne}\big\|_{L^2_{x,v}(\ell-2|\al|-2\iota)}^2+2\kappa\nu^{\fr13}\left\la\nb_{x}g^{(m+\beta)}_{\iota\ne},\nb_{v}g^{(m+\beta)}_{\iota\ne} \right\ra_{\ell-2-2\iota}\bigg)\\
    \nn&+\sum_{\iota=3}^{[\ell/3]+1} \fr{(\iota-2) (1+t)^{-3}}{2}\kappa^2\big\|Yg^{(m+\beta)}_{\iota\ne}\big\|_{L^2_{x,v}(\ell-2-2\iota)}^2\\
    \nn\le&\bar{C}\fr{{\rm A}_0}{\kappa}\kappa_0\nu^\fr{1}{3}\sum_{\iota=1}^{[\ell/3]+1}\iota \big(\kappa_0\nu^{\fr13}(1+t)\big)^{\iota-1}\Big(\kappa\big\|\nb_xg^{(m+\beta)}_{\ne}\big\|_{L^2_{x,v}(\ell-2\iota)}^2\\
    \nn&+\sum_{1\le|\al|\le2}\big\|\kappa^{|\al|}\nu^{\fr{|\al|}{3}}\pr_v^\al g^{(m+\beta)}_{\ne}\big\|_{L^2_{x,v}(\ell-2|\al|-2\iota)}^2\Big)\\
    \nn&+\kappa_0\nu^{\fr13}\sum_{\iota=3}^{[\ell/3]+1} (\iota-2) \Big(\big(\kappa_0\nu^{\fr13}(1+t)\big)^{\iota-3}\kappa^2\nu^{\fr23}\big\|\nb_vg^{(m+\beta)}_{\ne}\big\|_{L^2_{x,v}(\ell-2-2\iota)}^2\\
    \nn&+\big(\kappa_0\nu^{\fr13}(1+t)\big)^{\iota-1}\kappa\big\|\nb_xg^{(m+\beta)}_{\ne}\big\|_{L^2_{x,v}(\ell-2-2\iota)}^2\Big)\\
    \nn\le&\bar{C}\ell \fr{{\rm A}_0}{\kappa} \kappa_0\nu^{\fr13}\sum_{\iota=1}^{[\ell/3]+1}\big(\kappa_0\nu^{\fr13}(1+t)\big)^{\iota-1}\Big(\kappa\big\|\nb_xg^{(m+\beta)}_{\ne}\big\|_{L^2_{x,v}(\ell-2(\iota-1)-2)}^2\\
    \nn&+\kappa^2\nu^{\fr23}\sum_{0\le|\al|\le1}\big\|\kappa^{|\al|}\nu^{\fr{|\al|}{3}}\pr_v^\al g^{(m+\beta)}_{\ne}\big\|_{\sig,\ell-2|\al|-2\iota}^2\Big)\\
    \le&\bar{C}\ell \fr{{\rm A}_0}{\kappa} \kappa_0\nu^\fr{1}{3}\sum_{\iota=0}^{[\ell/3]} w^2_{\iota}(t)\mathcal{D}^{m+\beta}_{\ell-2\iota}(g_{\ne}(t)),
\end{align}
where $\bar{C}$ is a generic constant depending only on $\fr{1}{c}$ appearing in \eqref{coercive}.

Noting that
\begin{align*}
    \big[\pr_v^\al, v\cdot\nb_x g^{(m+\beta)}_{\iota*}\big]=\sum_{\substack{\al'\le\al, |\al'|=1}}C_\al^{\al'}\pr_x^{\al'}\pr_v^{\al-\al'}g_{\iota*}^{(m+\beta)},
\end{align*}
integrating by parts and using \eqref{coercive},  we then bound the second term on the left-hand side of \eqref{en-pr_vg} as follows:
\begin{align}\label{bad-cross}
    \nn&\sum_{1\le|\al|\le2}\kappa^{2|\al|}\nu^{\fr{2|\al|}{3}}\left\la\big[\pr_v^\al, v\cdot\nb_x g^{(m+\beta)}_{\iota*}\big],  \pr_v^\al g^{(m+\beta)}_{\iota*}\right\ra_{\ell-2|\al|-2\iota}\\
    =\nn&\kappa^{2}\nu^{\fr{2}{3}}\left\la\nb_x g^{(m+\beta)}_{\iota*},  \nb_vg^{(m+\beta)}_{\iota*}\right\ra_{\ell-2-2\iota}\\
    \nn&-\kappa^{4}\nu^{\fr{4}{3}}\sum_{\substack{\al'\le\al\\|\al'|=1, |\al|=2}}C_\al^{\al'}\left\la\pr_x^{\al'}g_{\iota*}^{(m+\beta)}, \pr_v^{\al-\al'}\pr_v^\al g^{(m+\beta)}_{\iota*}\right\ra_{\ell-4-2\iota}\\
    \nn&+\kappa^{4}\nu^{\fr{4}{3}}\sum_{\substack{\al'\le\al\\|\al'|=1, |\al|=2}}C_\al^{\al'}\left\la\pr_x^{\al'}g_{\iota*}^{(m+\beta)}, \pr_v^{\al-\al'}\la v\ra^{2\ell-8-4\iota}\pr_v^\al g^{(m+\beta)}_{\iota*}\right\ra_{x,v}\\
    \le\nn&\kappa \left(\kappa\nu^{\fr13}\big\|\nb_xg^{(m+\beta)}_{\iota*}\big\|_{L^2_{x,v}(\ell-2-2\iota)}^2\right)+C\kappa^2 \left(\nu\sum_{0\le|\al|\le 2}\big\|\kappa^{|\al|}\nu^{\fr{|\al|}{3}}\pr_v^\al g_{\iota*}^{(m+\beta)}\big\|_{\sig,\ell-2|\al|-2\iota}^2\right)\\
    \le& C\kappa\left(\nu^{\fr13}\mathcal{D}^{m+\beta}_{\ell-2\iota}(g_{\iota*}(t))\right),
\end{align}
where $C$ is a positive constant depending on $\ell$ and $\fr{1}{c}$ appearing in \eqref{coercive}.

By Lemma \ref{lem-coercive-L} and Remark \ref{rem-Y/t}, we find that
there exist small positive constants $\underline{\lm},\underline{\kappa}\in(0,1)$, depending on $M_\sig, M_{\mu}, s$ and $N$, such that if
\begin{align}\label{underline:lm-kappa}
    \lm(0)=\lm_{\infty}+\tl{\delta}\le\underline{\lm}, \quad \kappa\le\underline{\kappa},
\end{align}
there holds
\begin{align}\label{est-dissip1}
    {\rm Dis}_1(g_{\iota*})=\nn&\nu {\rm A}_0\sum_{m\in\N^6}\sum_{\substack{\beta\in\N^6\\ |\beta|\le N}}\kappa^{2|\beta|}a_{m,\lm,s}^2(t)\sum_{|\al|\le1}\left\la Z^{m+\beta} L\pr^{\al}_{x}g_{\iota*},\pr_x^{\al}g_{\iota*}^{(m+\beta)}\right\ra_{\ell-2|\al|-2\iota}\\
    \ge\nn&(\fr{15}{16}-\zeta)\nu^{\fr13} \sum_{|\al|\le1}\big\|\sqrt{{\rm A}_0}\nu^{\fr13} \pr_x^\al g_{\iota*}\big\|_{\mathcal{G}^{\lm,N}_{s,\sig, \ell-2|\al|-2\iota}}^2\\
    &-\nu{\rm A}_0 C_\zeta \sum_{m\in \N^6}\sum_{|\beta|\le N}\kappa^{2|\beta|}a_{m,\lm,s}^2(t)\sum_{|\al|\le1}\left\| \pr_x^\al  g^{(m+\beta)}_{\iota*}\right\|_{L^2_{x,v}(|v|\le2\zeta)}^2,
\end{align}

\begin{align}\label{est-dissip2}
    {\rm Dis}_2(g_{\iota*})=\nn&\nu\sum_{1\le|\al|\le2} \sum_{m\in\N^6}\sum_{\substack{\beta\in\N^6\\ |\beta|\le N}}\kappa^{2|\beta|}a_{m,\lm,s}^2(t)\left\la \kappa^{|\al|}\nu^{\fr{|\al|}{3}}\pr_v^\al Z^{m+\beta} Lg_{\iota*},\kappa^{|\al|}\nu^{\fr{|\al|}{3}}\pr_v^\al g_{\iota*}^{(m+\beta)}\right\ra_{\ell-2|\al|-2\iota}\\
    \ge\nn&(1-\zeta)\nu \sum_{1\le|\al|\le2}\big\|\kappa^{|\al|}\nu^{\fr{|\al|}{3}} \pr_v^\al g_{\iota*}\big\|_{\mathcal{G}^{\lm,N}_{s, \sig, \ell-2|\al|-2\iota}}^2\\
    \nn&-\fr{1}{16}\nu\sum_{0\le|\al|\le2}\big\|\kappa^{|\al|}\nu^{\fr{|\al|}{3}} \pr_v^\al g_{\iota*}\big\|_{\mathcal{G}^{\lm,N}_{s, \sig, \ell-2|\al|-2\iota}}^2\\
    &-\sum_{1\le|\al|\le2}\nu^{1+\fr{2|\al|}{3}}\kappa^{2|\al|} C_\zeta \sum_{m\in \N^6}\sum_{|\beta|\le N}\kappa^{2|\beta|}a_{m,\lm,s}^2(t)\big\|   g^{(m+\beta)}_{\iota*}\big\|_{L^2_{x,v}(|v|\le2\zeta)}^2,
\end{align}
and 
\begin{align}\label{est-dissip3}
{\rm Dis}_3(g_{\iota*})=\nn&\nu \sum_{m\in\N^6}\sum_{\substack{\beta\in\N^6\\ |\beta|\le N}}\kappa^{2|\beta|+2}a_{m,\lm,s}^2(t)(1+ t)^{-2}\left\la Y Z^{m+\beta} Lg_{\iota*},Yg_{\iota*}^{(m+\beta)}\right\ra_{\ell-2-2\iota}\\
\ge\nn&(\fr{15}{16}-\zeta)\nu^{\fr13}( 1+t)^{-2}\big\|\nu^{\fr13}\kappa Y  g_{\iota*}\big\|_{\mathcal{G}^{\lm,N}_{s,\sig,\ell-2-2\iota}}^2-\nu \zeta'C_{\zeta} \kappa^{2}\|\nb_xg_{\iota*}\|_{\mathcal{G}^{\lm,N}_{s,\sig,\ell-2-2\iota}}^2\\
    \nn&-\nu C_{\zeta'}C_{\zeta} \sum_{m\in \N^6}\sum_{|\beta|\le N}\kappa^{2|\beta|+2}a_{m,\lm,s}^2(t)\big\|   g_{\iota*}^{(m+\beta)}\big\|_{L^2_{x,v}(|v|\le2\zeta)}^2\\
    &-\nu C_{\zeta}\sum_{m\in \N^6}\sum_{|\beta|\le N}\kappa^{2|\beta|+2}a_{m,\lm,s}^2(t)\big\| \nb_x  g_{\iota*}^{(m+\beta)}\big\|_{L^2_{x,v}(|v|\le2\zeta)}^2.
\end{align}
For the cross terms, let us denote
\begin{align}\label{def-dissip4}
    {\rm Dis}_4(g_{\iota*})=&\nu^{\fr43}\sum_{m\in\N^6}\sum_{\substack{\beta\in\N^6\\|\beta|\le N}}\kappa^{2|\beta|+1}a_{m,\lm,s}^2(t)\bigg(\left\la Z^{m+\beta}L\nb_{x}g_{\iota*},  \nb_{v}g_{\iota*}^{(m+\beta)}\right\ra_{\ell-2-2\iota}\\
    &+ \left\la \nb_{v}Z^{m+\beta} Lg_{\iota*}, \nb_{x}g_{\iota*}^{(m+\beta)}\right\ra_{\ell-2-2\iota}\bigg).
\end{align}
We infer from Corollary \ref{coro-cross} that
\begin{align}\label{est-cro}
|{\rm Dis}_4(g_{\iota*})|\le\nn&C{\rm A}_0^{-\fr12} \nu^{\fr13}\big\|\sqrt{{\rm A}_0}\nu^{\fr13}\nb_xg_{\iota*}\big\|_{\mathcal{G}^{\lm,N}_{s,\sig,\ell-2-2\iota}}\\
&\times\Big(\big\|\kappa \nu^{\fr23}\nb_v g_{\iota*}\big\|_{\mathcal{G}^{\lm,N}_{s,\sig,\ell-2-2\iota}}+\nu^{\fr13}\kappa {\rm A}_0^{-\fr12}\big\|\sqrt{{\rm A}_0}\nu^{\fr13}g_{\iota*}\big\|_{\mathcal{G}^{\lm,N}_{s,\sig,\ell-2\iota}}\Big).
    \end{align}
Let us take $\zeta=\fr{1}{8}$,  $\zeta'$ so small that
\[
\zeta'C_\zeta \le \fr{1}{32},
\]
and ${\rm A}_0$ so large that
\begin{equation}\label{large-A_0}
    C{\rm A}_0^{-\fr12}\le \fr{1}{32},
\end{equation}
then it follows from \eqref{bad-cross}, \eqref{est-dissip1}--\eqref{est-cro} that
\begin{align}\label{lower-Dis}
\nn&\sum_{j=1}^4{\rm Dis}_j(g_{\iota*})+\nu^{\fr13}\kappa\|\nb_xg_{\iota*}\|^2_{\mathcal{G}^{\lm,N}_{s,\ell-2-2\iota}}\\
\nn&+\sum_{m\in\N^6}\sum_{\substack{\beta\in\N^6\\ |\beta|\le N}}\kappa^{2|\beta|}a_{m,\lm,s}^2(t)\sum_{1\le|\al|\le2}\kappa^{2|\al|}\nu^{\fr{2|\al|}{3}}\left\la\big[\pr_v^\al, v\cdot\nb_x g^{(m+\beta)}_{\iota*}\big],  \pr_v^\al g^{(m+\beta)}_{\iota*}\right\ra_{\ell-2|\al|-2\iota}\\
\ge& \fr{1}{2}\nu^{\fr13}\mathcal{D}_{\ell-2\iota}(g_{\iota*}(t))-\nu {\rm A}_0C \sum_{m\in \N^6}\sum_{|\beta|\le N}\kappa^{2|\beta|}a_{m,\lm,s}^2(t)\sum_{|\al|\le1}\left\| \pr_x^\al  g^{(m+\beta)}_{\iota*}\right\|_{L^2_{x,v}(|v|\le2\zeta)}^2.
\end{align}

For the last term on the right-hand side of \eqref{lower-Dis}, if $\iota=0$, one can use the decomposition $g=g_{\ne}+g_0$ and Poincar\'{e} inequality to obtain
\begin{align}\label{est-low1}
\nn&\nu {\rm A}_0C \sum_{m\in \N^6}\sum_{|\beta|\le N}\kappa^{2|\beta|}a_{m,\lm,s}^2(t)\sum_{|\al|\le1}\left\| \pr_x^\al  g^{(m+\beta)}\right\|_{L^2_{x,v}(|v|\le2\zeta)}^2\\
\le\nn&\big(\nu^{\fr23} {\rm A}_0\kappa^{-1}C\big) \Big(\nu^{\fr13}\kappa\| \nb_x  g\|_{\mathcal{G}^{\lm,N}_{s,0}}^2\Big)\\
&+\nu {\rm A}_0C \sum_{m\in \N^6}\sum_{|\beta|\le N}\kappa^{2|\beta|}a_{m,\lm,s}^2(t)\big\|   g^{(m+\beta)}_{0}\big\|_{L^2_{v}(|v|\le2\zeta)}^2.
\end{align}
If $|m+\beta|>0$, there exists $\bar{e}_l\in\N^6$ for some $l\in\{1, 2,3\}$, such that $g_0^{(m+\beta)}=\pr_{v_l}g_0^{(m+\beta-\bar{e}_l)}$. Then by \eqref{coercive}, we have
\begin{align*}
    \big\|g_0^{(m+\beta)}\big\|_{L^2_v(|v|\le2\zeta)}\les \big\|g^{(m+\beta-\bar{e}_l)}\big\|_{\sig,\ell}^2.
\end{align*}
Two sub-cases will be involved. If $|\beta|>0$, we rewrite $\kappa^{2|\beta|}=\kappa^2\kappa^{2|\beta-\bar{e}_l|}$. If $|m|>0$, assume without loss of generality that $m_l\ge1$, then the following fact holds:
\begin{align*}
    a_{m,\lm,s}^2(t)=&\lm^2a_{m-\bar{e}_l,\lm,s}^2(t)\Big[\big(\fr{1+|m|}{|m|}\big)^{12}\fr{1}{|m|^{\fr1s-1}}\fr{1}{m_l}\Big]^2
    \le2^{24}\lm^2a_{m-\bar{e}_l,\lm,s}^2(t).
\end{align*}
Therefore,
\begin{align}\label{est-low2}
    \nn&\nu {\rm A}_0C \sum_{\substack{m,\beta\in \N^6,|\beta|\le N\\|m+\beta|>0}}\kappa^{2|\beta|}a_{m,\lm,s}^2(t)\big\|   g^{(m+\beta)}_{0}\big\|_{L^2_{v}(|v|\le2\zeta)}^2\\
    \le&\max\{\kappa^2,\lm^2\}C\Big(\nu^{\fr13} \big\|\sqrt{{\rm A}_0}\nu^{\fr13}g\big\|^2_{\mathcal{G}^{\lm,N}_{s,\sig,\ell}}\Big).
\end{align}
If $|m+\beta|=0$, we need to bound $\nu{\rm A}_0C\|g_0\|_{L^2_v(|v|\le2\zeta)}^2$. By Lemma \ref{lem-lower order}, we find that
\begin{align}\label{bd-g0-low}
\nn&\nu{\rm A}_0C\|g_0\|_{L^2_v(|v|\le2\zeta)}^2\\
\le&\nu{\rm A}_0C\left\|\la v\ra^{\ell-2}\nb_xg\right\|_{L^2_{x,v}}^2+\nu{\rm A}_0C\|E\|^4_{L^2_{x}}+\nu {\rm A}_0C  \left\|({\rm I}-{\rm P}_0)g\right\|_{\sig}^2.
\end{align}

For the case $\iota=1, 2,\cdots,[\ell/3]+1$, $g_0$ is absent, now \eqref{est-low1} reduces to
\begin{align}\label{est-low3}
\nn&\nu {\rm A}_0C \sum_{m\in \N^6}\sum_{|\beta|\le N}\kappa^{2|\beta|}a_{m,\lm,s}^2(t)\sum_{|\al|\le1}\left\| \pr_x^\al  g^{(m+\beta)}_{\iota\ne}\right\|_{L^2_{x,v}(|v|\le2\zeta)}^2\\
\le&\big(\nu^{\fr23} {\rm A}_0\kappa^{-1}C\big) \Big(\nu^{\fr13}\kappa\| \nb_x  g_{\iota\ne}\|_{\mathcal{G}^{\lm,N}_{s,0}}^2\Big).
\end{align}

If $\iota=0$, we further take the constants $\underline{\lm}$ and $\underline{\kappa}$ in \eqref{underline:lm-kappa} smaller, if necessary, so that the right hand side of \eqref{est-low2} can be absorbed  by $\fr18\mathcal{D}_\ell(g(t))$. Moreover, let $\nu_0 $ be such that
\begin{align}\label{small-nu_0}
\nu_0^{\fr23}{\rm A}_0\kappa^{-1}C\le\fr{1}{32},
\end{align}
then for all $\nu\le\nu_0$,
\[
\nu{\rm A}_0C\left\|\la v\ra^{\ell-2}\nb_xg\right\|_{L^2_{x,v}}^2\le \nu{\rm A}_0C\left\|\nb_xg\right\|_{\mathcal{G}^{\lm,N}_{s,\ell-2}}^2\le\fr{1}{32}\nu^{\fr13}\kappa\|\nb_xg\|_{\mathcal{G}^{\lm,N}_{s,\ell-2}}^2.
\]
Meanwhile, let ${\rm B}_0$ be such that
\begin{align}\label{large-B_0}
{\rm B}_0\dl\ge2 {\rm A}_0C,
\end{align}
here $C$ is the constant appearing in \eqref{bd-g0-low}.
Then it follows from \eqref{lower-Dis}--\eqref{bd-g0-low} that
\begin{align}\label{bd-dis}
    \nn&{\rm B}_0\dl \nu \|({\rm I}-{\rm P}_0)g\|^2_{\sig}+\sum_{j=1}^4{\rm Dis}_j(g)+\nu^{\fr13}\kappa\|\nb_xg\|^2_{\mathcal{G}^{\lm,N}_{s,\ell-2}}\\
    \nn&+\sum_{m\in\N^6}\sum_{\substack{\beta\in\N^6\\ |\beta|\le N}}\kappa^{2|\beta|}a_{m,\lm,s}^2(t)\sum_{1\le|\al|\le2}\kappa^{2|\al|}\nu^{\fr{2|\al|}{3}}\left\la\big[\pr_v^\al, v\cdot\nb_x g^{(m+\beta)}\big],  \pr_v^\al g^{(m+\beta)}\right\ra_{\ell-2|\al|}\\
\ge&\fr14\nu^{\fr13}\Big({\rm B}_0\dl \nu^{\fr23} \|({\rm I}-{\rm P}_0)g\|^2_{\sig}+\mathcal{D}_{\ell}(g(t))\Big)-C\nu {\rm A}_0\|E\|_{L^2_x}^4.
\end{align}

If $\iota=1,\cdots, [\ell/3]+1$, it follows from \eqref{lower-Dis} and \eqref{est-low3} that
\begin{align}\label{bd-dis-iota}
    \nn&\sum_{j=1}^4{\rm Dis}_j(g_{\iota\ne})+\nu^{\fr13}\kappa\|\nb_xg_{\iota\ne}\|^2_{\mathcal{G}^{\lm,N}_{s,\ell-2-2\iota}}\\
    \nn&+\sum_{m\in\N^6}\sum_{\substack{\beta\in\N^6\\ |\beta|\le N}}\kappa^{2|\beta|}a_{m,\lm,s}^2(t)\sum_{1\le|\al|\le2}\kappa^{2|\al|}\nu^{\fr{2|\al|}{3}}\left\la\big[\pr_v^\al, v\cdot\nb_x g^{(m+\beta)}_{\iota\ne}\big],  \pr_v^\al g^{(m+\beta)}_{\iota\ne}\right\ra_{\ell-2|\al|-2\iota}\\
\ge&\fr14\nu^{\fr13}\mathcal{D}_{\ell-2\ell}(g_{\iota\ne}(t))=\fr14\nu^{\fr13}w^2_{\iota}(t)\mathcal{D}_{\ell-2\ell}(g_{\ne}(t)).
\end{align}
Now we infer from \eqref{bd-dis}, \eqref{bd-dis-iota} that, to absorb the right hand side of \eqref{dt-(1+t)}, one can choose $\kappa_0$ so small that
\begin{align}\label{small-kappa_0}
    \ell\fr{\rm A_0}{\kappa}\kappa_0\le\fr{1}{8\bar{C}},
\end{align}
where $\bar{C}$ is the constant appearing in \eqref{dt-(1+t)}.

Collecting the estimates in \eqref{en-0}--\eqref{est-linear0}, \eqref{dt-(1+t)},
\eqref{bd-dis}, \eqref{bd-dis-iota}, choosing the constants $\underline{\lm}, \underline{\kappa},{\rm A}_0, \nu_0, {\rm B}_0$ and $\kappa_0$ successively satisfying \eqref{underline:lm-kappa}, \eqref{large-A_0}, \eqref{small-nu_0},\eqref{large-B_0}, and \eqref{small-kappa_0}, recalling the definitions of $\mathcal{E}(g(t))$, $\mathcal{D}(g(t))$ and $\mathcal{CK}(g(t))$ in \eqref{total-en}--\eqref{total-CK}, we find that for all $\lm,\kappa$ and $\nu$ satisfying $\lm\le\underline{\lm}, \kappa\le\underline{\kappa}$ and $\nu\le\nu_0$, the estimate \eqref{en-ineq-Linear-g} holds.
\end{proof}

The following four lemmas are dedicated to dealing with the contributions from the right hand side of the energy inequality \eqref{en-ineq-Linear-g}.

We first go to bound the linear contributions $L[\mu]_{\iota;1)}$ and $L[\mu]_{\iota;2)}$.
\begin{lem}\label{lem-Linear contributions}
Under the bootstrap hypotheses \eqref{bd-en}--\eqref{H-phi}, there holds  
\begin{align}\label{est-Lmu}
    \nn&2\sum_{\iota=0}^{[\ell/3]+1}\Big[L[\mu]_{\iota;1)}+L[\mu]_{\iota;2)}\Big]\\
    \nn\le&C(\sqrt{\rm A_0}+1)\sum_{\iota=1}^{[\ell/3]+1}\Bigg[\Big({  w_{\iota}(t)\la t\ra^{\fr{1+a-s}{2}}\|\rho\|_{\mathcal{G}^{\lm,N}_s}}\Big)\bigg(w_{\iota}(t)\mathcal{CK}_{\ell-2\iota}( g_{\ne}(t))^{\fr12}\\
    \nn&+\fr{1}{\la t\ra^{\fr{1+a}{2}}}w_{\iota}(t)\mathcal{E}_{\ell-2\iota}(g_{\ne}(t))^{\fr12}\bigg)+\Big({  w_{\iota}(t)\|\rho\|_{\mathcal{G}^{\lm,\fr{N}{2}}_s}}\Big)\Big(w_{\iota}(t)\mathcal{E}_{\ell-2\iota}(g_{\ne}(t))^{\fr12}\Big)\Bigg]\\
    &+C(\sqrt{{\rm A}_0}+1)\|\rho\|_{\mathcal{G}^{\lm,N}_{s}}\mathcal{E}_{\ell}(g(t))^{\fr12}.
\end{align}
\end{lem}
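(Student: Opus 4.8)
The plan is to estimate each of the four families of inner products $L[\mu]_{\iota;1)}$ and $L[\mu]_{\iota;2)}$ separately, treating the forcing term $\big(e^{\phi}E\cdot v\sqrt{\mu}\big)_{\iota*}$ as a product of three factors: the composite $e^{\phi}$, the density-induced field $E = -\nabla_x\phi$, and the rapidly decaying Maxwellian weight $v\sqrt{\mu}$. First I would pass from $f$-type quantities to $g$-type quantities using Lemma \ref{lem-compose} and Corollary \ref{coro-compose}, so that the composite $e^{\phi}$ produces only a harmless factor $1+\|\phi\|_{\mathcal{G}^{\lm,N}_s}\lesssim 1$ (by the bootstrap hypothesis \eqref{H-phi}), and then apply the product estimates in the Gevrey norm to split off the $v\sqrt{\mu}$ factor, which lies in every $\mathcal{G}^{\lm,N}_{s,\ell'}$ with a universal bound. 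The essential structural point is that $E$ depends only on $x$, so $Z^{m+\beta}\big(E\cdot v\sqrt\mu\big)$ distributes as a sum over $n\le m,\beta'\le\beta$ of $(\nabla_x,t\nabla_x)^{\tl n + \tl\beta'}E$ times $Z^{m-n+\beta-\beta'}(v\sqrt\mu)$; the combinatorial coefficients $b_{m,n,s}$ from Remark \ref{Rmk: coefficients a-b} let us reassemble this convolution-type sum into a product of two Gevrey norms via Lemma \ref{lem-summable} (or its convolution analogue), exactly as in the proof of Proposition \ref{lem-Linfty-rho}.

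The key distinction is between the $\iota=0$ term and the $\iota\ge1$ (enhanced-dissipation-weighted) terms. For $\iota=0$, after the paraproduct-free splitting we simply bound $L[\mu]_{0;1)} + L[\mu]_{0;2)}$ by $C(\sqrt{\rm A_0}+1)\|\rho\|_{\mathcal{G}^{\lm,N}_s}\,\mathcal{E}_\ell(g(t))^{1/2}\cdot\mathcal{E}_\ell(g(t))^{1/2}$-type quantities; here one pairs the top-order factor $Z^{m+\beta}g$ (bounded by $\mathcal{E}_\ell(g(t))^{1/2}$ after summing in $m,\beta$ against $a_{m,\lm,s}^2\kappa^{2|\beta|}$) with the lower-order field factor bounded by $\|\rho\|_{\mathcal{G}^{\lm,N}_s}$, giving precisely the last line of \eqref{est-Lmu}. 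For $\iota\ge1$ the $f_{\ne}$-projection forces us to keep the weight $w_\iota(t) = (\kappa_0\nu^{1/3}(1+t))^{\iota/2}$ on one side; the natural move is to extract a factor $\la t\ra^{-(1+a)/2}$ from the field side (which $\rho$ can afford because of the Landau damping decay built into the density estimates \eqref{bd-rho-1}) and compensate with the $\mathcal{CK}_{\ell-2\iota}$ term, which is exactly $-2\frac{\dot\lm}{\lm}\sum |m| a_{m,\lm,s}^2(\cdots)$ and, since $-\dot\lm/\lm \sim \la t\ra^{-(1+a)}$, supplies a factor $\la t\ra^{-(1+a)/2}$ worth of decay per unit regularity, so that the echo-localization gain in regularity $|m|$ converts into the time decay needed to integrate. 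This produces the two bracketed contributions in the first sum on the right of \eqref{est-Lmu}: one pairing $w_\iota\la t\ra^{(1+a-s)/2}\|\rho\|$ with $w_\iota\mathcal{CK}_{\ell-2\iota}^{1/2} + \la t\ra^{-(1+a)/2}w_\iota\mathcal{E}_{\ell-2\iota}^{1/2}$, and a lower-regularity ($\mathcal{G}^{\lm,N/2}_s$) remainder pairing $w_\iota\|\rho\|_{\mathcal{G}^{\lm,N/2}_s}$ with $w_\iota\mathcal{E}_{\ell-2\iota}^{1/2}$, the latter arising from the portion of the sum where the field carries many derivatives and $g$ carries few, so the $a$-coefficient ratio is favorable and no CK compensation is needed.

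For the cross terms $L[\mu]_{\iota;2)}$ the argument is identical but with an extra factor $\kappa\nu^{1/3}$ in front and the pairing $\nabla_x Z^{m+\beta}(\cdots)$ against $\nabla_v g^{(m+\beta)}$ (and vice versa); these are strictly lower order than the diagonal terms because of the $\kappa\nu^{1/3}$ smallness, so they are absorbed into the same right-hand side. The main obstacle I anticipate is bookkeeping the velocity-weight losses: the $\iota$-th weighted energy uses weight $\la v\ra^{\ell-2|\al|-2\iota}$, and since $v\sqrt\mu$ is Schwartz this is never a genuine loss, but one must check the weight exponents stay $\ge 0$ for all $\iota\le[\ell/3]+1$ and all $|\al|\le2$, which is where the hypothesis $\ell > 18$ from \eqref{res-final-1} is consumed; and one must verify that the combinatorial reassembly respects $a_{m,\lm,s}(t)\lesssim a_{\bar m,\lm,s}(t)$ (used already in Proposition \ref{lem-Linfty-rho}) so that the pure-$v$ derivatives falling on $\sqrt\mu$ are controlled by the full Gevrey norm of the Maxwellian. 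Modulo that bookkeeping, everything reduces to the product and summation lemmas already in hand, so I would not expect any deep new difficulty in this particular lemma — the serious work is deferred to the $N[E]$ and $N[\Gamma]$ estimates handled by the subsequent lemmas.
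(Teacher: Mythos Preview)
Your outline matches the paper's approach, but two details need correcting. First, you have the high-low roles reversed for $\iota\ge1$: the $\|\rho\|_{\mathcal{G}^{\lm,N/2}_s}$ term (paired with $w_\iota\mathcal{E}_{\ell-2\iota}^{1/2}$, no CK needed) comes from the piece where the field carries \emph{few} $\beta$-derivatives ($|\beta'|\le|\beta|/2$), because then the regularity gap on $\rho$ already supplies time-integrability via \eqref{decay-rho-recursion}; the CK pairing is needed precisely in the opposite regime, when $\rho$ sits at full regularity $\mathcal{G}^{\lm,N}_s$ and has no spare decay. Second, the mechanism producing the CK term is not an echo-type regularity transfer (there is no echo in this linear forcing; that structure belongs to Lemma~\ref{lem-transport}) but the pointwise multiplier inequality \eqref{up-kkt},
\[
1 \lesssim \big(|k|^{-s/2}\la t\ra^{(1+a-s)/2}\big)\big(|k,\eta+kt|^{s/2}\la t\ra^{-(1+a)/2}\big)\la\eta\ra^{s/2},
\]
which inserts the first factor on $\hat\rho_k$ and the second on $g_{\ne}^{(m+\beta)}$; the latter is then bounded by $\mathcal{CK}_{\ell-2\iota}^{1/2}+\la t\ra^{-(1+a)/2}\mathcal{E}_{\ell-2\iota}^{1/2}$ via Corollary~\ref{coro-CK}, with the leftover $\la\eta\ra^{s/2}$ absorbed harmlessly by the Schwartz factor $v\sqrt\mu$. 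With these two corrections your argument is the paper's.
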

\begin{proof}
The proof will be divided  into two cases according to whether $\iota=0$ or not.

{\bf Case 1: $\iota=0$.}
Note that for $|\al|=1$, 
\begin{gather*}
    \pr_x^\al (e^\phi E\cdot v\sqrt{\mu})=\pr_x^\al (e^\phi E)\cdot v\sqrt{\mu},\\
     Y^\al (e^\phi E\cdot v\sqrt{\mu})=t\pr_x^\al (e^\phi E)\cdot v\sqrt{\mu}+ (e^\phi E)\cdot \pr_v^\al(v\sqrt{\mu}),
\end{gather*}
and for $1\le|\al|\le2$, there holds
\begin{align*}
    \pr_v^\al (e^\phi E\cdot v\sqrt{\mu})=(e^\phi E)\cdot \pr_v^\al(v\sqrt{\mu}).
\end{align*}
Then by the product estimate in $\mathcal{G}^{\lm,N}_{s,\ell-2|\al|}$ and Lemma \ref{lem-compose} with $-\phi$ replaced by $\phi$, we find that
\begin{align}\label{est-Lmu01}
    L[\mu]_{0;1)}
    \le\nn& C\Big(\sqrt{\rm{A}_0}+\kappa+\kappa\nu^{\fr13}\Big)\big(\|e^\phi-1\|_{\mathcal{G}^{\lm,N}_{s}}+1\big)\\
    \nn&\times\Big(\|\nb_x\phi\|_{\mathcal{G}^{\lm,N}_{s}}\|E\|_{\mathcal{G}^{\lm,N}_{s}}+\|\nb_xE\|_{\mathcal{G}^{\lm,N}_{s}}+\|E\|_{\mathcal{G}^{\lm,N}_{s}}\Big)\\
    \nn&\times\bigg[\Big(\sqrt{{\rm A}_0}\sum_{|\al|\le1}\|\pr_x^\al g\|_{\mathcal{G}^{\lm,N}_{s,\ell-2|\al|}}\Big)+\kappa(1+t)^{-1}\|Yg\|_{\mathcal{G}^{\lm,N}_{s,\ell-2}}\\
    \nn&+\sum_{1\le|\al|\le2}\big\|\kappa^{|\al|}\nu^{\fr{|\al|}{3}}\pr_v^\al g\big\|_{\mathcal{G}^{\lm,N}_{s,\ell-2|\al|}}\bigg]\\
    \le&C(\sqrt{{\rm A}_0}+1)\|\rho\|_{\mathcal{G}^{\lm,N}_{s}}\mathcal{E}_{\ell}(g(t))^{\fr12}.
\end{align}

{\bf Case 2: $\iota=1, 2, \cdots, [\ell/3]+1$.} Now
\begin{align*}
    L[\mu]_{\iota;1)}=&\sum_{m\in\N^6,|\beta|\le N}\kappa^{2|\beta|}a_{m,\lm,s}^2(t)\bigg(\sum_{|\al|\le1}2{\rm A}_0\left\la \pr_{x}^\al Z^{m+\beta}\big(e^{\phi}E\cdot v\sqrt{\mu}\big)_{\iota\ne},\pr_x^{\al}g^{(m+\beta)}_{\iota\ne}\right\ra_{\ell-2|\al|-2\iota}\\
    &+2(1+t)^{-2}\kappa^2\sum_{|\al|=1}\left\la Y^{\al} Z^{m+\beta}\big(e^{\phi}E\cdot v\sqrt{\mu}\big)_{\iota\ne},Y^{\al}g^{(m+\beta)}_{\iota\ne}\right\ra_{\ell-2|\al|-2\iota}\\
&+2\sum_{1\le|\al|\le2}\kappa^{2|\al|}\nu^{\fr{2|\al|}{3}}\left\la \pr_v^\al Z^{m+\beta}\big(e^{\phi}E\cdot v\sqrt{\mu} \big)_{\iota\ne},\pr_v^\al g^{(m+\beta)}_{\iota\ne}\right\ra_{\ell-2|\al|-2\iota} \bigg)=\sum_{1\le i\le3} L[\mu]_{\iota;1)}^{(i)},  
\end{align*}
with
\begin{align*}
    \big(e^{\phi}E\cdot v\sqrt{\mu}\big)_{\iota\ne}=w_{\iota}(t)\big(e^{\phi}E\cdot v\sqrt{\mu}\big)_{\ne}.
\end{align*}
We can further split $L[\mu]_{\iota;1)}^{(1)}$ into two terms:
\begin{align*}
    L[\mu]_{\iota;1)}^{(1)}
    =&2{\rm A}_0\sum_{|\al|\le1}\sum_{m\in\N^6,|\beta|\le N}\kappa^{2|\beta|}a_{m,\lm,s}^2(t)\Big(\sum_{\substack{n\le m,\beta'\le\beta\\|\beta'|\le |\beta|/2}}+\sum_{\substack{n\le m,\beta'\le\beta\\|\beta'|>|\beta|/2}}\Big)C_{\beta}^{\beta'}C_m^n\\
    &\times w_{\iota}(t)\int_{\T^3\times\R^3} Z^{n+\beta'}\big(\pr_{x}^\al(e^{\phi}E)\big)_{\ne}\cdot Z^{m-n+\beta-\beta'}(v\sqrt{\mu})\pr_x^{\al}g^{(m+\beta)}_{\iota\ne}\la v\ra^{2\ell-4|\al|-4\iota }dxdv\\
    =&L[\mu]_{\iota;1)}^{(1),{\rm LH}}+L[\mu]_{\iota;1)}^{(1),{\rm HL}}.
\end{align*}

To bound $L[\mu]_{\iota;1)}^{(1),{\rm HL}}$, note first that for $k\ne0$,
\begin{align}\label{up-kkt}
    1=\big(|k,kt|^{-\fr{s}{2}}\la t\ra^{\fr{1+a}{2}}\big)\big(|k,kt|^{\fr{s}{2}}\la t\ra^{-\fr{1+a}{2}}\big)\les\big(|k|^{-\fr{s}{2}}\la t\ra^{\fr{1+a-s}{2}}\big)\big(|k,\eta+kt|^{\fr{s}{2}}\la t\ra^{-\fr{1+a}{2}}\big)\la\eta\ra^{\fr{s}{2}} .
\end{align}
Then by Lemma \ref{lem-compose} and Corollary \ref{coro-CK}, one deduces that
\begin{align}\label{est-Lmuiota11HL}
    \nn&L[\mu]_{\iota;1)}^{(1),{\rm HL}}\\
    \le\nn&C {\rm A}_0w_{\iota}(t)\sum_{|\al|\le1}\sum_{m\in\N^6,|\beta|\le N}\kappa^{2|\beta|}\sum_{\substack{n\le m,\beta'\le\beta\\|\beta'|>|\beta|/2}}b_{m,n,s}\\
    \nn&\times\sum _{k\in\Z^3_*}\int_{\R^3} a_{n,\lm,s}(t) \fr{\la t\ra^{\fr{1+a-s}{2}}}{|k|^\fr{s}{2}}\Big|\mathcal{F}_{x}\big[Z^{n+\beta'}\pr_{x}^\al(e^{\phi}E)\big]_k\Big| \\
    \nn&\times a_{m-n,\lm,s}(t)\Big|\la\eta\ra \mathcal{F}_v\big[Z^{m-n+\beta-\beta'}(v\sqrt{\mu})\la v\ra^{\ell-2|\al|-2\iota }\big](\eta)\Big|\\
    \nn&\times a_{m,\lm,s}(t)\Big|\fr{1}{\la t\ra^{\fr{1+a}{2}}}\mathcal{F}_{x,v}\big[| Z_x|^\fr{s}{2}\big(\pr_x^{\al}g^{(m+\beta)}_{\iota\ne}\la v\ra^{\ell-2|\al|-2\iota }\big)\big]_k(\eta)\Big|d\eta\\
    \le\nn&C{\rm A}_0w_{\iota}(t)\sum_{|\al|\le1}\big\|\la t\ra^{\fr{1+a-s}{2}}|\nb_x|^{-\fr{s}{2}}\pr_x^\al(e^{\phi}E)\big\|_{\mathcal{G}^{\lm,N}_{s}}\big\|\la\nb_v\ra(v\sqrt{\mu})\big\|_{\mathcal{G}^{\lm,\fr{N}{2}}_{s,\ell-2|\al|-2\iota}}\\
    \nn&\times \fr{1}{\la t\ra^{\fr{1+a}{2}}}\bigg(\sum_{m\in\N^6,|\beta|\le N}\kappa^{2|\beta|}a_{m,\lm,s}^2(t)\left\|| Z|^{\fr{s}{2}}\big(\pr_x^\al g_{\iota\ne}^{(m+\beta)}\la v\ra^{\ell-2|\al|-2\iota}\big)\right\|^2_{L^2_{x,v}}\bigg)^{\fr12}\\
    \le\nn&C{\rm A}_0w_{\iota}(t)\la t\ra^{\fr{1+a-s}{2}}\big(1+\|\phi\|_{\mathcal{G}^{\lm,N}_{s}}\big)\Big(\|\nb_x\phi\|_{\mathcal{G}^{\lm,N}_{s}}\|E\|_{\mathcal{G}^{\lm,N}_{s}}+\|\nb_xE\|_{\mathcal{G}^{\lm,N}_s}+\|E\|_{\mathcal{G}^{\lm,N}_s}\Big)\\
    \nn&\times w_{\iota}(t)\sum_{|\al|\le1}\Big(\mathfrak{CK}_{\ell-2|\al|-2\iota}[\pr_x^\al g_{\ne}(t)]^{\fr12}+\fr{1}{\la t\ra^{\fr{1+a}{2}}}\|\pr_x^\al g_{\ne}\|_{\mathcal{G}^{\lm,N}_{s,\ell-2|\al|-2\iota}}\Big)\\
    \le& C\sqrt{{\rm A}_0}\Big[w_{\iota}(t)\la t\ra^{\fr{1+a-s}{2}}\|\rho\|_{\mathcal{G}^{\lm,N}_s}\Big]\Big(w_{\iota}(t)\mathcal{CK}_{\ell-2\iota}( g_{\ne}(t))^{\fr12}+\fr{1}{\la t\ra^{\fr{1+a}{2}}}w_{\iota}(t)\mathcal{E}_{\ell-2\iota}(g_{\ne}(t))^{\fr12}\Big).
    \end{align}
For $L[\mu]_{\iota;1)}^{(1),{\rm LH}}$, there is no need to use the CK term. Indeed,
\begin{align}\label{est-Lmuiota11LH}
    \nn&L[\mu]_{\iota;1)}^{(1),{\rm LH}}\\
    \le\nn&C {\rm A}_0 w_{\iota}(t)\sum_{|\al|\le1}\big\|\pr_x^\al(e^{\phi}E)\big\|_{\mathcal{G}^{\lm,\fr{N}{2}}_{s}}\big\|v\sqrt{\mu}\|_{\mathcal{G}^{\lm,N}_{s,\ell-2|\al|-2\iota}} \left\|\pr_x^\al g_{\iota\ne}\right\|_{\mathcal{G}^{\lm,N}_{s,\ell-2|\al|-2\iota}}\\
    \le\nn&C {\rm A}_0 w_{\iota}(t)\big(1+\|\phi\|_{\mathcal{G}^{\lm,\fr{N}{2}}_{s}}\big)\Big(\|\nb_x\phi\|_{\mathcal{G}^{\lm,\fr{N}{2}}_{s}}\|E\|_{\mathcal{G}^{\lm,\fr{N}{2}}_{s}}+\|\nb_xE\|_{\mathcal{G}^{\lm,\fr{N}{2}}_s}+\|E\|_{\mathcal{G}^{\lm,\fr{N}{2}}_s}\Big)\\
    \nn&\times \left\|\pr_x^\al g_{\iota\ne}\right\|_{\mathcal{G}^{\lm,N}_{s,\ell-2|\al|-2\iota}}\\
    \le& C\sqrt{{\rm A}_0}\Big({  w_{\iota}(t)\|\rho\|_{\mathcal{G}^{\lm,\fr{N}{2}}_s}}\Big)\Big(w_{\iota}(t)\mathcal{E}_{\ell-2\iota}(g_{\ne}(t))^{\fr12}\Big).
    \end{align}
Clearly, $L[\mu]_{\iota;1)}^{(2)}$ and $L[\mu]_{\iota;1)}^{(3)}$ can be treated in the same way. We thus have
\begin{align}\label{est-Lmuiota1}
    L[\mu]_{\iota;1)}\le\nn& C(\sqrt{\rm A_0}+1)\Bigg[\Big({  w_{\iota}(t)\la t\ra^{\fr{1+a-s}{2}}\|\rho\|_{\mathcal{G}^{\lm,N}_s}}\Big)\bigg(w_{\iota}(t)\mathcal{CK}_{\ell-2\iota}( g_{\ne}(t))^{\fr12}\\
    &+\fr{1}{\la t\ra^{\fr{1+a}{2}}}w_{\iota}(t)\mathcal{E}_{\ell-2\iota}(g_{\ne}(t))^{\fr12}\bigg)+\Big({  w_{\iota}(t)\|\rho\|_{\mathcal{G}^{\lm,\fr{N}{2}}_s}}\Big)\Big(w_{\iota}(t)\mathcal{E}_{\ell-2\iota}(g_{\ne}(t))^{\fr12}\Big)\Bigg].
\end{align}
Moreover, it is easy to see that the upper bounds in \eqref{est-Lmu01} and \eqref{est-Lmuiota1} are still valid for $L[\mu]_{\iota;2)}$ for $\iota=0$ and $\iota=1,2,\cdots [\ell/3]+1$, respectively, we omit the details here. Thus, \eqref{est-Lmu} holds and the proof of Lemma \ref{lem-Linear contributions} is completed.
\end{proof}

 Next we turn to bound $N[\pr_t\phi]_{\iota;1)}$ and $N[\pr_t\phi]_{\iota;2)}$.
\begin{lem}\label{lem-pr_tphi}
Under the bootstrap hypotheses \eqref{bd-en}--\eqref{H-phi}, there holds
\begin{align}\label{est-ptphi}
    2\sum_{\iota=0}^{[\ell/3]+1}\Big[N[\pr_t\phi]_{\iota;1)}+N[\pr_t\phi]_{\iota;2)}\Big]
    \le \nn& C\Big(\mathcal{E}_{\ell}(g(t))+\sum_{\iota=1}^{[\ell/3]+1}w_{\iota}^2(t)\mathcal{E}_{\ell-2\iota}(g_{\ne}(t))\Big)\|M(t)\|_{\mathcal{G}^{\lm,N}_s}\\
    \nn&+C\mathcal{E}_\ell(g(t))^{\fr12}\sum_{\iota=1}^{[\ell/3]+1}\Bigg[\Big({  w_{\iota}(t)\la t\ra^{\fr{1+a-s}{2}}\|M\|_{\mathcal{G}^{\lm,N}_s}}\Big)\\
    \nn&\times\bigg(w_{\iota}(t)\mathcal{CK}_{\ell-2\iota}( g_{\ne}(t))^{\fr12}+\fr{1}{\la t\ra^{\fr{1+a}{2}}}w_{\iota}(t)\mathcal{E}_{\ell-2\iota}(g_{\ne}(t))^{\fr12}\bigg)\\
    &+\Big({  w_{\iota}(t)\|M\|_{\mathcal{G}^{\lm,\fr{N}{2}}_s}}\Big)\Big(w_{\iota}(t)\mathcal{E}_{\ell-2\iota}(g_{\ne}(t))^{\fr12}\Big)\Bigg].
\end{align}
\end{lem}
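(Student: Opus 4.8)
The plan is to treat the eight inner products collected in $N[\pr_t\phi]_{\iota;1)}$ and $N[\pr_t\phi]_{\iota;2)}$ in exactly the same spirit as the linear contributions $L[\mu]_{\iota;1)}$, $L[\mu]_{\iota;2)}$ handled in Lemma \ref{lem-Linear contributions}, with the single difference that the quantity $e^{\phi}E\cdot v\sqrt{\mu}$ there is now replaced by the product $\pr_t\phi\, g$, where by \eqref{pt-phi} we have $\pr_t\phi = -(-\Dl_x)^{-1}\nb_x\cdot M$, so that $\pr_t\phi$ enjoys \emph{the same} Gevrey regularity as $M$ (indeed $\|\pr_t\phi\|_{\mathcal{G}^{\lm,N}_s}\les \|M\|_{\mathcal{G}^{\lm,N}_s}$ and an extra $|\nb_x|^{-1}$ gain relative to $M$ that we exploit in the high-low term). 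The key structural point is that $\pr_t\phi$ is a function of $x$ only, so when we apply $Z^m = \pr_x^{\tl m}Y^{\bar m}$ to the product $\pr_t\phi\, g$, only $\pr_x^{\tl m}$ hits $\pr_t\phi$ (the $Y$'s act trivially on it modulo $t\pr_x$-terms on the other factor), which keeps the combinatorics identical to the $L[\mu]$ case.

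First I would split, for each $\iota$, the multilinear sum via the paraproduct-type decomposition $\sum_{|\beta'|\le|\beta|/2}+\sum_{|\beta'|>|\beta|/2}$ on the $\beta$-derivatives and $n\le m$ on the $Z$-derivatives, exactly as in the proof of Lemma \ref{lem-Linear contributions}, into a ``low-high'' piece (derivatives on $\pr_t\phi$, i.e. on $M$) and a ``high-low'' piece (derivatives on $g$). For $\iota = 0$ there is no time weight and no projection to nonzero modes, and every term is bounded crudely by $C\|M\|_{\mathcal{G}^{\lm,N}_s}\mathcal{E}_\ell(g(t))$ using the product estimate \eqref{product1}/\eqref{product2} in $\mathcal{G}^{\lm,N}_{s,\ell-2|\al|}$ together with Lemma \ref{lem-compose} (to pass between $f$ and $g$ via the composite $q(\phi)$) and the bound $\|\phi\|_{\mathcal{G}^{\lm,N}_s}\le \tfrac12$; this produces the first term on the right of \eqref{est-ptphi} restricted to $\iota=0$, which is absorbed into the stated $\mathcal{E}_\ell(g(t))\|M\|_{\mathcal{G}^{\lm,N}_s}$ piece. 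For $\iota = 1,\dots,[\ell/3]+1$, we write $(\pr_t\phi\, g)_{\iota\ne} = w_\iota(t)(\pr_t\phi\, g)_{\ne}$ and reproduce the two estimates \eqref{est-Lmuiota11HL} (high-low) and \eqref{est-Lmuiota11LH} (low-high): for the high-low piece one uses the elementary frequency splitting \eqref{up-kkt}, i.e. $1 \les |k|^{-s/2}\la t\ra^{(1+a-s)/2}\cdot |k,\eta+kt|^{s/2}\la t\ra^{-(1+a)/2}\cdot\la\eta\ra^{s/2}$, to trade a factor $|k|^{-s/2}\la t\ra^{(1+a-s)/2}$ onto $\pr_t\phi$ (equivalently, onto $M$, since $\pr_t\phi$ has an extra $|\nb_x|^{-1}$) and a factor $\la t\ra^{-(1+a)/2}|\nb_x,Y|^{s/2}$ onto the $g$-factor, which is then recognized as $\mathfrak{CK}_{\ell-2|\al|-2\iota}[\cdot]^{1/2}$ via Corollary \ref{coro-CK}; the low-high piece needs no CK term and is bounded directly by $w_\iota(t)\|M\|_{\mathcal{G}^{\lm,N/2}_s}$ times $w_\iota(t)\mathcal{E}_{\ell-2\iota}(g_{\ne}(t))^{1/2}$. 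Summing the $\iota=1,\dots,[\ell/3]+1$ contributions gives exactly the last two bracketed blocks of \eqref{est-ptphi}; the remaining $w_\iota^2(t)\mathcal{E}_{\ell-2\iota}(g_{\ne}(t))\|M\|_{\mathcal{G}^{\lm,N}_s}$ contributions come from those sub-terms in which derivatives land in a ``balanced'' way and one simply applies the product estimate without splitting, which is harmless since we keep a full $w_\iota(t)\mathcal{E}_{\ell-2\iota}(g_{\ne}(t))^{1/2}$ on each side. The $N[\pr_t\phi]_{\iota;2)}$ cross terms are of the same form with one $\nb_x$ and one $\nb_v$ derivative and an extra $\kappa\nu^{1/3}$; the velocity weights $\la v\ra^{\ell-2-2\iota}$ are more than enough localization, so the identical argument applies and the bounds are dominated by those already obtained.

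The main obstacle is purely bookkeeping rather than conceptual: one must verify that, after applying $Z^{m+\beta}$ to $\pr_t\phi\, g$ and redistributing the combinatorial factors $a_{m,\lm,s}(t)C_m^nC_\beta^{\beta'}$ via Remark \ref{Rmk: coefficients a-b} and the summability Lemma \ref{lem-summable}, the low-frequency factor in the high-low piece genuinely carries the full weight $\la t\ra^{(1+a-s)/2}|\nb_x|^{-s/2}$ that is needed to convert it to $\|M\|_{\mathcal{G}^{\lm,N}_s}$ — here one uses crucially that $\pr_t\phi = -\nb_x\cdot(-\Dl_x)^{-1}M$ supplies an extra $|\nb_x|^{-1}$, so that $\la t\ra^{(1+a-s)/2}|\nb_x|^{-s/2}\pr_t\phi$ is controlled by $\la t\ra^{(1+a-s)/2}|\nb_x|^{1-s/2}|\nb_x|^{-1}M \les \|M\|_{\mathcal{G}^{\lm,N}_s}$ on $\T^3$ (using $|\nb_x|^{1-s/2}\lesssim \la\nb_x\rangle$ there). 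Apart from tracking this, the proof is a line-by-line transcription of Lemma \ref{lem-Linear contributions} with $(e^\phi E\cdot v\sqrt\mu) \rightsquigarrow (\pr_t\phi\, g)$ and $\rho \rightsquigarrow M$, so I would present it by stating the decomposition, invoking \eqref{est-Lmu01}, \eqref{est-Lmuiota11HL}, \eqref{est-Lmuiota11LH} mutatis mutandis, and collecting terms to arrive at \eqref{est-ptphi}.
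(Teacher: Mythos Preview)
Your treatment of the case $\iota=0$ is correct and matches the paper (though there is no need for Lemma~\ref{lem-compose} here, since the term already involves $g$ rather than $f$): a direct product estimate together with $\|\pr_t\phi\|_{\mathcal{G}^{\lm,N}_s}\les\|M\|_{\mathcal{G}^{\lm,N}_s}$ gives $N[\pr_t\phi]_{0;1)}+N[\pr_t\phi]_{0;2)}\le C\mathcal{E}_\ell(g(t))\|M(t)\|_{\mathcal{G}^{\lm,N}_s}$.

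For $\iota\ge 1$, however, your plan to transcribe the $L[\mu]$ argument directly has a structural gap. In $L[\mu]_{\iota;1)}$ the second factor $v\sqrt{\mu}$ in the product $e^\phi E\cdot v\sqrt{\mu}$ is a pure zero mode in $x$; this forces the $x$-frequency of $e^\phi E$ to coincide with the $x$-frequency $k$ of the test function $g^{(m+\beta)}_{\iota\ne}$, and that coincidence is what lets the factor $|k|^{-s/2}\la t\ra^{(1+a-s)/2}$ from \eqref{up-kkt} land on $E$ (hence on $\rho$). In $N[\pr_t\phi]_{\iota;1)}$ the second factor is $g$, which carries both zero and nonzero $x$-modes, so $\pr_t\phi$ is not in general at frequency $k$ and the analogy breaks. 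The paper resolves this by first writing
\[
(\pr_t\phi\,g)_{\ne}=\pr_t\phi\,g_0+(\pr_t\phi\,g_{\ne})_{\ne}.
\]
For the $(\pr_t\phi\,g_{\ne})_{\ne}$ piece the time weight $w_\iota(t)$ is placed on $g_\ne$ (not on $M$), and a direct product estimate exactly as in the $\iota=0$ case yields the first-line term $w_\iota^2(t)\mathcal{E}_{\ell-2\iota}(g_{\ne}(t))\|M\|_{\mathcal{G}^{\lm,N}_s}$; this is not a ``balanced derivatives'' residue as you suggest. For the $\pr_t\phi\,g_0$ piece, $g_0$ is a function of $v$ only and genuinely plays the role of $v\sqrt{\mu}$: now $\pr_t\phi$ does sit at frequency $k$, the weight $w_\iota$ goes to $M$, and the HL/LH analysis from \eqref{est-Lmuiota11HL}--\eqref{est-Lmuiota11LH} applies verbatim, with the single change that the $O(1)$ norm of $v\sqrt{\mu}$ is replaced by a $g_0$-norm bounded by $\mathcal{E}_\ell(g(t))^{1/2}$. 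This extra factor is precisely the $\mathcal{E}_\ell(g(t))^{1/2}$ prefactor appearing in the remaining terms of \eqref{est-ptphi}. Once you insert this split and keep straight which $g$ receives the $\la\eta\ra^{s/2}$ (the product factor $g_0$) versus the $|Z|^{s/2}\la t\ra^{-(1+a)/2}$ (the test function $g^{(m+\beta)}_{\iota\ne}$), the rest of your sketch goes through.
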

\begin{proof}
Clearly, $N[\pr_t\phi]_{\iota;2)}$ and $N[\pr_t\phi]_{\iota;1)}$ possess the same structure, and obey the same upper bounds. We will estimate  them together.

{\bf Case 1: $\iota=0$.}
Using  the product estimate \eqref{product1} and \eqref{pt-phi}, similar to \eqref{est-Lmu01}, we obtain
 \begin{align*}
    N[\pr_t\phi]_{0;1)}+N[\pr_t\phi]_{0;2)}\le C&\mathcal{E}_{\ell}(g(t))\|M(t)\|_{\mathcal{G}^{\lm,N}_s}.
 \end{align*}

{\bf Case 2: $\iota=1, 2, \cdots, [\ell/3]+1$.} Note that
\begin{align*}
    (\pr_t\phi g)_{\ne}=\pr_t\phi g_0+(\pr_t\phi g_{\ne})_{\ne}.
\end{align*}
According to this decomposition of $\pr_t\phi g$, it is natural to split $N[\pr_t\phi]_{\iota;i)}$ into two parts, namely, $N[\pr_t\phi]_{\iota;i)}=N[\pr_t\phi]_{\iota;i)}^{\ne}+N[\pr_t\phi]_{\iota;i)}^{0}$, where $N[\pr_t\phi]_{\iota;i)}^{\ne}$ is the analogue of $N[\pr_t\phi]_{\iota;i)}$ with $(\pr_t\phi g)_{\ne}$ replaced by $(\pr_t\phi g_{\ne})_{\ne}$, and $N[\pr_t\phi]_{\iota;i)}^{0}$ is also the analogue of $N[\pr_t\phi]_{\iota;i)}$  with $(\pr_t\phi g)_{\ne}$ replaced by $\pr_t\phi g_{0}, i=1,2$.

$N[\pr_t\phi]_{\iota;i)}^{\ne}, i=1,2$ can be treated in the same way as the case $\iota=0$ because now the time weight $w_{\iota}(t)$ can hit on $g_{\ne}$. We state the result here:
\begin{align*}
     N[\pr_t\phi]_{\iota;1)}^{\ne}+N[\pr_t\phi]_{\iota;2)}^{\ne}\le C&\big(w_{\iota}^2(t)\mathcal{E}_{\ell-2\iota}(g_{\ne}(t))\big)\|M(t)\|_{\mathcal{G}^{\lm,N}_s}.
 \end{align*}

As for $N[\pr_t\phi]_{\iota;i)}^0, i=1, 2$, thanks to \eqref{up-kkt},  one can follow the estimate of  $L[\mu]_{\iota,1)}^{(1)}$ (see \eqref{est-Lmuiota11HL} and \eqref{est-Lmuiota11LH})  line by line to obtain
\begin{align*}
    &N[\pr_t\phi]_{\iota;1)}^{0}+N[\pr_t\phi]_{\iota;2)}^{0}\\
    \le& C\Big(\sqrt{\rm A_0}\|\la \nb_v\ra g_0\|_{\mathcal{G}^{\lm,\fr{N}{2}}_{s,\ell}}+\kappa(1+t)^{-1}\|Y g_0\la\nb_v\ra \|_{\mathcal{G}^{\lm,\fr{N}{2}}_{s,\ell-2}}+\sum_{1\le|\al|\le2}\big\|\kappa^{|\al|}\nu^{\fr{|\al|}{3}}\pr_v^\al\la\nb_v\ra g_0\big\|_{\mathcal{G}^{\lm,\fr{N}{2}}_{s,\ell-2|\al|}}\Big)\\
    &\times \Big({  w_{\iota}(t)\la t\ra^{\fr{1+a-s}{2}}\|M\|_{\mathcal{G}^{\lm,N}_s}}\Big)\bigg(w_{\iota}(t)\mathcal{CK}_{\ell-2\iota}( g_{\ne}(t))^{\fr12}+\fr{1}{\la t\ra^{\fr{1+a}{2}}}w_{\iota}(t)\mathcal{E}_{\ell-2\iota}(g_{\ne}(t))^{\fr12}\bigg)\\
    &+C\Big(\sqrt{\rm A_0}\|g_0\|_{\mathcal{G}^{\lm,N}_{s,\ell-2\iota}}+\kappa(1+t)^{-1}\| Y g_0\|_{\mathcal{G}^{\lm,N}_{s,\ell-2-2\iota}}+\sum_{1\le|\al|\le2}\big\|\kappa^{|\al|}\nu^{\fr{|\al|}{3}}\pr_v^\al g_0\big\|_{\mathcal{G}^{\lm,N}_{s,\ell-2|\al|-2\iota}}\Big)\\
    &\times\Big({  w_{\iota}(t)\|M\|_{\mathcal{G}^{\lm,\fr{N}{2}}_s}}\Big)\Big(w_{\iota}(t)\mathcal{E}_{\ell-2\iota}(g_{\ne}(t))^{\fr12}\Big).
\end{align*}
Collecting the above three estimates yields \eqref{est-ptphi}.
\end{proof}

The estimates for the transport terms $N[E]_{\iota;1)}$ and $N[E]_{\iota;2)}$ are established in the following lemma.
\begin{lem}\label{lem-transport}
Under the bootstrap hypotheses \eqref{bd-en}--\eqref{H-phi}, there holds
\begin{align}\label{est-transport}
    \nn&2\sum_{\iota=0}^{[\ell/3]+1}\Big[N[E]_{\iota;1)}+N[E]_{\iota;2)}\Big]\\
    \le \nn&C\la t\ra^{a+2}\|\la \nb_x\ra^2\rho\|_{\mathcal{G}^{\lm,\fr{N}{2}}_{s}}\mathcal{CK}_{\ell}(g(t))+\la t\ra \|\la \nb_x\ra^2\rho\|_{\mathcal{G}^{\lm,\fr{N}{2}}_{s}}\mathcal{E}_{\ell}(g(t))\\
    \nn&+C\left\|\la t\ra^{\fr{3+a-s}{2}}   \rho\right\|_{\mathcal{G}^{\lm,N}_{s}}\mathcal{E}_{\ell}(g(t))^{\fr12} \Big( \mathcal{CK}_{\ell}(g(t))^{\fr12}+\fr{1}{\la t\ra^{\fr{1+a}{2}}}\mathcal{E}_{\ell}(g(t))^{\fr12}\Big)\\
    \nn&+C\sum_{\iota=1}^{[\ell/3]+1}w_{\iota}^2(t)\Bigg[\la t\ra^{a+2}\|\la \nb_x\ra^2\rho\|_{\mathcal{G}^{\lm,\fr{N}{2}}_{s}}\mathcal{CK}_{\ell-2\iota}(g_{\ne}(t))+\la t\ra \|\la \nb_x\ra^2\rho\|_{\mathcal{G}^{\lm,\fr{N}{2}}_{s}}\mathcal{E}_{\ell-2\iota}(g_{\ne}(t))\\
    \nn&+\left\|\la t\ra^{\fr{3+a-s}{2}}   \rho\right\|_{\mathcal{G}^{\lm,N}_{s}}\mathcal{E}_{\ell-2\iota}(g_{\ne}(t))^{\fr12} \Big( \mathcal{CK}_{\ell-2\iota}(g_{\ne}(t))^{\fr12}+\fr{1}{\la t\ra^{\fr{1+a}{2}}}\mathcal{E}_{\ell-2\iota}(g_{\ne}(t))^{\fr12}\Big)\Bigg]\\
    \nn&+C\sum_{\iota=1}^{[\ell/3]+1}\Bigg[w_{\iota}(t)\|\rho\|_{\mathcal{G}^{\lm,\fr{N}{2}}_s}\mathcal{E}_\ell(g_0(t))^{\fr12}\mathcal{E}_{\ell-2\iota}(g_{\iota\ne}(t))^{\fr12}\\
    \nn&+\la t\ra^{\iota+\fr{1+a}{2-s}+\fr{s}{2}}\big\||\nb_x|^{\fr{s}{2}}\rho\big\|_{\mathcal{G}^{\lm,\fr{N}{2}}_s}\Big(\mathcal{CK}_{\ell}(g_{\ne}(t))^{\fr12}+\fr{\mathcal{E}_\ell(g_{\ne}(t))^{\fr12}}{\la t\ra^{\fr{1+a}{2}}}\Big)\mathcal{E}_\ell(g_0(t))^{\fr12}\\
    \nn&+\la t\ra^{\iota+\fr{1+a}{2-s}+\fr{s}{2}}\big\||\nb_x|^{\fr{s}{2}}\rho\big\|_{\mathcal{G}^{\lm,\fr{N}{2}}_s}\Big(\mathcal{CK}_{\ell}(g_{\ne}(t))^{\fr12}+\fr{\mathcal{E}_\ell(g_{\ne}(t))^{\fr12}}{\la t\ra^{\fr{1+a}{2}}}\Big)\\
    \nn&\times\Big(\nu^{\fr16}\mathcal{D}_\ell(g_0(t))^{\fr12}+\mathcal{CK}_{\ell}(g_{0}(t))^{\fr12}+\fr{1}{\la t\ra^{\fr{1+a}{2}}}\mathcal{E}_\ell(g_0(t))^{\fr12}\Big)\\
    &+\Big( w_{\iota}(t)\la t\ra^{\fr{1+a-s}{2}}\|\rho\|_{\mathcal{G}^{\lm,N}_s}\Big)\Big(\mathcal{CK}_{\ell-2\iota}(g_{\iota\ne}(t))^{\fr12}+\fr{1}{\la t\ra^{\fr{1+a}{2}}}\mathcal{E}_{\ell-2\iota}(g_{\iota\ne}(t))^{\fr12}\Big)\mathcal{E}_{\ell}(g_0(t))^{\fr12}\Bigg].
\end{align}
\end{lem}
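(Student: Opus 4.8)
The plan is to estimate the quantities $N[E]_{\iota;1)}$ and $N[E]_{\iota;2)}$ for each $\iota=0,1,\dots,[\ell/3]+1$ by expanding the vector fields via the Leibniz rule, writing the resulting trilinear expressions on the Fourier side in $x$, and then carrying out a paraproduct decomposition of $E\cdot\nabla_v g$ into the three regimes: (i) $E$ at low frequency / $g$ at high frequency (transport/reaction term with $E$ as the ``low'' factor), (ii) $E$ at high frequency / $g$ at low frequency (the genuine \emph{plasma echo} term), and (iii) $\nabla_v g_0$ interacting with the full $E$ (the zero-mode reaction term). For regime (i), where $E$ carries at most $N/2$ derivatives, we use the product estimate \eqref{product1} directly together with $\|E\|_{\mathcal{G}^{\lm,\fr N2}_s}\lesssim \langle t\rangle\|\langle\nabla_x\rangle^2\rho\|_{\mathcal{G}^{\lm,\fr N2}_s}$ (the extra $\langle t\rangle$ and two $x$-derivatives coming from $E=-\nabla_x(-\Delta_x)^{-1}\rho$ and from trading $\partial_v$ against $Y=\nabla_v+t\nabla_x$); here the $\mathcal{CK}$ term is needed to absorb the $\langle t\rangle^{a+2}$ loss using the gap \eqref{lm-gap}, exactly as in the $\langle t\rangle^{a+2}\|\langle\nabla_x\rangle^2\rho\|\,\mathcal{CK}_\ell$ contributions appearing on the right of \eqref{est-transport}.

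For regime (ii), the worst case is $|k-l|t\approx|\eta|\le\frac1{100}|lt|$, and the key is the echo gain estimate \eqref{eq:e/e} (and its physical-side counterparts \eqref{lm-gap1}--\eqref{lm-gap5}): I would bound the $\rho$-factor using the bound \eqref{up-kkt} to convert the multiplier $1$ into $|k|^{-s/2}\langle t\rangle^{(1+a-s)/2}\cdot |k,\eta+kt|^{s/2}\langle t\rangle^{-(1+a)/2}\cdot\langle\eta\rangle^{s/2}$, assigning the first factor to $\rho$ (producing $\|\langle t\rangle^{(3+a-s)/2}\rho\|_{\mathcal{G}^{\lm,N}_s}$ after restoring the $\langle t\rangle$ from $E$), the middle factor to the $\mathcal{CK}$ term on $g$, and the $\langle\eta\rangle^{s/2}$ to the velocity localization of $\sqrt{\mu}$ or, in regime (iii), to $\nabla_v g_0$. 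The $\iota\ge1$ pieces are handled identically but with the time weight $w_\iota(t)=(\kappa_0\nu^{1/3}(1+t))^{\iota/2}$ placed on $g_{\ne}$ throughout, which is harmless for regimes (i)--(ii); the subtlety is regime (iii) with the zero mode, where projecting to non-zero modes breaks the transport structure and $E\cdot\nabla_v g_0$ loses a $v$-derivative. There we exploit, as flagged in the ``bad terms'' discussion preceding \eqref{est-Eg0-LH-iota}, the extra $\nu^{\iota/3}\le\nu^{1/3}\le\nu^{\frac12\frac{1-s}{1-s/2}}$ smallness together with the interpolation $\|\nu^{1/2}\nabla_v W Y^m g_0\|^{(1-s)/(1-s/2)}\,\mathcal{CK}_\ell(g_0)^{(s/2)/(2-s)}$, yielding the terms carrying $\langle t\rangle^{\iota+\frac{1+a}{2-s}+\frac s2}$ in \eqref{est-transport}; this is where $s>1/2$ is used (non-fundamentally).

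The main obstacle I expect is the bookkeeping of the echo gain in the physical-side (vector-field) formulation rather than the Fourier-multiplier formulation: unlike \cite{BMM2016,grenier2021landau}, the regularity radius ratio $(\lm(t)/\lm(\tau))^{|m|}$ does not directly give time decay, so one must convert ``extra $|m|$'' into ``extra decay'' via the identification of $(k,kt)^m$ with time $t$ (the mechanism described around \eqref{eq:gain m}), and this has to be done uniformly in the summation over $m,\beta,n,n',n''$ using the combinatorial coefficients $b_{m,n,s}$, $b_{m,n,n',s}$ and Lemma \ref{lem-summable} / Lemma \ref{lem-convolution}. A secondary difficulty is ensuring all velocity weights close: the paraproduct with $\nabla_v g_0$ and the $Y$-for-$\partial_v$ substitution each cost powers of $\langle v\rangle$, and one must verify the weights $\ell-2|\al|-2\iota$ appearing in $N[E]_{\iota;\cdot)}$ are high enough — this is exactly why $\ell>18$ and $\iota\le[\ell/3]+1$ are imposed in \eqref{res-final-1}. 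Once these are in hand, \eqref{est-transport} follows by collecting the three regimes, summing over $\iota$, and invoking Corollary \ref{coro-compose} / Lemma \ref{lem-compose} to pass between $f$ and $g=e^\phi f$ and between $\rho$ and $E$.
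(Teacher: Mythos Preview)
Your high-level decomposition into the three regimes (low-$E$/high-$g$, high-$E$/low-$g$, and the zero-mode interaction for $\iota\ge1$) matches the paper's structure, and your description of regime (iii) --- the interpolation $\nu^{\iota/3}|\eta|\le(\nu^{1/2}|\eta|)^{\frac{1-s}{1-s/2}}(|\eta|^{s/2})^{\frac{s/2}{1-s/2}}\langle\eta+kt\rangle^{s/2}\langle kt\rangle^{s/2}$ --- is exactly what the paper does in \eqref{est-Eg0-LH-iota}. Your identification of \eqref{up-kkt} as the key inequality for the high-$E$/low-$g$ piece is also correct.

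However, there is a genuine conceptual confusion in your proposal: you repeatedly invoke the \emph{echo gain} mechanism --- \eqref{eq:e/e}, \eqref{lm-gap1}--\eqref{lm-gap5}, \eqref{eq:gain m}, and the ratio $(\lambda(t)/\lambda(\tau))^{|m|}$ --- as the ``key'' for regime (ii) and the ``main obstacle'' of the proof. This is wrong: the echo gain is \emph{not used at all} in this lemma. Those estimates compare $\lambda$ at two different times $t$ and $\tau$ and are used in the density estimates (Propositions \ref{prop-rho1} and \ref{prop-rho2-0}), where the Volterra structure produces a time integral $\int_0^t\cdots d\tau$. Here, by contrast, $N[E]_{\iota;j)}$ is a quantity at a \emph{single} time $t$, with no $\tau$ variable to exploit. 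The $\mathcal{CK}$ terms arise instead from the single-time weight $|m|(-\dot\lambda(t)/\lambda(t))\sim|m|\langle t\rangle^{-1-a}$ in the definition \eqref{CK-l}: in the low-$E$ regime the paper writes $a_{m,s}^2(t)C_m^n=\frac{-t}{2\dot\lambda(t)}\cdot\frac{1}{\sqrt{|m-n+e_j||m|}}\cdot(\cdots)$ and uses Lemma \ref{lem-summable2} to sum (see \eqref{est-TLH-E31}); in the high-$E$ regime the $|Z|^{s/2}$ factor from \eqref{up-kkt} is converted into $\mathcal{CK}^{1/2}$ via Corollary \ref{coro-CK}. The $\langle t\rangle^{a+2}$ factor is not ``absorbed'' by $\mathcal{CK}$ --- it comes \emph{from} the prefactor $-t/(2\dot\lambda(t))\sim t\langle t\rangle^{1+a}$ and remains on the right-hand side, to be controlled later by the time-integrated bootstrap hypotheses on $\rho$.

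A second point you elide: in the low-$E$ regime, the paper distinguishes the sub-case where the $m$-part of the commutator vanishes ($|n|=0$, so only $\beta'$ derivatives hit $E$). Here there is no $|m|$ available to form $\mathcal{CK}$, and the extra $\nabla_v$ on $g^{(m+\beta-\beta')}$ is handled by writing $\nabla_v=Y-t\nabla_x$ and using the $(1+t)^{-1}Y$ component of the energy (see \eqref{est-TLH-E32} and \eqref{est-TLH-E22}). This is precisely why that component was included in $\mathcal{E}_\ell^n$, and it produces the $\langle t\rangle\|\langle\nabla_x\rangle^2\rho\|_{\mathcal{G}^{\lm,N/2}_s}\mathcal{E}_\ell(g)$ term in \eqref{est-transport}. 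Your description bundles this under ``trading $\partial_v$ against $Y$'' but attributes the result to the $\mathcal{CK}$ mechanism, which it is not.
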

\begin{proof}
{\bf Case 1: $\iota=0$.}
Let $W\in\big\{\pr_x^\al, (1+ t)^{-|\al|}Y^\al:  0\le|\al|\le1\big\}\cup \big\{ \pr_v^\al: 1\le|\al|\le2\big\}$ with $\al\in\N^3$. The transport nonlinearities are of the following form: 
\begin{align*}
    &-\left\la W Z^{m+\beta}\left(E\cdot\nb_vg\right), Wg^{(m+\beta)}\right\ra_{\ell-2|\al|}\\
    =&(\ell-2|\al|)\int_{\T^3\times\R^3}   E\cdot v \la v\ra^{2\ell-4|\al|-2} \big|Wg^{(m+\beta)}\big|^2dxdv\\
    &-\left\la   [Z^{m+\beta},E\cdot\nb_v] W g,Wg^{(m+\beta)}\right\ra_{\ell-2|\al|}\\
    &-\left\la   Z^{m+\beta}(W E\cdot\nb_vg), Wg^{(m+\beta)}\right\ra_{\ell-2|\al|}\\
    =&T[E]_1^{m,\beta}+T[E]_2^{m,\beta}+T[E]_3^{m,\beta},
\end{align*}
and
\begin{align*}
    &-\left\la \pr_{x_j}Z^{m+\beta}(E\cdot\nb_vg), \pr_{v_j}g^{(m+\beta)} \right\ra_{\ell-2}-\left\la \pr_{v_j}Z^{m+\beta}(E\cdot\nb_vg), \pr_{x_j}g^{(m+\beta)} \right\ra_{\ell-2}\\
    =&(\ell-2)\int_{\T^3\times\R^3}E\cdot v\la v\ra^{2\ell-6}\pr_{x_j}g^{(m+\beta)}\pr_{v_j}g^{(m+\beta)}dvdx\\
    &-\Big(\left\la [Z^{m+\beta},E\cdot\nb_v]\pr_{x_j}g, \pr_{v_j}g^{(m+\beta)}\right\ra_{\ell-2}+\left\la [Z^{m+\beta},E\cdot\nb_v]\pr_{v_j}g, \pr_{x_j}g^{(m+\beta)}\right\ra_{\ell-2}\Big)\\
    &-\left\la Z^{m+\beta}\big( \pr_{x_j}E\cdot\nb_v g\big), \pr_{v_j}g^{(m+\beta)}\right\ra_{\ell-2}=T_c[E]_1^{m,\beta}+T_c[E]_2^{m,\beta}+T_c[E]_3^{m,\beta},
\end{align*}
here $c$ stands for `cross terms'. In particular, if $|\al|=0$, $T[E]_3^{m,\beta}$ vanishes. It is easy to see that $T_c[E]_j^{m,\beta}$ can be treated  in the same way as $T[E]_j^{m,\beta}, j=1,2,3$, respectively. In the following, we only give the details of the treatments of $T[E]_j^{m,\beta}, j=1,2,3$ to avoid unnecessary repetition.

Clearly, 
\begin{align*}
    \big|T[E]_1^{m,\beta}\big|\le \ell \|E(t)\|_{L^\infty_x}\big\|Wg^{(m+\beta)}\big\|^2_{L^2_{x,v}(\ell-2|\al|)}.
\end{align*}
Then
\begin{align}\label{est-TE1}
    \sum_{m\in\N^6}\sum_{|\beta|\le N}\kappa^{2|\beta|}a_{m,s}^2(t)\big|T[E]_1^{m,\beta}\big|\le \ell \|E(t)\|_{L^\infty_x}\|Wg\|^2_{\mathcal{G}^{\lm,N}_{s,\ell-2|\al|}}.
\end{align}

\noindent{$\diamond$ \underline{Treatments of $T[E]_3^{m,\beta}$}.} By the low-high and high-low decomposition, we write 
\begin{align*}
    T[E]_3^{m,\beta}=&-\sum_{\substack{n\le m,\beta'\le\beta\\|\beta'|\le|\beta|/2}}C_{\beta}^{\beta'}C_{m}^n\left\la   Z^{n+\beta'}W E\cdot\nb_vg^{(m-n+\beta-\beta')},\la v\ra^{2\ell-4|\al|}Wg^{(m+\beta)}\right\ra_{x,v}\\
    &-\sum_{\substack{n\le m,\beta'\le\beta\\|\beta'|>|\beta|/2}}C_{\beta}^{\beta'}C_{m}^n\left\la   Z^{n+\beta'}W E\cdot\nb_vg^{(m-n+\beta-\beta')},\la v\ra^{2\ell-4|\al|}Wg^{(m+\beta)}\right\ra_{x,v}\\
    =&T^{\rm LH}[E]^{m,\beta}_3+T^{\rm HL}[E]^{m,\beta}_3.
\end{align*}
We first consider $T^{\rm LH}[E]^{m,\beta}_3$, which can be further split into two parts:
\begin{align*}
    T^{\rm LH}[E]^{m,\beta}_3=&-\sum_{\substack{0<n\le m,\beta'\le\beta\\ |\beta'|\le |\beta|/2}}C_\beta^{\beta'} C_{m}^n\left\la   Z^{n+\beta'}W E\cdot\nb_vg^{(m-n+\beta-\beta')},\la v\ra^{2\ell-4|\al|}Wg^{(m+\beta)}\right\ra_{x,v}\\
    &-\sum_{\beta'\le\beta,|\beta'|\le |\beta|/2}C_{\beta}^{\beta'}\left\la   Z^{\beta'}W E\cdot\nb_vg^{(m+\beta-\beta')},\la v\ra^{2\ell-4|\al|}Wg^{(m+\beta)}\right\ra_{x,v}\\
    =&T^{\rm LH}[E]^{m,\beta}_{3;(1)}+T^{\rm LH}[E]^{m,\beta}_{3;(2)}.
\end{align*}
To deal with $T^{\rm LH}[E]^{m,\beta}_{3;1)}$,
we need to use the {\rm CK} term. Before proceeding any further, noting that $\pr_{v_j}=Y_j-t\pr_{x_j}$, we can further split $T^{\rm LH}[E]^{m,\beta}_{3;(1)}$ into two parts:
\begin{align*}
    T^{\rm LH}[E]^{m,\beta}_{3;(1)}\les& \sum_{\substack{0<n\le m,\beta'\le\beta\\|\beta'|\le |\beta|/2}}C_{m}^n\big\| \la\nb_x\ra^2Z^{n+\beta'} WE^j\big\|_{L^2_x}\Big(t\big\|\pr_{x_j} g^{(m-n+\beta-\beta')}\big\|_{L^2_{x,v}(\ell-2|\al|)}
    \\ & +\big\| Y_jg^{(m-n+\beta-\beta')}\big\|_{L^2_{x,v}(\ell-2|\al|)}\Big)\big\|W g^{(m+\beta)} \big\|_{L^2_{x,v}(\ell-2|\al|)}\\
    =:& T^{\rm LH}[E]^{m,\beta}_{3;(1),t\nb_x}+T^{\rm LH}[E]^{m,\beta}_{3;(1),Y}.
\end{align*}
$T^{\rm LH}[E]^{m,\beta}_{3;(1),t\nb_x}$ and $T^{\rm LH}[E]^{m,\beta}_{3;(1),Y}$ can be treated exactly in the same way, we only give the details for the former one which has extra $t$ growth. Indeed, by Remark  \ref{rem-summable} and Lemma \ref{lem-convolution}, we have
\begin{align}\label{est-TLH-E31}
    \nn&\sum_{m\in\N^6}\sum_{|\beta|\le N}\kappa^{2|\beta|} a_{m,s}^2(t)\big|T^{\rm LH}[E]^{m,\beta}_{3;(1),t\nb_x}\big|\\
    =\nn&-\fr{t}{2\dot{\lm}(t)}\sum_{\substack{|\beta|\le N}} \sum_{\substack{ \beta'\le\beta\\ |\beta'|\le|\beta|/2}}\kappa^{2|\beta|}\sum_{\substack{m\in\N^6}}\sum_{\substack{ 0<n\le m}}\fr{C_m^nC_{|m|}^m }{C_{|n|}^nC_{|m-n+\tl{e}_j|}^{m-n+\tl{e}_j}}\fr{\Gamma_s(|n|)\Gamma_s(|m-n+\tl{e}_j|)}{\Gamma_s(|m|)\sqrt{|m-n+\tl{e}_j||m|}}\\
    \nn&\times \Big(\kappa^{|\beta'|}a_{n,s}(t)\left\|\la\nb_x\ra^2Z^{n+\beta'}WE^j \right\|_{L^2_x}\Big)\\
    \nn&\times \Big(\sqrt{-2(|m-n+\tl{e}_j|)\dot{\lm}(t)/\lm(t)}\kappa^{|\beta-\beta'|}a_{m-n+\tl{e}_j}(t)\big\| g^{(m-n+\tl{e}_j+\beta-\beta')} \big\|_{L^2_{x,v}(\ell-2|\al|)}\Big)\\
    \nn&\times \Big(\sqrt{-2|m|\dot{\lm}(t)/\lm(t)}\kappa^{|\beta|}a_{m,s}(t)\big\|W g^{(m+\beta)}\big\|_{L^2_{x,v}(\ell-2|\al|)}\Big)\\
    \les&\la t\ra^{a+2}\big\|\la\nb_x\ra^2WE\big\|_{\mathcal{G}^{\lm,\fr{N}{2}}_{s}}\mathfrak{CK}_{\ell}[g]^{\fr12}\mathfrak{CK}_{\ell-2|\al|}[Wg]^{\fr12}.
\end{align}

For the treatment of $T^{\rm LH}[E]^{m,\beta}_{3;(2)}$, note first that $T^{\rm LH}[E]^{m,\beta}_{3;(2)}$ does not vanish only when $W=\pr_x^\al$ or $(1+ t)^{-|\al|}Y^\al$ with $|\al|=1$. The extra $v$ derivative on $g^{(m+\beta-\beta')}$ leads us to introduce $(1+ t)^{-2}\big\| Yg\|^2_{\mathcal{G}^{\lm,N}_{s,\ell-2}}$ in the energy functional $\mathcal{E}_{\ell}(g)$, see \eqref{eq:E_l^n} and \eqref{energy-l}. Indeed, using again the fact $\pr_{v_j}=Y_j-t\pr_{x_j}$, one deduces that
\begin{align}\label{est-TLH-E32}
    \nn&\sum_{m\in\N^6}\sum_{|\beta|\le N}\kappa^{2|\beta|} a_{m,s}^2(t)\big|T^{\rm LH}[E]^{m,\beta}_{3;(2)}\big|\\
    \les\nn&\sum_{\substack{|\beta|\le N}}\sum_{\substack{ \beta'\le\beta\\ |\beta'|\le|\beta|/2}}\kappa^{|\beta'|}\big\|\la\nb_x\ra^2Z^{\beta'}W E \big\|_{L^2_x}\sum_{\substack{m\in\N^6}}\left(\kappa^{|\beta|}a_{m,s}(t)\big\| W g^{(m+\beta)}\big\|_{L^2_{x,v}(\ell-2)}\right)\\
    \nn&\times \kappa^{|\beta-\beta'|}a_{m,s}(t)\Big(\big\| Yg^{(m+\beta-\beta')} \big\|_{L^2_{x,v}(\ell-2)}+t\big\| \nb_xg^{(m+\beta-\beta')} \big\|_{L^2_{x,v}(\ell-2)}\Big)\\
    \les& \la t\ra \sum_{|\beta|\le \fr{N}{2}}\kappa^{|\beta|}\big\|\la\nb_x\ra^2Z^{\beta}\nb_x E \big\|_{L^2_x}\Big(\big\|\la t\ra^{-1}Yg\big\|_{\mathcal{G}^{\lm,N}_{s,\ell-2}}+\big\|\nb_xg\big\|_{\mathcal{G}^{\lm,N}_{s,\ell-2}}\Big)\|Wg\|_{\mathcal{G}^{\lm,N}_{s,\ell-2}}.
\end{align}

Now we turn to estimate $T^{\rm HL}[E]_3^{m,\beta}$, which will be divided into three parts on the Fourier side:
\begin{align*}
    T^{\rm HL}[E]_3^{m,\beta}
    =&-\sum_{k\in\Z^3_*}\sum_{\substack{\beta'\le\beta\\|\beta'|>|\beta|/2}}C_{\beta}^{\beta'}\sum_{n\le m}C_{m}^n \int_{\R^3}   \mathcal{F}_x[Z^{n+\beta'}W E]_k(t)\\
    &\cdot  \mathcal{F}_{x,v}\big[\la v\ra^{\ell-2|\al|}\nb_vg^{(m-n+\beta-\beta')}\big]_{0}(t,\eta)\overline{\mathcal{F}_{x,v}\big[\la v\ra^{\ell-2|\al|}Wg^{(m+\beta)}\big]_k(t,\eta)} d\eta\\
    &-\sum_{\substack{\beta'\le\beta\\|\beta'|>|\beta|/2}}C_{\beta}^{\beta'}\sum_{n\le m}C_{m}^n \sum_{l\in\Z^3_*}\int_{\R^3}   \mathcal{F}_x[Z^{n+\beta'}W E]_l(t)\\
    &\cdot  \mathcal{F}_{x,v}\big[\la v\ra^{\ell-2|\al|}\nb_vg^{(m-n+\beta-\beta')}\big]_{-l}(t,\eta)\overline{\mathcal{F}_{x,v}\big[\la v\ra^{\ell-2|\al|}Wg^{(m+\beta)}\big]_0(t,\eta)} d\eta\\
    &-\sum_{k\in\Z^3_*}\sum_{\substack{\beta'\le\beta\\|\beta'|>|\beta|/2}}C_{\beta}^{\beta'}\sum_{n\le m}C_{m}^n \sum_{l\in\Z^3_*,l\ne k}\int_{\R^3}   \mathcal{F}_x[Z^{n+\beta'}W E]_l(t)\cdot \\
    & \mathcal{F}_{x,v}\big[\la v\ra^{\ell-2|\al|}\nb_vg^{(m-n+\beta-\beta')}\big]_{k-l}(t,\eta)\overline{\mathcal{F}_{x,v}\big[\la v\ra^{\ell-2|\al|}Wg^{(m+\beta)}\big]_k(t,\eta)} d\eta\\
    =&T^{\rm HL}[E]_{3,(1)}^{m,\beta}+T^{\rm HL}[E]_{3,(2)}^{m,\beta}+T^{\rm HL}[E]_{3,(3)}^{m,\beta}.
\end{align*}
The treatment of $T^{\rm HL}[E]_{3,(1)}^{m,\beta}$ is straightforward, since now $\nb_v$ is a good derivative on $g_0$ which is at low frequency. More precisely, combining the inequality \eqref{up-kkt} with Lemmas \ref{lem-summable} and \ref{lem-convolution}  and Corollary  \ref{coro-CK}, similar to \eqref{est-Lmuiota11HL}, we have 
\begin{align}\label{est-THL-E31}
    \nn&\sum_{m\in\N^6,|\beta|\le N}\kappa^{2|\beta|}a^2_{m,s}(t)\big|T^{\rm HL}[E]_{3,(1)}^{m,\beta}\big|\\
    \nn\les&\big\|\la t\ra^{\fr{1+a-s}{2}}|\nb_x|^{-\fr{s}{2}}W E(t)\big\|_{\mathcal{G}^{\lm,N}_s}\big\|\la\nb_v\ra\nabla_v g_0(t)\big\|_{\mathcal{G}^{\lm,\fr{N}{2}}_{s,\ell-2|\al|}}\big\|\la t\ra^{-\fr{1+a}{2}}|Z|^{\fr{s}{2}}Wg(t)\big\|_{\mathcal{G}^{\lm,N}_{s,\ell-2|\al|}}\\
    \les&\big\|\la t\ra^{\fr{1+a-s}{2}}\rho(t)\big\|_{\mathcal{G}^{\lm,N}_s}\|g_0(t)\big\|_{\mathcal{G}^{\lm,N}_{s,\ell}}\Big(\mathfrak{CK}_{\ell-2|\al|}[Wg(t)]^{\fr12}+\la t\ra^{-\fr{1+a}{2}}\big\|Wg(t)\big\|_{\mathcal{G}^{\lm,N}_{s,\ell-2|\al|}}\Big).
\end{align}
For $T^{\rm HL}[E]_{3,(2)}^{m,\beta}$, we first need to swap the positions of $\la v\ra^{\ell-2|\al|}$ and $\nb_v$ in front of  $g^{(m-n+\beta-\beta')}$, and thus split it into two parts:
\begin{align*}
    T^{\rm HL}[E]_{3,(2)}^{m,\beta}
    =&-\sum_{\substack{\beta'\le\beta\\|\beta'|>|\beta|/2}}C_{\beta}^{\beta'}\sum_{n\le m}C_{m}^n \sum_{k\in\Z^3_*}\int_{\R^3}   \mathcal{F}_x[Z^{n+\beta'}W E]_k(t)\\
    &\times\cdot i\eta \mathcal{F}_{x,v}\big[\la v\ra^{\ell-2|\al|}g^{(m-n+\beta-\beta')}\big]_{-k}(t,\eta)\overline{\mathcal{F}_{x,v}\big[\la v\ra^{\ell-2|\al|}Wg^{(m+\beta)}\big]_0(t,\eta)} d\eta\\
    &+\sum_{\substack{\beta'\le\beta\\|\beta'|>|\beta|/2}}C_{\beta}^{\beta'}\sum_{n\le m}C_{m}^n \int_{\T\times\R^3}   Z^{n+\beta'}W E(t)\cdot \nb_v\la v\ra^{\ell-2|\al|} g^{(m-n+\beta-\beta')}(t,x,v)\\
    &\times\la v\ra^{\ell-2|\al|}Wg^{(m+\beta)}_0(t,v)dx dv=T^{\rm HL}[E]_{3,(2);d}^{m,\beta}+T^{\rm HL}[E]_{3,(2);e}^{m,\beta},
\end{align*}
here `d' stands for `difficult' and `e' stands for `easy'.
Noting that for $|k|\ne 0$, there hold
\begin{align*}
    |\eta|\les&\la t\ra |-k,\eta-kt|\les \fr{1}{\la t\ra^{\fr{1+a}{2}}}\big(|k|^{-\fr{s}{2}}\la t\ra^{\fr{3+a-s}{2}}\big)|k|^{\fr{s}{2}}|-k,\eta-kt|,
\end{align*}
and
\begin{align*}
    &\big\||\nb_x|^{\fr{s}{2}}|Z|\big(\la v\ra^{\ell-2|\al|} g^{(m-n+\beta-\beta')}\big) \big\|_{L^2_{x,v}}\\
    \les& \big\||\nb_x|^{\fr{s}{2}}|Z| g^{(m-n+\beta-\beta')} \big\|_{L^2_{x,v}(\ell-2|\al|)}+\big\|\nb_v\la v\ra^{\ell-2|\al|} |\nb_x|^{\fr{s}{2}}g^{(m-n+\beta-\beta')} \big\|_{L^2_{x,v}}\\
    \les& \big\||\nb_x|^{\fr{s}{2}}|Z| g^{(m-n+\beta-\beta')} \big\|_{L^2_{x,v}(\ell)}.
\end{align*}
 Then by Lemmas \ref{lem-summable} and \ref{lem-convolution}, we have
\begin{align}\label{est-THL-E32d}
    \nn&\sum_{m\in\N^6,|\beta|\le N}\kappa^{2|\beta|}a^2_{m,s}(t)T^{\rm HL}[E]_{3,(2);d}^{m,\beta}\\
    \les\nn& \sum_{|\beta|\le N}\sum_{\substack{\beta'\le\beta\\|\beta'|>|\beta|/2}}\sum_{m\in\N^6}\sum_{n\le m}b_{m,n,s} \\
    \nn&\times \kappa^{|\beta'|}a_{n,s}(t)    \big\|\la t\ra^{\fr{3+a-s}{2}}|\nb_x|^{-\fr{s}{2}}Z^{n+\beta'}W E\big\|_{L^2_x}\\
    \nn&\times \kappa^{|\beta-\beta'|}a_{m-n,s}(t)\big\| |\nb_x|^{\fr{s}{2}}|Z|\big(\la v\ra^{\ell-2|\al|} g^{(m-n+\beta-\beta')}\big)\big\|_{L^2_{x,v}}\\
    \nn&\times \kappa^{|\beta|}a_{m,s}(t)  \left\|\fr{1}{\la t\ra^{\fr{1+a}{2}}}  \big(\la v\ra^{\ell-2|\al|}Wg^{(m+\beta)}_0\big)\right\|_{L^2_v}\\
    \les\nn&\big\|\la t\ra^{\fr{3+a-s}{2}}|\nb_x|^{-\fr{s}{2}}WE\big\|_{\mathcal{G}^{\lm,N}_{s}}\||\nb_x|^{\fr{s}{2}}|Z|g\|_{\mathcal{G}^{\lm,\fr{N}{2}}_{s,\ell}} \left\|\fr{1}{\la t\ra^{\fr{1+a}{2}}}  Wg_0\right\|_{\mathcal{G}^{\lm,N}_{s,\ell-2|\al|}}\\
    \les&\big\|\la t\ra^{\fr{3+a-s}{2}}\rho\big\|_{\mathcal{G}^{\lm,N}_{s}}\|g\|_{\mathcal{G}^{\lm,N}_{s,\ell}} \left\|\fr{1}{\la t\ra^{\fr{1+a}{2}}}  Wg_0\right\|_{\mathcal{G}^{\lm,N}_{s,\ell-2|\al|}}.
\end{align}
The treatment of $T^{\rm HL}[E]_{3,(2);e}^{m,\beta}$ is similar and easier due to the absence of factor $\eta$:
\begin{align}\label{est-THL-E32e}
    \sum_{m\in\N^6,|\beta|\le N}\kappa^{2|\beta|}a^2_{m,s}(t)T^{\rm HL}[E]_{3,(2);e}^{m,\beta}
    \les\big\|\la t\ra^{\fr{1+a}{2}}\rho\big\|_{\mathcal{G}^{\lm,N}_{s}}\|g\|_{\mathcal{G}^{\lm,N}_{s,\ell}} \left\|\fr{1}{\la t\ra^{\fr{1+a}{2}}}  Wg_0\right\|_{\mathcal{G}^{\lm,N}_{s,\ell-2|\al|}}.
\end{align}

As for $T^{\rm HL}[E]_{3,(3)}^{m,\beta}$, like $T^{\rm HL}[E]_{3,(2)}^{m,\beta}$, we  split it into two parts as well:
\begin{align*}
    T^{\rm HL}[E]_{3,(3)}^{m,\beta}=&-\sum_{\substack{\beta'\le\beta\\|\beta'|>|\beta|/2}}C_{\beta}^{\beta'}\sum_{n\le m}C_{m}^n \sum_{k\in\Z^3_*}\sum_{l\in\Z^3_*,l\ne k}\int_{\R^3}   \mathcal{F}_x[Z^{n+\beta'}W E]_l(t)\cdot i\eta\\
    &\times \mathcal{F}_{x,v}\big[\la v\ra^{\ell-2|\al|}g^{(m-n+\beta-\beta')}\big]_{k-l}(t,\eta)\\
    &\times\overline{\mathcal{F}_{x,v}\big[\la v\ra^{\ell-2|\al|}Wg^{(m+\beta)}\big]_k(t,\eta)} d\eta\\
    &+\sum_{\substack{\beta'\le\beta\\|\beta'|>|\beta|/2}}C_{\beta}^{\beta'}\sum_{n\le m}C_{m}^n \int_{\T^3\times\R^3}   Z^{n+\beta'}W E(t)\cdot \nb_v\la v\ra^{\ell-2|\al|} g^{(m-n+\beta-\beta')}_{\ne}(t,x,v)\\
    &\times\la v\ra^{\ell-2|\al|}Wg^{(m+\beta)}(t,x,v)dxdv=T^{\rm HL}[E]_{3,(3);d}^{m,\beta}+T^{\rm HL}[E]_{3,(3);e}^{m,\beta}.
\end{align*}
If $t\ge10|k-l,\eta+(k-l)t|$, nothing that $l\ne k$ and $|l|>0$,  we now have $t\ge10$ and  $\fr{10}{11}|\eta+kt|\le|lt|\le \fr{10}{9}|\eta+kt|$, and hence 
\begin{align*}
    |\eta|\les& \la t\ra |k-l,\eta+(k-l)t|=\fr{\la t\ra^{\fr{1+a}{2}}}{|lt|^{\fr{s}{2}}}\la t\ra |k-l,\eta+(k-l)t|\fr{|lt|^{\fr{s}{2}}}{\la t\ra^{\fr{1+a}{2}}}\\
    \les&\big(|l|^{-\fr{s}{2}}\la t\ra^{\fr{3+a-s}{2}}\big)|k-l,\eta+(k-l)t|\fr{|\eta+kt|^{\fr{s}{2}}}{\la t\ra^{\fr{1+a}{2}}}.
\end{align*}
If $t\le10|k-l,\eta+(k-l)t|$, for $k\ne l$ there holds,
\begin{align*}
    |\eta|\les& \la t\ra |k-l,\eta+(k-l)t|\les |k-l,\eta+(k-l)t|^2\\
    \les&\big(|l|^{-\fr{s}{2}}\la t\ra^{\fr{1+a}{2}}\big) \big(|k-l|^{\fr{s}{2}}|k-l,\eta+(k-l)t|^2\big)\fr{|k|^{\fr{s}{2}}}{\la t\ra^{\fr{1+a}{2}}}.
\end{align*}
It follows these two inequalities and Corollary \ref{coro-CK} that
\begin{align}\label{est-THL-E33d}
    \nn&\sum_{m\in\N^6,|\beta|\le N}\kappa^{2|\beta|}a^2_{m,s}(t)T^{\rm HL}[E]_{3,(3);d}^{m,\beta}\\
    \les\nn&\sum_{|\beta|\le N}\sum_{\substack{\beta'\le\beta\\|\beta'|>|\beta|/2}}\sum_{m\in\N^6}\sum_{n\le m}b_{m,n,s} \Big(\kappa^{|\beta'|}a_{n,s}(t) \big\| \la t\ra^{\fr{3+a-s}{2}}   |\nb_x|^{-\fr{s}{2}}Z^{n+\beta'}W E(t)\big\|_{L^2_x}\Big)\\
    \nn&\times \kappa^{|\beta-\beta'|}a_{m-n,s}(t)\big\||Z|^2\big(\la v\ra^{\ell-2|\al|} |\nb_x|^{2+\fr{s}{2}} g^{(m-n+\beta-\beta')}\big)\big\|_{L^2_{x,v}}\\
    \nn&\times\kappa^{|\beta|}a_{m,s}(t)\left\|\fr{|Z|^{\fr{s}{2}}}{\la t\ra^{\fr{1+a}{2}}}  \big(\la v\ra^{\ell-2|\al|}Wg^{(m+\beta)}\big)\right\|_{L^2_{x,v}}\\
    \les\nn&\left\|\la t\ra^{\fr{3+a-s}{2}}   |\nb_x|^{-\fr{s}{2}}\rho\right\|_{\mathcal{G}^{\lm,N}_{s}}\left\||Z|^2 |\nb_x|^{2+\fr{s}{2}}g \right\|_{\mathcal{G}^{\lm,\fr{N}{2}}_{s,\ell}}\\
    \nn&\times \left\|\kappa^{|\beta|}a_{m,s}(t)\left\|\fr{|Z|^{\fr{s}{2}}}{\la t\ra^{\fr{1+a}{2}}}  \left(\la v\ra^{\ell-2|\al|}Wg^{(m+\beta)}(t)\right)\right\|_{L^2_{x,v}}\right\|_{L^2_{m,\beta}}\\
    \les&\left\|\la t\ra^{\fr{3+a-s}{2}}   \rho\right\|_{\mathcal{G}^{\lm,N}_{s}}\left\| g(t)\right\|_{\mathcal{G}^{\lm, N}_{s,\ell}} \Big( \mathfrak{CK}_{\ell-2|\al|}[Wg(t)]^{\fr12}+\fr{1}{\la t\ra^{\fr{1+a}{2}}}\left\| W g(t)\right\|_{\mathcal{G}^{\lm, N}_{s,\ell-2|\al|}}\Big).
\end{align}
To bound $T^{\rm HL}[E]_{3,(3);e}^{m,\beta}$, note that
\begin{align*}
1\les \big(|l|^{-\fr{s}{2}}\la t\ra^{\fr{1+a}{2}}\big)|k-l|^{\fr{s}{2}}\fr{|k|^{\fr{s}{2}}}{\la t\ra^{\fr{1+a}{2}}}.
\end{align*}
Then similar to the treatment of  $T^{\rm HL}[E]_{3,(3);d}^{m,\beta}$ above, we have
\begin{align}\label{est-THL-E33e}
    \nn&\sum_{m\in\N^6,|\beta|\le N}\kappa^{2|\beta|}a^2_{m,s}(t)T^{\rm HL}[E]_{3,(3);e}^{m,\beta}\\
    \les\nn&\left\|\la t\ra^{\fr{1+a}{2}}   |\nb_x|^{-\fr{s}{2}}\rho\right\|_{\mathcal{G}^{\lm,N}_{s}}\left\| |\nb_x|^{2+\fr{s}{2}}g \right\|_{\mathcal{G}^{\lm,\fr{N}{2}}_{s,\ell}} \\
    \nn&\times\left\|\kappa^{|\beta|}a_{m,s}(t)\left\|\fr{|\nb_x|^{\fr{s}{2}}}{\la t\ra^{\fr{1+a}{2}}}  \big(\la v\ra^{\ell-2|\al|}Wg^{(m+\beta)}\big)\right\|_{L^2_{x,v}}\right\|_{L^2_{m,\beta}}\\
    \les&\left\|\la t\ra^{\fr{3+a-s}{2}}   \rho\right\|_{\mathcal{G}^{\lm,N}_{s}}\left\| g(t)\right\|_{\mathcal{G}^{\lm, N}_{s,\ell}} \Big( \mathfrak{CK}_{\ell-2|\al|}[Wg(t)]^{\fr12}+\fr{1}{\la t\ra^{\fr{1+a}{2}}}\left\| W g(t)\right\|_{\mathcal{G}^{\lm, N}_{s,\ell-2|\al|}}\Big).
\end{align}

\noindent{$\diamond$ \underline{Treatments of $T[E]_2^{m,\beta}$}.} To begin with, like \eqref{commutator-L},  we split the commutator $[Z^{m+\beta}, E\cdot\nb_v]$   into two parts:
\begin{align*}
    T[E]_2^{m,\beta}=\nn&-\sum_{\beta'\le\beta}C_{\beta}^{\beta'}\sum_{0<n\le m}C_m^n\left\la Z^{n+\beta'}E\cdot \nb_vW g^{(m-n+\beta-\beta')}, \la v\ra^{2\ell-4|\al|}W g^{(m+\beta)} \right\ra_{x,v}\\
    \nn&-\sum_{0<\beta'\le\beta}C_{\beta}^{\beta'}\left\la Z^{\beta'}E\cdot \nb_vW g^{(m+\beta-\beta')}, \la v\ra^{2\ell-4|\al|}W g^{(m+\beta)} \right\ra_{x,v}\\
    =&T[E]_{2;(1)}^{m,\beta}+T[E]_{2;(2)}^{m,\beta}.
\end{align*}
As the treatment of $T[E]^{m,\beta}_3$, we need to use the low-high and high-low decomposition:
\begin{align*}
    T[E]_{2;(1)}^{m,\beta}=&-\Big(\sum_{\substack{0<n\le m, \beta'\le\beta\\|\beta'|\le|\beta|/2}}+\sum_{\substack{0<n\le m, \beta'\le\beta\\|\beta'|>|\beta|/2}}\Big)C_{\beta}^{\beta'}C_m^n\\
    &\times\left\la Z^{n+\beta'}E\cdot \nb_vW g^{(m-n+\beta-\beta')}, \la v\ra^{2\ell-4|\al|}W g^{(m+\beta)} \right\ra_{x,v}\\
    =&T^{\rm LH}[E]_{2;(1)}^{m,\beta}+T^{\rm HL}[E]_{2;(1)}^{m,\beta},
\end{align*}
and
\begin{align*}
    T[E]_{2;(2)}^{m,\beta}=&-\Big(\sum_{\substack{0<\beta'\le\beta\\|\beta'|\le|\beta|/2}}+\sum_{\substack{0<\beta'\le\beta\\|\beta'|>|\beta|/2}}\Big)C_{\beta}^{\beta'}\left\la Z^{\beta'}E\cdot \nb_vW g^{(m+\beta-\beta')}, \la v\ra^{2\ell-4|\al|}W g^{(m+\beta)} \right\ra_{x,v}\\
    =&T^{\rm LH}[E]_{2;(2)}^{m,\beta}+T^{\rm HL}[E]_{2;(2)}^{m,\beta}.
\end{align*}

We first treat $T^{\rm LH}[E]^{m,\beta}_{2;(1)}$ and $T^{\rm LH}[E]^{m,\beta}_{2;(2)}$.
For $T^{\rm LH}[E]^{m,\beta}_{2;(1)}$, one can treat it  in the same way as that of $T^{\rm LH}[E]^{m,\beta}_{3;(1)}$. Indeed, similar to \eqref{est-TLH-E31}, we have
\begin{align}\label{est-TE121LH}
    \sum_{m\in\N^6}\sum_{|\beta|\le N}\kappa^{2|\beta|} a_{m,s}^2(t)\big|T^{\rm LH}[E]^{m,\beta}_{2;(1)}\big|
    \les\la t\ra^{a+2}\big\|\la\nb_x\ra^2E\big\|_{\mathcal{G}^{\lm,\fr{N}{2}}_{s}}\mathfrak{CK}_{\ell-2|\al|}[Wg].
\end{align}
As for  $T^{\rm LH}[E]_{2;(2)}^{m,\beta}$, since $|\beta'|>0$, there is no need to involve the vector field $(1+t)^{-1}Y$ as the treatment of $T^{\rm LH}[E]_{3;(2)}^{m,\beta}$ in \eqref{est-TLH-E32}. In fact, noting that $\nb_v=Y-t\nb_x$, then there holds
\begin{align*}
    \sum_{\substack{|\beta-\beta'|\le N-1}}\big\|\nb_vW g^{(m+\beta-\beta')}\big\|_{L^2_{x,v}(\ell-2|\al|)}\les\la t\ra \sum_{\substack{|\beta|\le N}}\big\|W g^{(m+\beta)}\big\|_{L^2_{x,v}(\ell-2|\al|)}.
\end{align*}
Thus,
\begin{align}\label{est-TLH-E22}
\nn&\sum_{m\in\N^6}\sum_{|\beta|\le N}\kappa^{2|\beta|} a_{m,s}^2(t)\big|T^{\rm LH}[E]_{2;(2)}^{m,\beta}\big|\\
\nn\les&\sum_{|\beta|\le N}\sum_{m\in\N^6}\sum_{0<|\beta'|\le N/2}\kappa^{|\beta'|} \big\|\la\nb_x\ra^2Z^{\beta'}E\big\|_{L^2_x}\\
\nn&\times\sum_{\substack{|\beta-\beta'|\le N-1}}\kappa^{|\beta-\beta'|}a_{m,s}(t)\big\|\nb_vW g^{(m+\beta-\beta')}\big\|_{L^2_{x,v}(\ell-2|\al|)} \\
\nn&\times \Big(\kappa^{|\beta|} a_{m,s}(t)\big\|W g^{(m+\beta)}\big\|_{L^2_{x,v}(\ell-2|\al|)}\Big)\\
\les&\la t\ra \sum_{|\beta|\le\fr{N} {2}}\kappa^{|\beta|}\big\|Z^{\beta}\la\nb_x\ra^2E\big\|_{L^2_x} \|Wg\|^2_{\mathcal{G}^{\lm,N}_{s,\ell-2|\al|}}.
\end{align}

Next we turn to treat  $T^{\rm HL}[E]_{2;(1)}^{m,\beta}$ and $T^{\rm HL}[E]_{2;(2)}^{m,\beta}$. Given that both  $T^{\rm HL}[E]_{2;(1)}^{m,\beta}$ and $T^{\rm HL}[E]_{2;(2)}^{m,\beta}$ share the same structure as $T^{\rm HL}[E]_{3}^{m,\beta}$, one can follow the estimates of $T^{\rm HL}[E]_{3}^{m,\beta}$ \eqref{est-THL-E31}--\eqref{est-THL-E33e} line by line to get
\begin{align}\label{est-THL-E2}
\nn&\sum_{m\in\N^6}\sum_{|\beta|\le N}\kappa^{2|\beta|} a_{m,s}^2(t)\Big(\big|T^{\rm HL}[E]_{2;(1)}^{m,\beta}\big|+\big|T^{\rm HL}[E]_{2;(2)}^{m,\beta}\big|\Big)\\
\les&\left\|\la t\ra^{\fr{3+a-s}{2}}   \rho\right\|_{\mathcal{G}^{\lm,N}_{s}}\left\|W g(t)\right\|_{\mathcal{G}^{\lm, N}_{s,\ell-2|\al|}} \Big( \mathfrak{CK}_{\ell-2|\al|}[Wg(t)]^{\fr12}+\fr{1}{\la t\ra^{\fr{1+a}{2}}}\left\| W g(t)\right\|_{\mathcal{G}^{\lm, N}_{s,\ell-2|\al|}}\Big).
\end{align}

Now collecting the estimates \eqref{est-TE1}--\eqref{est-THL-E2}, and noting that the estimates for $T[E]_j^{m,\beta}$ apply to $T_c[E]_j^{m,\beta}, j=1,2,3$, respectively,  we conclude that
\begin{align}\label{est-NE01}
    N[E]_{0;1)}+N[E]_{0;2)}\nn\les& \la t\ra^{a+2}\|\la \nb_x\ra^2\rho\|_{\mathcal{G}^{\lm,\fr{N}{2}}_{s}}\mathcal{CK}_{\ell}(g(t))+\la t\ra \|\la \nb_x\ra^2\rho\|_{\mathcal{G}^{\lm,\fr{N}{2}}_{s}}\mathcal{E}_{\ell}(g(t))\\
    &+\left\|\la t\ra^{\fr{3+a-s}{2}}   \rho\right\|_{\mathcal{G}^{\lm,N}_{s}}\mathcal{E}_{\ell}(g(t))^{\fr12} \Big( \mathcal{CK}_{\ell}(g(t))^{\fr12}+\fr{1}{\la t\ra^{\fr{1+a}{2}}}\mathcal{E}_{\ell}(g(t))^{\fr12}\Big).
\end{align}

{\bf Case 2: $\iota=1,2,\cdots,[\ell/3]+1$.} For $W\in\big\{\pr_x^\al, (1+ t)^{-|\al|}Y^\al:  0\le|\al|\le1\big\}\cup \big\{ \pr_v^\al: 1\le|\al|\le2\big\}$ with $\al\in\N^3$, we write 
\begin{align*}
    &-\left\la W Z^{m+\beta}\left(E\cdot\nb_vg\right)_{\iota\ne},\la v\ra^{2\ell-4\iota-4|\al|}Wg_{\iota\ne}^{(m+\beta)}\right\ra_{x,v}\\
    =&-w_{\iota}(t)\left\la W Z^{m+\beta}\left(E\cdot\nb_vg_{0}\right),Wg_{\iota\ne}^{(m+\beta)}\right\ra_{\ell-2\iota-2|\al|}\\
    &-\left\la W Z^{m+\beta}\left(E\cdot\nb_vg_{\iota\ne}\right),Wg_{\iota\ne}^{(m+\beta)}\right\ra_{\ell-2\iota-2|\al|},
\end{align*}
and
\begin{align*}
    &-\left\la \pr_{x_j}Z^{m+\beta}(E\cdot\nb_vg)_{\iota\ne}, \pr_{v_j}g_{\iota\ne}^{(m+\beta)} \right\ra_{\ell-2-2\iota}-\left\la \pr_{v_j}Z^{m+\beta}(E\cdot\nb_vg)_{\iota\ne}, \pr_{x_j}g_{\iota\ne}^{(m+\beta)} \right\ra_{\ell-2-2\iota}\\
    =&-w_{\iota}(t)\left\la \pr_{x_j}Z^{m+\beta}(E\cdot\nb_vg_0), \pr_{v_j}g_{\iota\ne}^{(m+\beta)} \right\ra_{\ell-2-2\iota}-w_{\iota}(t)\left\la \pr_{v_j}Z^{m+\beta}(E\cdot\nb_vg_0), \pr_{x_j}g_{\iota\ne}^{(m+\beta)} \right\ra_{\ell-2-2\iota}\\
    &-\left\la \pr_{x_j}Z^{m+\beta}(E\cdot\nb_vg_{\iota\ne}), \pr_{v_j}g_{\iota\ne}^{(m+\beta)} \right\ra_{\ell-2-2\iota}-\left\la \pr_{v_j}Z^{m+\beta}(E\cdot\nb_vg_{\iota\ne}), \pr_{x_j}g_{\iota\ne}^{(m+\beta)} \right\ra_{\ell-2-2\iota}.
\end{align*}
Accordingly, $N[E]_{\iota;j)}$ can be split into two parts: $N[E]_{\iota;j)}=N[E]^0_{\iota;j)}+N[E]^{\ne}_{\iota;j)}, j=1,2$. Clearly, $N[E]^{\ne}_{\iota;j)}$ preserves the same  structure as that of $N[E]_{0;j)}, j=1,2$, all the estimates in {\bf Case 1} hold with $g$ replaced by $g_{\iota\ne}$. More precisely, similar to \eqref{est-NE01}, now we have
\begin{align}\label{est-NEiota1-ne}
    \nn&N[E]_{\iota;1)}^{\ne}+N[E]_{\iota;2)}^{\ne}\\
    \nn\les& \la t\ra^{a+2}\|\la \nb_x\ra^2\rho\|_{\mathcal{G}^{\lm,\fr{N}{2}}_{s}}\mathcal{CK}_{\ell-2\iota}(g_{\iota\ne}(t))+\la t\ra \|\la \nb_x\ra^2\rho\|_{\mathcal{G}^{\lm,\fr{N}{2}}_{s}}\mathcal{E}_{\ell-2\iota}(g_{\iota\ne}(t))\\
    &+\left\|\la t\ra^{\fr{3+a-s}{2}}   \rho\right\|_{\mathcal{G}^{\lm,N}_{s}}\mathcal{E}_{\ell-2\iota}(g_{\iota\ne}(t))^{\fr12} \Big( \mathcal{CK}_{\ell-2\iota}(g_{\iota\ne}(t))^{\fr12}+\fr{1}{\la t\ra^{\fr{1+a}{2}}}\mathcal{E}_{\ell-2\iota}(g_{\iota\ne}(t))^{\fr12}\Big).
\end{align}

$N[E]_{\iota;1)}^0$ and $N[E]_{\iota;2)}^0$ can be bounded in  the same way, we only give the details of the treatments of   $N[E]^0_{\iota;1)}$. To this end, we further split $-w_{\iota}(t)\left\la W Z^{m+\beta}\left(E\cdot\nb_vg_{0}\right),Wg_{\iota\ne}^{(m+\beta)}\right\ra_{\ell-2\iota-2|\al|}$ as follows:
\begin{align*}
    &-w_{\iota}(t)\left\la W Z^{m+\beta}\left(E\cdot\nb_vg_{0}\right),Wg_{\iota\ne}^{(m+\beta)}\right\ra_{\ell-2\iota-2|\al|}\\
    =&-\sum_{\beta'\le\beta,|\beta'|\le|\beta|/2}\sum_{n\le m}C_{\beta}^{\beta'}C_{m}^nw_{\iota}(t)\left\la  Z^{n+\beta'}E\cdot\nb_v Wg_{0}^{(m-n+\beta-\beta')},Wg_{\iota\ne}^{(m+\beta)}\right\ra_{\ell-2\iota-2|\al|}\\
    &-\sum_{\beta'\le\beta,|\beta'|\le|\beta|/2}\sum_{n\le m}C_{\beta}^{\beta'}C_{m}^nw_{\iota}(t)\left\la  Z^{n+\beta'}W E\cdot\nb_vg_{0}^{(m-n+\beta-\beta')},Wg_{\iota\ne}^{(m+\beta)}\right\ra_{\ell-2\iota-2|\al|}\\
    &-\sum_{\beta'\le\beta,|\beta'|>|\beta|/2}\sum_{n\le m}C_{\beta}^{\beta'}C_{m}^nw_{\iota}(t)\left\la Z^{n+\beta'}E\cdot\nb_v Wg_{0}^{(m-n+\beta-\beta')},Wg_{\iota\ne}^{(m+\beta)}\right\ra_{\ell-2\iota-2|\al|}\\
    &-\sum_{\beta'\le\beta,|\beta'|>|\beta|/2}\sum_{n\le m}C_{\beta}^{\beta'}C_{m}^nw_{\iota}(t)\left\la Z^{n+\beta'}W E\cdot\nb_vg_{0}^{(m-n+\beta-\beta')},Wg_{\iota\ne}^{(m+\beta)}\right\ra_{\ell-2\iota-2|\al|}\\
    =&T^{\rm LH}_{\iota;0}[E]^{m,\beta}_{1)}+T^{\rm LH}_{\iota;0}[E]^{m,\beta}_{2)}+T^{\rm HL}_{\iota;0}[E]^{m,\beta}_{1)}+T^{\rm HL}_{\iota;0}[E]^{m,\beta}_{2)}.
\end{align*}
Note that if $|\al|=0$, $W$ vanishes, $T^{\rm LH}_{\iota;0}[E]^{m,\beta}_{1)}$ and $T^{\rm LH}_{\iota;0}[E]^{m,\beta}_{2)}$  are identical and thus reduce to one single term. Meanwhile, $T^{\rm HL}_{\iota;0}[E]^{m,\beta}_{1)}$ and $T^{\rm HL}_{\iota;0}[E]^{m,\beta}_{2)}$ reduce to one single term as well.

For $T^{\rm LH}_{\iota;0}[E]^{m,\beta}_{1)}$, let us divide $T^{\rm LH}_{\iota;0}[E]^{m,\beta}_{1)}$ into two parts as before:
\begin{align*}
    &T^{\rm LH}_{\iota;0}[E]^{m,\beta}_{1)}\\
    =&\sum_{\substack{n\le m,\beta'\le\beta\\
    |\beta'|\le|\beta|/2}}C_{\beta}^{\beta'}C_{m}^nw_{\iota}(t)\left\la  Z^{n+\beta'}E\cdot\nb_v\la v\ra^{\ell-2\iota-2|\al|} Wg_{0}^{(m-n+\beta-\beta')},Wg_{\iota\ne}^{(m+\beta)}\la v\ra^{\ell-2\iota-2|\al|}\right\ra_{x,v}\\
    &-\sum_{\substack{n\le m,\beta'\le\beta\\
    |\beta'|\le|\beta|/2}}C_{\beta}^{\beta'}C_{m}^nw_{\iota}(t)\left\la  Z^{n+\beta'}E\cdot\nb_v\big( Wg_{0}^{(m-n+\beta-\beta')}\la v\ra^{\ell-2\iota-2|\al|}\big),Wg_{\iota\ne}^{(m+\beta)}\la v\ra^{\ell-2\iota-2|\al|}\right\ra_{x,v}\\
    =&T^{\rm LH}_{\iota;0}[E]^{m,\beta}_{1);e}+T^{\rm LH}_{\iota;0}[E]^{m,\beta}_{1);d}.
\end{align*}
The first term $T^{\rm LH}_{\iota;0}[E]^{m,\beta}_{1);e}$ can be treated  straightforwardly: 
\begin{align}\label{est-Eg0-LH-iota-e}
    \nn&\sum_{m\in\N^6,|\beta|\le N}\kappa^{2|\beta|}a_{m,\lm,s}^2(t)\big|T^{\rm LH}_{\iota;0}[E]^{m,\beta}_{1);e}\big|\\
    \les& w_{\iota}(t)\|E\|_{\mathcal{G}^{\lm,\fr{N}{2}}_s}\|Wg_0\|_{\mathcal{G}^{\lm,N}_{s,\ell-2|\al|}}\|Wg_{\iota\ne}\|_{\mathcal{G}^{\lm,N}_{s,\ell-2\iota-2|\al|}}.
\end{align}
For the second term $T^{\rm LH}_{\iota;0}[E]^{m,\beta}_{1);d}$, since now $g_0$ is at high frequency, we have to deal with the extra $v$ derivative carefully. 
Our strategy is to use the CK term of $Wg_{\iota\ne}$, and simultaneously interpolate between  the dissipation and CK for $g_0$. More precisely, noting that for $\iota\ge1$ and $s\ge\fr12$, there holds
\begin{align*}
    \nu^{\fr{\iota}{3}}|\eta|\le\nu^{\fr13} |\eta|^{1-\fr{s}{2}}\la\eta+kt\ra^{\fr{s}{2}}\la kt\ra^{\fr{s}{2}}=\big(\nu^{\fr12}|\eta|\big)^{\fr{1-s}{1-\fr{s}{2}}} \big(|\eta|^{\fr{s}{2}}\big)^{\fr{\fr{s}{2}}{1-\fr{s}{2}}}\la\eta+kt\ra^{\fr{s}{2}}\la kt\ra^{\fr{s}{2}}.
\end{align*}
Moreover, it is easy to see that
\begin{align*}
    &\Big\|\nb_v\big(Wg_0^{(m-n+\beta-\beta')}\la v\ra^{\ell-2\iota-2|\al|}\big)\Big\|_{L^2_v}\\
    \les&\Big\|\nb_vWg_0^{(m-n+\beta-\beta')}\la v\ra^{\ell-2\iota-2|\al|}\Big\|_{L^2_v}+\Big\|Wg_0^{(m-n+\beta-\beta')}\la v\ra^{\ell-2\iota-2|\al|}\Big\|_{L^2_v},
\end{align*}
Then using Corollary \ref{coro-CK}, the fact $w_{\iota}^2(t)\les \nu^{\fr{\iota}{3}}\la t\ra^{\iota}$, and \eqref{coercive}, we have
\begin{align}\label{est-Eg0-LH-iota}
    \nn&\sum_{m\in\N^6,|\beta|\le N}\kappa^{2|\beta|}a_{m,\lm,s}^2(t)\big|T^{\rm LH}_{\iota;0}[E]^{m,\beta}_{1);d}\big|\\
    \nn\les&\la t\ra^{\iota}\sum_{m\in\N^6,|\beta|\le N}\kappa^{2|\beta|}\sum_{\substack{n\le m,\beta'\le\beta\\|\beta'\le|\beta|/2}}b_{m,n,s}\sum_{k\in\Z^3_*}a_{n,\lm,s}(t)\big|\la kt\ra^{\fr{s}{2}}\mathcal{F}_x\big[Z^{n+\beta'}E\big]_k\big|\\
    \nn&\times  a_{m-n,\lm,s}(t)\Big(\nu^{\fr12}\Big\|\nb_v\big(Wg_0^{(m-n+\beta-\beta')}\la v\ra^{\ell-2\iota-2|\al|}\big)\Big\|_{L^2_v}\Big)^{\fr{1-s}{1-\fr{s}{2}}}\\
    \nn&\quad\quad\quad\quad \ \  \times\Big(\Big\||\nb_v|^{\fr{s}{2}}\big(Wg_0^{(m-n+\beta-\beta')}\la v\ra^{\ell-2\iota-2|\al|}\big)\Big\|_{L^2_v}\Big)^{\fr{\fr{s}{2}}{1-\fr{s}{2}}}\\
    \nn&\times a_{m,\lm,s}(t)\Big\|\mathcal{F}_x\big[\la Y\ra^{\fr{s}{2}}\big(Wg_{\ne}^{(m+\beta)}\la v\ra^{\ell-2\iota-2|\al|}\big)\big]_k\Big\|_{L^2_v}\\
    \les\nn&\la t\ra^{\iota+\fr{1+a}{2}(1+\fr{\fr{s}{2}}{1-\fr{s}{2}})}\big\|\la t\nb _x\ra^{\fr{s}{2}} E\big\|_{\mathcal{G}^{\lm,\fr{N}{2}}_s}\Big(\nu^{\fr12}\big\|\nb_vWg_0\|_{\mathcal{G}^{\lm,N}_{s,\ell-2\iota-2|\al|}}+\nu^{\fr12}\big\|Wg_0\|_{\mathcal{G}^{\lm,N}_{s,\ell-2\iota-2|\al|}}\Big)^{\fr{1-s}{1-\fr{s}{2}}}\\
    \nn&\times\Big(\mathfrak{CK}_{\ell-2\iota-2|\al|}[Wg_{0}(t)]^{\fr12}+\fr{1}{\la t\ra^{\fr{1+a}{2}}}\|Wg_0\|_{\mathcal{G}^{\lm,N}_{s,\ell-2\iota-2|\al|}}\Big)^{\fr{\fr{s}{2}}{1-\fr{s}{2}}}\\
    \nn&\times\Big(\mathfrak{CK}_{\ell-2\iota-2|\al|}[Wg_{\ne}(t)]^{\fr12}+\fr{1}{\la t\ra^{\fr{1+a}{2}}}\|Wg_{\ne}\|_{\mathcal{G}^{\lm,N}_{s,\ell-2\iota-2|\al|}}\Big)\\
    \les\nn&\la t\ra^{\iota+\fr{1+a}{2-s}+\fr{s}{2}}\big\||\nb_x|^\fr{s}{2}E\big\|_{\mathcal{G}^{\lm,\fr{N}{2}}_{s}}\Big(\mathfrak{CK}_{\ell-2|\al|}[Wg_{\ne}(t)]^{\fr12}+\fr{\|Wg_{\ne}\|_{\mathcal{G}^{\lm,N}_{s,\ell-2|\al|}}}{\la t\ra^{\fr{1+a}{2}}}\Big)\big\|Wg_0\|_{\mathcal{G}^{\lm,N}_{s,\ell-2|\al|}}\\
    \nn&+\la t\ra^{\iota+\fr{1+a}{2-s}+\fr{s}{2}}\big\||\nb_x|^\fr{s}{2}E\big\|_{\mathcal{G}^{\lm,\fr{N}{2}}_{s}}\Big(\mathfrak{CK}_{\ell-2|\al|}[Wg_{\ne}(t)]^{\fr12}+\fr{\|Wg_{\ne}\|_{\mathcal{G}^{\lm,N}_{s,\ell-2|\al|}}}{\la t\ra^{\fr{1+a}{2}}}\Big)\\
    &\times\Big(\nu^{\fr16}\big\|\nu^{\fr13}Wg_0\|_{\mathcal{G}^{\lm,N}_{s,\sig,\ell-2|\al|}}+\mathfrak{CK}_{\ell-2|\al|}[Wg_{0}(t)]^{\fr12}+\fr{1}{\la t\ra^{\fr{1+a}{2}}}\|Wg_0\|_{\mathcal{G}^{\lm,N}_{s,\ell-2|\al|}}\Big).
\end{align}
Clearly, $T^{\rm LH}_{\iota;0}[E]^{m,\beta}_{2)}$ can be treated in the same way. Similar to \eqref{est-Eg0-LH-iota-e} and \eqref{est-Eg0-LH-iota}, we arrive at
\begin{align}\label{est-Eg0-LH-iota-2}
    \nn&\sum_{m\in\N^6,|\beta|\le N}\kappa^{2|\beta|}a_{m,\lm,s}^2(t)\big|T^{\rm LH}_{\iota;0}[E]^{m,\beta}_{2)}\big|\\
    \les\nn&w_{\iota}(t)\|WE\|_{\mathcal{G}^{\lm,\fr{N}{2}}_s}\|g_0\|_{\mathcal{G}^{\lm,N}_{s,\ell}}\|Wg_{\iota\ne}\|_{\mathcal{G}^{\lm,N}_{s,\ell-2\iota-2|\al}}\\
    \nn&+\la t\ra^{\iota+\fr{1+a}{2-s}+\fr{s}{2}}\big\||\nb_x|^\fr{s}{2}WE\big\|_{\mathcal{G}^{\lm,\fr{N}{2}}_{s}}\Big(\mathfrak{CK}_{\ell-2|\al|}[Wg_{\ne}(t)]^{\fr12}+\fr{\|Wg_{\ne}\|_{\mathcal{G}^{\lm,N}_{s,\ell-2|\al|}}}{\la t\ra^{\fr{1+a}{2}}}\Big)\big\|g_0\|_{\mathcal{G}^{\lm,N}_{s,\ell}}\\
    \nn&+\la t\ra^{\iota+\fr{1+a}{2-s}+\fr{s}{2}}\big\||\nb_x|^\fr{s}{2}WE\big\|_{\mathcal{G}^{\lm,\fr{N}{2}}_{s}}\Big(\mathfrak{CK}_{\ell-2|\al|}[Wg_{\ne}(t)]^{\fr12}+\fr{\|Wg_{\ne}\|_{\mathcal{G}^{\lm,N}_{s,\ell-2|\al|}}}{\la t\ra^{\fr{1+a}{2}}}\Big)\\
    &\times\Big(\nu^{\fr16}\big\|\nu^{\fr13}g_0\|_{\mathcal{G}^{\lm,N}_{s,\sig,\ell}}+\mathfrak{CK}_{\ell}[g_{0}(t)]^{\fr12}+\fr{1}{\la t\ra^{\fr{1+a}{2}}}\|g_0\|_{\mathcal{G}^{\lm,N}_{s,\ell}}\Big).
\end{align}

The high-low frequency part $T^{\rm HL}_{\iota;0}[E]^{m,\beta}_{1)}$ and $T^{\rm HL}_{\iota;0}[E]^{m,\beta}_{2)}$ can be treated in a similar manner as that of $L[\mu]_{\iota;1)}^{(1),{\rm HL}}$. In fact, thanks to \eqref{up-kkt}, similar to \eqref{est-Lmuiota11HL},  we have
\begin{align}\label{est-HL-iota}
    \nn&\sum_{m\in\N^6,|\beta|\le N}\kappa^{2|\beta|}a_{m,\lm,s}^2(t)\Big(\big|T^{\rm HL}_{\iota;0}[E]^{m,\beta}_{1)}\big|+\big|T^{\rm HL}_{\iota;0}[E]^{m,\beta}_{2)}\big|\Big)\\
    \nn\les&\Big[ w_{\iota}(t)\la t\ra^{\fr{1+a-s}{2}}\|\rho\|_{\mathcal{G}^{\lm,N}_s}\Big]\Big[\mathfrak{CK}_{\ell-2\iota-2|\al|}[Wg_{\iota\ne}(t)]^{\fr12}+\fr{1}{\la t\ra^{\fr{1+a}{2}}}\|Wg_{\iota\ne}\|_{\mathcal{G}^{\lm,N}_{s,\ell-2\iota-2|\al|}}\Big]\\
    &\times\Big(\big\|\la \nb_v\ra\nb_vWg_0\big\|_{\mathcal{G}^{\lm,\fr{N}{2}}_{s,\ell}}+\big\|\la \nb_v\ra\nb_vg_0\big\|_{\mathcal{G}^{\lm,\fr{N}{2}}_{s,\ell}}\Big).
\end{align}

Now collecting the estimates \eqref{est-Eg0-LH-iota-e}--\eqref{est-HL-iota}, and noting that $N[E]_{\iota;1)}^0$ and  $N[E]_{\iota;2)}^0$ eventually have the same upper bound, we conclude that
\begin{align}\label{est-NEiota-0}
    \nn&N[E]_{\iota;1)}^0+N[E]_{\iota;2)}^0\\
    \les\nn&w_{\iota}(t)\|\rho\|_{\mathcal{G}^{\lm,\fr{N}{2}}_s}\mathcal{E}_\ell(g_0(t))^{\fr12}\mathcal{E}_{\ell-2\iota}(g_{\iota\ne}(t))^{\fr12}\\
    \nn&+\la t\ra^{\iota+\fr{1+a}{2-s}+\fr{s}{2}}\big\||\nb_x|^{\fr{s}{2}}\rho\big\|_{\mathcal{G}^{\lm,\fr{N}{2}}_s}\Big(\mathcal{CK}_{\ell}(g_{\ne}(t))^{\fr12}+\fr{\mathcal{E}_\ell(g_{\ne}(t))^{\fr12}}{\la t\ra^{\fr{1+a}{2}}}\Big)\mathcal{E}_\ell(g_0(t))^{\fr12}\\
    \nn&+\la t\ra^{\iota+\fr{1+a}{2-s}+\fr{s}{2}}\big\||\nb_x|^{\fr{s}{2}}\rho\big\|_{\mathcal{G}^{\lm,\fr{N}{2}}_s}\Big(\mathcal{CK}_{\ell}(g_{\ne}(t))^{\fr12}+\fr{\mathcal{E}_\ell(g_{\ne}(t))^{\fr12}}{\la t\ra^{\fr{1+a}{2}}}\Big)\\
    \nn&\times\Big(\nu^{\fr16}\mathcal{D}_\ell(g_0(t))^{\fr12}+\mathcal{CK}_{\ell}(g_{0}(t))^{\fr12}+\fr{1}{\la t\ra^{\fr{1+a}{2}}}\mathcal{E}_\ell(g_0(t))^{\fr12}\Big)\\
    &+\Big( w_{\iota}(t)\la t\ra^{\fr{1+a-s}{2}}\|\rho\|_{\mathcal{G}^{\lm,N}_s}\Big)\Big(\mathcal{CK}_{\ell-2\iota}(g_{\iota\ne}(t))^{\fr12}+\fr{1}{\la t\ra^{\fr{1+a}{2}}}\mathcal{E}_{\ell-2\iota}(g_{\iota\ne}(t))^{\fr12}\Big)\mathcal{E}_{\ell}(g_0(t))^{\fr12}.
\end{align}
It follows from \eqref{est-NE01}, \eqref{est-NEiota1-ne} and \eqref{est-NEiota-0} that \eqref{est-transport} holds.
\end{proof}

Finally, we bound the  the collision nonlinearities $N[\Gamma]_{\iota;1)}$ and $N[\Gamma]_{\iota;2)}$.
\begin{lem}\label{lem-nonlinear-collision}
Under the bootstrap hypotheses \eqref{bd-en}--\eqref{H-phi}, there holds
\begin{align}\label{est-nonlinear-collision}
    \nn&2\sum_{\iota=0}^{[\ell/3]+1}\Big[N[\Gamma]_{\iota;1)}+N[\Gamma]_{\iota;2)}\Big]\\
    \nn\le& C\mathcal{E}_\ell(g(t))^{\fr12}\Big(\nu^{\fr13}\mathcal{D}_\ell(g(t))+\sum_{\iota=1}^{[\ell/3]+1} \nu^{\fr13}w_{\iota}^2(t)\mathcal{D}_{\ell-2\iota}(g_{\ne}\Big)\\
    \nn&+C\sum_{\iota=1}^{[\ell/3]+1} \mathcal{E}_{\ell-2\iota}(g_{\iota\ne}(t))^{\fr12}\Big(\nu^{\fr13}\mathcal{D}_{\ell}(g(t))^{\fr12}\mathcal{D}_{\ell-2\iota}(g_{\iota\ne})^{\fr12}\Big)\\
    \nn&+C\sum_{\iota=1}^{[\ell/3]+1} \Big(w_{\iota}(t)\|\rho\|_{\mathcal{G}^{\lm,\fr{N}{2}}_s}\Big)\mathcal{E}_{\ell}(g(t))^{\fr12}\Big(\nu^{\fr13}\mathcal{D}_{\ell}(g(t))^{\fr12}\mathcal{D}_{\ell-2\iota}(g_{\iota\ne}(t))^{\fr12}\Big)\\
    &+C\sum_{\iota=1}^{[\ell/3]+1}\Big(w_{\iota}(t)\|\rho\|_{\mathcal{G}^{\lm,N}_s}\Big)\mathcal{E}_{\ell}(g(t))^{\fr12}\Big(\nu^{\fr23} \mathcal{D}_\ell(g(t))^{\fr12}\Big)\Big( w_{\iota}(t)\mathcal{E}_{\ell-2\iota}(g_{\ne}(t))^{\fr12}\Big). 
\end{align}
\end{lem}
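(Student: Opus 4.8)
The plan is to reduce the whole estimate to a bilinear, vector-field Gevrey version of Guo's trilinear bound for $\Gamma$ (Theorem 3 of \cite{Guo2002}), organized by a paraproduct decomposition, and then to distribute the single power of $\nu$ carried by $\Gamma$ among the $\nu^{2/3}$-weighted $\sigma$-dissipation norms that make up $\mathcal{D}$. First I would use $g=e^{\phi}f$ and write $q(\phi)=e^{-\phi}-1$, so that $e^{-\phi}\Gamma(g,g)=\Gamma(g,g)+q(\phi)\Gamma(g,g)$; by Lemma \ref{lem-compose} and the elliptic bound $\|\phi\|_{\mathcal{G}^{\lambda,N}_s}\lesssim\|\rho\|_{\mathcal{G}^{\lambda,N}_s}$ from Proposition \ref{lem-Linfty-rho}, the extra factor $q(\phi)$ costs at most a product estimate against $\|\rho\|_{\mathcal{G}^{\lambda,N/2}_s}$ or $\|\rho\|_{\mathcal{G}^{\lambda,N}_s}$, which is exactly the source of the $\|\rho\|$-weighted contributions on the right-hand side of \eqref{est-nonlinear-collision}. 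For $\iota\ge 1$ I would further split $g=g_0+g_{\ne}$, so that $(\Gamma(g,g))_{\ne}=\Gamma(g_0,g_{\ne})+\Gamma(g_{\ne},g_0)+(\Gamma(g_{\ne},g_{\ne}))_{\ne}$, the term $\Gamma(g_0,g_0)$ dropping under the projection.

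The core step is the bilinear estimate: for $W\in\{\partial_x^{\alpha},\,(1+t)^{-|\alpha|}Y^{\alpha}:\ |\alpha|\le 1\}\cup\{\partial_v^{\alpha}:\ 1\le|\alpha|\le 2\}$ one must bound $\sum_{m,\beta}\kappa^{2|\beta|}a_{m,\lambda,s}^2(t)\,\langle W Z^{m+\beta}\Gamma(g_1,g_2),\,Wg^{(m+\beta)}\rangle_{\ell-2|\alpha|-2\iota}$ together with its cross-term analogue. Since $\Gamma(g_1,g_2)$ is bilinear with a Gaussian-weighted factor $\mu^{1/2}g_1$ in the sense of \eqref{def-Gamma}, applying the Leibniz rule for $W$ and $Z^{m+\beta}$ produces commutator pieces that are structurally identical to the $\mathbf{c}_{\mathcal{A},i},\mathbf{c}_{\mathcal{K},i}$ of Lemma \ref{lem-coercive-L} with $\mu^{1/2}$ replaced throughout by $\mu^{1/2}g_1$. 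As there, I would integrate by parts in $v$ to relieve one $\partial_{v_i}$, apply the Cauchy--Schwarz inequality in $(v,\tilde v)$ using $|\Phi^{ij}(v-\tilde v)|\le|v-\tilde v|^{-1}$ and the Gaussian cutoffs $\mu^{1/8}(v)\mu^{1/8}(\tilde v)$ exactly as in \eqref{cs-vtlv}, thereby recovering products of $\sigma$-norms via \eqref{coercive}, and then collapse the multi-index sums into products of Gevrey $\sigma$-norms using the convolution and summability lemmas (Lemmas \ref{lem-summable}, \ref{lem-convolution}), losing only an arbitrarily small fixed radius of analyticity that is absorbed as in Lemma \ref{lem-compose}. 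A low-high/high-low paraproduct split is then carried out: in every sub-case one factor is placed in a $\sigma$-dissipation norm $\mathcal{G}^{\lambda,N}_{s,\sigma,\ell}$ (so its $\nu^{2/3}$ is available) and the other in an energy norm or, when it is the low factor, an $L^{\infty}$-type Gevrey norm. Writing $\nu=\nu^{1/3}\cdot\nu^{2/3}$ and pairing $\nu^{2/3}$ with $\mathcal{D}$-norms (and $\nu^{4/3}$ for the cross terms $N[\Gamma]_{\iota;2)}$) yields precisely the factors $\nu^{1/3}\mathcal{D}_\ell^{1/2}\mathcal{D}_{\ell-2\iota}^{1/2}$, $\nu^{1/3}\mathcal{D}_\ell(g)$, $\nu^{2/3}\mathcal{D}_\ell^{1/2}$ appearing on the right-hand side; for $\iota=0$ this already closes the estimate, producing the $C\mathcal{E}_\ell(g)^{1/2}\nu^{1/3}\mathcal{D}_\ell(g)$ contribution.

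For $\iota\ge 1$ the delicate terms are $\Gamma(g_0,g_{\ne})$ and $\Gamma(g_{\ne},g_0)$ weighted by $w_\iota^2(t)=(\kappa_0\nu^{1/3}(1+t))^{\iota}$, where the free-transport structure is lost and a direct paraproduct estimate loses one $v$-derivative on whichever factor is $g_0$. When $g_0$ is the high-frequency factor I would pay for this loss with the extra $\nu^{\iota/3}$ in $w_\iota$ and interpolate between the dissipation $\nu^{2/3}\|\cdot\|_{\sigma}^2$ of $g_0$ and its CK term, exactly in the manner of the transport estimate \eqref{est-Eg0-LH-iota} (this is the single place where $s>1/2$ would enter, through $\nu^{\frac12\frac{1-s}{1-s/2}}\ge\nu^{1/3}\ge\nu^{\iota/3}$); when $g_0$ is the low-frequency factor it enters an $L^{\infty}$-type Gevrey norm controlled by $\|\rho\|_{\mathcal{G}^{\lambda,N/2}_s}$ (or $\|\rho\|_{\mathcal{G}^{\lambda,N}_s}$) via Corollary \ref{coro-compose} and Proposition \ref{lem-Linfty-rho}, while $w_\iota g_{\ne}$ is placed in $\mathcal{D}_{\ell-2\iota}(g_{\ne})$; this produces the remaining $\|\rho\|$-weighted lines of \eqref{est-nonlinear-collision}. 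The bad terms generated when $\partial_t$ hits $w_\iota(t)$ are, as usual, absorbed by $\mathcal{D}$ through Lemma \ref{lem: E_l<D_l-2}.

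\textbf{Main obstacle.} The hard part is not any single inequality but the simultaneous bookkeeping: running the bilinear commutator expansion of $\Gamma$ against the full family $\{\partial_x^{\alpha},Y^{\alpha},\partial_v^{\alpha}\}$ with the anisotropic velocity weights $\langle v\rangle^{\ell-2|\alpha|-2\iota}$, while guaranteeing in every one of the many low-high, high-low and high-high sub-cases that at least one factor lands in a $\nu^{2/3}$-weighted $\sigma$-dissipation norm (so the $\nu$ prefactor of $\Gamma$ is not wasted), and that the loss of a $v$-derivative in the $\iota\ge1$ zero-mode interactions is exactly compensated by the $\nu^{\iota/3}$ in $w_\iota$. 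In other words the proof recombines the commutator machinery of Lemma \ref{lem-coercive-L} (in bilinear form), the cross-term analysis of Corollary \ref{coro-cross}, and the $g_0$-interpolation trick of Lemma \ref{lem-transport}, and the real work lies in organizing that case analysis and the attendant $\nu$- and weight-accounting rather than in any new estimate.
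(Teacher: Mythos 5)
There is a genuine gap, and it concerns the term that the paper calls $\Gamma_{\iota;0}=w_\iota(t)\,q(\phi)_{\ne}\,\Gamma(g_0,g_0)$. You write $e^{-\phi}\Gamma(g,g)=\Gamma(g,g)+q(\phi)\Gamma(g,g)$ and then, for $\iota\geq1$, split $(\Gamma(g,g))_{\ne}$ in $g=g_0+g_{\ne}$, correctly noting that $(\Gamma(g_0,g_0))_{\ne}=0$. But $(q(\phi)\Gamma(g_0,g_0))_{\ne}=q(\phi)_{\ne}\Gamma(g_0,g_0)$ does \emph{not} vanish, and your proposal never isolates or treats it. In the decomposition actually used in the paper,
\[
(e^{-\phi}\Gamma(g,g))_{\iota\ne}=\big((1+q(\phi))\Gamma(g_{\iota\ne},g)\big)_{\ne}+\big((1+q(\phi))\Gamma(g_{0},g_{\iota\ne})\big)_{\ne}+w_\iota(t)\,q(\phi)_{\ne}\,\Gamma(g_0,g_0),
\]
the first two pieces already carry $w_\iota(t)$ on a nonzero mode and are handled exactly as in the case $\iota=0$ via the bilinear Gevrey collision estimate --- no CK/dissipation interpolation is needed there, and your invocation of the transport-estimate trick \eqref{est-Eg0-LH-iota} at that point is superfluous. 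The obstruction is entirely in $\Gamma_{\iota;0}$, which has no $g_{\ne}$ factor at all.

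For $\Gamma_{\iota;0}$ a bare product estimate on $q(\phi)$ plus the bilinear $\Gamma$-bound does not close. When $q(\phi)$ is at high frequency, it must land in $\|\rho\|_{\mathcal{G}^{\lambda,N}_s}$ (not $\mathcal{G}^{\lambda,N/2}_s$); the natural integration-by-parts route puts the test function $Wg_{\iota\ne}$ into a $\sigma$-dissipation norm and yields an expression of the form $w_\iota\|\rho\|_{\mathcal{G}^{\lambda,N}_s}\,\mathcal{E}_\ell^{1/2}\,\nu^{1/3}\mathcal{D}_\ell^{1/2}\mathcal{D}_{\ell-2\iota}(g_{\iota\ne})^{1/2}$, but this is not bounded by the right-hand side of \eqref{est-nonlinear-collision} and cannot be integrated in time, since $w_\iota(t)\|\rho\|_{\mathcal{G}^{\lambda,N}_s}$ is only $L^2_t$-controlled by \eqref{bd-rho-2}, not $L^\infty_t$, and only $\int_0^T\nu^{1/3}\mathcal{D}\,dt$ is available. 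The paper instead expands $\Gamma(g_0,g_0)$ explicitly from \eqref{def-Gamma} (equations \eqref{eq-Gammag0g0}--\eqref{eq-WZGammag0g0}) and observes that \emph{every} resulting term places at least one $v$-derivative on one of the two $g_0$'s; the extra $\langle v\rangle^{-2\iota}$ weight available once $\iota\geq1$ then allows \eqref{coercive} to convert that $v$-differentiated $g_0$ into a $\sigma$-dissipation norm, producing $\nu^{2/3}\mathcal{D}_\ell^{1/2}$ while leaving the test function in the \emph{energy} norm $w_\iota\mathcal{E}_{\ell-2\iota}(g_{\ne})^{1/2}$ --- exactly the last line of \eqref{est-nonlinear-collision}. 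This structural use of the Landau convolution is the missing idea; it is not obtained by treating $q(\phi)$ as a harmless multiplicative factor and invoking a general bilinear $\Gamma$-estimate.
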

\begin{proof}
{\bf Case 1: $\iota=0$.}
Note that for $|\al|=1$,
\begin{align*}
    &\nu\left\la \pr_{x}^\al Z^{m+\beta}\big(e^{-\phi}\Gamma(g,g)\big), \pr_x^{\al}g^{(m+\beta)}\right\ra_{\ell-2|\al|}\\
    =&\nu \left\la   Z^{m+\beta} \Gamma(\pr_{x}^\al f,g),\pr_x^{\al}g^{(m+\beta)}\right\ra_{\ell-2|\al|}+\nu \left\la   Z^{m+\beta} \Gamma(f,\pr_{x}^\al g),\pr_x^{\al}g^{(m+\beta)}\right\ra_{\ell-2|\al|}.
\end{align*}
By Lemma \ref{lem-NL-collision} and Remark \ref{rem-NL-collision}, we find that
\begin{align*}
    &N[\Gamma]_{0;1)}\\
\le\nn&C\nu \Big(\sqrt{{\rm A}_0}+\kappa(1+t)^{-1}+\kappa\nu^{\fr13}\Big)\Big(\|f\|_{\mathcal{G}^{\lm,N}_{s,\ell}}\|g\|_{\mathcal{G}^{\lm,N}_{s,\sig,\ell}}+\|f\|_{\mathcal{G}^{\lm,N}_{s,\sig,\ell}}\|g\|_{\mathcal{G}^{\lm,N}_{s,\ell}}\Big)\\
\nn&\times\Big(\sqrt{{\rm A}_0}\|g\|_{\mathcal{G}^{\lm,N}_{s,\sig,\ell}}+\kappa(1+t)^{-1}\|Yg\|_{\mathcal{G}^{\lm,N}_{s,\sig,\ell-2}}+\sum_{1\le|\al|\le2}\big\|\kappa^{|\al|}\nu^{\fr{|\al|}{3}}\pr_v^\al g\big\|_{\mathcal{G}^{\lm,N}_{s,\sig,\ell-2|\al|}}\Big)\\
\nn&+C\nu {\rm A}_0\Big(\|\nb_xf\|_{\mathcal{G}^{\lm,N}_{s,\ell-2}}\|g\|_{\mathcal{G}^{\lm,N}_{s,\sig,\ell}}+\|\nb_xf\|_{\mathcal{G}^{\lm,N}_{s,\sig,\ell-2}}\|g\|_{\mathcal{G}^{\lm,N}_{s,\ell}}\Big)\|\nb_xg\|_{\mathcal{G}^{\lm,N}_{s,\sig,\ell-2}}\\
\nn&+C\nu {\rm A}_0\Big(\|\nb_xg\|_{\mathcal{G}^{\lm,N}_{s,\ell-2}}\|f\|_{\mathcal{G}^{\lm,N}_{s,\sig,\ell}}+\|\nb_xg\|_{\mathcal{G}^{\lm,N}_{s,\sig,\ell-2}}\|f\|_{\mathcal{G}^{\lm,N}_{s,\ell}}\Big)\|\nb_xg\|_{\mathcal{G}^{\lm,N}_{s,\sig,\ell-2}}\\
\nn&+C\nu \kappa^2(1+t)^{-2}\Big(\|Yf\|_{\mathcal{G}^{\lm,N}_{s,\ell-2}}\|g\|_{\mathcal{G}^{\lm,N}_{s,\sig,\ell}}+\|Yf\|_{\mathcal{G}^{\lm,N}_{s,\sig,\ell-2}}\|g\|_{\mathcal{G}^{\lm,N}_{s,\ell}}\Big)\|Yg\|_{\mathcal{G}^{\lm,N}_{s,\sig,\ell-2}}\\
\nn&+C\nu \kappa^2(1+t)^{-2}\Big(\|Yg\|_{\mathcal{G}^{\lm,N}_{s,\ell-2}}\|f\|_{\mathcal{G}^{\lm,N}_{s,\sig,\ell}}+\|Yg\|_{\mathcal{G}^{\lm,N}_{s,\sig,\ell-2}}\|f\|_{\mathcal{G}^{\lm,N}_{s,\ell}}\Big)\|Yg\|_{\mathcal{G}^{\lm,N}_{s,\sig,\ell-2}}\\
\nn&+C\nu\kappa^2\nu^{\fr23} \Big(\|\nb_vf\|_{\mathcal{G}^{\lm,N}_{s,\ell-2}}\|g\|_{\mathcal{G}^{\lm,N}_{s,\sig,\ell}}+\|\nb_vf\|_{\mathcal{G}^{\lm,N}_{s,\sig,\ell-2}}\|g\|_{\mathcal{G}^{\lm,N}_{s,\ell}}\Big)\|\nb_vg\|_{\mathcal{G}^{\lm,N}_{s,\sig,\ell-2}}\\
\nn&+C\nu\kappa^2\nu^{\fr23} \Big(\|f\|_{\mathcal{G}^{\lm,N}_{s,\ell}}\|\nb_vg\|_{\mathcal{G}^{\lm,N}_{s,\sig,\ell-2}}+\|f\|_{\mathcal{G}^{\lm,N}_{s,\sig,\ell}}\|\nb_vg\|_{\mathcal{G}^{\lm,N}_{s,\ell-2}}\Big)\|\nb_vg\|_{\mathcal{G}^{\lm,N}_{s,\sig,\ell-2}}\\
\nn&+C\nu\kappa^4\nu^{\fr43} \Big(\|f\|_{\mathcal{G}^{\lm,N}_{s,\ell}}\|\Dl_vg\|_{\mathcal{G}^{\lm,N}_{s,\sig,\ell-4}}+\|f\|_{\mathcal{G}^{\lm,N}_{s,\sig,\ell}}\|\Dl_vg\|_{\mathcal{G}^{\lm,N}_{s,\ell-4}}\Big)\|\Dl_vg\|_{\mathcal{G}^{\lm,N}_{s,\sig,\ell-4}}\\
\nn&+C\nu\kappa^4\nu^{\fr43} \sum_{0\le|\al|\le1}\Big(\|\pr_v^\al f\|_{\mathcal{G}^{\lm,N}_{s,\ell-2|\al|}}\|\nb_vg\|_{\mathcal{G}^{\lm,N}_{s,\sig,\ell-2}}+\|\pr_v^\al f\|_{\mathcal{G}^{\lm,N}_{s,\sig,\ell-2|\al|}}\|\nb_vg\|_{\mathcal{G}^{\lm,N}_{s,\ell-2}}\Big)\|\Dl_v g\|_{\mathcal{G}^{\lm,N}_{s,\sig,\ell-4}}\\
\nn&+C\nu\kappa^4\nu^{\fr43}\sum_{1\le|\al|\le2} \Big(\|\pr_v^\al f\|_{\mathcal{G}^{\lm,N}_{s,\ell-2|\al|}}\|g\|_{\mathcal{G}^{\lm,N}_{s,\sig,\ell}}+\|\pr_v^\al f\|_{\mathcal{G}^{\lm,N}_{s,\sig,\ell-2|\al|}}\|g\|_{\mathcal{G}^{\lm,N}_{s,\ell}}\Big)\|\Dl_v g\|_{\mathcal{G}^{\lm,N}_{s,\sig,\ell-4}}.
\end{align*}
It follows from this,  Corollary \ref{coro-compose} and Lemma \ref{lem-Linfty-rho} that
\begin{align}\label{est-NGamma01}
N[\Gamma]_{0;1)}\nn\le&C\Big(\sqrt{{\rm A}_0}\|g\|_{\mathcal{G}^{\lm,N}_{s,\sig,\ell}}+\kappa(1+t)^{-1}\|Yg\|_{\mathcal{G}^{\lm,N}_{s,\sig,\ell-2}}+\sum_{1\le|\al|\le2}\big\|\kappa^{|\al|}\nu^{\fr{|\al|}{3}}\nb_vg\big\|_{\mathcal{G}^{\lm,N}_{s,\sig,\ell-2|\al|}}\Big)\\
\nn&\times\nu \|g\|_{\mathcal{G}^{\lm,N}_{s,\ell}} \Big(\sqrt{{\rm A}_0}\|g\|_{\mathcal{G}^{\lm,N}_{s,\sig,\ell}}\Big)\\
\nn&+C\nu{\rm A}_0\sum_{|\al|\le1}\|\pr_x^\al g\|_{\mathcal{G}^{\lm,N}_{s,\ell-2|\al|}}\sum_{|\al|\le1}\|\pr_x^\al g\|_{\mathcal{G}^{\lm,N}_{s,\sig,\ell-2|\al|}}\|\nb_xg\|_{\mathcal{G}^{\lm,N}_{s,\ell-2}}\\
\nn&+C\nu\kappa^2(1+t)^{-2}\Big[\Big(t\|g\|_{\mathcal{G}^{\lm,N}_{s,\ell}}+\|Yg\|_{\mathcal{G}^{\lm,N}_{s,\ell}}\Big)\|g\|_{\mathcal{G}^{\lm,N}_{s,\sig,\ell}}\\
\nn&+\Big(t\|g\|_{\mathcal{G}^{\lm,N}_{s,\sig,\ell}}+\|Yg\|_{\mathcal{G}^{\lm,N}_{s,\sig,\ell}}\Big)\|g\|_{\mathcal{G}^{\lm,N}_{s,\ell}}\Big]\|Yg\|_{\mathcal{G}^{\lm,N}_{s,\sig,\ell-2}}\\
\nn&+C\nu \kappa^2\nu^{\fr23}\Big(\|\nb_vg\|_{\mathcal{G}^{\lm,N}_{s,\ell-2}}\|g\|_{\mathcal{G}^{\lm,N}_{s,\sig,\ell}}+\|\nb_vg\|_{\mathcal{G}^{\lm,N}_{s,\sig,\ell-2}}\|g\|_{\mathcal{G}^{\lm,N}_{s,\ell-2}}\Big)\|\nb_vg\|_{\mathcal{G}^{\lm,N}_{s,\sig,\ell}}\\
\nn&+C\nu\kappa^4\nu^{\fr43} \Big(\|g\|_{\mathcal{G}^{\lm,N}_{s,\ell}}\|\Dl_vg\|_{\mathcal{G}^{\lm,N}_{s,\sig,\ell-4}}+\|g\|_{\mathcal{G}^{\lm,N}_{s,\sig,\ell}}\|\Dl_vg\|_{\mathcal{G}^{\lm,N}_{s,\ell-4}}\Big)\|\Dl_vg\|_{\mathcal{G}^{\lm,N}_{s,\sig,\ell-4}}\\
\nn&+C\nu\kappa^4\nu^{\fr43} \sum_{0\le|\al|\le1}\Big(\|\pr_v^\al g\|_{\mathcal{G}^{\lm,N}_{s,\ell-2|\al|}}\|\nb_vg\|_{\mathcal{G}^{\lm,N}_{s,\sig,\ell-2}}\\
\nn&\quad\quad\quad\quad\quad\quad\quad\quad\quad+\|\pr_v^\al g\|_{\mathcal{G}^{\lm,N}_{s,\sig,\ell-2|\al|}}\|\nb_vg\|_{\mathcal{G}^{\lm,N}_{s,\ell-2}}\Big)\|\Dl_v g\|_{\mathcal{G}^{\lm,N}_{s,\sig,\ell-4}}\\
&\le  C\mathcal{E}_\ell(g(t))^{\fr12} \Big(\nu^{\fr13}\mathcal{D}_\ell(g(t))\Big).
\end{align}
Similarly, using Lemma \ref{lem-NL-collision}, Corollary \ref{coro-compose} and Lemma \ref{lem-Linfty-rho} again, we are led to
\begin{align}\label{est-NGamma02}
    \nn&N[\Gamma]_{0;2)}\\
    \le\nn&C\nu\kappa\nu^\fr13\Big(\sum_{|\al|\le1}\|\pr_x^\al g\|_{\mathcal{G}^{\lm,N}_{s,\ell-2|\al|}}\sum_{|\al|\le1}\|\pr_x^\al f\|_{\mathcal{G}^{\lm,N}_{s,\sig,\ell-2|\al|}}\\
    \nn&+\sum_{|\al|\le1}\|\pr_x^\al g\|_{\mathcal{G}^{\lm,N}_{s,\sig,\ell-2|\al|}}\sum_{|\al|\le1}\|\pr_x^\al f\|_{\mathcal{G}^{\lm,N}_{s,\ell-2|\al|}}\Big)\|\nb_vg\|_{\mathcal{G}^{\lm,N}_{s,\sig,\ell-2}}\\
    \nn&+C\nu\kappa\nu^\fr13\Big(\sum_{|\al|\le1}\|\pr_v^\al g\|_{\mathcal{G}^{\lm,N}_{s,\ell-2|\al|}}\|f\|_{\mathcal{G}^{\lm,N}_{s,\sig,\ell}}+\sum_{|\al|\le1}\|\pr_v^\al g\|_{\mathcal{G}^{\lm,N}_{s,\sig,\ell-2|\al|}}\| f\|_{\mathcal{G}^{\lm,N}_{s,\ell}}\Big)\|\nb_xg\|_{\mathcal{G}^{\lm,N}_{s,\sig,\ell-2}}\\
    \nn&+C\nu\kappa\nu^\fr13\Big(\| g\|_{\mathcal{G}^{\lm,N}_{s,\ell}}\|\nb_v f\|_{\mathcal{G}^{\lm,N}_{s,\sig,\ell-2}}
    +\|g\|_{\mathcal{G}^{\lm,N}_{s,\sig,\ell}}\|\nb_v f\|_{\mathcal{G}^{\lm,N}_{s,\ell-2}}\Big)\|\nb_xg\|_{\mathcal{G}^{\lm,N}_{s,\sig,\ell-2}}\\
    \le\nn&\nu^{\fr13}C\sum_{|\al|\le1}\|\pr_x^\al g\|_{\mathcal{G}^{\lm,N}_{s,\ell-2|\al|}}\sum_{|\al|\le1}\Big(\nu^\fr13\|\pr_x^\al g\|_{\mathcal{G}^{\lm,N}_{s,\sig,\ell-2|\al|}}\Big)\Big(\kappa\nu^{\fr23}\|\nb_vg\|_{\mathcal{G}^{\lm,N}_{s,\sig,\ell-2}}\Big)\\
    \nn&+C\nu^\fr13\Big(\sum_{|\al|\le1}\kappa\nu^{\fr13}\|\pr_v^\al  g\|_{\mathcal{G}^{\lm,N}_{s,\ell-2|\al|}}\nu^{\fr13}\|g\|_{\mathcal{G}^{\lm,N}_{s,\sig,\ell}}\\
    \nn&+\sum_{|\al|\le1}\kappa\nu^{\fr23}\|\pr_v^\al  g\|_{\mathcal{G}^{\lm,N}_{s,\sig,\ell-2|\al|}}\|g\|_{\mathcal{G}^{\lm,N}_{s,\ell}}\Big)\Big(\nu^{\fr13}\|\nb_xg\|_{\mathcal{G}^{\lm,N}_{s,\sig,\ell-2}}\Big)\\
    &\le  C\mathcal{E}_\ell(g(t))^{\fr12} \Big(\nu^{\fr13}\mathcal{D}_\ell(g(t))\Big).
\end{align}

{\bf Case 2: $\iota=1, 2, \cdots, [\ell/3]+1$.} We write
\begin{align*}
    &\Big(e^{-\phi}\Gamma(g,g)\Big)_{\iota\ne}\\
    =&\Big(\big(1+q(\phi)\big)\Gamma(g_{\iota\ne},g)\Big)_{\ne}+\Big(\big(1+q(\phi)\big)\Gamma(g_{0},g_{\iota\ne})\Big)_{\ne}+w_{\iota}(t)q(\phi)_{\ne}\Gamma(g_0,g_0)\\
    =&\Gamma_{\iota;\ne(1)}+\Gamma_{\iota;\ne(2)}+\Gamma_{\iota;0},
\end{align*}
and accordingly we split $N[\Gamma]_{\iota;j)}$ into three parts and denote
\[
N[\Gamma]_{\iota;j)}=N[\Gamma]_{\iota;j)}^{\ne(1)}+N[\Gamma]_{\iota;j)}^{\ne(2)}+N[\Gamma]_{\iota;j)}^{0},\quad j=1,2.
\]
Note that $\Gamma_{\iota;\ne(1)}=\Gamma(g_{\iota\ne},f)_{\ne}$, and $\Gamma_{\iota;\ne(2)}=\Gamma(\tl{g},g_{\iota\ne})_{\ne}$, where $\tl{g}=\big(1+q(\phi)\big)g_0$. By the product estimates \eqref{product1}, \eqref{product2}, Lemma \ref{lem-compose}, Lemma \ref{lem-Linfty-rho}, \eqref{positive-sig1} and \eqref{positive-sig2} in Corollary \ref{coro-sig}, we have 
\begin{align*}
    \|\nb_x\tl{g}\|_{\mathcal{G}^{\lm,N}_{s,\ell-2\iota-2}}\les \|\nb_x\phi\|_{\mathcal{G}^{\lm,N}_{s}} \big(1+\|\phi\|_{\mathcal{G}^{\lm,N}_s}\big)\|g_0\|_{\mathcal{G}^{\lm,N}_{s,\ell-2\iota-2}}\les \|g\|_{\mathcal{G}^{\lm,N}_{s,\ell}},
\end{align*} 

\begin{align*}
    \|Y\tl{g}\|_{\mathcal{G}^{\lm,N}_{s,\ell-2\iota-2}}\les& \big(1+\|\phi\|_{\mathcal{G}^{\lm,N}_s}\big)\|Yg_0\|_{\mathcal{G}^{\lm,N}_{s,\ell-2\iota-2}}+t\|\nb_x\phi\|_{\mathcal{G}^{\lm,N}_{s}}\big(1+\|\phi\|_{\mathcal{G}^{\lm,N}_s}\big)\|g_0\|_{\mathcal{G}^{\lm,N}_{s,\ell-2\iota-2}}\\
    \les& \|Y g\|_{\mathcal{G}^{\lm,N}_{s,\ell-2}}+t\| g\|_{\mathcal{G}^{\lm,N}_{s,\ell}},
\end{align*}

\begin{align*}
    \|\pr_v^\al\tl{g}\|_{\mathcal{G}^{\lm,N}_{s,\ell-2\iota-2|\al|}}\les& \big(1+\|\phi\|_{\mathcal{G}^{\lm,N}_s}\big)\|\pr_v^\al g_0\|_{\mathcal{G}^{\lm,N}_{s,\ell-2\iota-2|\al|}}
    \les \|\pr_v^\al g\|_{\mathcal{G}^{\lm,N}_{s,\ell-2|\al|}}, \quad 0\le|\al|\le2,
\end{align*}
and
\begin{align*}
    \|\nb_x\tl{g}\|_{\mathcal{G}^{\lm,N}_{s,\sig,\ell-2\iota-2}}\les \|\nb_x\phi\|_{\mathcal{G}^{\lm,N}_{s}} \big(1+\|\phi\|_{\mathcal{G}^{\lm,N}_s}\big)\|g_0\|_{\mathcal{G}^{\lm,N}_{s,\sig,\ell-2\iota-2}}\les \|g\|_{\mathcal{G}^{\lm,N}_{s,\sig,\ell}},
\end{align*} 

\begin{align*}
    \|Y\tl{g}\|_{\mathcal{G}^{\lm,N}_{s,\sig,\ell-2\iota-2}}\les& \big(1+\|\phi\|_{\mathcal{G}^{\lm,N}_s}\big)\|Yg_0\|_{\mathcal{G}^{\lm,N}_{s,\sig,\ell-2\iota-2}}+t\|\nb_x\phi\|_{\mathcal{G}^{\lm,N}_{s}}\big(1+\|\phi\|_{\mathcal{G}^{\lm,N}_s}\big)\|g_0\|_{\mathcal{G}^{\lm,N}_{s,\sig,\ell-2\iota-2}}\\
    \les& \|Y g\|_{\mathcal{G}^{\lm,N}_{s,\sig,\ell-2}}+t\| g\|_{\mathcal{G}^{\lm,N}_{s,\ell}},
\end{align*}

\begin{align*}
    \|\pr_v^\al\tl{g}\|_{\mathcal{G}^{\lm,N}_{s,\sig, \ell-2\iota-2|\al|}}\les& \big(1+\|\phi\|_{\mathcal{G}^{\lm,N}_s}\big)\|\pr_v^\al g_0\|_{\mathcal{G}^{\lm,N}_{s,\sig,\ell-2\iota-2|\al|}}
    \les \|\pr_v^\al g\|_{\mathcal{G}^{\lm,N}_{s,\sig,\ell-2|\al|}}, \quad 0\le|\al|\le2,
\end{align*}
Consequently, following the estimates of $N[\Gamma]_{0;1)}$ and $N[\Gamma]_{0;2)}$ line by line, similar to \eqref{est-NGamma01} and \eqref{est-NGamma02}, we are led to
\begin{align}\label{est-NGamma-iota-ne}
    \nn&\sum_{j=1}^2\Big(N[\Gamma]_{\iota;1)}^{\ne(j)}+N[\Gamma]_{\iota;2)}^{\ne(j)}\Big)\\
    \le&C\mathcal{E}_{\ell-2\iota}(g_{\iota\ne}(t))^{\fr12}\Big(\nu^{\fr13}\mathcal{D}_{\ell}(g(t))^{\fr12}\mathcal{D}_{\ell-2\iota}(g_{\iota\ne})^{\fr12}\Big)+C\mathcal{E}_{\ell}(g(t))^{\fr12}\Big(\nu^{\fr13}\mathcal{D}_{\ell-2\iota}(g_{\iota\ne})\Big).
\end{align}

We are left to bound $N[\Gamma]_{\iota;j)}^0, j=1,2$. To this end,  note first that
\begin{gather*}
    \pr_x^\al \big(q(\phi)_{\ne}\Gamma(g_0,g_0)\big)=-\pr_x^\al\phi\big(1+q(\phi)\big)\Gamma(g_0,g_0),\\
    Y^\al \big(q(\phi)_{\ne}\Gamma(g_0,g_0)\big)=-t\pr_x^\al\phi\big(1+q(\phi)\big)\Gamma(g_0,g_0)+q(\phi)_{\ne}\pr_v^{\al}\Gamma(g_0,g_0),
\end{gather*}
for $|\al|=1$, and
\begin{align*}
    \pr_v^\al \big(q(\phi)_{\ne}\Gamma(g_0,g_0)\big)=q(\phi)_{\ne}\pr_v^\al\Gamma(g_0,g_0),\quad \quad{\rm for}\ \  1\le|\al|\le2.
\end{align*}
For $W\in \big\{\pr_x^\al, (1+ t)^{-|\al|}Y^\al: 0\le|\al|\le1\big\}\cup \big\{\pr_v^\al: 1\le|\al|\le2 \big\}$ with $\al\in\N^3$, we write
\begin{align*}
    &WZ^{m+\beta}\big(q(\phi)_{\ne}\Gamma(g_0,g_0)\big)\\
    =&Z^{m+\beta}\big(Wq(\phi)_{\ne}\Gamma(g_0,g_0)\big)+Z^{m+\beta}\big(q(\phi)_{\ne}W\Gamma(g_0,g_0)\big)\\
    =&\Big(\sum_{\substack{n\le m,\beta'\le\beta\\|\beta'|\le|\beta|/2}}+\sum_{\substack{n\le m,\beta'\le\beta\\|\beta'|>|\beta|/2}}\Big)C_{\beta}^{\beta'}C_m^n Z^{n+\beta'}Wq(\phi)_{\ne} Z^{m-n+\beta-\beta'}\Gamma(g_0,g_0)\\
    &+\Big(\sum_{\substack{n\le m,\beta'\le\beta\\|\beta'|\le|\beta|/2}}+\sum_{\substack{n\le m,\beta'\le\beta\\|\beta'|>|\beta|/2}}\Big)C_{\beta}^{\beta'}C_m^n Z^{n+\beta'}q(\phi)_{\ne}W Z^{m-n+\beta-\beta'}\Gamma(g_0,g_0).
\end{align*}
Based on these decompositions, we split $N[\Gamma]_{\iota;j)}^0$ into four parts:
\begin{align*}
    N[\Gamma]_{\iota;j)}^0=N[\Gamma]_{\iota;j),1}^{0;{\rm LH}}+N[\Gamma]_{\iota;j),1}^{0;{\rm HL}}+N[\Gamma]_{\iota;j),2}^{0;{\rm LH}}+N[\Gamma]_{\iota;j),2}^{0;{\rm HL}}, \quad j=1,2,
\end{align*}
here the last subscript 1 indicates $W$ hits on $q(\phi)$, and the last subscript 2 indicates $W$ hits on $\Gamma(g_0,g_0)$. 

For $N[\Gamma]_{\iota;j),1}^{0,{\rm LH}}$ and $N[\Gamma]_{\iota;j),2}^{0,{\rm LH}}$, $q(\phi)$ is at low frequency, hence can be used to absorb the extra time grow factor $w_{\iota}(t)$. Moreover, noting that $q(\phi)$ is independent of $v$ variable, Lemma \ref{lem-NL-collision} still  applies to the estimates of $N[\Gamma]_{\iota;j),1}^{0,{\rm LH}}$ and $N[\Gamma]_{\iota;j),2}^{0,{\rm LH}}$. Indeed, similar to \eqref{est-NGamma-iota-ne}, we have
\begin{align}\label{est-NGamma-iota-0-LH}
    \nn&\sum_{j=1}^2\Big(N[\Gamma]_{\iota;j),1}^{0;{\rm LH}}+N[\Gamma]_{\iota;j),2}^{0;{\rm LH}}\Big)\\
    \le\nn& C \Big(w_{\iota}(t)\|\nb_x\phi\|_{\mathcal{G}^{\lm,\fr{N}{2}}_s}\Big)\mathcal{E}_{\ell}(g_0(t))^{\fr12}\Big(\nu^{\fr13}\mathcal{D}_{\ell}(g_0(t))^{\fr12}\mathcal{D}_{\ell-2\iota}(g_{\iota\ne}(t))^{\fr12}\Big)\\
    \le &C \Big(w_{\iota}(t)\|\rho\|_{\mathcal{G}^{\lm,\fr{N}{2}}_s}\Big)\mathcal{E}_{\ell}(g(t))^{\fr12}\Big(\nu^{\fr13}\mathcal{D}_{\ell}(g(t))^{\fr12}\mathcal{D}_{\ell-2\iota}(g_{\iota\ne}(t))^{\fr12}\Big).
\end{align}

Finally, we  turn to treat $N[\Gamma]_{\iota;j),1}^{0,{\rm HL}}$ and $N[\Gamma]_{\iota;j),2}^{0,{\rm HL}}$. When $W$ hits on $q(\phi)$, the estimates are easier, so we only give the details when $W$ hits on $\Gamma(g_0,g_0)$. Instead of  integrating by part over the $v$ variable, recalling \eqref{def-Gamma}, and noting that $\Phi^{ij}=\Phi^{ji}$, we directly write
\begin{align}\label{eq-Gammag0g0}
    \Gamma(g_0,g_0)=\nn&\big(\Phi^{ij}*(\mu^{\fr12}g_0)\big)\pr_{v_iv_j}g_0-2\big(\Phi^{ij}*(v_i\mu^{\fr12}g_0)\big)\pr_{v_j}g_0\\
    &-\big(\Phi^{ij}*(\mu^{\fr12}\pr_{v_iv_j}g_0)\big)g_0+2\big(\Phi^{ij}*(v_i\mu^{\fr12}\pr_{v_j}g_0)\big)g_0,
\end{align}
and hence for $0\le|\al|\le2$,
\begin{align}\label{eq-WZGammag0g0}
    \nn&\pr_v^{\al}Z^{m-n+\beta-\beta'}\Gamma(g_0,g_0)\\
    =\nn&\sum_{\substack{\al'\le\al\\ \al''\le\al'}}C_{\al}^{\al'}C_{\al'}^{\al''}\sum_{\substack{n'\le m-n\\ \beta''\le\beta-\beta'}}C_{\beta-\beta'}^{\beta''}C_{m-n}^{n'}\sum_{\substack{n''\le n'\\\beta'''\le\beta''}}C^{\beta'''}_{\beta''}C^{n''}_{n'}\\
    \nn&\times\Phi^{ij}*\big(\pr_v^{\al''}Z^{n''+\beta'''}\mu^{\fr12}\pr_v^{\al'-\al''}Z^{n'-n''+\beta''-\beta'''}g_0\big)\pr_{v}^{\al-\al'}Z^{m-n-n'+\beta-\beta'-\beta''}\pr_{v_iv_j}g_0\\
    &+{\rm similar\ terms}.
\end{align}
It is worth pointing out that at least one $v_l$ derivative hits on one of $g_0$ for each term on the right-hand side of \eqref{eq-Gammag0g0} with $l\in\{1,2,3\}$, so does for each term on the right-hand side of \eqref{eq-WZGammag0g0}. Furthermore, the fact $\iota\ge1$ under consideration enables us to use \eqref{coercive} to bound one of the $g_0$ terms, which has at least one $v_l$ derivative. Moreover precisely,
similar to \eqref{est-Gamma1s}--\eqref{est-convolution}, we get
\begin{align*}
    &\big|\Phi^{ij}*\big(\pr_v^{\al''}Z^{n''+\beta'''}\mu^{\fr12}\pr_v^{\al'-\al''}Z^{n'-n''+\beta''-\beta'''}g_0\big)\big|\\
    \le&C \fr{\Gamma_s(|n''|)}{C_{|n''|}^{n''}}\la v\ra^{-1}\big\|\mu^{\fr18}\pr_v^{\al'-\al''}Z^{n'-n''+\beta''-\beta'''}g_0\big\|_{L^2_v},
\end{align*}
and analogous inequalities hold for all the rest convolutions on the right-hand side of \eqref{eq-WZGammag0g0}. Thus,
\begin{align*}
    &\left\la  Z^{n+\beta'}q(\phi)_{\ne}\pr_{v}^\al Z^{m-n+\beta-\beta'}\Gamma(g_0,g_0),Wg^{(m+\beta)}_{\iota\ne}\right\ra_{\ell-2|\al|-2\iota}\\
    \le&C\sum_{\substack{n'\le m-n\\ \beta''\le\beta-\beta'}}C_{m-n}^{n'}\sum_{\substack{n''\le n'\\\beta'''\le\beta''}}C^{n''}_{n'}\fr{\Gamma_s(|n''|)}{C_{|n''|}^{n''}} \|Z^{n+\beta'}q(\phi)\|_{L^2_x}\big\|W g_{\iota\ne}^{(m+\beta)}\big\|_{L^2_{x,v}(\ell-2|\al|-2\iota)}\\
    &\times\sum_{\substack{\al'\le\al\\ \al''\le\al'}}\Big[\big\|\mu^{\fr18}\pr_{v}^{\al'-\al''}Z^{n'-n''+\beta''-\beta'''}g_0\big\|_{L^2_v}\big\|Z^{m-n-n'+\beta-\beta'-\beta''}\nb_v^2\pr_v^{\al-\al' }g_0\big\|_{L^2_v(\ell-2|\al|-2\iota-1)}\\
    &+\big\|\mu^{\fr18}\pr_{v}^{\al'-\al''}Z^{n'-n''+\beta''-\beta'''}g_0\big\|_{L^2_v}\big\|Z^{m-n-n'+\beta-\beta'-\beta''}\nb_v\pr_v^{\al-\al' }g_0\big\|_{L^2_v(\ell-2|\al|-2\iota-1)}\\
    &+\big\|\mu^{\fr18}\pr_v^{\al'-\al''}Z^{n'-n''+\beta''-\beta'''}\nb_v^2g_0\big\|_{L^2_v}\big\|Z^{m-n-n'+\beta-\beta'-\beta''}\pr_v^{\al-\al'} g_0\big\|_{L^2_v(\ell-2|\al|-2\iota-1)}\\
    &+\big\|\mu^{\fr18}\pr_v^{\al'-\al''}Z^{n'-n''+\beta''-\beta'''} \nb_vg_0\big\|_{L^2_v}\big\|Z^{m-n-n'+\beta-\beta'-\beta''}\pr_v^{\al-\al' }g_0\big\|_{L^2_v(\ell-2|\al|-2\iota-1)}\Big]\\
    \le&C\sum_{\substack{n'\le m-n\\ \beta''\le\beta-\beta'}}C_{m-n}^{n'}\sum_{\substack{n''\le n'\\\beta'''\le\beta''}}C^{n''}_{n'}\fr{\Gamma_s(|n''|)}{C_{|n''|}^{n''}} \|Z^{n+\beta'}q(\phi)\|_{L^2_x}\big\|W g_{\iota\ne}^{(m+\beta)}\big\|_{L^2_{x,v}(\ell-2|\al|-2\iota)}\\
    &\times \sum_{\substack{\al'\le\al\\ \al''\le \al'}}\Big[\big\|\pr_v^{\al'-\al''}Z^{n'-n''+\beta''-\beta'''}g_0\big\|_{L^2_v(\ell-2|\al'-\al''|)}\big|Z^{m-n-n'+\beta-\beta'-\beta''}\nb_v\pr_v^{\al-\al'} g_0\big|_{\sig,\ell-2|\al-\al'|}\\
    &+\big\|\pr_v^{\al'-\al''}Z^{n'-n''+\beta''-\beta'''} g_0\big\|_{L^2_v(\ell-2|\al'-\al''|)}\big|Z^{m-n-n'+\beta-\beta'-\beta''}\pr_v^{\al-\al'} g_0\big|_{\sig,\ell-2|\al-\al'|}\\
    &+\big|\pr_v^{\al'-\al''}Z^{n'-n''+\beta''-\beta'''}\nb_v g_0\big|_{\sig,\ell-2|\al'-\al''|}\big\|Z^{m-n-n'+\beta-\beta'-\beta''}\pr_v^{\al-\al'} g_0\big\|_{L^2_v(\ell-2|\al-\al'|)}\\
    &+\big|\pr_v^{\al'-\al''}Z^{n'-n''+\beta''-\beta'''} g_0\big|_{\sig,\ell-2|\al'-\al''|}\big\|Z^{m-n-n'+\beta-\beta'-\beta''}\pr_v^{\al-\al'}g_0\big\|_{L^2_v(\ell-2|\al-\al'|)}\Big].
\end{align*}
Consequently,
\begin{align*}
&\nu w_{\iota}(t)\sum_{m\in\N^6,|\beta|\le N}\kappa^{2|\beta|}a_{m,\lm,s}^2(t)\sum_{\substack{n\le m,\beta'\le\beta\\|\beta'|>|\beta|/2}}C_{\beta}^{\beta'}C_m^n\\
&\times\left\la  Z^{n+\beta'}q(\phi)_{\ne}\pr_{v}^\al Z^{m-n+\beta-\beta'}\Gamma(g_0,g_0),Wg^{(m+\beta)}_{\iota\ne}\right\ra_{\ell-2|\al|-2\iota}\\
\le&C\nu w_{\iota}(t)\sum_{m\in\N^6,|\beta|\le N}\kappa^{2|\beta|}\sum_{\substack{n\le m,\beta'\le\beta\\|\beta'|>|\beta|/2}}\sum_{\substack{n'\le m-n\\ \beta''\le\beta-\beta'}}\sum_{\substack{n''\le n'\\\beta'''\le\beta''}}b_{m,n,n',n'',s}\sum_{\substack{\al'\le\al\\ \al''\le\al'}}\\
&\times \Big(a_{n,\lm,s}(t)\|Z^{n+\beta'}q(\phi)\|_{L^2_x}\Big)\lm^{|n''|}(t)\\
&\times\bigg[\Big(a_{n'-n'',\lm,s}(t)\big\|Z^{n'-n''+\beta''-\beta'''}\pr_v^{\al'-\al''}g_0\big\|_{L^2_v(\ell-2|\al'-\al''|)}\Big)\\
    &\times a_{m-n-n',\lm,s}(t)\big|Z^{m-n-n'+\beta-\beta'-\beta''}\la \nb_v\ra\pr_v^{\al-\al'} g_0\big|_{\sig,\ell-2|\al-\al'|}\\
    &+\Big(a_{n'-n'',\lm,s}(t)\big|Z^{n'-n''+\beta''-\beta'''}\la\nb_v\ra \pr_v^{\al'-\al''}g_0\big|_{\sig,\ell}\Big)\\
    &\times a_{m-n-n',\lm,s}(t)\big\|Z^{m-n-n'+\beta-\beta'-\beta''}\pr_v^{\al-\al'} g_0\big\|_{L^2_v(\ell-2|\al-\al'|)}\bigg]\\
    &\times a_{m,\lm,s}(t)\big\|W g_{\iota\ne}^{(m+\beta)}\big\|_{L^2_{x,v}(\ell-2|\al|-2\iota)}\\
    \le&C\nu\Big(w_{\iota}(t)\|q(\phi)\|_{\mathcal{G}^{\lm,N}_s}\Big)\|W g_{\iota\ne}\|_{\mathcal{G}^{\lm,N}_{s,\ell-2|\al|-2\iota}}\sum_{\substack{\al'\le\al\\ \al''\le\al'}}\Big(\|\pr_v^{\al'-\al''}g_0\|_{\mathcal{G}^{\lm,\fr{N}{2}}_{s,\ell-2|\al'-\al''|}}\|\la\nb_v\ra\pr_v^{\al-\al'} g_0\|_{\mathcal{G}^{\lm,\fr{N}{2}}_{s,\sig,\ell-2|\al-\al'|}}\\
    &+\|\la\nb_v\ra\pr_v^{\al'-\al''} g_0\|_{\mathcal{G}^{\lm,\fr{N}{2}}_{s,\sig,\ell-2|\al'-\al''|}}\|\pr_v^{\al-\al'} g_0\|_{\mathcal{G}^{\lm,\fr{N}{2}}_{s,\ell-2|\al-\al'|}}\Big).
\end{align*}
Similarly, 
\begin{align*}
&\nu w_{\iota}(t)\sum_{m\in\N^6,|\beta|\le N}\kappa^{2|\beta|}a_{m,\lm,s}^2(t)\sum_{\substack{n\le m,\beta'\le\beta\\|\beta'|>|\beta|/2}}C_{\beta}^{\beta'}C_m^n\\
&\times\left\la  Z^{n+\beta'}W q(\phi)_{\ne} Z^{m-n+\beta-\beta'}\Gamma(g_0,g_0),Wg^{(m+\beta)}_{\iota\ne}\right\ra_{\ell-2|\al|-2\iota}\\
    \le&C\nu\Big(w_{\iota}(t)\|\nb_x\phi\|_{\mathcal{G}^{\lm,N}_s}\Big)\|W g_{\iota\ne}\|_{\mathcal{G}^{\lm,N}_{s,\ell-2|\al|-2\iota}}\Big(\|g_0\|_{\mathcal{G}^{\lm,\fr{N}{2}}_{s,\ell}}\|\la\nb_v\ra g_0\|_{\mathcal{G}^{\lm,\fr{N}{2}}_{s,\sig,\ell}}\Big).
\end{align*}
It follows from the above two inequalities that
\begin{align}\label{est-NGamma-iota-0-HL}
    \nn&\sum_{j=1}^2\Big(N[\Gamma]^{0;{\rm HL}}_{\iota;j),1}+N[\Gamma]^{0;{\rm HL}}_{\iota;j),2}\Big)\\
    \le& C\Big(w_{\iota}(t)\|\rho\|_{\mathcal{G}^{\lm,N}_s}\Big)\mathcal{E}_{\ell}(g(t))^{\fr12}\Big(\nu^{\fr23} \mathcal{D}_\ell(g(t))^{\fr12}\Big)\Big( w_{\iota}(t)\mathcal{E}_{\ell-2\iota}(g_{\ne}(t))^{\fr12}\Big).
\end{align}
Collecting the estimates \eqref{est-NGamma01}--\eqref{est-NGamma-iota-0-LH}, and \eqref{est-NGamma-iota-0-HL}, we obtain \eqref{est-nonlinear-collision}.
\end{proof}

Now  we are in a position to improve the bootstrap hypothesis \eqref{bd-en}.
\begin{prop}\label{prop-g-nonlinear}
    Assume that \eqref{res-final-1}--\eqref{restriction: lm-kappa-nu}  hold. Under the bootstrap hypotheses \eqref{bd-en}--\eqref{H-phi}, for $t\in[0,T]$ with $T\le\nu^{-\fr12}$, we have
    \begin{align}\label{improve-H-en}
        \mathcal{E}(g(t))+\int_0^T\fr14\nu^{\fr13}\mathcal{D}(g(t))+\mathcal{CK}(g(t))dt\le \mathcal{E}(g(0))+\fr{1}{50}{C_g}\eps^2+C\eps^3.
    \end{align}
\end{prop}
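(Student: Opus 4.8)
The strategy is a standard energy-integration argument combined with the paraproduct-type nonlinear bounds established in Lemmas \ref{lem-Linear contributions}--\ref{lem-nonlinear-collision}, together with the density estimates \eqref{bd-rho-1}--\eqref{bd-rho-2} that follow from the bootstrap hypotheses. The plan is to integrate the differential energy inequality \eqref{en-ineq-Linear-g} of Proposition \ref{prop-g} in time from $0$ to $T\le\nu^{-1/2}$, and to show that every term on the right-hand side is either bounded by $\mathcal{E}(g(0))$, or is a ``good'' term controllable by $C\eps^3$ plus a small multiple of the left-hand side (to be absorbed), or is a ``clean'' term of the form (small constant)$\cdot\eps^2$, whose total contribution we arrange to be at most $\tfrac{1}{50}C_g\eps^2$. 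First I would recall that under \eqref{bd-en}--\eqref{H-phi} one has $\mathcal{E}(g(t))\le 8C_g\eps^2$, $\|\rho\|$-type quantities are $O(\eps)$ in the relevant time-weighted norms by \eqref{bd-rho-1}--\eqref{bd-rho-2}, $\|E(t)\|_{L^2_x}\lesssim \eps e^{-c_0\lambda_\infty\langle t\rangle^s}$ and $\|\phi\|_{\mathcal{G}^{\lambda,N}_s}\le\tfrac12$ (improved to $\tfrac14$ by Proposition \ref{lem-Linfty-rho}), and that $\mathcal{E}(g(0))\le C\eps^2$ with constant independent of all parameters (from the definition of the $\mathcal{G}^{\lambda_{\mathrm{in}},N}_{s,\ell}$-norm of $f_{\mathrm{in}}$ and the relation $g=e^\phi f$ at $t=0$).

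\textbf{Treatment of the individual contributions.} The linear source terms $L[\mu]_{\iota;1)}+L[\mu]_{\iota;2)}$, by Lemma \ref{lem-Linear contributions}, are bounded by $C(\sqrt{\mathrm{A}_0}+1)\|\rho\|_{\mathcal{G}^{\lambda,N}_s}$ times factors of the form $\mathcal{CK}^{1/2}$, $\mathcal{E}^{1/2}$ and $\langle t\rangle^{-(1+a)/2}\mathcal{E}^{1/2}$; using $\|\rho\|\lesssim\eps$ (from \eqref{Linfty-rho}) together with the time-weighted bound \eqref{bd-rho-1} (which supplies the $\langle t\rangle^{(3+a-s)/2}$-weight needed to beat the $w_\iota(t)\langle t\rangle^{(1+a-s)/2}$ growth since $w_\iota(t)\lesssim(\nu^{1/3}\langle t\rangle)^{\iota/2}$ and $t\le\nu^{-1/2}$), Cauchy--Schwarz in $t$ yields a bound of the shape $C\eps\big(\int_0^T\mathcal{CK}\big)^{1/2}\big(\int_0^T\mathcal{CK}\big)^{1/2}+C\eps\sup_t\mathcal{E}\lesssim C\eps\cdot\eps^2 = C\eps^3$ after invoking \eqref{bd-en}. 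The term $N[\partial_t\phi]$, by Lemma \ref{lem-pr_tphi}, is controlled in the same manner using $\|M(t)\|\lesssim\eps$ and \eqref{bd-rho-1}. The transport terms $N[E]$, by Lemma \ref{lem-transport}, are the most delicate: one has factors like $\langle t\rangle^{a+2}\|\langle\nabla_x\rangle^2\rho\|_{\mathcal{G}^{\lambda,N/2}_s}\mathcal{CK}_\ell(g)$ and $\langle t\rangle^{\iota+\frac{1+a}{2-s}+\frac s2}\||\nabla_x|^{s/2}\rho\|$ times products of $\mathcal{CK}^{1/2}$ and dissipation factors $\nu^{1/6}\mathcal{D}^{1/2}$; here one uses the rapid decay $\|\langle\nabla_x\rangle^2\rho(t)\|_{\mathcal{G}^{\lambda,N/2}_s}\lesssim\eps\langle t\rangle^{-N/2}$ from \eqref{decay-rhoM} to absorb the polynomial $t$-growth (recall $N\ge 2b+12$ is large), and uses $s>1/2$ plus $t\le\nu^{-1/2}$ exactly as in the heuristic of Section \ref{sec:density decomposition} to ensure the powers of $\nu$ and $t$ combine into a bounded quantity; after Cauchy--Schwarz in $t$ these all become $C\eps^3$ or small multiples of $\int_0^T(\nu^{1/3}\mathcal{D}+\mathcal{CK})$. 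Finally $N[\Gamma]$, by Lemma \ref{lem-nonlinear-collision}, is bounded by $C\mathcal{E}_\ell^{1/2}\cdot\nu^{1/3}\mathcal{D}$ plus similar terms; since $\mathcal{E}_\ell^{1/2}\lesssim\sqrt{C_g}\eps$ is small, this is absorbed into $\tfrac18\nu^{1/3}\int_0^T\mathcal{D}$ for $\eps$ small. The lower-order terms $\nu\mathrm{A}_0\|E\|_{L^2_x}^4$, $\tfrac{\sqrt{\mathrm{B}_0}}{\mathrm{A}_0}(\sqrt{\mathrm{B}_0}\|g\|_{L^2})(\nu\mathrm{A}_0\|g\|_\sigma^2)$, $\mathrm{B}_0\|\partial_t\phi\|_{L^\infty_x}\|g\|_{L^2}^2$ and $\mathrm{B}_0\|E\|_{L^2_x}\|g\|_{L^2}$ are all handled by: $\|E\|_{L^2_x}^4\lesssim\eps^4 e^{-4c_0\lambda_\infty\langle t\rangle^s}$ is integrable in $t$ with $\int_0^\infty\lesssim\eps^4$; $\|\partial_t\phi\|_{L^\infty_x}\lesssim\|M\|_{H^3}\lesssim\eps\langle t\rangle^{-N/2}$ is integrable; $\|g\|_{L^2}^2\lesssim\eps^2$; and the cubic collision term is $\lesssim\tfrac{1}{\mathrm{A}_0}\eps\cdot\nu^{1/3}\mathcal{D}$, absorbed for $\mathrm{A}_0$ large. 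Choosing $\eps_0$ small enough (depending on $C_g$, $\mathrm{A}_0$, $\mathrm{B}_0$, and all the constants produced by the lemmas) so that all the ``$C\eps^3$'' and absorbed pieces are controlled, and collecting the genuinely $O(\eps^2)$ pieces (which come with small universal prefactors or with the exponentially/polynomially decaying weights whose time-integrals are finite and small), gives \eqref{improve-H-en}.

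\textbf{Main obstacle.} The delicate point is the bookkeeping of the transport contributions $N[E]^0_{\iota;j)}$ involving $g_0$ at high frequency (the terms $T^{\rm LH}_{\iota;0}[E]^{m,\beta}_{1);d}$ in the proof of Lemma \ref{lem-transport}), where a genuine loss of $v$-derivative on $g_0$ is traded against an interpolation between the dissipation $\mathcal{D}_\ell(g_0)$ and the CK term $\mathcal{CK}_\ell(g_0)$, weighted by powers $\langle t\rangle^{\iota+\frac{1+a}{2-s}+\frac s2}$ of time; one must verify that after Cauchy--Schwarz in $t$ on $[0,\nu^{-1/2}]$ the combination $\langle t\rangle^{\iota+\frac{1+a}{2-s}+\frac s2}\cdot\nu^{\iota/3}\cdot(\text{dissipation}\cdot\text{CK weights})$ stays bounded — this is precisely where the restrictions $s>\tfrac12$, $a<s-\tfrac12$, $b\ge[\ell/3]/2+2$ and the largeness of $N$ in \eqref{res-final-1}--\eqref{res-final-2} are consumed. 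I would carry out this estimate first and in detail, then treat everything else, which is comparatively routine, by the absorption/smallness mechanism described above.
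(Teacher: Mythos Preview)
Your overall plan---integrate \eqref{en-ineq-Linear-g} from Proposition \ref{prop-g} and bound each right-hand side contribution using Lemmas \ref{lem-Linear contributions}--\ref{lem-nonlinear-collision} together with the density bounds \eqref{bd-rho-1}--\eqref{bd-rho-2}---is exactly the paper's approach, and your treatment of the nonlinear pieces $N[\pr_t\phi]$, $N[E]$, $N[\Gamma]$ and of the lower-order terms $\nu\mathrm{A}_0\|E\|^4$, $\mathrm{B}_0\|\pr_t\phi\|_{L^\infty}\|g\|^2$ is correct in substance.

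There is, however, a genuine accounting error in your treatment of the linear source terms. The bound \eqref{est-Lmu} from Lemma \ref{lem-Linear contributions} has the schematic form
\[
\big(w_\iota\langle t\rangle^{\frac{1+a-s}{2}}\|\rho\|_{\mathcal{G}^{\lm,N}_s}\big)\cdot\big(w_\iota\,\mathcal{CK}^{1/2}\big)
\ +\ \|\rho\|_{\mathcal{G}^{\lm,N}_s}\cdot\mathcal{E}_\ell^{1/2},
\]
which after Cauchy--Schwarz in $t$ yields $\|w_\iota\langle t\rangle^{\frac{1+a-s}{2}}\rho\|_{L^2_t}\cdot\big(\int w_\iota^2\,\mathcal{CK}\big)^{1/2}$, i.e.\ one factor of $\eps$ from the $\rho$-estimate and one factor of $\sqrt{C_g}\,\eps$ from the bootstrap on $g$---giving $O(\eps^2)$, not $O(\eps^3)$. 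Your displayed bound ``$C\eps(\int\mathcal{CK})^{1/2}(\int\mathcal{CK})^{1/2}$'' has one $\mathcal{CK}^{1/2}$ factor too many. The same applies to the term $C\mathrm{B}_0\|E\|_{L^2_x}\|g\|_{L^2_{x,v}}$ in \eqref{en-ineq-Linear-g}, which after time-integration is $C\sqrt{\mathrm{B}_0}\sqrt{C_g}\,\eps^2$. These contributions carry the large constants $\sqrt{\mathrm{A}_0}$ and $\sqrt{\mathrm{B}_0}$, so they are not ``small universal prefactors'' in any direct sense.

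The paper's mechanism for controlling these genuinely $O(\eps^2)$ terms is the \emph{hierarchy of bootstrap constants}: the constant $C_0$ appearing in the density hypothesis \eqref{H-rho1} (and implicitly the unit constants in \eqref{bd-rho2-1}--\eqref{bd-rho2-2}) must be chosen so that $C_0\ll \tfrac{1}{\mathrm{B}_0}C_g$ and $C_0\ll\tfrac{\kappa^2}{\mathrm{A}_0}C_g$, i.e.\ $C_g$ is taken large relative to the $\rho$-constants. This is precisely what forces the linear contributions below $\tfrac{1}{50}C_g\eps^2$ and explains why that specific term appears on the right of \eqref{improve-H-en}. Your ``Main obstacle'' paragraph focuses on the transport interactions with $g_0$, but those are genuinely trilinear and give $O(\eps^3)$ (see \eqref{est-g-nonlinear3}); the real constraint on the constants comes from the linear terms you mishandled.
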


\begin{proof}
We first deal with the linear terms. Clearly,
\begin{align}\label{L1-rho}
\int_0^T\|(\rho, M)\|_{\mathcal{G}^{\lm,N}_s}dt\les \big\|\la t\ra^{\fr{3+a-s}{2}}(\rho,M)\big\|_{L^2_t\mathcal{G}^{\lm,N}_s}.
\end{align}
Then by \eqref{bd-rho-1} and bootstrap hypothesis \ref{bd-en}, we have
\begin{align}\label{est-g-linear1}
    C{\rm B}_0\int_0^T \|E\|_{L^2_x} \|g\|_{L^2_{x,v}}dt\le\nn&C\sqrt{{\rm B}_0} \sup_t\mathcal{E}(g(t))^{\fr12}\int_0^T\big\|\rho\big\|_{L^2_x}dt\\
    \le& C\sqrt{{\rm B}_0}\sqrt{{\rm C}_g}\sqrt{{\rm C}_0}\eps^2\le \fr{1}{100}C_g\eps^2,
\end{align}
provided ${\rm C}_0\ll \fr{1}{{\rm B}_0}{\rm C}_g$. Moreover, by  \eqref{bd-rho-1}, \eqref{bd-rho-2}, and \eqref{decay-rho-recursion}, we infer from \eqref{est-Lmu} in Lemma \ref{lem-Linear contributions} that
\begin{align}\label{est-g-linear2}
    \nn&2\sum_{\iota=0}^{[\ell/3]+1}\int_0^T\Big[L[\mu]_{\iota;1)}+L[\mu]_{\iota;2)}\Big]dt\\
    \le\nn& C\sqrt{{\rm A}_0}\sup_{t\le T}\mathcal{E}_{\ell}(g(t))^{\fr12}\int_0^T\|\rho\|_{\mathcal{G}^{\lm,N}_{s}}dt\\
    \nn&+C\sqrt{{\rm A}_0}\sum_{\iota=1}^{[\ell/3]+1}\Bigg[\Big(\int_0^Tw_{\iota}^2(t)\mathcal{CK}_{\ell-2\iota}(g_{\ne}(t))dt\Big)^{\fr12}+\sup_t\Big(w^2_{\iota}(t)\mathcal{E}_{\ell-2\iota}(g_{\ne}(t))\Big)^{\fr12}\Bigg]\\
    \nn&\times \kappa^{-1}\big\|\langle t\rangle^{\frac{1+a-s}{2}}w_{\iota}(t)\rho\big\|_{L^2_t\mathcal{G}_s^{\lambda,N}}\\
    \le&C\kappa^{-1}\sqrt{{\rm A}_0}\sqrt{{\rm C}_{g}{\rm C}_{0}} \eps^2\le \fr{1}{100}{\rm C}_g\eps^2,
\end{align}
as long as ${\rm C}_0\ll \fr{\kappa^{2}}{{\rm A}_0}{\rm C}_g$.

Next we turn to estimate the nonlinear terms. Using \eqref{pt-phi}, we find that
\begin{align*}
    \|\pr_t\phi\|_{L^\infty_x}\les\big\||\nb_x|^2\pr_t\phi\big\|_{L^2_x}\les\|\nb_x\cdot M\|_{L^2_x}\les \kappa^{-1}\|M\|_{\mathcal{G}^{\lm,N}_s}.
\end{align*}
Combining this with \eqref{L1-rho} and \eqref{Linfty-rho}, we are led to
\begin{align}\label{est-g-nonlinear1}
    \nn& \int_0^T C\nu {\rm A}_0\|E\|_{L^2_x}^4+C \fr{\sqrt{{\rm B}_0}}{{\rm A}_0}\big(\sqrt{{\rm B}_0}\|g\|_{L^2_{x,v}}\big)\big(\nu {\rm A}_0\|g\|_{\sig}^2\big)+{\rm B}_0\|\pr_t\phi\|_{L^\infty_x}\|g\|_{L^2_{x,v}}^2dt\\
    \les\nn&\|\rho\|_{L^\infty_tL^2_x}^3\int_0^T\|\rho\|_{L^2_x}dt+\sup_t\mathcal{E}(g(t))^{\fr12}\Big(\nu^{\fr13}\int_0^T\mathcal{D}_\ell(g(t))dt\Big)\\
    &+\sup_{t}\mathcal{E}(g(t))\int_0^T\|M\|_{\mathcal{G}^{\lm,N}_s}dt\les \eps^3.
\end{align}

For the rest terms on the right hand side of  \eqref{en-ineq-Linear-g}, using \eqref{bd-rho-1}, \eqref{bd-rho-2}, and \eqref{Linfty-rho} again, we infer from  Lemmas \ref{lem-pr_tphi}--\ref{lem-nonlinear-collision} that
\begin{align}\label{est-g-nonlinear2}
     \nn&2\sum_{\iota=0}^{[\ell/3]+1}\int_0^T N[\pr_t\phi]_{\iota;1)}+N[\pr_t\phi]_{\iota;2)}dt\\
     \le \nn&C\sup_{t\le T}\mathcal{E}_{\ell}(g(t))\int_0^T\|M(t)\|_{\mathcal{G}^{\lm,N}_s}dt+C\sup_t\mathcal{E}_\ell(g(t))^{\fr12}\sum_{\iota=1}^{[\ell/3]+1}\big\|\la t\ra^{\fr{1+a-s}{2}}w_{\iota}(t)M\big\|_{L^2_t\mathcal{G}^{\lm,N}_s}\\
     &\times\Big[\Big(\int_0^T\mathcal{CK}_{\ell-2\iota}(g_{\iota\ne}(t))dt\Big)^{\fr12}+\sup_t\mathcal{E}_{\ell-2\iota}(g_{\iota\ne}(t))^{\fr12}\Big]\les\eps^3,
 \end{align}

 \begin{align}\label{est-g-nonlinear3}
     \nn&2\sum_{\iota=0}^{[\ell/3]+1}\int_0^T N[E]_{\iota;1)}+N[E]_{\iota;2)}dt\\
     \les 
     \nn&\Big(\big\|\la t\ra^{\fr{3+a-s}{2}}\rho\big\|_{L^2_t\mathcal{G}^{\lm,N}_s}+\max_{1\iota\le[\ell/3]+1}\big\|\la t\ra^{\fr{1+a-s}{2}}w_{\iota}(t)\rho\big\|_{L^2_t\mathcal{G}^{\lm,N}_s}\Big)\\
     \nn&\times\Big[\Big(\int_0^T\mathcal{CK}(g(t))dt\Big)^{\fr12}+\sup_{t\le T}\mathcal{E}(g(t))^{\fr12}\Big]\sup_{t\le T}\mathcal{E}(g(t))^{\fr12} \\
     &+\|\nb_x\rho\|_{L^\infty_t\mathcal{G}^{\lm,N}_s}\Big[\int_0^T\nu^{\fr13}\mathcal{D}(g(t))+\mathcal{CK}(g(t))dt+\sup_{t\le T}\mathcal{E}(g(t))\Big]\les \eps^3,
 \end{align}
 and
 \begin{align}\label{est-g-nonlinear4}
     \nn&2\sum_{\iota=0}^{[\ell/3]+1}\int_0^T N[\Gamma]_{\iota;1)}+N[\Gamma]_{\iota;2)}dt\\
     \les \nn&\nu^{\fr13}\int_0^T\mathcal{D}(g(t))dt\Big(1+\|\nb_x\rho\|_{L^\infty_t\mathcal{G}^{\lm,N}_s}\Big)\sup_{t\le T}\mathcal{E}(g(t))^{\fr12}\\
     &+\max_{1\le\iota\le[\ell/3]+1}\big\|\la t\ra^{\fr{1+a-s}{2}}w_{\iota}(t)\rho\big\|_{L^2_t\mathcal{G}^{\lm,N}_s}\Big(\nu^{\fr13}\int_0^T\mathcal{D}(g(t))dt\Big)^{\fr12}\sup_{t\le T}\mathcal{E}(g(t))\les \eps^3.
 \end{align}

 Now recalling \eqref{en-ineq-Linear-g}, collecting the estimates in \eqref{est-g-linear1}--\eqref{est-g-nonlinear4}, we obtain \eqref{improve-H-en}.
\end{proof}

\section{Linear estimates}\label{sec: Linear estimates}
In this section, we consider the linearized inhomogeneous Landau equation for $|k|\neq 0$:
\begin{align}\label{eq:linear-Landau}
    \begin{cases}
        \partial_t f+ik \cdot v f+\nu L f=0,\\
        f|_{t=0}=f_{\mathrm{in}}(v).
    \end{cases}
\end{align}
\subsection{Semigroup estimates and enhanced dissipation}\label{sec-semigroup est}
For $k\in\Z^3_*$, let us denote $f^{(n)}=\tl{Y}^nf$ with  $n\in \mathbb{N}^3$. Then similar to \eqref{eq:E_l^n}--\eqref{dissipation-l}, we introduce
\begin{align}\label{eq:varE_l^n}
\begin{aligned}
    \mathfrak{E}_{k,\ell}^{n}(f(t))=&{\rm A}_0\sum_{|\al|\le1}\left\|\la v\ra^{\ell-2|\al|}k^{\al}f^{(n)}\right\|_{L^2_{v}}^2+\kappa^2\nu^{\fr23}\left\|\la v\ra^{\ell-2}\nb_vf^{(n)}\right\|_{L^2_{v}}^2\\
    &+2\kappa\nu^\fr13\mathrm{Re}\,\left\la ik f^{(n)},\la v\ra^{2\ell-4}\nb_{v}f^{(n)} \right\ra_{v},
\end{aligned}
\end{align}

\begin{align}\label{def-varD-nl}
    \mathfrak{D}^{n}_{k,\ell}(f(t))={\rm A}_0\nu^{\fr23}\sum_{|\al|\le1}\left|k^\al f^{(n)}\right|_{\sig,\ell-2|\al|}^2+{  \kappa}|k|^2\left\|\la v\ra^{\ell-2}f^{(n)}\right\|_{L^2_{v}}^2+\kappa^2\nu^{\fr43}\left|\nb_vf^{(n)}\right|_{\sig,\ell-2}^2,
\end{align}

\begin{align}\label{def-varE}
    \mathsf{E}_{k,\ell}(f(t)):=\sum_{m\in\N^3}\sum_{\substack{\beta\in\N^3, |\beta|\le N}}\kappa^{2|\beta|}a_{m,\lm_L,s_L}^2\mathfrak{E}_{k,\ell}^{m+\beta}(f(t)).
\end{align}

\begin{align}\label{def-varD}
\mathsf{D}_{k,\ell}(f(t)):=\sum_{m\in\N^3}\sum_{\substack{\beta\in\N^3, |\beta|\le N}}\kappa^{2|\beta|}a_{m,\lm_L,s_L}^2\mathfrak{D}_{k,\ell}^{m+\beta}(f(t)),
\end{align}
with $a_{m,\lm_L,s_{L}}=\frac{\lm_L^{|m|}}{\Gamma_{s_L}(|m|)}C_{|m|}^m$, and $0< s_L<\fr23$.  

Moreover, similar to \eqref{total-en}--\eqref{total-CK}, we define 
\begin{align}\label{def-total-en}
    \mathsf{E}_{k,\ell}^{w}(f(t))=\sum_{\iota=0}^{[\ell/3]+1}w_{\iota}^2(t)\mathsf{E}_{k,\ell-2\iota}(f(t)),
\end{align}

\begin{align}
    \mathsf{D}^{w}_{k,\ell}(f(t))=\sum_{\iota=0}^{[\ell/3]+1}w_{\iota}^2(t)\mathsf{D}_{k,\ell-2\iota}(f(t)).
\end{align}

\begin{prop}\label{prop-semigroup}
For any $s_L\in (\fr12,\fr23)$, there is $\lambda_L$ such that the solution $f$ to \eqref{eq:linear-Landau} satisfies
    \begin{align}\label{semigroup-est}
        \mathsf{E}^{w}_{k,\ell}(f(t))+\fr14\nu^{\fr13}\int_0^t\mathsf{D}^{w}_{k,\ell}(f(t'))dt
        \les \mathsf{E}^{w}_{k,\ell}(f(0)).
    \end{align}
\end{prop}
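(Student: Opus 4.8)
\textbf{Proof proposal for Proposition \ref{prop-semigroup}.}

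The plan is to run a weighted hypocoercivity energy estimate in Gevrey class on the single-frequency linearized Landau equation \eqref{eq:linear-Landau}, mimicking the structure of the nonlinear argument but without the transport nonlinearity, the self-consistent field, or the $Y$-based weights. First I would fix $\ell$ large and, for each $\iota=0,1,\dots,[\ell/3]+1$, compute $\frac{d}{dt}\mathsf{E}_{k,\ell-2\iota}(f(t))$ by applying $\kappa^{|\beta|}a_{m,\lm_L,s_L}\tl{Y}^{m+\beta}$ to \eqref{eq:linear-Landau}, taking the appropriate weighted $L^2_v$ inner products dictated by \eqref{eq:varE_l^n}, and summing. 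The key commutation facts are that $\tl{Y}_i=\pr_{v_i}+ik_it$ commutes with $\pr_t+ik\cdot v$, and that $[\pr_{v_j},ik\cdot v]=ik_j$, which produces the hypocoercive cross term $2\kappa\nu^{1/3}\mathrm{Re}\la ik f^{(m+\beta)},\la v\ra^{2\ell-4}\nb_v f^{(m+\beta)}\ra_v$ in \eqref{eq:varE_l^n} (this is exactly the mechanism that converts the $|k|^2$ Poincaré-type gain into dissipation on $\nb_v f$). The collision terms $\nu\la \tl{Y}^{m+\beta}Lf,\dots\ra$ are handled by the Gevrey coercivity estimate of Lemma \ref{lem-coercive-L} (in the one-frequency $x$-Fourier form, with $Z^m$ replaced by $\tl{Y}^m$ and $\pr_x^\al$ replaced by $k^\al$); this gives, after choosing $\lm_L e\max\{M_\sig,M_\mu\}$ and $\kappa$ small enough, a lower bound of the form $c\nu^{1/3}\mathsf{D}_{k,\ell-2\iota}(f)$ minus a truncated low-velocity term $C_\zeta\sum a_{m,\lm_L,s_L}^2\|f^{(m+\beta)}\|^2_{L^2_v(|v|\le 2\zeta)}$.

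The next step is to absorb the truncated low-velocity error. Unlike in the nonlinear case, here $f$ need not have zero mean in $x$ (indeed $k$ is fixed nonzero), so the relevant Poincaré inequality is simply $\|f^{(m+\beta)}\|_{L^2_v(|v|\le2\zeta)}\lesssim |k|^{-1}\|k f^{(m+\beta)}\|_{L^2_v(|v|\le2\zeta)}\lesssim |k|^{-1}\|f^{(m+\beta)}\|_{\sig,\ell}$ once we note $k^\al$ with $|\al|=1$ is built into $\mathfrak{D}^{m+\beta}_{k,\ell}$; alternatively, for $|m+\beta|>0$ one writes $f^{(m+\beta)}=\tl{Y}_lf^{(m+\beta-e_l)}=(\pr_{v_l}+ik_lt)f^{(m+\beta-e_l)}$ and controls it by the $\nb_v$ and $k$ pieces of $\mathfrak{D}$ at lower order, exactly as in \eqref{est-low2}. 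In every case the error is dominated by $\frac18\nu^{1/3}\mathsf{D}_{k,\ell-2\iota}(f)$ after taking $\lm_L,\kappa,\nu_0$ small, using $\nu\le\nu^{1/3}$. Then I would handle the time-weighted sum exactly as in \eqref{dt-(1+t)}: the bad terms $\iota\kappa_0\nu^{1/3}(\kappa_0\nu^{1/3}(1+t))^{-1}w_\iota^2\mathsf{E}_{k,\ell-2\iota}$ produced when $\partial_t$ hits $w_\iota(t)=(\kappa_0\nu^{1/3}(1+t))^{\iota/2}$ telescope into $C\kappa_0\nu^{1/3}\sum_{\iota}w_\iota^2\mathsf{D}_{k,\ell-2\iota}$ via the one-frequency analogue of Lemma \ref{lem: E_l<D_l-2} (using $\|k f^{(n)}\|_{L^2_v(\ell-2\iota)}^2\lesssim \mathfrak{D}^n_{k,\ell-2\iota+2}$), and are absorbed by choosing $\kappa_0\ll\kappa/(\ell\mathrm{A}_0)$. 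The Cauchy–Kowalewskaya terms $-2\frac{\dot\lm_L}{\lm_L}\sum|m|a_{m}^2\mathfrak{E}$ are nonnegative (since $\lm_L$ decreasing) and simply get dropped on the right-hand side; one must only verify $\mathsf{E}_{k,\ell-2\iota}$ is coercive, i.e. the cross term in \eqref{eq:varE_l^n} is controlled by the other two via Cauchy–Schwarz and $2\kappa\nu^{1/3}|ab|\le \mathrm{A}_0\kappa^{-1}\cdots$, which holds once $\kappa\ll\mathrm{A}_0$.

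Putting the $\iota$-sum together gives $\frac{d}{dt}\mathsf{E}^w_{k,\ell}(f)+\frac14\nu^{1/3}\mathsf{D}^w_{k,\ell}(f)\le 0$, and integrating in time yields \eqref{semigroup-est}. I expect the main obstacle to be the careful bookkeeping of the Gevrey combinatorial coefficients $a_{m,\lm_L,s_L}$ and $b_{m,n,s_L}$ in the commutator estimates from Lemma \ref{lem-coercive-L}: one must check that the loss of radius incurred by $\Gamma_{s_L}(|m|)$ versus $\Gamma_{s_L}(|n|)\Gamma_{s_L}(|m-n|)$ (the inequality \eqref{losing ridus}) combines with the geometric decay $(\lm_L e\max\{M_\sig,M_\mu\})^{|n|}$ from the $\mu^{1/2}$ and $\sigma^{ij}$ derivative bounds to give a summable series uniformly in $k$ and $t$ — this is precisely why $s_L<2/3$ and $\lm_L$ small are needed, and it is the only place where the structure of the Landau kernel genuinely enters. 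The dependence of $\lm_L$ on $s_L$ (and on $N$, $M_\sig$, $M_\mu$) must be tracked so that the statement "there is $\lambda_L$" is meaningful; everything else is a direct transcription of the estimates already carried out for the nonlinear energy in Section \ref{sec: est f}, with all the genuinely hard terms (Vlasov transport, reaction, density) simply absent.
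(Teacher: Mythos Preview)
Your proposal is correct and follows essentially the same approach as the paper's proof, which likewise transcribes the nonlinear hypocoercivity argument of Proposition \ref{prop-g} to the single-mode linear equation, noting that the fixed nonzero $k$ removes the zero-mode complications in the low-velocity error absorption (cf.\ the paper's reference to \eqref{est-low3}). One minor correction: in this semigroup estimate the Gevrey radius $\lm_L$ is a \emph{time-independent} constant (see the definition of $a_{m,\lm_L,s_L}$ just after \eqref{def-varD}), so there are no CK terms whatsoever; your remark that they are ``nonnegative (since $\lm_L$ decreasing) and simply get dropped'' is harmless but stems from a misreading of the setup.
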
  
\begin{proof}
The energy inequality \eqref{semigroup-est} can be achieved by following the treatments of the linear part of Proposition \ref{prop-g} line by line. Here, we are considering a simpler case: the mode $k\in\Z^3_*$ is fixed, and hence the zero mode of $f$ will never be  involved when dealing with the lower order errors stemming from the coercive estimates for the Landau operator, see the last term of \eqref{est-low1}. We sketch the proof below.

Note first that for $m, \beta\in\N^3$, $f^{(m+\beta)}=\tl{Y}^{m+\beta}$ solves 
\begin{align*}
    \partial_t f^{(m+\beta)}+ik \cdot v f^{(m+\beta)}+\nu \tl{Y}^{m+\beta}L f=0.
\end{align*}
We denote $f_{[\iota]}=w_{\iota}(t)f$ for $\iota=0, 1, 2,\cdots, [\ell/3]+1$. In particular, $f_{[0]}=f$. Then similar to \eqref{en-pr_xg}, \eqref{en-pr_vg} and \eqref{en-cross}, we are led to
\begin{align}
  \nn&\fr{1}{2}\fr{d}{dt}\big\|\la v\ra^{\ell-2|\al|-2\iota}k^{\al}f_{[\iota]}^{(m+\beta)}\big\|_{L^2_{v}}^2+\nu\mathrm{Re}\,\big\la k^\al\tl{Y}^{m+\beta}L f_{[\iota]},\la v\ra^{2\ell-4|\al|-4\iota}k^\al f_{[\iota]}^{(m+\beta)} \big\ra_v\\
  =&\fr{\iota\kappa_0\nu^{\fr13}}{2}(\kappa_0\nu^{\fr13}(1+t))^{-1}\big\|\la v\ra^{\ell-2|\al|-2\iota}k^{\al}f_{[\iota]}^{(m+\beta)}\big\|_{L^2_{v}}^2,
    \end{align}

    \begin{align}
    \nn&\fr{1}{2}\fr{d}{dt}\big\|\la v\ra^{\ell-2-2\iota}\nb_vf_{[\iota]}^{(m+\beta)}\big\|_{L^2_{v}}^2+\mathrm{Re}\,\big\la ikf^{(m+\beta)}_{[\iota]},\la v\ra^{2\ell-4-4\iota} \nb_vf^{(m+\beta)}_{[\iota]}\big\ra_{v}\\
    \nn&+\nu\mathrm{Re}\,\big\la\tl{Y}^{m+\beta}\nb_vLf_{[\iota]},\la v\ra^{2\ell-4-4\iota}\nb_vf_{[\iota]}^{(m+\beta)} \big\ra_v\\
    =&\fr{\iota\kappa_0\nu^{\fr13}}{2}(\kappa_0\nu^{\fr13}(1+t))^{-1}\big\|\la v\ra^{\ell-2-2\iota}\nb_vf_{[\iota]}^{(m+\beta)}\big\|_{L^2_{v}}^2,
    \end{align}
and
    \begin{align}
    \nn&\fr{d}{dt}\mathrm{Re}\,\big\la ik f_{[\iota]}^{(m+\beta)},\la v\ra^{2\ell-4-4\iota}\nb_{v}f_{[\iota]}^{(m+\beta)} \big\ra_{v}+|k|^2\big\|\la v\ra^{\ell-2-2\iota}f_{[\iota]}^{(m+\beta)}\big\|_{L^2_v}^2
    \\
    =\nn&\iota\kappa_0\nu^{\fr13}(\kappa_0\nu^{\fr13}(1+t))^{-1}\mathrm{Re}\,\big\la ik f_{[\iota]}^{(m+\beta)},\la v\ra^{2\ell-4-4\iota}\nb_{v}f_{[\iota]}^{(m+\beta)} \big\ra_{v}\\
    &-\nu\mathrm{Re}\,\left(\big\la ik\tl{Y}^{m+\beta}Lf_{[\iota]},\la v\ra^{2\ell-4-4\iota}\nb_vf_{[\iota]}^{(m+\beta)}\big\ra_v+\big\la ikf_{[\iota]},\la v\ra^{2\ell-4-4\iota}\nb_v\tl{Y}^{m+\beta}Lf_{[\iota]}\big\ra_v\right).
    \end{align}
For the sake of presentation, we adapt the notations introduced in \eqref{est-dissip1}, \eqref{est-dissip2} and \eqref{def-dissip4}. More precisely,  we denote
\begin{align}
    {\rm Dis}_{k;1}(f_{[\iota]})=\nn&\nu {\rm A}_0\sum_{m\in\N^3}\sum_{\substack{\beta\in\N^3\\ |\beta|\le N}}\kappa^{2|\beta|}a_{m,\lm_L,s_L}^2\sum_{|\al|\le1}\mathrm{Re}\,\left\la \tl{Y}^{m+\beta} Lk^\al f_{[\iota]},k^\al f_{[\iota]}^{(m+\beta)}\la v\ra^{2\ell-4|\al|-4\iota}\right\ra_{v},
\end{align}
and the remaining terms ${\rm Dis}_{k;2}(f_{[\iota]})$ and ${\rm Dis}_{k;4}(f_{[\iota]})$ can be obtained by analogous modifications of ${\rm Dis}_{2}(g_{\iota*})$ and ${\rm Dis}_{4}(g_{\iota*})$. As mentioned above, the zero mode is excluded from the lower-order contributions because $k\ne0$. Thus, similar to \eqref{lower-Dis} and \eqref{est-low3}, we arrive at
\begin{align}\label{lb-dis}
    \nn&{\rm Dis}_{k;1}(f_{[\iota]})+{\rm Dis}_{k;2}(f_{[\iota]})+{\rm Dis}_{k;4}(f_{[\iota]})+\kappa\nu^{\fr13}|k|^2\|f_{[\iota]}\|_{\tl{\mathcal{G}}^{\lm_L,N}_{s_L,\ell}}^2\\
    \nn&+\kappa^2\nu^{\fr23}\sum_{m\in\N^3}\sum_{\substack{\beta\in\N^3\\ |\beta|\le N}}\kappa^{2|\beta|}a_{m,\lm_L,s_L}^2\mathrm{Re}\,\big\la ikf^{(m+\beta)}_{[\iota]},\la v\ra^{2\ell-4-4\iota} \nb_vf^{(m+\beta)}_{[\iota]}\big\ra_{v}\\
    \ge& \fr{\nu^{\fr13}}{4}\mathsf{D}_{k,\ell-2\iota}(f_{[\iota]}).
\end{align}
On the other hand, similar to \eqref{dt-(1+t)}, we have
\begin{align}\label{dt-(1+t)-var}
    \nn&\sum_{\iota=0}^{[\ell/3]+1}\fr{\iota \kappa_0\nu^{\fr13}}{2}\big(\kappa_0\nu^{\fr13}(1+t)\big)^{-1}\sum_{m\in\N^3}\sum_{\substack{\beta\in\N^3\\ |\beta|\le N}}\kappa^{2|\beta|}a_{m,\lm_L,s_L}^2\\
    \nn&\times\bigg({\rm A}_0\sum_{|\al|\le1}\big\|k^{\al}f_{[\iota]}^{(m+\beta)}\la v\ra^{\ell-2|\al|-2\iota}\big\|_{L^2_{v}}^2\\
    \nn&+\kappa^2\nu^{\fr23}\big\|\nb_vf_{[\iota]}^{(m+\beta)}\la v\ra^{\ell-2-2\iota}\big\|_{L^2_{v}}^2+2\kappa\nu^{\fr13}\mathrm{Re}\,\left\la ikf_{[\iota]}^{(m+\beta)},\la v\ra^{2\ell-4-4\iota}\nb_{v}f_{[\iota]}^{(m+\beta)} \right\ra_{v}\bigg)\\
    \le&\fr{\nu^{\fr13}}{8}\sum_{\iota=0}^{[\ell/3]} \mathsf{D}_{k,\ell-2\iota}(f_{[\iota]}(t)).
\end{align}
Collecting the above estimates, we conclude that \eqref{semigroup-est} holds.
\end{proof}

\subsection{Linear density estimates}
First we rewrite \eqref{pVPL} as follows
\be\label{l-VPL}
\begin{cases}
\pr_tf+v\cdot\nb_xf-2(E(t,x)\cdot v)\mu^\fr12+\nu Lf=\mathfrak{N}(t,x,v),\\[2mm]
E(t,x)=-\nb_x(-\Dl_x)^{-1}\rho,\\[2mm]
\rho=\displaystyle\int_{\R^3}f(t,x,v)\mu^\fr12(v)dv.
\end{cases}
\ee
where 
\begin{align*}
\mathfrak{N}(t,x,v)=-E(t,x)\cdot\nb_vf+E(t,x)\cdot v f+\nu\Gamma(f,f).
\end{align*}
To get the equation for the density, taking the Fourier transform in $x$ of \eqref{l-VPL}, we get 
\be\label{l-VPL-k}
\begin{cases}
\pr_t\hat{f}_k+ik\cdot v\hat{f}_k-2\hat{E}_k(t)\cdot v\mu^\fr12+\nu L\hat{f}_k=\hat{\mathfrak{N}}_k(t,v),\\[2mm]
\hat{E}_k(t)=-ik|k|^{-2}\hat{\rho}_k(t),\\[2mm]
\hat{\rho}_k(t)=\displaystyle\int_{\R^3}\hat{f}_k(t,v)\mu^\fr12(v)dv.
\end{cases}
\ee
Now we treat $-2\hat{E}_k(t)\cdot v\mu^\fr12$ as a source term as well, since it is not written in terms of $\hat{f}_k(t,v)$ but in terms of $\hat{\rho}_k$ directly. Thus, the first equation of \eqref{l-VPL-k} can be rewritten as
\be\label{eq-f_k}
\pr_t\hat{f}_k+ik\cdot v\hat{f}_k+\nu L\hat{f}_k=-2ik|k|^{-2}\cdot v\mu^\fr12(v)\hat{\rho}_k(t)+\hat{\mathfrak{N}}_k(t,v).
\ee
A key step to derive the equation for the density $\hat{\rho}_k(t)$ is to solve the above equation, i.e., to get the representation of $\hat{f}_k(t,v)$ in terms of the initial data  and the source terms. This leads to the inhomogeneous linear Landau equation written mode-by-mode in $x$
\be\label{Landau}
\pr_th+ik\cdot vh+\nu Lh=0,\quad h|_{t=0}=h_{\rm in}.
\ee
Let $S_k(t)$ be the semigroup associated to \eqref{Landau}, that is, the unique solution $h(t,v)$ to \eqref{Landau} is given by
\be
h(t,v)=S_k(t)[h_{\rm in}(v)].
\ee

It is a standard property of semigroups that $[S_k(t),ik\cdot v \mathrm{Id}+\nu L]=0$. 
Now we define the associated dual semigroup $S^*_k(t)$, namely, 
\begin{align*}
    \langle S_k(t)[\varphi_1], \varphi_2\rangle=\langle \varphi_1(v), S^*_k(t)[\varphi_2]\rangle.
\end{align*}
Then, we have that $\Phi_k(t,v)=S_k^*(t)[\varphi_2]$ solves 
\begin{align*}
    \pr_t\Phi_k-ik\cdot v\Phi_k+\nu L\Phi_k=0,\quad \Phi_k|_{t=0}=\varphi_2. 
\end{align*}

By Duhamel's principle, the solution $\hat{f}_k(t)$ to \eqref{eq-f_k} is then of the form
\be\label{f_k}
\begin{aligned}
\hat{f}_k(t)=&S_k(t)[\hat{f}_{\rm in}(v)]-\int_0^t2ik|k|^{-2}\hat{\rho}_k(\tau) \cdot S_k(t-\tau)[v\mu^{\fr12}(v)] d\tau\\
&+\int_0^tS_k(t-\tau)[\hat{\mathfrak{N}}_k(\tau,v)]d\tau.
\end{aligned}
\ee
Now we  are in a position to obtain the equation for $\hat{\rho}_k(t)$.  Multiplying \eqref{f_k} by $\mu(v)^{\fr12}$, and integrating the resulting equation with respect to $v$ over $\R^3$, we are led to
\be\label{rho_k}
\hat{\rho}_k(t)+\int_0^t K_k(t-\tau) \hat{\rho}_k(\tau)d\tau=\mathcal{N}_k(t),
\ee
where
\be\label{K_k}
K_k(t)=\fr{2ki}{|k|^2}\int_{\R^3}S_k(t)[v\mu^{\fr12}(v)]\mu^{\fr12}(v) dv.
\ee
and
\be\label{N_k}
\mathcal{N}_k(t)=\int_{\R^3}S_k(t)[\hat{f}_{\rm in}(v)]\mu^{\fr12}(v)dv+\int_0^t\int_{\R^3}S_k(t-\tau)[\hat{\mathfrak{N}}_k(\tau,v)]\mu^{\fr12}(v)dvd\tau.
\ee
To solve the Volterra equation \eqref{rho_k}, we shall  take the Fourier-Laplace transform of \eqref{rho_k} with respect to the time variable $t$, and then check the Penrose stability condition to ensure the solvability. In particular, some  pointwise estimates on $K_k(t)$ are needed. 

\subsection{Estimates of the kernel $K_k(t)$}
The basic idea is to get the  pointwise estimates of $K_k(t)$ by means of  energy estimates of $\fr{2ki}{|k|^2}S_k(t)[v\mu^{\fr12}(v)]$. We have the following lemma.

\begin{lem}\label{lem-point-K_k}
There exist   constants $0<\tl{c}<\fr12$ and $C>0$ depending only on $s_L$,  $\lambda_L$ and $n\in\N$, such that  
\begin{align}\label{point-K_k}
     |\pr_t^nK_k(t)|\le C |k|^{n-1}\min\left\{e^{-\fr{\tl c}{2}(\nu^{\fr13}t)^{\fr13}}, e^{-\fr{\tl c}{2}(\nu t)^{\fr23}}\right\}e^{-\fr{\tl{c}}{2}|kt|^{s_L}}. 
\end{align}
\end{lem}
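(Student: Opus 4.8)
\textbf{Proof strategy for Lemma \ref{lem-point-K_k}.}

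The plan is to extract the pointwise bounds on $\pr_t^n K_k(t)$ from the semigroup energy estimates of Proposition \ref{prop-semigroup} applied to the special datum $h_{\rm in}(v) = v\mu^{\fr12}(v)$. First I would note that, by \eqref{K_k} and the commutation $[S_k(t), ik\cdot v\,\mathrm{Id}+\nu L]=0$, each time derivative $\pr_t$ acting on $K_k(t)$ converts into a copy of $-(ik\cdot v\,\mathrm{Id}+\nu L)$ hitting the semigroup; hence $\pr_t^n K_k(t) = \frac{2ki}{|k|^2}\int_{\R^3} S_k(t)[\,(-ik\cdot v-\nu L)^n(v\mu^{\fr12})\,]\,\mu^{\fr12}\,dv$, and the factor $(-ik\cdot v-\nu L)^n(v\mu^{\fr12})$ is again a Schwartz-class function of $v$ with coefficients that are polynomials in $k$ of degree $\le n$ (after absorbing the $\nu L$ contributions, which are lower order since $\nu\le \nu_0$ and $\nu t$ appears only through the dissipation). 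This reduces matters to proving, for a fixed Schwartz datum $h_{\rm in}$, the decay estimate
\begin{align}\label{eq:Kk-goal}
\left| \int_{\R^3} S_k(t)[h_{\rm in}]\,\mu^{\fr12}(v)\,dv \right| \lesssim \frac{1}{|k|}\min\{e^{-\tl c(\nu^{\fr13}t)^{\fr13}}, e^{-\tl c(\nu t)^{\fr23}}\}\, e^{-\tl c|kt|^{s_L}},
\end{align}
with the $|k|^{-1}$ coming from the $\tfrac{1}{|k|^2}$ prefactor against one power of $|k|$ that we recover below, and the remaining powers of $|k|$ from the polynomial coefficients giving the $|k|^{n-1}$ in \eqref{point-K_k}.

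The key steps, in order: (i) apply Proposition \ref{prop-semigroup} to $f(t)=S_k(t)[h_{\rm in}]$ with the weighted-in-time energy $\mathsf E^w_{k,\ell}$; since $h_{\rm in}=v\mu^{\fr12}$ (or its $(-ik\cdot v-\nu L)^n$-image) is Gevrey-$s_L$ regular and rapidly decaying in $v$, $\mathsf E^w_{k,\ell}(f(0))$ is finite with a constant depending only on $n, s_L, \lambda_L, \ell$; (ii) read off from the definition \eqref{def-total-en} of $\mathsf E^w_{k,\ell}$, in particular from the $\iota=[\ell/3]+1$ term carrying the weight $w_{[\ell/3]+1}^2(t)\sim(\nu^{\fr13}t)^{[\ell/3]+1}$, that $\|\langle v\rangle^{\ell-2([\ell/3]+1)} f(t)\|_{L^2_v}^2 \lesssim (\nu^{\fr13}t)^{-([\ell/3]+1)}\mathsf E^w_{k,\ell}(f(0))$, which after pairing with $\mu^{\fr12}$ and choosing $\ell$ large gives polynomial-in-$(\nu^{\fr13}t)$ decay; (iii) upgrade the polynomial decay to the stretched-exponential $e^{-\tl c(\nu^{\fr13}t)^{\fr13}}$ by the standard device of summing the polynomial bounds against the factorial growth of the Gevrey constants (or equivalently optimizing over $\ell$), exactly as the factor $(\nu^{\fr13}t)^{1/3}$ in Theorem \ref{Thm: main} and Remark \ref{Rmk: enhanced dissipation} is produced; (iv) for the $e^{-\tl c(\nu t)^{\fr23}}$ alternative, observe that on the regime $\nu^{\fr13}t \ge 1$ one has $(\nu^{\fr13}t)^{\fr13}\ge (\nu t)^{\fr23}$ is false in general, so one instead uses the basic dissipation $\nu^{\fr23}\int |f|_{\sig}^2$ directly: the $|k|^2$-coercive term in $\mathsf D_{k,\ell}$ combined with the hypocoercive mixing of $ik\cdot v$ gives the short-time enhanced-dissipation rate $e^{-c(\nu t)^{2/3}}$ valid for $\nu t\lesssim 1$, as in \cite{chaturvedi2023vlasov, bedrossian2024taylor}; (v) to obtain the phase-mixing factor $e^{-\tl c|kt|^{s_L}}$, exploit that $K_k$ only sees the projection onto $\mu^{\fr12}$: write $S_k(t)[h_{\rm in}] = S_k^{0}(t)[h_{\rm in}] + (\text{collision correction})$ where $S_k^0$ is free transport $e^{-ik\cdot v t}$, for which $\int e^{-ik\cdot v t} h_{\rm in}(v)\mu^{\fr12}(v)\,dv = \widehat{h_{\rm in}\mu^{\fr12}}(kt)$ decays like $e^{-\lambda_L|kt|^{s_L}}$ by Gevrey regularity in $v$; then control the collision correction using the fact that $\nu L$ applied to the propagated datum stays Gevrey-$s_L$ with a radius that only shrinks by a controlled amount (Lemma \ref{lem-coercive-L} and the Gevrey theory of Section \ref{sec: Linear Landau operator}), so that at the cost of replacing $\lambda_L$ by some $\tl c<\tfrac12\lambda_L$ one retains the $e^{-\tl c|kt|^{s_L}}$ factor. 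The $|k|^{-1}$ rather than $|k|^{0}$ is recovered because $\int v\mu^{\fr12}\cdot\mu^{\fr12}\,dv=0$ (odd integrand), so the leading term in the $|kt|\to 0$ expansion of $K_k$ vanishes and one gains an extra power of $|kt|$, which near $t\sim |k|^{-1}$ is an extra $|k|^{-1}$; combined with the $|k|^{-2}$ prefactor and up to $|k|^{n}$ from the polynomial coefficients this yields the stated $|k|^{n-1}$.

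The main obstacle I anticipate is step (iv)--(v) done \emph{simultaneously}: making the phase-mixing estimate $e^{-\tl c|kt|^{s_L}}$ coexist with the enhanced-dissipation estimates without degrading either, since the hypocoercive energy $\mathsf E^w_{k,\ell}$ is tuned for the dissipation and does not directly see the free-transport decay. The resolution is to run two separate arguments and take the minimum: for $|kt|\le (\nu^{\fr13}t)^{\fr13}$ (equivalently $|k|\le\nu^{\fr13}$-type regimes) the dissipation dominates and one ignores phase mixing, while for $|kt|$ large one peels off the transport part and treats the Landau operator perturbatively in Gevrey class, absorbing its effect into the loss of radius $\lambda_L\mapsto\tl c$. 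A secondary technical point is bookkeeping the polynomial-in-$k$ coefficients generated by $(-ik\cdot v-\nu L)^n$ and verifying their $v$-Gevrey norms remain uniformly bounded; this is routine given the explicit Gaussian structure of $\mu$ but must be tracked to land exactly on $|k|^{n-1}$. Everything else is an application of Proposition \ref{prop-semigroup} together with the now-standard interpolation trick (polynomial decay plus Gevrey-summability $\Rightarrow$ stretched-exponential decay).
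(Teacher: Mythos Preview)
Your overall architecture is sound, but step (v) --- extracting the phase-mixing factor $e^{-\tl c|kt|^{s_L}}$ --- has a genuine gap as written. The Duhamel decomposition $S_k(t) = S_k^0(t) + (\text{collision correction})$ does not directly deliver this: the correction $\nu\int_0^t e^{-ik\cdot v(t-\tau)} L\, S_k(\tau)[h_{\rm in}]\,d\tau$, when paired with $\mu^{1/2}$, contributes terms like $\int_0^t e^{-c|k(t-\tau)|^{s_L}}(\cdots)\,d\tau$, and the portion near $\tau\approx t$ carries no phase-mixing decay at all. Treating $\nu L$ perturbatively over arbitrarily long times does not close without essentially re-proving the energy estimate you already have.

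The paper's route is much more direct and avoids any decomposition. Proposition~\ref{prop-semigroup} already controls the full Gevrey energy $\mathsf E_{k,\ell}\big(S_k(t)[v\mu^{1/2}]\big)$ built from the vector fields $\tl Y = \nb_v + ikt$. A multinomial/Gevrey-summation argument (the chain starting at \eqref{multinomial}) shows that this energy dominates the weighted Fourier norm $\big\|e^{\tl c|\xi+kt|^{s_L}}\mathcal F_v[S_k(t)[v\mu^{1/2}]](\xi)\big\|_{L^2_\xi}^2$ for $\tl c$ small depending on $\lm_L$. Then pair with $\mu^{1/2}$ on the Fourier side via Plancherel and use $-|\xi+kt|^{s_L}\le -|kt|^{s_L}+|\xi|^{s_L}$ together with Gaussian decay of $\widehat{\mu^{1/2}}$; the factor $e^{-\tl c|kt|^{s_L}}$ drops out immediately, with no perturbation argument.

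Two smaller corrections. First, for the time derivatives the paper exploits the adjoint side rather than the initial datum: since $L$ is self-adjoint and $L\mu^{1/2}=0$, one gets $\pr_t^n K_k(t) = (-1)^n\frac{2ki}{|k|^2}\int S_k(t)[v\mu^{1/2}]\,(ik\cdot v)^n\mu^{1/2}\,dv$, so the $\nu L$ terms vanish identically and only a \emph{single} semigroup estimate on $S_k(t)[v\mu^{1/2}]$ is ever needed. Your version, applying $(-ik\cdot v-\nu L)^n$ to $v\mu^{1/2}$, would also work but carries unnecessary $\nu L$ debris. Second, your $|k|$-counting is off: the prefactor $\tfrac{2ki}{|k|^2}$ has magnitude $2|k|^{-1}$, not $|k|^{-2}$, so $|k|^{n-1}$ is simply $|k|^{-1}\cdot|k|^n$ and no ``vanishing at $t=0$'' argument is needed. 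Finally, for the enhanced-dissipation factors $\min\{e^{-c(\nu^{1/3}t)^{1/3}}, e^{-c(\nu t)^{2/3}}\}$ the paper just invokes the corresponding pointwise estimate from \cite{chaturvedi2023vlasov} rather than redoing the summation.
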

\begin{proof}
For any fixed $n\in\N$, by using the multinomial theorem, we have
\begin{align}\label{multinomial}
    |\xi+kt|^{2n}=\Big(\sum_{1\le j\le 3}(\xi_j+k_jt)^2\Big)^n=\sum_{|\al|=n}C_n^{\al}(\xi+kt)^{2\al},
\end{align}
where $\al=(\al_1, \al_2,\al_3)$.
Consequently, thanks to the inequality $(n!)^22^n\le (2n)!$, we find that
\begin{align*}
e^{2c|\xi+kt|^{s_L}}
    =&\sum_{n\in\N}\fr{(2\tl c|\xi+kt|^{s_L})^{2n}}{(2n)!}+\sum_{n\in\N}\fr{(2\tl c|\xi+kt|^{s_L})^{2n+1}}{(2n+1)!}\\
    \le&(1+2\tl c|\xi+kt|^{s_L})\sum_{n\in\N}\left(\fr{\Big((2\tl c)^{\fr{1}{s_L}}|\xi+kt|\Big)^{2n}}{(n!)^{\fr{2}{s_L}}}\right)^{s_L}\fr{1}{2^{n}}\\
    \le&(1+2\tl c|\xi+kt|^{s_L})\left(\sum_{n\in\N}\fr{\Big((2\tl c)^{\fr{1}{s_L}}|\xi+kt|\Big)^{2n}}{(n!)^{\fr{2}{s}}}\right)^{s_L}\left(\sum_{n\in\N}\fr{1}{2^{\fr{n}{1-s_L}}}\right)^{1-s_L}\\
    \les&\la \xi+kt\ra^{s_L}\sum_{n\in\N}\fr{\Big((2\tl c)^{\fr{1}{s_L}}|\xi+kt|\Big)^{2n}}{(n!)^{\fr{2}{s_L}}}=\la \xi+kt\ra^{s_L}\sum_{n\in\N}\fr{(2\tl c)^{\fr{2n}{s_L}}\sum_{|\al|=n}C_n^{\al}(\xi+kt)^{2\al}}{(n!)^{\fr{2}{s_L}}}\\
    \le&\la \xi+kt\ra^{s_L}\sum_{n\in\N}\sum_{|\al|=n}\left(\fr{(2\tl c)^{\fr{n}{s_L}}}{(n!)^{\fr{1}{s_L}}(n+1)^{-12}}C_n^{\al}\right)^2(i\xi+ikt)^{\al}(-i\xi-ikt)^{\al}\\
    \les&\la \xi+kt\ra^{s_L}\sum_{m\in\N^3}\left(\fr{\lm_L^{|m|}}{\Gamma_{s_L}(|m|)}C_{|m|}^{m}\right)^2(i\xi+ikt)^{m}(-i\xi-ikt)^{m},
\end{align*}
provided $\lm_L\ge (2\tl c)^{\fr{1}{s_L}}$. Then by the linear estimate \eqref{semigroup-est}, we get that 
\begin{align*}
    \left\|e^{\tl c|\xi+kt|^{s_L}}\mathcal{F}_v\big[S_k(t)[v\sqrt{\mu}]\big]\right\|_{L^{2}_{\xi}}^2
    \les \sum_{m\in\N^3}\sum_{0\le|\beta|\le1}a^2_{m,\lm_L,s_L}\left\|{Y}^{m+\beta}S_k(t)[v\sqrt{\mu}]\right\|_{L^2_v}^2\les C.
\end{align*}
Recalling \eqref{K_k} and \eqref{Landau}, we get the representation of $\pr_tK_k(t)$
\begin{align*}
    \pr_tK_k(t)=&\fr{2ki}{|k|^2}\int_{\R^3} \pr_t\left(S_k(t)[v\mu^{\fr12}(v)]\right)\mu^{\fr12}(v) dv\\
    =&-\fr{2ki}{|k|^2}\int_{\R^3} (i k\cdot v+\nu L)\left(S_k(t)[v\mu^{\fr12}(v)]\right)\mu^{\fr12}(v) dv\\
    =&-\fr{2ki}{|k|^2}\int_{\R^3} \left(S_k(t)[v\mu^{\fr12}(v)]\right)(i k\cdot v+\nu L)\mu^{\fr12}(v) dv\\
    =&-\fr{2ki}{|k|^2}\int_{\R^3} \left(S_k(t)[v\mu^{\fr12}(v)]\right)i k\cdot v\mu^{\fr12}(v) dv,
\end{align*}
due to the fact $L \mu^{\fr12}(v)=0$. Consequently, for $n\in\N$, there holds
\begin{align}\label{ptn_K}
  \pr_t^{n}K_k(t)=(-1)^n\fr{2ki}{|k|^2}\int_{\R^3}  \left(S_k(t)[v\mu^{\fr12}(v)]\right)(i k\cdot v)^n\mu^{\fr12}(v) dv.
\end{align} 
Using Plancherel's theorem, and noting that $-|\xi+kt|^{s_L}\le-|kt|^{s_L}+|\xi|^{s_L}$ for any $0<s_L\le1$, there exists a polynomial $p_n(\xi)$ of degree $n$, such that 
\begin{align}\label{point-K}
    |\pr_t^nK_k(t)|
    \nn&\lesssim \fr{1}{|k|}\left|\int_{\mathbb{R}^3}\mathcal{F}_v\big[S_k(t)[v\sqrt{\mu}]\big](\xi)\overline{\mathcal{F}_v\big[(i k\cdot v)^n\mu^{\fr12}(v)\big]}(\xi)d\xi \right|\\
    \nn&\lesssim |k|^{n-1}\int_{\mathbb{R}^3}e^{-\tl c|\xi+kt|^{s_L}}\Big|e^{\tl c|\xi+kt|^{s_L}}\mathcal{F}_v\big[S_k(t)[v\sqrt{\mu}]\big](\xi)\Big|\big| p_n(\xi)e^{-\fr12|\xi|^2}\big|d\xi \\
    \nn&\lesssim |k|^{n-1}e^{-\tl c|kt|^{s_L}}\left\|p_n(\xi)e^{-\fr12|\xi|^2}e^{\tl c|\xi|^{s_L}}\right\|_{L^2_{\xi}}\left\|e^{\tl c|\xi+kt|^{s_L}}\mathcal{F}_v\big[S_k(t)[v\sqrt{\mu}]\big]\right\|_{L^{2}_{\xi}}\\
    &\les |k|^{n-1}e^{-\tl c|kt|^{s_L}}. 
\end{align}

On the other hand, we also have by Lemma 7.5 of \cite{chaturvedi2023vlasov} that
\begin{align*}
    |\pr_t^n K_k(t)|\les  |k|^{n-1}\min\Big\{e^{-c(\nu^{\fr13}t)^{\fr13}}, e^{-c(\nu t)^{\fr23}}\Big\}. 
\end{align*}
Thus \eqref{point-K_k} follows immediately. 
\end{proof}
\begin{coro}
There exist $\dl_1\in(0,1)$ and $C_{s_L, \tl c}\ge1$ depending only on $s_L$ and $\tl{c}$, such that
\begin{align}\label{est-K_k}
  \sum_{j=0}^\infty \left\|\fr{(\dl_1 |k|t)^j\big(|k|^{\fr32}K_k(t)+|k|^{\fr12}\pr_tK_k(t)\big)}{(j!)^{\fr{1}{s_L}}}\right\|_{L^2(t\ge0)}^2+|k|^5\sum_{j=0}^\infty \left\|\fr{(\dl_1 |k|t)^jtK_k(t)}{(j!)^{\fr{1}{s_L}}}\right\|_{L^2(t\ge0)}^2\le C_{s_L,\tl c}.
  \end{align}
\end{coro}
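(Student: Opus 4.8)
The plan is to deduce the Gevrey-in-time bounds on $K_k$ and $\partial_t K_k$ directly from the pointwise estimate \eqref{point-K_k} of Lemma \ref{lem-point-K_k}. First I would fix $n=0$ and $n=1$ in \eqref{point-K_k} to obtain the starting point
\[
|K_k(t)|\le C|k|^{-1}e^{-\frac{\tl c}{2}|kt|^{s_L}},\qquad |\partial_t K_k(t)|\le C\, e^{-\frac{\tl c}{2}|kt|^{s_L}},
\]
so that $|k|^{3/2}K_k(t)+|k|^{1/2}\partial_t K_k(t)$ is bounded by $C|k|^{1/2}e^{-\frac{\tl c}{2}|kt|^{s_L}}$ and $tK_k(t)$ is bounded by $C|k|^{-1}t\,e^{-\frac{\tl c}{2}|kt|^{s_L}}$. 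The mechanism for converting the Gevrey-$s_L$ spatial decay $e^{-\frac{\tl c}{2}|kt|^{s_L}}$ into control of the whole series $\sum_j (\delta_1|k|t)^j/(j!)^{1/s_L}$ is exactly the elementary inequality already used inside the proof of Lemma \ref{lem-point-K_k}: summing the defining series of $e^{c|x|^{s_L}}$ and splitting off a geometric factor gives, for a suitable constant $C_{s_L}$ and any $c'<c$,
\[
\sum_{j=0}^\infty \frac{(c'')^j|x|^j}{(j!)^{1/s_L}}\le C_{s_L}\, e^{c|x|^{s_L}}\qquad\text{whenever } (c'')^{1/s_L}\le c-c',
\]
which is the reverse direction of the estimate used in \eqref{point-K}. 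Applying this with $x=|k|t$ lets me absorb the series weight $(\delta_1|k|t)^j/(j!)^{1/s_L}$ against a fraction of the exponent $\frac{\tl c}{2}|kt|^{s_L}$, provided $\delta_1^{1/s_L}\le \frac{\tl c}{4}$, say, leaving a residual factor $e^{-\frac{\tl c}{4}|kt|^{s_L}}$.

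\textbf{Key steps in order.} (i) Extract the $n=0,1$ pointwise bounds and form the two combinations appearing on the left of \eqref{est-K_k}; the factor $|k|^{3/2}$ in front of $K_k$ is designed precisely so both terms carry the same power $|k|^{1/2}$. (ii) Choose $\delta_1$ small in terms of $s_L$ and $\tl c$ so that the series-weight inequality above holds; this turns each summand into an integrable function of $t$, with the $t$-integral converging because $e^{-\frac{\tl c}{4}|kt|^{s_L}}$ (times a polynomial factor in $t$ for the $tK_k$ term) decays superpolynomially as $t\to\infty$. (iii) Perform the $L^2(t\ge 0)$ integral after the change of variables $\tau=|k|t$: for the first sum,
\[
\sum_{j}\Big\|\frac{(\delta_1|k|t)^j |k|^{1/2}e^{-\frac{\tl c}{2}|kt|^{s_L}}}{(j!)^{1/s_L}}\Big\|_{L^2(t\ge0)}^2
= |k|^{1-1}\!\int_0^\infty\!\Big(\sum_j\frac{(\delta_1\tau)^j}{(j!)^{1/s_L}}\Big)^2 e^{-\tl c\tau^{s_L}}\,d\tau \le C_{s_L}\!\int_0^\infty e^{-\frac{\tl c}{2}\tau^{s_L}}d\tau,
\]
and the powers of $|k|$ cancel exactly (the $|k|$ from $dt=|k|^{-1}d\tau$ against the $|k|^{1/2}$ squared, giving $|k|^{1-1}$); a completely analogous computation handles the second sum, where the extra $|k|^5$ prefactor and the factor $t=|k|^{-1}\tau$ combine as $|k|^5\cdot|k|^{-2}\cdot|k|^{-1}\cdot\|\,\cdot\,\|_{L^2\text{(in }\tau)}^2\cdot|k|^{-1}$—here one must be careful that the $\tK_k$ piece is bounded by $C|k|^{-1}t e^{-\frac{\tl c}{2}|kt|^{s_L}}=C|k|^{-2}\tau e^{-\frac{\tl c}{2}\tau^{s_L}}$, so after squaring and integrating in $t$ one gets $|k|^{5}\cdot|k|^{-4}\cdot|k|^{-1}\int_0^\infty \big(\sum_j\frac{(\delta_1\tau)^j}{(j!)^{1/s_L}}\big)^2\tau^2 e^{-\tl c\tau^{s_L}}d\tau$, which is a finite constant. (iv) Add the two bounds to obtain the constant $C_{s_L,\tl c}$ and note $C_{s_L,\tl c}\ge 1$ can be arranged trivially.

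\textbf{Main obstacle.} None of this is deep; the only genuinely delicate point is bookkeeping the powers of $|k|$ so that the final bound is \emph{uniform in} $k\in\mathbb{Z}^3_*$ — in particular making sure the $|k|^{1/2}$, $|k|^{3/2}$, $|k|^5$ and the Jacobian $|k|^{-1}$ from $t\mapsto |k|t$ all cancel rather than leaving a residual positive or negative power of $|k|$. (For small $|k|$, i.e. $|k|\ge 1$, a residual negative power would be harmless, but the statement asks for a clean constant, so one wants exact cancellation; for large $|k|$ a residual positive power would be fatal, so this check is essential.) A secondary, purely technical point is justifying the interchange of $\sum_j$ with $\|\cdot\|_{L^2}$ and with $\int_0^\infty$, which follows from monotone convergence since every term is nonnegative. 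Everything else reduces to the single elementary Gevrey series inequality already invoked in the proof of Lemma \ref{lem-point-K_k}, applied in the opposite direction.
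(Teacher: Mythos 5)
Your plan is correct and follows essentially the same route as the paper: pointwise bounds from Lemma \ref{lem-point-K_k} (with $n=0,1$), a Gevrey-series inequality absorbing $\sum_j(\delta_1|k|t)^j/(j!)^{1/s_L}$ into $e^{\tilde c\,|kt|^{s_L}/4}$ for small enough $\delta_1$, and the change of variables $\tau=|k|t$ to verify exact cancellation of the $|k|$-powers. Two small cosmetic points. First, the equality sign in your step (iii) should be an inequality: $\sum_j\|f_j\|_{L^2}^2=\int\sum_j|f_j|^2\,dt$, which is \emph{at most} (not equal to) $\int\bigl(\sum_j|f_j|\bigr)^2\,dt$; your bound still goes through because you are overestimating, but the paper avoids this by directly controlling $\sum_j\frac{(\delta' |k|t)^{2j}}{(j!)^{2/s_L}}$ (the sum of squares, via the Hölder split in \eqref{holder}) rather than the square of the unsquared sum, which is tidier and uses exactly the form of the series that actually appears. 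Second, the sufficient condition you state for the Gevrey-series inequality, ``$(c'')^{1/s_L}\le c-c'$,'' has a dangling $c'$ and doesn't parse; the correct smallness condition (as in the paper's \eqref{holder}) is of the form $(p+1)\delta_1^{s_L}\le\tilde c/2$ with $p=[2/s_L]$. Neither issue affects the validity of the argument.
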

\begin{proof}
We infer from \eqref{point-K} that
\begin{align*}
&|k|^3\left\|e^{\fr{\tl c}{2}|kt|^{s_L}}K_k(t)\right\|_{L^2(t\ge0)}^2+|k|\left\|e^{\fr{\tl c}{2}|kt|^{s_L}}\pr_tK_k(t)\right\|_{L^2(t\ge0)}^2\\
\le&C|k|\int_0^\infty e^{-\tl{c}|kt|^{s_L}}dt=C\int_0^\infty e^{-\tl{c}t^{s_L}}dt
=\fr{C}{{s_L}}\int_0^\infty\tau^{\fr{1}{{s_L}}-1} e^{-\tl{c}\tau}d\tau=\fr{C}{s_L}\tl{c}^{-\fr{1}{s_L}}\Gamma(\fr{1}{{s_L}}),
\end{align*}
and similarly,
\begin{align*}
&\left\|e^{\fr{\tl c}{2}|kt|^{s_L}}t K_k(t)\right\|_{L^2(t\ge0)}^2\le\fr{C}{|k|^2}\int_0^\infty t^2e^{-\tl{c}|kt|^{s_L}}dt=\fr{C}{|k|^5}\int_0^\infty t^2 e^{-\tl{c}t^{s_L}}dt\\
=&\fr{1}{s_L}\fr{C}{|k|^5}\int_0^\infty\tau^{\fr{3}{s_L}-1} e^{-\tl{c}\tau}d\tau=\fr{C}{|k|^5}\fr{1}{s_L}\tl{c}^{-\fr{3}{s_L}}\Gamma(\fr{3}{s_L}).
\end{align*}

For any $p\in\N$, by Stirling's formula, there exist constants $\theta_1, \theta_2\in (0,1)$, such that
\[
\fr{(pj)!}{(j!)^p}=\fr{\sqrt{2\pi p j}\left(\fr{pj}{e}\right)^{pj}e^{\fr{\theta_1}{12pj}}}{\left(\sqrt{2\pi j}\left(\fr{j}{e}\right)^{j}e^{\fr{\theta_2}{12j}}\right)^{p}}\le2 p^{pj}.
\]
Now let us take $p=\left[\fr{2}{s_L}\right]\in \N$ and $\theta'\in (0,1]$, such that $\fr{2}{s_L}=p+1-\theta'=p\theta'+(1+p)(1-\theta')$. Then for all $0<\dl'\le\dl_1$ such that $(p+1)\dl_1^{s_L}\le\fr{\tl c}{2}$, by H\"{o}lder's inequality, we have 
\begin{align}\label{holder}
    \nn&\sum_{j\ge0}\fr{(\dl'|k| t)^{2j}}{(j!)^\fr{2}{s_L}}
    \nn=\sum_{j\ge0}
    \fr{((\dl'|k| t)^{s_L})^{jp\theta'+j(1+p)(1-\theta')}}{((j!)^p)^{\theta'}((j!)^{p+1})^{1-\theta'}}\\
    \nn=& \sum_{j\ge0}\left(\fr{((\dl'|k| t)^{s_L})^{pj}}{(pj)!}\fr{(pj)!}{(j!)^p}\right)^{\theta'}\left(\fr{((\dl' |k|t)^{s_L})^{(p+1)j}}{((p+1)j)!}\fr{((p+1)j)!}{(j!)^{p+1}}\right)^{1-\theta'}\\
    \le& 2\sum_{n\ge0}\fr{\left[\big((p+1)^{\fr{1}{s_L}}\dl'|k| t\big)^{s_L}\right]^{n}}{n!}=2 e^{(p+1)\dl'^{s_L} (|k|t)^{s_L}}\le2 e^{\fr{\tl{c}}{2}|kt|^{s_L}}.
\end{align}
Consequently,
\begin{align*}
&\sum_{j=0}^\infty \left\|\fr{(\dl_1 |k|t)^j\big(|k|^{\fr32}K_k(t)+|k|^{\fr12}\pr_tK_k(t)\big)}{(j!)^{\fr{1}{s_L}}}\right\|_{L^2(t\ge0)}^2\\
\le&2\left\|e^{\fr{c}{2}|kt|^{s_L}}\big(|k|^{\fr32}K_k(t)+|k|^{\fr12}\pr_tK_k(t)\big)\right\|_{L^2(t\ge0)}^2\le C,
\end{align*}
and
\begin{align*}
&|k|^5\sum_{j\ge0}\left\|\fr{(\dl_1|k|t)^jt}{(j!)^{\fr{1}{s_L}}}K_k(t)\right\|_{L^2(t\ge0)}^2\le2|k|^5\left\|e^{\fr{c}{2}|kt|^{s_L}}tK_k(t)\right\|_{L^2(t\ge0)}^2\le C.
\end{align*}
Thus we get \eqref{est-K_k}.
\end{proof}

\subsection{The Penrose condition}\label{sec:penrose}
The Fourier-Laplace transform of $K_k(t)$ is given by
\begin{align}\label{LK_k}
    \mathcal{L}[K_k](\lm+i\om)=\int_0^\infty e^{-i(\lm+i\om) t}K_k(t)dt, \quad \lm\in\mathbb{R},\, \om\le0.
\end{align}

The point-wise estimate \eqref{point-K_k} for $K_k(t)$ ensures that the \underline{Penrose condition} holds, namely, there are $\nu_1, \kappa_1\in (0,1)$ independent of $k\in\Z^3$ such that for all $\nu\in[0,\nu_1]$, there holds
\begin{align}\label{Penrose}
\inf_{ \mathrm{Im}\, (\lm+i\om)\le0}\inf_{k\in\Z^3_*}|1+\mathcal{L}[K_k](\lm+i\om)|
\ge\kappa_1>0.
\end{align}
See more details in Section 7 of \cite{chaturvedi2023vlasov}. Note that in this section, we use the variable $\lambda$ to denote the Fourier-Laplace transform in $t$, rather than the Gevrey radius as employed elsewhere in the paper.

Clearly, \eqref{Penrose} holds for $\om=0$. 
Taking $\om=0$ in \eqref{LK_k}, let us define
\begin{align*}
  \tilde{G}_k(\lambda)=-\frac{\mathcal{L}[K_k](\lambda)}{1+\mathcal{L}[K_k](\lambda)},
\end{align*}
and $G_k(t)$ be the Fourier inverse of $\widetilde{G}_k(\lambda)$, namely,
\[
G_k(t)=-\fr{1}{2\pi}\int_{\R}e^{i\lm t}\fr{\mathcal{L}[K_k](\lm)}{1+\mathcal{L}[K_k](\lm)}d\lm.
\]
Then by $K_k(0)=0$, we have
\[
\pr_tG_k(t)=-\fr{1}{2\pi}\int_{\R}e^{i\lm t}\fr{\mathcal{L}[\pr_tK_k](\lm)}{1+\mathcal{L}[K_k](\lm)}d\lm.
\]

\subsection{Estimates of $G_k(t)$}
\begin{prop}\label{prop:kernel G}
    Let $\rho(t,x)$ be the density that solves  \eqref{l-VPL} with initial data $f_{\rm in}(x,v)$, then the following representation holds:
\begin{align}\label{rep-rho}
\hat{\rho}_k(t)=\mathcal{N}_k(t)+\int_0^tG_k(t-\tau)\mathcal{N}_k(\tau)d\tau.
\end{align}
Moreover, there exist $\dl_2\in(0,1)$ and $C_{s_L,\tl{c},\kappa_1}\ge1$ depending on $s_L, c$ and $\kappa_1$ such that
\begin{align}\label{est-G_k}
    |G_k(t)|\le C_{s_L,\tl{c},\kappa_1} |k|^{-1}e^{-\dl_2|kt|^{s_L}}.
\end{align}
\end{prop}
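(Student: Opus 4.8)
The plan is to prove the representation formula \eqref{rep-rho} first and then derive the kernel bound \eqref{est-G_k} from the Penrose condition \eqref{Penrose} and the Gevrey-class bounds \eqref{est-K_k} on $K_k$. For the representation, I would start from the Volterra equation \eqref{rho_k}, namely $\hat\rho_k + K_k * \hat\rho_k = \mathcal{N}_k$ (convolution in $t$ on $[0,t]$). Applying the one-sided Fourier--Laplace transform in $t$ and using that convolution becomes multiplication, one gets $(1+\mathcal{L}[K_k])\,\mathcal{L}[\hat\rho_k] = \mathcal{L}[\mathcal{N}_k]$, hence $\mathcal{L}[\hat\rho_k] = \mathcal{L}[\mathcal{N}_k] + \tilde G_k \,\mathcal{L}[\mathcal{N}_k]$ with $\tilde G_k = -\mathcal{L}[K_k]/(1+\mathcal{L}[K_k])$, which is well-defined and bounded on $\{\mathrm{Im}(\lambda+i\omega)\le 0\}$ precisely because of \eqref{Penrose}. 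Inverting the Laplace transform and using $G_k(0)$-type endpoint considerations (together with $K_k(0)=0$) then yields \eqref{rep-rho}. This part is essentially a standard resolvent computation for scalar Volterra equations; the only care needed is in justifying the Laplace inversion, for which the decay of $K_k$ from Lemma \ref{lem-point-K_k} and the boundedness away from zero of $1+\mathcal{L}[K_k]$ suffice.

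For the kernel estimate \eqref{est-G_k}, the idea is to transfer the Gevrey-class, exponentially localized control of $K_k$ in \eqref{est-K_k} to $G_k$ through the algebra structure of the relevant function space. Concretely, I would work in the Banach algebra (under convolution in $t$) of functions $h$ on $[0,\infty)$ with $\sum_{j\ge 0} \|(\delta |k| t)^j h(t)/(j!)^{1/s_L}\|_{L^2(t\ge0)}^2 < \infty$ for an appropriate $\delta$, or equivalently — via the elementary Gevrey series bound used already in \eqref{holder} — the space of $h$ with $\|e^{\tilde c'|kt|^{s_L}} h\|$ controlled in a suitable norm. The point is that \eqref{est-K_k} says $K_k$ (and $t K_k$, $\partial_t K_k$) lives in such a space with norm $\lesssim C_{s_L,\tilde c}$, uniformly in $k$ after the $|k|$-weights. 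One then writes the Neumann/resolvent series $\tilde G_k = -\mathcal{L}[K_k]\sum_{n\ge 0}(-\mathcal{L}[K_k])^n$, which on the physical side corresponds to $G_k = -\sum_{n\ge 1} (-1)^{n+1} K_k^{*n}$; the Penrose bound \eqref{Penrose} guarantees that $1+\mathcal{L}[K_k]$ is invertible with the inverse bounded by $\kappa_1^{-1}$ uniformly in $k$, so the resolvent $(1+K_k*)^{-1} - \mathrm{Id}$ maps the algebra to itself with norm controlled by $\kappa_1$, $C_{s_L,\tilde c}$. Translating this back to a pointwise bound gives $|G_k(t)| \lesssim_{s_L,\tilde c,\kappa_1} |k|^{-1} e^{-\delta_2 |kt|^{s_L}}$ for some $\delta_2 \in (0,1)$, after possibly shrinking the exponential rate (this is where the loss from $\tilde c$ to $\delta_2$ enters, exactly as the loss from $\delta_1$ to $\delta_2$).

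An alternative — and perhaps cleaner — route for \eqref{est-G_k} is to estimate $G_k$ directly via contour shifting: from $G_k(t) = -\frac{1}{2\pi}\int_{\mathbb{R}} e^{i\lambda t}\frac{\mathcal{L}[K_k](\lambda)}{1+\mathcal{L}[K_k](\lambda)}\,d\lambda$ one shifts the contour into the region $\mathrm{Im}\,\lambda < 0$ (allowed since, by the decay of $K_k$, $\mathcal{L}[K_k](\lambda+i\omega)$ is analytic and small there, and $1+\mathcal{L}[K_k]$ stays bounded below by $\kappa_1$ on the whole half-plane $\omega \le 0$ by \eqref{Penrose}). The amount one can shift is governed by how far into the lower half-plane $\mathcal{L}[K_k]$ extends analytically with good bounds, and the Gevrey-$s_L$ localization $e^{-\tilde c|kt|^{s_L}}$ of $K_k$ is exactly what allows a shift producing the subexponential gain $e^{-\delta_2|kt|^{s_L}}$ in $G_k$; the $|k|^{-1}$ prefactor is inherited from the $|k|^{-1}$ in \eqref{point-K_k}. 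I would carry this out by decomposing $\lambda$ into $|k||\lambda'|$ (scaling) so the estimates become $k$-uniform, then choosing the contour depth proportional to $(|k|t)^{s_L - 1}$ and optimizing.

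The main obstacle I anticipate is the bookkeeping in the second step: making the resolvent/contour argument genuinely uniform in $k\in\mathbb{Z}^3_*$ while keeping track of the precise powers of $|k|$ (note \eqref{est-K_k} carries $|k|^{3/2}K_k$, $|k|^{1/2}\partial_t K_k$, and $|k|^{5/2} t K_k$ in $L^2_t$, which are the natural weights for the convolution algebra) and extracting a clean \emph{pointwise} bound rather than just an $L^2_t$ or algebra-norm bound on $G_k$. Converting the Gevrey-series/$L^2_t$ control of $G_k$ into the pointwise statement \eqref{est-G_k} requires a Sobolev-type embedding in $t$ — e.g. controlling $\|G_k\|_{L^\infty_t}$ by $\|G_k\|_{L^2_t}^{1/2}\|\partial_t G_k\|_{L^2_t}^{1/2}$, where $\partial_t G_k$ is handled through the $\partial_t K_k$ bound in \eqref{est-K_k} and the fact that $\partial_t G_k = -\frac{1}{2\pi}\int e^{i\lambda t}\frac{\mathcal{L}[\partial_t K_k]}{1+\mathcal{L}[K_k]}\,d\lambda$ (using $K_k(0)=0$) — and one must check that the exponential weight survives this interpolation, which forces the final rate $\delta_2$ to be strictly smaller than $\tilde c$. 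All of this is routine in spirit but delicate in execution; it closely parallels the corresponding step in \cite{chaturvedi2023vlasov}, which I would follow.
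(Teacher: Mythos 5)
Your representation step (Volterra equation, Laplace transform, Penrose invertibility) is standard and matches the paper, and you correctly anticipate the final step: a Sobolev embedding in $t$ passing from $L^2_t$-type Gevrey control of $G_k$ and $\partial_t G_k$ to the pointwise bound, with a loss in the exponential rate. For the core kernel bound, however, the paper does \emph{not} use a Neumann/resolvent series or a contour shift. It works directly on the Fourier side: writing $\tilde G_k=\Upsilon(\mathcal{L}[K_k])$ with $\Upsilon(u)=-u/(1+u)$ and applying Fa\`a di Bruno's formula to $\partial_\lambda^n\big[\Upsilon(\mathcal{L}[K_k])\big]$, then controlling the Gevrey-weighted $L^2_\lambda$ norms of the resulting combination via \eqref{est-K_k}. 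The Penrose condition enters only through $\|\Upsilon^{(l)}(\mathcal{L}[K_k])\|_{L^\infty_\lambda}\le l!\,\kappa_1^{-(l+1)}$, and convergence of the $l$-sum is secured not by smallness of $\mathcal{L}[K_k]$ but by shrinking the Gevrey radius $\delta_3$ so that the tail $\sum_{j\ge1}\|(\!C_*\delta_3|k|\partial_\lambda)^j\mathcal{L}[K_k]/(j!)^{1/s_L}\|_{H^1_\lambda}$ is $\le\frac{\kappa_1}{2}|k|^{-3/2}$.

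This last point is where your proposal, as written, has a genuine gap. The Neumann expansion $\tilde G_k=-\mathcal{L}[K_k]\sum_{n\ge0}(-\mathcal{L}[K_k])^n$, equivalently $G_k=\sum_{n\ge1}(-1)^nK_k^{*n}$, requires $|\mathcal{L}[K_k]|<1$ (or an operator-norm bound $\|K_k*\|<1$ in the chosen algebra), and the Penrose condition \eqref{Penrose} gives only a lower bound $|1+\mathcal{L}[K_k]|\ge\kappa_1$ — which is consistent with $\mathcal{L}[K_k]$ being large. So that series need not converge. To make a Banach-algebra argument rigorous you would need either holomorphic functional calculus (holomorphy of $\Upsilon$ away from $-1$ plus a proved Gelfand/Wiener-type algebra structure for the Gevrey-weighted convolution space, which is nontrivial for $L^2_t$-based norms), or you would have to reorganize the expansion exactly as the paper does: the Fa\`a di Bruno formula separates the $L^\infty$ factor (bounded by Penrose, with factorial growth in $l$) from the product of Gevrey-derivative factors (made geometrically small by choice of $\delta_3$), and it is this separation — not a straight Neumann series — that makes the sum converge. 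Your second route (contour shift) would also need care: a fixed shift yields exponential rather than Gevrey-$s_L$ decay, and the $t$-dependent shift depth you propose needs a genuine argument that analyticity plus the Penrose lower bound persist on the shifted contour uniformly in $k$; this is doable but delicate, and it is not what the paper does.
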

\begin{proof}
By the Sobolev embedding, one deduces that
\begin{align*}
    &|k|e^{\dl_2(|kt|+1)^{s_L}}|G_k(t)|\\
    \les& |k|^{\fr12}\left\||k|e^{\dl_2(|kt|+1)^{s_L}}G_k(t)\right\|_{L^2_t(\R)}+|k|^{-\fr12}\left\|\pr_t\left(|k|e^{\dl_2(|kt|+1)^{s_L}}G_k(t)\right)\right\|_{L^2_t(\R)}\\
    \les&|k|^{\fr32}\left\|e^{\dl_2(|kt|+1)^{s_L}}G_k(t)\right\|_{L^2_t(\R)}+s_L\dl_2 |k|^{\fr32}\left\|(|kt|+1)^{s_L-1}e^{\dl_2(|kt|+1)^{s_L}}G_k(t)\right\|_{L^2_t(\R)}\\
    &+|k|^{\fr12}\left\|e^{\dl_2|kt|^{s_L}}\pr_tG_k(t)\right\|_{L^2_t(\R)}\\
    \les&|k|^{\fr32}\left\|e^{\dl_2|kt|^{s_L}}G_k(t)\right\|_{L^2_t(\R)}+|k|^{\fr12}\left\|e^{\dl_2|kt|^{s_L}}\pr_tG_k(t)\right\|_{L^2_t(\R)}.
\end{align*}
By the Taylor series of $e^{\dl_2|kt|^{s_L}}$, 
\begin{align*}
    e^{\dl_2|kt|^{s_L}}=\sum_{n=0}^\infty\fr{(
    \dl_2|kt|^{s_L})^n}{n!}=\sum_{n=0}^\infty\left(\fr{\left[(2\dl_2)^{\fr{1}{s_L}}|kt|\right]^n}{(n!)^{\fr{1}{s_L}}}\right)^{s_L}\fr{1}{2^n}\les \sum_{n=0}^\infty\fr{\left((2\dl_2)^{\fr{1}{s_L}}|kt|\right)^n}{(n!)^{\fr{1}{s_L}}}.
\end{align*}
Let us denote $\dl_3=(2\dl_2)^{\fr{1}{s_L}}$. Then by Minkowski inequality and Plancherel's theorem, we are led to
\begin{align*}
|k|^{\fr32}\left\|e^{\dl_2|kt|^{s_L}}G_k(t)\right\|_{L^2_t(\R)}\les |k|^{\fr32}\sum_{n=0}^\infty \left\|\fr{\left(\dl_3|kt|\right)^n}{(n!)^{\fr{1}{s_L}}}G_k(t) \right\|_{L^2_t(\R)}&=|k|^{\fr32}\sum_{n=0}^\infty\left\|\fr{\left(\dl_3|k|\pr_\lm\right)^n}{(n!)^{\fr1s}}\tilde{G}_k(\lm) \right\|_{L^2_\lm(\R)}\\
&=:{\rm S_1},
\end{align*}
and similarly,
\begin{align*}
|k|^{\fr12}\left\|e^{\dl_2|kt|^{s_L}}\pr_tG_k(t)\right\|_{L^2_t(\R)}\les |k|^{\fr12}\sum_{n=0}^\infty\left\|\fr{\left(\dl_3|k|\pr_\lm\right)^n}{(n!)^{\fr{1}{s_L}}}\dot{\tilde{G}}_k(\lm) \right\|_{L^2_\lm(\R)}=:{\rm S_2},
\end{align*}
where
\[
\dot{\tilde{G}}_k(\lm)=-\fr{\mathcal{L}[\pr_tK_k](\lm)}{1+\mathcal{L}[K_k](\lm)}.
\]
{\bf Estimate of $\rm S_1$.}
Let $\Upsilon(u)=-\fr{u}{1+u}=\fr{1}{u+1}-1$. Then $\Upsilon^{(n)}(u)=(-1)^nn!(u+1)^{-(n+1)}$. By using the Fa\`{a} di Bruno's formula, we have
\begin{align*}
\pr^n_{\lm}\big[\tilde{G}_k(\lambda)\big]=\sum\fr{n!}{m_1!1!^{m_1}m_2!2!^{m_2}\cdots m_n!n!^{m_n}}\cdot \Upsilon^{(m_1+\cdots+m_n)}(\mathcal{L}[K_k](\lambda))\cdot\prod_{j=1}^n\left(\pr^j_\lm \mathcal{L}[K_k](\lambda)\right)^{m_j},
\end{align*}
where the sum is over all $n$-tuples of nonnegative integers $(m_1,\cdots, m_n)$ satisfying the constraint 
\[
1\cdot m_1+2\cdot m_2+3\cdot m_3+\cdots n\cdot m_n=n.
\]
Let $C_*\ge1$ be the embedding constant in $H^1(\R)\hookrightarrow L^\infty(\R)$.
Then for any fix $N\in\N\setminus\{0\}$, we have
\begin{align*}
    &\sum_{n=1}^N\left\|\fr{(\dl_3 |k|\pr_\lm)^n}{(n!)^{\fr{1}{s_L}}} \tilde{G}_k(\lm)\right\|_{L^2_\lm}\\
    \le&\sum_{n=1}^N\fr{(\dl_3|k|)^n}{(n!)^{\fr{1}{s_L}}}\sum_{\sum_{j=1}^n jm_j=n} \fr{n!}{m_1!1!^{m_1}m_2!2!^{m_2}\cdots m_n!n!^{m_n}}\\
    &\times \left\|\Upsilon^{(m_1+\cdots m_n)}(\mathcal{L}[K_k](\lm))\right\|_{L^\infty_\lm}C_*^{n-1}\prod_{j=1}^n\left\|\pr_\lm^j\mathcal{L}[K_k](\lambda)\right\|_{H^1_{\lm}}^{m_j}\\
    =&\sum_{n=1}^N\sum_{l=1}^n\fr{\left\|\Upsilon^{(l)}(\mathcal{L}[K_k](\lm))\right\|_{L^\infty_\lm}}{l!}\sum_{\substack{\sum_{j=1}^n jm_j=n\\ \sum_{j=1}^n m_j=l}} \left(\fr{1!^{m_1}2!^{m_2}\cdots n!^{m_n}}{n!}\right)^{\fr{1}{s_L}-1}\\
    &\times \fr{l!}{m_1!m_2!\cdots m_n!}C_*^{n-1}\prod_{j=1}^n\left\|\fr{(\dl_3 |k|\pr_\lm)^j\mathcal{L}[K_k](\lambda)}{(j!)^{\fr{1}{s_L}}}\right\|_{H^1_{\lm}}^{m_j}\\
    \le&\sum_{l=1}^N\fr{\left\|\Upsilon^{(l)}(\mathcal{L}[K_k](\lm))\right\|_{L^\infty_\lm}}{l!}\left(\sum_{n=l}^N\sum_{\substack{\sum_{j=1}^n jm_j=n\\ \sum_{j=1}^n m_j=l}}  \fr{l!}{m_1!m_2!\cdots m_n!}\prod_{j=1}^n\left\|\fr{(C_*\dl_3 |k|\pr_\lm)^j\mathcal{L}[K_k](\lambda)}{(j!)^{\fr{1}{s_L}}}\right\|_{H^1_{\lm}}^{m_j}\right)\\
    \le& \sum_{l=1}^N\fr{\left\|\Upsilon^{(l)}(\mathcal{L}[K_k](\lm))\right\|_{L^\infty_\lm}}{l!}\left(\sum_{j=1}^N \left\|\fr{(C_*\dl_3 |k|\pr_\lm)^j\mathcal{L}[K_k](\lambda)}{(j!)^{\fr{1}{s_L}}}\right\|_{H^1_\lm}\right)^l,
\end{align*}
Thus,
\begin{align}\label{fdb1}
\nn&\sum_{n=1}^\infty\left\|\fr{(\dl_3 |k|\pr_\lm)^n}{(n!)^{\fr{1}{s_L}}} \tilde{G}_k(\lm)\right\|_{L^2_\lm}\\
\le& \sum_{l=1}^\infty\fr{\left\|\Upsilon^{(l)}(\mathcal{L}[K_k](\lm))\right\|_{L^\infty_\lm}}{l!}\left(\sum_{j=1}^\infty \left\|\fr{(C_*\dl_3 |k|\pr_\lm)^j\mathcal{L}[K_k](\lambda)}{(j!)^{\fr{1}{s_L}}}\right\|_{H^1_{\lm}}\right)^l.
\end{align}
Next we infer from \eqref{est-K_k} that
\begin{align*}
    &\sum_{j=1}^\infty \left\|\fr{(C_*\dl_3 |k|\pr_\lm)^j\mathcal{L}[K_k](\lambda)}{(j!)^{\fr{1}{s_L}}}\right\|_{H^1_{\lm}}\\
    \le&\left[\left(\sum_{j=1}^\infty \left\|\fr{(\dl_1 |k|\pr_\lm)^j\mathcal{L}[K_k](\lambda)}{(j!)^{\fr{1}{s_L}}}\right\|_{L^2_{\lm}}^2\right)^{\fr12}+\left(\sum_{j=1}^\infty \left\|\fr{(\dl_1 |k|\pr_\lm)^j\pr_\lm\mathcal{L}[K_k](\lambda)}{(j!)^{\fr{1}{s_L}}}\right\|_{L^2_{\lm}}^2\right)^{\fr12}\right]\left(\sum_{j=1}^\infty\left(\fr{C_*\dl_3}{\dl_1}\right)^{2j}\right)^\fr12\\
    =&\fr{\left(\fr{C_*\dl_3}{\dl_1}\right)^{2}}{1-\left(\fr{C_*\dl_3}{\dl_1}\right)^{2}}\left[\left(\sum_{j=1}^\infty \left\|\fr{(\dl_1 |k|t)^jK_k(t)}{(j!)^{\fr{1}{s_L}}}\right\|_{L^2(t\ge0)}^2\right)^{\fr12}+\left(\sum_{j=1}^\infty \left\|\fr{(\dl_1 |k|t)^jtK_k(t)}{(j!)^{\fr{1}{s_L}}}\right\|_{L^2(t\ge0)}^2\right)^{\fr12}\right]\\
    \le&\fr{\left(\fr{C_*\dl_3}{\dl_1}\right)^{2}}{1-\left(\fr{C_*\dl_3}{\dl_1}\right)^{2}}\sqrt{2C_{s_L,\tl{c}}}|k|^{-\fr32}
    \le\fr{\kappa_1}{2}|k|^{-\fr32},
\end{align*}
provided
\begin{align*}
    \dl_3\le\min\left\{\fr{1}{\sqrt{2}},\fr12\fr{\sqrt{\kappa_1}}{(2C_{s_L,\tl{c}})^{\fr14}}\right\}\fr{\dl_1}{C_*}.
\end{align*}
Substituting the above bound into \eqref{fdb1} and using \eqref{Penrose}, we have 
\begin{align*}
    {\rm S}_1\le|k|^\fr32\|\tl{G}_k(\lm)\|_{L^2_\lm}+\fr{1}{\kappa_1}\sum_{n=1}^\infty\fr{1}{2^n}\le\fr{1}{\kappa_1}\left(1+|k|^{\fr32}\|K_k(t)\|_{L^2(t\ge0)}\right)\le C_{s_L,\tl{c},\kappa_1}.
\end{align*}

\noindent{\bf Estimate of $\rm S_2$.}
Let us denote ${\bf f}(u)=\fr{1}{u+1}, \  {\bf g}(\lm)=-\mathcal{L}[\pr_tK_k](\lm)$. By Leibniz formula,
\begin{align*}
&\sum_{n=0}^\infty\left\|\fr{\left(\dl_3|k|\pr_\lm\right)^n}{(n!)^{\fr{1}{s_L}}}\dot{\tilde{G}}_k(\lm) \right\|_{L^2_\lm(\R)}=\sum_{n=0}^\infty\left\|\fr{\left(\dl_3|k|\pr_\lm\right)^n}{(n!)^{\fr{1}{s_L}}}\Big({\bf f}(\mathcal{L}[K_k](\lm)){\bf g}(\lm)\Big) \right\|_{L^2_\lm(\R)}\\
\le&\sum_{n=0}^\infty   \sum_{l\le n}\left\| \fr{(\dl_3|k|\pr_\lm)^l}{(l!)^{\fr{1}{s_L}}}{\bf f}(\mathcal{L}[K_k](\lm))\right\|_{L^\infty_\lm}\left\|\fr{(\dl_3|k|\pr_\lm)^{n-l}}{\big((n-l)!\big)^{\fr1s}}{\bf g}(\lm) \right\|_{L^2_\lm}\left(\fr{l!(n-l)!}{n!}\right)^{\fr{1}{s_L}-1}\\
\le&\sum_{l=0}^\infty\left\| \fr{(\dl_3|k|\pr_\lm)^l}{(l!)^{\fr{1}{s_L}}}{\bf f}(\mathcal{L}[K_k](\lm))\right\|_{L^\infty_\lm}\sum_{n=l}^\infty \left\|\fr{(\dl_3|k|\pr_\lm)^{n-l}}{\big((n-l)!\big)^{\fr{1}{s_L}}}{\bf g}(\lm) \right\|_{L^2_\lm}\\
=&\sum_{n=0}^\infty\left\| \fr{(\dl_3|k|\pr_\lm)^n}{(n!)^{\fr{1}{s_L}}}{\bf f}(\mathcal{L}[K_k](\lm))\right\|_{L^\infty_\lm}\sum_{n=0}^\infty \left\|\fr{(\dl_3|k|\pr_\lm)^{n}}{(n!)^{\fr{1}{s_L}}}\mathcal{L}[\pr_tK_k](\lm) \right\|_{L^2_\lm}.
\end{align*}
Similar to \eqref{fdb1},  by the Fa\`{a} di Bruno's formula and Sobolev embedding $H^1(\R)\hookrightarrow L^\infty(\R)$, and noting that ${\bf f}^{(l)}(u)=\Upsilon^{(l)}(u)$ for all $l=1, 2, 3, \cdots$,  we find that
\begin{align*}
&\sum_{n=1}^\infty\left\|\fr{(\dl_3 |k|\pr_\lm)^n}{(n!)^{\fr{1}{s_L}}} {\bf f}(\mathcal{L}[K_k](\lm))\right\|_{L^\infty_\lm}\\
\le& \sum_{l=1}^\infty\fr{\left\|\Upsilon^{(l)}(\mathcal{L}[K_k](\lm))\right\|_{L^\infty_\lm}}{l!}\left(\sum_{j=1}^\infty \left\|\fr{(C_*\dl_3 |k|\pr_\lm)^j\mathcal{L}[K_k](\lambda)}{(j!)^{\fr{1}{s_L}}}\right\|_{H^1_{\lm}}\right)^l\le\fr{1}{\kappa_1}|k|^{-\fr32},
\end{align*}
and hence
\begin{align*}
    \sum_{n=0}^\infty\left\| \fr{(\dl_3|k|\pr_\lm)^n}{(n!)^{\fr{1}{s_L}}}{\bf f}(\mathcal{L}[K_k](\lm))\right\|_{L^\infty_\lm}\le \fr{2}{\kappa_1}.
\end{align*}

  On the other hand, by Plancherel's theorem and \eqref{est-K_k}, we arrive at
\begin{align*}
    &|k|^\fr12\sum_{n=0}^\infty \left\|\fr{(\dl_3|k|\pr_\lm)^{n}}{(n!)^{\fr{1}{s_L}}}\mathcal{L}[\pr_tK_k](\lm) \right\|_{L^2_\lm}\\
    \le&\left(\sum_{n=0}^\infty \left\|\fr{(\sqrt{2}\dl_3|k|t)^{n}}{(n!)^{\fr{1}{s_L}}}|k|^{\fr12}\pr_tK_k(t) \right\|_{L^2(t\ge0)}^2\right)^{\fr12}\left(\sum_{n=0}^\infty \fr{1}{2^n}\right)^{\fr12}\le C_{s_L,\tl{c}},
\end{align*}
provided $\dl_3\le\fr{\dl_1}{\sqrt{2}}$.
It follows from the above two estimates that
${\rm S}_2\le\fr{2}{\kappa_1}C_{s_L,\tl{c}}$. Then \eqref{est-G_k} holds as long as we choose
\begin{align*}
\dl_2\le \fr12\left(\min\left\{\fr{1}{\sqrt{2}},\fr12\fr{\sqrt{\kappa_1}}{(2C_{s_L,\tl{c}})^{\fr14}}\right\}\fr{\dl_1}{C_*}\right)^{s_L}.
\end{align*}
The  proof of this lemma is complete. 
\end{proof}

\section{Density estimates in Gevrey class}\label{sec: density est}

In this section, we will bound the density $\rho$. To use   the linear estimates obtained in Section \ref{sec: Linear estimates}, we  choose $\lm_L$ and $s_L$ such that $\lm(0)\le\lm_L$ and $s< s_L$ in what follows. 
\subsection{Decomposition of the density $\rho$}\label{sec: decom-rho}

Recall that we decompose the density $\rho$ into two parts, namely, $\rho=\rho^{(1)}+\rho^{(2)}$, where $\rho^{(1)}$ and $\rho^{(2)}$ solve
\begin{align}\label{e-rho1}
    \hat{\rho}^{(1)}_k(t)=\mathcal{N}^{(1)}_k(t)+\int_0^tG_k(t-\tau)\mathcal{N}^{(1)}_k(\tau)d\tau,
\end{align}
and
\begin{align}\label{e-rho2}
    \hat{\rho}^{(2)}_k(t)=\mathcal{N}^{(2)}_k(t)+\int_0^tG_k(t-\tau)\mathcal{N}^{(2)}_k(\tau)d\tau,
\end{align}
respectively, where
\begin{align*}
    \mathcal{N}_k^{(1)}(t)=&\int_{\R^3}S_k(t)[\hat{f}_{\rm in}(v)]\mu^{\fr12}(v)dv+\int_0^t\int_{\R^3}S_k(t-\tau)[\hat{\mathfrak{N}}^{(1)}_k(\tau,v)]\mu^{\fr12}(v)dvd\tau,\\
    \mathcal{N}_k^{(2)}(t)=&\int_0^t\int_{\R^3}S_k(t-\tau)[\hat{\mathfrak{N}}^{(2)}_k(\tau,v)]\mu^{\fr12}(v)dvd\tau,
\end{align*}
with
\begin{align}\label{eq: N_1, N_2}
\begin{aligned}
    \mathfrak{N}^{(1)}(t,x,v):=&-E_1(t,x)\cdot\nb_vf-\mathcal{T}'_{E_2^j}\pr_{v_j}f+E_1(t,x)\cdot v f+\mathcal{T}'_{E_2^j}v^jf ,\\
    \mathfrak{N}^{(2)}(t,x,v):=&-\mathcal{T}_{\pr_{v_j}f}E^j_2+\mathcal{T}_{v_jf}E^j_2+\nu\Gamma(f,f),
\end{aligned}
\end{align}
and $E_i=-\nb_x(-\Dl_x)^{-1}\rho^{(i)}$, $i=1,2$.

\begin{lem}\label{lem-rho:N}
The following estimates for $\rho^{(1)}$ and $\rho^{(2)}$ hold:
\begin{align}\label{es-rho:N1}
    \left\|\la t\ra^b|\nb_x|^\fr32\rho^{(1)}\right\|_{L^2_t\mathcal{G}^{\lm,N-1}_s}\les \left\|\la t\ra^b|\nb_x|^\fr32\mathcal{N}^{(1)}\right\|_{L^2_t\mathcal{G}^{\lm,N-1}_s}.
\end{align}

\begin{align}\label{es-rho:N2}
    \left\|\langle t\rangle^{\fr{3+a-s}{2}}\rho^{(2)}\right\|_{L^2_t\mathcal{G}^{\lm,N}_s}
    \les \left\|
    \langle t\rangle^{\fr{3+a-s}{2}}\mathcal{N}^{(2)}\right\|_{L^2_t\mathcal{G}^{\lm,N}_s}.
\end{align}
and for $\iota=1,2,\cdots,  [\ell/3]+1$,
\begin{align}\label{es-rho:N2-iota}
    \left\|\langle t\rangle^{\fr{1+a-s}{2}}w_{\iota}(t)\rho^{(2)}\right\|_{L^2_t\mathcal{G}^{\lm,N}_s}
    \les \left\|
    \langle t\rangle^{\fr{1+a-s}{2}} w_{\iota}(t)\mathcal{N}^{(2)}\right\|_{L^2_t\mathcal{G}^{\lm,N}_s}.
\end{align}
\end{lem}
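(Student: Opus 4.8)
The statement is an abstract solvability/stability estimate for the Volterra-type equations \eqref{e-rho1} and \eqref{e-rho2}, saying that the operator $\rho \mapsto \rho + G_k * (\text{stuff})$ gains nothing beyond the regularity/decay already present in the forcing $\mathcal{N}^{(i)}$. The plan is to work mode-by-mode in $k \in \Z^3_*$, reduce each of \eqref{es-rho:N1}--\eqref{es-rho:N2-iota} to a statement about the time-convolution operator $\mathcal{N}_k \mapsto \mathcal{N}_k + \int_0^t G_k(t-\tau)\mathcal{N}_k(\tau)\,d\tau$ acting on weighted $L^2_t$ spaces, and then invoke the pointwise kernel bound \eqref{est-G_k}, namely $|G_k(t)| \le C|k|^{-1} e^{-\delta_2 |kt|^{s_L}}$, together with Young's convolution inequality. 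Since the Gevrey norms $\mathcal{G}^{\lambda,N}_s$ (and $\mathcal{G}^{\lambda,N-1}_s$) are, after taking the $x$-Fourier transform, just weighted $\ell^2_k$-sums of norms of $(k,kt)^{m+\beta}\hat\rho_k(t)$, and since $G_k$ acts diagonally in $k$, the whole estimate decouples: it suffices to show, for each fixed $k$ and each pair of time-weights appearing on the two sides, that convolution against $G_k$ is bounded on the relevant weighted $L^2(dt)$ space.

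\textbf{Key steps.} First I would fix $k$ and a multi-index level, and write $\hat\rho^{(i)}_k = \mathcal{N}^{(i)}_k + G_k *_t \mathcal{N}^{(i)}_k$ directly from \eqref{e-rho1}--\eqref{e-rho2}. The identity term contributes trivially, so the work is the convolution term. Second, I would insert the time-weights: for \eqref{es-rho:N1} the weight is $\langle t\rangle^b$; for \eqref{es-rho:N2} it is $\langle t\rangle^{\frac{3+a-s}{2}}$; for \eqref{es-rho:N2-iota} it is $\langle t\rangle^{\frac{1+a-s}{2}}w_\iota(t) = \langle t\rangle^{\frac{1+a-s}{2}}(\kappa_0\nu^{1/3}(1+t))^{\iota/2}$. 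In each case I would use the elementary inequality $\langle t\rangle^\theta \lesssim \langle \tau\rangle^\theta \langle t-\tau\rangle^\theta$ (and similarly $\langle t \rangle^{\iota/2} \lesssim \langle \tau \rangle^{\iota/2}\langle t-\tau\rangle^{\iota/2}$, absorbing the $\nu$-factors, which only help) to move the weight onto $\mathcal{N}^{(i)}_k(\tau)$ and onto the kernel, reducing matters to boundedness of convolution with $\langle t\rangle^{b}|G_k(t)|$ (resp.\ with the appropriate power) on $L^2(dt)$. Third, by \eqref{est-G_k}, $\big\| \langle t\rangle^{\theta}|k|^{\,p}\,e^{-\delta_2|kt|^{s_L}}\big\|_{L^1(t\ge0)}$ is finite and, via the substitution $t \mapsto |k|t$, behaves like $|k|^{-1-\theta-p}$ times an absolute constant $C_{s_L,\delta_2,\theta}$; choosing the exponents so that the surviving power of $|k|$ is $\le 0$ (which holds because $G_k$ carries the prefactor $|k|^{-1}$ and because $k \in \Z^3_*$ forces $|k|\ge 1$, so negative powers of $|k|$ are bounded), Young's inequality $\|G_k *_t h\|_{L^2_t} \le \|G_k\|_{L^1_t}\|h\|_{L^2_t}$ closes each estimate. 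Fourth, I would re-sum in $k$ and in the multi-index $m+\beta$: because all operations were diagonal in $k$ and commuted with the derivative weights $(k,kt)^{m+\beta}$ built into $\mathcal{G}^{\lambda,N}_s$ — here one must be a little careful that $(k,kt)^{m+\beta}$ applied to a convolution in $t$ distributes via the Leibniz/binomial expansion into terms $(k,k\tau)^{m_1}(k,k(t-\tau))^{m_2}$ with $m_1+m_2 = m+\beta$, which is exactly the product structure encoded in the coefficients $b_{m,n,s}$ of Remark \ref{Rmk: coefficients a-b}, and one uses $a_{m,\lambda,s}(t) C_m^n = b_{m,n,s} a_{n,\lambda,s}(t) a_{m-n,\lambda,s}(t)$ plus summability (Lemma \ref{lem-summable}) to split the Gevrey norm across the convolution — the right-hand side reassembles as $\|\langle t\rangle^{(\cdot)} \mathcal{N}^{(i)}\|_{L^2_t \mathcal{G}^{\lambda,\cdot}_s}$. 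For \eqref{es-rho:N1} there is an additional factor $|\nabla_x|^{3/2}$, i.e.\ $|k|^{3/2}$, on both sides; since this, too, is diagonal in $k$ and the kernel bound has room for it ($|k|^{3/2}$ against $|k|^{-1}\|e^{-\delta_2|kt|^{s_L}}\|_{L^1} \sim |k|^{-2}$ still gives a summable $|k|^{-1/2}$), the same scheme applies, with the loss of one derivative level ($N \to N-1$) providing the extra room to move two copies of the Gevrey multiplier through the convolution.

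\textbf{Main obstacle.} The genuinely delicate point is not the scalar $L^2$-convolution bound — that is routine given \eqref{est-G_k} — but the bookkeeping of how the vector-field/Gevrey multiplier $a_{m,\lambda,s}(t)(k,kt)^{m+\beta}$ interacts with a convolution in $t$ whose kernel itself carries a factor $e^{-\delta_2|k(t-\tau)|^{s_L}}$ that should, morally, supply the ``missing'' regularity $(k,k(t-\tau))^{m_2}$ via its Taylor expansion (as in the estimate of $K_k$ in Lemma \ref{lem-point-K_k} and the Faà di Bruno argument for $G_k$ in Proposition \ref{prop:kernel G}). Concretely, one must verify that the $s_L$-Gevrey decay of $G_k$ (with $s_L > s$) dominates the $s$-Gevrey growth demanded by the target norm $\mathcal{G}^{\lambda,N}_s$, uniformly after re-summation; this is where the hypothesis $\lambda(0)\le\lambda_L$ and $s<s_L$ chosen at the start of Section \ref{sec: density est} is used, and it is the step I would write out most carefully, reducing it to the summability estimate $\sum_{n}\big(\frac{\lambda_L^{1/s_L}}{\cdots}\big)^n < \infty$ exactly as in \eqref{holder}. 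The weight-splitting across the convolution (ensuring no time-decay is lost) and the compatibility of the three different time-weights with the single kernel bound are the remaining things to check, but they are mechanical once the Gevrey-versus-$s_L$ comparison is in place.
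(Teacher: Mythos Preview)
Your proposal is correct and follows essentially the same route as the paper. The paper proves only \eqref{es-rho:N2-iota} in detail (stating the other two are analogous), and its argument matches your plan: distribute $(k,kt)^{m+\beta}$ across the convolution via $(k,kt)=(k,k\tau)+(0,k(t-\tau))$ using the $b_{m,\bar n,s}$ coefficients and Lemma~\ref{lem-summable}, split the time weight by $\langle t\rangle^\theta w_\iota(t)\lesssim\langle\tau\rangle^\theta w_\iota(\tau)\langle t-\tau\rangle^{\theta'}$, and then show that the resulting Gevrey-$s$ norm of $G_k$ in the shifted time variable is controlled by $|k|^{-2}\langle kt\rangle^{2N}e^{-\delta_2|kt|^{s_L}}$ via exactly the $s_L>s$ comparison you flag (the paper invokes \eqref{holder} here just as you anticipate). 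The only cosmetic difference is that the paper uses Cauchy--Schwarz in $\tau$ with weight $\langle t-\tau\rangle^{-2}$ followed by Fubini, whereas you propose Young's inequality; both give equivalent conclusions since the kernel has super-polynomial decay.

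Two small remarks on your asides: the factor $|\nabla_x|^{3/2}$ in \eqref{es-rho:N1} appears identically on both sides and simply passes through (the kernel need not absorb it), and the drop from $N$ to $N-1$ there is inherited from the bootstrap hypothesis \eqref{H-rho1} rather than being needed for this lemma---the argument is identical at either regularity level. Neither point affects the validity of your scheme.
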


\begin{proof}
Here we only prove \eqref{es-rho:N2-iota}, since \eqref{es-rho:N1} and \eqref{es-rho:N2} can be proved in a similar manner. To simplify the presentation, for any $f_k\in \mathbb{C}$, $k\in \Z^3_*$, let us denote
\[
\|f_k\|^2_{\mathsf{G}^{\lm(t),N}_s}:=\sum_{m\in\N^6,|\beta|\le N}\kappa^{2|\beta|}a_{m,\lm,s}^2(t)|(k,kt)^{m+\beta}f_k|^2,
\]
and
\[
\|f_k\|^2_{\mathsf{\tl G}^{\lm(t),N}_s}:=\sum_{m\in\N^3,|\beta|\le N}\kappa^{2|\beta|}a_{m,\lm,s}^2(t)|(kt)^{m+\beta}f_k|^2.
\]
Then we can write 
\[
\|\rho\|^2_{\mathcal{G}^{\lm,N}_s}=\sum_{k\in\Z^3_*}\|\hat{\rho}_k\|^2_{\mathsf{G}^{\lm,N}_s}.
\]
Noting  that 
\begin{align*}
    \fr{\la t\ra^{\fr{1+a-s}{2}}w_{\iota}(t)}{\la \tau\ra^{\fr{1+a-s}{2}}w_{\iota}(\tau)}\les \la t-\tau\ra^{\fr{1+a-s+\iota}{2}},
\end{align*}
then by Lemma \ref{lem-summable} and the Cauchy-Schwarz inequality, we find that
\begin{align*}
&\la t\ra^{1+a-s}w_{\iota}^2(t)\sum_{k\in\Z^3_*}\Big\|\int_0^t G_k(t-\tau)\mathcal{N}^{(2)}_k(\tau)d\tau\Big\|_{\mathsf{G}^{\lm,N}_s}^2\\
\les&\sum_{k\in\Z^3_*}\sum_{|\beta|\le N}\kappa^{2|\beta|}\sum_{m\in\N^6}\Bigg[\sum_{\substack{\bar{n}\le m\\ \bar{\beta'}\le\beta} }b_{m,\bar{n},s}\int_0^t \la t-\tau\ra^{\fr{1+a-s+\iota}{2}} a_{\bar{n},\lm,s}(0)\Big| (k(t-\tau))^{\bar{n}+\bar{\beta'}}G_k(t-\tau)\Big|\\
&\times \la \tau\ra^{\fr{1+a-s}{2}}w_{\iota}(\tau) a_{m-\bar{n},\lm,s}(\tau)\left| (k,k\tau)^{m-\bar{n}+\beta-\bar{\beta'}}\mathcal{N}_k^{(2)}(\tau)\right|d\tau\Bigg]^2\\
\les&\sum_{k\in\Z^3_*}\sum_{\substack{m\in\N^6\\ |\beta|\le N}}\sum_{\substack{\bar{n}\le m\\ \bar{\beta'}\le\beta} }\int_0^t \la t-\tau\ra^{3+a-s+\iota} \kappa^{2|\bar{\beta'}|} a^2_{\bar{n},\lm,s}(0)\Big| (k(t-\tau))^{\bar{n}+\bar{\beta'}}G_k(t-\tau)\Big|^2\\
&\times \la \tau\ra^{1+a-s}w_{\iota}^2(\tau)\kappa^{2|\beta-\bar{\beta'}|} a_{m-\bar{n},\lm,s}^2(\tau)\left| (k,k\tau)^{m-\bar{n}+\beta-\bar{\beta'}}\mathcal{N}_k^{(2)}(\tau)\right|^2d\tau\\
\les&\sum_{k\in\Z^3_*}\int_0^t \la t-\tau\ra^{3+a-s+\iota} \| G_k(t-\tau)\|_{\mathsf{\tl G}^{\lm(0),N}_s}^2 \Big(\la \tau\ra^{1+a-s}w_{\iota}^2(\tau)\|\mathcal{N}_k^{(2)}(\tau)\|_{\mathsf{G}^{\lm(\tau),N}_s}^2\Big)d\tau.
\end{align*}
Consequently,
\begin{align*}
&\int_{0}^{T}\la t\ra^{1+a-s}w_{\iota}^2(t)\sum_{k\in\Z^3_*}\Big\|\int_0^t G_k(t-\tau)\mathcal{N}^{(2)}_k(\tau)d\tau\Big\|_{\mathsf{G}^{\lm(t),N}_s}^2dt\\
\les&\sum_{k\in\Z^3_*}\int_{0}^{T}\int_\tau ^{T} \la t-\tau\ra^{3+a-s+\iota} \| G_k(t-\tau)\|_{\mathsf{\tl  G}^{\lm(0),N}_s}^2 \Big(\la \tau\ra^{1+a-s}w_{\iota}^2(\tau)\|\mathcal{N}_k^{(2)}(\tau)\|_{\mathsf{G}^{\lm(\tau),N}_s}^2\Big)dt d\tau\\
\les&\sup_{k\in\Z^3_*}\int_0 ^{T} \la t\ra^{3+a-s+\iota} \| G_k(t)\|_{\mathsf{\tl G}^{\lm(0),N}_s}^2dt\int_{0}^{T}\la \tau\ra^{1+a-s}w_{\iota}^2(\tau)\sum_{k\in\Z^3_*}\|\mathcal{N}_k^{(2)}(\tau)\|_{\mathsf{G}^{\lm(\tau),N}_s}^2d\tau.
\end{align*}

We are left to bound $\sup_{k\in\Z^3_*}\int_0^{T}\la t\ra^{3+a-s+\iota}\|G_k(t)\|^2_{\mathsf{\tl G}^{\lm(0),N}_s}dt$.
To this end, noting that $C_{|m|}^m\le 3^{|m|}$ for $m\in\N^3$, and using the multinomial theorem again, see \eqref{multinomial} for instance, we infer from \eqref{holder} and \eqref{est-G_k} that
\begin{align*}
\|G_k(t)\|^2_{\mathsf{\tl G}^{\lm(0),N}_s}\le &\sum_{|\beta|\le N}\kappa^{2|\beta|}(kt)^{2\beta}\sum_{j=0}^\infty\fr{\Big(\sqrt{3}\lm(0)\Big)^{2j}}{\Gamma_s(j)^2}\sum_{|m|=j}C_{|m|}^m (kt)^{2m}\left|G_k(t)\right|^2\\
=&\sum_{|\beta|\le N}\kappa^{2|\beta|}(kt)^{2\beta}\sum_{j=0}^\infty\fr{\Big(\sqrt{3}\lm(0)|k|t\Big)^{2j}}{\Gamma_s(j)^2}\left|G_k(t)\right|^2\\
\le&C\la kt\ra^{2N}\sum_{j=0}^\infty\fr{\Big(\sqrt{3}\lm(0)|k|t\Big)^{2j}(j+1)^{24}}{(j!)^{\fr2s}}\left|G_k(t)\right|^2\\
\le&C|k|^{-2}\la kt\ra^{2N}\sum_{j=0}^\infty\fr{\big(6\lm(0)|k|t\big)^{2j}}{(j!)^{\fr2s}}e^{-2\dl_2|kt|^{s_L}}\\
\le&C|k|^{-2}\la kt\ra^{2N}e^{([\fr{2}{s}]+1)(6\lm(0))^s|kt|^s}e^{-2\dl_2|kt|^{s_L}}\\
\le&C|k|^{-2}\la kt\ra^{2N} e^{-\dl_2|kt|^{s_L}},
\end{align*}
due to $s_L>s$. Consequently,
\begin{align*}
    \int_0^{T}\la t\ra^{3+a-s+\iota}\|G_k(t)\|^2_{\mathsf{G}^{\lm(0),N}_s}dt
    \le C|k|^{-2}\int_0^{T}\la t\ra^{3+a-s+\iota} \la kt\ra^{2N}e^{-\dl_2|kt|^{s_L}}dt\le C|k|^{-3}.
\end{align*}
Then \eqref{es-rho:N2-iota} follows immediately.
\end{proof}

\subsection{Estimates for $\rho^{(1)}$}
To begin with,  let us define
\begin{align}\label{def-frak-g}
\mathfrak{g}_k(t,v)=\begin{cases}S_k^{*}(t)[\sqrt{\mu}(v)],\ \ {\rm if}\ \ k\in\Z^3_*;\\
\qquad\qquad\quad0, \ \ {\rm if}\ \ k=0,
\end{cases}
\quad{\rm and}\quad \mathfrak{g}(t,x,v)=\sum_{k\in\Z^3}\mathfrak{g}_k(t,v)e^{ik\cdot x}.
\end{align}

Recalling the definition of $\|\cdot\|_{\bar{\mathcal{G}}^{\lm,N}_{s,\ell}}$ in \eqref{def-varGnorm}, and in view of Proposition \ref{prop-semigroup}, we have the following bound for $\mathfrak{g}$.
\begin{lem}\label{lem:est-S^*}
For any $\ell_0\in\N, N_0\in\N$, there holds
\begin{align}\label{bd-frak-g1}
   \la \nu^{\fr13}t\ra^{3} \|\mathfrak{g}(t)\|_{\bar{\mathcal{G}}^{\lm_L,N_0}_{s_L,\ell_0}}^2\le C.
\end{align}
Furthermore, 
\begin{align}\label{bd-frak-g2}
   \la \nu^{\fr13}t\ra^{3} \|v\mathfrak{g}(t)\|_{\bar{\mathcal{G}}^{\lm_L,N_0}_{s_L,\ell_0}}^2\le C.
\end{align}
\end{lem}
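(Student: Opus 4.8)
\textbf{Proof proposal for Lemma \ref{lem:est-S^*}.}

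The plan is to read both bounds \eqref{bd-frak-g1} and \eqref{bd-frak-g2} off the semigroup energy inequality \eqref{semigroup-est} of Proposition \ref{prop-semigroup}, applied to the dual equation satisfied by $\mathfrak{g}_k$. Recall that $S_k^*(t)$ is the propagator of $\partial_t \Phi_k - ik\cdot v\,\Phi_k + \nu L\Phi_k = 0$, which is the same as \eqref{eq:linear-Landau} after replacing $k$ by $-k$; hence $\mathfrak{g}_k(t,v) = S_{-k}(t)[\sqrt{\mu}]$ up to relabeling, and all of Section \ref{sec-semigroup est} applies verbatim with $k \mapsto -k$. The key point is that $L\sqrt{\mu} = 0$, so $\sqrt{\mu}$ is a very benign, $\nu$-independent, and infinitely regular (in the vector-field sense) initial datum: for any $s_L \in (\tfrac12,\tfrac23)$, $\lambda_L$, $N_0$, $\ell_0$, one has $\mathsf{E}^w_{-k,\ell_0}(\mathfrak{g}_k(0)) = \mathsf{E}^w_{-k,\ell_0}(\sqrt{\mu}) \le C$ uniformly in $k$, because $\tilde Y^{m}\sqrt\mu|_{t=0} = \partial_v^{m}\sqrt\mu$ and the Gaussian decay kills all the $\langle v\rangle^{\ell}$ weights and all the combinatorial factors $a_{m,\lambda_L,s_L}$ (this is the same kind of summation bound used in the proof of Lemma \ref{lem-point-K_k}). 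Note the weights $w_\iota(0) = (\kappa_0\nu^{1/3})^{\iota/2}\le 1$, so the initial total energy is bounded independently of $\nu$.

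First I would fix $\ell_0, N_0$ and choose the parameters in Section \ref{sec-semigroup est} with $\ell$ large enough that $\ell - 2([\ell/3]+1) \ge \ell_0$ and $N \ge N_0$; then apply \eqref{semigroup-est} with datum $\sqrt\mu$ to get, for every $k\in\Z^3_*$ and all $t$,
\begin{align*}
    \mathsf{E}^{w}_{-k,\ell}(\mathfrak{g}_k(t)) + \tfrac14\nu^{1/3}\int_0^t \mathsf{D}^{w}_{-k,\ell}(\mathfrak{g}_k(t'))\,dt' \les \mathsf{E}^{w}_{-k,\ell}(\sqrt\mu) \les 1.
\end{align*}
The left side contains the $\iota = [\ell/3]+1$ term $w_{[\ell/3]+1}^2(t)\,\mathsf{E}_{-k,\ell-2([\ell/3]+1)}(\mathfrak{g}_k(t))$ with $w_{[\ell/3]+1}^2(t) = (\kappa_0\nu^{1/3}(1+t))^{[\ell/3]+1} \gtrsim \langle \nu^{1/3}t\rangle^{[\ell/3]+1} \gtrsim \langle\nu^{1/3}t\rangle^3$ once $\ell \ge 9$. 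Comparing $\mathsf{E}_{-k,\ell-2([\ell/3]+1)}$ with the norm defining $\|\cdot\|_{\bar{\mathcal G}^{\lambda_L,N_0}_{s_L,\ell_0}}$ in \eqref{def-varGnorm} — both are sup over $k$ of sums of $a^2_{m,\lambda_L,s_L}(0)\kappa^{2|\beta|}\|\langle v\rangle^{\ell_0}(Y^*)^{m+\beta}\mathfrak{g}_k\|_{L^2_v}^2$, and on the Fourier side $(Y^*)^{m} = (\nabla_v - t\nabla_x)^{m}$ acting on the $k$-th mode equals $(\partial_\xi - ik t)^{m}$ which is exactly $\tilde Y^{m}$ up to sign — one sees that $\|\mathfrak{g}(t)\|^2_{\bar{\mathcal G}^{\lambda_L,N_0}_{s_L,\ell_0}} = \sup_k \|\mathfrak{g}_k(t)\|^2_{\bar{\mathsf G}} \les \sup_k \mathsf{E}_{-k,\ell-2([\ell/3]+1)}(\mathfrak{g}_k(t)) \les \langle\nu^{1/3}t\rangle^{-3}$, which is \eqref{bd-frak-g1}. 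For \eqref{bd-frak-g2}, I would simply absorb the factor $v$ into the velocity weight: $\|v\mathfrak g(t)\|_{\bar{\mathcal G}^{\lambda_L,N_0}_{s_L,\ell_0}} \lesssim \|\mathfrak g(t)\|_{\bar{\mathcal G}^{\lambda_L,N_0}_{s_L,\ell_0+1}}$, and rerun the same argument with $\ell_0$ replaced by $\ell_0 + 1$ (increasing $\ell$ accordingly), since the commutator $[\,v,\,Y^*\,]$ is a lower-order constant and is harmless in the hierarchy.

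The main obstacle — and the only place needing care — is the bookkeeping identifying the two different Gevrey norms: the semigroup norm \eqref{def-varD}--\eqref{def-total-en} carries $x$-derivatives as multiplication by $k$, a time-dependent radius $\lambda(t)$, and the hypocoercivity cross-terms, whereas $\|\cdot\|_{\bar{\mathcal G}^{\lambda_L,N_0}_{s_L,\ell_0}}$ in \eqref{def-varGnorm} uses the fixed radius $\lambda(0)=\lambda_L$ and only the $(Y^*)^{m+\beta}$ fields. Since $\lambda(t) \le \lambda(0)$ the fixed-radius norm is the smaller one, so the comparison goes the right way; the $\mathfrak{E}^n_{k,\ell}$ energy dominates $\mathrm A_0\|\langle v\rangle^{\ell}\tilde Y^{m+\beta}\mathfrak g_k\|_{L^2_v}^2$ directly (the $\alpha = 0$ piece), and the $\sup_k$ in \eqref{def-varGnorm} is recovered because the estimate \eqref{semigroup-est} is uniform in $k$. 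I expect this to be a short, essentially mechanical verification; the genuine analytic content is entirely contained in Proposition \ref{prop-semigroup}, which we are allowed to invoke. I would close by remarking that the constant $C$ depends on $\ell_0, N_0, s_L, \lambda_L$ but not on $\nu$, $t$, or $k$.
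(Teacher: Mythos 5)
Your overall plan is the right one -- reduce to the semigroup energy inequality \eqref{semigroup-est} applied to the benign datum $\sqrt{\mu}$, use the fact that $\mathfrak{g}_k$ solves \eqref{eq:linear-Landau} with $k\mapsto -k$, and absorb $v$ into the velocity weight for the second bound. The treatment of $v\mathfrak{g}$ via the commutator $[(Y^*)^{m+\beta},v_i]$ is exactly what the paper does. However, the step where you extract the decay factor from the weighted energy is wrong.

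You claim that $w_{[\ell/3]+1}^2(t)=(\kappa_0\nu^{1/3}(1+t))^{[\ell/3]+1}\gtrsim\la\nu^{1/3}t\ra^{[\ell/3]+1}$ once $\ell\ge 9$, and therefore $\mathsf{E}_{-k,\ell-2([\ell/3]+1)}(\mathfrak g_k(t))\lesssim w_{[\ell/3]+1}^{-2}(t)\lesssim\la\nu^{1/3}t\ra^{-3}$. This inequality fails on the early-time regime: at $t=0$, $w_\iota^{-2}(0)=(\kappa_0\nu^{1/3})^{-\iota}\sim\nu^{-\iota/3}\gg 1$, while $\la\nu^{1/3}\cdot 0\ra^{-3}=1$. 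More generally, for any $t$ with $\nu^{1/3}(1+t)\lesssim 1$ the weight $w_\iota^2(t)$ is small, not large, so the single highest-$\iota$ term of $\mathsf{E}^{w}_{-k,\ell}$ carries no useful information there. What actually holds is $\la\nu^{1/3}t\ra^3\lesssim 1+\kappa_0^{-3}w_3^2(t)$, so you must split the estimate and use \emph{both} an unweighted term $\mathsf{E}_{-k,\ell'}$ (to control the regime $\nu^{1/3}t\lesssim 1$, at the cost of a higher velocity weight $\ell'$) and the $\iota=3$ term $w_3^2(t)\mathsf{E}_{-k,\ell'-6}$ (to control $\nu^{1/3}t\gtrsim 1$). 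Concretely, $\la\nu^{1/3}t\ra^3\|\mathfrak g\|^2_{\bar{\mathcal G}_{\ell_0}}\lesssim |k|^{-2}\big(\mathsf{E}_{-k,\ell_0+6}(\mathfrak g_k)+w_3^2(t)\mathsf{E}_{-k,\ell_0}(\mathfrak g_k)\big)\lesssim|k|^{-2}\mathsf{E}^w_{-k,\ell_0+6}(\mathfrak g_k)$, and then \eqref{semigroup-est} closes the argument. Your choice of the top $\iota=[\ell/3]+1$ is also wasteful; $\iota=3$ is the right level for the cubed factor, and there is no need to take $\ell$ large beyond $\ell_0+6$.
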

\begin{proof}
Recalling \eqref{eq:varE_l^n}, \eqref{def-varE} and \eqref{def-total-en}, for any fixed $k\in\Z^3_*$, we find that
    \begin{align*}
        &\la\nu^{\fr13}t\ra^3\|\mathfrak{g}(t)\|_{\bar{\mathcal{G}}^{\lm_L,N_0}_{s_L,\ell_0}}^2\le\la\nu^{\fr13}t\ra^3\|\mathfrak{g}(t)\|_{\bar{\mathcal{G}}^{\lm_L,N_0}_{s_L,\ell_0+4}}^2 \\
        \les&\fr{1}{|k|^2}\mathsf{E}_{k,\ell_0+6}(\mathfrak{g}_k(t))+\fr{1}{|k|^2}w_{3}^2(t)\mathsf{E}_{k,\ell_0+6-6}(\mathfrak{g}_k(t))\\
        \les& \fr{1}{|k|^2}\mathsf{E}^w_{k,\ell_0+6}(\mathfrak{g}_k(t))\les \fr{1}{|k|^2}\mathsf{E}^w_{k,\ell_0+6}(\mathfrak{g}_k(0))\les \|\la \nb_v\ra\sqrt{\mu}\|^2_{\mathcal{G}^{\lm_L,N_0}_{s_L, \ell_0+6}}.
    \end{align*}
    Here the $\bar{\mathcal{G}}^{\lm_L,N_0}_{s_L,\ell_0}$ norm is defined in \eqref{def-varGnorm}
To prove \eqref{bd-frak-g2}, note that for $|m|+|\beta|>0$,
\begin{align*}
    (Y^*)^{m+\beta}(v_i\mathfrak{g})=v_i(Y^*)^{m+\beta}\mathfrak{g}+(m_i+\beta_i)(Y^*)^{m+\beta-e_i}\mathfrak{g}.
\end{align*}
Assume, w.l.o.g., that $m_i>0$. Then it is not difficult to verify that
\begin{align*}
    a_{m,\lm_L,s_L} m_i\les a_{m-e_i,\lm_L,s_L},
\end{align*}
and hence
\begin{align}\label{est-vg}
    \|v\mathfrak{g}(t)\|_{\bar{\mathcal{G}}^{\lm_L,N_0}_{s_L,\ell_0}}^2\les \|\mathfrak{g}(t)\|_{\bar{\mathcal{G}}^{\lm_L,N_0}_{s_L,\ell_0+1}}^2+\|\mathfrak{g}(t)\|_{\bar{\mathcal{G}}^{\lm_L,N_0}_{s_L,\ell_0}}^2\les \|\la \nb_v\ra\sqrt{\mu}\|^2_{\mathcal{G}^{\lm_L,N_0}_{s_L, \ell_0+7}}.
\end{align}
Then \eqref{bd-frak-g2} follows immediately.
\end{proof}

The aim of this section is to establish the following Proposition:

\begin{prop}\label{prop-rho1}
    Under the bootstrap hypotheses \eqref{bd-en}--\eqref{H-phi}, if
    \begin{align}\label{res-N-b}
        \max\{8,b+6\}\le\fr{N}{2},
    \end{align}

\begin{align}\label{restrction-s-rho1}
    s>\fr13,\quad{\rm with}\quad 0<a\le\min\left\{\fr{3s-1}{2},  \fr{s}{b+4}\right\},
\end{align}

    there holds
\begin{align}\label{bd-rho1}
    \big\|\la t\ra^b|\nb_x|^{\fr32}\rho^{(1)}\big\|^2_{L^2_t\mathcal{G}^{\lm,N-1}_{s} }\les\nn& \|\la\nb_x,\nb_v \ra^{2b+2}f_{\rm in}\|^2_{\mathcal{G}^{\lm(0),N-1}_{s,0}}\\
    \nn&+\left\| \la t\ra^b |\nb_x|^{\fr32}\rho^{(1)}\right\|^2_{L^2_t\mathcal{G}^{\lm,N-1}_s}{ \Big(\left\|g\right\|^2_{L^\infty_t\mathcal{G}^{\lm,N}_{s,1}}+\left\|\nb_xg\right\|^2_{L^\infty_t\mathcal{G}^{\lm,N}_{s,1}}\Big)}\\
    &+\left\|\la t\ra^{\fr{3+a-s}{2}} \rho^{(2)} \right\|^2_{L^2_t\mathcal{G}^{\lm,N}_{s}}{ \Big(\left\|g\right\|^2_{L^\infty_t\mathcal{G}^{\lm,N}_{s,1}}+\left\|\nb_xg\right\|^2_{L^\infty_t\mathcal{G}^{\lm,N}_{s,1}}\Big)}.
\end{align}

\end{prop}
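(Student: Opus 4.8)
The plan is to exploit the decomposition of $\rho^{(1)}$ from Lemma~\ref{lem-rho:N}, which reduces \eqref{bd-rho1} to estimating $\big\|\la t\ra^b|\nb_x|^{3/2}\mathcal{N}^{(1)}\big\|_{L^2_t\mathcal{G}^{\lm,N-1}_s}$, where
\[
\mathcal{N}_k^{(1)}(t)=\int_{\R^3}S_k(t)[\hat{f}_{\rm in}]\mu^{\fr12}dv+\int_0^t\int_{\R^3}S_k(t-\tau)[\hat{\mathfrak{N}}^{(1)}_k(\tau,v)]\mu^{\fr12}dvd\tau .
\]
First I would handle the initial-data contribution: writing it via the dual semigroup as $\int_{\R^3}\hat f_{\rm in}(v)\,\overline{\hat{\mathfrak g}_k(t,v)}\,dv$ and using the decay $\la\nu^{1/3}t\ra^3\|\mathfrak g(t)\|_{\bar{\mathcal G}^{\lm_L,N_0}_{s_L,\ell_0}}^2\le C$ from Lemma~\ref{lem:est-S^*}, together with the fact that $\mathfrak g_k$ concentrates near $\xi\approx k(t-\tau)$ (so at $\tau=0$ near $\xi\approx kt$), one gains the factor $\la kt\ra^{-2b-2}$ from the Gevrey regularity of $\hat f_{\rm in}$, producing $\|\la\nb_x,\nb_v\ra^{2b+2}f_{\rm in}\|^2_{\mathcal G^{\lm(0),N-1}_{s,0}}$ after integrating in $t$. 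The main work is then the double-time integral with $\mathfrak N^{(1)}$, which decomposes into the collisionless transport pieces $-E_1\cdot\nb_vf$, $-\mathcal T'_{E_2^j}\pr_{v_j}f$ and the lower-order pieces $E_1\cdot vf$, $\mathcal T'_{E_2^j}v^jf$.

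\textbf{The transport term.} The principal difficulty is $-E_1\cdot\nb_vf$ (and the paraproduct analogue $-\mathcal T'_{E_2^j}\pr_{v_j}f$, where $E_2$ sits at the \emph{lower} frequency). Writing everything on the Fourier side and pairing against $\mathfrak g_k(t-\tau,\cdot)$, one arrives at the plasma-echo-type trilinear expression
\[
\sum_{l}\int_{\R^3} i\widehat{(E_1)}_l(\tau)\cdot\xi\,\hat f_{k-l}(\tau,\xi)\,\overline{\hat{\mathfrak g}_k(t-\tau,\xi)}\,d\xi .
\]
Here I would use the now-standard split by relative sizes of $|lt|$, $|\xi|$, $|k-l,\xi+(k-l)t|$ and isolate the critical regime where $|\xi|\approx |l(t-\tau)|\lesssim |lt|$. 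The gain needed comes from the ratio $\big(\lm(t)/\lm(\tau)\big)^{|m|}$ rather than from a Fourier multiplier: since $\lm(\tau)-\lm(t)\gtrsim a\tl\dl\frac{t-\tau}{(1+t)(1+\tau)^a}$ (inequality~\eqref{lm-gap}), one converts regularity into time decay exactly as in \eqref{lm-gap1}--\eqref{lm-gap5}. The factor $|\nb_x|^{3/2}$ and the weight $\la t\ra^b$ must be redistributed: $\la t\ra^b\lesssim \la t-\tau\ra^b\la\tau\ra^b$, the $\la t-\tau\ra^b$ is absorbed by the $\la\nu^{1/3}(t-\tau)\ra^{-3}$ (or directly by $e^{-\dl_2|k(t-\tau)|^{s_L}}$) decay of $\mathfrak g$ after noting $s_L>s$ so the residual $e^{-c|\xi-k(t-\tau)|^{s_L}}$ beats the polynomial loss, and the $\la\tau\ra^b$ pairs with $\hat\rho^{(1)}$ or $\hat\rho^{(2)}$ to reconstruct the norms on the right side of \eqref{bd-rho1}. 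The $\hat f_{k-l}$ factor is controlled by $\|g\|_{L^\infty_t\mathcal G^{\lm,N}_{s,1}}+\|\nb_xg\|_{L^\infty_t\mathcal G^{\lm,N}_{s,1}}$ via Corollary~\ref{coro-compose} (the extra $\la v\ra$ weights and one $x$-derivative are there precisely to absorb the $\xi$ and the velocity localization lost to $\mu^{1/2}$ and to $\mathfrak g$), and the frequency sum over $l$ is summable because $E_i=-\nb_x(-\Dl_x)^{-1}\rho^{(i)}$ carries the extra $|l|^{-1}$, combined with $|l|^{3/2}$ from $|\nb_x|^{3/2}$, leaving a convergent $\sum_l |l|^{1/2}e^{-c|l\tau|^{s_L}}$-type sum after using one unit of Gevrey regularity on $\rho$. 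I would carry out the low-high/high-low splittings separately: in the low-high case (the echo-producing one) the $\la t-\tau\ra^b$ and the regularity gain do the work; in the high-low case, no CK term is needed and crude Cauchy–Schwarz suffices.

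\textbf{The lower-order terms.} The pieces $E_1\cdot vf$ and $\mathcal T'_{E_2^j}v^jf$ are genuinely easier: there is no $\xi$ factor (no $\nb_v$), so one only needs $\|v\mathfrak g(t)\|_{\bar{\mathcal G}}$, which Lemma~\ref{lem:est-S^*} also controls with the same $\la\nu^{1/3}t\ra^{-3}$ decay, and the same echo mechanism applies with strictly more room. These contribute to the same right-hand side with $\|g\|_{L^\infty_t\mathcal G^{\lm,N}_{s,1}}$ (no $\nb_x$ even needed, but harmless). Assembling all pieces, squaring, integrating $\la t\ra^{2b}\,dt$, and using the restrictions \eqref{res-N-b} ($b+6\le N/2$ gives room for the regularity transfers) and \eqref{restrction-s-rho1} ($s>1/3$ and $a\le\min\{\frac{3s-1}{2},\frac{s}{b+4}\}$ make the time integrals converge and the $\lm(t)$-gap strong enough to beat the at-most-$\la t-\tau\ra^{b+O(1)}$ losses), one obtains \eqref{bd-rho1}. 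The main obstacle I anticipate is the bookkeeping of the plasma-echo regime on the physical side: unlike the Fourier-multiplier proofs, here one must track simultaneously the multi-indices $m,\beta$, the mode shifts $k,l$, and the $v$-frequency variable $\xi$, and verify that the combinatorial coefficients $b_{m,n,s}$ from Remark~\ref{Rmk: coefficients a-b} together with the $\lm$-gap genuinely close — this is where Lemmas~\ref{lem-summable} and~\ref{lem-convolution} (invoked elsewhere in the paper) must be applied with care, and where the choice $a\le s/(b+4)$ is consumed.
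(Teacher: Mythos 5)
The overall architecture you outline — reducing to $\mathcal{N}^{(1)}$ via Lemma~\ref{lem-rho:N}, bounding the initial contribution by duality with $\mathfrak g_k=S_k^*(\cdot)[\sqrt\mu]$, splitting the transport nonlinearity into paraproduct pieces, and using the $\lambda$-gap to resolve the plasma echoes — does match the paper's proof. However, there is a genuine gap in the way you propose to control the time weight $\la t\ra^b$ in the double-time integral for $E_1\cdot\nb_vf$, and the mechanism you describe would not close.

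You write $\la t\ra^b\lesssim\la t-\tau\ra^b\la\tau\ra^b$ and claim $\la t-\tau\ra^b$ is absorbed by the decay of $\mathfrak g$, either through $\la\nu^{1/3}(t-\tau)\ra^{-3}$ from Lemma~\ref{lem:est-S^*} or ``directly by $e^{-\dl_2|k(t-\tau)|^{s_L}}$.'' Neither works. First, the exponential $e^{-\dl_2|kt|^{s_L}}$ is the decay of the Volterra kernel $G_k$ from Proposition~\ref{prop:kernel G}, not of $\mathfrak g$; the dual-semigroup object satisfies $|\hat{\mathfrak g}_k(t,\xi)|\lesssim e^{-c|\xi-kt|^{s_L}}$, so it has no direct decay in $t$ at frequencies $\xi\approx kt$ — and that resonant region $|\xi|\approx|k(t-\tau)|$ is exactly the echo case you are trying to handle. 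Second, the enhanced-dissipation decay $\la\nu^{1/3}(t-\tau)\ra^{-3}$ is essentially unity for $t-\tau\ll\nu^{-1/3}$ and, even ignoring $\nu$, supplies only three powers of $t-\tau$; here $b\ge[\ell/3]/2+2$ (from \eqref{res-final-1}), so for $\ell>18$ one has $b\ge 5$ and the mechanism fails outright.

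The actual argument splits the $\tau$-integral at $\tau=2t/3$ and uses two disjoint mechanisms. When $\tau\le2t/3$, the time gap \eqref{lm-gap} gives $\lambda(t)/\lambda(\tau)\le e^{-\underline c\,t^{-a}}$ uniformly (not just in the echo regime), and the inequality \eqref{eq:gain m} with $\Theta=(b+3)/(s-a)$ converts this into
$|k|^{3/2}\la t\ra^{b+3}(\lambda(t)/\lambda(\tau))^{|m|}\lesssim|kt|^{b+3+a\Theta}/(|k|^{3/2+b+a\Theta}|m|^\Theta)$;
the $|kt|^{b+3+a\Theta}$ loss is then repaid by spending $b+3$ integer $Z$-derivatives on the modulator plus a fractional $a\Theta$-derivative supplied by the interpolation Lemma~\ref{lem-interp1} — this is precisely where $a\le s/(b+4)$ and $b+6\le N/2$ are consumed, with the extra power $\la t\ra^{-3}$ giving $t$- and $k$-summability. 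When $\tau\ge2t/3$, one simply uses $\la t\ra\approx\la\tau\ra$ and moves $\la t\ra^b$ onto $\rho(\tau)$, after which the echo analysis (sets $D$, $D^*$, cases $l=k$ and $l\ne k$) proceeds with the gap \eqref{lm-gap3}. Your parallel comment that the $\ell$-sum converges because of one unit of Gevrey regularity on $\rho$ also diverges from the paper, which instead pays $\la k-l\ra^{-4}$ derivatives on $f$ and uses the time-Jacobian $\int|k|/\la kt-l\tau\ra^2\,dt\lesssim1$; but that is a lesser concern than the time-weight issue above, which as stated leaves a hole in the core estimate.
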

\begin{proof}
To bound $\big\|\la t\ra^b|\nb_x|^{\fr32}\rho^{(1)}\big\|_{L^2_t\mathcal{G}^{\lm,N-1}_{s} }$, we estimate the terms in $\mathcal{N}^{(1)}_k(t)$ one by one.

\noindent{$\diamond$ \underline{\it Initial contribution}.}
Noting that $(k,kt)=(k,\eta)+(0,-\eta+kt)$ and that for $b\ge\fr12$, there holds
\begin{align*}
    \la t\ra^{2b}|k|^3\les\la k, kt\ra^{2b+2}\la t\ra^{-2}\les\la k, \eta\ra^{2b+2}\la -\eta+kt\ra^{2b+2}\la t\ra^{-2},
\end{align*}
using the dual semi-group $S^*_k(t)$ and the Plancherel's theorem, we have
\begin{align}\label{est-rho1-initial}
    \nn&\int_0^{T}\la t\ra^{2b}\sum_{k\in\Z^3_*}|k|^3\sum_{|\beta|\le N-1}\kappa^{2|\beta|}\sum_{m\in \N^6}\Big|a_{m,s}(t)(k,kt)^{m+\beta}\int_{\R^3}S_k(t)\big[(\widehat{f_{\rm in}})_k(v)\big]\mu^{\fr12}(v)dv\Big|^2dt\\
    =\nn&\int_0^{T}\la t\ra^{2b}\sum_{k\in\Z^3_*}|k|^3\sum_{|\beta|\le N-1}\kappa^{2|\beta|}\sum_{m\in \N^6}\bigg|\sum_{\substack{n\le m, \beta'\le\beta}}C_{\beta}^{\beta'} b_{m,n,s}\int_{\R^3}a_{n,s}(t)(k,\eta)^{n+\beta'}(\widehat{f_{\rm in}})_k(\eta)\\
    \nn&\times a_{m-n,s}(t)(0,-\eta+kt)^{m-n+\beta-\beta'}\overline{\mathcal{F}_v\big[S_k^*(t)[\mu^{\fr12}(v)]\big]}(\eta)d\eta\bigg|^2dt\\
    \les\nn&\int_0^{T}\la t\ra^{-2}\|\la\nb_x,\nb_v \ra^{2b+2}f_{\rm in}\|^2_{\mathcal{G}^{\lm(0),N-1}_{s,0}}\|\la Y^*\ra^{2b+2}\mathfrak{g}(t)\|^2_{\bar{\mathcal{G}}^{\lm(0),N-1}_{s,0}}dt\\
    \les&\|\la\nb_x,\nb_v \ra^{2b+2}f_{\rm in}\|^2_{\mathcal{G}^{\lm(0),N-1}_{s,0}}\sup_t\|\la Y^*\ra^{2b+2}\mathfrak{g}(t)\|^2_{\bar{\mathcal{G}}^{\lm(0),N-1}_{s,0}}.
\end{align}

\noindent{$\diamond$ \underline{\it Treatments of $E_1\cdot\nb_vf$}.} We now consider the collisionless nonlinear interaction $-E_1\cdot\nb_vf$  in $\mathfrak{N}^{(1)}$ in \eqref{eq: N_1, N_2}. To this end, let us denote
\begin{align}\label{T1E1}
    {\bf T}^{(1)}[E_1]
:=\nn&\int_0^{T}\la t\ra^{2b}\sum_{k\in\Z^3_*}|k|^{3}\sum_{|\beta|\le N-1}\kappa^{2|\beta|}\sum_{m\in\N^6}\Bigg|a_{m,s}(t)(k,kt)^{m+\beta}\\
\nn& \times\int_0^t\int_{\R^3}\sum_{l\in\Z^3_*}\hat{\rho}^{(1)}_l(\tau)\fr{l}{|l|^2}\cdot \eta \hat{f}_{k-l}(\tau,\eta)\bar{\hat{\mathfrak{g}}}_k(t-\tau,\eta) d\eta d\tau\Bigg|^2dt\\
=\nn&\int_0^{T}\la t\ra^{2b}\sum_{k\in\Z^3_*}|k|^{3}\sum_{|\beta|\le N-1}\kappa^{2|\beta|}\sum_{m\in\N^6}\Bigg|a_{m,s}(t)(k,kt)^{m}\\
\nn&\times\int_0^t\int_{\R^3}\Big(\sum_{\substack{\beta'\le\beta,|\beta'|\le|\beta|/2\\\beta''\le\beta-\beta'}}+\sum_{\substack{\beta'\le\beta,|\beta'|>|\beta|/2\\\beta''\le\beta-\beta'}}\Big)C_\beta^{\beta'}C_{\beta-\beta'}^{\beta''}\sum_{l\in\Z^3_*}(l,l\tau)^{\beta'}\hat{\rho}^{(1)}_l(\tau)\fr{l}{|l|^2}\\
\nn&\cdot \eta(k-l,\eta+(k-l)\tau)^{\beta''}\hat{f}_{k-l}(\tau,\eta)(0,-\eta+k(t-\tau))^{\beta-\beta'-\beta''}\bar{\hat{\mathfrak{g}}}_k(t-\tau,\eta) d\eta d\tau\Bigg|^2dt\\
:=&{\bf T}^{(1);\rm LH}[E_1]+{\bf T}^{(1);\rm HL}[E_1].
\end{align}

{\bf Case 1: $\tau\le \fr{2t}{3}$.} In this case, the gap \eqref{lm-gap} implies that (here and in what follows we focus on the case $t\ge1$, noting that the case $t\leq 1$ follows more or less immediately)
\begin{align}\label{lm-gap1}
    \fr{\lm(\tau)-\lm(t)}{\lambda(\tau)}\ge \fr{a\tl{\dl}}{6(\lm_\infty+\tl{\dl})}\fr{1}{(1+t)^{a}}
    \ge\fr{a\tl{\dl}}{6(\lm_\infty+\tl{\dl})2^{a}}t^{-a}=:\underline{c}t^{-a}.
\end{align}
Combining this with the elementary inequality $1-x\le e^{-x}$ for all $x\ge0$ yields
\begin{align}\label{lm-gap2}
\fr{\lm(t)}{\lm(\tau)}=1-\fr{\lm(\tau)-\lm(t)}{\lm(\tau)}\le1-\underline{c}t^{-a}\le e^{-\underline{c}t^{-a}}.
\end{align}
Consequently, for any  $b\in \N$, $|m|>0$, and $\Theta>0$, there holds
\begin{align}\label{eq:gain m}
|k|^{\fr32}\la t\ra^{b+3}\left(\fr{\lm(t)}{\lm(\tau)}\right)^{|m|}\le |k|^{\fr32}\la t\ra^{b+3} e^{-\underline{c}|m|t^{-a}}\le C\fr{|kt|^{b+3+a\Theta}}{|k|^{\fr32+b+a\Theta}|m|^{\Theta}}.
\end{align}
 Now choosing $\Theta=\fr{b+3}{s-a}$, for $ s\ge a(b+4)$, arguing as \eqref{est-interp1} and\eqref{est-interp2} in Lemma \ref{lem-interp1}, for $|m|>0$, we have
 \begin{align*}
    \nn&\left||k|^{\fr32}\la t\ra^b a_{m,s}(t) (k,kt)^{m+\beta}\right|\\
    \nn\le&C\fr{1}{|k|^{\fr32+b+a\Theta}\la t\ra^3}\left[\sum_{|\bar{\al}|=b+3}C_{|\bar{\al}|}^{\bar{\al}}a_{m+\bar{\al},s}^2(\tau)(k,kt)^{2(m+\bar{\al})+2\beta}\right]^{\fr{1-a\Theta}{2}}\\
&\times \left[\sum_{i=4}^6\sum_{|\bar{\al}|=b+3}C_{|\bar{\al}|}^{\bar{\al}}a_{m+\bar{\al}+\bar{e}_i,s}^2(\tau)(k,kt)^{2(m+\bar{\al}+\bar{e}_i)+2\beta}\right]^{\fr{a\Theta}{2}}.
\end{align*}
Denote $F_k(t,\tau,\eta)=\sum_{l\in\Z^3_*}\hat{\rho}^{(1)}_l(\tau)\frac{l}{|l|^2}\cdot  \eta\hat{f}_{k-l}(\tau,\eta)\bar{\hat{\mathfrak{g}}}_k(t-\tau,\eta)$ for the moment for the sake of presentation, we then deduce that
\begin{align}
    \nn&\sum_{0<m\in\N^6}\Bigg||k|^{\fr32}\la t\ra^b a_{m,s}(t) (k,kt)^{m+\beta}\int_0^{\fr{2t}{3}}\int_{\R^3}F_k(t,\tau,\eta)d\eta d\tau\Bigg|^2\\
    \nn\les&\fr{1}{|k|^{3+2b+2a\Theta}\la t\ra^6}\sum_{0<m\in\N^6}\Bigg[\int_0^{\fr{2t}{3}}\int_{\R^3}\Big(\sum_{|\bar{\al}|=b+3}a_{m+\bar{\al},s}^2(\tau)\big|(k,kt)^{(m+\bar{\al})+\beta} F_k(t,\tau,\eta)\big|^2\Big)^{\fr{1-a\Theta}{2}}\\
\nn&\times \Big(\sum_{i=4}^6\sum_{|\bar{\al}|=b+3}a_{m+\bar{\al}+\bar{e}_i,s}^2(\tau)\big|(k,kt)^{(m+\bar{\al}+\bar{e}_i)+\beta}F_k(t,\tau,\eta)\big|^2\Big)^{\fr{a\Theta}{2}}d\eta d\tau\Bigg]^2\\
\nn\les&\fr{1}{|k|^{3+2b+2a\Theta}\la t\ra^6}\sum_{0<m\in\N^6}\Bigg[\sum_{|\bar{\al}|=b+3}\int_0^{\fr{2t}{3}}\int_{\R^3}a_{m+\bar{\al},s}(\tau)\big|(k,kt)^{(m+\bar{\al})+\beta} F_k(t,\tau,\eta)\big|d\eta d\tau\Bigg]^{2(1-a\Theta)}\\
\nn&\times \Bigg[\sum_{i=4}^6\sum_{|\bar{\al}|=b+3}\int_0^{\fr{2t}{3}}\int_{\R^3}a_{m+\bar{\al}+\bar{e}_i,s}(\tau)\big|(k,kt)^{(m+\bar{\al}+\bar{e}_i)+\beta}F_k(t,\tau,\eta)\big|d\eta d\tau\Bigg]^{2a\Theta}\\
\nn\les&\fr{1}{|k|^{3+2b+2a\Theta}\la t\ra^6}\sum_{0<m\in\N^6}\Bigg[\int_0^{\fr{2t}{3}}\int_{\R^3}a_{m,s}(\tau)\big|(k,kt)^{m+\beta}F_k(t,\tau,\eta)\big|d\eta d\tau\Bigg]^{2}.
\end{align}
In particular, if $|m|=0$, 
\begin{align*}
    &\Bigg||k|^{\fr32}\la t\ra^b a_{0,s}(t) (k,kt)^{\beta}\int_0^{\fr{2t}{3}}\int_{\R^3}F_k(t,\tau,\eta)d\eta d\tau\Bigg|^2\\
    \les&\fr{1}{|k|^{3+2b+2a\Theta}\la t\ra^6}\Bigg||k t|^{b+4}  (k,kt)^{\beta}\int_0^{\fr{2t}{3}}\int_{\R^3}F_k(t,\tau,\eta)d\eta d\tau\Bigg|^2\\
    \les&\fr{1}{|k|^{3+2b+2a\Theta}\la t\ra^6} \sum_{|\bar{\al}|=b+4}C_{|\bar{\al}|}^{\bar{\al}}(k,kt)^{2(\bar{\al}+\beta)}\Bigg|\int_0^{\fr{2t}{3}}\int_{\R^3}a_{\bar{\al},s}(\tau)F_k(t,\tau,\eta)d\eta d\tau\Bigg|^2,
\end{align*}
where we have used the fact that $a_{\bar{\al},s}(\tau)$ is bounded from below for fixed $|\bar{\al}|=b+4$ in the last inequality above.
Combining the  above two inequalities for $|m|>0$ and $|m|=0$, respectively, we are led to
\begin{align}
    \nn&\sum_{m\in\N^6}\Bigg||k|^{\fr32}\la t\ra^b a_{m,s}(t) (k,kt)^{m+\beta}\int_0^{\fr{2t}{3}}\int_{\R^3}F_k(t,\tau,\eta)d\eta d\tau\Bigg|^2\\
\nn\les&\fr{1}{|k|^{3+2b+2a\Theta}\la t\ra^6}\sum_{m\in\N^6}\Bigg[\int_0^{\fr{2t}{3}}\int_{\R^3}a_{m,s}(\tau)\big|(k,kt)^{m+\beta}F_k(t,\tau,\eta)\big|d\eta d\tau\Bigg]^{2}.
\end{align}
Combining this with the fact 
\begin{align}\label{est-eta}
    |\eta|\les\la \tau\ra|k-l,\eta+(k-l)\tau|,
\end{align}
one can bound ${\bf T}^{(1)}[E_1]_{\tau\le\fr{2t}{3}}$ without resorting to the low-high and high-low   decomposition:
\begin{align}\label{est-T1E1-short}
    {\bf T}^{(1)}[E_1]_{\tau\le\fr{2t}{3}}\les\nn&\int_0^{T}\sum_{k\in\Z^3_*}\fr{1}{|k|^{3+2b+2a\Theta}} \sum_{|\beta|\le N-1}\kappa^{2|\beta|}\sum_{m\in\N^6}\\
    \nn&\times\left[\int_0^{\fr{2t}{3}}\int_{\R^3}a_{m,s}(\tau)\left|(k,kt)^{m+\beta}\sum_{l\neq 0}\hat{\rho}^{(1)}_l(\tau)\frac{l}{|l|^2}\fr{1}{\la t\ra^{3}}\cdot \eta\hat{ f}_{k-l}(\tau,\eta)\bar{\hat{\mathfrak{g}}}_k(t-\tau,\eta)\right| d\eta d\tau\right]^2dt\\
    \nn\les&\int_0^{T}\sum_{k\in\Z^3_*}\fr{1} {|k|^{3+2b+2a\Theta}}\sum_{|\beta|\le N-1}\kappa^{2|\beta|}\sum_{m\in\N^6}\Bigg[\sum_{\substack{\beta'\le\beta\\ \beta''\le\beta-\beta'}}\sum_{\substack{n\le m\\n'\le m-n}}C_\beta^{\beta'}C_{\beta-\beta'}^{\beta''}\\
    \nn&\times b_{m,n,n',s}\int^{\fr{2t}{3}}_0\int_{\R^3}  \sum_{l\in\Z^n_*}a_{n,s}(\tau) \left|(l,l\tau)^{n+\beta'}\hat{\rho}^{(1)}_l(\tau)\frac{1}{|l|}\right|\fr{1}{\la t\ra^2}\\
    \nn&\times a_{n',s}(\tau)\left|(k-l,\eta+(k-l)\tau)^{n'+\beta''} \widehat{ Zf}_{k-l}(\tau,\eta)\right| \\
    \nn&\times a_{m-n-n',s}(\tau)\left|(0,-\eta+k(t-\tau))^{m-n-n'+\beta-\beta'-\beta''}\bar{\hat{\mathfrak{g}}}_k(t-\tau,\eta)\right|d\eta d\tau\Bigg]^2dt\\
\les\nn&\int_0^{T}\sup_{k\in\Z^3_*}\sum_{|\beta|\le N-1}\kappa^{2|\beta|}\sum_{m\in\N^6}\Big(\sum_{\substack{\beta'\le\beta\\ \beta''\le\beta-\beta'}}\sum_{\substack{n\le m\\ n'\le m-n}}\int^{\fr{2t}{3}}_0\fr{b_{m,n, n', s}}{\la \tau\ra^2}d\tau\Big)\\
    \nn&\times \int^{\fr{2t}{3}}_0  \sum_{\substack{\beta'\le\beta\\ \beta''\le\beta-\beta'}}\sum_{\substack{n\le m\\ n'\le m-n}}a_{n,s}^2(\tau) \left\|(l,l\tau)^{n+\beta'}|l|^{-1}\hat{\rho}^{(1)}_l(\tau)\right\|_{L^2_l}^2\fr{1}{\la t\ra^2}\\
    \nn&\times a_{m-n,s}^2(\tau)\left\|(k-l,\eta+(k-l)\tau)^{m-n+\beta-\beta'}\widehat{ Zf}_{k-l}(\tau)\right\|_{L^2_{l,\eta}}^2\\
    \nn&\times a^2_{m-n-n',s}(0)\left\|(0,-\eta+k(t-\tau))^{m-n-n'+\beta-\beta'-\beta''}\bar{\hat{\mathfrak{g}}}_k(t-\tau)\right\|_{L^2_\eta}^2d\tau dt\\
    \nn\les&\int_0^{T}\int^{\fr{2t}{3}}_0 \fr{1}{\la t\ra^2} \left\||\nb_x|^{-1}\rho^{(1)}(\tau)\right\|^2_{\mathcal{G}^{\lm,N-1}_s}\left\|Z f(\tau) \right\|^2_{\mathcal{G}^{\lm,N-1}_{s,0}}\left\| \mathfrak{g}(t-\tau)\right\|^2_{\bar{\mathcal{G}}^{\lm(0), N-1}_{s,0}}d\tau dt\\
    \les&\left\||\nb_x|^{-1}\rho^{(1)}\right\|^2_{L^2_t\mathcal{G}^{\lm,N-1}_s}{ \left\| f \right\|^2_{L^\infty_t  \mathcal{G}^{\lm, N}_{s,0}}}
    \left\| \mathfrak{g}\right\|^2_{L^\infty_t  \bar{\mathcal{G}}^{\lm(0), N-1}_{s,0}}.
\end{align}

{\bf Case 2: $\fr{2t}{3}\le\tau\le t$.} We first consider ${\bf T}^{(1);\rm HL}[E_1] $ where $\rho^{(1)}$ is at high frequency. If $l=k$, the extra $v$ derivative on $f_0$ is harmless, while if $l\ne k$, one can see from \eqref{est-eta} that the extra $v$ derivative will lead to $\tau$ growth. Thus, two subcases will be involved.

{\it Case 2.1: $l\ne k$}. Noting that now $f$ is at low frequency, one can pay sufficient  derivatives on $f$ as well as on $\mathfrak{g}$ to gain a time integrable factor $\fr{1}{\la kt-l\tau\ra^3}$, which may be used to counteract the $\tau$ growth. Accordingly, it is natural to consider the following  two subcases $|kt-l\tau|\ge\fr{t}{2}$ and $|kt-l\tau|\le\fr{t}{2}$. To proceed, let us denote
\begin{align}\label{def-D}
    D=\{|kt-l\tau|\le{t}/{2}, k\ne l\},\quad D^*=\{|kt-l\tau|>{t}/{2}, k\ne l\},
\end{align}
and write
\begin{align*}
    {\bf T}^{(1);{\rm HL}}[E_1]_{\tau\approx t}
    :=&\int_0^{T}\la t\ra^{2b}\sum_{k\in\Z^3_*}|k|^{3}\sum_{|\beta|\le N-1}\kappa^{2|\beta|}\sum_{m\in\N^6}\Bigg|\int_{\fr{2t}{3}}^t\int_{\R^3}\left(\fr{\lm(t)}{\lm(\tau)}\right)^{|m|}\Big({\bf1}_{D}+{\bf1}_{D^*}\Big)\\
\nn&\times\sum_{\substack{\beta'\le\beta,|\beta'|>|\beta|/2\\\beta''\le\beta-\beta'}}C_{\beta}^{\beta'}C_{\beta-\beta'}^{\beta''}\sum_{\substack{n\le m\\n'\le m-n}}\sum_{l\in\Z^3_*} b_{m,n,n',s}(l,l\tau)^{n+\beta'}a_{n,s}(\tau)\hat{\rho}^{(1)}_l(\tau)\fr{l}{|l|^2}\\
\nn&\cdot a_{n',s}(\tau)\eta(k-l,\eta+(k-l)\tau)^{n'+\beta''}\hat{f}_{k-l}(\tau,\eta)\\
&\times a_{m-n-n',s}(\tau)(0,-\eta+k(t-\tau))^{m-n-n'+\beta-\beta'-\beta''}\bar{\hat{\mathfrak{g}}}_k(t-\tau,\eta) d\eta d\tau\Bigg|^2dt\\
=&{\bf T}^{(1);{\rm HL}}_{D}[E_1]_{\tau\approx t}+{\bf T}^{(1);{\rm HL}}_{D^*}[E_1]_{\tau\approx t}.
\end{align*}

For the former case when $|kt-l\tau|\le\fr{t}{2}$, and $k\ne l$, we have the following lower bound of $t-\tau$:
\[
|l(t-\tau)|=|(l-k)t-(kt-l\tau)|\ge |l-k|t-|kt-l\tau|\ge \fr{t}{2}.
\]
This enables us to use the gap between $\lm(t)$ and $\lm(\tau)$.   In fact, using the above lower bound of $t-\tau$, similar to \eqref{lm-gap1} and \eqref{lm-gap2}, we arrive at 
\begin{align}\label{lm-gap3}
  \fr{\lm(\tau)-\lm(t)}{\lambda(\tau)}\ge\underline{c}\tau^{-a}|l|^{-1},\quad{\rm and}\quad  \fr{\lambda(t)}{\lambda(\tau)}
    \le e^{-\underline{c}\tau^{-a}|l|^{-1}}.
\end{align}
Then for 
\begin{align}\label{restrction-s1}
    s>\fr13,\quad{\rm with}\quad 0<a\le\min\left\{\fr{3s-1}{2}, \fr{s}{2}\right\},
\end{align}
there exists a positive constant $\Theta$ satisfying 
\begin{align}\label{Theta1}
    \fr{1}{s-a}\le\Theta\le \min\left\{\fr{1}{a}, \fr{3}{1-a} \right\}.
    \end{align} 
It follows from \eqref{lm-gap3}--\eqref{Theta1} that,  for $|n|>0$,
\begin{align}\label{lm-gap5}
    \left(\fr{\lm(t)}{\lm(\tau)}\right)^{|n|}\fr{|l\tau|}{|l|^3}{\bf 1}_D\les \fr{\tau^{a\Theta}|l|^{\Theta}}{|n|^\Theta}\fr{|l\tau|}{|l|^3}=\fr{| l\tau|^{1+a\Theta}}{|n|^\Theta}\fr{|l|^{\Theta}}{|l|^{3+a\Theta}}\les \fr{| l\tau|^{1+a\Theta}}{|n|^\Theta}.
\end{align}
Let us denote 
\begin{align*}
    {\rm w}_1(\tau,l,n)=\begin{cases}
\fr{| l\tau|^{1+a\Theta}}{|n|^\Theta},\quad {\rm if}\quad |n|>0;\\[2mm]
\fr{|l\tau|}{|l|^3},\quad\quad \ \ {\rm if}\quad |n|=0.
    \end{cases}
\end{align*}
Then we infer from \eqref{lm-gap5} that
\begin{align}\label{lm-gap5'}
\left(\fr{\lm(t)}{\lm(\tau)}\right)^{|n|}\fr{|l\tau|}{|l|^3}{\bf 1}_D\les{\rm w}_1(\tau,l,n),\quad{\rm for\ \ all}\ \ n\in\N^6.
\end{align}
On the other hand, note that
\begin{align}\label{up-kt-ltau}
\la kt-l\tau \ra^2\les\la\eta+(k-l)\tau\ra^2\la k(t-\tau)-\eta\ra^2,
\end{align}
and 
\begin{align}\label{time-Jacobian}
    \int_0^\infty\fr{|l|}{\la kt-l\tau \ra^2}d\tau<\infty, \quad\int_0^\infty\fr{|k|}{\la kt-l\tau \ra^2}dt<\infty.
\end{align}
Then by \eqref{est-eta},  Corollaries \ref{coro-kernel}, \ref{coro-convolution}, and Lemma \ref{lem-interp1}, recalling the definition of $\|\cdot\|_{\bar{\mathcal{G}}^{\lm,N}_{s,\ell}}$ in \eqref{def-varGnorm}, we have
\begin{align}\label{est-resoance}
   \nn&{\bf T}^{(1);{\rm HL}}_{D}[E_1]_{\tau\approx t}\\
   \nn\les& \int_0^{T}\sum_{k\in\Z^3_*}\sum_{|\beta|\le N-1}\kappa^{2|\beta|}\sum_{m\in\N^6}\bigg[\int_{\fr{2t}{3}}^t\int_{\R^3}\sum_{\substack{\beta'\le\beta,|\beta'|>|\beta|/2\\\beta''\le\beta-\beta'}}\sum_{\substack{n\le m\\n'\le m-n}}\sum_{l\in\Z^3_*,l\ne k} \\
\nn&\fr{|l|b_{m,n,n',s}}{\la k-l\ra^2\la kt-l\tau \ra^2}\Big|\la \tau\ra^{b}(l,l\tau)^{n+\beta'}a_{n,s}(\tau)|l|^{\fr32}\hat{\rho}^{(1)}_l(\tau)\fr{l\tau}{|l|^3}\Big(\fr{\lm(t)}{\lm(\tau)}\Big)^{|n|}{\bf1}_{D}\Big|\\
\nn&\times\Big| a_{n',s}(\tau)(k-l,\eta+(k-l)\tau)^{n'+\beta''}\la\eta+(k-l)\tau\ra^2\mathcal{F}_{x,v}\big[|\nb_x|^{\fr{7}{2}}|Z|f\big]_{k-l}(\tau,\eta)\Big|\\
\nn&\times \Big|a_{m-n-n',s}(\tau)(0,-\eta+k(t-\tau))^{m-n-n'+\beta-\beta'-\beta''}\la-\eta+k(t-\tau)\ra^2\bar{\hat{\mathfrak{g}}}_k(t-\tau,\eta)\Big| d\eta d\tau\bigg]^2dt\\
\nn\les& \int_0^{T}\sum_{k\in\Z^3_*}\sum_{|\beta|\le N-1}\kappa^{2|\beta|}\sum_{m\in\N^6}\bigg(\int_{\fr{2t}{3}}^t\sum_{\substack{\beta'\le\beta,|\beta'|>|\beta|/2\\\beta''\le\beta-\beta'}}\sum_{\substack{n\le m\\n'\le m-n}}\sum_{l\in\Z^3_*,l\ne k}\fr{|l|b_{m,n,n',s}}{\la k-l\ra^4\la kt-l\tau \ra^2}d\tau\bigg) \\
\nn&\times\sum_{\substack{\beta'\le\beta,|\beta'|>|\beta|/2\\\beta''\le\beta-\beta'}}\sum_{\substack{n\le m\\n'\le m-n}}\sum_{l\in\Z^3_*,l\ne k}\int^{t}_{\fr{2t}{3}} \fr{|k|}{\la kt-l\tau \ra^2}\\
\nn&\times\Big|\la \tau\ra^{b}(l,l\tau)^{n+\beta'}a_{n,s}(\tau)|l|^{\fr32}\hat{\rho}^{(1)}_l(\tau){\rm w}_1(\tau,l,n)\Big|^2\\
\nn&\times\Big\| a_{n',s}(\tau)\mathcal{F}_{x,v}\big[\la Y\ra^2|\nb_x|^{4}|Z|f^{(n'+\beta'')}\big]_{k-l}(\tau,\eta)\Big\|_{L^2_\eta}^2\\
\nn&\times \Big\|a_{m-n-n',s}(0)\mathcal{F}_{x,v}\big[\la Y^*\ra^2\mathfrak{g}^{(m-n-n'+\beta-\beta'-\beta'')}\big]_k(t-\tau)\Big\|_{L^2_\eta}^2  d\tau dt \\
\les\nn&\int_0^{T}\sum_{k\in Z^3_*}\sum_{l\in\Z^3_*,l\ne k}\int_{\tau}^{T}\fr{|k|}{\la kt-l\tau\ra^2}dt\sum_{\substack{|\beta|\le N-1\\ m\in\N^6}}\kappa^{2|\beta|}\Big|\la \tau\ra^{b}(l,l\tau)^{m+\beta}a_{m,s}(\tau)|l|^{\fr32}\hat{\rho}^{(1)}_l(\tau){\rm w}_1(\tau,l,n)\Big|^2\\
\nn&\times \sum_{\substack{|\beta|\le \fr{N-1}{2}\\ m\in\N^6}}\kappa^{2|\beta|}\Big\| a_{m,s}(\tau)\mathcal{F}_{x}\big[\la Y\ra^2|\nb_x|^{4}|Z|f^{(m+\beta)}\big]_{k-l}(\tau,v)\Big\|_{L^2_v}^2d\tau \\
\nn&\times \sup_{t-\tau}\sum_{\substack{|\beta|\le \fr{N-1}{2}\\ m\in\N^6}}\kappa^{2|\beta|}\Big\| a_{m,s}(0)\mathcal{F}_{x}\big[\la Y^*\ra^2\mathfrak{g}^{(m+\beta)}\big]_k(t-\tau)\Big\|_{L^2_v}^2 \\
\les&\left\| \la t\ra^b |\nb_x|^{\fr32}\rho^{(1)}\right\|^2_{L^2_t\mathcal{G}^{\lm,N-1}_s}{ \left\|\la Y\ra^2|\nb_x|^4|Z|f(\tau)\right\|^2_{L^\infty_t\mathcal{G}^{\lm,\fr{N-1}{2}}_{s,0}}}\left\|\la Y^*\ra^2\mathfrak{g}\right\|^2_{  L^\infty_t\bar{\mathcal{G}}^{\lm(0),\fr{N-1}{2}}_{s,0}}.
\end{align}
The later case when $|kt-l\tau|>\fr{t}{2}$ is much easier to treat, because the $\tau$ loss  can directly be absorbed by $\fr{1}{\la kt-l\tau\ra}$. Thus, the above upper bound for ${\bf T}^{(1);{\rm HL}}_{D}[E_1]_{\tau\approx t}$ is still valid for ${\bf T}^{(1);{\rm HL}}_{D^*}[E_1]_{\tau\approx t}$ as long as one pays one more derivative on $f$ and $\mathfrak{g}$:
\begin{align}\label{est-nonresonace}
    {\bf T}^{(1);{\rm HL}}_{D^*}[E_1]_{\tau\approx t}\les \left\| \la t\ra^b |\nb_x|^{\fr32}\rho^{(1)}\right\|^2_{L^2_t\mathcal{G}^{\lm,N-1}_s}{ \left\|\la Y\ra^3|\nb_x|^4|Z|f\right\|^2_{L^\infty_t\mathcal{G}^{\lm,\fr{N-1}{2}}_{s,0}}}\left\|\la Y^*\ra^3\mathfrak{g}\right\|^2_{L^\infty_t\bar{\mathcal{G}}^{\lm(0),\fr{N-1}{2}}_{s,0}}.
\end{align}

{\it  Case 2.2: $l=k$.} Here there is no need to use the gap between $\lm(t)$ and $\lm(\tau)$. Similar to \eqref{est-resoance}, we have
\begin{align}\label{est-T1E1-l=k}
 \nn&{\bf T}^{(1);{\rm HL}}[E_1]_{\tau\approx t}\\
   \nn\les& \int_0^{T}\sum_{k\in\Z^3_*}\sum_{|\beta|\le N-1}\kappa^{2|\beta|}\sum_{m\in\N^6}\bigg[\int_{\fr{2t}{3}}^t\int_{\R^3}\sum_{\substack{\beta'\le\beta,|\beta'|>|\beta|/2\\\beta''\le\beta-\beta'}}\sum_{\substack{n\le m\\n'\le m-n}} \fr{b_{m,n,n',s}}{\la k(t-\tau) \ra^2}\\
\nn&\times\Big|\la \tau\ra^{b}|k|^{\fr32}a_{n,s}(\tau)(k,k\tau)^{n+\beta'}\hat{\rho}^{(1)}_k(\tau)\fr{k}{|k|^2}\Big|\\
\nn&\times\Big| a_{n',s}(\tau)(0,\eta)^{n'+\beta''}\la\eta\ra^2\mathcal{F}_{x,v}[\nb_vf]_{0}(\tau,\eta)\Big|\\
\nn&\times \Big|a_{m-n-n',s}(\tau)(0,-\eta+k(t-\tau))^{m-n-n'+\beta-\beta'-\beta''}\la-\eta+k(t-\tau)\ra^2\bar{\hat{\mathfrak{g}}}_k(t-\tau,\eta)\Big| d\eta d\tau\bigg]^2dt\\
\les&\left\| \la t\ra^b |\nb_x|^{\fr32}\rho^{(1)}\right\|^2_{L^2_t\mathcal{G}^{\lm,N-1}_s}{ \left\|\la \nb_v\ra^2\nb_vf_0\right\|^2_{L^\infty_t\mathcal{G}^{\lm,\fr{N-1}{2}}_{s,0}}}\left\|\la Y^*\ra^2\mathfrak{g}\right\|^2_{  L^\infty_t\bar{\mathcal{G}}^{\lm(0),\fr{N-1}{2}}_{s,0}}.
\end{align}

Now we turn to ${\bf T}^{(1);\rm HL}[E_1] $ where $\rho^{(1)}$ is at low frequency. Now the key issue is to deal with the summability in $k$. On the one hand,  noting that $|k|\le |l|\la k-l\ra$, one derivative in $x$ can hit on $f$  because $\|\nb_xf\|_{\mathcal{G}^{\lm,N}_{s,\ell-2}}$ is a constituent part of the energy functional of the distribution function $f$. More precisely, in view of \eqref{est-eta} again, one deduces that
\begin{align*}
    {\bf T}^{(1);{\rm LH}}[E_1]_{\tau\approx t}
&\les\int_0^{T}\sum_{k\in\Z^3_*}|k|\sum_{|\beta|\le N-1}\kappa^{2|\beta|}\sum_{m\in\N^6}\Bigg[\int_{\fr{2t}{3}}^t\int_{\R^3}\\
\nn&\sum_{\substack{\beta'\le\beta,|\beta'|\le|\beta|/2\\\beta''\le\beta-\beta'}}\sum_{\substack{n\le m\\n'\le m-n}}\sum_{l\in\Z^3_*} b_{m,n,n',s}\Big|\la \tau\ra^{b}(l,l\tau)^{n+\beta'}a_{n,s}(\tau)\hat{\rho}^{(1)}_l(\tau)\fr{l\tau}{|l|}\Big|\\
\nn&\times\Big| a_{n',s}(\tau)(k-l,\eta+(k-l)\tau)^{n'+\beta''}\mathcal{F}_{x,v}[\la\nb_x\ra |Z|f]_{k-l}(\tau,\eta)\Big|\\
&\times \Big|a_{m-n-n',s}(\tau)(0,-\eta+k(t-\tau))^{m-n-n'+\beta-\beta'-\beta''}\hat{\mathfrak{g}}_k(t-\tau,\eta)\Big| d\eta d\tau\Bigg]^2dt.
\end{align*}
 On the other hand, the $|k|^{\fr12}$ can be absorbed by the Jacobian when integrating with respect to the $t$ variable. Indeed, note that
\begin{align*}
    &\bigg[\int_{\R^3}\Big|a_{n',s}(\tau)(k-l,\eta+(k-l)\tau)^{n'+\beta''}\mathcal{F}_{x,v}[\la\nb_x\ra |Z|f]_{k-l}(\tau,\eta)\Big|\\
&\times \Big|a_{m-n-n',s}(\tau)(0,-\eta+k(t-\tau))^{m-n-n'+\beta-\beta'-\beta''}\hat{\mathfrak{g}}_k(t-\tau,\eta)\Big| d\eta\bigg]^2\\
\les&\int_{\R^3}\fr{1}{\la k(t-\tau)-\eta\ra^2}a_{n',s}^2(\tau)\Big|\mathcal{F}_{x,v}\big[\la\nb_x\ra |Z| f^{(n'+\beta'')}\big]_{k-l}(\tau,\eta)\Big|^2d\eta\\
&\times \Big\|a_{m-n-n',s}(0)  \mathcal{F}_{x,v}\big[\mathcal\la Y^*\ra \mathfrak{g}^{(m-n-n'+\beta-\beta'-\beta'')}\big]_k(t-\tau)\Big\|^2_{L^2_\eta}.
\end{align*}
Combining this  with Corollaries \ref{coro-kernel} and \ref{coro-convolution}, we are led to
\begin{align}\label{est-T1LHE1}
    {\bf T}^{(1);{\rm LH}}[E_1]_{\tau\approx t}
 \les\nn&\int_0^{T}\sum_{k\in\Z^3_*}\sum_{|\beta|\le N-1}\kappa^{2|\beta|}\sum_{m\in\N^6}\Bigg(\int_{\fr{2t}{3}}^t\sum_{\substack{\beta'\le\beta,|\beta'|\le|\beta|/2\\\beta''\le\beta-\beta'}}\sum_{\substack{n\le m\\n'\le m-n}}\sum_{l\in\Z^3_*} \fr{b_{m,n,n',s}}{\la l,l\tau\ra^{6}}d\tau\Bigg)\\
\nn&\times \sum_{l\in\Z^3_*}\int_{\fr{2t}{3}}^t\sum_{\substack{\beta'\le\beta,|\beta'|\le|\beta|/2\\\beta''\le\beta-\beta'}}\sum_{\substack{n\le m\\n'\le m-n}}  \Big|\la\tau\ra^{b} a_{n,s}(\tau)\la l,l\tau\ra^4(l,l\tau)^{n+\beta'}|l|^{-1}\hat{\rho}^{(1)}_l(\tau)\Big|^2\\
\nn&\times\int_{\R^3}\fr{|k|}{\la k(t-\tau)-\eta\ra^2}a_{n',s}^2(\tau)\Big|\mathcal{F}_{x,v}\big[\la\nb_x\ra |Z| f^{(n'+\beta'')}\big]_{k-l}(\tau,\eta)\Big|^2d\eta\\
\nn&\times \Big\|a_{m-n-n',s}(0)  \mathcal{F}_{x,v}\big[\mathcal\la Y^*\ra \mathfrak{g}^{(m-n-n'+\beta-\beta'-\beta'')}\big]_k(t-\tau)\Big\|^2_{L^2_\eta} d\tau dt\\
\les\nn&\int_0^{T}
\left\|\la \tau\ra^b |Z|^4|\nb_x|^{-1}\rho^{(1)}(\tau) \right\|^2_{\mathcal{G}^{\lm,\fr{N-1}{2}}_{s}} \left\| \la Y^*\ra\mathfrak{g}\right\|^2_{L^\infty_t\bar{\mathcal{G}}^{\lm(0),N-1}_{s,0}}\\
\nn&\times\sup_{l\in\Z^3_*}\sum_{k\in\Z^3_*}\sum_{|\beta|\le N-1}\kappa^{2|\beta|}\sum_{m\in\N^6}\int_{\R^3}\int_{\tau}^{T^*}\fr{|k|}{\la k(t-\tau)-\eta\ra^2}dt\\
\nn&\times a_{m,s}^2(\tau)\Big|\mathcal{F}_{x,v}\big[\la\nb_x\ra |Z| f^{(m+\beta)}\big]_{k-l}(\tau,\eta)\Big|^2d\eta d\tau \\
\les&\left\|\la t\ra^b |Z|^4|\nb_x|^{-1}\rho^{(1)} \right\|^2_{L^2_t\mathcal{G}^{\lm,\fr{N-1}{2}}_{s}}{ \left\| \la\nb_x\ra |Z|f\right\|^2_{L^\infty_t\mathcal{G}^{\lm,N-1}_{s,0}}}\left\| \la Y^*\ra\mathfrak{g}\right\|^2_{L^\infty_t\bar{\mathcal{G}}^{\lm(0),N-1}_{s,0}},
\end{align}
where we have used
\begin{align*}
    \int_{0}^\infty\fr{|k|}{\la k(t-\tau)-\eta\ra^2}dt \lesssim 1.
\end{align*}

\noindent{$\diamond$ \underline{\it Treatments of  $E_1\cdot vf$}.} Compared with $E_1\cdot \nb_vf$, the treatments of $E_1\cdot vf$ is easier, since there is no extra $v$ derivative on $f$ and thus the plasma echo appearing in ${\bf T}^{(1);{\rm HL}}_{D}[E_1]_{\tau\approx t}$ is absent now. More precisely, we write
\begin{align*}
    \tl{\bf T}^{(1)}[E_1]=&\int_0^{T}\la t\ra^{2b}\sum_{k\in\Z^3_*}|k|^3\sum_{|\beta|\le N-1}\kappa^{2|\beta|}\sum_{m\in\N^6}\Bigg|a_{m,s}(t)(k,kt)^{m+\beta}\\
    &\times \int_0^t\int_{\R^3}S_k(t-\tau)\big[\mathcal{F}_x[E_1\cdot vf]_k(\tau,v)\big]\mu^{\fr12}(v)dvd\tau\Bigg|^2dt\\
    =&\int_0^{T}\la t\ra^{2b}\sum_{k\in\Z^3_*}|k|^3\sum_{|\beta|\le N-1}\kappa^{2|\beta|}\sum_{m\in\N^6}\Bigg|a_{m,s}(t)(k,kt)^{m+\beta}\\
    &\times \int_0^t\int_{\R^3}\sum_{l\in\Z^3_*}\hat{\rho}^{(1)}_l(\tau)\fr{l}{|l|^2}\hat{f}_{k-l}(\tau,\eta)\cdot\overline{\mathcal{F}_v[v\mathfrak{g}_k]}(t-\tau,\eta)d\eta d\tau\Bigg|^2dt.
\end{align*}
Then similar to \eqref{est-T1E1-short}, \eqref{est-resoance} (without resorting to the gap between $\lm(t)$ and $\lm(\tau)$ and \eqref{est-eta}), \eqref{est-T1E1-l=k} and \eqref{est-T1LHE1}, we arrive at
\begin{align}\label{est-tlT1E1}
    \tl{\bf T}^{(1)}[E_1]\les\nn&\left\||\nb_x|^{-1}\rho^{(1)}\right\|^2_{L^2_t\mathcal{G}^{\lm,N-1}_s}{ \left\| f \right\|^2_{L^\infty_t  \mathcal{G}^{\lm, N-1}_{s,0}}}
    \left\| v\mathfrak{g}\right\|^2_{L^\infty_t  \bar{\mathcal{G}}^{\lm(0), N-1}_{s,0}}\\
    \nn&+\left\| \la \tau\ra^b |\nb_x|^{\fr32}\rho^{(1)}(\tau)\right\|^2_{L^2_t\mathcal{G}^{\lm,N-1}_s}\Big({ \left\|\la Y\ra^2|\nb_x|^4f(\tau)\right\|^2_{L^\infty_t\mathcal{G}^{\lm,\fr{N-1}{2}}_{s,0}}}+{ \left\|\la \nb_v\ra^2f_0\right\|^2_{L^\infty_t\mathcal{G}^{\lm,\fr{N-1}{2}}_{s,0}}}\Big)\\
    \nn&\times\left\|\la Y^*\ra^2(v\mathfrak{g})\right\|^2_{  L^\infty_t\bar{\mathcal{G}}^{\lm(0),\fr{N-1}{2}}_{s,0}}\\
    &+\left\|\la t\ra^b |Z|^4|\nb_x|^{-1}\rho^{(1)} \right\|^2_{L^2_t\mathcal{G}^{\lm,\fr{N-1}{2}}_{s}}{ \left\| \la\nb_x\ra f\right\|^2_{L^\infty_t\mathcal{G}^{\lm,N-1}_{s,0}}}\left\| \la Y^*\ra(v\mathfrak{g})\right\|^2_{L^\infty_t\bar{\mathcal{G}}^{\lm(0),N-1}_{s,0}}.
\end{align}

 \noindent{$\diamond$ \underline{\it Treatments of $\mathcal{T}'_{E_2^j}\pr_{v_j}f$}.} Like \eqref{T1E1}, recalling the definition of the Bony paraproduct decomposition in Section \ref{sec-Bony}, we write
\begin{align*}
    {\bf T}^{(1)}[E_2]=&\left\| \la t\ra^{b}|\nb_x|^{\fr{3}{2}}\int_0^t\int_{\R^3}S_k(t-\tau)\Big[\mathcal{F}_x\big[\mathcal{T}'_{E^j_2}\pr_{v_j}f\big]_k(\tau,v)\Big]\mu^{\fr12}(v)dvd\tau\right\|^2_{L^2_t\mathcal{G}^{\lm,N-1}_{s}}\\
=&\int_0^{T}\la t\ra^{2b}\sum_{k\in\Z^3_*}|k|^{3}\sum_{|\beta|\le N-1}\kappa^{2|\beta|}\sum_{m\in\N^6}\Bigg|a_{m,s}(t)(k,kt)^{m}\\
&\times\int_0^t\int_{\R^3}\Big(\sum_{\substack{\beta'\le\beta,|\beta'|\le|\beta|/2\\\beta''\le\beta-\beta'}}+\sum_{\substack{\beta'\le\beta,|\beta'|>|\beta|/2\\\beta''\le\beta-\beta'}}\Big)C_\beta^{\beta'}C_{\beta-\beta'}^{\beta''}\sum_{l\in\Z^3_*}\sum_{J\in\mathbb{B}}(l,l\tau)^{\beta'}\hat{\rho}^{(2)}_l(\tau)_{<16J}\fr{l_j}{|l|^2}\\
&\times \eta_j(k-l,\eta+(k-l)\tau)^{\beta''}\hat{f}_{k-l}(\tau,\eta)_{J}(0,-\eta+k(t-\tau))^{\beta-\beta'-\beta''}\bar{\hat{\mathfrak{g}}}_k(t-\tau,\eta) d\eta d\tau\Bigg|^2dt\\
=&{\bf T}^{(1);\rm LH}[E_2]+{\bf T}^{(1);\rm HL}[E_2].
\end{align*}
As the treatment of ${\bf T}^{(1)}[E_1]$, if $\tau\le \fr{2t}{3}$, one can treat ${\bf T}^{(1)}[E_2]$ without resorting to the low-high and high-low decomposition. In fact,
noting that 
\begin{align*}
    \nn&\sum_{J\in \mathbb{B}}\big|\hat{\rho}^{(2)}_l(\tau)_{<16J}\big| |\hat{f}_{k-l}(\tau,\eta)_{J}|\\
    =&\sum_{J\in \mathbb{B}}\varphi\Big(\fr{|l,l\tau|}{16J}\Big)\theta_J(|k-l,\eta+(k-l)\tau|)|\hat{\rho}^{(2)}_l(\tau)\hat{f}_{k-l}(\tau,\eta)|\le\big|\hat{\rho}^{(2)}_l(\tau)\hat{f}_{k-l}(\tau,\eta)\big|,
\end{align*}
then thanks to \eqref{est-eta},  similar to \eqref{est-T1E1-short}, one can ignore the frequency cutoff to obtain 
\begin{align}\label{est-T1E2-1}
    {\bf T}^{(1)}[E_2]_{\tau\le\fr{2t}{3}}\les\nn&\int_0^{T}\int^{\fr{2t}{3}}_0 \fr{1}{\la t\ra^2} \left\||\nb_x|^{-1}\rho^{(2)}(\tau)\right\|^2_{\mathcal{G}^{\lm,N-1}_s}\big\||Z| f(\tau) \big\|^2_{\mathcal{G}^{\lm,N-1}_{s,0}}\left\| \mathfrak{g}(t-\tau)\right\|^2_{\mathcal{G}^{\lm(0), N-1}_{s,0}}d\tau dt\\
    \les&\left\||\nb_x|^{-1}\rho^{(2)}\right\|^2_{L^2_t\mathcal{G}^{\lm,N-1}_s}{ \left\| f \right\|^2_{L^\infty_t  \mathcal{G}^{\lm, N}_{s,0}}}
    \left\| \mathfrak{g}\right\|^2_{L^\infty_t  \bar{\mathcal{G}}^{\lm(0), N-1}_{s,0}}.
\end{align}

If $\fr{2t}{3}\le\tau\le t$, ${\bf T}^{(1);\rm LH}[E_2]_{\tau\approx t}$ can be handled following the approach of treating ${\bf T}^{(1);\rm LH}[E_1]_{\tau\approx t}$:
\begin{align}\label{est-T1E2-2}
    {\bf T}^{(1);\rm LH}[E_2]_{\tau\approx t}\les\nn&\left\|\la t\ra^b |Z|^4|\nb_x|^{-1}\rho^{(2)} \right\|^2_{L^2_t\mathcal{G}^{\lm,\fr{N-1}{2}}_{s}}\big\| |Z|\la\nb_x\ra f\big\|^2_{L^\infty_t\mathcal{G}^{\lm,N-1}_{s,0}}\left\| \la Y^*\ra\mathfrak{g}\right\|^2_{L^\infty_t\bar{\mathcal{G}}^{\lm(0),N-1}_{s,0}}\\
    \les&\left\|\la t\ra^{\fr{3+a-s}{2}} \rho^{(2)} \right\|^2_{L^2_t\mathcal{G}^{\lm,N}_{s}}{ \big\| \la\nb_x\ra f\big\|^2_{L^\infty_t\mathcal{G}^{\lm,N}_{s,0}}}\left\| \mathfrak{g}\right\|^2_{L^\infty_t\bar{\mathcal{G}}^{\lm(0),N}_{s,0}},
\end{align}
provided
\begin{align*}
b+4-\fr{3+a-s}{2}-\fr12\le\fr{N}{2}.
\end{align*}
For ${\bf T}^{(1);\rm HL}[E_2]_{\tau\approx t}$,  note that on the support of the integrand above, there holds
\begin{align*}
     |l,l\tau|\le\fr34\cdot16J,\quad \fr{J}{2}\le|k-l,\eta+(k-l)\tau|\le\fr{3J}{2}, 
\end{align*}
and hence
\begin{align}\label{exchange}
    |l,l\tau|\le 24|k-l,\eta+(k-l)\tau|.
\end{align}
Then by \eqref{exchange},  one can move the extra growth in $\tau$  on $\rho^{(2)}(\tau)$ to  $f$. More precisely, 
combining \eqref{exchange} with \eqref{est-eta}, using  \eqref{up-kt-ltau} and \eqref{time-Jacobian}, similar to \eqref{est-resoance}, we find that
\begin{align}\label{est-T1E2-3}
   {\bf T}^{(1);\rm HL}[E_2]_{\tau\approx t}\les\nn&\int_0^{T}\sum_{k\in\Z^3_*}\sum_{|\beta|\le N-1}\kappa^{2|\beta|}\sum_{m\in\N^6}\Bigg[\int_{\fr{2t}{3}}^t\int_{\R^3}\sum_{\substack{\beta'\le\beta,|\beta'|>|\beta|/2\\\beta''\le\beta-\beta'}}\sum_{\substack{n\le m\\ n'\le m-n}}\sum_{l\in\Z^3_*}\fr{|l|b_{m,n,n',s}}{\la k-l\ra^2\la kt-l\tau \ra^2}\\
\nn&\times\Big|a_{n,s}(\tau)\la \tau\ra^{b+1}(l,l\tau)^{n+\beta'}|l|^{-\fr{1}{2}} \la\tau\ra^{\fr{3+a-s}{2}-b-1}\hat{\rho}^{(2)}_l(\tau)\Big|\\
\nn&\times \Big|a_{n',s}(\tau)\mathcal{F}_{x,v}\big[\la\nb_x\ra^{\fr72}\la Y\ra^2|Z|^{2+b-\fr{3+a-s}{2}}f^{(n'+\beta'')}\big]_{k-l}(\tau,\eta)\Big|\\
\nn&\times\Big|a_{m-n-n',s}(0)\mathcal{F}_{x,v}\big[\la Y^*\ra^2\mathfrak{g}^{(m-n-n'+\beta-\beta'-\beta'')}\big]_k(t-\tau,\eta)\Big| d\eta d\tau\Bigg]^2dt\\
\les\nn&\int_0^{T}\sum_{k\in\Z^3_*}\sum_{l\in\Z^3_*}\Big(\int_0^t\sum_{\substack{\beta'\le\beta,|\beta'|>|\beta|/2\\\beta''\le\beta-\beta'}}\sum_{\substack{n\le m\\ n'\le m-n}}\sum_{l\in\Z^3_*}\fr{|l|b_{m,n,n',s}}{\la k-l\ra^4\la kt-l\tau \ra^2}d\tau\Big)\\
\nn&\times \int_{\tau}^{T}\fr{|k|}{\la kt-l\tau \ra^2}dt\sum_{\substack{|\beta|\le N-1\\ m\in\N^6}}\kappa^{2|\beta|}\Big|a_{m,s}(\tau)(l,l\tau)^{m+\beta}\la \tau\ra^{\fr{3+a-s}{2}}|l|^{-\fr{s}{2}}\hat{\rho}^{(2)}_l(\tau)\Big|^2\\
\nn&\times \sum_{\substack{|\beta|\le\fr{N-1}{2}\\ m\in\N^6}}\kappa^{2|\beta|}\Big\|a_{m,s}(\tau)\mathcal{F}_{x}\big[\la\nb_x\ra^{4}\la Y\ra^2|Z|^{2+b-\fr{3+a-s}{2}}f^{(m+\beta)}\big]_{k-l}(\tau)\Big\|_{L^2_v}^2d\tau\\
\nn&\times \left\| \la Y^*\ra^2\mathfrak{g}\right\|^2_{L^\infty_t\bar{\mathcal{G}}^{\lm(0),\fr{N-1}{2}}_{s,0}} \\
\les\nn&\left\|\la t\ra^{\fr{3+a-s}{2}}\rho^{(2)}\right\|_{L^2_t\mathcal{G}^{\lm,N-1}_{s}}^2{ \big\| \la\nb_x\ra^4\la Y\ra^2 |Z|^{2+b-\fr{3+a-s}{2}}f\big\|^2_{L^\infty_t\mathcal{G}^{\lm,{\fr{N-1}{2}}}_{s,0}}}\\
&\times\left\| \la Y^*\ra^2\mathfrak{g}\right\|^2_{L^\infty_t\bar{\mathcal{G}}^{\lm(0),\fr{N-1}{2}}_{s,0}}.
\end{align}

 \noindent{$\diamond$ \underline{\it Treatments of $\mathcal{T}'_{E_2^j}v_jf$}.} Like the treatments of $\mathcal{T}'_{E_2^j}\pr_{v_j}f$, we write
\begin{align*}
    \tl{\bf T}^{(1)}[E_2]=&\int_0^{T}\la t\ra^{2b}\sum_{k\in\Z^3_*}|k|^3\sum_{|\beta|\le N-1}\sum_{m\in\N^6}\Bigg|a_{m,s}(t)(k,kt)^{m+\beta}\\
    &\times\int_0^t\int_{\R^3}\sum_{l\in\Z^3_*}\sum_{J\in\mathbb{B}}\hat{\rho}^{(2)}_l(\tau)_{<16J}\fr{l_j}{|l|^2} \mathcal{F}_{x,v}[v_jf]_{k-l}(\tau,\eta)_{J}\bar{\hat{\mathfrak{g}}}_k(t-\tau,\eta) d\eta d\tau\Bigg|^2dt.
\end{align*}
Then similar to \eqref{est-T1E2-1}, \eqref{est-T1E2-2} and \eqref{est-T1E2-3} without resorting to \eqref{est-eta}, we  obtain
\begin{align}\label{est-tlT1E2}
\tl{\bf T}^{(1)}[E_2]\les\nn&\left\||\nb_x|^{-1}\rho^{(2)}\right\|^2_{L^2_t\mathcal{G}^{\lm,N-1}_s}{ \left\|v f \right\|^2_{L^\infty_t  \mathcal{G}^{\lm, N-1}_{s,0}}}
    \left\| \mathfrak{g}\right\|^2_{L^\infty_t  \bar{\mathcal{G}}^{\lm(0), N-1}_{s,0}}\\
    \nn&+\left\|\la t\ra^{\fr{3+a-s}{2}} \rho^{(2)} \right\|^2_{L^2_t\mathcal{G}^{\lm,N}_{s}}{ \big\| \la\nb_x\ra (vf)\big\|^2_{L^\infty_t\mathcal{G}^{\lm,N-1}_{s,0}}}\left\| \mathfrak{g}\right\|^2_{L^\infty_t\bar{\mathcal{G}}^{\lm(0),N}_{s,0}}\\
    \nn&+\left\|\la t\ra^{\fr{3+a-s}{2}}\rho^{(2)}\right\|_{L^2_t\mathcal{G}^{\lm,N-1}_{s}}^2{ \big\| \la\nb_x\ra^4\la Y\ra^2 |Z|^{1+b-\fr{3+a-s}{2}}(vf)\big\|^2_{L^\infty_t\mathcal{G}^{\lm,{\fr{N-1}{2}}}_{s,0}}}\\
&\times\left\| \la Y^*\ra^2\mathfrak{g}\right\|^2_{L^\infty_t\bar{\mathcal{G}}^{\lm(0),\fr{N-1}{2}}_{s,0}}.
\end{align}

Collecting the estimates \eqref{est-rho1-initial}, \eqref{est-T1E1-short}, \eqref{est-resoance}--\eqref{est-T1E2-2}, \eqref{est-T1E2-3} and \eqref{est-tlT1E2}, and noting that the estimate for $\|v\mathfrak{g}\|_{\bar{\mathcal{G}}^{\lm(0),N_0}_{s,\ell_0}}$ in \eqref{est-vg} applies to $\|vf\|_{\mathcal{G}^{\lm,N}_{s,0}}$, then using Lemma \ref{lem:est-S^*}, \eqref{fg1}, \eqref{fg2}, and \eqref{Linfty-rho}, under the restrictions of \eqref{res-N-b} and \eqref{restrction-s-rho1}, we get \eqref{bd-rho1}.
\end{proof}

\subsection{Estimates for $\rho^{(2)}$} The aim of this subsection is to bound $\rho^{(2)}$. We first establish the following proposition for the $L^2_t\mathcal{G}^{\lm,N}_s$ estimate of $\la t\ra^{\fr{3+a-s}{2}}\rho^{(2)}$.
\begin{prop}\label{prop-rho2-0}
    Under the bootstrap hypotheses \eqref{bd-en}--\eqref{H-phi}, if
\begin{align}\label{restr-N}
    N\ge16,
\end{align}
and 
\begin{align}\label{res-s-a-rho2}
    s>\fr12,\quad 0<a<s-\fr12, \quad{\rm and}\quad a\le\fr{s}{6},
\end{align}
then  there holds
    \begin{align}\label{bd-rho2-iota=0}
        \nn&\big\|\la t\ra^{\fr{3+a-s}{2}}\rho^{(2)}\big\|^2_{L^2_t\mathcal{G}^{\lm,N}_s}\\
        \les\nn&\| g\|^2_{L^\infty_t\mathcal{G}^{\lm,N}_{s,2}}\big\|\la t\ra^{\fr{3+a-s}{2}}\rho^{(2)}\big\|^2_{L^2_t\mathcal{G}^{\lm,N}_s}+\|g\|^2_{L^\infty_t\mathcal{G}^{\lm,N}_{s,2}}\sum_{|\al|\le2}\big\|\nu^{\fr{|\al|}{3}}\pr_v^\al g\big\|^2_{L^\infty_t\mathcal{G}^{\lm,N}_{s,2}}\\
        \nn&+\|\la\nb_x\ra g\|^2_{L^\infty_t\mathcal{G}^{\lm,N}_{s,2}}\big\|\la t\ra^{\fr{3+a-s}{2}}\rho\big\|^2_{L^2_t\mathcal{G}^{\lm,N}_s}\sum_{|\al|\le2}\big\|\nu^{\fr{|\al|}{3}}\pr_v^{\al}g\big\|^2_{L^\infty_t\mathcal{G}^{\lm,N}_{s,2}}\\
        \nn&+\|\la\nb_x\ra g\|^2_{L^\infty_t\mathcal{G}^{\lm,N}_{s,2}}\Big(\nu^{\fr13}\int_0^{T}\nu^{\fr13}\la\tau\ra\sum_{|\al|\le2}\big\|\nu^{\fr{|\al|}{3}}\pr_v^\al g_{\ne}(\tau)\big\|^2_{\mathcal{G}^{\lm,N}_{s,2}}d\tau\Big)\\
        \nn&+\|g\|^2_{L^\infty_t\mathcal{G}^{\lm,N}_{s,2}}\Big(\nu^{\fr13}\int_0^{T}\nu\la\tau\ra^3\sum_{|\al|\le1}\big\|\nu^{\fr{|\al|}{3}}\pr_v^\al g_{\ne}(\tau)\big\|^2_{\mathcal{G}^{\lm,N}_{s,2}}d\tau\Big)\\
        &+\|g\|^2_{L^\infty_t\mathcal{G}^{\lm,N}_{s,2}}\int_0^{T}(\nu\la\tau\ra^3)^2\Big(\mathfrak{CK}_0[\nb_xg(\tau)]+\fr{1}{\la\tau\ra^{1+a}}\|\nb_xg(\tau)\|^2_{\mathcal{G}^{\lm,N}_{s,0}}\Big)d\tau.
    \end{align}
\end{prop}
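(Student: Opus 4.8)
The plan is to follow the scheme already used for $\rho^{(1)}$ in Proposition~\ref{prop-rho1}, but now keeping track of the slower decay dictated by the weight $\langle t\rangle^{(3+a-s)/2}$. First I would apply the Volterra inversion estimate \eqref{es-rho:N2} of Lemma~\ref{lem-rho:N} to reduce matters to bounding $\big\|\langle t\rangle^{(3+a-s)/2}\mathcal{N}^{(2)}\big\|_{L^2_t\mathcal{G}^{\lambda,N}_s}$, where $\mathcal{N}^{(2)}_k(t)=\int_0^t\int_{\mathbb{R}^3}S_k(t-\tau)[\hat{\mathfrak{N}}^{(2)}_k(\tau,v)]\sqrt{\mu}\,dv\,d\tau$ with $\mathfrak{N}^{(2)}$ as in \eqref{eq: N_1, N_2}. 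Then I would split $\mathcal{N}^{(2)}$ into the two para-product transport contributions from $-\mathcal{T}_{\partial_{v_j}f}E^j_2$ and $\mathcal{T}_{v_jf}E^j_2$ (with $E_2=-\nabla_x(-\Delta_x)^{-1}\rho^{(2)}$), and the collisional contribution $\nu\Gamma(f,f)$, using the paraproduct conventions of Section~\ref{sec-Bony}. Transferring the semigroup onto the Maxwellian via $\int S_k(t-\tau)[\hat{\mathfrak N}^{(2)}_k]\sqrt\mu\,dv=\langle \hat{\mathfrak N}^{(2)}_k(\tau),\overline{\hat{\mathfrak g}_k(t-\tau)}\rangle$, where $\mathfrak g_k=S^*_k(t)[\sqrt\mu]$ from \eqref{def-frak-g}, lets me exploit the enhanced-dissipation-weighted control of $\mathfrak g$ in vector-field Gevrey norms provided by Lemma~\ref{lem:est-S^*}.

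For the two transport terms, $\rho^{(2)}$ sits at high frequency inside $\mathcal{T}_{\partial_{v_j}f}E_2^j$ and $\mathcal{T}_{v_jf}E_2^j$, so these are structurally the high–low echo interactions $T^{\rm HL}[E_2]$ already treated in Proposition~\ref{prop-rho1}. I would split the $\tau$-integral into $\tau\le 2t/3$ and $2t/3\le\tau\le t$. On $\tau\le 2t/3$ the gap \eqref{lm-gap} converts $(\lambda(t)/\lambda(\tau))^{|m|}$ into regularity, hence into a power of $\langle t\rangle^{-1}$, so the paraproduct structure is not needed and the contribution is bounded by $\||\nabla_x|^{-1}\rho^{(2)}\|_{L^2_t\mathcal{G}^{\lambda,N}_s}^2$ against low-order norms of $g$ and $\mathfrak g$. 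On $2t/3\le\tau\le t$ the relation $|\eta|\lesssim\langle\tau\rangle|k-l,\eta+(k-l)\tau|$ costs a power of $\tau$, but the paraproduct localization $|l,l\tau|\lesssim|k-l,\eta+(k-l)\tau|$ lets one transfer it onto $f$ (absorbed by extra vector fields $\langle Y\rangle$, $\langle\nabla_x\rangle$), while the resonant set is handled via the $\lambda$-gap as in \eqref{lm-gap5} and the time Jacobian $\int \langle kt-l\tau\rangle^{-2}\lesssim|k|^{-1}$; the requirement $N\ge16$ in \eqref{restr-N} is precisely what provides enough spare derivatives on $f$ and on $\mathfrak g$. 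This produces the first three lines of \eqref{bd-rho2-iota=0}.

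The collisional term $\nu\Gamma(f,f)$ is the heart of the matter. I would decompose $f=f_0+f_{\neq}$, expand $\Gamma(f,f)$ via \eqref{def-Gamma} (using \eqref{eq-Gammag0g0} for the $f_0$–$f_0$ piece), estimate the $\Phi^{ij}$-convolutions through the Fourier formula \eqref{eq:Fourier-Phi} and the bilinear collision bounds of Lemma~\ref{lem-NL-collision}, and exploit the enhanced-dissipation gain of $\mathfrak g_k$ (Lemma~\ref{lem:est-S^*}) to absorb time growth. The interactions in which some input is $f_{\neq}$ are closed using the enhanced dissipation, producing exactly the weights $\nu\langle\tau\rangle^3$ and $(\nu\langle\tau\rangle^3)^2$ on the right of \eqref{bd-rho2-iota=0}, at the cost of the $\langle v\rangle$-budget shifts already present. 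The delicate piece is the low–high interaction $\nu\,\partial_{v_i}\big(\Phi^{ij}*(\sqrt\mu f_{0,\mathrm{low}})\,\partial_{v_j}f_{\neq,\mathrm{high}}\big)$: the $|\eta|$ from the extra $v$-derivative must be interpolated as $\nu^{1/3}|\eta|\lesssim(\nu^{1/2}|\eta|)^{\frac{1-s}{1-s/2}}|\eta|^{\frac{s/2}{1-s/2}}\langle\eta+k\tau\rangle^{s/2}\langle k\tau\rangle^{s/2}$ exactly as in \eqref{est-Eg0-LH-iota}, so that part of the loss is absorbed by the $\sigma$-dissipation of $f_0$ and part by the CK term of $f_{\neq}$, leaving a residual power $\nu\langle\tau\rangle^{(4+\varepsilon)(1+a-s)+1}$; together with the outer weight $\langle t\rangle^{(3+a-s)/2}$ this is time-summable precisely when $(4+\varepsilon)(1+a-s)+1\le3$ and $a<s-\tfrac12$, $a\le s/6$, which is where \eqref{res-s-a-rho2}, and ultimately the restriction $s>\tfrac12$, is used.

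Collecting the transport and collisional contributions then gives \eqref{bd-rho2-iota=0}. I expect the collisional low–high echo just described to be the main obstacle: unlike for $\rho^{(1)}$, there is no $\nu^{1/3}$ smallness available for the background $f_0$, the extra $v$-derivative in $\Gamma$ forces the interpolation above, and the clash between the Volterra weight $\langle t\rangle^{(3+a-s)/2}$ and the echo-induced $\tau$-growth is exactly what confines the Gevrey estimate to $t\le\nu^{-1/2}$. Keeping an accurate accounting of the $\langle v\rangle$-weight throughout, so that the $\nu\langle\tau\rangle^3$ enhanced-dissipation weight (and the associated $\langle v\rangle^{12}$ loss of localization) can be afforded, is the secondary bookkeeping difficulty.
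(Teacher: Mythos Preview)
Your scaffolding --- reduce via \eqref{es-rho:N2}, dualize onto $\mathfrak g_k=S_k^*(t)[\sqrt\mu]$, split $\tau\le 2t/3$ versus $\tau\approx t$, and handle the transport paraproducts $\mathcal{T}_{\partial_{v_j}f}E_2^j$, $\mathcal{T}_{v_jf}E_2^j$ in the spirit of Proposition~\ref{prop-rho1} --- matches the paper exactly, and those pieces go through essentially as you say.

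The gap is in the critical collisional echo. You correctly single out the $l=0$ piece of $\nu(\Phi^{ij}*(\sqrt\mu f_0))\,\partial_{v_iv_j}g_{\neq}$ (the paper's $\mathbb{I}^{m,\beta}_{1,k;2),d}$) as the worst term, but you propose to close it with the interpolation $\nu^{1/3}|\eta|\lesssim(\nu^{1/2}|\eta|)^{\frac{1-s}{1-s/2}}\cdots$ from \eqref{est-Eg0-LH-iota}. That identity was built for $E\cdot\nabla_v f_0$ in the \emph{energy} estimate (Lemma~\ref{lem-transport}), where a single $\nabla_v$ lands on $f_0$ and the $\sigma$-dissipation of $f_0$ absorbs the $\nu^{1/2}\nabla_v$ factor. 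Here both $v$-derivatives sit on $g_{\neq}$, not $f_0$, and the collision already carries a full $\nu$; there is no dissipative factor on $f_0$ to borrow against, and the trick used for $l\neq 0$ in \eqref{est-I11} (gaining $\langle l\tau\rangle^{-2}$ from \eqref{xi-gain-1}) is unavailable precisely because $l=0$. The paper instead observes that in the resonant sub-regime $|\eta|>\max\{10|\eta-k(t-\tau)|,1\}$ one has $t-\tau\approx|\eta|/|k|$, which yields a \emph{new} gap inequality \eqref{lm-gap4}, $(\lambda(t)/\lambda(\tau))^{|n'|}\le e^{-\underline c'\,|n'|\,|\eta|/(|k|\langle\tau\rangle^{1+a})}$. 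This absorbs the full loss $\langle\eta\rangle^{1+\varepsilon}|\xi_i\xi_j|$ (see \eqref{lm-gap6}) into the weight $\mathrm{w}_3(\tau,k,\xi,n')$, which Lemma~\ref{lem-interp1} then trades for a $\langle Y\rangle^{s/2}$ on $\nabla_x g$, i.e., a CK contribution; the power count \eqref{ktau-power}--\eqref{restriction-s>1/2} is where \eqref{res-s-a-rho2}, and in particular $s>\tfrac12$, actually enters, and the output is the final line of \eqref{bd-rho2-iota=0} with the $(\nu\langle\tau\rangle^3)^2\,\mathfrak{CK}_0[\nabla_x g]$ structure. So the residual power $\nu\langle\tau\rangle^{(4+\varepsilon)(1+a-s)+1}$ you quote from the outline is correct, but the route to it is the $\lambda$-gap on the high-frequency index $|n'|$ of $g_{\neq}$, not the dissipation/CK interpolation on $f_0$.
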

\begin{proof}
By \eqref{es-rho:N2} and recalling the definition of $\mathcal{N}^{(2)}_k(t)$ in Section \ref{sec: decom-rho}, to complete the proof, we will bound $\big\|
    \langle t\rangle^{\fr{3+a-s}{2}}\mathcal{N}^{(2)}\big\|_{L^2_t\mathcal{G}^{\lm,N}_s}$ term by term.

\noindent{$\diamond$ \underline{\it Estimates of $\mathcal{T}_{\pr_{v_j}f}E^j_2$}.}
Now let us investigate 
\begin{align*}
    {\bf T}^{(2)}[E_2]=&\left\| \la t\ra^{\fr{3+a-s}{2}}\int_0^t\int_{\R^3}S_k(t-\tau)\big[\mathcal{F}_x[\mathcal{T}_{\pr_{v_j}f}E^j_2]_k(\tau,v)\big]\mu^{\fr12}(v)dvd\tau\right\|^2_{L^2_t\mathcal{G}^{\lm,N}_{s}}\\
=&\int_0^{T}\la t\ra^{3+a-s}\sum_{k\in\Z^3_*}\sum_{|\beta|\le N}\kappa^{2|\beta|}\sum_{m\in\N^6}\Bigg|a_{m,s}(t)(k,kt)^{m}\\
&\times\int_0^t\int_{\R^3}\Big(\sum_{\substack{\beta'\le\beta,|\beta'|\le|\beta|/2\\\beta''\le\beta-\beta'}}+\sum_{\substack{\beta'\le\beta,|\beta'|>|\beta|/2\\\beta''\le\beta-\beta'}}\Big)C_\beta^{\beta'}C_{\beta-\beta'}^{\beta''}\sum_{l\in\Z^3_*}\sum_{J\ge8}(l,l\tau)^{\beta'}\hat{\rho}^{(2)}_l(\tau)_J\fr{l_j}{|l|^2}\\
&\times \eta_j(k-l,\eta+(k-l)\tau)^{\beta''}\hat{f}_{k-l}(\tau,\eta)_{<J/8}(0,-\eta+k(t-\tau))^{\beta-\beta'-\beta''}\bar{\hat{\mathfrak{g}}}_k(t-\tau,\eta) d\eta d\tau\Bigg|^2dt\\
=&{\bf T}^{(2);\rm LH}[E_2]+{\bf T}^{(2);\rm HL}[E_2].
\end{align*}
 ${\bf T}^{(2);\rm HL}[E_2]$ can be bounded  in a similar manner as  ${\bf T}^{(1);\rm HL}[E_1]$. We need to divide the interval $[0,t]$ into $[0,\fr{2t}{3}]$ and $[\fr{2t}{3}, t]$ as well. If $0\le \tau\le \fr{2t}{3}$, similar to \eqref{eq:gain m}, for any $\Theta>0$, $|m|>0$, we have
 \begin{align}\label{eq:gain m2}
     \la t\ra^{\fr{3+a-s}{2}+3} \left(\fr{\lm(t)}{\lm(\tau)}\right)^{|m|}\le C\fr{|kt|^{\fr{3+a-s}{2}+3+a\Theta}}{|k|^{\fr{3+a-s}{2}+3+a\Theta}|m|^\Theta}\le C\fr{|kt|^{5+a\Theta}}{|k|^{\fr{3+a-s}{2}+3+a\Theta}|m|^\Theta}.
 \end{align}
Taking $\Theta=\fr{5}{s-a}$, {  for $s\ge 6a$}, using Lemma \ref{lem-interp1} and \eqref{est-eta}, similar to \eqref{est-T1E1-short}, we are led to
 \begin{align}\label{est-T2E2HL-short}
     {\bf T}^{(2); {\rm HL}}[E_2]_{\tau\le\fr{2t}{3}}\les \left\||\nb_x|^{-1}\rho^{(2)}\right\|^2_{L^2_t\mathcal{G}^{\lm,N}_s}{ \big\||Z| f \big\|^2_{L^\infty_t  \mathcal{G}^{\lm, \fr{N}{2}}_{s,0}}}
    \left\| \mathfrak{g}\right\|^2_{L^\infty_t  \bar{\mathcal{G}}^{\lm(0), \fr{N}{2}}_{s,0}}.
 \end{align}
 If $\fr{2t}{3}\le \tau\le t$, using again \eqref{est-eta},  following the estimates of ${\bf T}^{(1);{\rm HL}}[E_1]_{\tau\approx t}$ line by line (see \eqref{est-resoance}--\eqref{est-T1E1-l=k}), we arrive at
 \begin{align}\label{est-T2E2HL-t=tau}
     {\bf T}^{(2);{\rm HL}}[E_2]_{\tau\approx t}\les \left\| \la t\ra^{\fr{3+a-s}{2}} \rho^{(2)}\right\|^2_{L^2_t\mathcal{G}^{\lm,N}_s}{ \left\|\la Y\ra^3\la\nb_x\ra^3|Z|f(\tau)\right\|^2_{L^\infty_t\mathcal{G}^{\lm,\fr{N}{2}}_{s,0}}}\left\|\la Y^*\ra^3\mathfrak{g}\right\|^2_{L^\infty_t\bar{\mathcal{G}}^{\lm(0),\fr{N}{2}}_{s,0}}.
 \end{align}
 
 The Bony decomposition will be used when addressing  ${\bf T}^{(2);\rm LH}[E_2]$. Indeed, on the support of the integrand above, there holds
\begin{align*}
    |k-l,\eta+(k-l)\tau|\le\fr34\cdot\fr{J}{8},\quad \fr{J}{2}\le|l,l\tau|\le\fr{3J}{2}, 
\end{align*}
and hence
\begin{align*}
    |k-l,\eta+(k-l)\tau|\le \fr{3}{16}|l,l\tau|.
\end{align*}
Then
\begin{align}\label{upper-eta}
    |\eta|\les\la\tau\ra|k-l,\eta+(k-l)\tau|\les\la\tau\ra|l,l\tau|.
\end{align}
 Using this, one can move the $v$ derivative on $f$ to $\rho^{(2)}$. More precisely, if $0\le\tau\le\fr{2t}{3}$, \eqref{eq:gain m2} is still valid, similar to \eqref{est-T2E2HL-short}, we have
\begin{align}\label{est-T2E2LH-short}
     {\bf T}^{(2); {\rm LH}}[E_2]_{\tau\le\fr{2t}{3}}\les \left\||\nb_x|^{-1}|Z|\rho^{(2)}\right\|^2_{L^2_t\mathcal{G}^{\lm,\fr{N}{2}}_s}{ \big\| f \big\|^2_{L^\infty_t  \mathcal{G}^{\lm, N}_{s,0}}}
    \left\| \mathfrak{g}\right\|^2_{L^\infty_t  \bar{\mathcal{G}}^{\lm(0), N}_{s,0}}.
 \end{align}
 If $\fr{2t}{3}\le\tau\le t$, using \eqref{upper-eta} again, we modify the estimates of ${\bf T}^{(1);{\rm LH}}[E_1]_{\tau\approx t}$ slightly to obtain
\begin{align}\label{est-T2E2LH-t=tau}
    {\bf T}^{(2);{\rm LH}}[E_2]_{\tau\approx t}\les\nn&\int_0^{T}\sum_{|\beta|\le N}\kappa^{2|\beta|}\sum_{k\in\Z^3_*}\Bigg[\sum_{\substack{\beta'\le\beta\\ \beta''\le\beta-\beta'}}\sum_{\substack{n\le m\\ n'\le m-n}}\sum_{l\in\Z^3_*}\int_{\fr{2t}{3}}^t\fr{b_{m,n,n',s}}{\la t\ra \la l,l\tau\ra^3}\\
    \nn&\times \Big|a_{n,s}(\tau) \la \tau\ra^{\fr{3+a-s}{2}}
    \fr{\la\tau\ra^2}{|l|}|l,l\tau|^4(l,l\tau)^{n+\beta'}\hat{\rho}^{(2)}_l(\tau)\Big|\\
    \nn&\times \Big\|a_{n',s}(\tau)\mathcal{F}_{x,v}\big[f^{(n'+\beta'')}\big]_{k-l}(\tau)\Big\|_{L^2_\eta}\\
\nn&\times \Big\|a_{m-n-n',s}(0)\mathcal{F}_{x,v}\big[\mathfrak{g}^{(m-n-n'+\beta-\beta'-\beta'')}\big]_k(t-\tau)\Big\|_{L^2_\eta} d\tau\Bigg]^2dt\\
\les&\left\|\la t\ra^{\fr{3+a-s}{2}}|\nb_x|^{-1}|Z|^6\rho^{(2)}\right\|_{L^2_t\mathcal{G}^{\lm,\fr{N}{2}}_s}^2 { \|f\|^2_{L^\infty_t\mathcal{G}^{\lm,N}_{s,0}}}\|\mathfrak{g}\|^2_{L^\infty_t\bar{\mathcal{G}}^{\lm(0),N}_{s,0}}.
\end{align}

\noindent{$\diamond$ \underline{\it Estimates of $\mathcal{T}_{{v_j}f}E^j_2$}.} To this end, we write

\begin{align*}
    \tl{\bf T}^{(2)}[E_2]
:=&\int_0^{T}\la t\ra^{3+a-s}\sum_{k\in\Z^3_*}\sum_{|\beta|\le N}\kappa^{2|\beta|}\sum_{m\in\N^6}\Bigg|a_{m,s}(t)(k,kt)^{m+\beta}\\
&\times\int_0^t\int_{\R^3}\sum_{l\in\Z^3_*}\sum_{J\ge8}\hat{\rho}^{(2)}_l(\tau)_J\fr{l_j}{|l|^2} \mathcal{F}_{x,v}[v_jf]_{k-l}(\tau,\eta)_{<J/8}\bar{\hat{\mathfrak{g}}}_k(t-\tau,\eta) d\eta d\tau\Bigg|^2dt.
\end{align*}
Similar to \eqref{est-T2E2HL-short}, \eqref{est-T2E2HL-t=tau}, \eqref{est-T2E2LH-short} and \eqref{est-T2E2LH-t=tau}, without resorting to \eqref{upper-eta}, we have
\begin{align}\label{est-tlT2E2}
\tl{\bf T}^{(2)}[E_2]\les\nn&\left\||\nb_x|^{-1}\rho^{(2)}\right\|^2_{L^2_t\mathcal{G}^{\lm,N}_s}{ \big\|v f \big\|^2_{L^\infty_t  \mathcal{G}^{\lm, \fr{N}{2}}_{s,0}}}
    \left\| \mathfrak{g}\right\|^2_{L^\infty_t  \bar{\mathcal{G}}^{\lm(0), \fr{N}{2}}_{s,0}}\\
    \nn&+\left\| \la \tau\ra^{\fr{3+a-s}{2}} \rho^{(2)}\right\|^2_{L^2_t\mathcal{G}^{\lm,N}_s}{ \left\|\la Y\ra^2\la\nb_x\ra^3(vf)\right\|^2_{L^\infty_t\mathcal{G}^{\lm,\fr{N}{2}}_{s,0}}}\left\|\la Y^*\ra^2\mathfrak{g}\right\|^2_{L^\infty_t\bar{\mathcal{G}}^{\lm(0),\fr{N}{2}}_{s,0}}\\
    \nn&+\left\||\nb_x|^{-1}\rho^{(2)}\right\|^2_{L^2_t\mathcal{G}^{\lm,\fr{N}{2}}_s}{ \big\| vf \big\|^2_{L^\infty_t  \mathcal{G}^{\lm, N}_{s,0}}}
    \left\| \mathfrak{g}\right\|^2_{L^\infty_t  \bar{\mathcal{G}}^{\lm(0), N}_{s,0}}\\
    +&\left\|\la t\ra^{\fr{3+a-s}{2}}|\nb_x|^{-1}|Z|^4\rho^{(2)}\right\|_{L^2_t\mathcal{G}^{\lm,\fr{N}{2}}_s}^2 { \|vf\|^2_{L^\infty_t\mathcal{G}^{\lm,N}_{s,0}}}\|\mathfrak{g}\|^2_{L^\infty_t\bar{\mathcal{G}}^{\lm(0),N}_{s,0}}.
\end{align}

\noindent{$\diamond$ \underline{\it The collision contributions}.}
Now we turn to bound
\begin{align*}
&\nu\left\|\langle t\rangle^{\fr{3+a-s}{2}} \int_0^t\int_{\R^3}S_k(t-\tau)[(\widehat{\Gamma(f,f)})_k(\tau)]\mu^{\fr12}(v)dvd\tau \right\|_{L^2_t\mathcal{G}^{\lm,N}_{s}}\\
=&\nu\Bigg[\int_0^{T}\langle t\rangle^{3+a-s}\sum_{k\in\Z^3_*} \sum_{|\beta|\le N}\kappa^{2|\beta|}\sum_{m\in\N^6}a_{m,s}^2(t)\\
    &\times\left|(k,kt)^{m+\beta}\int_0^t\int_{\R^3} S_k(t-\tau)\big[ \widehat{\Gamma(f,f)}_{k}(\tau,v)\big]\sqrt{\mu}dvd\tau\right|^2dt\Bigg]^{\fr12},
\end{align*}
which will correspond to a plasma echo-like contribution from the nonlinear Landau collisions. 
Recalling \eqref{def-Gamma} and noting that $\Phi^{ij}=\Phi^{ji}$, applying again the duality argument (recalling \eqref{def-frak-g} for definition of $\mathfrak{g}$),  we write
\begin{align*}
    &\nu(k,kt)^{m+\beta}\int_0^t\int_{\R^3}S_k(t-\tau)\big[ \widehat{\Gamma(f,f)}_{k}(\tau,v)\big]\sqrt{\mu}dvd\tau\\
    =&\nu(k,kt)^{m+\beta}\int_0^t\int_{\R^3} \mathcal{F}_x\Big[\big(\Phi^{ij}*(\mu^\fr12f)\big)\pr_{v_iv_j}f\Big]_{k}(\tau,v)\mathfrak {g}_k(t-\tau,v)dvd\tau\\
    &+2\nu(k,kt)^{m+\beta}\int_0^t\int_{\R^3} \mathcal{F}_x\Big[\big(\Phi^{ij}*(\pr_{v_i}\mu^\fr12f)\big)\pr_{v_j}f\Big]_{k}(\tau,v)\mathfrak {g}_k(t-\tau,v)dvd\tau\\
    &+\nu(k,kt)^{m+\beta}\int_0^t\int_{\R^3} \mathcal{F}_x\Big[\big(\Phi^{ij}*(\pr_{v_iv_j}\mu^\fr12f)\big)f\Big]_{k}(\tau,v)\mathfrak {g}_k(t-\tau,v)dvd\tau\\
    &-\nu(k,kt)^{m+\beta}\int_0^t\int_{\R^3} \mathcal{F}_x\Big[\big(\Phi^{ij}*\pr_{v_iv_j}(\mu^\fr12f)\big)f\Big]_{k}(\tau,v)\mathfrak {g}_k(t-\tau,v)dvd\tau
    =\sum_{\ell=1}^4\mathbb{I}^{m,\beta}_{\ell,k}(t).
\end{align*}

Clearly, the first three terms $\mathbb{I}^{m,\beta}_{1,k}(t)$, $\mathbb{I}^{m,\beta}_{2,k}(t)$ and $\mathbb{I}^{m,\beta}_{3,k}(t)$ possess a similar structure, and among them the first term $\mathbb{I}^{m,\beta}_{1,k}(t)$ is  the most challenging to address due to two $v$ derivatives hitting $f$. The last term $\mathbb{I}^{m,\beta}_{4,k}(t)$ is comparatively straightforward to deal with  since the two $v$ derivatives can be absorbed by $\Phi^{ij}$. In the following,
let us focus on the treatments of $\mathbb{I}^{m,\beta}_{1,k}(t)$ and $\mathbb{I}^{m,\beta}_{4,k}(t)$. 
By \eqref{eq:Fourier-Phi} and Plancherel's theorem, we write
\begin{align*}
  \mathbb{I}_{1,k}^{m,\beta}(t)=& 8\pi\nu(k,kt)^{m+\beta}\sum_{l\in\Z^3}\Big(\int_0^{\fr{2t}{3}}+\int_{\fr{2t}{3}}^t\Big)\int_{\R^6} \frac{\xi_i\xi_j}{|\xi|^4}\widehat{(\sqrt{\mu}f)}_l(\tau,\xi) \widehat{(\pr_{v_iv_j}f)}_{k-l}(\tau,\eta-\xi)\\
  &\times\bar{\hat{\mathfrak{g}}}_{k}(t-\tau,\eta)d\xi d\eta d\tau=\mathbb{I}_{1,k}^{m,\beta}(t)_{\tau\le\fr{2t}{3}}+\mathbb{I}_{1,k}^{m,\beta}(t)_{\tau\approx t},
\end{align*}
and
\begin{align*}
  \mathbb{I}_{4,k}^{m,\beta}(t)=& 8\pi\nu(k,kt)^{m+\beta}\sum_{l\in\Z^3}\Big(\int_0^{\fr{2t}{3}}+\int_{\fr{2t}{3}}^t\Big)\int_{\R^6} \widehat{(\sqrt{\mu}f)}_l(\tau,\xi) \hat{f}_{k-l}(\tau,\eta-\xi)\\
  &\times\bar{\hat{\mathfrak{g}}}_{k}(t-\tau,\eta)d\xi d\eta d\tau=\mathbb{I}_{4,k}^{m,\beta}(t)_{\tau\le\fr{2t}{3}}+\mathbb{I}_{4,k}^{m,\beta}(t)_{\tau\approx t},
\end{align*}
{\bf Case 1: $\tau\le\fr23t$.} Noting that
\begin{align*}
    (k,kt)=(l,\xi+l\tau)+(k-l,\eta-\xi+(k-l)\tau)+(0,-\eta+k(t-\tau)),
\end{align*}
now we can use \eqref{eq:gain m2} and Lemma \ref{lem-interp1} again, and by virtue of Lemma \ref{lem-weighted-young}, similar to the treatment of ${\bf T}^{(1)}[E_1]_{\tau\le\fr{2t}{3}}$ (see \eqref{est-T1E1-short}, but here we still need the low-high and high-low decomposition). By applying Remark \ref{Rmk: coefficients a-b}, we obtain that 
\begin{align*}
    \nn&\int_0^{T}\sum_{k\in\Z^3_*} \la t\ra^{3+a-s}\sum_{|\beta|\le N}\kappa^{2|\beta|}\sum_{m\in\N^6}a_{m,s}^2(t)\Big|\mathbb{I}_{1,k}^{m,\beta}(t)_{\tau\le\fr{2t}{3}}\Big|^2dt\\
    \les\nn&\nu^{\fr23}\int_0^{T^*}\sum_{k\in\Z^3_*}\fr{1}{|k|^{9+a-s+2a\Theta}} \sum_{|\beta|\le N}\kappa^{2|\beta|}\sum_{m\in\N^6}\Bigg[\sum_{l\in\Z^3}\int_0^{\fr{2t}{3}}\fr{1}{\la t\ra^3}\int_{\R^6}a_{m,s}(\tau)\\
    \nn&\times\Bigg|(k,kt)^{m+\beta}\fr{1}{|\xi|^2}\widehat{(\sqrt{\mu}f)}_l(\tau,\xi) \nu^{\fr23}\widehat{(\pr_{v_iv_j}f)}_{k-l}(\tau,\eta-\xi)\hat{\mathfrak{g}}_{k}(t-\tau,\eta)\Bigg|d\xi d\eta d\tau\Bigg]^2dt\\
    \les\nn&\nu^{\fr23}\int_0^{T}\sup_{k\in\Z^3_*} \sum_{|\beta|\le N}\kappa^{2|\beta|}\sum_{m\in\N^6}\Bigg(\sum_{\substack{\beta'\le\beta\\\beta''\le\beta-\beta'}}\sum_{\substack{n\le m\\n'\le m-n}}\int_0^{\fr{2t}{3}}\fr{b_{m,n,n',s}}{\la t\ra^2}d\tau\Bigg)\\
    \nn&\times \sum_{\substack{n\le m\\n'\le m-n}}\int_0^{\fr{2t}{3}}\fr{1}{\la t\ra^4}\Bigg[\sum_{\substack{\beta'\le\beta,\beta''\le\beta-\beta'\\|\beta'|\le|\beta|/2}}\Big\|a_{n,s}(\tau)(l,\xi+l\tau)^{n+\beta'}\widehat{(\sqrt{\mu}f)}_l(\tau,\xi)\Big\|^2_{(L^1_\xi\cap L^\infty_{\xi}) L^2_l}\\
    \nn&\times\Big\|a_{n',s}(\tau)\mathcal{F}_{x,v}\big[\nu^{\fr23}\pr_{v_iv_j}f^{(n'+\beta'')}\big]_{k-l}(\tau)\Big\|^2_{L^2_\eta L^2_l}\\
    \nn&\times\Big\|a_{m-n-n',s}(0)\mathcal{F}_{x}\big[\mathfrak{g}^{(m-n-n'+\beta-\beta'-\beta'')}\big]_{k}(t-\tau)\Big\|_{L^2_v}^2 d\tau dt\\
    \nn&+\sum_{\substack{\beta'\le\beta,\beta''\le\beta-\beta'\\|\beta'|>|\beta|/2}}\Big\|a_{n,s}(\tau)(l,\xi+l\tau)^{n+\beta'}\widehat{(\sqrt{\mu}f)}_l(\tau,\xi)\Big\|^2_{(L^2_\xi\cap L^\infty_{\xi}) L^2_l}\\
    \nn&\times\Big\|a_{n',s}(\tau)\mathcal{F}_{x,v}\big[\nu^{\fr23}\pr_{v_iv_j}f^{(n'+\beta'')}\big]_{k-l}(\tau)\Big\|^2_{(L^1_\eta\cap L^2_\eta) L^2_l}\\
    \nn&\times\Big\|a_{m-n-n',s}(0)\mathcal{F}_{x}\big[\mathfrak{g}^{(m-n-n'+\beta-\beta'-\beta'')}\big]_{k}(t-\tau)\Big\|_{L^2_v}^2 d\tau \Bigg]dt.
\end{align*}
Thus, we have 
\begin{align}\label{est-I1k-short}
    \nn&\int_0^{T}\sum_{k\in\Z^3_*} \la t\ra^{3+a-s}\sum_{|\beta|\le N}\kappa^{2|\beta|}\sum_{m\in\N^6}a_{m,s}^2(t)\Big|\mathbb{I}_{1,k}^{m,\beta}(t)_{\tau\le\fr{2t}{3}}\Big|^2\\
    \les\nn&\nu^{\fr23}\int_0^{T}\int_0^{\fr{2t}{3}}\fr{1}{\la t\ra^4}\left\|\la Y\ra^{2}(\sqrt{\mu}f)(\tau)\right\|^2_{\mathcal{G}^{\lm(\tau),\fr{N}{2}}_{s,2}}\left\|\nu^{\fr23}\pr_{v_iv_j}f(\tau)\right\|^2_{\mathcal{G}^{\lm(\tau),N}_{s,0}}\\
    \nn&\times\left\|\mathfrak{g}(t-\tau)\right\|^2_{\bar{\mathcal{G}}^{\lm(0),N}_{s,0}}d\tau dt\\
    \nn&+\nu^{\fr23}\int_0^{T}\int_0^{\fr{2t}{3}}\fr{1}{\la t\ra^4}\left\|\sqrt{\mu}f(\tau)\right\|^2_{\mathcal{G}^{\lm(\tau),N}_{s,2}}\left\|\nu^{\fr23}\pr_{v_iv_j}\la Y\ra^{2}f(\tau)\right\|^2_{\mathcal{G}^{\lm(\tau),\fr{N}{2}}_{s,0}}\\
    \nn&\times\left\|\mathfrak{g}(t-\tau)\right\|^2_{\bar{\mathcal{G}}^{\lm(0),\fr{N}{2}}_{s,0}}d\tau dt\\
    \les&\nu^{\fr23}{  \left\|\sqrt{\mu}f\right\|^2_{L^\infty_t\mathcal{G}^{\lm,N}_{s,2}}\left\|\nu^{\fr23}\pr_{v_iv_j}f\right\|^2_{L^\infty_t\mathcal{G}^{\lm,N}_{s,0}}}\left\|\mathfrak{g}\right\|^2_{L^\infty_t\bar{\mathcal{G}}^{\lm(0),N}_{s,0}}.
\end{align}

It is easy to see that the above estimate with minor changes applies to $\mathbb{I}_{4,k}^{m,\beta}(t)_{\tau\le\fr{2t}{3}}$. We state the result here:
\begin{align}\label{est-I4k-short}
    \nn&\int_0^{T}\sum_{k\in\Z^3_*} \la t\ra^{3+a-s}\sum_{|\beta|\le N}\kappa^{2|\beta|}\sum_{m\in\N^6}a_{m,s}^2(t)\Big|\mathbb{I}_{4,k}^{m,\beta}(t)_{\tau\le\fr{2t}{3}}\Big|^2\\
    \les&\nu^{2}{  \left\|\sqrt{\mu}f\right\|^2_{L^\infty_t\mathcal{G}^{\lm,N}_{s,0}}\left\|f\right\|^2_{L^\infty_t\mathcal{G}^{\lm,N}_{s,0}}}\left\|\mathfrak{g}\right\|^2_{L^\infty_t\bar{\mathcal{G}}^{\lm(0),N}_{s,0}}.
\end{align}

\noindent{\bf Case 2: $\fr23t\le\tau\le t$.} In this case, we further split  $\mathbb{I}^{m,\beta}_{1,k}(t)_{\tau\approx t}$ into three parts:
\begin{align}\label{divid-I1k}
  \mathbb{I}_{1,k}^{m,\beta}(t)_{\tau\approx t}
=\nn&8\pi\nu \sum_{l\in\Z^3}\sum_{\beta'\leq \beta}C_{\beta}^{\beta'}\sum_{\beta''\le\beta-\beta'}C_{\beta-\beta'}^{\beta''}
    \sum_{n\leq m}C_{m}^{n}\sum_{n'\le m-n}C_{m-n}^{n'}\\
    \nn&\times \Big({\bf1}_{|\beta'|\leq |\beta|/2}{\bf1}_{l\ne0}+{\bf1}_{|\beta'|\leq |\beta|/2}{\bf1}_{l=0}
    +{\bf1}_{|\beta'|>|\beta|/2}\Big)\\
    \nn&\times\int_{\fr{2t}{3}}^t\int_{\R^6} \frac{\xi_i\xi_j}{|\xi|^4}\mathcal{F}_{x,v}\big[Z^{n+\beta'}(\mu^\fr12f)\big]_l(\tau,\xi) \mathcal{F}_{x,v}\big[Z^{n'+\beta''}\pr_{v_iv_j}f\big]_{k-l}(\tau,\eta-\xi)\\
    \nn&\quad \quad 
    \times (0,-\eta+k(t-\tau))^{m-n-n'+\beta-\beta'-\beta''}\bar{\hat{\mathfrak{g}}}_{k}(t-\tau,\eta)d\xi d\eta d\tau\\
    &=\mathbb{I}_{1,k;1)}^{m,\beta}(t)+\mathbb{I}_{1,k;2)}^{m,\beta}(t)+\mathbb{I}_{1,k;3)}^{m,\beta}(t).
\end{align}

{\it $\bullet$ Treatments of $\mathbb{I}_{1,k;1)}^{m,\beta}(t)$.} Now we have
\begin{align}\label{xi-gain-1}
    \fr{1}{|\xi|^2}\les \fr{\la\xi\ra^2}{|\xi|^2}\fr{\la \xi+l\tau\ra^2}{\la l\tau\ra^2}.
\end{align}
Combining this with Remark \ref{Rmk: coefficients a-b} and Lemma \ref{lem-weighted-young}, we have
\begin{align*}
    \nn&\int_0^{T}\la t\ra^{3+a-s}\sum_{k\in\Z^3_*}\sum_{|\beta|\le N}\kappa^{2|\beta|}\sum_{m\in\N^6}a_{m,\lm,s}^2(t)\left|\mathbb{I}_{1,k;1)}^{m,\beta}(t)\right|^2dt\\
    \nn\les&\nu^2\int_0^{T}\la t\ra^{3+a-s}\sum_{k\in\Z^3_*} \sum_{|\beta|\le N}\kappa^{2|\beta|}\sum_{m\in\N^6}\Bigg[\sum_{\substack{\beta'\leq \beta, \beta''\le\beta-\beta'\\|\beta'|\le|\beta|/2}}
    \sum_{\substack{n\leq m\\ n'\le m-n }}\\
    \nn&\sum_{l\in \Z^3_*}\int_{\fr{2t}{3}}^t\int_{\R^6} \frac{b_{m,n,n',s}}{|l, l\tau|^2\la \nu^{\fr13}(t-\tau)\ra^{\fr32}} \left|a_{n,s}(\tau)\mathcal{F}_{x,v}\big[| Z|^4(\mu^\fr12f)^{(n+\beta')}\big]_l(\tau,\xi)\fr{\la\xi\ra^2}{|\xi|^2} \right|\\
    \nn&\times\left|a_{n',s}(\tau)\mathcal{F}_{x,v}\big[\pr_{v_iv_j}f^{(n'+\beta'')}\big]_{k-l}(\tau,\eta-\xi)\right|\\
    \nn&
    \times \la\nu^\fr13(t-\tau)\ra^{\fr32}\left|a_{m-n-n',s}(0) \mathcal{F}_{x,v}\big[\mathfrak{g}^{(m-n-n'+\beta-\beta'-\beta'')}\big]_k(t-\tau,\eta)\right|d\xi d\eta d\tau\Bigg]^2dt\\
    \nn\les&\nu^2\int_0^{T}\fr{1}{\la t\ra^{1-a+s}}\sum_{k\in\Z^3_*} \sum_{|\beta|\le N}\kappa^{2|\beta|}\sum_{m\in\N^6}\Bigg(\sum_{\substack{\beta'\leq \beta, \beta''\le\beta-\beta'\\|\beta'|\le|\beta|/2}}
    \sum_{\substack{n\leq m\\ n'\le m-n }}\sum_{l\in \Z^3_*}\int_{\fr{2t}{3}}^t \frac{b_{m,n,n',s}}{|l|^4\la \nu^{\fr13}(t-\tau)\ra^{\fr32}}d\tau\Bigg)\\
    \nn&\times\sum_{\substack{\beta'\leq \beta, \beta''\le\beta-\beta'\\|\beta'|\le|\beta|/2}}
    \sum_{\substack{n\leq m\\ n'\le m-n }}\sum_{l\in \Z^3_*}\int_{\fr{2t}{3}}^t \la\nu^\fr13(t-\tau)\ra^{-\fr32}  \left\|a_{n,s}(\tau)\mathcal{F}_{x,v}\big[| Z|^4(\mu^\fr12f)^{(n+\beta')}\big]_l(\tau) \right\|^2_{L^1_\xi\cap L^\infty_\xi}\\
    \nn&\times\left\|a_{n',s}(\tau)\mathcal{F}_{x,v}\big[\pr_{v_iv_j}f^{(n'+\beta'')}\big]_{k-l}(\tau)\right\|^2_{L^2_\eta}\\
    \nn&
    \times \la\nu^\fr13(t-\tau)\ra^{3}\left\|a_{m-n-n',s}(0) \mathcal{F}_{x,v}\big[\mathfrak{g}^{(m-n-n'+\beta-\beta'-\beta'')}\big]_k(t-\tau)\right\|^2_{L^2_\eta} d\tau dt.
 \end{align*}  
 Thus, we obtain that
\begin{align}\label{est-I11}
    \nn&\int_0^{T}\la t\ra^{3+a-s}\sum_{k\in\Z^3_*}\sum_{|\beta|\le N}\kappa^{2|\beta|}\sum_{m\in\N^6}a_{m,\lm,s}^2(t)\left|\mathbb{I}_{1,k;1)}^{m,\beta}(t)\right|^2dt\\
    \nn\les&\nu^{\fr53}\int_0^{T}\fr{1}{\la t\ra^{1-a+s}}\int_{\fr{2t}{3}}^t \la\nu^\fr13(t-\tau)\ra^{-\fr32} \left\| \la Y\ra^2|Z|^4(\mu^\fr12f)(\tau)\right\|_{\mathcal{G}^{\lm,\fr{N}{2}}_{s,2}}^2\\
    \nn&\times\left\|\pr_{v_iv_j}f(\tau)\right\|^2_{\mathcal{G}^{\lm,N}_{s,0}} \Big(\la\nu^\fr13(t-\tau)\ra^{3}\left\|\mathfrak{g}(t-\tau)\right\|^2_{\bar{\mathcal{G}}^{\lm,N}_{s,0}}\Big) d\tau dt\\
    \les& { \left\| \la Z\ra^6(\mu^\fr12f)\right\|_{L^\infty_t\mathcal{G}^{\lm,\fr{N}{2}}_{s,2}}^2 \left\|\nu^{\fr23}\pr_{v_iv_j}f\right\|^2_{L^\infty_t\mathcal{G}^{\lm,N}_{s,0}}}\sup_{t}\Big(\la\nu^\fr13t\ra^{3}\left\|\mathfrak{g}(t)\right\|^2_{\bar{\mathcal{G}}^{\lm,N}_{s,0}}\Big).
\end{align}

{$\bullet$ \it Treatments of $\mathbb{I}_{1,k;3)}^{m,\beta}(t)$.} Note that
\begin{align}\label{xi-gain}
    \fr{1}{|\xi|^2}\les\fr{\la\xi\ra^2}{|\xi|^2}\fr{\la \xi-kt+l\tau \ra^2}{\la kt-l\tau\ra^2}\les \fr{\la\xi\ra^2}{|\xi|^2} \fr{1}{\la kt-l\tau\ra^2}\la \eta-\xi+(k-l)\tau \ra^2\la -\eta+k(t-\tau) \ra^2.
\end{align}
Then one  can treat $\fr{1}{\la kt-l\tau\ra^2}$ according to whether $(t,\tau, k,l)$ is in $D$ or not, where $D$ is defined in \eqref{def-D}.

If $l=k$, in view of Remark \ref{Rmk: coefficients a-b}, \eqref{xi-gain},  Lemma \ref{lem-weighted-young}, Corollaries \ref{coro-kernel} and \ref{coro-convolution},  we are led to
\begin{align}\label{est-Ik3-2}
\nn&\sum_{k\in\Z^3_*} \sum_{|\beta|\le N}\kappa^{2|\beta|}\sum_{m\in\N^6}a_{m,\lm,s}^2(t)\left|\mathbb{I}_{1,k;3)}^{m,\beta}(t)\right|^2\\
\nn\les&\nu^2 \sum_{k\in\Z^3_*} \sum_{|\beta|\le N}\kappa^{2|\beta|}\sum_{m\in\N^6}\Bigg[\sum_{\substack{\beta'\leq \beta,\beta''\le\beta-\beta'\\|\beta'|>|\beta|/2}}
    \sum_{\substack{n\leq m\\n'\le m-n}}\int_{\fr{2t}{3}}^t\int_{\R^6} \frac{b_{m,n,n',s}}{\la k(t-\tau)\ra^2}\\
    \nn&\times\left|a_{n,s}(\tau)\mathcal{F}_{x,v}\big[(\mu^\fr12f)^{(n+\beta')}\big]_k(\tau,\xi)\fr{\la\xi\ra^2}{|\xi|^2} \right|\\ 
    \nn&\times\left|a_{n',s}(\tau)\mathcal{F}_{x,v}\big[\la \nb_v\ra^2\pr_{v_iv_j}f^{(n'+\beta'')}\big]_{0}(\tau,\eta-\xi)\right|\\
    \nn&\times\left|a_{m-n-n',s}(0) \mathcal{F}_{x,v}\big[\la Y^*\ra^2\mathfrak{g}^{(m-n-n'+\beta-\beta'-\beta'')}\big]_k(t-\tau,\eta)\right|d\xi d\eta d\tau\Bigg]^2\\
    \nn\les&\nu^2 \sum_{k\in\Z^3_*} \sum_{|\beta|\le N}\kappa^{2|\beta|}\sum_{m\in\N^6} \Bigg(\sum_{\substack{\beta'\leq \beta, \beta''\le\beta-\beta'\\|\beta'|>|\beta|/2}}
    \sum_{\substack{n\leq m\\n'\le m-n}} \int_{\fr{2t}{3}}^t \frac{b_{m,n,n',s}}{\la k(t-\tau)\ra^2}d\tau\Bigg)\\
    \nn&\times\sum_{\substack{\beta'\leq \beta, \beta''\le\beta-\beta'\\|\beta'|>|\beta|/2}}
    \sum_{\substack{n\leq m\\n'\le m-n}} \int_{\fr{2t}{3}}^t \frac{b_{m,n,n',s}}{\la k(t-\tau)\ra^2} \left\|a_{n,s}(\tau)\mathcal{F}_{x}\big[(\mu^\fr12f)^{(n+\beta')}\big]_k(\tau)\la v\ra^2\right\|_{L^2_v}^2\\ \nn&\times\left\|a_{n',s}(\tau)\la \nb_v\ra^4\pr_{v_iv_j}f^{(n'+\beta'')}_0(\tau)\right\|_{L^2_v}^2\\
    \nn&\times\left\|a_{m-n-n',s}(0) \la Y^*\ra^2\mathfrak{g}_k^{(m-n-n'+\beta-\beta'-\beta'')}(t-\tau)\right\|_{L^2_v}^2 d\tau\\
    \les
    \nn&\nu^2 \int_{\fr{2t}{3}}^t \frac{1}{\la t-\tau\ra^2} \left\|(\mu^\fr12f)_{\ne}(\tau)\right\|_{\mathcal{G}^{\lm(\tau),N}_{s,2}}^2\left\|\la \nb_v\ra^4\pr_{v_iv_j}f_0(\tau)\right\|_{\mathcal{G}^{\lm(\tau),\fr{N}{2}}_{s,0}}^2\\
    &\quad 
    \times\left\| \la Y^*\ra^2\mathfrak{g}(t-\tau)\right\|_{\bar{\mathcal{G}}^{\lm(0),\fr{N}{2}}_{s,0}}^2 d\tau.
\end{align}
Then recalling  \eqref{decom-f},
we have
\begin{align}\label{est-Ik3-3}
\nn&\int_0^{T}\langle t\rangle^{3+a-s}\sum_{k\in\Z^3_*} \sum_{|\beta|\le N}\kappa^{2|\beta|}\sum_{m\in\N^6}a_{m,\lm,s}^2(t)\left|\mathbb{I}_{1,k;3)}^{m,\beta}(t)\right|^2dt\\
\les\nn&\nu\int_0^{T}\int_{\fr{2t}{3}}^t \frac{\nu\la\tau\ra^{3+a-s}}{\la t-\tau\ra^2} \left\|(\mu^\fr12f)_{\ne}(\tau)\right\|_{\mathcal{G}^{\lm(\tau),N}_{s,2}}^2\left\|\la \nb_v\ra^4\pr_{v_iv_j}f_0(\tau)\right\|_{\mathcal{G}^{\lm(\tau),\fr{N}{2}}_{s,0}}^2\\
    \nn&\quad 
    \times \left\|\la Y^*\ra^2\mathfrak{g}(t-\tau)\right\|^2_{\bar{\mathcal{G}}^{\lm(0),\fr{N}{2}}_{s,0}} d\tau dt\\
    \les\nn&\nu\int_0^{T}\int_{\tau}^{T} \frac{1}{\la t-\tau\ra^2}\left\| \la Y^*\ra^2\mathfrak{g}(t-\tau)\right\|_{\mathcal{G}^{\lm(0),\fr{N}{2}}_{s,0}}^2 dt\\
    \nn&\times\left(\nu\la\tau\ra^{3+a-s}\left\|(\mu^\fr12f)_{\ne}(\tau)\right\|_{\mathcal{G}^{\lm(\tau),N}_{s,2}}^2\right)\left\|\la \nb_v\ra^4\pr_{v_iv_j}f_0(\tau)\right\|_{\mathcal{G}^{\lm(\tau),\fr{N}{2}}_{s,0}}^2 d\tau\\
    \les\nn&\sup_t\left\| \la Y^*\ra^2\mathfrak{g}(t)\right\|_{\mathcal{G}^{\lm(0),\fr{N}{2}}_{s,0}}^2\sup_\tau\left\|\la \nb_v\ra^4\pr_{v_iv_j}f_0(\tau)\right\|_{\mathcal{G}^{\lm(\tau),\fr{N}{2}}_{s,0}}^2\\
     &\times{  \nu^{\fr23}\Bigg[\nu^{\fr13}\int_0^{T} 
    \nu\la\tau\ra^3\big\|\mu^\fr12g_{\ne}(\tau)\big\|_{\mathcal{G}^{\lm(\tau),N}_{s,2}}^2 d\tau+\nu^{\fr43}\int_0^{T} \la \tau\ra^{3+a-s}\big\|\mu^{\fr12}q(\phi)g\big\|^2_{\mathcal{G}^{\lm(\tau),N}_{s,2}} d\tau\Bigg]}.
\end{align}

If $|kt-l\tau|\ge\fr{t}{2}$ and $k\ne l$, then $\mathbb{I}_{1,k;3)}^{m,\beta}(t)$ can be treated in a similar manner as  $\mathbb{I}_{1,k;1)}^{m,\beta}(t)$: 
\begin{align}\label{est-Ik3-1}
    \nn&\int_0^{T}\la t\ra^{3+a-s}\sum_{k\in\Z^3_*} \sum_{|\beta|\le N}\kappa^{2|\beta|}\sum_{m\in\N^6}a_{m,\lm,s}^2(t)\left|\mathbb{I}_{1,k;3)}^{m,\beta}(t)\right|^2dt\\
    \nn\les&\nu^2\int_0^{T}\la t\ra^{3+a-s}\sum_{k\in\Z^3_*} \sum_{|\beta|\le N}\kappa^{2|\beta|}\sum_{m\in\N^6}\Bigg[\sum_{\substack{\beta'\leq \beta, \beta''\le\beta-\beta'\\|\beta'|>|\beta|/2}}
    \sum_{\substack{n\leq m\\ n'\le m-n }}\\
    \nn&\sum_{l\in \Z^3,l\ne k}\int_{\fr{2t}{3}}^t\int_{\R^6} \frac{b_{m,n,n',s}}{\la t\ra^2|k-l|^2\la\nu^{\fr13}(t-\tau) \ra^{\fr32}}  \left|a_{n,s}(\tau)\mathcal{F}_{x,v}\big[(\mu^\fr12f)^{(n+\beta')}\big]_l(\tau,\xi)\fr{\la\xi\ra^2}{|\xi|^2} \right|\\
    \nn&\times\left|a_{n',s}(\tau)\mathcal{F}_{x,v}\big[\pr_{v_iv_j}|\nb_x|^2\la Y\ra^2f^{(n'+\beta'')}\big]_{k-l}(\tau,\eta-\xi)\right|\la\nu^{\fr13}(t-\tau) \ra^{\fr32}\\
    \nn&
    \times \left|a_{m-n-n',s}(0) \mathcal{F}_{x,v}\big[\la Y^*\ra^2\mathfrak{g}^{(m-n-n'+\beta-\beta'-\beta'')}\big]_k(t-\tau,\eta)\right|d\xi d\eta d\tau\Bigg]^2dt\\
    \nn\les&\nu^2\int_0^{T}\fr{1}{\la t\ra^{1-a+s}}\sum_{k\in\Z^3_*} \sum_{|\beta|\le N}\kappa^{2|\beta|}\sum_{m\in\N^6}\\
    \nn&\Bigg(\sum_{\substack{\beta'\leq \beta, \beta''\le\beta-\beta'\\|\beta'|\le|\beta|/2}}
    \sum_{\substack{n\leq m\\ n'\le m-n }}\sum_{l\in \Z^3,l\ne k}\int_{\fr{2t}{3}}^t \frac{b_{m,n,n',s}}{|l-k|^4\la\nu^{\fr13}(t-\tau)\ra^{\fr32}}d\tau\Bigg)\\
    \nn&\times\sum_{\substack{\beta'\leq \beta, \beta''\le\beta-\beta'\\|\beta'|\le|\beta|/2}}
    \sum_{\substack{n\leq m\\ n'\le m-n }}\sum_{l\in \Z^3}\int_{\fr{2t}{3}}^t \la\nu^{\fr13}(t-\tau) \ra^{-\fr32}  \left\|a_{n,s}(\tau)\mathcal{F}_{x,v}\big[(\mu^\fr12f)^{(n+\beta')}\big]_l(\tau) \right\|^2_{L^2_\xi\cap L^\infty_\xi}\\
    \nn&\times\left\|a_{n',s}(\tau)\mathcal{F}_{x,v}\big[\pr_{v_iv_j}|\nb_x|^2\la Y\ra^2f^{(n'+\beta'')}\big]_{k-l}(\tau)\right\|^2_{L^2_\eta\cap L^1_\eta}\\
    \nn&
    \times \la\nu^{\fr13}(t-\tau) \ra^{3}\left\|a_{m-n-n',s}(0) \mathcal{F}_{x,v}\big[\la Y^*\ra^2\mathfrak{g}^{(m-n-n'+\beta-\beta'-\beta'')}\big]_k(t-\tau)\right\|^2_{L^2_\eta} d\tau dt\\
    \nn\les&\nu^{\fr53}\int_0^{T}\fr{1}{\la t\ra^{1-a+s}}\int_{\fr{2t}{3}}^t \la\nu^\fr13(t-\tau)\ra^{-\fr32} \left\|(\mu^\fr12f)(\tau)\right\|_{\mathcal{G}^{\lm,N}_{s,2}}^2\\
    \nn&\times\left\|\pr_{v_iv_j}|\nb_x|^2\la Y\ra^4f(\tau)\right\|^2_{\mathcal{G}^{\lm,\fr{N}{2}}_{s,0}} \Big(\la\nu^\fr13(t-\tau)\ra^{3}\left\|\la Y^*\ra^2\mathfrak{g}(t-\tau)\right\|^2_{\bar{\mathcal{G}}^{\lm,\fr{N}{2}}_{s,0}}\Big) d\tau dt\\
    \les&{ \left\| (\mu^\fr12f)\right\|_{L^\infty_t\mathcal{G}^{\lm,N}_{s,2}}^2 \left\|\nu^{\fr23}\pr_{v_iv_j}|\nb_x|^2\la Y\ra^4f\right\|^2_{L^\infty_t\mathcal{G}^{\lm,\fr{N}{2}}_{s,0}}}\sup_{t}\Big(\la\nu^\fr13t\ra^{3}\left\|\la Y^*\ra^2\mathfrak{g}(t)\right\|^2_{\bar{\mathcal{G}}^{\lm,\fr{N}{2}}_{s,0}}\Big).
\end{align}

If $(t,\tau,k,l)\in D$, namely, $|kt-l\tau|\le\fr{t}{2}$ and $l\ne k$, then \eqref{lm-gap3} holds. Furthermore, if $l=0$, $|kt-l\tau|\le\fr{t}{2}$ cannot hold due to $|k|\ne0$. In the following, we focus on the case $l\ne0$.

{\it Case 1: $(t,\tau,k,l)\in D_1=\{|kt-l\tau|\le\fr{t}{2}, l\ne k, l\ne0, |l\tau|\le2|\xi|\}$.} In this case, there holds
$\fr{1}{|\xi|^2}\le \fr{4}{|l\tau|^2}$, then one can treat $\mathbb{I}_{1,k;3)}^{m,\beta}(t)$  in the same way as that in \eqref{est-Ik3-1} again.

{\it Case 2: $(t,\tau,k,l)\in D_2=\{|kt-l\tau|\le\fr{t}{2}, l\ne k, l\ne0, |l\tau|>2|\xi|\}$.} Now we have 
\begin{align*}
\fr{2}{3}|\xi+l\tau|\le|l\tau|\le2|\xi+l\tau|.
\end{align*}
Combining this with  \eqref{lm-gap3}, for $|n|\ge1$ and $\Theta$ satisfying  \eqref{Theta1}, similar to \eqref{lm-gap5'}, we have
\begin{align*}
    \Big(\fr{\lm(t)}{\lm(\tau)}\Big)^{|n|}\fr{\tau}{|l|^2}{\bf1}_{D_2}\les {\rm w}_2(\tau,l,\xi, n),
\end{align*}
where
\begin{align*}
    {\rm w}_2(\tau,l,\xi, n)=\begin{cases}
\fr{|\xi+ l\tau|^{1+a\Theta}}{|n|^\Theta},\quad {\rm if}\quad |n|>0;\\[2mm]
\fr{|\xi+l\tau|}{|l|^3},\quad\quad  \ \ {\rm if}\quad |n|=0.
    \end{cases}
\end{align*}
Then similar to \eqref{est-resoance}, thanks to Lemma \ref{lem-interp1} and using \eqref{decom-f}, we arrive at
\begin{align}\label{est-Ik3-4}
\nn&\int_0^{T}\la t\ra^{3+a-s}\sum_{k\in\Z^3_*} \sum_{|\beta|\le N}\kappa^{2|\beta|}\sum_{m\in\N^6}a_{m,\lm,s}^2(t)\left|\mathbb{I}_{1,k;3)}^{m,\beta}(t)\right|^2dt\\
    \les\nn&\nu^2 \int_0^{T}\la t\ra^{1+a-s}\sum_{k\in\Z^3_*}  \sum_{|\beta|\le N}\kappa^{2|\beta|}\sum_{m\in\N^6}\Bigg[\sum_{\substack{l\ne k,\\ l\ne0}}\sum_{\substack{\beta'\leq \beta,\beta''\le\beta-\beta'\\|\beta'|>|\beta|/2}}
    \sum_{\substack{n\leq m\\n'\le m-n}}\\
    \nn&\int_{\fr{2t}{3}}^t \frac{b_{m,n,n',s}{ |l|}}{\la kt-l\tau\ra^2|k-l|^2} \left\|a_{n,s}(\tau)\Big(\fr{\lm(t)}{\lm(\tau)}\Big)^{|n|}\fr{\tau}{{ |l|^2}} {\bf 1}_{D_2}\mathcal{F}_{x,v}\big[(\mu^\fr12{ \nb_x}f)^{(n+\beta')}\big]_l(\tau)\right\|_{L^2_\eta\cap L^\infty_\eta}\\ 
    \nn&\times\left\|a_{n',\lm,s}(\tau)\mathcal{F}_{x,v}[\la Y\ra^2|\nb_x|^2\pr_{v_iv_j}f^{(n'+\beta'')}]_{k-l}(\tau)\right\|_{L^2_\eta\cap L^1_\eta }\\
    \nn& \times\left\|a_{m-n-n',\lm,s}(0) \la Y^*\ra^2\mathfrak{g}_k^{(m-n-n'+\beta-\beta'-\beta'')}(t-\tau)\right\|_{L^2_v} d\tau\Bigg]^2dt\\
    \les\nn&\nu^2 \int_0^{T}\sum_{k\in\Z^3_*}  \sum_{|\beta|\le N}\kappa^{2|\beta|}\sum_{m\in\N^6} \\
    \nn&\Bigg(\sum_{\substack{l\ne k\\ l\ne 0}}\sum_{\substack{\beta'\leq \beta,\beta''\le\beta-\beta'\\|\beta'|>|\beta|/2}}
    \sum_{\substack{n\leq m\\n'\le m-n}} \int_{\fr{2t}{3}}^t \frac{ b_{m,n,n',s}|l|}{\la kt-l\tau\ra^2| k-l|^4}d\tau\Bigg)\\
    \nn&\times\sum_{\substack{l\ne k\\ l\ne0}}\sum_{\substack{\beta'\leq \beta,\beta''\le\beta-\beta'\\|\beta'|>|\beta|/2}}
    \sum_{\substack{n\leq m\\n'\le m-n}} \int_{\tau}^{T} \frac{|k|}{\la kt-l\tau\ra^2} \\
    \nn&\times \left\|a_{n,s}(\tau){\rm w}_2(\tau,l,\eta,n)\mathcal{F}_{x,v}\big[(\mu^\fr12\nb_xf)^{(n+\beta')}\big]_l(\tau)\right\|_{L^2_\eta\cap L^\infty_\eta}^2\\
     \nn&\times \la \tau\ra^{1+a-s}\left\|a_{n',s}(\tau)\mathcal{F}_x\big[\la Y\ra^4| \nb_x|^3\pr_{v_iv_j}f^{(n'+\beta'')}\big]_{k-l}(\tau)\right\|_{L^2_v}^2\\
    \nn& \times \left\|a_{m-n-n',s}(0) \la Y^*\ra^2\mathfrak{g}_k^{(m-n-n'+\beta-\beta'-\beta'')}(t-\tau)\right\|_{L^2_v}^2 dtd\tau\\
    \nn\les&\nu^{\fr13}\int_0^{T} \sum_{k\in\Z^3_*}\sum_{l\in\Z^3_*}\int_{\tau}^{T^*}\fr{|k|}{\la kt-l\tau\ra^2}\left\|\mathcal{F}_x\big[\mu^{\fr12}\nb_xf\big]_{l}(\tau)\right\|^2_{\tl{\mathcal{G}}^{\lm,N}_{s,2}} \left\|\la Y^*\ra^2\mathfrak{g}_k(t-\tau)\right\|^2_{\tl{\mathcal{G}}^{\lm(0),\fr{N}{2}}_{s,0}} \\
    \nn&\times \nu^{\fr13}\la \tau\ra^{1+a-s}\left\|\mathcal{F}_x\big[\la Y\ra^4| \nb_x|^3\nu^{\fr23}\pr_{v_iv_j}f(\tau)\big]_{k-l}\right\|^2_{\mathcal{\tl{G}}^{\lm,\fr{N}{2}}_{s,0}}dtd\tau\\
    \nn\les&{ \Bigg[\nu^{\fr13}\int_0^{T}\nu^{\fr13}\la\tau\ra \left\|\nu^{\fr23}\pr_{v_iv_j}g_{\ne}(\tau)\right\|^2_{\mathcal{G}^{\lm,\fr{N}{2}}_{s,0}}d\tau} \\
    \nn&{ +\nu^{\fr23}\int_0^{T}\la\tau\ra^{1+a-s} \left\|\big(q(\phi)\nu^{\fr23}\pr_{v_iv_j}g\big)(\tau)\right\|^2_{\mathcal{G}^{\lm,N}_{s,0}}d\tau\Bigg]}\\
    &\times\left\|\la Y^*\ra^2\mathfrak{g}\right\|^2_{L^\infty_t\bar{\mathcal{G}}^{\lm(0),\fr{N}{2}}_{s,0}}\left\|\mu^{\fr12}\nb_xf\right\|^2_{L^\infty_t\mathcal{G}^{\lm,N}_{s,2}},
\end{align}
provided $N\ge14$.

{$\bullet$  \it Treatments of $\mathbb{I}_{1,k;2)}^{m,\beta}(t)$.} In  view of \eqref{decom-f} again, we write
\begin{align}\label{decom-I12}
    \mathbb{I}_{1,k;2)}^{m,\beta}(t)
    =\nn&8\pi\nu \sum_{\substack{\beta'\leq \beta\\|\beta'|\leq |\beta|/2}}C_{\beta}^{\beta'}\sum_{\beta''\le\beta-\beta'}C_{\beta-\beta'}^{\beta''}
    \sum_{n\leq m}C_{m}^{n}\sum_{n'\le m-n}C_{m-n}^{n'}\\
    \nn&\times\int_{\fr{2t}{3}}^t\int_{\R^6} \frac{(\eta_i-\xi_i)(\eta_j-\xi_j)}{|\eta-\xi|^4}\mathcal{F}_{x,v}\big[Z^{n+\beta'}(\mu^\fr12f)\big]_0(\tau,\eta-\xi) \\
    \nn&\times\Big(\mathcal{F}_{x,v}\big[Z^{n'+\beta''}\big(q(\phi)\pr_{v_iv_j}g\big)\big]_{k}(\tau,\xi)+\mathcal{F}_{x,v}\big[Z^{n'+\beta''}\pr_{v_iv_j}g\big]_{k}(\tau,\xi)\Big)\\
    \nn&\times (0,-\eta+k(t-\tau))^{m-n-n'+\beta-\beta'-\beta''}\bar{\hat{\mathfrak{g}}}_{k}(t-\tau,\eta)d\xi d\eta d\tau\\
    =&\mathbb{I}_{1,k;2),e}^{m,\beta}(t)+\mathbb{I}_{1,k;2),d}^{m,\beta}(t).
\end{align}
$\mathbb{I}_{1,k;2),e}^{m,\beta}(t)$ is relatively easy to handle since the appearance of $q(\phi)$ provides us enough decay. Indeed,
\begin{align}\label{est-I12e-1}
\nn&\sum_{k\in\Z^3_*} \sum_{|\beta|\le N}\kappa^{2|\beta|}\sum_{m\in\N^6}a_{m,\lm,s}^2(t)\left|\mathbb{I}_{1,k;2),e}^{m,\beta}(t)\right|^2\\
\les\nn&\nu^{\fr23} \sum_{k\in\Z^3_*} \sum_{|\beta|\le N}\kappa^{2|\beta|}\sum_{m\in\N^6}\Bigg[\sum_{\substack{\beta'\leq \beta, \beta''\le\beta-\beta'\\|\beta'|\le|\beta|/2}}
    \sum_{\substack{n\leq m\\n'\le m-n}}\int_{\fr{2t}{3}}^t\int_{\R^6} \frac{b_{m,n,n',s}}{\la \nu^{\fr13}(t-\tau)\ra^{\fr32}} \\
    \nn&\times\left|\fr{a_{n,s}(\tau)}{|\eta-\xi|^2}\mathcal{F}_{x,v}\big[(\mu^\fr12f)^{(n+\beta')}\big]_0(\tau,\eta-\xi)\right|\\ 
    \nn&\times\left|a_{n',s}(\tau)\mathcal{F}_{x,v}\Big[\big(q(\phi)\nu^{\fr23}\pr_{v_iv_j}g\big)^{(n'+\beta'')}\Big]_{k}(\tau,\xi)\right| \la \nu^{\fr13}(t-\tau)\ra^{\fr32}\\
    \nn&\times\left|a_{m-n-n',s}(0) \mathcal{F}_{x,v}\big[\mathfrak{g}^{(m-n-n'+\beta-\beta'-\beta'')}\big]_k(t-\tau,\eta)\right|d\xi d\eta d\tau\Bigg]^2\\
    \nn\les&\nu^{\fr23} \sum_{k\in\Z^3_*} \sum_{|\beta|\le N}\kappa^{2|\beta|}\sum_{m\in\N^6} \Bigg(\sum_{\substack{\beta'\leq \beta,\beta''\le\beta-\beta'\\|\beta'|\le|\beta|/2}}
    \sum_{\substack{n\leq m\\n'\le m-n}} \int_{\fr{2t}{3}}^t \frac{b_{m,n,n',s}}{\la \nu^{\fr13}(t-\tau)\ra^{\fr32}}d\tau\Bigg)\\
    \nn&\times\sum_{\substack{\beta'\leq \beta,\beta''\le\beta-\beta'\\|\beta'|\le|\beta|/2}}
    \sum_{\substack{n\leq m\\n'\le m-n}} \int_{\fr{2t}{3}}^t \frac{b_{m,n,n',s}}{\la \nu^{\fr13}(t-\tau)\ra^{\fr32}} \left\|a_{n,\lm,s}(\tau)\mathcal{F}_{x}\big[\la\pr_v\ra^2(\mu^\fr12f)^{(n+\beta')}\big]_0(\tau)\la v\ra^2\right\|_{L^2_v}^2\\
     \nn&\times\left\|a_{n',s}(\tau)\mathcal{F}_{x}\Big[\big(q(\phi)\nu^{\fr23}\pr_{v_iv_j}g\big)^{(n'+\beta'')}\Big]_k(\tau)\right\|_{L^2_v}^2\\
    \nn&\times \la\nu^{\fr13}(t-\tau)\ra^{3}\left\|a_{m-n-n',s}(0) \mathfrak{g}_k^{(m-n-n'+\beta-\beta'-\beta'')}(t-\tau)\right\|_{L^2_v}^2 d\tau\\
    \les
    \nn&\nu^{\fr13} \int_{\fr{2t}{3}}^t \frac{1}{\la \nu^{\fr13} (t-\tau)\ra^{\fr32}} \left\|\la\pr_v\ra^2(\mu^\fr12f_0)(\tau)\right\|_{\mathcal{G}^{\lm(\tau),\fr{N}{2}}_{s,2}}^2\left\|\big(q(\phi)\nu^{\fr23}\pr_{v_iv_j}g\big)(\tau)\right\|_{\mathcal{G}^{\lm(\tau),N}_{s,0}}^2\\
    &\times \la\nu^{\fr13}(t-\tau)\ra^{3}\left\| \mathfrak{g}(t-\tau)\right\|_{\bar{\mathcal{G}}^{\lm(0),N}_{s,0}}^2 d\tau,
\end{align}
and hence
\begin{align}\label{est-I12e-2}
\nn&\int_0^{T}\langle t\rangle^{3+a-s}\sum_{k\in\Z^3_*} \sum_{|\beta|\le N}\kappa^{2|\beta|}\sum_{m\in\N^6}a_{m,\lm,s}^2(t)\left|\mathbb{I}_{1,k;2),e}^{m,\beta}(t)\right|^2dt\\
    \les\nn&\nu^{\fr13}\int_0^{T}\int_{\tau}^{T} \frac{1}{\la\nu^{\fr13} (t-\tau)\ra^{\fr32}}\Big(\la\nu^{\fr13}(t-\tau)\ra^3\left\|\mathfrak{g}(t-\tau)\right\|_{\bar{\mathcal{G}}^{\lm(0),N}_{s,0}}^2\Big) dt\\
    \nn&\times\left\|\la \nb_v\ra^2(\mu^\fr12f_{0})(\tau)\right\|_{\mathcal{G}^{\lm(\tau),\fr{N}{2}}_{s,2}}^2 \Big(\la \tau\ra^{3+a-s}\left\|q(\phi)\nu^{\fr23}\pr_{v_iv_j}g(\tau)\right\|_{\mathcal{G}^{\lm(\tau),N}_{s,0}}^2\Big) d\tau\\
    \les\nn&\left\|\la\nb_v\ra^2(\mu^\fr12f_0)\right\|_{L^\infty_t\mathcal{G}^{\lm,\fr{N}{2}}_{s,2}}^2\sup_t\Big(\la\nu^{\fr13}t\ra^3\left\|\mathfrak{g}(t)\right\|_{\bar{\mathcal{G}}^{\lm(0),N}_{s,0}}^2\Big)\\
    &\times{  \int_0^{T} 
    \la \tau\ra^{3+a-s}\left\|q(\phi)\nu^{\fr23}\pr_{v_iv_j}g(\tau)\right\|_{\mathcal{G}^{\lm(\tau),N}_{s,0}}^2d\tau}.
\end{align}

The treatments of $\mathbb{I}_{1,k;2),d}^{m,\beta}(t)$ will be divided into three cases.

{\it  Case (i): $|\eta|\le 10|\eta-k(t-\tau)|$.} Now we have $|k(t-\tau)|\le11|\eta-k(t-\tau)|$, and thus
\begin{align*}
    1\les\fr{\la\eta-k(t-\tau) \ra^2}{\la k(t-\tau)\ra^2},\quad {\rm and}\quad |\xi|\les\la\eta-\xi\ra\la\eta-k(t-\tau)\ra.
\end{align*}
Then one can move one $v$ derivative on $g$  to $\mu^{\fr12}f$ and $\mathfrak{g}$, and similar to \eqref{est-Ik3-2}, we have
\begin{align}\label{est-I12d-1}
\nn&\sum_{k\in\Z^3_*} \sum_{|\beta|\le N}\kappa^{2|\beta|}\sum_{m\in\N^6}a_{m,\lm,s}^2(t)\left|\mathbb{I}_{1,k;2),d}^{m,\beta}(t)\right|^2\\
\les\nn&\nu^2 \sum_{k\in\Z^3_*} \sum_{|\beta|\le N}\kappa^{2|\beta|}\sum_{m\in\N^6}\Bigg[\sum_{\substack{\beta'\leq \beta, \beta''\le\beta-\beta'\\|\beta'|\le|\beta|/2}}
    \sum_{\substack{n\leq m\\n'\le m-n}}\int_{\fr{2t}{3}}^t\int_{\R^6} \frac{b_{m,n,n',s}}{\la k(t-\tau)\ra^2} \\
    \nn&\times\left|\fr{a_{n,s}(\tau)}{|\eta-\xi|^2}\mathcal{F}_{x,v}\big[\la\nb_v\ra(\mu^\fr12f)^{(n+\beta')}\big]_0(\tau,\eta-\xi)\right|\\ \nn&\times\left|a_{n',s}(\tau)\mathcal{F}_{x,v}\big[\pr_{v_j}g^{(n'+\beta'')}\big]_{k}(\tau,\xi)\right|\\
    \nn&\times\left|a_{m-n-n',s}(0) \mathcal{F}_{x,v}\big[\la Y^*\ra^3\mathfrak{g}^{(m-n-n'+\beta-\beta'-\beta'')}\big]_k(t-\tau,\eta)\right|d\xi d\eta d\tau\Bigg]^2\\
    \les\nn&\nu^2 \sum_{k\in\Z^3_*} \sum_{|\beta|\le N}\kappa^{2|\beta|}\sum_{m\in\N^6} \Bigg(\sum_{\substack{\beta'\leq \beta,\beta''\le\beta-\beta'\\|\beta'|\le|\beta|/2}}
    \sum_{\substack{n\leq m\\n'\le m-n}} \int_{\fr{2t}{3}}^t \frac{b_{m,n,n',s}}{\la k(t-\tau)\ra^2}d\tau\Bigg)\\
    \nn&\times\sum_{\substack{\beta'\leq \beta,\beta''\le\beta-\beta'\\|\beta'|\le|\beta|/2}}
    \sum_{\substack{n\leq m\\n'\le m-n}} \int_{\fr{2t}{3}}^t \frac{b_{m,n,n',s}}{\la k(t-\tau)\ra^2} \left\|a_{n,\lm,s}(\tau)\mathcal{F}_{x}\big[\la\nb_v\ra^3(\mu^\fr12f)^{(n+\beta')}\big]_0(\tau)\la v\ra^2\right\|_{L^2_v}^2\\ \nn&\times\left\|a_{n',s}(\tau)\mathcal{F}_{x}\big[\pr_{v_j}g^{(n'+\beta'')}\big]_k(\tau)\right\|_{L^2_v}^2\\
    \nn&\times\left\|a_{m-n-n',s}(0) \la Y^*\ra^3\mathfrak{g}_k^{(m-n-n'+\beta-\beta'-\beta'')}(t-\tau)\right\|_{L^2_v}^2 d\tau\\
    \les
    &\nu^2 \int_{\fr{2t}{3}}^t \frac{1}{\la t-\tau\ra^2} \left\|\la\nb_v\ra^3(\mu^\fr12f_0)(\tau)\right\|_{\mathcal{G}^{\lm(\tau),\fr{N}{2}}_{s,2}}^2\left\|\pr_{v_j}g_{\ne}(\tau)\right\|_{\mathcal{G}^{\lm(\tau),N}_{s,0}}^2\left\| \la Y^*\ra^3\mathfrak{g}(t-\tau)\right\|_{\bar{\mathcal{G}}^{\lm(0),N}_{s,0}}^2 d\tau,
\end{align}
and hence
\begin{align}\label{est-I12d-2}
\nn&\nu^2\int_0^{T}\langle t\rangle^{3+a-s}\sum_{k\in\Z^3_*} \sum_{|\beta|\le N}\kappa^{2|\beta|}\sum_{m\in\N^6}a_{m,\lm,s}^2(t)\left|\mathbb{I}_{1,k;2),d}^{m,\beta}(t)\right|^2dt\\
    \les\nn&\nu^{\fr13}\int_0^{T}\int_{\tau}^{T} \frac{1}{\la t-\tau\ra^2}\left\| \la Y^*\ra^3\mathfrak{g}(t-\tau)\right\|_{\bar{\mathcal{G}}^{\lm(0),N}_{s,0}}^2 dt\\
    \nn&\times\left\|\la \nb_v\ra^3(\mu^\fr12f_{0})(\tau)\right\|_{\mathcal{G}^{\lm(\tau),\fr{N}{2}}_{s,2}}^2 \Big(\nu\la\tau\ra^{3}\left\|\nu^{\fr13}\pr_{v_j}g_{\ne}(\tau)\right\|_{\mathcal{G}^{\lm(\tau),N}_{s,0}}^2\Big) d\tau\\
    \les\nn&\left\|\la\nb_v\ra^3(\mu^\fr12f_0)\right\|_{L^\infty_t\mathcal{G}^{\lm(\tau),\fr{N}{2}}_{s,2}}^2\left\| \la Y^*\ra^3\mathfrak{g}\right\|_{L^\infty_t\bar{\mathcal{G}}^{\lm(0),N}_{s,0}}^2\\
    &\times{  \nu^{\fr13}\int_0^{T} 
    \nu\la\tau\ra^3\left\|\nu^{\fr13}\pr_{v_j}g_{\ne}(\tau)\right\|_{\mathcal{G}^{\lm(\tau),N}_{s,0}}^2 d\tau}.
\end{align}

{\it Case (ii): $|\eta|\le1$.} Now we have $|\xi_i||\xi_j|\les\la\eta-\xi\ra^2$. Similar to \eqref{est-I12e-1} and \eqref{est-I12e-2}, we obtain
\begin{align}\label{est-I12d-eta<1-2}
\nn&\int_0^{T}\la t\ra^{3+a-s}\sum_{k\in\Z^3_*} \sum_{|\beta|\le N}\kappa^{2|\beta|}\sum_{m\in\N^6}a_{m,\lm,s}^2(t)\left|\mathbb{I}_{1,k;2),d}^{m,\beta}(t)\right|^2\\
    \les
    \nn&\nu^{\fr53} \int_{0}^{T}\int_{\tau}^{T} \frac{1}{\la \nu^{\fr13} (t-\tau)\ra^{\fr32}} \left\|\la\nb_v\ra^4(\mu^\fr12f_0)(\tau)\right\|_{\mathcal{G}^{\lm(\tau),\fr{N}{2}}_{s,2}}^2\Big(\la\tau\ra^{3+a-s}\left\|g_{\ne}(\tau)\right\|_{\mathcal{G}^{\lm(\tau),N}_{s,0}}^2\Big)\\
    \nn&\times \la\nu^{\fr13}(t-\tau)\ra^{3}\left\| \mathfrak{g}(t-\tau)\right\|_{\bar{\mathcal{G}}^{\lm(0),N}_{s,0}}^2 d\tau dt\\
    \les\nn&\nu^{\fr13}\int_0^T{  \nu\la\tau\ra^{3+a-s}}\left\|g_{\ne}(\tau)\right\|_{\mathcal{G}^{\lm(\tau),N}_{s,0}}^2d\tau \left\|\la\nb_v\ra^4(\mu^\fr12f_0)(\tau)\right\|_{L^\infty_t\mathcal{G}^{\lm,\fr{N}{2}}_{s,2}}^2\\
    &\times\sup_t\Big(\la\nu^{\fr13}t\ra^{3}\left\| \mathfrak{g}(t)\right\|_{\bar{\mathcal{G}}^{\lm(0),N}_{s,0}}^2\Big).
\end{align}

{\it Case (iii): $|\eta|> \max\{10|\eta-k(t-\tau)|,1\}$.} Now the restriction $|\eta|> \max\{10|\eta-k(t-\tau)|,1\}$ implies that $\fr{9}{10}|\eta|\le|k(t-\tau)|\le\fr{11}{10}|\eta|$. Noting that ${  \fr{2t}{3}\le \tau\le t}$, we have $|k(t-\tau)|\le\fr12|k\tau|$. Then $|\eta|\le\fr59|k\tau|$, and hence $\fr{9}{14}|\eta+k\tau|\le|k\tau|\le\fr{9}{4}|\eta+k\tau|$. Thus for any positive constant $\varepsilon$
\begin{align}\label{appro1}
    \nn1\approx& \fr{\la \eta\ra^{1+\varepsilon}}{\la k(t-\tau)\ra^{1+\varepsilon}}\fr{\la\tau\ra^{\fr{1+a}{2}}}{|k\tau|^{\fr{s}{2}}}\fr{|k\tau|^{\fr{s}{2}}}{\la\tau\ra^{\fr{1+a}{2}}}\approx \fr{\la \eta\ra^{1+\varepsilon}}{\la k(t-\tau)\ra^{1+\varepsilon}}\fr{\la\tau\ra^{\fr{1+a}{2}}}{|k\tau|^{\fr{s}{2}}}\fr{|\eta+k\tau|^{\fr{s}{2}}}{\la\tau\ra^{\fr{1+a}{2}}}\\
    \les&\la \eta-\xi\ra^{\fr{s}{2}}\fr{\la \eta\ra^{1+\varepsilon}}{\la k(t-\tau)\ra^{1+\varepsilon}}\fr{\la\tau\ra^{\fr{1+a}{2}}}{|k\tau|^{\fr{s}{2}}}\fr{\la\xi+k\tau\ra^{\fr{s}{2}}}{\la\tau\ra^{\fr{1+a}{2}}}.
\end{align}
Next, we use the gap between $\lm(t)$ and $\lm(\tau)$ to absorb the extra factor $\la\eta\ra^{1+\varepsilon}|\xi_i||\xi_j|$.
More precisely,  similar to \eqref{lm-gap1} and \eqref{lm-gap2}, keeping in mind that $\tau\approx t$, now we infer from \eqref{lm-gap} and the fact $t-\tau\approx \fr{|\eta|}{|k|}$ that there exists a positive constant $\underline{c}'$, depending on $a$, $\tl{\dl}$ and $\lm_\infty$, such that
\begin{align}\label{lm-gap4}
    \fr{\lm(t)}{\lm(\tau)}\le e^{-\underline{c}'\fr{|\eta|}{|k|\la\tau\ra^{1+a}}}.
\end{align}
Then for $|n'|>0$,
\begin{align*}
    \nn&\la \eta\ra^{1+\varepsilon}|\xi_i||\xi_j|\left(\fr{\lm(t)}{\lm(\tau)}\right)^{|n'|}{\bf 1}_{|\eta|\ge1}\les\la\eta-\xi\ra^2 \la\eta\ra^{3+\varepsilon}e^{-\underline{c}'\fr{|n'||\eta|}{|k|\la\tau\ra^{1+a}}}{\bf 1}_{|\eta|\ge1}\\
    \les&\la\eta-\xi\ra^2 \fr{\la\eta\ra^{3+\varepsilon}}{|\eta|^{3+\varepsilon}}\left(\fr{|k|\la\tau\ra^{1+a}}{|n'|}\right)^{3+\varepsilon}{\bf 1}_{|\eta|\ge1}\les \la\eta-\xi\ra^2 \left(\fr{|k|\la\tau\ra^{1+a}}{|n'|}\right)^{3+\varepsilon}.
\end{align*}
Furthermore, 
\begin{align*}
    \left(\fr{|k|\la\tau\ra^{1+a}}{|n'|}\right)^{3+\varepsilon}\nn=&\Big(|k|^{(1-s)(3+\varepsilon)}\la\tau\ra^{(1+a-s)(3+\varepsilon)}\Big)\fr{(|k|\la\tau\ra)^{s(3+\varepsilon)}}{|n'|^{3+\varepsilon}}\\
    \nn\les&\Big(|k|^{(1-s)(3+\varepsilon)}\la\tau\ra^{(1+a-s)(3+\varepsilon)}\Big)\fr{|\eta+k\tau|^{s(3+\varepsilon)}}{|n'|^{3+\varepsilon}}\\
    \les&\la\eta-\xi\ra^{s(3+\varepsilon)}\Big(|k|^{(1-s)(3+\varepsilon)}\la\tau\ra^{(1+a-s)(3+\varepsilon)}\Big)\fr{\la\xi+k\tau\ra^{s(3+\varepsilon)}}{|n'|^{3+\varepsilon}}.
\end{align*}
If $|n'|=0$, using the fact $|\eta|\les |k\tau|\approx |\eta+k\tau|\les \la\eta-\xi\ra\la\xi+k\tau \ra$, we find that
\begin{align*}
    \la \eta\ra^{1+\varepsilon}|\xi_i||\xi_j|\left(\fr{\lm(t)}{\lm(\tau)}\right)^{|n'|}{\bf 1}_{|\eta|\ge1}\les\la\eta-\xi\ra^2 \la\eta\ra^{3+\varepsilon}\les \la \eta-\xi\ra^{5+\varepsilon}\la\xi+k\tau\ra^{3+\varepsilon}.
\end{align*}
It follows from the above three inequalities that
\begin{align}\label{lm-gap6}
\nn&\la\eta\ra^{1+\varepsilon}|\xi_i||\xi_j|\left(\fr{\lm(t)}{\lm(\tau)}\right)^{|n'|}{\bf 1}_{|\eta|\ge1}\\
\les& \la \eta-\xi\ra^{5+\varepsilon}\Big(|k|^{(1-s)(3+\varepsilon)}\la\tau\ra^{(1+a-s)(3+\varepsilon)}\Big) {\rm w_3}(\tau,k,\xi,n'),
\end{align}
where
\begin{align*}
    {\rm w_3}(\tau,k,\xi,n')=\begin{cases}
\fr{\la\xi+k\tau\ra^{s(3+\varepsilon)}}{|n'|^{3+\varepsilon}},\quad\, {\rm if}\quad |n'|>0;\\
\la\xi+k\tau\ra^{3+\varepsilon},\quad{\rm if} \quad |n'|=0.
    \end{cases}
\end{align*}
Notice that
\begin{align}\label{ktau-power}
    \nn&\la\tau\ra^{\fr{3+a-s}{2}}\fr{\la\tau\ra^{\fr{1+a}{2}}}{|k\tau|^{\fr{s}{2}}}\Big(|k|^{(1-s)(3+\varepsilon)}\la\tau\ra^{(1+a-s)(3+\varepsilon)}\Big)\\
    \les&|k|^{(1-s)(3+\varepsilon)-\fr{s}{2}}\la\tau\ra^{2+a-s+(1+a-s)(3+\varepsilon)}.
\end{align}
For $s>\fr12$, we can choose $a$ and $\varepsilon$ so small that
\begin{align}\label{restriction-s>1/2}
    |k|^{(1-s)(3+\varepsilon)-\fr{s}{2}}\le { |k|^2}\quad{\rm and }\quad\la\tau\ra^{2+a-s+(1+a-s)(3+\varepsilon)}\le\la\tau\ra^{3}.
\end{align}
Now taking ${\bf a}=\fr{s(3+\varepsilon)-1}{3+\varepsilon}$, ${\bf b}=1$ and $\Theta=3+\varepsilon$ in Lemma \ref{lem-interp1},  we infer from \eqref{lm-gap6}--\eqref{restriction-s>1/2} that
\begin{align}\label{est-I12d-main}
\nn&\int_0^{T}\sum_{k\in\Z^3_*}\la t\ra^{3+a-s} \sum_{|\beta|\le N}\kappa^{2|\beta|}\sum_{m\in\N^6}a_{m,\lm,s}^2(t)\left|\mathbb{I}_{1,k;2),d}^{m,\beta}(t)\right|^2dt\\
    \les\nn& \int_0^{T}\sum_{k\in\Z^3_*} \sum_{|\beta|\le N}\kappa^{2|\beta|}\sum_{m\in\N^6}\Bigg[\sum_{\substack{\beta'\leq \beta,\beta''\le\beta-\beta'\\|\beta'|\le|\beta|/2}}
    \sum_{\substack{n\leq m\\n'\le m-n}}\int_{\fr{2t}{3}}^t\int_{\R^6}  \fr{{ |k|}b_{m,n,n',s}}{\la k(t-\tau)\ra^{1+\varepsilon}}\\
    \nn&\times\left|\fr{a_{n,s}(\tau)}{|\eta-\xi|^2}\mathcal{F}_{x,v}\big[\la\nb_v \ra^{\fr{s}{2}+5+\varepsilon}(\mu^\fr12f)^{(n+\beta')}\big]_0(\tau,\eta-\xi)\right|\\ 
    \nn&\times \nu\tau^3\left|\fr{a_{n',s}(\tau)}{\la\tau\ra^{\fr{1+a}{2}}}{\rm w_3}(\tau,k,\xi,n')\mathcal{F}_{x,v}\big[\la Y\ra^{\fr{s}{2}}{ \nb_x}g^{(n'+\beta'')}\big]_{k}(\tau,\xi)\right|\\
    \nn&\times \left|a_{m-n-n',s}(0) \mathcal{F}_{x,v}\big[\mathfrak{g}^{(m-n-n'+\beta-\beta'-\beta'')}\big]_k(t-\tau,\eta)\right| d\xi d\eta d\tau\Bigg]^2dt\\
    \les\nn& \int_0^{T}\sum_{k\in\Z^3_*} \sum_{|\beta|\le N}\kappa^{2|\beta|}\sum_{m\in\N^6}
    \Bigg(\sum_{\substack{\beta'\leq \beta, \beta''\le\beta-\beta'\\|\beta'|\le|\beta|/2}}\sum_{\substack{n\leq m\\n'\le m-n}}\int_{\fr{2t}{3}}^t\fr{|k|b_{m,n,n',s}}{\la k(t-\tau)\ra^{1+\varepsilon}}d\tau\Bigg)\\
    \nn&\times \sum_{\substack{\beta'\leq \beta, \beta''\le\beta-\beta'\\|\beta'|\le|\beta|/2}}\sum_{\substack{n\leq m\\n'\le m-n}}\int_{\tau}^{T}  \fr{|k|b_{m,n,n',s}}{\la k(t-\tau)\ra^{1+\varepsilon}} \\
    \nn&\times\left\|a_{n,s}(\tau)\mathcal{F}_{x,v}\big[\la\nb_v \ra^{\fr{s}{2}+5+\varepsilon}(\mu^\fr12f)^{(n+\beta')}\big]_0(\tau,\eta)\right\|^2_{L^1_\eta\cap L^\infty_\eta}\\ 
    \nn&\times (\nu\la\tau\ra^3)^2\left\|\fr{a_{n',s}(\tau)}{\la\tau\ra^{\fr{1+a}{2}}}{\rm w}_3(\tau,k,\eta,n')\mathcal{F}_{x,v}\big[\la Y\ra^{\fr{s}{2}}\nb_xg^{(n'+\beta'')}\big]_{k}(\tau,\eta)\right\|^2_{L^2_\eta}\\
    \nn&\times \left\|a_{m-n-n',s}(0) \mathcal{F}_{x,v}\big[\mathfrak{g}^{(m-n-n'+\beta-\beta'-\beta'')}\big]_k(t-\tau)\right\|^2_{L^2_v}  dtd\tau \\
    \les\nn& { \int_{0}^{T}   (\nu\la\tau\ra^3)^2\left\|\fr{1}{\la\tau\ra^{\fr{1+a}{2}}}\la Y\ra^{\fr{s}{2}}\nb_xg(\tau)\right\|^2_{\mathcal{G}^{\lm(\tau),N}_{s,0}}d\tau}\\
    &\times \left\|\la\nb_v \ra^{\fr{s}{2}+7+\varepsilon}(\mu^\fr12f_0)\right\|^2_{L^\infty_t\mathcal{G}^{\lm(\tau),\fr{N}{2}}_{s,2}}\|\mathfrak{g}\|^2_{L^\infty_t\bar{\mathcal{G}}^{\lm(0),N}_{s,0}}.
\end{align}

{$\bullet$  \it Treatments of $\mathbb{I}_{4,k}^{m,\beta}(t)_{\tau\approx t}$.} A simple but key observation is that $(\mu^{\fr12}f)$ and $f$ cannot be at zero mode at the same time. Thus, by virtue of the low-high and high-low decomposition, it is not difficult to verify that
\begin{align}\label{est-I4k-tau=t}
    \nn&\int_0^{T}\la t\ra^{3+a-s}\sum_{k\in\Z^3_*} \sum_{|\beta|\le N}\kappa^{2|\beta|}\sum_{m\in\N^6}a_{m,\lm,s}^2(t)\left|\mathbb{I}_{4,k}^{m,\beta}(t)_{\tau\approx t}\right|^2dt\\
    \les\nn&\nu^{\fr43}\int_0^{T}\la\tau\ra^{3+a-s}\Big(\Big\|\la Y\ra^2\la\nb_x\ra^2(\mu^\fr12f_{\ne})(\tau)\Big\|_{\mathcal{G}^{\lm,\fr{N}{2}}_{s,0}}^2\|f(\tau)\|^2_{\mathcal{G}^{\lm,N}_{s,0}}\\
    \nn&+\Big\|\la Y\ra^2\la\nb_x\ra^2(\mu^\fr12f)(\tau)\Big\|_{\mathcal{G}^{\lm,\fr{N}{2}}_{s,0}}^2\|f_{\ne}(\tau)\|^2_{\mathcal{G}^{\lm,N}_{s,0}}\Big)d\tau\sup_t\Big(\la\nu^{\fr13}t\ra^3\|\mathfrak{g}(t)\|^2_{\bar{\mathcal{G}}^{\lm(0),N}_{s,0}}\Big)\\
    \nn&+\nu^{\fr43}\int_0^{T}\la\tau\ra^{3+a-s}\Big(\Big\|(\mu^\fr12f_{\ne})(\tau)\Big\|_{\mathcal{G}^{\lm,N}_{s,0}}^2\|\la Y\ra^2\la\nb_x\ra^2f(\tau)\|^2_{\mathcal{G}^{\lm,\fr{N}{2}}_{s,0}}\\
    &+\Big\|(\mu^\fr12f)(\tau)\Big\|_{\mathcal{G}^{\lm,\fr{N}{2}}_{s,0}}^2\|\la Y\ra^2\la\nb_x\ra^2f_{\ne}(\tau)\|^2_{\mathcal{G}^{\lm,\fr{N}{2}}_{s,0}}\Big)d\tau \sup_t\Big(\la\nu^{\fr13}t\ra^3\|\mathfrak{g}(t)\|^2_{\bar{\mathcal{G}}^{\lm(0),\fr{N}{2}}_{s,0}}\Big).
\end{align}

Collecting the estimates in \eqref{est-T2E2HL-short},\eqref{est-T2E2HL-t=tau}, \eqref{est-T2E2LH-short}--\eqref{est-tlT2E2}, \eqref{est-I1k-short}, \eqref{est-I4k-short}, \eqref{est-I11}, \eqref{est-Ik3-3}--\eqref{est-Ik3-4}, \eqref{est-I12e-2}, \eqref{est-I12d-2}, \eqref{est-I12d-eta<1-2}, \eqref{est-I12d-main}, and \eqref{est-I4k-tau=t}, and using  \eqref{decom-f}, \eqref{fg1}, \eqref{fg2}, Lemmas \ref{lem-compose}, \ref{lem-Linfty-rho}, \ref{lem:est-S^*}, and Corollary \ref{coro-CK}, under the restrictions \eqref{restr-N} and \eqref{res-s-a-rho2}, we obtain \eqref{bd-rho2-iota=0}.
\end{proof}

\begin{prop}\label{prop-rho2-iota}
    Under the bootstrap hypotheses \eqref{bd-en}--\eqref{H-phi}, if \eqref{restr-N} and \eqref{res-s-a-rho2} hold, and
\begin{align}\label{res-s-a-iota}
    a\le \fr{2s}{[\ell/3]+10},
\end{align}
then for $\iota=1, 2, \cdots, [\ell/3]+1$, there holds
\begin{align}\label{est-rho2-iota}
    \nn&\big\|\la t\ra^{\fr{1+a-s}{2}}w_{\iota}(t)\rho^{(2)}\big\|^2_{L^2_t\mathcal{G}^{\lm,N}_s}\\
    \les\nn&\big\|\la t\ra^{\fr{1+a-s}{2}}w_{\iota}(t)\rho^{(2)}\big\|^2_{L^2_t\mathcal{G}^{\lm,N}_s}\|g\|^2_{L^\infty_t\mathcal{G}^{\lm,N}_{s,2}}\\
    \nn&+\Big(\|g\|^2_{L^\infty_t\mathcal{G}^{\lm,N}_{s,2}}+\|w_{\iota}g_{\ne}\|^2_{L^\infty_t\mathcal{G}^{\lm,N}_{s,2}}\Big)\sum_{|\al|\le2}\big\|\nu^{\fr{|\al|}{3}}\pr_v^\al g\big\|^2_{L^\infty_t\mathcal{G}^{\lm,N}_{s,0}}\\
    \nn&+\big\|\la t\ra^{\fr{1+a-s}{2}}w_{\iota}(t)\rho\big\|^2_{L^2_t\mathcal{G}^{\lm,N}_s}\|g\|^2_{L^\infty_t\mathcal{G}^{\lm,N}_{s,2}}\sum_{|\al|\le2}\big\|\nu^{\fr{|\al|}{3}}\pr_v^\al g\big\|^2_{L^\infty_t\mathcal{G}^{\lm,N}_{s,0}}\\
    \nn&+\nu^{\fr13}\int_0^{T}\nu\la\tau\ra^{1+a-s}w_{\iota}^2(\tau)\sum_{|\al|\le1}\big\|\nu^{\fr{|\al|}{3}}\pr_v^\al g_{\ne}(\tau)\big\|^2_{\mathcal{G}^{\lm,N}_{s,2}}d\tau\|g\|^2_{L^\infty_t\mathcal{G}^{\lm,N}_{s,2}}\\
    \nn&+\|g\|^2_{L^\infty_t\mathcal{G}^{\lm,N}_{s,0}}\sum_{|\al|\le2}\big\|w_{\iota}\nu^{\fr{|\al|}{3}}\pr_v^\al g_{\ne}\big\|^2_{L^\infty_t\mathcal{G}^{\lm,N}_{s,0}}\\
    \nn&+\nu^{\fr23}\sum_{|\al|\le2}\big\|w_{\iota}\nu^{\fr{|\al|}{3}}\pr_v^\al g_{\ne}\big\|^2_{L^\infty_t\mathcal{G}^{\lm,N}_{s,0}}\|\la\nb_x\ra g\|^2_{L^\infty_t\mathcal{G}^{\lm,N}_{s,2}}\\
    &+\|g\|^2_{L^\infty_t\mathcal{G}^{\lm,N}_{s,2}}\int_0^{T}(\nu\la\tau\ra^2)^2w_{\iota}^2(\tau)\Big(\mathfrak{CK}_0[\nb_xg(\tau)]+\fr{1}{\la\tau\ra^{1+a}}\|\nb_xg(\tau)\|^2_{\mathcal{G}^{\lm,N}_{s,0}}\Big)d\tau.
\end{align}
\end{prop}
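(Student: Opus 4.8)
The proof of Proposition~\ref{prop-rho2-iota} will follow the same architecture as the proof of Proposition~\ref{prop-rho2-0}, but with the time weight $\langle t\rangle^{\frac{3+a-s}{2}}$ replaced by $\langle t\rangle^{\frac{1+a-s}{2}}w_\iota(t)$, and with the extra factor $w_\iota(t) = (\kappa_0\nu^{1/3}(1+t))^{\iota/2}$ distributed carefully so that no unbounded time growth survives. First I would invoke \eqref{es-rho:N2-iota} from Lemma~\ref{lem-rho:N} to reduce the estimate for $\|\langle t\rangle^{\frac{1+a-s}{2}}w_\iota(t)\rho^{(2)}\|_{L^2_t\mathcal{G}^{\lambda,N}_s}$ to the corresponding estimate for $\|\langle t\rangle^{\frac{1+a-s}{2}}w_\iota(t)\mathcal{N}^{(2)}\|_{L^2_t\mathcal{G}^{\lambda,N}_s}$, and then bound each contribution to $\mathcal{N}^{(2)}_k$ — i.e.\ $\mathcal{T}_{\partial_{v_j}f}E_2^j$, $\mathcal{T}_{v_jf}E_2^j$, and the collision term $\nu\Gamma(f,f)$ — term by term, exactly as in Proposition~\ref{prop-rho2-0}.

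\textbf{Key steps in order.} (1) Reduce to $\mathcal{N}^{(2)}$ via Lemma~\ref{lem-rho:N}. (2) For the transport-type contributions $\mathcal{T}_{\partial_{v_j}f}E_2^j$ and $\mathcal{T}_{v_jf}E_2^j$, repeat the Bony decomposition and the split $\tau\le \tfrac{2t}{3}$ / $\tfrac{2t}{3}\le\tau\le t$ as in \eqref{est-T2E2HL-short}--\eqref{est-tlT2E2}; the key new point is that $w_\iota(t)$ must be moved onto the $g_{\neq}$ factor using $w_\iota(t)/w_\iota(\tau)\lesssim\langle t-\tau\rangle^{\iota/2}$, which is absorbed by paying extra derivatives on $\mathfrak{g}$ and $f$ (this costs roughly $\iota/2$ extra derivatives, hence the need for $a\le \frac{2s}{[\ell/3]+10}$ in \eqref{res-s-a-iota} to keep the regularity-gain mechanism \eqref{eq:gain m2} working with $\Theta$ enlarged by $O(\ell)$). (3) For the collision term, use the duality representation with $\mathfrak{g}_k=S^*_k(t)[\sqrt\mu]$, the decomposition into $\mathbb{I}_{1,k}$--$\mathbb{I}_{4,k}$ (from \eqref{def-Gamma}), and within $\mathbb{I}_{1,k}$ the further split into $\mathbb{I}_{1,k;1)},\mathbb{I}_{1,k;2)},\mathbb{I}_{1,k;3)}$ and the cases $l\ne k$/$l=k$, $D_1/D_2$, (i)/(ii)/(iii) as in the proof of Proposition~\ref{prop-rho2-0}. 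Here the crucial arithmetic is that $w_\iota^2(t)\lesssim \nu^{\iota/3}\langle t\rangle^\iota$ and that $w_\iota$ should be split as $w_\iota(t)=\big(w_\iota(t)/w_\iota(t-\tau)\big)w_\iota(t-\tau)$ in the resonant regimes: one piece is controlled by the enhanced-dissipation decay $\langle\nu^{1/3}(t-\tau)\rangle^{-3/2}$ already built into the estimates for $\mathfrak{g}$ via Lemma~\ref{lem:est-S^*} (which gives $\langle\nu^{1/3}t\rangle^3\|\mathfrak{g}(t)\|^2\le C$), and the other piece is absorbed onto the dissipation of $g_{\neq}$ at lower weight $\ell-2\iota$ using Lemma~\ref{lem: E_l<D_l-2}. (4) As in Proposition~\ref{prop-rho2-0}, use $f = q(\phi)g+g$ from \eqref{decom-f} together with the composition estimate Lemma~\ref{lem-compose}, Corollary~\ref{coro-compose}, and the density bounds of Proposition~\ref{lem-Linfty-rho} to convert everything into norms of $g$, $\nu^{|\alpha|/3}\partial_v^\alpha g$, and $\rho$. (5) Collect; the final inequality \eqref{est-rho2-iota} is just the bookkeeping of which factors land on which side.

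\textbf{Main obstacle.} The hard part will be step (3) in the deep resonant case (iii) — the analogue of \eqref{est-I12d-main} — where $|\eta|>\max\{10|\eta-k(t-\tau)|,1\}$ and $\tau\approx t$, so that $|k(t-\tau)|\approx|\eta|$ and one must simultaneously: gain back the two $v$-derivatives $\xi_i\xi_j$ hitting $g$, absorb the growth $\nu\langle\tau\rangle^2 w_\iota(\tau)$ (the enhanced-dissipation time weight $\nu t^3$ in Proposition~\ref{prop-rho2-0} here becomes $\nu t^2 w_\iota(\tau)$, a net improvement by one power of $t$ but a loss of the extra $w_\iota$), and exploit the $\lambda(t)/\lambda(\tau)$ gap \eqref{lm-gap4} to trade regularity for the factor $\langle\eta\rangle^{1+\varepsilon}|\xi_i||\xi_j|$. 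Checking that the resulting exponents still satisfy the $s>\tfrac12$ restriction \eqref{restriction-s>1/2} after the $w_\iota$-induced modifications — i.e.\ that $|k|^{(1-s)(3+\varepsilon)-s/2}\lesssim|k|^2$ and $\langle\tau\rangle^{2+a-s+(1+a-s)(3+\varepsilon)}\langle\tau\rangle^{\iota}\lesssim(\langle\tau\rangle^2 w_\iota)^2/w_\iota^2 = \langle\tau\rangle^4$ type bounds hold with $a,\varepsilon$ small — is where I expect the calculation to be most delicate, and it is precisely why the right-hand side of \eqref{est-rho2-iota} carries the term $\int_0^T(\nu\langle\tau\rangle^2)^2 w_\iota^2(\tau)(\mathfrak{CK}_0[\nabla_xg]+\langle\tau\rangle^{-1-a}\|\nabla_xg\|^2)d\tau$ rather than the $(\nu\langle\tau\rangle^3)^2$ appearing in \eqref{bd-rho2-iota=0}.
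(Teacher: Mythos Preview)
Your overall architecture is correct and matches the paper: reduce via \eqref{es-rho:N2-iota}, split $\tau\le\tfrac{2t}{3}$ versus $\tau\approx t$, repeat the case analysis of Proposition~\ref{prop-rho2-0} term by term, and use the $f=q(\phi)g+g$ decomposition. You also correctly identify case~(iii) of $\mathbb{I}_{1,k;2),d}$ as the hardest spot and that the exponent drops from $\nu\la\tau\ra^3$ to $\nu\la\tau\ra^2$ (cf.\ \eqref{ktau-power-iota}).

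However, your mechanism for distributing $w_\iota(t)$ is not what the paper does, and as written it has a genuine problem. In step~(3) you propose $w_\iota(t)=\big(w_\iota(t)/w_\iota(t-\tau)\big)\,w_\iota(t-\tau)$ and claim the piece $w_\iota(t-\tau)$ is absorbed by the enhanced dissipation of $\mathfrak{g}$. But Lemma~\ref{lem:est-S^*} only provides $\la\nu^{1/3}(t-\tau)\ra^{-3}\|\mathfrak g(t-\tau)\|^2\les 1$, while $w_\iota^2(t-\tau)\approx(\nu^{1/3}(1+t-\tau))^{\iota}$; for $\iota>3$ the product $w_\iota^2(t-\tau)\|\mathfrak g(t-\tau)\|^2$ is unbounded, so this splitting fails for precisely the large-$\iota$ range the proposition must cover. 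The paper's route is simpler and avoids this: for $\tfrac{2t}{3}\le\tau\le t$ one has $w_\iota(t)\approx w_\iota(\tau)$ directly (since $(1+t)/(1+\tau)\le\tfrac32$), and $w_\iota(\tau)$ is then placed \emph{on the $\tau$-factor} --- on $\rho^{(2)}(\tau)$ in the transport terms (giving \eqref{est-T2E2-iota-tau=t}--\eqref{est-tlT2E2-iota-tau=t}) and on $g_{\neq}(\tau)$ or $q(\phi)(\tau)$ in the collision terms (giving \eqref{est-I11-iota-d}, \eqref{est-I13d-k=l}, \eqref{est-Ik3d-iota-1}, \eqref{est-Ik3d-iota-2}, \eqref{est-I12d-main-iota}). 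These are exactly the $\iota$-weighted norms $\|w_\iota g_{\neq}\|_{\mathcal G^{\lm,N}_{s,\cdot}}$ appearing on the right of \eqref{est-rho2-iota} and in the bootstrap energy $\mathcal E_{\ell-2\iota}(g_{\iota\ne})$. No factor of $w_\iota$ ever touches $\mathfrak g$.

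For $\tau\le\tfrac{2t}{3}$, you are right that the restriction \eqref{res-s-a-iota} arises from enlarging $\Theta$, but again the mechanism is not a $w_\iota(t)/w_\iota(\tau)$ ratio nor ``extra derivatives on $\mathfrak g$ and $f$'': one simply bounds $w_\iota(t)\la t\ra^{\frac{1+a-s}{2}+3}\les\la t\ra^{\frac{[\ell/3]}{2}+4}$ and absorbs this entirely via the $\lambda$-gap \eqref{eq:gain m3} with $\Theta=\frac{[\ell/3]/2+4}{s-a}$, requiring $s\ge([\ell/3]/2+5)a$, i.e.\ \eqref{res-s-a-iota}. Once you adopt the $w_\iota(t)\approx w_\iota(\tau)$ observation for the near-diagonal regime and the direct $\lambda$-gap absorption for the far regime, the rest of your outline goes through as you describe.
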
    
\begin{proof}
By \eqref{es-rho:N2-iota}, we will estimate $\Big\|
    \langle t\rangle^{\fr{1+a-s}{2}} w_{\iota}(t)\mathcal{N}^{(2)}\Big\|_{L^2_t\mathcal{G}^{\lm,N}_s}$ term by term.

\noindent{$\diamond$ \underline{\it Estimates of $\mathcal{T}_{\pr_{v_j}f}E^j_2$ and $\mathcal{T}_{{v_j}f}E^j_2$}.}
Let us denote 
\begin{align*}
    {\bf T}_{\iota}^{(2)}[E_2]=&\left\| \la t\ra^{\fr{1+a-s}{2}}w_{\iota}(t)\int_0^t\int_{\R^3}S_k(t-\tau)\big[\mathcal{F}_x[\mathcal{T}_{\pr_{v_j}f}E^j_2]_k(\tau,v)\big]\mu^{\fr12}(v)dvd\tau\right\|^2_{L^2_t\mathcal{G}^{\lm,N}_{s}}\\
=&\int_0^{T}\la t\ra^{1+a-s}w^2_{\iota}(t)\sum_{k\in\Z^3_*}\sum_{|\beta|\le N}\kappa^{2|\beta|}\sum_{m\in\N^6}\Bigg|a_{m,s}(t)(k,kt)^{m}\\
&\times\int_0^t\int_{\R^3}\Big(\sum_{\substack{\beta'\le\beta,|\beta'|\le|\beta|/2\\\beta''\le\beta-\beta'}}+\sum_{\substack{\beta'\le\beta,|\beta'|>|\beta|/2\\\beta''\le\beta-\beta'}}\Big)\sum_{l\in\Z^3_*}\sum_{J\ge8}(l,l\tau)^{\beta'}\hat{\rho}^{(2)}_l(\tau)_J\fr{l_j}{|l|^2}\\
&\times \eta_j(k-l,\eta+(k-l)\tau)^{\beta''}\hat{f}_{k-l}(\tau,\eta)_{<J/8}(0,-\eta+k(t-\tau))^{\beta-\beta'-\beta''}\bar{\hat{\mathfrak{g}}}_k(t-\tau,\eta) d\eta d\tau\Bigg|^2dt\\
=&{\bf T}_{\iota}^{(2);\rm LH}[E_2]+{\bf T}_{\iota}^{(2);\rm HL}[E_2].
\end{align*}
Moreover, if $\pr_{v_j}f$ is replaced by $v_jf$, we use $\tl{\bf T}_{\iota}^{(2)}[E_2]$ to replace ${\bf T}_{\iota}^{(2)}[E_2]$.

If $\tau\le\fr23t$, we use the gap between $\lm(t)$ and $\lm(\tau)$ to absorb the extra weight $w_\iota(t)$. Indeed, noting that $w_{\iota}^2(t)\les \la t\ra^\iota\les \la t\ra^{[\ell/3]+1}$, similar to \eqref{eq:gain m2}, for any $\Theta>0$, $|m|>0$, we have
 \begin{align}\label{eq:gain m3}
     w_{\iota}(t)\la t\ra^{\fr{1+a-s}{2}+3} \left(\fr{\lm(t)}{\lm(\tau)}\right)^{|m|}\le C\fr{|kt|^{\fr{[\ell/3]}{2}+4+a\Theta}}{|k|^{\fr{[\ell/3]}{2}+4+a\Theta}|m|^\Theta}.
 \end{align}
Taking $\Theta=\fr{\fr{[\ell/3]}{2}+4}{s-a}$, for $s\ge (\fr{[\ell/3]}{2}+5)a$, similar to \eqref{est-T2E2HL-short} and \eqref{est-T2E2LH-short}, we arrive at
\begin{align}\label{est-T2E2-iota-short}
    {\bf T}^{(2)}_\iota[E_2]_{\tau\le\fr{2t}{3}}\les \big\|\rho^{(2)}\big\|^2_{L^2_t\mathcal{G}^{\lm,N}_s}{ \big\|f \big\|^2_{L^\infty_t  \mathcal{G}^{\lm, N}_{s,0}}}
    \left\| \mathfrak{g}\right\|^2_{L^\infty_t  \bar{\mathcal{G}}^{\lm(0), N}_{s,0}},
\end{align}
and
\begin{align}\label{est-tlT2E2-iota-short}
    \tl{\bf T}^{(2)}_\iota[E_2]_{\tau\le\fr{2t}{3}}\les \big\|\rho^{(2)}\big\|^2_{L^2_t\mathcal{G}^{\lm,N}_s}{ \big\|vf \big\|^2_{L^\infty_t  \mathcal{G}^{\lm, N}_{s,0}}}
    \left\| \mathfrak{g}\right\|^2_{L^\infty_t  \bar{\mathcal{G}}^{\lm(0), N}_{s,0}}.
\end{align}

If $\fr23t\le\tau\le t$, then $w_{\iota}(t)\approx w_{\iota}(\tau)$, which will hit on $\rho(\tau)$, thus the proofs for the case $\iota=0$ apply here. Similar to \eqref{est-T2E2HL-t=tau},\eqref{est-T2E2LH-t=tau} and \eqref{est-tlT2E2}, we find that
\begin{align}\label{est-T2E2-iota-tau=t}
    {\bf T}^{(2)}_\iota[E_2]_{\tau\approx t}\les \big\|\la t\ra^{\fr{1+a-s}{2}}w_{\iota}(t)\rho^{(2)}\big\|^2_{L^2_t\mathcal{G}^{\lm,N}_s}{ \big\|f \big\|^2_{L^\infty_t  \mathcal{G}^{\lm, N}_{s,0}}}
    \left\|\la Y^*\ra^3 \mathfrak{g}\right\|^2_{L^\infty_t  \bar{\mathcal{G}}^{\lm(0), N}_{s,0}},
\end{align}
and
\begin{align}\label{est-tlT2E2-iota-tau=t}
    \tl{\bf T}^{(2)}_\iota[E_2]_{\tau\approx t}\les \big\|\la t\ra^{\fr{1+a-s}{2}}w_{\iota}(t)\rho^{(2)}\big\|^2_{L^2_t\mathcal{G}^{\lm,N}_s}{ \big\|vf \big\|^2_{L^\infty_t  \mathcal{G}^{\lm, N}_{s,0}}}
    \left\|\la Y^*\ra^2 \mathfrak{g}\right\|^2_{L^\infty_t  \bar{\mathcal{G}}^{\lm(0), N}_{s,0}}.
\end{align}
\noindent{$\diamond$ \underline{\it The collision contributions}.}
Now we consider
\begin{align*}
&\nu\left\|\langle t\rangle^{\fr{1+a-s}{2}}w_{\iota}(t) \int_0^t\int_{\R^3}S_k(t-\tau)[(\widehat{\Gamma(f,f)})_k(\tau)]\mu^{\fr12}(v)dvd\tau \right\|_{L^2_t\mathcal{G}^{\lm,N}_{s}}\\
=&\nu\Bigg[\int_0^{T}\langle t\rangle^{1+a-s}w_{\iota}^2(t)\sum_{k\in\Z^3_*} \sum_{|\beta|\le N}\kappa^{2|\beta|}\sum_{m\in\N^6}a_{m,s}^2(t)\\
    &\times\left|(k,kt)^{m+\beta}\int_0^t\int_{\R^3} S_k(t-\tau)\big[ \widehat{\Gamma(f,f)}_{k}(\tau,v)\big]\sqrt{\mu}dvd\tau\right|^2dt\Bigg]^{\fr12}.
\end{align*}
Like the case $\iota=0$, the above integral can be divided into four parts, and we adopt the notation used there.  If $\tau\le\fr23t$, we still use \eqref{eq:gain m3} to absorb $w_\iota(t)$, here we omit the estimates and only state the results. In fact, similar to \eqref{est-I1k-short} and \eqref{est-I4k-short}, one deduces that 
\begin{align}\label{est-I1k-iota-short}
    \nn&\int_0^{T}\sum_{k\in\Z^3_*} \la t\ra^{1+a-s}w_{\iota}^2(t)\sum_{|\beta|\le N}\kappa^{2|\beta|}\sum_{m\in\N^6}a_{m,s}^2(t)\Big|\mathbb{I}_{1,k}^{m,\beta}(t)_{\tau\le\fr{2t}{3}}\Big|^2dt\\
    \les&\nu^{\fr23}{  \left\|\sqrt{\mu}f\right\|^2_{L^\infty_t\mathcal{G}^{\lm,N}_{s,2}}\left\|\nu^{\fr23}\pr_{v_iv_j}f\right\|^2_{L^\infty_t\mathcal{G}^{\lm,N}_{s,0}}}\left\|\mathfrak{g}\right\|^2_{L^\infty_t\bar{\mathcal{G}}^{\lm(0),N}_{s,0}}.
\end{align}
and
\begin{align}\label{est-I4k-iota-short}
    \nn&\int_0^{T}\sum_{k\in\Z^3_*} \la t\ra^{1+a-s}w_{\iota}^2(t)\sum_{|\beta|\le N}\kappa^{2|\beta|}\sum_{m\in\N^6}a_{m,s}^2(t)\Big|\mathbb{I}_{4,k}^{m,\beta}(t)_{\tau\le\fr{2t}{3}}\Big|^2dt\\
    \les&\nu^{2}{  \left\|\sqrt{\mu}f\right\|^2_{L^\infty_t\mathcal{G}^{\lm,N}_{s,0}}\left\|f\right\|^2_{L^\infty_t\mathcal{G}^{\lm,N}_{s,0}}}\left\|\mathfrak{g}\right\|^2_{L^\infty_t\bar{\mathcal{G}}^{\lm(0),N}_{s,0}}.
\end{align}

We  are left to deal with the case when $\fr23t\le\tau\le t$ below, and we use the notation in \eqref{divid-I1k}.

{\it $\bullet$ Treatments of $\mathbb{I}_{1,k;1)}^{m,\beta}(t)$.} Using \eqref{decom-f} again, we further split   $\mathbb{I}_{1,k;1)}^{m,\beta}(t)$ into two parts:
\begin{align*}
  \mathbb{I}_{1,k;1)}^{m,\beta}(t)
=\nn&8\pi\nu \sum_{l\in\Z^3_*}\sum_{\substack{\beta'\leq \beta\\|\beta'|\leq |\beta|/2}}C_{\beta}^{\beta'}\sum_{\beta''\le\beta-\beta'}C_{\beta-\beta'}^{\beta''}
    \sum_{n\leq m}C_{m}^{n}\sum_{n'\le m-n}C_{m-n}^{n'}\\
    \nn&\times\int_{\fr{2t}{3}}^t\int_{\R^6} \frac{\xi_i\xi_j}{|\xi|^4}\Big(\mathcal{F}_{x,v}\big[Z^{n+\beta'}(\mu^\fr12q(\phi)g)\big]_l(\tau,\xi)+\mathcal{F}_{x,v}\big[Z^{n+\beta'}(\mu^{\fr12}g)\big]_l(\tau,\xi)\Big)\\
    &\times \mathcal{F}_{x,v}\big[Z^{n'+\beta''}\pr_{v_iv_j}f\big]_{k-l}(\tau,\eta-\xi)\\
    \nn&
    \times (0,-\eta+k(t-\tau))^{m-n-n'+\beta-\beta'-\beta''}\bar{\hat{\mathfrak{g}}}_{k}(t-\tau,\eta)d\xi d\eta d\tau\\
    =&\mathbb{I}_{1,k;1),e}^{m,\beta}(t)+\mathbb{I}_{1,k;1),d}^{m,\beta}(t).
\end{align*}
For $\mathbb{I}_{1,k;1),e}^{m,\beta}(t)$, we do not even need to use \eqref{xi-gain-1}. In fact, similar to \eqref{est-I12e-1} and \eqref{est-I12e-2}, we have
\begin{align}\label{est-I11e-iota}
\nn&\int_0^{T}\la t\ra^{1+a-s}w_{\iota}^2(t)
\sum_{k\in\Z^3_*} \sum_{|\beta|\le N}\kappa^{2|\beta|}\sum_{m\in\N^6}a_{m,\lm,s}^2(t)\left|\mathbb{I}_{1,k;1),e}^{m,\beta}(t)\right|^2dt\\
    \les
    \nn&\nu^{\fr13} \int_0^{T}\int_{\tau}^{T}\frac{1}{\la \nu^{\fr13} (t-\tau)\ra^{\fr32}} \Big(\la \tau\ra^{1+a-s}w_{\iota}^2(\tau)\left\|\la Y\ra^2|\nb_x|^2(\mu^\fr12q(\phi)g)(\tau)\right\|_{\mathcal{G}^{\lm(\tau),\fr{N}{2}}_{s,2}}^2\Big)\\
    \nn&\times \left\|\nu^{\fr23}\pr_{v_iv_j}f(\tau)\right\|_{\mathcal{G}^{\lm(\tau),N}_{s,0}}^2 \Big(\la\nu^{\fr13}(t-\tau)\ra^{3}\left\| \mathfrak{g}(t-\tau)\right\|_{\bar{\mathcal{G}}^{\lm(0),N}_{s,0}}^2\Big) dt d\tau\\
    \les\nn&{ \int_0^{T} \la \tau\ra^{1+a-s}w_{\iota}^2(\tau)\left\|\la Y\ra^2|\nb_x|^2(\mu^\fr12q(\phi)g)(\tau)\right\|_{\mathcal{G}^{\lm(\tau),\fr{N}{2}}_{s,2}}^2d\tau} \left\|\nu^{\fr23}\pr_{v_iv_j}f\right\|_{L^\infty_t\mathcal{G}^{\lm,N}_{s,0}}^2\\
    &\times\sup_t\Big(\la\nu^{\fr13}t\ra^{3}\left\| \mathfrak{g}(t)\right\|_{\bar{\mathcal{G}}^{\lm(0),N}_{s,0}}^2\Big).
\end{align}
As for $\mathbb{I}_{1,k;1),d}^{m,\beta}(t)$, thanks to \eqref{xi-gain-1}, similar to \eqref{est-I11} with $\mu^{\fr12}f$ replaced by $\mu^{\fr12}g$, we arrive at
\begin{align}\label{est-I11-iota-d}
    \nn&\int_0^{T}\la t\ra^{1+a-s}w_{\iota}^2(t)\sum_{k\in\Z^3_*}\sum_{|\beta|\le N}\kappa^{2|\beta|}\sum_{m\in\N^6}a_{m,\lm,s}^2(t)\left|\mathbb{I}_{1,k;1),d}^{m,\beta}(t)\right|^2dt\\
    \nn\les&\nu^{\fr53}\int_0^{T}\fr{1}{\la t\ra^{3-a+s}}\int_{\fr{2t}{3}}^t \la\nu^\fr13(t-\tau)\ra^{-\fr32} w_{\iota}^2(\tau)\left\| \la Y\ra^2|Z|^4(\mu^\fr12g)_{\ne}(\tau)\right\|_{\mathcal{G}^{\lm,\fr{N}{2}}_{s,2}}^2\\
    \nn&\times\left\|\pr_{v_iv_j}f(\tau)\right\|^2_{\mathcal{G}^{\lm,N}_{s,0}} \Big(\la\nu^\fr13(t-\tau)\ra^{3}\left\|\mathfrak{g}(t-\tau)\right\|^2_{\bar{\mathcal{G}}^{\lm,N}_{s,0}}\Big) d\tau dt\\
    \les& { \left\|w_{\iota} \la Z\ra^6(\mu^\fr12g_{\ne})\right\|_{L^\infty_t\mathcal{G}^{\lm,\fr{N}{2}}_{s,2}}^2 \left\|\nu^{\fr23}\pr_{v_iv_j}f\right\|^2_{L^\infty_t\mathcal{G}^{\lm,N}_{s,0}}}\sup_{t}\Big(\la\nu^\fr13t\ra^{3}\left\|\mathfrak{g}(t)\right\|^2_{\bar{\mathcal{G}}^{\lm,N}_{s,0}}\Big).
\end{align}

{$\bullet$ \it Treatments of $\mathbb{I}_{1,k;3)}^{m,\beta}(t)$.} If $l\ne k$, we split $\mathbb{I}_{1,k;3)}^{m,\beta}(t)$ as follows:
\begin{align*}
  \mathbb{I}_{1,k;3)}^{m,\beta}(t)
=\nn&8\pi\nu \sum_{l\in\Z^3}\sum_{\substack{\beta'\leq \beta\\|\beta'|>|\beta|/2}}C_{\beta}^{\beta'}\sum_{\beta''\le\beta-\beta'}C_{\beta-\beta'}^{\beta''}
    \sum_{n\leq m}C_{m}^{n}\sum_{n'\le m-n}C_{m-n}^{n'}\\
    \nn&\times\int_{\fr{2t}{3}}^t\int_{\R^6} \frac{\xi_i\xi_j}{|\xi|^4}\mathcal{F}_{x,v}\big[Z^{n+\beta'}(\mu^\fr12f)\big]_l(\tau,\xi) \\
    &\times\Big(\mathcal{F}_{x,v}\big[Z^{n'+\beta''}q(\phi)\pr_{v_iv_j}g\big]_{k-l}(\tau,\eta-\xi)+\mathcal{F}_{x,v}\big[Z^{n'+\beta''}\pr_{v_iv_j}g\big]_{k-l}(\tau,\eta-\xi)\Big)\\
    \nn&\quad \quad 
    \times (0,-\eta+k(t-\tau))^{m-n-n'+\beta-\beta'-\beta''}\bar{\hat{\mathfrak{g}}}_{k}(t-\tau,\eta)d\xi d\eta d\tau\\
    &=\mathbb{I}_{1,k;3),e}^{m,\beta}(t)+\mathbb{I}_{1,k;3),d}^{m,\beta}(t),
\end{align*}
while if $l=k$, we divide $\mathbb{I}_{1,k;3)}^{m,\beta}(t)$  in the following way
\begin{align*}
  \mathbb{I}_{1,k;3)}^{m,\beta}(t)
=\nn&8\pi\nu \sum_{\substack{\beta'\leq \beta\\|\beta'|>|\beta|/2}}C_{\beta}^{\beta'}\sum_{\beta''\le\beta-\beta'}C_{\beta-\beta'}^{\beta''}
    \sum_{n\leq m}C_{m}^{n}\sum_{n'\le m-n}C_{m-n}^{n'}\\
    \nn&\times\int_{\fr{2t}{3}}^t\int_{\R^6} \frac{\xi_i\xi_j}{|\xi|^4}\Big(\mathcal{F}_{x,v}\big[Z^{n+\beta'}(\mu^\fr12q(\phi)g)\big]_k(\tau,\xi)+\mathcal{F}_{x,v}\big[Z^{n+\beta'}(\mu^\fr12g)\big]_k(\tau,\xi)\Big)\\ &\times\mathcal{F}_{x,v}\big[Z^{n'+\beta''}\pr_{v_iv_j}f\big]_{0}(\tau,\eta-\xi)\\
    \nn& 
    \times (0,-\eta+k(t-\tau))^{m-n-n'+\beta-\beta'-\beta''}\bar{\hat{\mathfrak{g}}}_{k}(t-\tau,\eta)d\xi d\eta d\tau\\
    &=\mathbb{I}_{1,k;3),e}^{m,\beta}(t)+\mathbb{I}_{1,k;3),d}^{m,\beta}(t),
\end{align*}
We first treat $\mathbb{I}_{1,k;3),e}^{m,\beta}(t)$ for $l\ne k$. Indeed, similar to \eqref{est-I11e-iota}, one deduces that
\begin{align}\label{est-I13e-iota}
\nn&\int_0^{T}\la t\ra^{1+a-s}w_{\iota}^2(t)
\sum_{k\in\Z^3_*} \sum_{|\beta|\le N}\kappa^{2|\beta|}\sum_{m\in\N^6}a_{m,\lm,s}^2(t)\left|\mathbb{I}_{1,k;3),e}^{m,\beta}(t)\right|^2dt\\
    \les
    \nn&\nu^{\fr13} \int_0^{T}\int_{\tau}^{T}\frac{1}{\la \nu^{\fr13} (t-\tau)\ra^{\fr32}} \Big(\la \tau\ra^{1+a-s}w_{\iota}^2(\tau)\left\|\la Y\ra^2|\nb_x|^2(q(\phi)\nu^{\fr23}\pr_{v_iv_j}g)(\tau)\right\|_{\mathcal{G}^{\lm(\tau),\fr{N}{2}}_{s,0}}^2\Big)\\
    \nn&\times \left\|\mu^\fr12f(\tau)\right\|_{\mathcal{G}^{\lm(\tau),N}_{s,2}}^2\Big( \la\nu^{\fr13}(t-\tau)\ra^{3}\left\| \mathfrak{g}(t-\tau)\right\|_{\bar{\mathcal{G}}^{\lm(0),N}_{s,0}}^2\Big) dt d\tau\\
    \les\nn&{ \int_0^{T} \la \tau\ra^{1+a-s}w_{\iota}^2(\tau)\left\|\la Y\ra^2|\nb_x|^2(q(\phi)\nu^{\fr23}\pr_{v_iv_j}g)(\tau)\right\|_{\mathcal{G}^{\lm(\tau),\fr{N}{2}}_{s,0}}^2d\tau} \left\|\mu^\fr12f\right\|_{L^\infty_t\mathcal{G}^{\lm,N}_{s,2}}^2\\
    &\times\sup_t\Big(\la\nu^{\fr13}t\ra^{3}\left\| \mathfrak{g}(t)\right\|_{\bar{\mathcal{G}}^{\lm(0),N}_{s,0}}^2\Big).
\end{align}
If $l=k$, then $\mathbb{I}_{1,k;3),e}^{m,\beta}(t)$  can be treated in a similar way:
\begin{align}\label{est-I13e-iota-k=l}
\nn&\int_0^{T}\la t\ra^{1+a-s}w_{\iota}^2(t)
\sum_{k\in\Z^3_*} \sum_{|\beta|\le N}\kappa^{2|\beta|}\sum_{m\in\N^6}a_{m,\lm,s}^2(t)\left|\mathbb{I}_{1,k;3),e}^{m,\beta}(t)\right|^2dt\\
    \les
    \nn&\nu^{\fr13} \int_0^{T}\int_{\tau}^{T}\frac{1}{\la \nu^{\fr13} (t-\tau)\ra^{\fr32}} \Big(\la \tau\ra^{1+a-s}w_{\iota}^2(\tau)\left\|(\mu^\fr12q(\phi)g)(\tau)\right\|_{\mathcal{G}^{\lm(\tau),N}_{s,2}}^2\Big)\\
    \nn&\times \left\|\la Y\ra^2\big(\nu^{\fr23}\pr_{v_iv_j}f\big)(\tau)\right\|_{\mathcal{G}^{\lm(\tau),\fr{N}{2}}_{s,0}}^2 \Big(\la\nu^{\fr13}(t-\tau)\ra^{3}\left\| \mathfrak{g}(t-\tau)\right\|_{\bar{\mathcal{G}}^{\lm(0),N}_{s,0}}^2\Big) dt d\tau\\
    \les\nn&{ \int_0^{T} \la \tau\ra^{1+a-s}w_{\iota}^2(\tau)\left\|(\mu^\fr12q(\phi)g)(\tau)\right\|_{\mathcal{G}^{\lm(\tau),N}_{s,2}}^2d\tau} \left\|\la Y\ra^2\big(\nu^{\fr23}\pr_{v_iv_j}f\big)\right\|_{L^\infty_t\mathcal{G}^{\lm(\tau),\fr{N}{2}}_{s,0}}^2\\
    &\times\sup_t\Big(\la\nu^{\fr13}t\ra^{3}\left\| \mathfrak{g}(t)\right\|_{\bar{\mathcal{G}}^{\lm(0),N}_{s,0}}^2\Big).
\end{align}
We are left to treat $\mathbb{I}_{1,k;3),d}^{m,\beta}(t)$ whether $l=k$ or not.

If $l=k$, by \eqref{est-Ik3-2} with $\mu^{\fr12}f$
replaced by $\mu^{\fr12}g$, similar to \eqref{est-Ik3-3}, we find that
\begin{align}\label{est-I13d-k=l}
\nn&\int_0^{T}\langle t\rangle^{1+a-s}w^2_{\iota}(t)\sum_{k\in\Z^3_*} \sum_{|\beta|\le N}\kappa^{2|\beta|}\sum_{m\in\N^6}a_{m,\lm,s}^2(t)\left|\mathbb{I}_{1,k;3),d}^{m,\beta}(t)\right|^2dt\\
    \les\nn&\nu\int_0^{T}\int_{\tau}^{T} \frac{1}{\la t-\tau\ra^2}\left\| \la Y^*\ra^2\mathfrak{g}(t-\tau)\right\|_{\bar{\mathcal{G}}^{\lm(0),\fr{N}{2}}_{s,0}}^2 dt\\
    \nn&\times\left(\nu\la\tau\ra^{1+a-s}w_{\iota}^2(\tau)\left\|(\mu^\fr12g)_{\ne}(\tau)\right\|_{\mathcal{G}^{\lm(\tau),N}_{s,2}}^2\right)\left\|\la \nb_v\ra^4\pr_{v_iv_j}f_0(\tau)\right\|_{\mathcal{G}^{\lm(\tau),\fr{N}{2}}_{s,0}}^2 d\tau\\
    \les\nn&\left\| \la Y^*\ra^2\mathfrak{g}\right\|_{L^\infty_t\bar{\mathcal{G}}^{\lm(0),\fr{N}{2}}_{s,0}}^2\left\|\la \nb_v\ra^4\pr_{v_iv_j}f_0\right\|_{L^\infty_t\mathcal{G}^{\lm,\fr{N}{2}}_{s,0}}^2\\
     &\times\Bigg[\nu^{\fr13}\int_0^{T} 
    {\nu^{\fr53}\la\tau\ra^{1+a-s}}w_{\iota}^2(\tau)\big\|\mu^\fr12g_{\ne}(\tau)\big\|_{\mathcal{G}^{\lm(\tau),N}_{s,2}}^2 d\tau\Bigg].
\end{align}

If $l\ne k$, there are four potential cases, but only three are non-trivial:\par
\noindent{\it Case 1: $l\ne k, |kt-l\tau|\ge \fr{t}{2}$.}\\
{\it Case 2: $l\ne k, |kt-l\tau|< \fr{t}{2},l=0$.} (This case is impossible and ruled out.)\\
{\it Case 3: $l\ne k, |kt-l\tau|< \fr{t}{2},l\ne0, |l\tau|\le 2|\xi|$.}\\
{\it Case 4: $l\ne k, |kt-l\tau|< \fr{t}{2},l\ne0, |l\tau|> 2|\xi|$.}\par
For {\it Case 1} and {\it Case 3}, like \eqref{xi-gain}, we always have
\begin{align*}
    \fr{1}{|\xi|^2}\les \fr{\la\xi\ra^2}{|\xi|^2} \fr{1}{\la t\ra^2}\la \eta-\xi+(k-l)\tau \ra^2\la -\eta+k(t-\tau) \ra^2.
\end{align*}
Then similar to \eqref{est-Ik3-1}, we arrive at
\begin{align}\label{est-Ik3d-iota-1}
    \nn&\int_0^{T}\la t\ra^{1+a-s}w_{\iota}^2(t)\sum_{k\in\Z^3_*} \sum_{|\beta|\le N}\kappa^{2|\beta|}\sum_{m\in\N^6}a_{m,\lm,s}^2(t)\left|\mathbb{I}_{1,k;3),d}^{m,\beta}(t)\right|^2dt\\
    \les&\left\| (\mu^\fr12f)\right\|_{L^\infty_t\mathcal{G}^{\lm,N}_{s,2}}^2{   \left\|w_{\iota}|\nb_x|^2\la Y\ra^4\nu^{\fr23}\pr_{v_iv_j}g_{\ne}\right\|^2_{L^\infty_t\mathcal{G}^{\lm,\fr{N}{2}}_{s,0}}}\sup_{t}\Big(\la\nu^\fr13t\ra^{3}\left\|\la Y^*\ra^2\mathfrak{g}(t)\right\|^2_{\bar{\mathcal{G}}^{\lm,\fr{N}{2}}_{s,0}}\Big).
\end{align}

For {\it Case 4}, similar to \eqref{est-Ik3-4}, we get
\begin{align}\label{est-Ik3d-iota-2}
\nn&\int_0^{T}\la t\ra^{1+a-s}w_{\iota}^2(t)\sum_{k\in\Z^3_*} \sum_{|\beta|\le N}\kappa^{2|\beta|}\sum_{m\in\N^6}a_{m,\lm,s}^2(t)\left|\mathbb{I}_{1,k;3),d}^{m,\beta}(t)\right|^2dt\\
    \les\nn&\nu^2 \int_0^{T}\la t\ra^{-1+a-s}\sum_{k\in\Z^3_*}  \sum_{|\beta|\le N}\kappa^{2|\beta|}\sum_{m\in\N^6}\Bigg[\sum_{\substack{l\ne k,\\ l\ne0}}\sum_{\substack{\beta'\leq \beta,\beta''\le\beta-\beta'\\|\beta'|>|\beta|/2}}
    \sum_{\substack{n\leq m\\n'\le m-n}}\\
    \nn&\int_{\fr{2t}{3}}^t \frac{b_{m,n,n',s}{ |l|}}{\la kt-l\tau\ra^2|k-l|^2} \left\|a_{n,s}(\tau)\Big(\fr{\lm(t)}{\lm(\tau)}\Big)^{|n|}\fr{\tau}{{ |l|^2}} {\bf 1}_{D_2}\mathcal{F}_{x,v}\big[(\mu^\fr12{ \nb_x}f)^{(n+\beta')}\big]_l(\tau)\right\|_{L^2_\eta\cap L^\infty_\eta}\\ 
    \nn&\times w_{\iota}(\tau)\left\|a_{n',\lm,s}(\tau)\mathcal{F}_{x,v}\big[\la Y\ra^2|\nb_x|^2\pr_{v_iv_j}g^{(n'+\beta'')}\big]_{k-l}(\tau)\right\|_{L^2_\eta\cap L^1_\eta }\\
    \nn& \times\left\|a_{m-n-n',\lm,s}(0) \la Y^*\ra^2\mathfrak{g}_k^{(m-n-n'+\beta-\beta'-\beta'')}(t-\tau)\right\|_{L^2_v} d\tau\Bigg]^2dt\\
    \les\nn&\nu^{\fr13}\int_0^{T}{ \nu^{\fr13}\la \tau\ra^{-1+a-s}} {w_{\iota}^2(\tau)} \left\|\la Y\ra^4| \nb_x|^3\nu^{\fr23}\pr_{v_iv_j}g_{\ne}(\tau)\right\|^2_{\mathcal{G}^{\lm,\fr{N}{2}}_{s,0}}d\tau \\
    \nn&\times\left\|\la Y^*\ra^2\mathfrak{g}\right\|^2_{L^\infty_t\bar{\mathcal{G}}^{\lm(0),\fr{N}{2}}_{s,0}}\big\|\mu^{\fr12}\nb_xf\big\|^2_{L^\infty_t\mathcal{G}^{\lm,N}_{s,2}}\\
    \les&\nu^\fr23\big\|w_\iota \nu^\fr23\pr_{v_i}\pr_{v_j}g_{\ne}\big\|^2_{L^\infty_t\mathcal{G}^{\lm,N}_{s,0}}\left\|\la Y^*\ra^2\mathfrak{g}\right\|^2_{L^\infty_t\bar{\mathcal{G}}^{\lm(0),\fr{N}{2}}_{s,0}}\big\|\mu^{\fr12}\nb_xf\big\|^2_{L^\infty_t\mathcal{G}^{\lm,N}_{s,2}},
\end{align}
provided $N\ge14$.

{$\bullet$  \it Treatments of $\mathbb{I}_{1,k;2)}^{m,\beta}(t)$.} Here we use the decomposition of $\mathbb{I}_{1,k;2)}^{m,\beta}(t)$ in \eqref{decom-I12}. For $\mathbb{I}_{1,k;2),e}^{m,\beta}(t)$, by \eqref{est-I12e-1}, similar to \eqref{est-I12e-2}, we arrive at 
\begin{align}\label{est-I12e-iota}
\nn&\int_0^{T}\langle t\rangle^{1+a-s}w_{\iota}^2(t)\sum_{k\in\Z^3_*} \sum_{|\beta|\le N}\kappa^{2|\beta|}\sum_{m\in\N^6}a_{m,\lm,s}^2(t)\left|\mathbb{I}_{1,k;2),e}^{m,\beta}(t)\right|^2dt\\
    \les\nn&\left\|\la\nb_v\ra^2(\mu^\fr12f_0)\right\|_{L^\infty_t\mathcal{G}^{\lm,\fr{N}{2}}_{s,2}}^2\sup_t\Big(\la\nu^{\fr13}t\ra^3\left\| \mathfrak{g}(t)\right\|_{\bar{\mathcal{G}}^{\lm(0),N}_{s,0}}^2\Big)\\
    &\times{  \int_0^{T} 
    \la \tau\ra^{1+a-s}w^2_{\iota}(\tau)\left\|q(\phi)\nu^{\fr23}\pr_{v_iv_j}g(\tau)\right\|_{\mathcal{G}^{\lm(\tau),N}_{s,0}}^2d\tau}.
\end{align}
Next we investigate $\mathbb{I}_{1,k;2),d}^{m,\beta}(t)$. If $|\eta|\le 10|\eta-k(t-\tau)|$, by \eqref{est-I12d-1}, similar to \eqref{est-I12d-2}, we find that
\begin{align}\label{est-I12d-iota}
\nn&\nu^2\int_0^{T}\langle t\rangle^{1+a-s}w_{\iota}^2(t)\sum_{k\in\Z^3_*} \sum_{|\beta|\le N}\kappa^{2|\beta|}\sum_{m\in\N^6}a_{m,\lm,s}^2(t)\left|\mathbb{I}_{1,k;2),d}^{m,\beta}(t)\right|^2dt\\
    \les\nn&\left\|\la\nb_v\ra^3(\mu^\fr12f_0)\right\|_{L^\infty_t\mathcal{G}^{\lm,\fr{N}{2}}_{s,2}}^2\left\| \la Y^*\ra^3\mathfrak{g}\right\|_{L^\infty_t\bar{\mathcal{G}}^{\lm(0),N}_{s,0}}^2\\
    &\times\nu^{\fr13}\int_0^{T} 
    {  \nu\langle \tau\rangle^{1+a-s}}w_{\iota}^2(\tau)\left\|\nu^{\fr13}\pr_{v_j}g_{\ne}(\tau)\right\|_{\mathcal{G}^{\lm(\tau),N}_{s,0}}^2 d\tau.
\end{align}

If $|\eta|\le1$, similar to \eqref{est-I12d-eta<1-2}, we have
\begin{align}\label{est-I12d-eta<1-iota}
\nn&\int_0^{T}\la t\ra^{1+a-s}w_{\iota}^2(t)\sum_{k\in\Z^3_*} \sum_{|\beta|\le N}\kappa^{2|\beta|}\sum_{m\in\N^6}a_{m,\lm,s}^2(t)\left|\mathbb{I}_{1,k;2),d}^{m,\beta}(t)\right|^2\\
    \les\nn&\nu^{\fr13}\int_0^T{  \nu\la\tau\ra^{1+a-s}}w_{\iota}^2(\tau)\left\|g_{\ne}(\tau)\right\|_{\mathcal{G}^{\lm(\tau),N}_{s,0}}^2d\tau \left\|\la\nb_v\ra^4(\mu^\fr12f_0)(\tau)\right\|_{L^\infty_t\mathcal{G}^{\lm,\fr{N}{2}}_{s,2}}^2\\
    &\times\sup_t\Big(\la\nu^{\fr13}t\ra^{3}\left\| \mathfrak{g}(t)\right\|_{\bar{\mathcal{G}}^{\lm(0),N}_{s,0}}^2\Big).
\end{align}

If $|\eta|> \max\{10|\eta-k(t-\tau)|,1\}$, all the estimates in \eqref{appro1}--\eqref{lm-gap6} hold. Similar to \eqref{ktau-power} and \eqref{restriction-s>1/2}, for $s>\fr12$ and $a, \varepsilon$ chosen the same as those in \eqref{restriction-s>1/2}, we have
\begin{align}\label{ktau-power-iota}
    \la\tau\ra^{\fr{1+a-s}{2}}\fr{\la\tau\ra^{\fr{1+a}{2}}}{|k\tau|^{\fr{s}{2}}}\Big(|k|^{(1-s)(3+\varepsilon)}\la\tau\ra^{(1+a-s)(3+\varepsilon)}\Big)
    \les|k|^{2}\la\tau\ra^{2}.
\end{align}
Accordingly, the analogue of \eqref{est-I12d-main} is given by
\begin{align}\label{est-I12d-main-iota}
\nn&\int_0^{T}\sum_{k\in\Z^3_*}\la t\ra^{1+a-s} w_{\iota}^2(t)\sum_{|\beta|\le N}\kappa^{2|\beta|}\sum_{m\in\N^6}a_{m,\lm,s}^2(t)\left|\mathbb{I}_{1,k;2),d}^{m,\beta}(t)\right|^2dt\\
    \les\nn& \int_{0}^{T}   { (\nu\la\tau\ra^2)^2}{w_{\iota}^2(\tau)}\left\|\fr{1}{\la\tau\ra^{\fr{1+a}{2}}}\la Y\ra^{\fr{s}{2}}\nb_xg(\tau)\right\|^2_{\mathcal{G}^{\lm(\tau),N}_{s,0}}d\tau\\
    &\times \left\|\la\nb_v \ra^{\fr{s}{2}+7+\varepsilon}(\mu^\fr12f_0)\right\|^2_{L^\infty_t\mathcal{G}^{\lm(\tau),\fr{N}{2}}_{s,2}}\|\mathfrak{g}\|^2_{L^\infty_t\bar{\mathcal{G}}^{\lm(0),N}_{s,0}}.
\end{align}

{$\bullet$  \it Treatments of $\mathbb{I}_{4,k}^{m,\beta}(t)_{\tau\approx t}$.} Similar to \eqref{est-I4k-tau=t}, using \eqref{decom-f}, \eqref{fg1} and \eqref{Linfty-rho}, Lemmas \ref{lem-compose} and \ref{lem:est-S^*}, we are led to
\begin{align}\label{est-I4k-tau=t-iota}
    \nn&\int_0^{T}\la t\ra^{1+a-s}w^2_{\iota}(t)\sum_{k\in\Z^3_*} \sum_{|\beta|\le N}\kappa^{2|\beta|}\sum_{m\in\N^6}a_{m,\lm,s}^2(t)\left|\mathbb{I}_{4,k}^{m,\beta}(t)_{\tau\approx t}\right|^2dt\\
    \les\nn&\nu^{\fr43}\int_0^{T}\la\tau\ra^{1+a-s}w_{\iota}^2(\tau)\|f_{\ne}(\tau)\|_{\mathcal{G}^{\lm,N}_{s,0}}^2\|f(\tau)\|^2_{\mathcal{G}^{\lm,N}_{s,0}}d\tau\\
    \les&\nu^{\fr43}\int_0^{T}\la\tau\ra^{1+a-s}w_{\iota}^2(\tau)\Big(\|\rho\|_{\mathcal{G}^{\lm,N}_s}^2\|g\|^2_{\mathcal{G}^{\lm,N}_{s,0}}+\|g_{\ne}\|^2_{\mathcal{G}^{\lm,N}_{s,0}}\Big)d\tau\|g\|^2_{L^\infty_t\mathcal{G}^{\lm,N}_{s,0}}.
\end{align}

Now collecting the estimates in \eqref{est-T2E2-iota-short}--\eqref{est-I12d-eta<1-iota}, \eqref{est-I12d-main-iota} and \eqref{est-I4k-tau=t-iota},  and using \eqref{decom-f}, \eqref{fg1}, \eqref{fg2} and \eqref{Linfty-rho}, Lemmas \ref{lem-compose}, \ref{lem:est-S^*}, and Corollary \ref{coro-CK} again, we obtain \eqref{est-rho2-iota}.
\end{proof}

\section{First moment estimates in Gevrey class}
We begin this section  by deriving the equation for the first moment 
$M(t,x)$. Multiplying \eqref{f_k} by $v\sqrt{\mu}(v)$, and integrating the resulting equation with respect to $v$ over $\R^3$ yields
\be\label{eq-M_k}
\hat{M}_k(t)=-\int_0^tH_k(t-\tau)\hat{\rho}_k(\tau)d\tau+\mathsf{N}_k(t),
\ee
where
\begin{align}\label{def-H}
    H_k(t)=\fr{2ki}{|k|^2}\int_{\R^3} S_k(t)[v\mu^{\fr12}]v\mu^{\fr12}(v)dv.
\end{align}
and
\be\label{Nsf_k}
\mathsf{N}_k(t)=\int_{\R^3}S_k(t)[\hat{f}_{\rm in}(v)]v\mu^{\fr12}(v)dv+\int_0^t\int_{\R^3}S_k(t-\tau)[\hat{\mathfrak{N}}_k(\tau,v)]v\mu^{\fr12}(v)dvd\tau.
\ee
Like the decomposition of the density $\rho$, we decompose the first moment $M(t,x)$ into two parts as follows:
\begin{align}\label{e-M1}
    \hat{M}^{(1)}_k(t)=-\int_0^tH_k(t-\tau)\hat{\rho}^{(1)}_k(\tau)d\tau+\mathsf{N}^{(1)}_k(t),
\end{align}
and
\begin{align}\label{e-M2}
    \hat{M}^{(2)}_k(t)=-\int_0^tH_k(t-\tau)\hat{\rho}^{(2)}_k(\tau)d\tau+\mathsf{N}^{(2)}_k(t),
\end{align}
where
\begin{align*}
    \mathsf{N}_k^{(1)}(t)=&\int_{\R^3}S_k(t)[\hat{f}_{\rm in}(v)]v\mu^{\fr12}(v)dv+\int_0^t\int_{\R^3}S_k(t-\tau)[\hat{\mathfrak{N}}^{(1)}_k(\tau,v)]v\mu^{\fr12}(v)dvd\tau,\\
    \mathsf{N}_k^{(2)}(t)=&\int_0^t\int_{\R^3}S_k(t-\tau)[\hat{\mathfrak{N}}^{(2)}_k(\tau,v)]v\mu^{\fr12}(v)dvd\tau,
\end{align*}
with $\mathfrak{N}^{(1)}(\tau,x,v)$ and $\mathfrak{N}^{(2)}(\tau,x,v)$ defined in \eqref{eq: N_1, N_2}.

\begin{lem}\label{lem-point-H_k}
There exist   constants $0<c'<\fr12$ and $C>0$ depending only on $s_L$, and $\lambda_L$, such that  
\begin{align}\label{point-H_k}
     |H_k(t)|\le C |k|^{-1}\min\left\{e^{-\fr{c'}{2}(\nu^{\fr13}t)^{\fr13}}, e^{-\fr{c'}{2}(\nu t)^{\fr23}}\right\}e^{-\fr{c'}{2}|kt|^{s_L}}. 
\end{align}
\end{lem}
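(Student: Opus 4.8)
\textbf{Proof proposal for Lemma \ref{lem-point-H_k}.}

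The plan is to mimic the proof of Lemma \ref{lem-point-K_k} essentially verbatim, the only structural change being that the test function $\mu^{1/2}$ appearing there is replaced by $v\mu^{1/2}$, which is still a Schwartz function that is annihilated by neither the transport nor the Landau operator in a way that matters. Recall from \eqref{def-H} that
\[
H_k(t)=\fr{2ki}{|k|^2}\int_{\R^3}S_k(t)[v\mu^{\fr12}(v)]v\mu^{\fr12}(v)\,dv,
\]
so the structure is $\fr{2ki}{|k|^2}\langle S_k(t)[v\sqrt\mu], v\sqrt\mu\rangle_v$, i.e.\ it pairs the \emph{same} semigroup orbit $S_k(t)[v\sqrt\mu]$ used in Lemma \ref{lem-point-K_k} against $v\sqrt\mu$ rather than $\sqrt\mu$. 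First I would establish the Gevrey-Fourier bound on the orbit: exactly as in the proof of Lemma \ref{lem-point-K_k}, using the elementary inequality $(n!)^2 2^n \le (2n)!$ and the multinomial identity \eqref{multinomial}, one shows that for suitable $\lambda_L \ge (2\tilde c)^{1/s_L}$,
\[
e^{2\tilde c|\xi+kt|^{s_L}} \lesssim \langle\xi+kt\rangle^{s_L}\sum_{m\in\N^3}\Big(\fr{\lambda_L^{|m|}}{\Gamma_{s_L}(|m|)}C_{|m|}^m\Big)^2(i\xi+ikt)^m(-i\xi-ikt)^m,
\]
whence, via Plancherel and the linear semigroup estimate \eqref{semigroup-est} (which bounds $\sum_{m,\beta}a_{m,\lambda_L,s_L}^2\|Y^{m+\beta}S_k(t)[v\sqrt\mu]\|_{L^2_v}^2$ uniformly), one gets
\[
\big\|e^{\tilde c|\xi+kt|^{s_L}}\mathcal{F}_v\big[S_k(t)[v\sqrt\mu]\big]\big\|_{L^2_\xi}^2 \lesssim C.
\]
This step is identical to its counterpart in Lemma \ref{lem-point-K_k} and requires no new input.

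Next I would exploit the Gevrey decay of the orbit against $v\sqrt\mu$. Using $-|\xi+kt|^{s_L}\le -|kt|^{s_L}+|\xi|^{s_L}$ valid for $0<s_L\le 1$, and noting that $v\sqrt\mu$ has Fourier transform $p_1(\xi)e^{-|\xi|^2/2}$ for a degree-one polynomial $p_1$, one estimates by Cauchy-Schwarz in $\xi$:
\[
|H_k(t)| \lesssim \fr{1}{|k|}\Big|\int_{\R^3}\mathcal{F}_v[S_k(t)[v\sqrt\mu]](\xi)\,\overline{\mathcal{F}_v[v\sqrt\mu]}(\xi)\,d\xi\Big| \lesssim |k|^{-1}e^{-\tilde c|kt|^{s_L}}\big\|p_1(\xi)e^{-|\xi|^2/2}e^{\tilde c|\xi|^{s_L}}\big\|_{L^2_\xi}\big\|e^{\tilde c|\xi+kt|^{s_L}}\mathcal{F}_v[S_k(t)[v\sqrt\mu]]\big\|_{L^2_\xi},
\]
and the last two factors are bounded. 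This yields the Gevrey-in-$|kt|$ half of \eqref{point-H_k}. For the enhanced-dissipation factors $\min\{e^{-c'(\nu^{1/3}t)^{1/3}},e^{-c'(\nu t)^{2/3}}\}$, I would invoke the same input used for $K_k$, namely Lemma 7.5 of \cite{chaturvedi2023vlasov} (or equivalently the decay built into Proposition \ref{prop-semigroup}): since $H_k(t)$ is again a bounded bilinear pairing of $S_k(t)$ with fixed Schwartz data, the argument producing $|\partial_t^n K_k(t)|\lesssim |k|^{n-1}\min\{e^{-c(\nu^{1/3}t)^{1/3}},e^{-c(\nu t)^{2/3}}\}$ applies with $n=0$ and $\mu^{1/2}$ replaced by $v\mu^{1/2}$, giving $|H_k(t)|\lesssim |k|^{-1}\min\{e^{-c(\nu^{1/3}t)^{1/3}},e^{-c(\nu t)^{2/3}}\}$. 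Combining the two bounds (taking $c'$ the smaller of the relevant constants, and halving the exponents to absorb constants exactly as in Lemma \ref{lem-point-K_k}) gives \eqref{point-H_k}.

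I do not anticipate a genuine obstacle here: the only place one must be slightly careful is that in Lemma \ref{lem-point-K_k} one crucially used $L\mu^{1/2}=0$ to pass from $\partial_t K_k$ to the clean formula \eqref{ptn_K}, but for $H_k$ we only need $n=0$ (no time derivatives), so that simplification is never invoked and $L(v\sqrt\mu)\ne 0$ causes no trouble. The mild cost of the extra factor of $v$ is simply that the fixed profile is $p_1(\xi)e^{-|\xi|^2/2}$ rather than $e^{-|\xi|^2/2}$, which is still Schwartz and still has a finite weighted Gevrey norm $\|p_1(\xi)e^{-|\xi|^2/2}e^{\tilde c|\xi|^{s_L}}\|_{L^2_\xi}$; this is the analogue of the elementary bound \eqref{est-vg} for $v\mathfrak g$. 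Hence the whole proof is a routine transcription of the proof of Lemma \ref{lem-point-K_k}, and I would present it as such, flagging only the two points above.
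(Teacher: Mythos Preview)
Your proposal is correct and follows exactly the approach the paper indicates: the paper's proof simply states ``The proof of \eqref{point-H_k} is similar to the proof of Lemma \ref{lem-point-K_k}. We omit the details for simplicity.'' Your transcription of that proof, with the two points you flag (that $L\mu^{1/2}=0$ was only used for time derivatives of $K_k$ and hence is irrelevant here, and that the extra factor of $v$ merely replaces $e^{-|\xi|^2/2}$ by a degree-one polynomial times a Gaussian), is precisely what the omitted details would be.
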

The proof of \eqref{point-H_k} is similar to the proof of Lemma \ref{lem-point-K_k}. We omit the details for simplicity.

\begin{lem}
The following estimates for $M^{(1)}$ and $M^{(2)}$ hold:
    \begin{align}\label{es-rho:M1}
    \left\|\la t\ra^b|\nb_x|^\fr32M^{(1)}\right\|_{L^2_t\mathcal{G}^{\lm,N-1}_s}\les \left\|\la t\ra^b|\nb_x|^\fr32\rho^{(1)}\right\|_{L^2_t\mathcal{G}^{\lm,N-1}_s}+ \left\|\la t\ra^b|\nb_x|^\fr32\mathsf{N}^{(1)}\right\|_{L^2_t\mathcal{G}^{\lm,N-1}_s},
\end{align}

\begin{align}\label{es-rho:M2}
    \left\|\langle t\rangle^{\fr{3+a-s}{2}}M^{(2)}\right\|_{L^2_t\mathcal{G}^{\lm,N}_s}
    \les \left\|\langle t\rangle^{\fr{3+a-s}{2}}\rho^{(2)}\right\|_{L^2_t\mathcal{G}^{\lm,N}_s}+\left\|
    \langle t\rangle^{\fr{3+a-s}{2}}\mathsf{N}^{(2)}\right\|_{L^2_t\mathcal{G}^{\lm,N}_s},
\end{align}
and for $\iota=1,2,\cdots,  [\ell/3]+1$,
\begin{align}\label{es-rho:M2-iota}
    \left\|\langle t\rangle^{\fr{1+a-s}{2}}w_{\iota}(t)M^{(2)}\right\|_{L^2_t\mathcal{G}^{\lm,N}_s}
    \les \left\|
    \langle t\rangle^{\fr{1+a-s}{2}} w_{\iota}(t)\rho^{(2)}\right\|_{L^2_t\mathcal{G}^{\lm,N}_s}+\left\|
    \langle t\rangle^{\fr{1+a-s}{2}} w_{\iota}(t)\mathsf{N}^{(2)}\right\|_{L^2_t\mathcal{G}^{\lm,N}_s}.
\end{align}
\end{lem}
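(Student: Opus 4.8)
The plan is to reduce the moment estimates \eqref{es-rho:M1}--\eqref{es-rho:M2-iota} to the density estimates \eqref{es-rho:N1}--\eqref{es-rho:N2-iota} (which were already proved in Lemma \ref{lem-rho:N}) by treating the Volterra-type convolution term $\int_0^t H_k(t-\tau)\hat\rho_k(\tau)d\tau$ in \eqref{e-M1}--\eqref{e-M2} as a gain, using the pointwise Gevrey decay of the kernel $H_k(t)$ recorded in Lemma \ref{lem-point-H_k}. Concretely, from \eqref{e-M1} we have $\hat M^{(1)}_k(t)=-\int_0^t H_k(t-\tau)\hat\rho^{(1)}_k(\tau)d\tau+\mathsf N^{(1)}_k(t)$, so it suffices to bound the convolution part in $L^2_t\mathcal{G}^{\lm,N-1}_s$ by $\|\la t\ra^b|\nb_x|^{3/2}\rho^{(1)}\|_{L^2_t\mathcal{G}^{\lm,N-1}_s}$ and, for the $\mathsf N^{(1)}$ part, simply to pass it through unchanged (it appears as a term on the right-hand side of \eqref{es-rho:M1}). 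The same structure applies verbatim to $M^{(2)}$ via \eqref{e-M2}, using the time-weights $\la t\ra^{(3+a-s)/2}$ and $\la t\ra^{(1+a-s)/2}w_\iota(t)$ respectively.

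The key step is the convolution estimate. I would follow the argument already used for $G_k$ in the proof of Lemma \ref{lem-rho:N} essentially line by line: write everything in the Fourier-in-$x$ Gevrey norms $\|\cdot\|_{\mathsf G^{\lm(t),N}_s}$ and $\|\cdot\|_{\mathsf{\tl G}^{\lm(t),N}_s}$ introduced there, expand $(k,kt)=(k(t-\tau))+(k,k\tau)$, distribute the $Z$-derivatives using the coefficient identities in Remark \ref{Rmk: coefficients a-b} and Lemma \ref{lem-summable}, absorb the ratio of time-weights $\la t\ra^b/\la\tau\ra^b\lesssim\la t-\tau\ra^b$ (resp. $\la t\ra^{(1+a-s)/2}w_\iota(t)/(\la\tau\ra^{(1+a-s)/2}w_\iota(\tau))\lesssim\la t-\tau\ra^{(1+a-s+\iota)/2}$), and then apply Young's convolution inequality in $t$. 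This reduces matters to showing $\sup_{k\in\Z^3_*}\int_0^\infty\la t\ra^{2b+?}\|H_k(t)\|^2_{\mathsf{\tl G}^{\lm(0),N}_s}\,dt\le C|k|^{-3}$ (and the analogous bound with $3+a-s+\iota$ in place of $2b$), exactly as in the estimate of $\|G_k(t)\|^2_{\mathsf{\tl G}^{\lm(0),N}_s}$ there. This last bound follows from \eqref{point-H_k}: expand $\|H_k(t)\|^2_{\mathsf{\tl G}^{\lm(0),N}_s}$ as a power series in $|kt|$, use the Hölder/Stirling inequality \eqref{holder} to exchange the Gevrey sum for an exponential $e^{([2/s]+1)(6\lm(0))^s|kt|^s}$, and absorb it into $e^{-\frac{c'}{2}|kt|^{s_L}}$ using $s_L>s$, leaving $C|k|^{-2}\la kt\ra^{2N}e^{-\frac{c'}{2}|kt|^{s_L}}$, which integrates against any polynomial time weight to $C|k|^{-3}$.

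I do not expect a genuine obstacle here: $H_k$ and $K_k$ have identical structure (both are $\frac{2ki}{|k|^2}$ times an inner product of $S_k(t)[v\sqrt\mu]$ against a Gaussian-times-polynomial, the only difference being $v\sqrt\mu$ versus $\sqrt\mu$ as the second factor), so the representation formula \eqref{ptn_K} and the semigroup bound \eqref{semigroup-est} used to derive \eqref{point-K_k} carry over to \eqref{point-H_k} without change — which is presumably why the paper states Lemma \ref{lem-point-H_k} and then writes ``We omit the details for simplicity.'' The only mild bookkeeping point is that $M$ carries one fewer derivative than $\rho$ in the $\mathcal G^{\lm,N-1}_s$ versus $\mathcal G^{\lm,N}_s$ appearing on the two sides; this is harmless because the $\mathsf N^{(j)}$ terms also sit at the same regularity level on both sides of \eqref{es-rho:M1}--\eqref{es-rho:M2-iota}, so no derivative is actually spent. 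Thus the main ``work'' is purely notational: reindexing the $G_k$-argument of Lemma \ref{lem-rho:N} with $H_k$ in place of $G_k$ and with $\hat\rho^{(j)}$ in place of $\mathcal N^{(j)}$.
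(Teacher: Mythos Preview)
Your proposal is correct and is exactly the argument the paper has in mind: the paper gives no explicit proof of this lemma, but since $H_k$ enjoys the same pointwise Gevrey decay \eqref{point-H_k} as $G_k$ in \eqref{est-G_k}, the proof of Lemma \ref{lem-rho:N} carries over verbatim with $H_k$ in place of $G_k$ and $\hat\rho^{(j)}$ in place of $\mathcal N^{(j)}$. Your remark about the $N-1$ versus $N$ regularity is slightly muddled (both sides of each estimate sit at the same level, so there is nothing to reconcile), but this does not affect the argument.
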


We will use the duality argument as well to bound $\mathsf{N}^{(1)}_k(t)$ and $\mathsf{N}^{(2)}_k(t)$. To this end, let us define
\begin{align}\label{def-frak-h}
\mathfrak{h}_k(t,v)=\begin{cases}S_k^{*}(t)[v\sqrt{\mu}(v)],\ \ {\rm if}\ \ k\in\Z^3_*;\\
\qquad\qquad\quad\ \, 0,  \ \ {\rm if}\ \ k=0,
\end{cases}
\quad{\rm and}\quad \mathfrak{h}(t,x,v)=\sum_{k\in\Z^3}\mathfrak{h}_k(t,v)e^{ik\cdot x}.
\end{align}
Clearly, all the estimates established for $\mathfrak{g}$ also hold for $\mathfrak{h}$. Accordingly, collecting the estimates obtained in Propositions \ref{prop-rho1}--\ref{prop-rho2-iota},  recalling the definitions of $\mathcal{E}_\ell(g(t))$, $\mathcal{D}_\ell(g(t))$ and $\mathcal{CK}_\ell(g(t))$ in \eqref{energy-l}--\eqref{CK-l}, and using \eqref{coercive}, we establish the following proposition for $M^{(1)}$ and $M^{(2)}$.
\begin{prop}\label{prop-M} Let $\ell>18.$
    Under the bootstrap hypotheses \eqref{bd-en}--\eqref{H-phi}, if \eqref{res-N-b} holds, and 
\begin{align}
    s>\fr12, \quad 0<a<s-\fr12,\quad {\rm and }\quad a\le \min\Big\{\fr{s}{b+4}, \fr{2s}{[\ell/3]+10}\Big\},
\end{align}
    then for $t\le \nu^{-\fr12}$, there holds
    \begin{align*}
        \big\|\la t\ra^b|\nb_x|^\fr32M^{(1)}\big\|_{L^2_t\mathcal{G}^{\lm,N-1}_s}^2\les\nn& \|\la\nb_x,\nb_v\ra^{2b+2}f_{\rm in}\|^2_{\mathcal{G}^{\lm(0),N-1}_{s,0}}+\big\|\la t\ra^b|\nb_x|^\fr32\rho^{(1)}\big\|^2_{L^2_t\mathcal{G}^{\lm,N-1}_s}\sup_t\mathcal{E}_{\ell}(g(t))\\
        &+\big\|\la t\ra^\fr{3+a-s}{2}\rho^{(2)}\big\|^2_{L^2_t\mathcal{G}^{\lm,N}_s}\sup_t\mathcal{E}_{\ell}(g(t)),\\
        \big\|\langle t\rangle^{\fr{3+a-s}{2}}M^{(2)}\big\|_{L^2_t\mathcal{G}^{\lm,N}_s}^2\les\nn& \big\|\langle t\rangle^{\fr{3+a-s}{2}}\rho^{(2)}\big\|^2_{L^2_t\mathcal{G}^{\lm,N}_s}\sup_{ t}\mathcal{E}_\ell(g(t))+\sup_{ t}\mathcal{E}_\ell(g(t))^2\\
        \nn&+\big\|\langle t\rangle^{\fr{3+a-s}{2}}\rho\big\|^2_{L^2_t\mathcal{G}^{\lm,N}_s}\sup_{ t}\mathcal{E}_\ell(g(t))^2\\
        \nn&+\nu^{\fr13}\int_0^{T}w_3^2(t)\mathcal{D}_{\ell-6}(g_{\ne}(t))dt\sup_{ t}\mathcal{E}_\ell(g(t))\\
        \nn&+\nu^{\fr13}\int_0^{T}w_1^2(t)\mathcal{D}_{\ell-2}(g_{\ne}(t))dt\sup_{ t}\mathcal{E}_\ell(g(t))\\
        &+\int_0^{T}w_6^2(t)\mathcal{CK}_{\ell-12}(g_{\ne}(t))dt\sup_{ t}\mathcal{E}_\ell(g(t)),
    \end{align*}
    and for $\iota=1,2,\cdots,[\ell/3]+1$,
    \begin{align*}
      \big\|\langle t\rangle^{\fr{1+a-s}{2}}w_{\iota}(t)M^{(2)}\big\|^2_{L^2_t\mathcal{G}^{\lm,N}_s}
    \les\nn&  \big\|\langle t\rangle^{\fr{1+a-s}{2}}w_{\iota}(t)\rho^{(2)}\big\|^2_{L^2_t\mathcal{G}^{\lm,N}_s}\sup_{ t}\mathcal{E}_\ell(g(t))\\
    \nn&+\sup_{ t}\Big(\mathcal{E}_\ell(g(t))+w_{\iota}^2(t)\mathcal{E}_{\ell-2\iota}(g_{\ne}(t))\Big)\mathcal{E}_\ell(g(t))\\
    \nn&+\big\|\langle t\rangle^{\fr{1+a-s}{2}}w_{\iota}(t)\rho\big\|^2_{L^2_t\mathcal{G}^{\lm,N}_s}\sup_{ t}\mathcal{E}_\ell(g(t))^2\\
    \nn&+\nu^\fr13\int_0^{T}w_{\iota}^2(t)\mathcal{D}_{\ell-2\iota}(g_{\ne}(t))dt\sup_{ t}\mathcal{E}_\ell(g(t))\\
    &+\int_0^{T}w^2_{\iota}(t)\mathcal{CK}_{\ell-2\iota}(g_{\ne}(t))dt\sup_{ t}\mathcal{E}_\ell(g(t)).
    \end{align*}
\end{prop}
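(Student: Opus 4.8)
\textbf{Proof plan for Proposition \ref{prop-M}.}
The plan is to reduce everything to the density and moment estimates of Section \ref{sec: density est} together with the pointwise kernel bound \eqref{point-H_k}, in exact parallel to how $\rho^{(1)},\rho^{(2)}$ were handled. First I would record the representation formulas \eqref{e-M1}, \eqref{e-M2}, and I would invoke the analogue of Lemma \ref{lem-rho:N} (namely \eqref{es-rho:M1}, \eqref{es-rho:M2}, \eqref{es-rho:M2-iota}), which is proved the same way: Laplace-transforming in time is not needed here since $M^{(1)},M^{(2)}$ are given \emph{explicitly} in terms of $\rho^{(1)},\rho^{(2)}$ and the source terms $\mathsf N^{(1)},\mathsf N^{(2)}$, and the convolution with $H_k$ is controlled by \eqref{point-H_k} in precisely the same Gevrey-summed way as the convolution with $G_k$ was in the proof of Lemma \ref{lem-rho:N} (one uses $C_{|m|}^m\le 3^{|m|}$, the multinomial expansion \eqref{multinomial}, \eqref{holder}, and the fact $s_L>s$ to absorb the polynomial and Gevrey factors, giving a time-integrable kernel). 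Thus the first contribution to each estimate is simply the corresponding weighted $L^2_t\mathcal G$ norm of $\rho^{(1)}$ resp. $\rho^{(2)}$, which is already what appears on the right-hand sides.

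The remaining work is to bound $\big\|\langle t\rangle^{b}|\nb_x|^{3/2}\mathsf N^{(1)}\big\|_{L^2_t\mathcal G^{\lm,N-1}_s}$, $\big\|\langle t\rangle^{\frac{3+a-s}{2}}\mathsf N^{(2)}\big\|_{L^2_t\mathcal G^{\lm,N}_s}$, and $\big\|\langle t\rangle^{\frac{1+a-s}{2}}w_\iota(t)\mathsf N^{(2)}\big\|_{L^2_t\mathcal G^{\lm,N}_s}$. Here is the key observation: $\mathsf N^{(i)}_k$ differs from $\mathcal N^{(i)}_k$ only in that the weight $\mu^{1/2}(v)$ in the $v$-integral is replaced by $v\mu^{1/2}(v)$. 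Introducing $\mathfrak h_k(t,v)=S_k^*(t)[v\sqrt\mu]$ as in \eqref{def-frak-h}, the duality argument used throughout Section \ref{sec: density est} applies verbatim with $\mathfrak g$ replaced by $\mathfrak h$. By Lemma \ref{lem:est-S^*} (and the elementary commutation \eqref{est-vg}, which only costs one unit of velocity weight), $\mathfrak h$ obeys exactly the same bounds \eqref{bd-frak-g1}, \eqref{bd-frak-g2} as $\mathfrak g$: $\langle\nu^{1/3}t\rangle^3\|\mathfrak h(t)\|^2_{\bar{\mathcal G}^{\lm_L,N_0}_{s_L,\ell_0}}\le C$ and the same with an extra factor of $v$ or of $\langle Y^*\rangle^j$. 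Therefore I can cite Propositions \ref{prop-rho1}, \ref{prop-rho2-0}, \ref{prop-rho2-iota} line by line, substituting $\mathfrak h$ for $\mathfrak g$, to obtain for $\mathsf N^{(1)}$ the bound \eqref{bd-rho1} and for $\mathsf N^{(2)}$ the bounds \eqref{bd-rho2-iota=0}, \eqref{est-rho2-iota}. The only bookkeeping difference is a uniformly bounded loss of a fixed number of $v$-weights and $Y^*$-derivatives (well within the slack in $N,\ell$ imposed by \eqref{res-N-b} and $\ell>18$), which is harmless.

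Finally I would translate the right-hand sides of \eqref{bd-rho1}, \eqref{bd-rho2-iota=0}, \eqref{est-rho2-iota} into the energy/dissipation/CK quantities stated in the Proposition: using \eqref{decom-f}, \eqref{fg1}--\eqref{fg6}, Lemma \ref{lem-compose}, Lemma \ref{lem-Linfty-rho}, Corollary \ref{coro-CK}, and the coercivity bound \eqref{coercive}, every Gevrey norm of $g$ (and of $\nu^{|\al|/3}\pr_v^\al g$, $\nb_x g$, $Yg$, $\mu^{1/2}g$, $q(\phi)\,\nu^{2/3}\pr_{v_iv_j}g$, etc.) appearing there is dominated by $\mathcal E_\ell(g(t))^{1/2}$, every $L^2_t$ integral of $\nu^{1/3}\|\nu^{|\al|/3}\pr_v^\al g_\ne\|^2_\sig$-type quantities with the appropriate time weight $w_\iota$ by $\nu^{1/3}\int w_\iota^2\mathcal D_{\ell-2\iota}(g_\ne)\,dt$, and every $\mathfrak{CK}$-type term by $\int w_\iota^2\mathcal{CK}_{\ell-2\iota}(g_\ne)\,dt$; the density factors $\langle t\rangle^{\cdot}\rho^{(i)}$ and $\langle t\rangle^{\cdot}\rho$ are kept as written. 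This produces exactly the three displayed inequalities. I do not expect a genuine obstacle: the content of the proposition is that ``moments behave like densities'' because $v\sqrt\mu$ is as good a Gaussian weight as $\sqrt\mu$; the only thing to check carefully is that the kernel bound \eqref{point-H_k} has the same $|k|^{-1}$ prefactor and Gevrey-$s_L$ decay as \eqref{point-K_k}, so that the Volterra-type convolution with $H_k$ is controlled identically — and Lemma \ref{lem-point-H_k} already asserts this. The mildly delicate point, and the one I would write out, is confirming that the regularity and velocity-localization budget ($N\ge\max\{2b+12,2[\ell/3]+8\}$, $\ell>18$) still suffices after the uniform extra loss incurred by carrying the factor $v$ through $\mathfrak h$ and through the commutators \eqref{est-vg}.
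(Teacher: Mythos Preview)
Your proposal is correct and follows essentially the same approach as the paper: the paper simply states that all estimates for $\mathfrak g$ carry over to $\mathfrak h=S_k^*(t)[v\sqrt\mu]$, then collects the bounds from Propositions \ref{prop-rho1}--\ref{prop-rho2-iota} (with $\mathfrak g\to\mathfrak h$) together with the $H_k$-convolution estimates \eqref{es-rho:M1}--\eqref{es-rho:M2-iota} and repackages the right-hand sides via the definitions of $\mathcal E_\ell,\mathcal D_\ell,\mathcal{CK}_\ell$ and \eqref{coercive}. Your identification of the key point---that $v\sqrt\mu$ is as good a Gaussian as $\sqrt\mu$, so the duality argument transfers at the cost of a harmless finite loss of velocity weight via \eqref{est-vg}---is exactly the content of the paper's one-line justification.
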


{\Large \part{Long-time relaxation regime $t \geq \nu^{-1/2} $}}

\section{Estimates on $(f_0^L,\tilde{f})$}
In this section, we study the distribution functions $f_0^L$ and $\tilde{f}$. Since $f_0^L$ solves the homogeneous Landau equation, the energy estimates can be treated as the $x$-averaged part of the inhomogeneous Landau equation which is simply \cite{Guo2002} removing the $v\cdot \nabla_x f$ term and using the conservation laws to keep coercivity. The treatment of $\tilde{f}$ is more complicated. Compared to the energy estimates in \cite{chaturvedi2023vlasov}, we need to treat the additional quasi-linear terms that involve $f_0^L$. Thanks to the better localization and regularity properties, we can regard these as linear terms, however, with only $O(1)$ smallness. 
\subsection{Estimates on $f_0^L$}\label{sec:f_0^L}
Recalling \eqref{eq:quasi-linearized},  $f_0^L$ satisfies the following homogeneous Landau equation:
\begin{align}\label{eq-f_0^L}
\pr_tf_0^L+\nu Lf_0^L=\nu\Gamma(f_0^L, f_0^L),\quad f_0^L(0,v)=({\rm Id}-{\rm P}_0)f_0(\nu^{-\fr12},v).
\end{align}
The aim of the section is  to establish the following energy bounds on $f_0^L$. 
\begin{prop}\label{prop-f_0^L}
Assume that $N'
\in \N$, $\ell'>0$, and $f_0^L$ is a solution to \eqref{eq-f_0^L}. Then $\exists \eps_1>0$, such that if $\|f_0^L(0)\|_{H^{N'}_{\ell'}}\le \eps_1$, then there holds
    \begin{align}\label{bd-f_0^L}
        \|f_0^L(t)\|_{H^{N'}_{\ell'}}^2+\fr12\nu\int_0^t\|f_0^L(t')\|_{H^{N'}_{\sig,\ell'}}^2dt'\le 2 \|f_0^L(0)\|_{H^{N'}_{\ell'}}^2.
    \end{align}
\end{prop}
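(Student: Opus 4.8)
\textbf{Proof proposal for Proposition \ref{prop-f_0^L}.}

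The plan is to run a standard Guo-type weighted energy estimate for the spatially homogeneous Landau equation, closed by a continuity/bootstrap argument on the quantity $\|f_0^L(t)\|_{H^{N'}_{\ell'}}$. First I would fix the definition of the $H^{N'}_{\ell'}$ and $H^{N'}_{\sig,\ell'}$ norms (with the large constant ${\rm B}_1$ built in so that the $L^2_v$-piece dominates) and record the coercivity of the linear Landau operator from \cite{Guo2002}: since $\mathrm{P}_0 f_0^L(t,v)\equiv 0$ for all $t$ by the choice of initial data and the conservation laws, one has $\langle Lf_0^L, f_0^L\rangle_v \ge \delta |f_0^L|_\sig^2$ (this is \eqref{coercive-0}), and more generally $\langle \pr_v^\beta Lf_0^L, \pr_v^\beta f_0^L\rangle_{\ell'} \gtrsim |\pr_v^\beta f_0^L|_{\sig,\ell'}^2$ up to lower-order commutator errors and a truncated $L^2_v(|v|\le 2\zeta)$ term, exactly as in \cite{Guo2002,guo2012vlasov} and the vector-field version Lemma \ref{lem-coercive-L} (now in the simpler case without $x$-dependence and without the Gevrey summation). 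The truncated low-velocity term is absorbed using the Poincaré-type / coercivity argument: because $\mathrm{P}_0 f_0^L \equiv 0$, the $L^2_v(|v|\le 2\zeta)$ norm of $f_0^L$ is controlled by $\zeta' |f_0^L|_\sig^2$ plus a constant times the same, and by choosing ${\rm B}_1$ large the net contribution stays coercive — this is the homogeneous analogue of Lemma \ref{lem-lower order} and is where the normalization \eqref{eq: P_0f_0} / conservation laws are used.

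Next I would differentiate $\sum_{|\beta|\le N'}\|\langle v\rangle^{\ell'}\pr_v^\beta f_0^L\|_{L^2_v}^2 + {\rm B}_1\|f_0^L\|_{L^2_v}^2$ in time. Applying $\pr_v^\beta$ to \eqref{eq-f_0^L}, pairing with $\langle v\rangle^{2\ell'}\pr_v^\beta f_0^L$, and summing, the linear term produces $\ge \tfrac12\nu \|f_0^L\|^2_{H^{N'}_{\sig,\ell'}}$ by the coercivity discussion above (after choosing $\zeta,\zeta',{\rm B}_1$); the commutators $[\pr_v^\beta, L]$ and the weight-derivative terms $\pr_{v_i}\langle v\rangle^{2\ell'}$ are controlled by $|\cdot|_{\sig,\ell'}$-norms using the estimates on $\sig^{ij}$, $\sig^i$ (\eqref{eq: est sigma_ij}, \eqref{def-sig}) and on the kernel part $\mathcal{K}$ — these are exactly the single-mode, finite-regularity specializations of the bounds already proved in Lemma \ref{lem-coercive-L} / Corollary \ref{coro-cross}, so I would just cite those. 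For the nonlinear term $\nu\Gamma(f_0^L,f_0^L)$, I would invoke the trilinear estimate of Guo (Theorem 3 of \cite{Guo2002}), upgraded to the weighted $H^{N'}_{\ell'}$ setting as in \cite{guo2012vlasov} (or the Gevrey version Lemma \ref{lem-NL-collision} restricted to $x$-independent, finitely many derivatives): schematically
\begin{align*}
\nu\big|\langle \pr_v^\beta \Gamma(f_0^L,f_0^L),\langle v\rangle^{2\ell'}\pr_v^\beta f_0^L\rangle_v\big| \lesssim \nu\, \|f_0^L\|_{H^{N'}_{\ell'}}\, \|f_0^L\|_{H^{N'}_{\sig,\ell'}}^2 .
\end{align*}

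Putting these together yields a differential inequality of the form
\begin{align*}
\frac{d}{dt}\|f_0^L(t)\|_{H^{N'}_{\ell'}}^2 + \nu\Big(1 - C\|f_0^L(t)\|_{H^{N'}_{\ell'}}\Big)\|f_0^L(t)\|_{H^{N'}_{\sig,\ell'}}^2 \le 0 .
\end{align*}
Then I would close by bootstrap: assume $\|f_0^L(t)\|_{H^{N'}_{\ell'}}^2 \le 2\|f_0^L(0)\|_{H^{N'}_{\ell'}}^2 \le 2\eps_1^2$ on $[0,T]$; choosing $\eps_1$ small enough that $C\sqrt{2}\,\eps_1 \le \tfrac12$ makes the dissipation coefficient $\ge \tfrac12\nu$, so the inequality gives both $\|f_0^L(t)\|_{H^{N'}_{\ell'}}^2 \le \|f_0^L(0)\|_{H^{N'}_{\ell'}}^2$ (improving the bootstrap constant $2$ to $1$) and $\tfrac12\nu\int_0^T \|f_0^L\|_{H^{N'}_{\sig,\ell'}}^2 \le \|f_0^L(0)\|_{H^{N'}_{\ell'}}^2$. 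Local well-posedness of the homogeneous Landau equation near Maxwellian (again \cite{Guo2002}) provides a short-time solution to start the continuity argument, and the a priori bound extends it globally. The application in Theorem \ref{thm:longtime} / bootstrap hypothesis \eqref{H-f_0^L} is the case $N' = \tilde N+4$, $\ell' = \tilde\ell+3$, with $\eps_1$ chosen after $\mathrm{C}_L$.

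The main obstacle, as usual for Landau, is not the structure of the argument but making the velocity-weight bookkeeping in the coercivity step honest: ensuring that the commutator and weight-derivative errors from $[\pr_v^\beta, L]$ and from $\pr_{v_i}\langle v\rangle^{2\ell'}$, together with the truncated low-velocity remainder $\|f_0^L\|^2_{L^2_v(|v|\le 2\zeta)}$, are all genuinely dominated by $\delta'\nu\|f_0^L\|^2_{H^{N'}_{\sig,\ell'}}$ after the constants $\zeta,\zeta',{\rm B}_1$ are fixed in the right order — exactly the delicate point handled in Lemma \ref{lem-coercive-L} and Lemma \ref{lem-lower order}, whose proofs carry over here verbatim (indeed more easily, since there is no $x$-frequency summation and no Gevrey weight $a_{m,\lm,s}$). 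I would therefore present the proof largely by reduction to those lemmas, spelling out only the homogeneous simplifications (no $v\cdot\nabla_x$ transport term, $\mathrm{P}_0 f_0^L\equiv 0$ replacing the need for $E$-dependent lower-order terms) and the final bootstrap.
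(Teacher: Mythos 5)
Your proposal is correct and follows essentially the same route as the paper: weighted energy estimates at each $\pr_v^\beta$ level plus a ${\rm B}_1$-amplified unweighted $L^2_v$ estimate, Guo's coercivity (Lemmas \ref{Lem: Lemma-5}, \ref{Lem: Lemma-6}) together with $\mathrm{P}_0 f_0^L\equiv 0$ from the conservation laws to absorb the truncated low-velocity remainder via \eqref{coercive}, the trilinear collision estimate, and a continuity argument. The paper cites the original Guo lemmas directly rather than the Gevrey vector-field Lemma \ref{lem-coercive-L} you point to, but that is a presentational choice, not a difference in mechanism.
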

\begin{proof}
    For $\beta\in\N^3$, applying $\pr_v^\beta$ to \eqref{eq-f_0^L} yields
    \begin{align*}
        \pr_t\pr_v^\beta f_0^L+\nu \pr_v^\beta Lf_0^L=\nu \pr_v^\beta \Gamma(f_0^L, f_0^L).
    \end{align*}
Taking the inner product of this equation with $\la v\ra^{2\ell'}\pr_v^\beta f_0^L$, we are led to
\begin{align*}
    \fr12\fr{d}{dt}\|\la v\ra^{\ell'}\pr_v^\beta f_0^L\|_{L^2_v}^2+\nu\Big\la \pr_v^\beta L f_0^L,\la v\ra^{2\ell'}\pr_v^\beta f_0^L\Big\ra_v=\nu\Big\la \pr_v^\beta \Gamma(f_0^L, f_0^L), \la v\ra^{2\ell'}\pr_v^\beta f_0^L\Big\ra_v.
\end{align*}
Moreover, the $L^2_v$ energy estimate for $f_0^L$ without velocity weight is given by
\begin{align*}
    \fr12\fr{d}{dt}\|f_0^L\|_{L^2_v}^2+\nu\big\la Lf_0^L,f_0^L\big\ra_v=\nu\big\la \Gamma(f_0^L,f_0^L),f_0^L\big\ra_v.
\end{align*}
By Lemmas \ref{Lem: Lemma-5} and \ref{Lem: Lemma-6}, we have 
\begin{align}\label{lower-Lf0-1}
    \Big\la \pr_v^\beta L f_0^L,\la v\ra^{2\ell}\pr_v^\beta f_0^L\Big\ra_v\ge |\pr_v^\beta f_0^L|^2_{\sig,\ell'}-\zeta\sum_{|\beta'|\le |\beta|}\left|\pr_v^{\beta'}f_0^L\right|^2_{\sig,\ell'}-C_{\zeta}\|\bar{\chi}_{C_\zeta}f_0^L\|_{L^2_v}^2,
\end{align}
and
\begin{align}\label{lower-Lf0-2}
    \big\la  L f_0^L, f_0^L\big\ra_v\ge \dl | ({\rm I}-{\rm P}_0)f_0^L|^2_{\sig}.
\end{align}

For ${\rm P}_0f_0^L$, we write ${\rm P}_0f_0^L(t,x)=\tl{a}(t)\sqrt{\mu}+\sum_{j=1}^3\tl{b}_j(t)v_j\sqrt{\mu}+\tl{c}(t)(|v|^2-\fr32)\sqrt{\mu}$, with
\begin{align*}
    \tl{a}(t)\int_{\R^3}\mu dv=\int_{\R^3} f_0^L(t,v)\sqrt{\mu}dv,\quad \tl{b}_j(t)\int_{\R^3}v_j^2\mu dv=\int_{\R^3}f_0^L(t,v)v_j\sqrt{\mu}dv,
\end{align*}
and
\begin{align*}
    \tl{c}(t)\int_{\R^3}\big[(|v|^2-\fr32)\sqrt{\mu}\big]^2dv=\int_{\R^3}f_0^L(t,v)(|v|^2-\fr32)\sqrt{\mu}dv.
\end{align*}
By the choice of initial data of $f_0^L$
and conservation laws,
\begin{align}\label{conserv-f_0^L}
\int_{\R^3}f_0^L(t,v)\sqrt{\mu}dv=\int_{\R^3}f_0^L(t,v)v_j\sqrt{\mu}dv=\int_{\R^3}f_0^L(t,v)|v|^2\sqrt{\mu}dv=0,\quad j=1,2,3.
\end{align}
Consequently, $\tl{a}(t)=\tl{b}_j(t)=\tl{c}(t)=0, j=1,2,3$. Thus, ${\rm P}_0f_0^L=0$. Accordingly, by choosing $\zeta$ sufficiently small in \eqref{lower-Lf0-1} and multiplying \eqref{lower-Lf0-2} by a large constant ${\rm B}_1$, we can employ the coercive estimate \eqref{coercive} to absorb the terminal term in \eqref{lower-Lf0-1}:
\begin{align*}
    \sum_{|\beta|\le N'}\nu\Big\la \pr_v^\beta L f_0^L,\la v\ra^{2\ell'}\pr_v^\beta f_0^L\Big\ra_v+{\rm B}_1\nu\big\la  L f_0^L, f_0^L\big\ra_v\ge\fr12\nu\sum_{|\beta|\le N}\big|\pr_v^\beta f_0^L\big|_{\sig,\ell}^2+\fr12\nu {\rm B}_1\dl \big|f_0^L\big|_{\sig}^2.
\end{align*}
The collsion terms can be bounded as follows
\begin{align*}
    \nn&\nu\sum_{|\beta|\le N'}\Big\la \pr_v^\beta \Gamma(f_0^L, f_0^L), \la v\ra^{2\ell'}\pr_v^\beta f_0^L\Big\ra_v\\
    \le\nn& C\sum_{|\beta|\le N'}\sum_{\substack{\beta'\le\beta\\ \beta''\le\beta'}}\Big(\big\|\la v\ra^{\ell'}\pr_v^{\beta''} f_0^L\big\|_{L^2_v}\big|\pr_v^{\beta-\beta'}f_0^L\big|_{\sig,\ell'}+\big|\pr_v^{\beta''} f_0^L\big|_{\sig,\ell'}\big\|\la v\ra^{\ell'} \pr_v^{\beta-\beta'}f_0^L\big\|_{L^2_v}\Big)\big|\pr_v^\beta f_0^L|_{\sig,\ell'}\\
    \le&C\sum_{|\beta|\le N'}\big\|\la v\ra^{\ell'}\pr_v^\beta f_0^L\big\|_{L^2_v}\Big(\nu\sum_{|\beta|\le N'}\big|\pr_v^\beta f_0^L \big|^2_{\sig,\ell'}\Big),
\end{align*}
and
\begin{align*}
    \nu\Big\la  \Gamma(f_0^L, f_0^L),  f_0^L\Big\ra_v\le C\|f_0^L\|_{L^2_v}\Big(\nu|f_0^L|^2_{\sig}\Big).
\end{align*}
Then \eqref{bd-f_0^L} follows immediately by using the standard continuity argument.
\end{proof}

\subsection{Estimates on $\tl{f}$}\label{sec: tlf}
Many steps in the $\tl{f}$ estimate are similar to those in \cite{chaturvedi2023vlasov}, so we will mainly focus only on the steps that differ significantly. 
To do so, let us classify the terms in \eqref{eq:quasi-linearized} into following three sets
\begin{itemize}
    \item Linear term $-2{E}\cdot v\mu^{\fr12}$: this term is treated in Lemma 9.5 of \cite{chaturvedi2023vlasov}.
    \item Nonlinear terms 
    \begin{itemize}
        \item Collision effect $\nu \Gamma(\tilde{f}, \tilde{f})$: this term is treated in Lemma 9.7 of \cite{chaturvedi2023vlasov}.
        \item Collisionless effect $-{E}\cdot \nabla \tilde{f}+v\cdot \tl{f}$: this term is treated in Lemma 9.6 of \cite{chaturvedi2023vlasov}.
    \end{itemize}
    \item Quasi-linear terms
    \begin{itemize}
        \item Collision effect $\nu \Gamma(\tilde{f}, f_0^L)+\nu \Gamma(f_0^L,\tilde{f})$: this term can be controlled by the linear collision term due to the smallness of $f_0^L$. 
        \item Collisionless effects ${E}\cdot \nabla f_0^L$ and ${E}\cdot v f_0^L$: the treatment of these two terms are similar to that of the linear term $-2{E}\cdot v\mu^{\fr12}$, since $f_0^L$ has higher regularity and stronger localization than $\tilde{f}$. 
    \end{itemize}
\end{itemize}

\begin{prop}\label{prop-tlf}
    Under the bootstrap hypotheses \eqref{H-f_0^L}--\eqref{H-tlrho}, for all $0\le N'\le \tl{N}$, we have for some $\underline{\eta}>0$ independent of $\nu$ that
    \begin{align}\label{bd-E-longtime}
   \nn&\fr{d}{dt}\mathbb{E}_{N',\tl\ell}(\tl{f}(t))+\underline{\eta} \nu^{\fr13}\mathbb{D}_{N',\tl\ell}(\tl{f}(t))\\
    \le\nn&C\nu^{1+4\mathfrak{b}}\eps^4\la t\ra^{-4}+C{\rm C}_{\rho}{\rm C}_{f,2}\eps^2 \nu^{2\mathfrak{b}}\la t\ra^{-2}\\
    \nn&+\nu\|f_0^L\|^2_{H^{\tl{N}+2}_{\sig,\tl\ell}}\mathbb{E}_{N',\tl\ell}(\tl{f}(t))+C\eps\la t\ra^{-2}\mathbb{E}_{N',\tl\ell}(\tl{f}(t))\\
    \nn&+\nu^{\mathfrak{b}}\mathbb{E}_{N',\tl\ell}(\tl f(t))^{\fr12}\mathbb{D}_{N',\tl\ell}(\tl f(t))^{\fr12}\mathbb{D}_{\tl N-2,\tl\ell}(\tl f(t))^{\fr12}\\
    &+\|\rho\|_{\tl{H}^{N'}_x}\min \Big\{{\mathbb{E}}_{N',\tl\ell}(\tl{f}(t))^{\fr12},{\mathbb{D}}_{N',\tl\ell}(\tl{f}(t))^{\fr12}\Big\}.
\end{align}
\end{prop}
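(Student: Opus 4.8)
\textbf{Proof strategy for Proposition \ref{prop-tlf}.}
The plan is to run a weighted energy estimate on the quasi-linearized equation \eqref{eq:quasi-linearized} for $\tl f$, differentiating the energy functional $\mathbb{E}_{N',\tl\ell}(\tl f(t))$ in time and classifying the resulting terms exactly along the lines indicated in the three bullet lists above. First I would compute $\frac{d}{dt}\tl{\mathcal{E}}^n_{\tl\ell}(\tl f(t))$ for each multi-index $n$ with $|n|\le N'$, taking the inner products of the equations for $e^{\phi}\pr_x^\al\tl f^{(n)}$, $e^{\phi}\pr_v^\al\tl f^{(n)}$ and the cross term with the appropriate velocity weights $\la v\ra^{2\tl\ell-2|\al|-2|n|}$, and then summing over $n$ with the factors $\kappa^{2|n|}$. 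The linearized-operator part $\nu L\tl f + \nu L$-commutators produces, by the coercivity lemmas for the Landau operator (the homogeneous analogue of Lemma \ref{lem-coercive-L} and Corollary \ref{coro-cross}, together with the lower bound \eqref{coercive-0} and the $\nu^{1/3}$-weighted hypocoercivity structure built into \eqref{eq:tlE_l^n}), the dissipation $\underline\eta\,\nu^{1/3}\mathbb{D}_{N',\tl\ell}(\tl f(t))$ on the left-hand side, up to a lower-order truncated term controlled as in Lemma \ref{lem-lower order} using the conservation laws $\mathrm{P}_0\tl f\approx \mathrm{P}_0 f_0$ and the decay of $E$.

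Next I would dispose of the terms that already appear (possibly after trivial modifications) in \cite{chaturvedi2023vlasov}: the linear source $-2E\cdot v\mu^{1/2}$ is handled as in their Lemma 9.5, contributing $\|\rho\|_{\tl H^{N'}_x}\min\{\mathbb{E}^{1/2},\mathbb{D}^{1/2}\}$ (this is where the Landau damping decay $\|\la t\nb_x\ra^{\tl N/2}E\|_{L^2}\lesssim \eps\nu^{\mathfrak b}$ from the bootstrap \eqref{H-tlrho} enters via the factor $\nu^\mathfrak{b}\la t\ra^{-2}$ after interpolation); the nonlinear collision term $\nu\Gamma(\tl f,\tl f)$ is handled as in their Lemma 9.7, producing $\nu^\mathfrak{b}\mathbb{E}_{N',\tl\ell}(\tl f)^{1/2}\mathbb{D}_{N',\tl\ell}(\tl f)^{1/2}\mathbb{D}_{\tl N-2,\tl\ell}(\tl f)^{1/2}$ by the split $\tl N$ vs $\tl N-2$ (putting the low-regularity factor in the smaller norm, which is where the extra $\nu^{\mathfrak b}$ of smallness comes from); and the collisionless transport $-E\cdot\nb_v\tl f + E\cdot v\tl f$ as in their Lemma 9.6, producing the $C\eps\la t\ra^{-2}\mathbb{E}_{N',\tl\ell}(\tl f)$ term (using $\|E\|$ decay and the fact that $\nb_v = Y - t\nb_x$ together with velocity weights absorbs the $t$-growth against the Landau damping factor). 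For the quasi-linear collisions $\nu\Gamma(\tl f,f_0^L)+\nu\Gamma(f_0^L,\tl f)$, I would use the bilinear Landau estimates (Lemma \ref{lem-NL-collision}-type bounds, or the basic Theorem 3 of \cite{Guo2002}) together with the high regularity and strong velocity localization of $f_0^L$ guaranteed by Proposition \ref{prop-f_0^L} and the bootstrap \eqref{H-f_0^L}, treating $f_0^L$ as a (small, $O(\eps)$, but not $\nu$-small) coefficient: this yields the $\nu\|f_0^L\|^2_{H^{\tl N+2}_{\sig,\tl\ell}}\mathbb{E}_{N',\tl\ell}(\tl f)$ contribution, which is then absorbed after time integration using $\nu\int_0^\infty\|f_0^L\|^2_{H^{\tl N+2}_{\sig,\tl\ell}}\,dt'\lesssim \eps^2$. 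The quasi-linear transport terms $E\cdot\nb_v f_0^L$ and $E\cdot v f_0^L$ are treated like the linear source $-2E\cdot v\mu^{1/2}$ — since $f_0^L$ has better localization and regularity than $\mu^{1/2}$ times a bounded factor, the same duality/paraproduct bookkeeping applies and gives a contribution of the same type as $\|\rho\|_{\tl H^{N'}_x}\min\{\mathbb{E}^{1/2},\mathbb{D}^{1/2}\}$, plus lower-order pieces absorbed by the dissipation; the $\nu^{1+4\mathfrak b}\eps^4\la t\ra^{-4}$ and $C_\rho C_{f,2}\eps^2\nu^{2\mathfrak b}\la t\ra^{-2}$ terms are the resulting ``source squared'' terms after Young's inequality, using the decay rates in \eqref{H-tlrho} and \eqref{H-tlf-L}.

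The main obstacle I anticipate is the careful bookkeeping of the $\nu$-powers in the quasi-linear collision terms so that they genuinely reduce to the linear-collision mechanism: because $f_0^L$ is only $O(\eps)$-small rather than $O(\eps\nu^{1/3})$, one cannot simply treat $\nu\Gamma(\tl f,f_0^L)$ as a quadratic nonlinearity with smallness to spare; instead one must exploit that it is \emph{linear in $\tl f$} and route one derivative/weight onto $f_0^L$ (which can afford it, having $\tl N+2$ extra regularity and $\tl\ell+3$ extra localization by Proposition \ref{prop-f_0^L}), keeping the full $\nu$ in front and pairing it against the $\nu^{1/3}$-weighted dissipation $\mathbb{D}_{N',\tl\ell}$. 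Getting the weights $\la v\ra^{2\tl\ell-2|\al|-2|n|}$ in \eqref{eq:tlE_l^n} to interface correctly with the $\sigma$-weighted norms in the commutator estimates — in particular verifying that the decreasing-in-$|n|$ velocity weight is compatible with the Landau coercivity at every level — is the delicate point; once that is in place, the time integration and absorption into $\underline\eta\,\nu^{1/3}\mathbb{D}_{N',\tl\ell}$ is routine, and summing over $N'\le\tl N$ and closing via the bootstrap hypotheses \eqref{H-f_0^L}--\eqref{H-tlrho} gives \eqref{bd-E-longtime}. I would leave the detailed estimates of the individual collision commutators, which mirror Section \ref{sec: est f} and \cite{chaturvedi2023vlasov}, to the reader.
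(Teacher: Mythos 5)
Your proposal is correct and follows essentially the same approach as the paper: the same classification of source terms into linear, nonlinear (collision and transport), and quasi-linear pieces, the same citation pattern to \cite{chaturvedi2023vlasov} for the terms already present there, and the same key observation that the quasi-linear collisions $\nu\Gamma(\tl f, f_0^L)+\nu\Gamma(f_0^L,\tl f)$ are linear in $\tl f$ so that one factor of the bilinear Landau estimate carries the $O(\eps)$-small $\|f_0^L\|_{H^{N'+2}_{\tl\ell}}$ and is absorbed into $\underline{\eta}\nu^{1/3}\mathbb{D}_{N',\tl\ell}$, while Young's inequality on the remainder gives precisely the $\nu\|f_0^L\|^2_{H^{\tl N+2}_{\sig,\tl\ell}}\mathbb{E}_{N',\tl\ell}(\tl f)$ term. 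Your attribution of the two source-squared terms $C\nu^{1+4\mathfrak b}\eps^4\la t\ra^{-4}$ and $C{\rm C}_\rho{\rm C}_{f,2}\eps^2\nu^{2\mathfrak b}\la t\ra^{-2}$ to the quasi-linear transport after Young's inequality is a slight mislabeling — in the paper these arise from $C\nu\|E\|_{L^2_x}^4$ and $\|E\|_{L^2_x}\|e^\phi\tl f\|_{L^2_{x,v}}(1+\|\nb_vf_0^L\|_{L^2_v}+\|vf_0^L\|_{L^2_v})$ in the base-level $\|e^\phi\tl f\|_{L^2}$ energy estimate, not from the weighted higher-order quasi-linear transport, which instead produces the $\|\rho\|_{\tl H^{N'}_x}\min\{\mathbb{E}^{1/2},\mathbb{D}^{1/2}\}$ contribution you also identify — but this is a bookkeeping detail that does not affect the argument.
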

\begin{proof}
For $m\in\N^6$, let us define a set of inhomogeneous terms  
\begin{align*}
    \mathcal{R}^{(m)}=\big\{2Z^{m}(E\cdot v\sqrt{\mu}), [E\cdot\nb_v-E\cdot v, Z^{m}]\tl{f}, -Z^m(E\cdot \nb_vf_0^L), Z^m(E\cdot v f_0^L), \nu Z^m\Gamma(\tl{f},\tl{f})\big\}.
\end{align*}
For ${Q}_{m}\in\mathcal{R}^{(m)}$, let us denote
\begin{align*}
   {\bf N}_{N',\tl\ell}[Q]=\sum_{m\in \N^6,|m|\le N'}&\Bigg[{\rm A}_0\sum_{|\al|\le1}\Big\la \pr_x^\al Q_{m}, e^{2\phi}\la v\ra^{2\tl\ell-4|\al|-4|m|}\pr_x^\al\tl{f}^{(m)}\Big\ra_{x,v}\\
    &+\sum_{1\le|\al|\le2}\kappa^{2|\al|}\nu^{\fr{2|\al|}{3}}\Big\la \pr_v^\al Q_{m}, e^{2\phi}\la v\ra^{2\tl\ell-4|\al|-4|m|}\pr_v^\al\tl{f}^{(m)}\Big\ra_{x,v}\\
    &+2\kappa\nu^{\fr13}\Big\la\nb_xQ_{m}, e^{2\phi}\la v\ra^{2\tl\ell-4-4|m|}\nb_v\tl{f}^{(m)} \Big\ra_{x,v}\\
    &+2\kappa\nu^{\fr13}\Big\la\nb_vQ_{m}, e^{2\phi}\la v\ra^{2\tl\ell-4-4|m|}\nb_x\tl{f}^{(m)} \Big\ra_{x,v}\Bigg],
\end{align*}
and we use the notation ${\bf N}_{N',\tl\ell}[f_0^L]$ if $Q_m$ is replaced by $\nu Z^m\Gamma(\tl{f},f_0^L)+\nu Z^m\Gamma(f_0^L,\tl{f})$.

Recalling the second equation of \eqref{eq:quasi-linearized}, the equation for $\tl{f}^{(m)}=Z^m\tl{f}$ can be written as
\begin{align*}
    \pr_t\tl{f}^{(m)}+v\cdot \nb_x\tl{f}^{(m)}+E\cdot\nb_v\tl{f}^{(m)}-&E\cdot v\tl{f}^{(m)}+\nu Z^m L\tl{f}\\
    &\quad=\nu Z^m\Gamma(\tl{f},f_0^L)+\nu Z^m\Gamma(f_0^L,\tl{f})+\sum_{{Q}_{m}\in\mathcal{R}^{(m)}}{ Q}_{m}.
\end{align*}
For the evolution of $\|e^\phi \tl{f}\|_{L^2_{x,v}}$,  similar to \eqref{en-0}--\eqref{est-linear0}, it is not difficult to verify that
\begin{align*}
    \nn&\fr{d}{dt}\|e^{\phi}\tl{f}\|^2_{L^2_{x,v}}+\dl\nu\|({\rm I}-{\rm P_0})e^\phi \tl{f}\|_{\sig}^2\\
    \les\nn&\|\pr_t\phi\|_{L^\infty_x}\big\|e^\phi \tl{f}\big\|_{L^2_{x,v}}^2+\|E\|_{L^2_x}\|e^\phi\tl{f}\|_{L^2_{x,v}}+\|E\|_{L^2_x}\|e^\phi\tl{f}\|_{L^2_{x,v}}\Big(\|\nb_vf_0^L\|_{L^2_v}+\|vf_0^L\|_{L^2_v}\Big)\\
    &+\nu\|e^\phi \tl{f}\|_{L^2_{x,v}}\|e^\phi\tl{f}\|^2_{\sig}+\nu\Big(\|f_0^L\|_{L^2_{x,v}}\|e^\phi \tl{f}\|_{\sig}^2+\big|f_0^L\big|_{\sig}\|e^{\phi}\tl{f}\|_{L^2_{x,v}}\|e^\phi \tl{f}\|_{\sig}\Big).
\end{align*}
Combining this with Proposition 5.11 of \cite{chaturvedi2023vlasov}, we find that there exists a small positive constant $\underline{\eta}$, independent of $\nu$, such that the evolution of $\mathbb{E}_{N',\tl\ell}(\tl{f})$ satisfies 
\begin{align}\label{evolu-E-tlf}
    \nn&\fr{d}{dt}\mathbb{E}_{N',\tl\ell}(\tl{f}(t))+2\underline{\eta} \nu^{\fr13}\mathbb{D}_{N',\tl\ell}(\tl{f}(t))\\
    \le\nn&C\nu\|E\|_{L^2_x}^4+C\big(\|\pr_t\phi\|_{L^\infty_x}+\|\phi\|_{W^{1,\infty}_x}\big)\mathbb{E}_{N',\tl\ell}(\tl{f}(t))+C\Big({\bf N}_{N',\tl\ell}[f_0^L]+\sum_{Q_m\in \mathcal{R}^{(m)}}{\bf N}_{N',\tl\ell}[Q]\Big)\\
    \nn&+\|E\|_{L^2_x}\|e^\phi\tl{f}\|_{L^2_{x,v}}+\|E\|_{L^2_x}\|e^\phi\tl{f}\|_{L^2_{x,v}}\Big(\|\nb_vf_0^L\|_{L^2_v}+\|vf_0^L\|_{L^2_v}\Big)\\
    &+\nu\|f_0^L\|_{L^2_{x,v}}\mathbb{D}_{N',\tl\ell}(\tl f(t))+\nu\big|f_0^L\big|^2_{\sig}\mathbb{E}_{N',\tl\ell}(\tl f(t)).
\end{align}
To bound ${\bf N}_{N',\tl\ell}[f_0^L]$, by virtue of Lemma 4.9 of \cite{chaturvedi2023vlasov}, one deduces that
\begin{align*}
    &\Big\la\pr_x^\al Z^{m}\Gamma(\tl{f}, f^L_0)+\pr_x^\al Z^{m}\Gamma(f^L_0,\tl{f}), e^{2\phi}\la v\ra^{2\tl\ell-4|\al|-4|m|}\pr_x^\al\tl{f}^{(m)}\Big\ra_{x,v}\\
    \les& \big\|e^{\phi}\pr_x^\al \tl{f}^{(m)}\big\|_{\sig,\tl\ell-2|\al|-2|m|}\sum_{|m'|+|m''|\le|m|}\Big(\big\|\pr_v^{
    m'}f_0^L\la v\ra^{\tl\ell}\big\|_{L^2_v}\big\|e^\phi\pr_x^\al\tl{f}^{(m'')}\big\|_{\sig,\tl\ell-2|\al|-2|m|}\\
    &+\big|\pr_v^{
    m'}f_0^L\big|_{\sig,\tl\ell}\big\|e^\phi \la v\ra^ {\tl\ell-2|\al|-2|m''|}\pr_x^\al\tl{f}^{(m'')}\big\|_{L^2_{x,v}}\Big),
\end{align*}

\begin{align*}
    &\sum_{1\le|\al|\le2}\kappa^{2|\al|}\nu^{\fr{2|\al|}{3}}\Big\la \pr_v^\al Z^{m}\Gamma(\tl{f}, f^L_0)+\pr_v^\al Z^{m}\Gamma(f^L_0,\tl{f}), e^{2\phi}\la v\ra^{2\tl\ell-4|\al|-4|m|}\pr_v^\al\tl{f}^{(m)}\Big\ra_{x,v}\\
    \les&\kappa\nu^{\fr13}\sum_{|m'|+|m''|\le|m|}\Big(\big\|\pr_v^{m'}f_0^L\la v\ra^{\tl\ell}\big\|_{L^2_v}\big\|e^{\phi}\nb_v\tl{f}^{(m'')}\big\|_{\sig,\tl\ell-2-2|m''|}\\
    &\qquad+\big|\pr_v^{m'}f_0^L\big|_{\sig,\tl\ell}\big\|e^{\phi}\la v\ra^{\tl\ell-2-2|m''|}\nb_v\tl{f}^{(m'')}\big\|_{L^2_{x,v}}\Big)\big\|\kappa\nu^{\fr13}\nb_v\tl{f}^{(m)}\big\|_{\sig,\tl\ell-2-2|m|}\\
    &+\kappa\nu^{\fr13}\sum_{|m'|+|m''|\le|m|}\sum_{|\al|\le1}\Big(\big\|\pr^{\al}_v\pr_v^{m'}f_0^L\la v\ra^{\tl\ell}\big\|_{L^2_v}\big\|e^{\phi}\tl{f}^{(m'')}\big\|_{\sig,\tl\ell-2|m''|}\\
    &\qquad+\big|\pr_v^\al\pr_v^{m'}f_0^L\big|_{\sig,\tl\ell}\big\|e^{\phi}\la v\ra^{\tl\ell-2|m''|}\tl{f}^{(m'')}\big\|_{L^2_{x,v}}\Big)\big\|\kappa\nu^{\fr13}\nb_v\tl{f}^{(m)}\big\|_{\sig,\tl\ell-2-2|m|}\\
    &+\kappa^2\nu^{\fr23}\sum_{|m'|+|m''|\le|m|}\Big(\big\|\pr_v^{m'}f_0^L\la v\ra^{\tl\ell}\big\|_{L^2_v}\big\|e^{\phi}\Dl_v\tl{f}^{(m'')}\big\|_{\sig,\tl\ell-4-2|m''|}\\
    &\qquad+\big|\pr_v^{m'}f_0^L\big|_{\sig,\tl\ell}\big\|e^{\phi}\la v\ra^{\tl\ell-4-2|m''|}\Dl_v\tl{f}^{(m'')}\big\|_{L^2_{x,v}}\Big)\big\|\kappa^2\nu^{\fr23}\Dl_v\tl{f}^{(m)}\big\|_{\sig,\tl\ell-4-2|m|}\\
    &+\kappa^2\nu^{\fr23}\sum_{|m'|+|m''|\le|m|}\sum_{|\al|\le1}\Big(\big\|\pr^{\al}_v\pr_v^{m'}f_0^L\la v\ra^{\tl\ell}\big\|_{L^2_v}\big\|e^{\phi}\nb_v\tl{f}^{(m'')}\big\|_{\sig,\tl\ell-2-2|m''|}\\
    &\qquad+\big|\pr_v^\al\pr_v^{m'}f_0^L\big|_{\sig,\tl\ell}\big\|e^{\phi}\la v\ra^{\tl\ell-2-2|m''|}\nb_v\tl{f}^{(m'')}\big\|_{L^2_{x,v}}\Big)\big\|\kappa^2\nu^{\fr23}\Dl_v\tl{f}^{(m)}\big\|_{\sig,\tl\ell-4-2|m|}\\
    &+\kappa^2\nu^{\fr23}\sum_{|m'|+|m''|\le|m|}\sum_{|\al|\le2}\Big(\big\|\pr^{\al}_v\pr_v^{m'}f_0^L\la v\ra^{\tl\ell}\big\|_{L^2_v}\big\|e^{\phi}\tl{f}^{(m'')}\big\|_{\sig,\tl\ell-2|m''|}\\
    &\qquad+\big|\pr_v^\al\pr_v^{m'}f_0^L\big|_{\sig,\tl\ell}\big\|e^{\phi}\la v\ra^{\tl\ell-2|m''|}\tl{f}^{(m'')}\big\|_{L^2_{x,v}}\Big)\big\|\kappa^2\nu^{\fr23}\Dl_v\tl{f}^{(m)}\big\|_{\sig,\tl\ell-4-2|m|},
\end{align*}
and
\begin{align*}
&2\kappa\nu^{\fr13}\Big\la\nb_xZ^m\Gamma(\tl{f},f_0^L)+\nb_x Z^m\Gamma(f_0^L,\tl{f}), e^{2\phi}\la v\ra^{2\tl\ell-4-4|m|}\nb_v\tl{f}^{(m)} \Big\ra_{x,v}\\
    &+2\kappa\nu^{\fr13}\Big\la\nb_vZ^m\Gamma(\tl{f},f_0^L)+\nb_v Z^m\Gamma(f_0^L,\tl{f}), e^{2\phi}\la v\ra^{2\tl\ell-4-4|m|}\nb_x\tl{f}^{(m)} \Big\ra_{x,v}\\
    \les&\sum_{|m'|+|m''|\le|m|}\Big(\big\|\pr_v^{m'}f_0^L\la v\ra^{\tl\ell}\big\|_{L^2_v}\big\|e^\phi \nb_x\tl{f}^{(m'')}\big\|_{\sig,\tl\ell-2-2|m''|}\\
    &\qquad+\big|\pr_v^{m'}f_0^L\big|_{\sig,\tl\ell}\big\|e^{\phi}\la v\ra^{\tl\ell-2-2|m''|}\nb_x\tl{f}^{(m'')}\big\|_{L^2_{x,v}}\Big)\big\|\kappa\nu^{\fr13}\nb_v\tl{f}^{(m)}\big\|_{\sig,\tl\ell-2-2|m|}\\
    &+\sum_{|m'|+|m''|\le|m|}\Big(\big\|\pr_v^{m'}f_0^L\la v\ra^{\tl\ell}\big\|_{L^2_v}\big\|e^\phi \kappa\nu^{\fr13}\nb_v\tl{f}^{(m'')}\big\|_{\sig,\tl\ell-2-2|m''|}\\
    &\qquad+\big|\pr_v^{m'}f_0^L\big|_{\sig,\tl\ell}\big\|e^{\phi}\la v\ra^{\tl\ell-2-2|m''|}\kappa\nu^{\fr13}\nb_v\tl{f}^{(m'')}\big\|_{L^2_{x,v}}\Big)\big\|\nb_x\tl{f}^{(m)}\big\|_{\sig,\tl\ell-2-2|m|}\\
    &+\sum_{|m'|+|m''|\le|m|}\sum_{|\al|\le1}\kappa\nu^{\fr13}\Big(\big\|\pr_v^\al\pr_v^{m'}f_0^L\la v\ra^{\tl\ell}\big\|_{L^2_v}\big\|e^\phi \tl{f}^{(m'')}\big\|_{\sig,\tl\ell-2|m''|}\\
    &\qquad+\big|\pr_v^\al\pr_v^{m'}f_0^L\big|_{\sig,\tl\ell}\big\|e^{\phi}\la v\ra^{\tl\ell-2|m''|}\tl{f}^{(m'')}\big\|_{L^2_{x,v}}\Big)\big\|\nb_x\tl{f}^{(m)}\big\|_{\sig,\tl\ell-2-2|m|}.
\end{align*}    
It follows from the above three estimates that
\begin{align}\label{est-quasi-f_0^L}
    {\bf N}_{N',\tl\ell}[f_0^L]\les\nn&\nu^{\fr13} \mathbb{D}_{N',\tl\ell}(\tl{f}(t))^{\fr12}\Big(\|f_0^L\|_{H^{N'+2}_{\tl\ell}}\mathbb{D}_{N',\tl\ell}(\tl{f}(t))^{\fr12}+\nu^{\fr13}\|f_0^L\|_{H^{N'+2}_{\sig,\tl\ell}}\mathbb{E}_{N',\tl\ell}(\tl{f}(t))^{\fr12}\Big)\\
    \le& \nu^{\fr13}\mathbb{D}_{N',\tl\ell}(\tl{f}(t))\Big(C\|f_0^L\|_{H^{N'+2}_{\tl\ell}}
    +\underline{\eta}/2\Big)+\nu\|f_0^L\|^2_{H^{\tl{N}+2}_{\sig,\tl\ell}}\mathbb{E}_{N',\tl\ell}(\tl{f}(t)).
\end{align}

We  are left to  bound $\sum_{Q_m\in \mathcal{R}^{(m)}}{\bf N}_{N',\tl\ell}[Q]$.
If $Q_m\in\mathcal{R}^{(m)}\setminus\{-Z^{m}(E\cdot\nb_vf_0^L), Z^m(E\cdot vf_0^L) \}$, ${\bf N}_{N',\tl\ell}[Q]$ has been bounded in \cite{chaturvedi2023vlasov} (see Proposition 9.2 of \cite{chaturvedi2023vlasov}). It suffices to deal with the case when 
 $Q_m=-Z^m(E\cdot\nb_vf_0^L)$ and $Z^m(E\cdot vf_0^L)$. Let us focus on the former, as the approach for the latter is similar. Basically, if $Q_m=E\cdot \nb_vf_0^L$, ${\bf N}_{N',\tl\ell}[Q]$ can be treated in a similar manner as the case when $Q_m=2Z^m(E\cdot v\sqrt{\mu})$. Moreover, strategies for addressing the four terms in in ${\bf N}_{N',\tl\ell}[Q]$ are identical. Here we only sketch the treatments of the third term in ${\bf N}_{N',\tl\ell}[Q]$ with $Q_m=-Z^m(E\cdot\nb_vf_0^L)$. Indeed, thanks to \eqref{coercive}, we find that
\begin{align*}
    &2\kappa\nu^{\fr13}\sum_{m\in \N^6,|m|\le \tl{N}}\Big\la\nb_xZ^{m}(E\cdot\nb_v f_0^L), e^{2\phi}\la v\ra^{2\tl\ell-4-4|m|}\nb_v\tl{f}^{(m)} \Big\ra_{x,v}\\
   \les&\sum_{m\in \N^6,|m|\le \tl{N}}\big\|Z^m(\nb_xE\cdot\nb_vf_0^L)\|_{L^2_{x,v}}\big\|e^{\phi}\la v\ra^{\tl\ell-2-2|m|}\kappa\nu^{\fr13}\nb_v\tl{f}^{(m)}\big\|_{L^2_{x,v}}\\
   \les&\|\rho\|_{\tl{H}^{N'}_x}\|f_0^L\|_{H^{N'+1}_{\tl\ell}}\min \Big\{{\mathbb{E}}_{N',\tl\ell}(\tl{f}(t))^{\fr12},{\mathbb{D}}_{N',\tl\ell}(\tl{f}(t))^{\fr12}\Big\}.
\end{align*}
It follows from this, \eqref{evolu-E-tlf},  \eqref{est-quasi-f_0^L}, the bootstrap hypothesis \eqref{H-f_0^L},    and Proposition 9.2 of \cite{chaturvedi2023vlasov} that \eqref{bd-E-longtime} holds.
\end{proof}

\begin{rem}\label{rem:improve-H-tlf}
    By using \eqref{bd-E-longtime}, one can improve the bounds in \eqref{H-tlf-H} and \eqref{H-tlf-L} immediately. We refer to the proof of Theorem 9.1 in \cite{chaturvedi2023vlasov} for more details.
\end{rem}

\section{Quasi-linear estimate }\label{sec:quasi-linear}
\subsection{The solution operator estimates and enhanced dissipation}
In this section, let us consider the following quasi-linear equation \eqref{eq:linearized 2}.
Taking Fourier transform of \eqref{eq:linearized 2} in $x$ yields
\begin{align}\label{QL-f-k}
    \partial_{t}\hat{{\rm f}}_k+ik\cdot v\hat{{\rm f}}_k
        +\nu L \hat{\rm f}_k=\nu\Gamma(f_0^L,\hat{\rm f}_k)+\nu\Gamma(\hat{\rm f}_k,f_0^L).
\end{align}
Let us denote by $\mathbb{S}_{k}(t,\tau), 0\le\tau\le t$ the solution operator associated with \eqref{QL-f-k}, namely, the unique solution ${\rm h}(t,v;\tau)$ to \eqref{QL-f-k} with initial data ${\rm h}(\tau, v;\tau)={\rm h}_{\rm in}(v;\tau)$ is given by
\begin{align}\label{def:solution-h}
{\rm h}(t,v;\tau)=\mathbb{S}_{k}(t,\tau)[{\rm h}_{\rm in}(v;\tau)].
\end{align}

\begin{itemize}
    \item For $k\in\Z^3_*, \zeta\in\R^3$, let us denote $Y_{\bar{\tau},\zeta}=\nb_v+i\big(k(t-\tau)+\zeta\big)$, here $\bar{\tau}$ stands for $t-\tau$. In particular, if $t=\tau$, $Y_{\bar{\tau},\zeta}$ reduces to $Y_{0,\zeta}=\nb_v+i\zeta$.
    \item In this section, for $n\in \N^3$, we still use $f^{(n)}$ to denote $Y_{\bar{\tau},\zeta}^nf$.
    \item We use the notations  $\mathfrak{E}_{\bar{\tau},\zeta;\ell}^n(f)$ and $\mathfrak{D}_{\bar{\tau},\zeta;\ell}^n(f)$ to denote the energy and dissipation for $f^{(n)}$ defined in \eqref{eq:varE_l^n} and \eqref{def-varD-nl}, respectively, with $\tl{Y}$ replaced by $Y_{\bar{\tau},\zeta}$.
    \item For $N'\in\N,\ell'>0$, let us introduce the energy functional and the corresponding dissipation term with finite regularity
\begin{align}\label{def-ENkl}
{\bf E}_{\bar{\tau},\zeta;\ell'}^{N'}({\rm h}(t;\tau))=\sum_{m\in\N^3,|m|\le N'}\kappa^{2|m|}\mathfrak{E}^m_{\bar{\tau},\zeta;\ell'}({\rm h}(t;\tau)),
\end{align}
and
\begin{align}\label{def-DNkl}
{\bf D}_{\bar{\tau},\zeta;\ell'}^{N'}({\rm h}(t;\tau))=\sum_{m\in\N^3,|m|\le N'}\kappa^{2|m|}\mathfrak{D}^m_{\bar{\tau},\zeta;\ell'}({\rm h}(t;\tau)).
\end{align}

   \item We also define the total energy and dissipation with finite regularity, incorporating the effects of enhanced dissipation:
   \begin{align}\label{def-EN-bartau}
       \tl{\mathsf{E}}_{N',\ell'}({\rm h}(t;\tau))=\sum_{\iota=0}^{[\ell/3]+1}w^2_{\iota}(t-\tau){\bf E}_{\bar{\tau},\zeta;\ell'-2\iota}^{N'}({\rm h}(t;\tau)),
   \end{align}
   and
   \begin{align}\label{def-DN-bartau}
       \tl{\mathsf{D}}_{N',\ell'}({\rm h}(t;\tau))=\sum_{\iota=0}^{[\ell/3]+1}w^2_{\iota}(t-\tau){\bf D}_{\bar{\tau},\zeta;\ell'-2\iota}^{N'}({\rm h}(t;\tau)).
   \end{align}
\end{itemize}

The  aim of this section is to bound $\tl {\mathsf{E}}_{N',\ell'}({\rm h}(t;\tau))$ in terms of the initial data ${\rm h}_{\rm in}(v;\tau)$ with suitable norm. More precisely, we have the following proposition.
\begin{prop}\label{prop:solution-operator}
Let ${\rm h}(t,v;\tau)$ be the solution to \eqref{QL-f-k} with initial data $h_{\rm in}(v;\tau)$. Then for any $N'\in\N$ with $N'\le \tl N$ and $2<\ell'\le\tl\ell$, the following estimate holds
\begin{align}\label{est-solution oper}
    \tl{\mathsf{E}}_{N',\ell'}({\rm h}(t;\tau))+\nu^{\fr13}\int_\tau^{t}\tl{\mathsf{D}}_{N',\ell'}({\rm h}(t';\tau))dt'\les \tl{\mathsf{E}}_{N',\ell'}({\rm h}_{\rm in}(\tau)).
\end{align}
\end{prop}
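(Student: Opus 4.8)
The plan is to run the same hypocoercive energy scheme that was established for the linearized Landau semigroup $S_k(t)$ in Proposition~\ref{prop-semigroup}, but now track the two extra quasi-linear collision terms $\nu\Gamma(f_0^L,{\rm h})$ and $\nu\Gamma({\rm h},f_0^L)$. The point is that $f_0^L$, which is propagated by Proposition~\ref{prop-f_0^L} (equivalently the bootstrap hypothesis~\eqref{H-f_0^L}), is \emph{smooth and well-localized uniformly in $\nu$ and in time}, so these terms are genuinely lower order compared to the enhanced-dissipation gain $\nu^{1/3}\tl{\mathsf D}$, once $f_0^L$ is taken small (which $\eps$-smallness provides). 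First I would note that the commuting vector fields $Y_{\bar\tau,\zeta}$ are adapted to the free transport operator in exactly the way $\tl Y$ was adapted to $\partial_t+ik\cdot v$ in Section~\ref{sec-semigroup est}: since $\zeta$ and $\bar\tau=t-\tau$ enter only as a constant frequency shift, $Y_{\bar\tau,\zeta}$ still commutes with $\partial_t+ik\cdot v$ up to harmless $\bar\tau$-independent terms, so for $m\in\N^3$ the function ${\rm h}^{(m)}=Y^m_{\bar\tau,\zeta}{\rm h}$ solves \eqref{QL-f-k} with $\nu Y^m_{\bar\tau,\zeta}L{\rm h}$ in place of $\nu L{\rm h}$ and with $\nu Y^m_{\bar\tau,\zeta}\Gamma(f_0^L,{\rm h})+\nu Y^m_{\bar\tau,\zeta}\Gamma({\rm h},f_0^L)$ on the right. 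Then I would write down the time-derivative of each piece of $\mathfrak E^m_{\bar\tau,\zeta;\ell'}$ (the $\partial_x^\alpha$, $\nu^{1/3}\nabla_v$, and cross-term pieces), exactly mirroring the three displayed identities in the proof of Proposition~\ref{prop-semigroup}, obtaining the analogue of the lower bounds \eqref{lb-dis}--\eqref{dt-(1+t)-var}, i.e.\ the coercive contribution from $\nu L$ controls $\tfrac14\nu^{1/3}{\bf D}$ and the time-weight derivatives of $w_\iota$ are absorbed into $\tfrac18\nu^{1/3}\tl{\mathsf D}$.

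The new work is to estimate, summed over $|m|\le N'$ and over $\iota$, the contributions
\[
\nu\,\big\langle Y^m_{\bar\tau,\zeta}\Gamma(f_0^L,{\rm h})+Y^m_{\bar\tau,\zeta}\Gamma({\rm h},f_0^L),\ W\,{\rm h}^{(m)}\big\rangle
\]
against each weight $W\in\{\langle v\rangle^{\ell'-2|\alpha|-2\iota}\partial_x^\alpha,\ \kappa^{|\alpha|}\nu^{|\alpha|/3}\langle v\rangle^{\ell'-2|\alpha|-2\iota}\partial_v^\alpha,\ \text{cross term}\}$ occurring in \eqref{eq:varE_l^n}. For this I would invoke the bilinear estimate for $\Gamma$ in Guo's $|\cdot|_{\sigma,\ell}$ framework (Lemma~4.9 of \cite{chaturvedi2023vlasov}, used already in Proposition~\ref{prop-tlf} to bound ${\bf N}_{N',\tl\ell}[f_0^L]$): after distributing the $Y^m_{\bar\tau,\zeta}$ derivatives by Leibniz, every term carries at least one factor $\|\langle v\rangle^{\ell'+3}\langle\nabla_v\rangle^{|m'|}f_0^L\|_{L^2_v}$ with $|m'|\le N'\le\tl N$, which by \eqref{H-f_0^L} is $\lesssim\eps$, and the remaining factors are paired as (something $\lesssim \tl{\mathsf D}^{1/2}$) times (something $\lesssim\tl{\mathsf D}^{1/2}$) using the $\sigma$-norm coercivity~\eqref{coercive}. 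The upshot, exactly as in \eqref{est-quasi-f_0^L}, is
\[
\Big|\textstyle\sum\Big|\ \lesssim\ \nu^{1/3}\big(C\eps+\text{lower order}\big)\,\tl{\mathsf D}_{N',\ell'}({\rm h})\ +\ \nu\|f_0^L\|^2_{H^{\tl N+2}_{\sigma,\tl\ell}}\,\tl{\mathsf E}_{N',\ell'}({\rm h}),
\]
where the second term is absorbed after a Grönwall step using $\nu\int_0^\infty\|f_0^L\|^2_{H^{\tl N+4}_{\sigma,\tl\ell+3}}\,dt'\lesssim\eps^2$ from \eqref{H-f_0^L}. The $\nu$-weight $\langle v\rangle^{-2\iota}$ dropping two powers per enhanced-dissipation level and the decrement $\langle v\rangle^{-2|m|}$ per vector field in \eqref{eq:varE_l^n}–type weights give exactly the room needed so that the convolution $\Phi^{ij}\ast(\mu^{1/2}\cdot)$ loses no net localization, precisely as in Lemma~\ref{lem-coercive-L} and Remark~\ref{rem-Y/t}.

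Combining: for $\eps\le\eps_0$ small, all the quasi-linear collision contributions are swallowed by $\tfrac18\nu^{1/3}\tl{\mathsf D}_{N',\ell'}({\rm h})$ plus an integrable-in-time prefactor times $\tl{\mathsf E}_{N',\ell'}({\rm h})$, so that
\[
\frac{d}{dt}\tl{\mathsf E}_{N',\ell'}({\rm h}(t;\tau))+\tfrac14\nu^{1/3}\tl{\mathsf D}_{N',\ell'}({\rm h}(t;\tau))\ \le\ C\nu\|f_0^L(t)\|^2_{H^{\tl N+2}_{\sigma,\tl\ell}}\,\tl{\mathsf E}_{N',\ell'}({\rm h}(t;\tau)),
\]
and integrating from $\tau$ to $t$ with Grönwall and $\nu\int_\tau^t\|f_0^L\|^2_{H^{\tl N+2}_{\sigma,\tl\ell}}\lesssim\eps^2$ gives \eqref{est-solution oper}. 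The main obstacle I anticipate is purely bookkeeping: checking that the velocity-weight indices $\ell'-2|\alpha|-2|m|-2\iota$ stay nonnegative along the chain $|m|\le N'\le\tl N$, $\iota\le[\ell/3]+1$, and that the commutator $[Y^m_{\bar\tau,\zeta},L]$ produces only terms already handled by the Landau-operator lemmas (Lemma~\ref{lem-coercive-L}, Corollary~\ref{coro-cross}) — since $Y_{\bar\tau,\zeta}=\nabla_v+i(k\bar\tau+\zeta)$ is an affine shift of $\nabla_v$, the commutator structure is identical to the one in those lemmas, so no genuinely new commutator estimate is needed. The only place where one must be slightly careful is that, unlike the pure linear case, the source terms $\Gamma(f_0^L,{\rm h})$ couple the $\iota$-levels through $f_0^L$; but since $f_0^L$ carries no $w_\iota$ weight and is bounded in the full $\tl\ell+3$ localization, the $\iota$-sum closes termwise just as in Proposition~\ref{prop-semigroup}.
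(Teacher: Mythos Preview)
Your proposal is essentially correct and follows the same route as the paper: rerun the hypocoercive energy identity of Proposition~\ref{prop-semigroup} with the shifted vector fields $Y_{\bar\tau,\zeta}$, obtain the coercive lower bound and absorb the time-weight errors exactly as before, then bound the quasi-linear collision contributions via the bilinear estimate (Lemma~4.9 of \cite{chaturvedi2023vlasov}) to reach a differential inequality of the form $\tfrac{d}{dt}\tl{\mathsf E}+\tfrac18\nu^{1/3}\tl{\mathsf D}\le C\nu\|f_0^L\|^2_{H^{N'+1}_{\sigma,\ell'}}\tl{\mathsf E}$, and close by Gr\"onwall using Proposition~\ref{prop-f_0^L}. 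One small correction: the energy $\mathfrak E^m_{\bar\tau,\zeta;\ell'}$ (built on \eqref{eq:varE_l^n}) carries weight $\langle v\rangle^{\ell'-2|\alpha|-2\iota}$ with \emph{no} $-2|m|$ decrement per vector field---you have confused it with the functional $\tl{\mathcal E}^n_\ell$ in \eqref{eq:tlE_l^n}---but this only simplifies the bookkeeping you flagged.
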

\begin{proof}
For any $m\in\N^3$, ${\rm h}^{(m)}=Y_{\bar{\tau},\zeta}^m{\rm h}$ satisfies 
\begin{align}\label{eq-hm}
    \pr_t{\rm h}^{(m)}+ik\cdot v{\rm h}^{(m)}+\nu Y_{\bar{\tau},\zeta}^{m}L{\rm h}=\nu Y_{\bar{\tau},\zeta}^{m}\Gamma(f_0^L,{\rm h})+\nu Y_{\bar{\tau},\zeta}^{m}\Gamma({\rm h},f_0^L).
\end{align}
Like in Section \ref{sec-semigroup est}, here we use ${\rm h}_{[\iota]}$ to denote $w_{\iota}(t-\tau){\rm h}$. Moreover, we also adapt the notations defined in \eqref{est-dissip1}, \eqref{est-dissip2} and \eqref{def-dissip4} to denote the contributions of the Landau operator:
\begin{align}
    {\rm Dis}_{k;1}^{\bar{\tau},\zeta}({\rm h}_{[\iota]}(t;\tau))=\nn&\nu {\rm A}_0\sum_{\substack{m\in\N^3\\ |m|\le N'}}\kappa^{2|m|}\sum_{|\al|\le1}\mathrm{Re}\,\left\la Y_{\bar{\tau},\zeta}^{m} Lk^\al {\rm h}_{[\iota]},k^\al {\rm h}_{[\iota]}^{(m+\beta)}\la v\ra^{2\ell'-4|\al|-4\iota}\right\ra_{v},
\end{align}
and the remaining terms ${\rm Dis}^{\bar{\tau},\zeta}_{k;2}({\rm h}_{[\iota]}(t;\tau))$ and ${\rm Dis}_{k;4}^{\bar{\tau},\zeta}({\rm h}_{[\iota]}(t;\tau))$ can be obtained by making corresponding modifications to ${\rm Dis}_{2}(g_{\iota*})$ and ${\rm Dis}_{4}(g_{\iota*})$.

Then recalling the definition of ${\bf E}_{\bar{\tau},\zeta;\ell'}^{N'}({\rm h}(t;\tau))$ in \eqref{def-ENkl}, by \eqref{eq-hm},  we have 
\begin{align}\label{en-identity-bartau}
    \nn&\fr12\fr{d}{dt}{\bf E}_{\bar{\tau},\zeta;\ell'}^{N'}({\rm h}_{[\iota]}(t;\tau))+{\rm Dis}_{k;1}^{\bar{\tau},\zeta}({\rm h}_{[\iota]}(t;\tau))+{\rm Dis}_{k;2}^{\bar{\tau},\zeta}({\rm h}_{[\iota]}(t;\tau))+{\rm Dis}_{k;4}^{\bar{\tau},\zeta}({\rm h}_{[\iota]}(t;\tau))\\
    \nn&+\kappa\nu^{\fr13}|k|^2\sum_{\substack{m\in \N^3\\ |m|\le N}}\kappa^{2|m|}\big\|\la v\ra^{\ell'-2-2\iota}{\rm h}_{[\iota]}\big\|_{L^2_v}^2\\
    \nn&+\kappa^2\nu^{\fr23}\sum_{\substack{m\in \N^3\\ |m|\le N}}\kappa^{2|m|}\mathrm{Re}\,\Big\la ik{\rm h}^{(m)}_{[\iota]},\la v\ra^{2\ell'-4-4\iota} \nb_v{\rm h}^{(m)}_{[\iota]}\Big\ra_{v}\\
    =&{\rm Er}_{k,\iota}^{\bar{\tau},\zeta}({\rm h}_{[\iota]}(t;\tau))+{\rm PD}_{k,\iota}^{\bar{\tau},\zeta}({\rm h}_{[\iota]}(t;\tau)),
\end{align}
where
\begin{align*}
    &{\rm Er}_{k,\iota}^{\bar{\tau},\zeta}({\rm h}_{[\iota]}(t;\tau))\\
    =&\fr{\iota \kappa_0\nu^{\fr13}}{2}\big(\kappa_0\nu^{\fr13}(1+t)\big)^{-1}\sum_{\substack{m\in\N^3\\ |m|\le N}}\kappa^{2|m|}\bigg({\rm A}_0\sum_{|\al|\le1}\big\|k^{\al}{\rm h}_{[\iota]}^{(m)}\la v\ra^{2\ell'-4|\al|-4\iota}\big\|_{L^2_{v}}^2\\
    \nn&+\kappa^2\nu^{\fr23}\big\|\nb_v{\rm h}_{[\iota]}^{(m)}\la v\ra^{2\ell'-4-4\iota}\big\|_{L^2_{v}}^2+2\kappa\nu^{\fr13}\mathrm{Re}\,\left\la ik{\rm h}_{[\iota]}^{(m)},\la v\ra^{2\ell'-4-4\iota}\nb_{v}{\rm h}_{[\iota]}^{(m)} \right\ra_{v}\bigg),
\end{align*}
and
\begin{align*}
    &{\rm PD}_{k,\iota}^{\bar{\tau},\zeta}({\rm h}_{[\iota]}(t;\tau))\\
    =&{\rm A}_0 \sum_{\substack{m\in \N^3\\ |m|\le N}}\kappa^{2|m|}\sum_{|\al|\le1}\nu\mathrm{Re}\,\Big\la  Y_{\bar{\tau},\zeta}^{m}\Gamma(f_0^L, k^\al{\rm h}_{[\iota]})+ Y_{\bar{\tau},\zeta}^{m}\Gamma(k^\al{\rm h},f_0^L),\la v\ra^{2\ell'-4|\al|-4\iota}k^\al{\rm h}^{(m)}_{[\iota]} \Big\ra_v\\
    \nn&+\kappa^2\nu^{\fr23}\sum_{\substack{m\in \N^3\\ |m|\le N}}\kappa^{2|m|}\nu\mathrm{Re}\,\Big\la  Y_{\bar{\tau},\zeta}^{m}\nb_v\Gamma(f_0^L,{\rm h}_{[\iota]})+ Y_{\bar{\tau},\zeta}^{m}\nb_v\Gamma({\rm h}_{[\iota]},f_0^L),\la v\ra^{2\ell'-4-4\iota}\nb_v{\rm h}^{(m)}_{[\iota]} \Big\ra_v\\
    \nn&+\kappa\nu^{\fr13}\sum_{\substack{m\in \N^3\\ |m|\le N}}\kappa^{2|m|}\Bigg[\nu\mathrm{Re}\,\Big\la  ik Y_{\bar{\tau},\zeta}^{m}\Gamma(f_0^L,{\rm h}_{[\iota]})+ ik Y_{\bar{\tau},\zeta}^{m}\Gamma({\rm h}_{[\iota]},f_0^L),\la v\ra^{2\ell'-4-4|\iota|}\nb_v{\rm h}^{(m)}_{[\iota]} \Big\ra_v\\
    &+\nu\mathrm{Re}\,\Big\la  ik{\rm h}^{(m)}_{[\iota]}\la v\ra^{2\ell'-4-4\iota}, Y_{\bar{\tau},\zeta}^{m}\nb_v\Gamma(f_0^L,{\rm h}_{[\iota]})+ Y_{\bar{\tau},\zeta}^{m}\nb_v\Gamma({\rm h}_{[\iota]},f_0^L)\Big\ra_v\Bigg],
\end{align*}
here `Er' stands for `error' and `PD' stands for `perturbed dissipation'.

Similar to \eqref{lb-dis} and \eqref{dt-(1+t)-var}, we get
\begin{align}\label{lb-dis-iota}
    \nn&{\rm Dis}_{k;1}^{\bar{\tau},\zeta}({\rm h}_{[\iota]}(t;\tau))+{\rm Dis}_{k;2}^{\bar{\tau},\zeta}({\rm h}_{[\iota]}(t;\tau))+{\rm Dis}_{k;4}^{\bar{\tau},\zeta}({\rm h}_{[\iota]}(t;\tau))\\
    \nn&+\kappa\nu^{\fr13}|k|^2\sum_{\substack{m\in \N^3\\ |m|\le N'}}\kappa^{2|m|}\big\|\la v\ra^{\ell'-2-2\iota}{\rm h}_{[\iota]}\big\|_{L^2_v}^2\\
    &+\kappa^2\nu^{\fr23}\sum_{\substack{m\in \N^3\\ |m|\le N'}}\kappa^{2|m|}\mathrm{Re}\,\Big\la ik{\rm h}^{(m)}_{[\iota]},\la v\ra^{2\ell'-4-4\iota} \nb_v{\rm h}^{(m)}_{[\iota]}\Big\ra_{v}\ge\fr{\nu^{\fr13}}{4}{\bf D}_{\bar{\tau},\zeta;\ell'-2\iota}^{N'}({\rm h}_{[\iota]}(t;\tau)),
\end{align}
and
\begin{align}\label{dt-(1+t)-var-bartau}
\sum_{\iota=0}^{[\ell/3]+1}{\rm Er}_{k,\iota}^{\bar{\tau},\zeta}({\rm h}_{[\iota]}(t;\tau))
    \le\fr{\nu^{\fr13}}{8}\sum_{\iota=0}^{[\ell/3]} {\bf D}_{\bar{\tau},\zeta;\ell'-2\iota}^{N'}({\rm h}_{[\iota]}(t)).
\end{align}
It follows from \eqref{en-identity-bartau}, \eqref{lb-dis-iota} and \eqref{dt-(1+t)-var-bartau} and the definition of $\tl{\mathsf{ E}}_{N',\ell'}({\rm h}(t;\tau))$ and $\tl{\mathsf{D}}_{N',\ell'}({\rm h}(t;\tau))$ in \eqref{def-EN-bartau} and \eqref{def-DN-bartau}  that
\begin{align}\label{en-ineq-bartau}
    \fr12\fr{d}{dt}\tl{\mathsf{E}}_{N',\ell'}({\rm h}(t;\tau))+\fr18\nu^{\fr13}\tl{\mathsf{ D}}_{N',\ell'}({\rm h}(t;\tau))\le \sum_{\iota=0}^{[\ell/3]+1} {\rm PD}_{k,\iota}^{\bar{\tau},\zeta}({\rm h}_{[\iota]}(t;\tau)).
\end{align}

We are left to bound ${\rm PD}_{k,\iota}^{\bar{\tau},\zeta}({\rm h}_{[\iota]}(t;\tau))$, which can be addressed similarly to ${\bf N}_{N',\tl\ell}[f_0^L]$ in Proposition \ref{prop-tlf}. In fact, by Lemma 4.9 of \cite{chaturvedi2023vlasov}, we infer that
    \begin{align*}
        \nn&\nu\left|\mathrm{Re}\,\Big\la  Y_{\bar{\tau},\zeta}^{m}\Gamma(f_0^L, k^\al{\rm h}_{[\iota]})+ Y_{\bar{\tau},\zeta}^{m}\Gamma(k^\al{\rm h}_{[\iota]},f_0^L),\la v\ra^{2\ell'-4\iota-4|\al|}k^\al{\rm h}_{[\iota]}^{(m)} \Big\ra_v\right|\\
        \les\nn&\nu\big|k^\al{\rm h}_{[\iota]}^{(m)}\big|_{\sig,\ell'-2\iota-2|\al|}\sum_{|m'|+|m''|\le|m|}\Big(\big\|\pr_v^{m'}f_0^L\la v\ra^{\ell'}\big\|_{L^2_v}\big|k^\al {\rm h}_{[\iota]}^{(m'')}\big|_{\sig,\ell'-2\iota-2|\al|}\\
        &+|\pr_v^{m'}f_0^L|_{\sig,\ell}\big\|k^\al {\rm h}_{[\iota]}^{(m'')}\la v\ra^{\ell'-2\iota-2|\al|}\big\|_{L^2_v}\Big),
    \end{align*}
as well as 
\begin{align*}
    \nn&\nu\left|\mathrm{Re}\,\Big\la  Y_{\bar{\tau},\zeta}^{m}\nb_v\Gamma(f_0^L,{\rm h}_{[\iota]})+ Y_{\bar{\tau},\zeta}^{m}\nb_v\Gamma({\rm h}_{[\iota]},f_0^L),\la v\ra^{2\ell'-4\iota-4}\nb_v{\rm h}_{[\iota]}^{(m)} \Big\ra_v\right|\\
    \les\nn&\nu\big|\nb_v{\rm h}_{[\iota]}^{(m)}\big|_{\sig,\ell'-2\iota-2}\sum_{|m'|+|m''|\le|m|}\Big(\big\|\pr_v^{m'}f_0^L\la v\ra^{\ell'}\big\|_{L^2_v}\big| \nb_v{\rm h}_{[\iota]}^{(m'')}\big|_{\sig,\ell'-2\iota-2}\\
        \nn&+\sum_{|\al|\le1}\big\|\pr_v^\al\pr_v^{m'}f_0^L\la v\ra^{\ell'-2\iota-2|\al|}\big\|_{L^2_v}\big| {\rm h}_{[\iota]}^{(m'')}\big|_{\sig,\ell'-2\iota}+|\pr_v^{m'}f_0^L|_{\sig,\ell'}\big\| \nb_v{\rm h}_{[\iota]}^{(m'')}\la v\ra^{\ell'-2\iota-2}\big\|_{L^2_v}\\
        &+\sum_{|\al|\le1}|\pr_v^\al\pr_v^{m'}f_0^L|_{\sig,\ell'-2\iota-2|\al|}\big\| {\rm h}_{[\iota]}^{(m'')}\la v\ra^{\ell'-2\iota}\big\|_{L^2_v}\Big),
\end{align*}
and 
\begin{align*}
\nn&\nu\left|\mathrm{Re}\,\Big\la  ik Y_{\bar{\tau},\zeta}^{m}\Gamma(f_0^L,{\rm h}_{[\iota]})+ ik Y_{\bar{\tau},\zeta}^{m}\Gamma({\rm h}_{[\iota]},f_0^L),\la v\ra^{2\ell'-4\iota-4}\nb_v{\rm h}_{[\iota]}^{(m)} \Big\ra_v\right|\\
    \nn&+\nu\left|\mathrm{Re}\,\Big\la  ik{\rm h}_{[\iota]}^{(m)}\la v\ra^{2\ell'-4\iota-4}, Y_{\bar{\tau},\zeta}^{m}\nb_v\Gamma(f_0^L,{\rm h}_{[\iota]})+ Y_{\bar{\tau},\zeta}^{m}\nb_v\Gamma({\rm h}_{[\iota]},f_0^L)\Big\ra_v\right|\\
    \les\nn&\nu\big|\nb_v{\rm h}_{[\iota]}^{(m)}\big|_{\sig,\ell'-2\iota-2}\sum_{|m'|+|m''|\le|m|}\Big(\big\|\pr_v^{m'}f_0^L\la v\ra^{\ell'}\big\|_{L^2_v}\big|k {\rm h}_{[\iota]}^{(m'')}\big|_{\sig,\ell'-2\iota-2}\\
        \nn&+|\pr_v^{m'}f_0^L|_{\sig,\ell'-2\iota}\Big\|k {\rm h}_{[\iota]}^{(m'')}\la v\ra^{\ell'-2\iota-2}\big\|_{L^2_v}\Big)\\
        \nn&+\nu\big|k{\rm h}_{[\iota]}^{(m)}\big|_{\sig,\ell'-2}\sum_{|m'|+|m''|\le|m|}\Big(\big\|\pr_v^{m'}f_0^L\la v\ra^{\ell'}\big\|_{L^2_v}\big| \nb_v{\rm h}_{[\iota]}^{(m'')}\big|_{\sig,\ell'-2\iota-2}\\
        \nn&+\sum_{|\al|\le1}\big\|\pr_v^\al\pr_v^{m'}f_0^L\la v\ra^{\ell'-2\iota-2|\al|}\big\|_{L^2_v}\big| {\rm h}_{[\iota]}^{(m'')}\big|_{\sig,\ell'-2\iota}+|\pr_v^{m'}f_0^L|_{\sig,\ell'-2\iota}\big\| \nb_v{\rm h}_{[\iota]}^{(m'')}\la v\ra^{\ell'-2\iota-2}\big\|_{L^2_v}\\
        &+\sum_{|\al|\le1}|\pr_v^\al\pr_v^{m'}f_0^L|_{\sig,\ell'-2\iota-2|\al|}\big\| {\rm h}_{[\iota]}^{(m'')}\la v\ra^{\ell'-2\iota}\big\|_{L^2_v}\Big).
\end{align*}
We infer from the above three inequalities that
\begin{align}\label{est-PD}
    \nn&{\rm PD}_{k,\iota}^{\bar{\tau},\zeta}({\rm h}_{[\iota]}(t;\tau))\\
    \le\nn&C\nu^{\fr13} {\bf D}_{\bar{\tau},\zeta;\ell'}^{N'}({\rm h}_{[\iota]}(t;\tau))^{\fr12}\Bigg(\sum_{|\al|\le1}\big\|\pr_v^\al f_0^L(t)\big\|_{H^{N'}_{\ell'-2|\al|}}{\bf D}_{\bar{\tau},\zeta;\ell'}^{N'}({\rm h}(t;\tau))^{\fr12}\\
    \nn&+\Big(\nu^{\fr13}\sum_{|\al|\le1}\big\|\pr_{v}^\al f_0^L(t)\big\|_{H^{N'}_{\sig,\ell'-2|\al|}}\Big){\bf E}_{\bar{\tau},\zeta;\ell'}^{N'}({\rm h}_{[\iota]}(t;\tau))^{\fr12}\Bigg)\\
    \le&\Big(C\big\| f_0^L(t)\big\|_{H^{{N'}+1}_{\ell'}}+\fr{1}{32}\Big)\nu^{\fr13} {\bf D}_{\bar{\tau},\zeta;\ell'}^{N'}({\rm h}_{[\iota]}(t;\tau))+C\nu\big\| f_0^L(t)\big\|^2_{H^{N'+1}_{\sig,\ell'}}{\bf E}_{\bar{\tau},\zeta;\ell'}^{N'}({\rm h}_{[\iota]}(t;\tau)).
\end{align}
Substituting this into \eqref{en-ineq-bartau}, for $\big\| f_0^L(t)\big\|_{H^{N'+1}_{\ell'}}$ sufficiently small, we obtain
\begin{align*}
    \fr{d}{dt}\tl{\mathsf{ E}}_{N',\ell'}({\rm h}(t;\tau))+\fr18\nu^{\fr13}\tl{\mathsf{D}}_{N',\ell'}({\rm h}(t;\tau))\le C\nu\big\| f_0^L(t)\big\|^2_{H^{N'+1}_{\sig,\ell'}}\mathsf{ E}_{N',\ell'}({\rm h}(t;\tau)).
\end{align*}
Then by Gr\"onwall's inequality and \eqref{bd-f_0^L}, we get \eqref{est-solution oper}.
\end{proof}

\begin{coro}\label{coro:solution operator}
    Fix $k\in\Z^3_*$ and $\zeta\in\R^3$, ${\rm h}(t,v;\tau)$ be defined in \eqref{def:solution-h}. Then for any $N'\in\N$ with $N'\le\tl N$ and $8<\ell'\le\tl\ell$, there holds
    \begin{align}\label{bd-rmh}
        \left|\int_{\R^3}{\rm h}(t,v;\tau)\sqrt{\mu}(v) dv\right|^2\les \la k(t-\tau)+\zeta\ra^{-2N'}(\nu^{\fr13}\la t-\tau\ra)^{-3}|k|^{-2}\tl{\mathsf{ E}}_{N', \ell'}({\rm h_{in}}(\tau)).
    \end{align}
\end{coro}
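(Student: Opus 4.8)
The plan is to express the velocity-integral of ${\rm h}(t,v;\tau)$ against $\sqrt{\mu}$ via the dual operator $\mathbb{S}_k^*$, exactly as in the semigroup setting of Lemma~\ref{lem-point-K_k}, and then convert the vector-field Gevrey-type bound \eqref{est-solution oper} into the pointwise decay \eqref{bd-rmh} through Plancherel and a multinomial/Stirling argument. First I would introduce the dual two-parameter propagator $\mathbb{S}_k^*(t,\tau)$ solving the adjoint of \eqref{QL-f-k} (with $-ik\cdot v$ in place of $ik\cdot v$ and the transposed quasi-linear collision terms), so that
\begin{align*}
    \int_{\R^3}{\rm h}(t,v;\tau)\sqrt{\mu}(v)\,dv = \int_{\R^3}{\rm h}_{\rm in}(v;\tau)\,\mathbb{S}_k^*(t,\tau)[\sqrt{\mu}](v)\,dv.
\end{align*}
The key observation is that $\mathbb{S}_k^*(t,\tau)[\sqrt{\mu}]$ enjoys the same energy estimate \eqref{est-solution oper} as ${\rm h}$ itself (the adjoint system has the same structure — a Landau dissipation, the transport term $ik\cdot v$, and the small quasi-linear collision perturbation controlled by $f_0^L$ via \eqref{bd-f_0^L}), because Proposition~\ref{prop:solution-operator} only used coercivity of $L$, the commutator bounds, and smallness of $f_0^L$, none of which is destroyed under taking adjoints.

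The second step is to translate the $\zeta$-shifted vector field $Y_{\bar\tau,\zeta} = \nabla_v + i(k(t-\tau)+\zeta)$ appearing in \eqref{def-ENkl}--\eqref{def-DN-bartau} into a Gevrey weight on the Fourier side. Writing $\xi$ for the $v$-frequency dual variable, $Y_{\bar\tau,\zeta}$ acts as multiplication by $i(\xi + k(t-\tau)+\zeta)$ on $\mathcal{F}_v[{\rm h}]$, so summing $\sum_{m\in\N^3}\kappa^{2|m|}\mathfrak{E}^m_{\bar\tau,\zeta;\ell'}$ reproduces, via the multinomial identity $|\xi+k(t-\tau)+\zeta|^{2n} = \sum_{|\al|=n}C_n^\al(\xi+k(t-\tau)+\zeta)^{2\al}$ and the inequality $(n!)^2 2^n \le (2n)!$ already used in Lemma~\ref{lem-point-K_k}, a Fourier weight comparable to $\la \xi + k(t-\tau)+\zeta\ra^{N'}$ times the standard $L^2_v$ mass — this uses $N' \le \tl N$ so that the $H^{N'}$-level control is available, and $\ell' \le \tl\ell$ so that the weighted norms are controlled. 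The enhanced-dissipation factors $w_\iota^2(t-\tau) = (\kappa_0\nu^{1/3}(t-\tau))^\iota$ summed over $\iota=0,\dots,[\ell/3]+1$ produce, after trading velocity localization for enhanced decay as in Lemma~\ref{lem: E_l<D_l-2} and integrating the dissipation in time à la Lemma~7.5 of \cite{chaturvedi2023vlasov}, the factor $(\nu^{1/3}\la t-\tau\ra)^{-3}$ — concretely one uses $\int_\tau^t \tl{\mathsf{D}}_{N',\ell'}\,dt' \lesssim \tl{\mathsf{E}}_{N',\ell'}({\rm h}_{\rm in})$ together with $w_3^2(t-\tau)\mathsf{D} \gtrsim (\text{something})$ to upgrade boundedness to $(\nu^{1/3}(t-\tau))^{-3}$ decay of $\mathbb{S}_k^*(t,\tau)[\sqrt\mu]$ in low regularity. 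Then Cauchy–Schwarz in $\xi$ against $\sqrt\mu$ and $\la\xi+k(t-\tau)+\zeta\ra^{-N'} e^{-c|\xi|^2}$, using $-\la\xi+k(t-\tau)+\zeta\ra \le -\la k(t-\tau)+\zeta\ra + |\xi|$ to pull out the $\la k(t-\tau)+\zeta\ra^{-N'}$ gain while the Gaussian absorbs $|\xi|$, yields \eqref{bd-rmh}; the $|k|^{-2}$ arises, as in \eqref{point-K}, from the mismatch between the definition of $\mathfrak{E}^0$ (which carries a factor ${\rm A}_0\sum_{|\al|\le 1}|k^\al(\cdot)|^2 \gtrsim |k|^2$) and the plain $L^2_v$ mass we want to extract.

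The main obstacle will be verifying that the adjoint solution operator $\mathbb{S}_k^*(t,\tau)$ genuinely satisfies the energy estimate of Proposition~\ref{prop:solution-operator} in the shifted vector field $Y_{\bar\tau,\zeta}$ with a \emph{fixed} $\zeta$ — i.e., that the proof of \eqref{est-solution oper} goes through verbatim with $\tl Y$ replaced by $Y_{\bar\tau,\zeta}$, which it should since $Y_{\bar\tau,\zeta}$ still commutes with $\partial_t + ik\cdot v$ up to harmless lower-order terms and the Landau-commutator estimates (Lemma~\ref{lem-coercive-L}, Remark~\ref{rem-Y/t}) are insensitive to the constant shift $\zeta$ in the imaginary part. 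A secondary technical point is keeping track of the loss of $\ell'$ by $6$ (hence the hypothesis $\ell' > 8$ rather than $> 2$): each of the two velocity derivatives traded for enhanced dissipation in the $w_\iota$-weighted estimates costs two powers of $\la v\ra$, and pulling out three powers of $\nu^{1/3}\la t-\tau\ra$ via Lemma~\ref{lem: E_l<D_l-2}-type interpolation costs the $\iota$-shift up to $\iota = 3$, so one must check $8 < \ell' \le \tl\ell$ leaves enough room, which is exactly the stated hypothesis. Everything else — the multinomial bookkeeping, Stirling, the Gaussian Cauchy–Schwarz — is routine and parallel to the already-proved Lemma~\ref{lem-point-K_k} and its corollary \eqref{est-K_k}.
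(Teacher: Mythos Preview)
Your approach takes an unnecessary detour through the adjoint solution operator $\mathbb{S}_k^*$. The paper's proof is far more direct and works entirely with ${\rm h}$ itself, never introducing duality. Concretely: (i) Proposition~6.1 of \cite{chaturvedi2023vlasov} (an elementary integration-by-parts lemma) gives directly
\[
\left|\int_{\R^3}{\rm h}(t,v;\tau)\sqrt\mu(v)\,dv\right| \lesssim \la k(t-\tau)+\zeta\ra^{-N'}\sum_{|m|\le N'}\|Y_{\bar\tau,\zeta}^m{\rm h}(t,\cdot;\tau)\|_{L^2_v};
\]
(ii) the sum on the right is bounded by $|k|^{-2}(\nu^{1/3}\la t-\tau\ra)^{-3}\tl{\mathsf{E}}_{N',\ell'}({\rm h}(t;\tau))$ simply by reading the definitions --- the $|k|^{-2}$ comes from the ${\rm A}_0|k|^2$ term built into $\mathfrak{E}^m$, and the $(\nu^{1/3}\la t-\tau\ra)^{-3}$ from the $w_3^2(t-\tau)$-weighted summand at level $\ell'-6$ inside $\tl{\mathsf{E}}$ (this is why $\ell'>8$ is required: one needs $\ell'-6-2>0$ so that the weight $\la v\ra^{\ell'-8}$ still controls the unweighted $L^2_v$ norm); (iii) finally Proposition~\ref{prop:solution-operator} bounds $\tl{\mathsf{E}}_{N',\ell'}({\rm h}(t;\tau))$ by $\tl{\mathsf{E}}_{N',\ell'}({\rm h}_{\rm in}(\tau))$. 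No Fourier transform, no multinomial identity, no Stirling.

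Your duality route could in principle be made to work, but it creates extra obligations --- adjoint energy estimates with the correct $\zeta$-shifted vector field, and a dual-norm Cauchy--Schwarz pairing that must produce exactly $\tl{\mathsf{E}}_{N',\ell'}({\rm h}_{\rm in})$ on the ${\rm h}_{\rm in}$ side --- that the paper avoids entirely. Your proposal is also somewhat muddled between the two pictures: step~1 sets up the dual representation $\int{\rm h}_{\rm in}\,\mathbb{S}_k^*[\sqrt\mu]$, but your final Cauchy--Schwarz is phrased as pairing ``against $\sqrt\mu$,'' which only makes sense in the \emph{direct} picture $\int{\rm h}\sqrt\mu$. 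If you drop the adjoint entirely and apply your Fourier-side Cauchy--Schwarz directly to $\int\hat{\rm h}(t,\xi;\tau)\overline{\widehat{\sqrt\mu}}(\xi)\,d\xi$ --- inserting $\la\xi+k(t-\tau)+\zeta\ra^{\pm N'}$ and absorbing the negative power into the Gaussian $\widehat{\sqrt\mu}$ via the triangle inequality --- you recover precisely the paper's step~(i), and then (ii)--(iii) follow from the definitions and Proposition~\ref{prop:solution-operator} without further work.
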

\begin{proof}
    By using Proposition 6.1 of \cite{chaturvedi2023vlasov}, one deduces that 
    \begin{align*}
        \left|\int_{\R^3}{\rm h}(t,v;\tau)\sqrt{\mu}(v) dv\right|\les \la k(t-\tau)+\zeta\ra^{-N'}\sum_{|m|\le N'}\|Y^{m}_{\bar{\tau},\zeta}{\rm h}(t,\cdot,\tau)\|_{L^2_v}.
    \end{align*}
    Recalling  the definitions of ${\bf E}_{\bar{\tau},\zeta;\ell'}^{N'}({\rm h}(t;\tau))$ and $\tl{\mathsf{E}}_{N',\ell'}({\rm h}(t;\tau))$ in \eqref{def-ENkl} and \eqref{def-EN-bartau}, respectively, we find that
\begin{align*}
    \sum_{|m|\le N'}\|Y^{m}_{\bar{\tau},\zeta}{\rm h}(t,\cdot,\tau)\|^2_{L^2_v}\les&|k|^{-2}(\nu^{\fr13}\la t-\tau\ra)^{-3}\Big(w_3^2(t-\tau){\bf E}^{N'}_{\bar{\tau},\zeta;\ell'-6}({\rm h}(t;\tau))\Big)\\
    \les&|k|^{-2}(\nu^{\fr13}\la t-\tau\ra)^{-3}\tl{\mathsf{E}}_{N',\ell'}({\rm h}(t;\tau)).
\end{align*}
Then in view of \eqref{est-solution oper},  then \eqref{bd-rmh} follows immediately.   
\end{proof}

\subsection{The estimates of $\mathbb{S}_k(t,\tau)[v\sqrt{\mu}]-S_k(t-\tau)[v\sqrt{\mu}]$.}
For the sake of presentation, let us denote
\[
{ f}_{(1),k}(t,v;\tau)=\mathbb{S}_k(t,\tau)[v\sqrt{\mu}],\quad { f}_{(2),k}(t,v;\tau)={S}_k(t-\tau)[v\sqrt{\mu}],
\]
and
\[
 {\bf f}_k(t,v;\tau)={ f}_{(1),k}(t,v;\tau)-{ f}_{(2),k}(t,v;\tau).
\]
In the following, we omit the  subscript $k$ in ${ f}_{(1),k}(t,v;\tau)$, ${ f}_{(2),k}(t,v;\tau)$ and ${\bf f}_k(t,v;\tau)$ for simplicity.
Then ${ f}_{(1)}(t,v;\tau)$, ${ f}_{(2)}(t,v;\tau)$ and ${\bf f}(t,v;\tau)$ satisfy 
\begin{align}
    \begin{cases}
       \pr_tf_{(1)}+ik\cdot vf_{(1)}+\nu Lf_{(1)}=\nu \Gamma(f_{(1)}, f_0^L)+\nu\Gamma(f_0^L,f_{(1)}),\\
       f_{(1)}(\tau;\tau)=v\sqrt{\mu},
    \end{cases}
\end{align}

\begin{align}
    \begin{cases}
        \pr_tf_{(2)}+ik\cdot vf_{(2)}+\nu Lf_{(2)}=0,\\
       f_{(2)}(\tau;\tau)=v\sqrt{\mu},
    \end{cases}
\end{align}
and
\begin{align}
    \begin{cases}
        \pr_t{\bf f}+ik\cdot v{\bf f}+\nu L{\bf f}=\nu \Gamma(f_{(1)}, f_0^L)+\nu\Gamma(f_0^L,f_{(1)}),\\
       {\bf f}(\tau;\tau)=0,
    \end{cases}
\end{align}
respectively. 

We have the following estimates for ${\bf f}(t,v;\tau)$.
\begin{prop}\label{prop:difference}
For any $N'\in\N$ with $N'\le \tl N$ and $2<\ell'\le\tl\ell$, the following estimate holds
    \begin{align}\label{en-difference}
        \nn&{\bf E}^{N'}_{\bar{\tau},0;\ell'}({\bf f}(t;\tau))+\nu^{\fr13}\int_\tau ^t{\bf D}^{N'}_{\bar{\tau},0;\ell'}({\bf f}(t';\tau))dt'\\
        \les&|k|^2\Big(\sup_{\tau\le t'\le t}\|f_0^L(t')\|^2_{H^{N'+1}_{\ell'}}
        +\nu \int_\tau^t\big\|f_0^L(t)\big\|^2_{H^{N'+1}_{\sig,\ell'}}dt'\Big).
    \end{align}
\end{prop}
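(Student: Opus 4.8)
\textbf{Proof proposal for Proposition \ref{prop:difference}.}

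The plan is to run an energy estimate directly on the difference equation for ${\bf f}(t,v;\tau)$, viewing the right-hand side $\nu\Gamma(f_{(1)},f_0^L)+\nu\Gamma(f_0^L,f_{(1)})$ as a forcing term. Since ${\bf f}(\tau;\tau)=0$, there is no initial data contribution, so the entire energy ${\bf E}^{N'}_{\bar\tau,0;\ell'}({\bf f}(t;\tau))$ will be controlled by the time-integral of this forcing against the appropriate weighted test functions, plus the dissipative structure coming from the linear Landau operator. The commuting vector field here is $Y_{\bar\tau,0}=\nabla_v+ik(t-\tau)$, which commutes with $\partial_t+ik\cdot v$ in exactly the way $\tilde Y$ did in Section \ref{sec-semigroup est}. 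Concretely, for each $m\in\N^3$ with $|m|\le N'$ I would apply $Y_{\bar\tau,0}^m$ to the ${\bf f}$-equation, take the weighted inner products dictated by the definition \eqref{def-ENkl}--\eqref{def-DNkl} of ${\bf E}^{N'}_{\bar\tau,0;\ell'}$ and ${\bf D}^{N'}_{\bar\tau,0;\ell'}$ (the $\partial_x^\alpha$, $\kappa^{|\alpha|}\nu^{|\alpha|/3}\partial_v^\alpha$, and cross-term pieces), and assemble the identity analogous to \eqref{en-identity-bartau}, but now with zero initial data and with a genuine forcing term on the right.

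The dissipative terms are handled exactly as in the proof of Proposition \ref{prop:solution-operator}: the coercivity estimates for the linear Landau operator in these weighted vector-field norms (Lemma \ref{lem-coercive-L} and Remark \ref{rem-Y/t}, or the $k$-fixed versions used for \eqref{lb-dis}) give ${\rm Dis}_{k;1}+{\rm Dis}_{k;2}+{\rm Dis}_{k;4}+(\text{the }|k|^2\text{ and cross terms})\ge \tfrac{1}{4}\nu^{1/3}{\bf D}^{N'}_{\bar\tau,0;\ell'}({\bf f})$ after choosing $\kappa,\lambda$ small. The new work is the forcing term
\begin{align*}
\nu\,\mathrm{Re}\,\big\la Y_{\bar\tau,0}^m\big(\Gamma(f_{(1)},f_0^L)+\Gamma(f_0^L,f_{(1)})\big),\ \text{(weighted test function)}^{(m)}\big\ra_v.
\end{align*}
I would bound this using the bilinear collision estimate in Gevrey/weighted norms (Lemma 4.9 of \cite{chaturvedi2023vlasov}, already invoked in \eqref{est-PD}): distributing the $Y_{\bar\tau,0}^m$ via Leibniz and using that $f_0^L$ carries high regularity and strong velocity localization, each term is controlled by
\begin{align*}
\nu\,{\bf D}^{N'}_{\bar\tau,0;\ell'}({\bf f}(t;\tau))^{1/2}\Big(\|f_0^L(t)\|_{H^{N'+1}_{\ell'}}{\bf D}^{N'}_{\bar\tau,0;\ell'}(f_{(1)}(t;\tau))^{1/2}+\nu^{1/3}\|f_0^L(t)\|_{H^{N'+1}_{\sigma,\ell'}}{\bf E}^{N'}_{\bar\tau,0;\ell'}(f_{(1)}(t;\tau))^{1/2}\Big),
\end{align*}
together with the trivial splitting $f_{(1)}={\bf f}+f_{(2)}$. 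The ${\bf f}$-part of the right side is absorbed into $\tfrac14\nu^{1/3}{\bf D}^{N'}_{\bar\tau,0;\ell'}({\bf f})$ plus a term $\nu\|f_0^L\|^2_{H^{N'+1}_{\sigma,\ell'}}{\bf E}^{N'}_{\bar\tau,0;\ell'}({\bf f})$ (to be closed by Gr\"onwall), while the $f_{(2)}$-part produces the genuine source: $\nu\|f_0^L\|_{H^{N'+1}_{\ell'}}{\bf D}^{N'}_{\bar\tau,0;\ell'}(f_{(2)}(t;\tau))^{1/2}{\bf D}^{N'}_{\bar\tau,0;\ell'}({\bf f})^{1/2}+\nu^{4/3}\|f_0^L\|_{H^{N'+1}_{\sigma,\ell'}}{\bf E}^{N'}_{\bar\tau,0;\ell'}(f_{(2)})^{1/2}{\bf D}^{N'}_{\bar\tau,0;\ell'}({\bf f})^{1/2}$. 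Here I would use the semigroup estimate Proposition \ref{prop-semigroup} (specialized, without the $w_\iota$ weights, to $f_{(2)}=S_k(t-\tau)[v\sqrt\mu]$) to bound ${\bf E}^{N'}_{\bar\tau,0;\ell'}(f_{(2)}(t;\tau))\lesssim \|\langle\nabla_v\rangle(v\sqrt\mu)\|^2_{\mathcal G^{\lambda_L,N'}_{s_L,\ell'}}\cdot|k|^2\lesssim |k|^2$ and $\nu^{1/3}\int_\tau^t{\bf D}^{N'}_{\bar\tau,0;\ell'}(f_{(2)}(t';\tau))dt'\lesssim |k|^2$ — the $|k|^2$ coming simply from the factor $v\sqrt\mu$ having $|k|$-independent norm and from the normalization $K_k$-style prefactors (actually $v\sqrt\mu$ itself is $k$-independent, so the $|k|^2$ just tracks how the statement is normalized; I would double-check whether the cleaner bound has no $|k|^2$, but it is harmless since $|k|\ge 1$). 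Applying Cauchy--Schwarz in $t'$ to the two source terms and Young's inequality to split off a small multiple of $\nu^{1/3}\int_\tau^t{\bf D}^{N'}_{\bar\tau,0;\ell'}({\bf f})dt'$, then Gr\"onwall against the integrating factor $\exp(C\nu\int_\tau^t\|f_0^L\|^2_{H^{N'+1}_{\sigma,\ell'}})$ — which is bounded by Proposition \ref{prop-f_0^L} — yields precisely \eqref{en-difference}.

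The main obstacle is purely bookkeeping rather than conceptual: one must verify that the weighted bilinear Landau estimates of \cite{chaturvedi2023vlasov} apply verbatim with the \emph{time-shifted} vector field $Y_{\bar\tau,0}$ in place of $\tilde Y$, and that the velocity-weight shifts $\ell'-2|\alpha|-2|m|$ built into ${\bf E}^{N'}_{\bar\tau,0;\ell'}$ leave enough room (this is where $\ell'>2$, or the sharper $\ell'>8$ of Corollary \ref{coro:solution operator}, enters). The point that makes the whole scheme work is that $f_{(1)}$ satisfies the \emph{same} energy inequality as $\tilde f$'s linearized propagator — so Proposition \ref{prop:solution-operator} already gives uniform control of $f_{(1)}$ — while $f_0^L$ is genuinely small in the high norms by the bootstrap hypothesis \eqref{H-f_0^L} and Proposition \ref{prop-f_0^L}; hence every ``error'' term is either absorbable or integrable-in-$t'$ with the stated right-hand side. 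I do not expect to need the $w_\iota$ enhanced-dissipation weights for this particular proposition (the statement is posed with $\zeta=0$ and without the $w_\iota$'s), so the argument is a slightly simplified version of the proof of Proposition \ref{prop:solution-operator}.
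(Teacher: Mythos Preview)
Your proposal is correct and follows essentially the same energy-estimate scheme as the paper. The one substantive difference is that you split $f_{(1)}={\bf f}+f_{(2)}$, absorbing the ${\bf f}$-contribution and closing via Gr\"onwall while controlling $f_{(2)}$ through the $S_k$ semigroup estimate. The paper instead keeps $f_{(1)}$ intact and applies Proposition \ref{prop:solution-operator} directly---the very observation you flag at the end as ``the point that makes the whole scheme work''---to obtain
\[
\sup_{\tau\le t'\le t}{\bf E}^{N'}_{\bar\tau,0;\ell'}(f_{(1)}(t';\tau))+\nu^{1/3}\int_\tau^t{\bf D}^{N'}_{\bar\tau,0;\ell'}(f_{(1)}(t';\tau))\,dt'\ \lesssim\ {\bf E}^{N'}_{\bar\tau,0;\ell'}(v\sqrt\mu)\ \lesssim\ |k|^2.
\]
After Young's inequality on the ${\rm PD}$ pairing this closes \eqref{en-difference} immediately, with no Gr\"onwall step. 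Your splitting is a correct but unnecessary detour once Proposition \ref{prop:solution-operator} is available. (Your hesitation about the $|k|^2$ factor is also resolved by the paper: it enters through the $k^\alpha$, $|\alpha|=1$ piece in the definition \eqref{eq:varE_l^n} of $\mathfrak{E}^n_{k,\ell}$, so indeed ${\bf E}^{N'}_{\bar\tau,0;\ell'}(v\sqrt\mu)\approx|k|^2$.)
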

\begin{proof}
    Noting that  the error term ${\rm Er}^{\bar{\tau},0}_{k,0}(\cdot)=0$ due to $\iota=0$, now the energy inequality \eqref{en-ineq-bartau} reduces to
   \begin{align}\label{en-ineq-bartau-0}
    \fr12\fr{d}{dt}{\bf E}^{N'}_{\bar{\tau},0;\ell'}({\bf f}(t;\tau))+\fr14\nu^{\fr13}{\bf D}^{N'}_{\bar{\tau},0;\ell'}({\bf f}(t;\tau))\le  {\rm PD}_{k,0;(1)}^{\bar{\tau},0}({\bf f}(t;\tau)),
\end{align}
where
\begin{align*}
&{\mathrm{PD}}_{k,0;(1)}^{\bar{\tau},0}({\bf f}(t;\tau))\\
    =&{\rm A}_0 \sum_{\substack{m\in \N^3\\ |m|\le N'}}\kappa^{2|m|}\sum_{|\al|\le1}\nu\mathrm{Re}\,\Big\la  Y_{\bar{\tau},0}^{m}\Gamma(f_0^L, k^\al f_{(1)})+ Y_{\bar{\tau},0}^{m}\Gamma(k^\al f_{(1)},f_0^L),\la v\ra^{2\ell'-4|\al|}k^\al{\bf f}^{(m)} \Big\ra_v\\
    \nn&+\kappa^2\nu^{\fr23}\sum_{\substack{m\in \N^3\\ |m|\le N'}}\kappa^{2|m|}\nu\mathrm{Re}\,\Big\la  Y_{\bar{\tau},0}^{m}\nb_v\Gamma(f_0^L,f_{(1)})+ Y_{\bar{\tau},0}^{m}\nb_v\Gamma(f_{(1)},f_0^L),\la v\ra^{2\ell'-4}\nb_v{\bf f}^{(m)} \Big\ra_v\\
    \nn&+\kappa\nu^{\fr13}\sum_{\substack{m\in \N^3\\ |m|\le N'}}\kappa^{2|m|}\Bigg[\nu\mathrm{Re}\,\Big\la  ik Y_{\bar{\tau},0}^{m}\Gamma(f_0^L,f_{(1)})+ ik Y_{\bar{\tau},0}^{m}\Gamma(f_{(1)},f_0^L),\la v\ra^{2\ell'-4}\nb_v{\bf f}^{(m)} \Big\ra_v\\
    &+\nu\mathrm{Re}\,\Big\la  ik{\bf f}^{(m)}\la v\ra^{2\ell'-4}, Y_{\bar{\tau},0}^{m}\nb_v\Gamma(f_0^L, f_{(1)})+ Y_{\bar{\tau},0}^{m}\nb_v\Gamma(f_{(1)},f_0^L)\Big\ra_v\Bigg].
\end{align*}
Similar to \eqref{est-PD}, we arrive at
\begin{align*}
    \nn&{\rm PD}_{k,0;(1)}^{\bar{\tau},0}({\bf f}(t;\tau))\\
    \le\nn&C\nu^{\fr13} {\bf D}_{\bar{\tau},0;\ell}^{N'}({\bf f}(t;\tau))^{\fr12}\Bigg(\sum_{|\al|\le1}\big\|\pr_v^\al f_0^L(t)\big\|_{H^{N'}_{\ell-2|\al|}}{\bf D}_{\bar{\tau},0;\ell}^{N'}(f_{(1)}(t;\tau))^{\fr12}\\
    \nn&+\Big(\nu^{\fr13}\sum_{|\al|\le1}\big\|\pr_{v}^\al f_0^L(t)\big\|_{H^{N'}_{\sig,\ell'-2|\al|}}\Big){\bf E}_{\bar{\tau},0;\ell'}^{N'}(f_{(1)}(t;\tau))^{\fr12}\Bigg).
\end{align*}
Substituting this into \eqref{en-ineq-bartau}, we are led to 
\begin{align*}
        \nn&{\bf E}^{N'}_{\bar{\tau},0;\ell'}({\bf f}(t;\tau))+\nu^{\fr13}\int_\tau ^t{\bf D}^{N'}_{\bar{\tau},0;\ell'}({\bf f}(t';\tau))dt'\\
        \les\nn&\nu^{\fr13}\int_\tau^t{\bf D}_{\bar{\tau},0;\ell'}^{N'}(f_{(1)}(t';\tau))dt'\sup_{\tau\le t'\le t}\|f_0^L(t')\|^2_{H^{N'+1}_{\ell'}}\\
        &+\nu \int_\tau^t\big\|f_0^L(t)\big\|^2_{H^{N'+1}_{\sig,\ell'}}dt'\sup_{\tau\le t'\le t}{\bf E}_{\bar{\tau},0;\ell'}^{N'}(f_{(1)}(t';\tau)).
    \end{align*}
 By Proposition \ref{prop:solution-operator}, recalling \eqref{def-ENkl} and \eqref{eq:varE_l^n}, we have
 \begin{align*}
     \sup_{\tau\le t'\le t}{\bf E}_{\bar{\tau},0;\ell'}^{N'}(f_{(1)}(t';\tau))+\nu^{\fr13}\int_\tau^t{\bf D}_{\bar{\tau},0;\ell'}^{N'}(f_{(1)}(t';\tau))dt'\les {\bf E}^{N'}_{\bar{\tau},0;\ell'}(v\sqrt{\mu})\les |k|^2.
 \end{align*}
Then \eqref{en-difference} follows immediately.
\end{proof}

\section{Density estimate in Sobolev spaces}\label{sec: density}
One can see from \eqref{conserv-f_0^L} that $f_0^L$ provides zero contribution to the density $\rho$. Thus, \eqref{exp-tl-rho} holds, and 
 the equation for $\tl{f}$ can be rewritten as follows:
\begin{align}\label{eq:tilde-f}
    \left\{
    \begin{aligned}
        &\partial_{t}\tilde{f}+v\cdot \nabla_x\tilde{f}-2E\cdot v\mu^{\fr12}
        +\nu L \tilde{f}-\nu \Gamma(\tilde{f},f_0^L)-\nu \Gamma(f_0^L,\tilde{f})=\mathbf{N}(t,x,v),\\
        &E=-\nb_x(-\Delta_x)^{-1}\rho=-\nb_x(-\Delta_x)^{-1}\int_{\mathbb{R}^3}\tilde{f}(t,x,v)\sqrt{\mu}(v)dv,\\
        &\tilde{f}(0,x,v)=f_{\ne}(\nu^{-\fr12},x,v)+{\rm P}_0f_0(\nu^{-\fr12},v),
    \end{aligned}
    \right.
\end{align}
where ${\bf N}(t,x,v)$ is defined in \eqref{eq:N}.
Taking Fourier transform in $x$, and denoting $\tl{f}_k(t,v)=\mathcal{F}[\tl{f}]_k(t,v)$, we are led to
\begin{align}\label{eq:tl-f-k}
    \pr_t\tl{f}_k+ik\cdot v\tl{f}_k+\nu L \tilde{f}_k-\nu \Gamma(\tilde{f}_k,f_0^L)-\nu \Gamma(f_0^L,\tilde{f}_k)=-\fr{2ik\cdot v \mu^{\fr12}}{|k|^2}\hat{\rho}_k+\hat{{\bf N}}_k(t,v),
\end{align}
with $\hat{\rho}_k(t)=\int_{\R^3}\tl{f}_k(t,x,v)\sqrt{\mu}dv$.

Then by   Duhamel's principle,  the solution $\tl{f}_k(t,v)$ to \eqref{eq:tl-f-k} takes the form of 
\begin{align}\label{exp-tl-f-k}
\tl f_k(t,v)
=\nn&\mathbb{S}_{k}(t,0)[\tl{f}_k(0,v)]-\int_0^t\fr{2ik}{|k|^2}\hat{\rho}_k(\tau)\cdot\mathbb{S}_{k}(t,\tau)\big[v\sqrt{\mu}\big]d\tau\\
&+\int_0^t\mathbb{S}_{k}(t,\tau)\big[\hat{{\bf N}}_k(\tau,v)\big]d\tau.
\end{align}
Consequently,
\begin{align}\label{exp-rho-k}
\hat{\rho}_k(t)=\nn&\int_{\R^3}\mathbb{S}_k(t,0)\big[\tl f_{k}(0,v)\big]\sqrt{\mu}(v)dv+\int_0^t\int_{\R^3}\mathbb{S}_k(t,\tau)\big[\hat{\mathbf{N}}_k(\tau,v)\big]\sqrt{\mu}(v)dvd\tau\\
&-\int_0^t\int_{\R^3}\fr{2ik  }{|k|^2}\hat{\rho}_k(\tau)\cdot\mathbb{S}_k(t,\tau)\big[v\sqrt{\mu}(v)\big]\sqrt{\mu}(v)dvd\tau.
\end{align}
Let us denote
\begin{align}\label{def-tlNk1}
    \tl{\mathcal{N}}^{(1)}_k(t)=\int_{\R^3}\mathbb{S}_k(t,0)\big[\tl f_k(0,v)\big]\sqrt{\mu}(v)dv+\int_0^t\int_{\R^3}\mathbb{S}_k(t,\tau)\big[\hat{\mathbf{N}}_k(\tau,v)\big]\sqrt{\mu}(v)dvd\tau
\end{align}
and
\begin{align}\label{def-tlNk2}
   \tl{\mathcal{N}}^{(2)}_k(t)= -\int_0^t\int_{\R^3}\fr{2ik  }{|k|^2}\hat{\rho}_k(\tau)\cdot\Big(\mathbb{S}_k(t,\tau)\big[v\sqrt{\mu}(v)\big]-S_k(t-\tau)\big[v\sqrt{\mu}(v)\big]\Big)\sqrt{\mu}(v)dvd\tau.
\end{align}
Then similar to \eqref{rho_k}, \eqref{exp-rho-k} can be rewritten as
\begin{align}\label{exp-rho-k-1}
    \hat{\rho}_k(t)+\int_0^tK_k(t-\tau)\hat{\rho}_k(\tau)d\tau=\tl{\mathcal{N}}^{(1)}_k(t)+\tl{\mathcal{N}}^{(2)}_k(t).
\end{align}
By Proposition \ref{prop:kernel G}, we find that
\begin{align}\label{rep-rho_k}
    \hat{\rho}_k(t)=\tl{\mathcal{N}}^{(1)}_k(t)+\tl{\mathcal{N}}^{(2)}_k(t)+\int_0^tG_k(t-\tau)\left(\tl{\mathcal{N}}^{(1)}_k(\tau)+\tl{\mathcal{N}}^{(2)}_k(\tau)\right)d\tau.
\end{align}
Thanks to \eqref{est-G_k}, similar to Lemma \ref{lem-rho:N}, one easily deduces the following bounds for $\rho$.
\begin{lem}\label{lem:rho-longtime}
    The following estimates for $\rho$ hold:
    \begin{align}\label{gbd-rho-Linfty}
        \|\rho\|^2_{L^\infty_t \tl{H}^{\tl N}_x}\les \sum_{i=1}^2\sum_{k\in\Z^3_*}\sum_{\beta\in\N^6,|\beta|\le \tl N}\kappa^{2|\beta|}\|(k,kt)^{\beta}\tl{\mathcal{N}}^{(i)}_k \|^2_{L^\infty_t},
    \end{align}
    and
    \begin{align}\label{gbd-rho-L2}
        \|\rho\|^2_{L^2_t \tl{H}^{\tl {N}}_x}\les \sum_{i=1}^2\sum_{k\in\Z^3_*}\sum_{\beta\in\N^6,|\beta|\le \tl N}\kappa^{2|\beta|}\|(k,kt)^{\beta}\tl{\mathcal{N}}^{(i)}_k \|^2_{L^2_t},
    \end{align}
    where, by \eqref{def-tlHN},
    \begin{align*}
    \|\rho(t)\|_{\tl{H}^{\tl N}_x}^2=\sum_{\beta\in\N^6,|\beta|\le \tl N}\kappa^{2|\beta|}\|Z^\beta\rho(t)\|^2_{L^2(\T^3)}=\sum_{k\in\Z^3_*}\sum_{\beta\in\N^6,|\beta|\le \tl N}\kappa^{2|\beta|}\left|(k,kt)^{\beta}\hat{\rho}_k(t)\right|^2.
\end{align*}
\end{lem}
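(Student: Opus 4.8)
\textbf{Proof proposal for Lemma \ref{lem:rho-longtime}.} The plan is to read off \eqref{gbd-rho-Linfty}--\eqref{gbd-rho-L2} from the representation formula \eqref{rep-rho_k} together with the kernel bound \eqref{est-G_k}, exactly as was done for $\rho^{(1)},\rho^{(2)}$ in Lemma \ref{lem-rho:N}. First I would fix $k\in\Z^3_*$ and $\beta\in\N^6$ with $|\beta|\le\tl N$, apply $(k,kt)^\beta$ to \eqref{rep-rho_k}, and split the convolution factor: writing $(k,kt)=(k,k\tau)+(0,k(t-\tau))$ and expanding $(0,k(t-\tau))^{\bar\beta'}$ on the $G_k$ factor and $(k,k\tau)^{\beta-\bar\beta'}$ on the $\tl{\mathcal N}^{(i)}_k(\tau)$ factor. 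The only subtlety relative to Lemma \ref{lem-rho:N} is that here we work in Sobolev regularity ($\tl H^{\tl N}_x$) rather than a Gevrey class, so no combinatorial weight $a_{m,\lm,s}$ needs to be tracked; this actually simplifies the bookkeeping, since the coefficients $C_\beta^{\bar\beta'}$ are just bounded by $2^{|\beta|}\le 2^{\tl N}$.

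For the $L^\infty_t$ bound \eqref{gbd-rho-Linfty}: the term $\tl{\mathcal N}^{(1)}_k(t)+\tl{\mathcal N}^{(2)}_k(t)$ is controlled pointwise in $t$ by its own $\sup$, and for the convolution term I would bound
\[
\Big|(k,kt)^\beta\int_0^t G_k(t-\tau)\tl{\mathcal N}^{(i)}_k(\tau)\,d\tau\Big|
\lesssim \sum_{\bar\beta'\le\beta}\int_0^t \big|(k(t-\tau))^{\bar\beta'}G_k(t-\tau)\big|\,\big|(k,k\tau)^{\beta-\bar\beta'}\tl{\mathcal N}^{(i)}_k(\tau)\big|\,d\tau,
\]
then use \eqref{est-G_k}, namely $|(k s)^{\bar\beta'}G_k(s)|\lesssim |k|^{-1}\la ks\ra^{\tl N}e^{-\dl_2|ks|^{s_L}}$, whose $L^1_s$ norm in $s$ is bounded by $C|k|^{-2}$ uniformly in $k$ (change of variables $s\mapsto |k|s$). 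Pulling $\sup_\tau |(k,k\tau)^{\beta-\bar\beta'}\tl{\mathcal N}^{(i)}_k(\tau)|$ out of the time integral and summing over $\beta$ and $k$ (the $|k|^{-2}$ gives summability, though in fact a crude sum over $k$ already suffices since the $L^\infty_t$ bound only needs boundedness) yields \eqref{gbd-rho-Linfty}. For the $L^2_t$ bound \eqref{gbd-rho-L2} I would instead apply Young's convolution inequality in time: $\|G_k * \tl{\mathcal N}^{(i)}_k\|_{L^2_t}\le \|G_k\|_{L^1_t}\|\tl{\mathcal N}^{(i)}_k\|_{L^2_t}$, after the same derivative expansion, again using that $\int_0^\infty |(ks)^{\bar\beta'}G_k(s)|\,ds\lesssim |k|^{-2}$ from \eqref{est-G_k}, and then sum over $\beta$ and $k$.

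I do not anticipate a genuine obstacle here — this is a routine transfer of the Volterra-inversion argument already carried out in Lemma \ref{lem-rho:N} to the present two-parameter setting. The one point that deserves a sentence of care is that $\mathbb{S}_k$ is a two-parameter propagator, so the Volterra equation \eqref{exp-rho-k-1} is still a genuine \emph{convolution} equation in $t$ only because the kernel $K_k(t-\tau)$ in it comes from the true semigroup $S_k$, with the discrepancy $\mathbb{S}_k[v\sqrt\mu]-S_k[v\sqrt\mu]$ having been moved into $\tl{\mathcal N}^{(2)}_k$; hence the solution kernel $G_k$ from Proposition \ref{prop:kernel G} applies verbatim. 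Everything downstream (the summability in $k$, the derivative redistribution) is then identical in structure to Lemma \ref{lem-rho:N}, so I would present it briefly and refer back there for the repetitive estimates.
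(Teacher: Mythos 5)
Your proposal is correct and follows the same route the paper intends: read off the bounds from the representation formula \eqref{rep-rho_k} by distributing $(k,kt)^\beta$ over the Volterra convolution, using the Gevrey-type decay of $G_k$ from \eqref{est-G_k}, and summing in $k$ — exactly the ``similar to Lemma~\ref{lem-rho:N}'' step the paper invokes without detail. Your observation that the two-parameter propagator $\mathbb{S}_k$ has been absorbed into $\tl{\mathcal N}^{(2)}_k$ so that the kernel $K_k(t-\tau)$ genuinely comes from the one-parameter semigroup $S_k$ (making $G_k$ from Proposition~\ref{prop:kernel G} applicable) is precisely the structural point the paper builds its decomposition \eqref{def-tlNk1}--\eqref{def-tlNk2} around, and the substitution of Young's inequality for the weighted Cauchy--Schwarz used in Lemma~\ref{lem-rho:N} is a harmless simplification since no time weights appear here.
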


Next we show that the contributions from $\tl{\mathcal{N}}_k^{(2)}$ in \eqref{gbd-rho-Linfty} and \eqref{gbd-rho-L2} can be absorbed by the left hand side. More precisely, we have the following proposition.
\begin{prop}\label{prop:perturb-rho}
The following estimates hold:
    \begin{align}\label{Nk2-L2-small}
     \nn&\sum_{k\in\Z^3_*}\sum_{\beta\in\N^6,|\beta|\le \tl N}\kappa^{2|\beta|}\Big(\|(k,kt)^{\beta}\tl{\mathcal{N}}^{(2)}_k \|^2_{L^\infty_t}+\nu^{\fr13}\|(k,kt)^{\beta}\tl{\mathcal{N}}^{(2)}_k \|^2_{L^2_t}\Big)\\
     \les& C\eps^2\Big(\|\rho\|^2_{L^\infty_t\tl{H}^{\tl{N}}_x}+\nu^{\fr13}\|\rho\|^2_{L^2_t\tl H^{\tl{N}}_x}\Big).
    \end{align}
\end{prop}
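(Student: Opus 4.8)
The plan is to control $\tilde{\mathcal{N}}^{(2)}_k$ by exploiting the fact, established in Proposition \ref{prop:difference}, that the difference $\mathbb{S}_k(t,\tau)[v\sqrt{\mu}]-S_k(t-\tau)[v\sqrt{\mu}] = {\bf f}(t,v;\tau)$ is both small (of size $O(\epsilon)$, since it is driven entirely by the quasi-linear $f_0^L$ terms, and $\|f_0^L\|_{H^{N}_{\ell}}\lesssim \epsilon$ by \eqref{bd-f_0^L}) and carries enough regularity in the $k(t-\tau)$ variable to produce fast-in-time decay via the integrated-in-time kernel bounds, exactly as with $\mathfrak{g}$ in the earlier density arguments. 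Concretely, applying $Z^\beta$ (i.e.\ multiplying $\hat{\mathcal{F}}$ by $(k,kt)^\beta$) to $\tilde{\mathcal{N}}^{(2)}_k$, I would split $(k,kt)^\beta = \sum C_\beta^{\beta'}(k,k\tau)^{\beta'}(0,k(t-\tau))^{\beta-\beta'}$, putting the $(k,k\tau)^{\beta'}$ weight on $\hat\rho_k(\tau)$ and the $(0,k(t-\tau))^{\beta-\beta'}$ weight on $\overline{\mathcal{F}_v[{\bf f}(t,\cdot;\tau)]}$, after writing the velocity integral as a pairing against the dual solution operator applied to $\sqrt{\mu}$ (this is the reason ${\bf f}$ — not its dual — appears with the $v$-weights absorbed, precisely analogous to the role of $\mathfrak{g}=S_k^*(\cdot)[\sqrt{\mu}]$ in Sections \ref{sec: Linear estimates}--\ref{sec: density est}).

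The key steps, in order, would be: (i) reduce via Lemma \ref{lem:rho-longtime} to bounding $\sum_{k,\beta}\kappa^{2|\beta|}\|(k,kt)^\beta\tilde{\mathcal{N}}^{(2)}_k\|^2_{L^\infty_t\cap \nu^{1/6}L^2_t}$; (ii) insert the duality identity $\int_{\R^3}\big(\mathbb{S}_k(t,\tau)[v\sqrt\mu]-S_k(t-\tau)[v\sqrt\mu]\big)\sqrt\mu\,dv = \langle v\sqrt\mu,\,({\mathbb S}_k^*-S_k^*)(t,\tau)[\sqrt\mu]\rangle$ — or equivalently keep ${\bf f}$ and note Corollary-type control of $\int {\bf f}\,\sqrt\mu\,dv$ through the finite-regularity energy ${\bf E}^{N'}_{\bar\tau,0;\ell'}$ via Corollary \ref{coro:solution operator}-style Sobolev embedding in $\eta$; (iii) apply Proposition \ref{prop:difference} to get ${\bf E}^{\tilde N}_{\bar\tau,0;\tilde\ell-2\iota}({\bf f}(t;\tau))\lesssim |k|^2\big(\sup_{t'}\|f_0^L(t')\|^2_{H^{\tilde N+1}_{\tilde\ell}} + \nu\int_0^t\|f_0^L\|^2_{H^{\tilde N+1}_{\sigma,\tilde\ell}}dt'\big)\lesssim \epsilon^2|k|^2$ using \eqref{bd-f_0^L}, together with the enhanced-dissipation weight $w_3^2(t-\tau)$ to upgrade this to a $(\nu^{1/3}\langle t-\tau\rangle)^{-3}$ decay factor — mirroring Lemma \ref{lem:est-S^*}; (iv) perform the Schur/Young-type time-and-frequency summation: the convolution kernel in $\tau$ has an integrable profile $\min\{1,(\nu^{1/3}\langle t-\tau\rangle)^{-3/2}\}$ (one $3/2$-power kept, the other used for the $L^2_t\to L^\infty_t$ or $L^2_t\to L^2_t$ bound, as in the proofs of Propositions \ref{prop-rho2-0}--\ref{prop-rho2-iota}), the spatial weight $(0,k(t-\tau))^{\beta-\beta'}\mathcal{F}_v[{\bf f}]$ is absorbed because ${\bf f}$ has $\tilde N\ge N'$ derivatives of regularity in the commuting vector field $Y_{\bar\tau,0}$, and the $|k|^{-2}$ from $\frac{k}{|k|^2}$ against the $|k|^2$ from Proposition \ref{prop:difference} cancels so no negative power of $|k|$ is needed for summability; (v) close by recognizing the remaining factor as $\|(k,k\tau)^{\beta'}\hat\rho_k(\tau)\|$ summed and integrated, i.e.\ $\|\rho\|^2_{L^\infty_t\tilde H^{\tilde N}_x}+\nu^{1/3}\|\rho\|^2_{L^2_t\tilde H^{\tilde N}_x}$, with an overall constant $C\epsilon^2$ coming from $\|f_0^L\|^2$.

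The main obstacle I anticipate is the bookkeeping at step (iv): one must be careful that the Leibniz split of $(k,kt)^\beta$ does not lose derivatives — the piece landing on ${\bf f}$ can be as large as the full $|\beta|\le\tilde N$, so Proposition \ref{prop:difference} must be invoked at the top regularity level $\tilde N$ (or $\tilde N+1$, accounting for the extra $x$-derivative on the electric field hidden in $\frac{k}{|k|^2}\hat\rho$), and the velocity-weight index $\tilde\ell$ must be chosen large enough ($\tilde\ell > 8$, with room for the $2\iota$ losses from the enhanced-dissipation hierarchy) so that Corollary \ref{coro:solution operator}'s Sobolev embedding and the $w_\iota$-weighted energies are all available simultaneously. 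A secondary subtlety is that ${\bf f}(t,v;\tau)$ is genuinely a two-parameter object and $f_0^L$ evolves in $t$, so the bound from Proposition \ref{prop:difference} must be applied with the running-supremum/integral over $[\tau,t]$ as stated there, and one checks that the resulting $\sup_{t'}\|f_0^L(t')\|^2 + \nu\int\|f_0^L\|^2_\sigma$ is $\lesssim\epsilon^2$ uniformly — which is exactly what \eqref{bd-f_0^L} (Proposition \ref{prop-f_0^L}) delivers. Everything else is a routine adaptation of the density-estimate machinery of Sections \ref{sec: Linear estimates} and \ref{sec: density est} to the Sobolev (rather than Gevrey) setting, where the plasma-echo gymnastics are unnecessary because there is no $E\cdot\nabla_v f$ interaction here — only the benign, small, high-regularity correction ${\bf f}$.
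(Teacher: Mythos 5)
Your reduction is on the right track: you correctly identify that the content of the proposition is controlled by pairing $\hat\rho_k(\tau)\frac{k}{|k|^2}$ against the difference ${\bf f}_k(t,\cdot;\tau)=\mathbb{S}_k(t,\tau)[v\sqrt\mu]-S_k(t-\tau)[v\sqrt\mu]$, that Proposition~\ref{prop:difference} furnishes ${\bf E}^{N'}_{\bar\tau,0;\ell'}({\bf f}_k)\lesssim |k|^2\epsilon^2$, and that this $|k|^2$ cancels the $|k|^{-2}$ from the Poisson kernel so that the $k$-sum closes via $\tilde H^{\tilde N}_x$. That part matches the paper's bookkeeping.

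However, your step (iii)--(iv) contains a genuine gap in the \emph{time-decay mechanism}. You propose to extract $(\nu^{1/3}\langle t-\tau\rangle)^{-3}$ decay from the enhanced-dissipation weights $w_\iota(t-\tau)$, ``mirroring Lemma~\ref{lem:est-S^*}.'' But Proposition~\ref{prop:difference} only establishes the \emph{unweighted} energy bound ${\bf E}^{N'}_{\bar\tau,0;\ell'}({\bf f}_k)$ --- its proof explicitly reduces the energy inequality to the $\iota=0$ level, discarding the $w_\iota$ weights entirely, because the forcing $\nu\Gamma(f_{(1)},f_0^L)$ is only estimated through the unweighted dissipation of $f_{(1)}$. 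So the analogue of $\tilde{\mathsf{E}}_{N',\ell'}$ (Proposition~\ref{prop:solution-operator}) is simply not proved for ${\bf f}_k$, and the $(\nu^{1/3}\langle t-\tau\rangle)^{-3}$ factor you want is not available. Moreover, even if it were, it could not close the argument: $\int_0^\infty\min\{1,(\nu^{1/3}s)^{-3/2}\}\,ds\approx\nu^{-1/3}$, so a Young-type estimate with that kernel loses a factor $\nu^{-1/3}$ (or $\nu^{-2/3}$ after squaring), which cannot be beaten here since, unlike in Propositions~\ref{prop-rho2-0}--\ref{prop-rho2-iota}, there is no $\nu$ prefactor on $\tilde{\mathcal{N}}^{(2)}_k$ --- the integrand contains $\hat\rho_k$ with no collisional smallness.

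The mechanism the paper actually uses is not enhanced dissipation but Landau damping via the regularity transfer
\begin{align*}
1\lesssim\frac{\langle\eta+k(t-\tau)\rangle\,\langle\eta\rangle}{\langle k(t-\tau)\rangle},
\end{align*}
applied squared: this produces a $\langle k(t-\tau)\rangle^{-2}$ kernel at the cost of placing $\langle Y_{\bar\tau,0}\rangle^{2}$ on ${\bf f}_k$ and $\langle\partial_v\rangle^{2}$ on the Maxwellian test function $\sqrt\mu$. The crucial point is that $\int_0^\infty\langle ks\rangle^{-2}\,ds\approx|k|^{-1}$ is uniform in $\nu$, so the time convolution closes with no loss. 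Your sketch never extracts these inverse powers of $\langle k(t-\tau)\rangle$ --- you ``absorb'' the $(0,k(t-\tau))^{\beta-\beta'}$ weights into the $Y_{\bar\tau,0}$ regularity of ${\bf f}_k$, which is fine, but you do not sacrifice the two extra derivatives (the ${\bf E}^{\tilde N+2}$ level, not ${\bf E}^{\tilde N}$) that generate the $\langle k(t-\tau)\rangle^{-2}$ decay. Without that, there is no integrable-in-time kernel. The duality rewriting you mention in step (ii) is unnecessary and not what the paper does --- ${\bf f}_k$ is simply paired against $\sqrt\mu$ and Plancherel is applied directly --- but that is a cosmetic difference; the missing Landau-damping kernel is the substantive gap.
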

\begin{proof}
We only focus on the $L^2_t$ estimate, since the $L^\infty_t$ bound can be obtained in the same manner. The key point is that one can use the Landau damping of ${\bf f}_k(t,v;\tau)=\mathbb{S}_k(t,\tau)\big[v\sqrt{\mu}(v)\big]-S_k(t-\tau)\big[v\sqrt{\mu}(v)\big]$ to gain time decay because ${\bf f}_k$ has extra regularity. More precisely, note first that 
\begin{align*}
  1\les\fr{\la \eta+k(t-\tau)\ra \la\eta\ra}{\la k(t-\tau)\ra}.
\end{align*}
Then by Plancherel's theorem, we have
\begin{align}\label{est-rho-difference}
    \nn& \sum_{k\in\Z^3_*}\sum_{\beta\in\N^6,|\beta|\le \tl N}\kappa^{2|\beta|}\|(k,kt)^{\beta}\tl{\mathcal{N}}^{(2)}_k \|^2_{L^2_t}\\
    =\nn&\sum_{k\in\Z^3_*}\sum_{\substack{\beta\in\N^6\\|\beta|\le \tl N}}\kappa^{2|\beta|}\int_0^{T^*}\Bigg|\sum_{\substack{\beta'\le\beta\\ \beta''\le\beta'}}C_{\beta}^{\beta'}C_{\beta'}^{\beta''}\int_0^t\int_{\R^3}(k,k\tau)^{\beta-\beta'}\fr{2ik}{|k|^2}\hat{\rho}_k(\tau)\\
    \nn&\cdot (0,k(t-\tau)+\eta)^{\beta''}\mathcal{F}_v[{\bf f}_k](t,\eta;\tau)(0,-\eta)^{\beta'-\beta''}\overline{\mathcal{F}_v[\sqrt{\mu}]}(\eta)d\eta d\tau \Bigg|^2dt\\
    \les\nn&\sum_{k\in\Z^3_*}\sum_{\substack{\beta\in\N^6\\|\beta|\le \tl N}}\kappa^{2|\beta|}\int_0^{T^*}\Bigg[\sum_{\substack{\beta'\le\beta\\ \beta''\le\beta'}}C_{\beta}^{\beta'}C_{\beta'}^{\beta''}\int_0^t\left|(k,k\tau)^{\beta-\beta'}\fr{2ik}{|k|^2}\hat{\rho}_k(\tau)\right|\fr{1}{\la k(t-\tau)\ra^2}\\
    \nn&\times \left\|Y_{\bar{\tau},0}^{\bar{\beta''}}\la Y_{\bar{\tau},0}\ra^2{\bf f}_k(t;\tau)\right\|_{L^2_v}\Big\|\pr_v^{\bar{\beta'}-\bar{\beta''}}\la\pr_v\ra^2\sqrt{\mu}\Big\|_{L^2_v} d\tau \Bigg]^2dt\\
    \les\nn&\sum_{k\in\Z^3_*}\sum_{\substack{\beta\in\N^6\\|\beta|\le \tl N}}\sum_{\substack{\beta'\le\beta}}\int_0^{T^*}\int_0^t\kappa^{2(|\beta|-|\beta'|)}\left|(k,k\tau)^{\beta-\beta'}\fr{2ik}{|k|^2}\hat{\rho}_k(\tau)\right|^2\fr{1}{\la k(t-\tau)\ra^2} d\tau\\
    \nn&\times   \int_0^t\fr{d\tau}{\la k(t-\tau)\ra^2}dt\sup_{k\in\Z^3_*}\sup_{0\le\tau\le t\le T^*}\sum_{\substack{\al\in\N^3,|\al|\le \tl N}}\kappa^{2|\al|}\left\|Y_{\bar{\tau},0}^{\al}\la Y_{\bar{\tau},0}\ra^2{\bf f}_k(t;\tau)\right\|_{L^2_v}^2\\
    \les&\sup_{k\in\Z^3_*}\sup_{0\le\tau\le t\le T^*}\sum_{\substack{\al\in\N^3,|\al|\le \tl N}}\kappa^{2|\al|}\left\|Y_{\bar{\tau},0}^{\al}\la Y_{\bar{\tau},0}\ra^2{\bf f}_k(t;\tau)\right\|_{L^2_v}^2\|\rho\|^2_{L^2_t\tl{H}_x^{\tl N}}.
\end{align}
Clearly, for any $k\in\Z^3_*$, it holds that
\begin{align*}
    \sum_{\substack{\al\in\N^3,|\al|\le N_1}}\kappa^{2|\al|}\left\|Y_{\bar{\tau},0}^{\al}\la Y_{\bar{\tau},0}\ra^2{\bf f}_k(t;\tau)\right\|_{L^2_v}^2\les \fr{1}{|k|^2}{\bf E}^{N_1+2}_{\bar{\tau},0;2}({\bf f}_k(t;\tau)).
\end{align*}
Then by Proposition \ref{prop:difference} and the smallness of $f_0^L$, we obtain \eqref{Nk2-L2-small}.
\end{proof}

\begin{prop}\label{prop:est-tlNk1}
The following estimates hold:
    \begin{align}\label{bd-tlNk1}
     \nn&\sum_{k\in\Z^3_*}\sum_{\beta\in\N^6,|\beta|\le \tl N}\kappa^{2|\beta|}\Big(\|(k,kt)^{\beta}\tl{\mathcal{N}}^{(1)}_k \|^2_{L^\infty_t}+\nu^{\fr13}\|(k,kt)^{\beta}\tl{\mathcal{N}}^{(1)}_k \|^2_{L^2_t}\Big)\\
     \les&\eps^2\nu^{2\mathfrak{b}}+\eps^2\Big(\|\rho(t)\|^2_{L^\infty_t\tl{H}^{\tl{N}}_x}+\nu^{\fr13}\|\rho(t)\|^2_{L^2_t\tl H^{\tl{N}}_x}\Big).
    \end{align}
\end{prop}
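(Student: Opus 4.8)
\textbf{Plan of proof for Proposition \ref{prop:est-tlNk1}.}
The strategy is to estimate each of the two contributions to $\tl{\mathcal{N}}^{(1)}_k(t)$ (see \eqref{def-tlNk1}) separately, using the solution-operator bounds of Proposition \ref{prop:solution-operator} and its Corollary \ref{coro:solution operator}, together with the bootstrap hypotheses \eqref{H-f_0^L}--\eqref{H-tlrho}. For the initial-data term $\int_{\R^3}\mathbb{S}_k(t,0)[\tl f_k(0,v)]\sqrt{\mu}(v)dv$, I would apply Corollary \ref{coro:solution operator} with $\tau=0$ and a suitable $N'\le\tl N$, $\ell'\le\tl\ell$, which gives a bound of the form $\la kt\ra^{-2N'}(\nu^{1/3}\la t\ra)^{-3}|k|^{-2}\tl{\mathsf{E}}_{N',\ell'}(\tl f_k(0))$. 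Since $\la kt\ra^{-2N'}$ is summable against $(k,kt)^{2\beta}$ for $|\beta|\le \tl N$ once $N'$ is large enough, and since the time decay $(\nu^{1/3}\la t\ra)^{-3}$ makes the $L^2_t$ integral in $t$ finite uniformly in $k$ (producing the factor $\nu^{-1/3}$ which cancels the $\nu^{1/3}$ weight), this contribution is controlled by $\tl{\mathsf{E}}_{\tl N,\tl\ell}(\tl f(0))\lesssim \eps^2\nu^{2\mathfrak{b}}$ using the initial-data hypothesis in Theorem \ref{thm:longtime} (equivalently Remark \ref{Rmk: P_0f_0}).

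For the nonlinear term $\int_0^t\int_{\R^3}\mathbb{S}_k(t,\tau)[\hat{\bf N}_k(\tau,v)]\sqrt{\mu}(v)dvd\tau$, I would again invoke Corollary \ref{coro:solution operator}, now applied to the solution of \eqref{QL-f-k} with initial data $\hat{\bf N}_k(\tau,\cdot)$ at time $\tau$; this yields a pointwise-in-$(t,\tau,k)$ bound with the kernel $\la k(t-\tau)\ra^{-N'}(\nu^{1/3}\la t-\tau\ra)^{-3/2}|k|^{-1}$ against $\tl{\mathsf{E}}_{N',\ell'}(\hat{\bf N}_k(\tau))^{1/2}$. Recalling \eqref{eq:N}, ${\bf N}$ splits into the collisionless transport pieces $-E\cdot\nb_v\tl f+E\cdot v\tl f$, the quasi-linear pieces $-E\cdot\nb_v f_0^L+E\cdot v f_0^L$, and the collision piece $\nu\Gamma(\tl f,\tl f)$. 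After a low-high/high-low (Bony) decomposition in $x$-frequency to separate the $E$ factor from the $\tl f$ (or $f_0^L$) factor, one uses: the Landau damping decay of $E$ (from the $\rho$ bootstrap \eqref{H-tlrho}, giving $\|\la t\nb_x\ra^{\tl N/2}E\|_{L^2}\lesssim\eps\nu^{\mathfrak{b}}$ once \eqref{H-tlrho} is improved — or directly the quantity $\|\rho\|_{\tl H^{\tl N}_x}$ on the right-hand side of \eqref{bd-tlNk1}) to absorb the polynomial-in-$t$ losses coming from converting $\nb_v$ into $Y-t\nb_x$; the energy/dissipation bounds \eqref{H-tlf-H}--\eqref{H-tlf-L} on $\tl f$; and \eqref{H-f_0^L} for $f_0^L$. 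Schematically, each term produces either $\eps\cdot\|\rho\|_{L^2_t\tl H^{\tl N}_x}$-type factors (kept on the right) or terms of size $\eps^2\nu^{2\mathfrak{b}}$ (from the $\tl f$-bilinear and quasi-linear contributions, whose $\nu^{2\mathfrak{b}}$ smallness comes from \eqref{H-tlf-H}); the time integrals converge because of the $(\nu^{1/3}\la t-\tau\ra)^{-3/2}$ decay of the solution operator combined with the Landau-damping decay of the density kernel, using Schur-type $L^1_t$-kernel estimates as in the proof of Lemma \ref{lem-rho:N}.

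The main obstacle I anticipate is the transport nonlinearity $E\cdot\nb_v\tl f$: as in \eqref{sec:density decomposition}, converting $\nb_v$ to $Y - t\nb_x$ costs a power of $t$ (indeed of $t-\tau$ on the $\mathbb{S}_k(t,\tau)$ side), and one must check that the combined time decay — $(\nu^{1/3}\la t-\tau\ra)^{-3/2}$ from the enhanced dissipation of $\mathbb{S}_k$, the extra $\la k(t-\tau)\ra^{-N'}$ velocity-localization gain, and the decay of $E$ in lower regularity — beats this growth and still leaves a factor of $\eps$ (or $\eps\nu^{\mathfrak{b}}$) to close. Unlike the short-time regime this is not a borderline matter of the Gevrey index, because here we have genuine $\nu^{1/3}$-rate enhanced dissipation available on the nonzero modes and only need finitely many derivatives; nonetheless the bookkeeping of which norm ($\tl{\mathsf{E}}$ at level $\tl N$ versus $\tl N-2$, with the corresponding $\min\{1+t,\nu^{-1/3}\}$ weights) absorbs which loss is the delicate point, and one must be careful that the quasi-linear $f_0^L$-terms, which carry only $O(1)$ smallness in $\eps$ but are multiplied by $E$, still yield the $\eps^2\nu^{2\mathfrak{b}}$ bound rather than merely $\eps\nu^{\mathfrak{b}}$-times-$\|\rho\|$. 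I would handle that last subtlety by always pairing one factor of $E$ (which is $O(\eps\nu^{\mathfrak{b}})$ in low norms by Landau damping) with the $O(\eps)$ bound on $f_0^L$ from \eqref{H-f_0^L}, giving the required $\eps^2\nu^{\mathfrak b}$ and then noting $\mathfrak{b}\ge 1/3$ lets us trade part of it against time-integrability if needed.
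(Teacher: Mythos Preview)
Your plan is correct and matches the paper's approach: the $f_0^L$-free contributions (initial data, $-E\cdot\nb_v\tl f+E\cdot v\tl f$, and $\nu\Gamma(\tl f,\tl f)$) are handled exactly as in Proposition~8.2 of \cite{chaturvedi2023vlasov} via Corollary~\ref{coro:solution operator}, while for the quasi-linear pieces one exploits that $E=E(x)$ and $f_0^L=f_0^L(v)$, so $\hat\rho_k(\tau)$ factors out of $\mathbb{S}_k(t,\tau)\big[\mathcal{F}_x[E\cdot\nb_v f_0^L]_k\big]$ and the remaining $\mathbb{S}_k(t,\tau)[\nb_v f_0^L]$ is controlled by Proposition~\ref{prop:solution-operator} and \eqref{H-f_0^L}, just as for the linear term $E\cdot v\sqrt\mu$. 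Your final worry is therefore misplaced: the $f_0^L$ terms contribute to the $\eps^2\big(\|\rho\|^2_{L^\infty_t\tl H^{\tl N}_x}+\nu^{1/3}\|\rho\|^2_{L^2_t\tl H^{\tl N}_x}\big)$ part of \eqref{bd-tlNk1} (the $\eps^2$ coming from $\|f_0^L\|^2_{H^{\tl N+4}_{\tl\ell+3}}$), not to the $\eps^2\nu^{2\mathfrak b}$ part, and no extra $\nu^{\mathfrak b}$ needs to be extracted from $E$.
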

\begin{proof}
Recalling the definitions of $\tl{\mathcal{N}}^{(1)}_k(t)$ and ${\bf N}(t,x,v)$ in \eqref{def-tlNk1} and \eqref{eq:N}, respectively, one can classify the terms in \eqref{def-tlNk1} into two sets:
\begin{itemize}
    \item terms involving $f_0^L$: $\displaystyle\int_0^t \int_{\R^3} \mathbb{S}_k(t,\tau)\big[-\widehat{(E\cdot\nb_vf_0^L)}_k(\tau,v)+\widehat{(E\cdot vf_0^L)}_k(\tau,v)\big]\sqrt{\mu}(v)dvd\tau$.
    \item terms not involving $f_0^L$: 
    \begin{align*}
        &\int_{\R^3}\mathbb{S}_k(t,0)[\tl{f}_k(0,v)]\sqrt{\mu}(v)dv\\
        +&\int_0^t\int_{\R^3}\mathbb{S}_k(t,\tau)\big[-(\widehat{E\cdot\nb_v\tl{f}})_k(\tau,v)+(\widehat{E\cdot v\tl{f}})_k(\tau,v)\big]\sqrt{\mu}(v)dvd\tau\\
        +&\nu\int_0^t\int_{\R^3}\mathbb{S}_k(t,\tau)\big[(\widehat{\Gamma(\tl{f},\tl{f})})_k(\tau,v)\big]\sqrt{\mu}(v)dvd\tau.
    \end{align*}
\end{itemize}
Now we bound the terms involving $f_0^L$.
Similar to \eqref{est-rho-difference}, we find that
\begin{align*}
    \nn&\sum_{k\in\Z^3_*}\sum_{\substack{\beta\in\N^6\\|\beta|\le \tl N}}\kappa^{2|\beta|}\int_0^{T^*}\Bigg|\int_0^t\int_{\R^3}(k,kt)^{\beta}\fr{ik}{|k|^{2}}\hat{\rho}_k(\tau)\cdot\mathbb{S}_k(t,\tau)\big[\nb_vf^L_0(\tau,v)\big]\sqrt{\mu}(v)dvd\tau\Bigg|^2dt\\
    \les\nn&\sum_{k\in\Z^3_*}\sum_{\substack{\beta\in\N^6\\|\beta|\le \tl N}}\kappa^{2|\beta|}\int_0^{T^*}\Bigg[\sum_{\substack{\beta'\le\beta\\ \beta''\le\beta'}}\int_0^t\left|(k,k\tau)^{\beta-\beta'}\fr{ik}{|k|^2}\hat{\rho}_k(\tau)\right|\fr{1}{\la k(t-\tau)\ra^2}\\
    \nn&\times \left\|Y_{\bar{\tau},0}^{\bar{\beta''}}\la Y_{\bar{\tau},0}\ra^2\mathbb{S}_k(t,\tau)\big[\nb_vf^L_0(\tau,v)\big]\right\|_{L^2_v}\Big\|\pr_v^{\bar{\beta'}-\bar{\beta''}}\la\nb_v\ra^2\sqrt{\mu}\Big\|_{L^2_v} d\tau \Bigg]^2dt\\
    \les&\sup_{k\in\Z^3_*}\sup_{0\le\tau\le t\le T^*}\sum_{\substack{\al\in\N^3,|\al|\le \tl N}}\kappa^{2|\al|}\left\|Y_{\bar{\tau},0}^{\al}\la Y_{\bar{\tau},0}\ra^2\mathbb{S}_k(t,\tau)\big[\nb_vf^L_0(\tau,v)\big]\right\|_{L^2_v}^2\|\rho\|^2_{L^2_t\tl{H}^{\tl{N}}_x},
\end{align*}
and
\begin{align*}
    \nn&\sum_{k\in\Z^3_*}\sum_{\substack{\beta\in\N^6\\|\beta|\le \tl N}}\kappa^{2|\beta|}\int_0^{T^*}\Bigg|\int_0^t\int_{\R^3}(k,kt)^{\beta}\fr{ik}{|k|^{2}}\hat{\rho}_k(\tau)\cdot\mathbb{S}_k(t,\tau)\big[vf^L_0(\tau,v)\big]\sqrt{\mu}(v)dvd\tau\Bigg|^2dt\\
    \les&\sup_{k\in\Z^3_*}\sup_{0\le\tau\le t\le T^*}\sum_{\substack{\al\in\N^3,|\al|\le \tl N}}\kappa^{2|\al|}\left\|Y_{\bar{\tau},0}^{\al}\la Y_{\bar{\tau},0}\ra^2\mathbb{S}_k(t,\tau)\big[vf^L_0(\tau,v)\big]\right\|_{L^2_v}^2\|\rho\|^2_{L^2_t\tl{H}^{\tl N}_x}.
\end{align*}
By Propositions \ref{prop:solution-operator} and \ref{prop-f_0^L}, for any $k\in\Z^3_*$, there holds
\begin{align}
    \nn&\sum_{\substack{\al\in\N^3,|\al|\le \tl N}}\kappa^{2|\al|}\left\|Y_{\bar{\tau},0}^{\al}\la Y_{\bar{\tau},0}\ra^2\mathbb{S}_k(t,\tau)\big[\nb_vf^L_0(\tau,v)\big]\right\|_{L^2_v}^2\\
    \les\nn&\fr{1}{|k|^2}{\bf E}^{\tl N+2}_{\bar{\tau},0;2}\Big(\mathbb{S}_k(t,\tau)\big[\nb_vf^L_0(\tau,v)\big]\Big)\les \fr{1}{|k|^2}{\bf E}^{\tl N+2}_{\bar{\tau},0;2}\big(\nb_vf^L_0(\tau,v)\big)\les \|f_0^L(\tau)\|_{H^{\tl N+{ 4}}_2}^2\les \eps^2,
\end{align}
and
\begin{align}
    \nn&\sum_{\substack{\al\in\N^3,|\al|\le \tl N}}\kappa^{2|\al|}\left\|Y_{\bar{\tau},0}^{\al}\la Y_{\bar{\tau},0}\ra^2\mathbb{S}_k(t,\tau)\big[vf^L_0(\tau,v)\big]\right\|_{L^2_v}^2\\
    \les\nn&\fr{1}{|k|^2}{\bf E}^{\tl N+2}_{\bar{\tau},0;2}\Big(\mathbb{S}_k(t,\tau)\big[vf^L_0(\tau,v)\big]\Big)\les \fr{1}{|k|^2}{\bf E}^{\tl N+2}_{\bar{\tau},0;2}\big(vf^L_0(\tau,v)\big)\les \|f_0^L(\tau)\|_{H^{\tl N+{ 3}}_{ 3}}^2\les \eps^2.
\end{align}

Clearly, the above estimates apply to the case when the time variable is taken $L^\infty_t$ norm. 

For the rest terms not involving $f_0^L$, thanks to Proposition \ref{prop:solution-operator} and Corollary \ref{coro:solution operator}, one can treat them in the same way as one treats $\mathbb{I}_k(t), \mathbb{II}_k(t)$ and $\mathbb{III}_k(t)$ in \cite{chaturvedi2023vlasov}, see Proposition 8.2 of \cite{chaturvedi2023vlasov} for more details. We thus complete the proof Proposition \ref{prop:est-tlNk1}.
\end{proof}

Finally, we conclude that the bound in the bootstarp hypothesis \eqref{H-tlrho} can be improved by combining the estimates in Lemma \ref{lem:rho-longtime}, and Propositions \ref{prop:perturb-rho}, \ref{prop:est-tlNk1}.

\appendix

\section{Basic properties of the Gevrey class and Landau collision operators}
\subsection{Elementary properties of $\sig^{ij}$ and $\sig^i$}\label{sec: Linear Landau operator}
In this section, we study the Gevrey regularity of the coefficients $\sig^{ij}$ and $\sig^{i}$ that appear in the linear Landau operator $L$. We refer to \cite{MR2425602, MR4147427} for the Gevrey regularity estimates of the coefficients where the Coulomb potential is replaced by the soft potentials, namely, $\Phi^{ij}$ is replaced by $\Phi^{ij}_{\gamma}=|z|^{\gamma+2}(\delta_{ij}-\frac{z_iz_j}{|z|^2})$ with $\gamma\in [-1,1]$. 

For any vector-valued function ${\bf g}(v)=(g_1(v), g_2(v), g_3(v))$, we define the projection to the vector $v$ as
\begin{align}\label{eq: P_v definition}
    P_v{\bf g}=\fr{v}{|v|}\fr{v}{|v|}\cdot{\bf g},\quad {\rm i.e.,}\quad (P_v{\bf g})_i=\fr{v_i}{|v|}\sum_{j=1}^3\fr{v_j}{|v|}{g}_j.
\end{align}

\begin{lem}\label{lem:sigma-lam}
Let $\sig^{ij}(v)$ be given in \eqref{def-sig}. Then $\sig^{ij}(v)$ can be expressed as
\be\label{exp-sig}
\sig^{ij}(v)=\lm_1(v)\fr{v_iv_j}{|v|^2}+\lm_2(v)\left(\delta_{ij}-\fr{v_iv_j}{|v|^2}\right),
\ee
where $\lm_1(v)>0$ is a simple eigenvalue associated with the vector $v$ and $\lm_2(v)$ is a double eigenvalue associated with $v^\bot$. Moreover, there are constants $c_1$ and $c_2>0$ such that as $|v|\rightarrow\infty$, we have
\be
\lm_1(v)\rightarrow c_1(1+|v|)^{-3},\quad \lm_2(v)\rightarrow c_2(1+|v|)^{-1}.
\ee
\end{lem}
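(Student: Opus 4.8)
The plan is to compute $\sigma^{ij}(v) = (\Phi^{ij} * \mu)(v)$ explicitly by exploiting the rotational structure. First I would observe that since $\Phi^{ij}(z) = |z|^{-1}(\delta_{ij} - z_iz_j/|z|^2)$ is symmetric under the orthogonal group in the sense that $\Phi^{ij}(Rz) = R_{ik}R_{jl}\Phi^{kl}(z)$ for any rotation $R$, and since $\mu(v) = e^{-|v|^2}$ is radial, the convolution $\sigma^{ij}(v)$ must be an $O(3)$-equivariant symmetric $2$-tensor depending only on $v$. Any such tensor is a linear combination of $v_iv_j/|v|^2$ and $\delta_{ij} - v_iv_j/|v|^2$, which gives the representation \eqref{exp-sig} with $\lambda_1(v) = \sigma^{ij}(v) v_iv_j/|v|^2$ (the eigenvalue in the direction of $v$, i.e., the action of $P_v$ as in \eqref{eq: P_v definition}) and $\lambda_2(v) = \frac12(\sigma^{ii}(v) - \lambda_1(v))$ (the eigenvalue on $v^\perp$, counted with multiplicity two). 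Because $\sigma^{ij}$ is smooth (the singularity $|z|^{-1}$ of $\Phi^{ij}$ is locally integrable in $\mathbb{R}^3$ and $\mu$ is Schwartz), the functions $\lambda_1,\lambda_2$ are smooth functions of $|v|^2$.

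Next I would establish positivity of $\lambda_1$. Testing the quadratic form: for any fixed $v \neq 0$ and the unit vector $e = v/|v|$, one has $\lambda_1(v) = \int_{\mathbb{R}^3} \frac{1}{|v-\tilde v|}\big(1 - \frac{(e\cdot(v-\tilde v))^2}{|v-\tilde v|^2}\big)\mu(\tilde v)\,d\tilde v = \int_{\mathbb{R}^3}\frac{|e \wedge (v - \tilde v)|^2}{|v-\tilde v|^3}\mu(\tilde v)\,d\tilde v \geq 0$, and this is strictly positive since the integrand vanishes only on the line $\{\tilde v : \tilde v - v \parallel e\}$, a set of measure zero. The same computation with $e$ replaced by any unit vector orthogonal to $v$ shows $\lambda_2(v) > 0$ as well. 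So $\sigma^{ij}$ is a genuinely positive-definite matrix, $\lambda_1$ being the simple eigenvalue for eigenvector $v$ and $\lambda_2$ the double eigenvalue for the plane $v^\perp$.

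The remaining and most substantial step is the asymptotic behavior as $|v| \to \infty$. Here I would split the convolution $\int \frac{\mu(\tilde v)}{|v - \tilde v|}(\cdots)\,d\tilde v$ and note that since $\mu$ decays like a Gaussian, the dominant contribution comes from $|\tilde v| = O(1)$, where $|v - \tilde v| \approx |v|$. For $\lambda_2$: in directions orthogonal to $v$, the factor $1 - \frac{(\hat e_\perp \cdot(v-\tilde v))^2}{|v-\tilde v|^2}$ is $O(1)$ (close to $1$, since $v - \tilde v$ is nearly parallel to $v$), so $\lambda_2(v) \approx |v|^{-1}\int \mu(\tilde v)\,d\tilde v = \pi^{3/2}|v|^{-1}$, giving $c_2 = \pi^{3/2}$ (or the appropriate constant after the $(1+|v|)^{-1}$ normalization). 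For $\lambda_1$: the factor $1 - \frac{(e\cdot(v - \tilde v))^2}{|v-\tilde v|^2} = \frac{|v|^{-2}|e \wedge(v-\tilde v)|^2}{\cdots}$ — writing $\tilde v = \tilde v_\parallel e + \tilde v_\perp$, the wedge picks out only $\tilde v_\perp$, so this factor is $\approx \frac{|\tilde v_\perp|^2}{|v|^2}$, producing the extra $|v|^{-2}$ decay; thus $\lambda_1(v) \approx |v|^{-3}\int |\tilde v_\perp|^2 \mu(\tilde v)\,d\tilde v$ after a Taylor expansion of $|v - \tilde v|^{-3}$ around $|v|^{-3}$, which fixes $c_1$ up to the normalization. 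The main obstacle is making these asymptotic expansions rigorous and uniform: one must justify interchanging the limit with the integral, control the region $|\tilde v| \gtrsim |v|/2$ (which contributes exponentially small terms by the Gaussian decay), and carefully Taylor-expand the kernels $|v - \tilde v|^{-1}$ and $|v-\tilde v|^{-3}$ with explicit remainder bounds. This is essentially the computation carried out in \cite{Guo2002} and the references therein, so I would cite those estimates and only sketch the scaling argument rather than redo all the error analysis.
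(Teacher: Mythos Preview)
Your approach is correct and is essentially the standard proof of this result. However, the paper itself does not provide a proof of this lemma: it is stated without proof as a known structural fact about the linearized Landau collision matrix, and the paper relies on it (and on the corollary that follows) as background from \cite{Guo2002}. Your sketch---using $O(3)$-equivariance to obtain the spectral decomposition, positivity of the quadratic form via the explicit integral representation, and Taylor expansion of $|v-\tilde v|^{-1}$ and $|v-\tilde v|^{-3}$ against the Gaussian to extract the asymptotics---is exactly the argument in \cite{Guo2002} (Lemma~3 there), so your final remark about citing that reference is the right disposition.
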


\begin{coro}\label{coro-sig}
In view of \eqref{exp-sig}, there hold
\be\label{positive-sig1}
\sig^{ij}(v)g_ig_j=\lm_1(v)|P_v{\bf g}|^2+\lm_2(v)|(I-P_v){\bf g}|^2,
\ee
and
\be\label{positive-sig2}
\sig^{ij}v_iv_j=\lm_1(v)|v|^2.
\ee
There is $c>0$ such that 
\begin{align}\label{coercive}
    |h|_{\sigma, \ell}\geq c \Big(\|\langle v\rangle^{\ell-\fr12}h\|_{L^2_v}+\|\langle v\rangle^{\ell-\fr32}\nabla_vh\|_{L^2_v}\Big). 
\end{align}
\end{coro}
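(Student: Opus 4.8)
\textbf{Proof plan for the coercivity estimate \eqref{coercive}.} The statement to be proved is the pointwise/integrated lower bound
\[
|h|_{\sigma,\ell}\geq c\Big(\|\langle v\rangle^{\ell-\frac12}h\|_{L^2_v}+\|\langle v\rangle^{\ell-\frac32}\nabla_v h\|_{L^2_v}\Big),
\]
where by definition $|h|_{\sigma,\ell}^2=\int_{\mathbb R^3}\langle v\rangle^{2\ell}\big[\sigma^{ij}\partial_{v_i}h\,\partial_{v_j}h+\sigma^{ij}v_iv_j|h|^2\big]\,dv$. The plan is to work entirely from the spectral decomposition \eqref{exp-sig}--\eqref{positive-sig2} of $\sigma^{ij}$ together with the asymptotics of the eigenvalues $\lambda_1,\lambda_2$ from Lemma \ref{lem:sigma-lam}. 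First I would record the elementary two-sided bounds that follow from that lemma: there are constants $0<c_*\le C_*$ with
\[
c_*\langle v\rangle^{-3}\le \lambda_1(v)\le C_*\langle v\rangle^{-3},\qquad c_*\langle v\rangle^{-1}\le \lambda_2(v)\le C_*\langle v\rangle^{-1},\qquad v\in\mathbb R^3,
\]
since $\lambda_1,\lambda_2$ are continuous, strictly positive, and have the stated behaviour as $|v|\to\infty$ (so they are bounded above and below on any compact set and match $(1+|v|)^{-3}$, $(1+|v|)^{-1}$ at infinity).

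The second step handles the zeroth-order term. By \eqref{positive-sig2}, $\sigma^{ij}v_iv_j=\lambda_1(v)|v|^2\ge c_*\langle v\rangle^{-3}|v|^2$. This is comparable to $\langle v\rangle^{-1}$ for $|v|\ge 1$ but degenerates like $|v|^2$ near the origin, so the weight $\langle v\rangle^{\ell-\frac12}$ cannot be controlled by it alone near $v=0$. The fix is to also keep a slice of the gradient term: split $\mathbb R^3=\{|v|\le 1\}\cup\{|v|>1\}$. On $\{|v|>1\}$ one directly gets $\int_{|v|>1}\langle v\rangle^{2\ell}\sigma^{ij}v_iv_j|h|^2\,dv\gtrsim \int_{|v|>1}\langle v\rangle^{2\ell-1}|h|^2\,dv$. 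On $\{|v|\le 1\}$ one instead uses that $\langle v\rangle^{\ell-\frac12}$ is bounded there, so it suffices to bound $\|h\|_{L^2(|v|\le 1)}$; this is done by a standard Poincaré/Hardy-type argument absorbing it into $\|\nabla_v h\|_{L^2(|v|\le 2)}$ plus a harmless lower-order piece, or more simply by interpolating against the gradient term whose weight $\lambda_1(v)$ is bounded below on $\{|v|\le 2\}$ — this is exactly the mechanism already used in \eqref{low-order} and Remark \ref{rem-Y/t} of the main text. The cleanest route is: the gradient part of $|h|_{\sigma,\ell}^2$ dominates $\int_{|v|\le 2}\langle v\rangle^{2\ell-3}|\nabla_v h|^2\,dv\gtrsim \int_{|v|\le 2}|\nabla_v h|^2\,dv$, and then by the usual Hardy inequality on a ball $\int_{|v|\le 1}|h|^2\,dv\lesssim \int_{|v|\le 2}|\nabla_v h|^2\,dv + \int_{1\le|v|\le2}|h|^2\,dv$, where the last term is already controlled by the $|v|>1$ estimate. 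Adding the two regions gives $\|\langle v\rangle^{\ell-\frac12}h\|_{L^2_v}^2\lesssim |h|_{\sigma,\ell}^2$.

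The third step is the gradient term. Decompose $\nabla_v h = P_v(\nabla_v h)+(I-P_v)(\nabla_v h)$ using the projection \eqref{eq: P_v definition}; then \eqref{positive-sig1} gives $\sigma^{ij}\partial_{v_i}h\,\partial_{v_j}h=\lambda_1|P_v\nabla_v h|^2+\lambda_2|(I-P_v)\nabla_v h|^2\ge c_*\big(\langle v\rangle^{-3}|P_v\nabla_v h|^2+\langle v\rangle^{-1}|(I-P_v)\nabla_v h|^2\big)\ge c_*\langle v\rangle^{-3}|\nabla_v h|^2$. Hence $\int\langle v\rangle^{2\ell}\sigma^{ij}\partial_{v_i}h\partial_{v_j}h\,dv\ge c_*\int\langle v\rangle^{2\ell-3}|\nabla_v h|^2\,dv=c_*\|\langle v\rangle^{\ell-\frac32}\nabla_v h\|_{L^2_v}^2$. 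Combining the second and third steps (taking $c$ to be a small multiple of the resulting constants, and noting $|h|_{\sigma,\ell}^2$ bounds both pieces separately so it bounds their sum up to a factor $2$) yields \eqref{coercive}. The only mildly delicate point — the main obstacle — is the behaviour near $v=0$ where $\sigma^{ij}v_iv_j$ degenerates: one must genuinely use the gradient term there via Hardy's inequality rather than hoping the zeroth-order term suffices. Everything else is a routine consequence of the eigenvalue asymptotics, and the argument parallels the lower-order estimates already invoked repeatedly in the body of the paper.
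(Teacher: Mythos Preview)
The paper gives no proof of this corollary: the identities \eqref{positive-sig1}--\eqref{positive-sig2} are immediate linear algebra from \eqref{exp-sig}, and the coercivity \eqref{coercive} is simply quoted (it is Corollary~1 in \cite{Guo2002}). Your argument for \eqref{coercive} is correct and is exactly the standard route --- eigenvalue lower bounds give the gradient estimate and the large-$|v|$ part of the $L^2$ estimate, while the region $\{|v|\le 1\}$ is recovered from the gradient term via the inequality $\int_{|v|\le1}|h|^2\lesssim\int_{|v|\le2}|\nabla h|^2+\int_{1\le|v|\le2}|h|^2$. One cosmetic remark: that last inequality is a Friedrichs--Poincar\'e inequality rather than Hardy's; it is true and follows, for instance, from the divergence identity $3|h|^2=\mathrm{div}(v)\,|h|^2$ on $B_1$ together with a trace bound on $\partial B_1$, or by a compactness argument.
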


Let $\psi$ be a cutoff function in the Gevrey-${\fr{1}{s_0}}$ class, such that $\psi(v)\equiv1$ for $|v|\le1/2$ and $\psi(v)\equiv0$ for $|v|\ge3/4$. Set $\chi(v)=\psi(\fr{v}{2})-\psi(v)$. Then ${\rm supp\, }\chi\subset\{\fr{1}{2}\le|v|\le\fr{3}{2}\}$, and we have the
partition of unity
\[
1=\psi(v)+\sum_{j=0}^\infty\chi(\fr{v}{2^j}).
\]
Let us denote $\chi_j(v)=\chi(\fr{v}{2^j})$, for  $j=0, 1, 2, \cdots$,  $\chi_{-1}(v)=\psi(v)$ and $\chi_j(v)=0$ for  $j=-2, -3, -4, \cdots$. Then the above partition of unity can be rewritten as
\[
1=\sum_{j\in\Z}^\infty\chi(\fr{v}{2^j}).
\]
Moreover, since $\psi(v)$ and $\chi(v)$ are in the Gevrey-${\fr{1}{s_0}}$ class, there are two positive constants $M_\psi$ and $M_\chi$, such that for any multi-index $\beta\in\N^3$, there hold
\begin{align}\label{G-s}
|\pr_v^\beta\psi(v)|\les M_\psi^{|\beta|}\Gamma_{s_0}(\beta),\quad
|\pr_v^\beta\chi(v)|\les M_\chi^{|\beta|}\Gamma_{s_0}(\beta).
\end{align}

Recall that for $k\in\N$, we denote 
\[
\Gamma_s(k)=2^{-25}(k!)^{\fr{1}{s}}(k+1)^{-12}.
\]
Then the following inequality holds.
\be\label{bound-product-Gamma}
\sum_{j=0}^k\fr{k!}{j!(k-j)!}\Gamma_s(j)\Gamma_s(k-j)< \Gamma_s(k), \quad {\rm for}\quad 0<s\le1.
\ee

\begin{lem}\label{lem: sig_ij}
Let $0<c'<c$. There are $M_{\mu}$ and $M_{\sig}$, such that for any $\beta\in \mathbb{N}^3$, there hold
\begin{align}
\label{e-prmu}&\left|\pr_v^\beta e^{-c|v|^2}\right|\le M_{\mu}^{|\beta|}\Gamma_1(\beta) e^{-c'|v|^2},\\
\label{eq: est sigma_ij}&|\pr_v^\beta\sig^{ij}(v)|+|\pr_v^\beta\sig^i(v)|+|\pr_v^{\beta}(v_iv_j\sig^{ij}(v))|\le M_{\sigma}^{|\beta|}\Gamma_s(\beta) (1+|v|)^{-1-|\beta|},
\end{align}
for any $0<s<\fr23$.
\end{lem}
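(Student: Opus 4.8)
\textbf{Proof proposal for Lemma \ref{lem: sig_ij}.}

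The plan is to prove the three estimates in order of increasing difficulty. The bound \eqref{e-prmu} on derivatives of the Gaussian $e^{-c|v|^2}$ is classical: each differentiation produces a polynomial factor in $v$ (a Hermite-type polynomial) times $e^{-c|v|^2}$, the polynomial has degree $|\beta|$ and Gaussian-controlled coefficients, and the gap between $c'$ and $c$ absorbs any polynomial growth via $|v|^k e^{-(c-c')|v|^2} \le C_k$. The only point requiring care is tracking that the combinatorial constants grow at most geometrically, i.e. like $M_\mu^{|\beta|}$ times $\beta!$ (analytic, so $\Gamma_1(\beta)$ suffices); this follows by an induction on $|\beta|$ using the Leibniz rule and the elementary product inequality \eqref{bound-product-Gamma} with $s=1$. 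So the first inequality is essentially bookkeeping.

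The substance is \eqref{eq: est sigma_ij}, the Gevrey-$\tfrac1s$ estimate on $\sigma^{ij}=\Phi^{ij}*\mu$, $\sigma^i = v_j\sigma^{ij}$, and $v_iv_j\sigma^{ij}$. First I would reduce the three quantities to the first one: $\sigma^i = v_j\sigma^{ij}$ by \eqref{def-sig} and $v_iv_j\sigma^{ij}=\lambda_1(v)|v|^2$ by \eqref{positive-sig2}, so once we control $\pr_v^\beta \sigma^{ij}$ with the stated decay $(1+|v|)^{-1-|\beta|}$, the two others follow by the Leibniz rule — multiplication by $v_j$ costs one power of $|v|$ in growth but, crucially, one must check that the stated decay is still consistent, which it is because $\sigma^{ij}$ decays like $(1+|v|)^{-1}$ while $\lambda_1$ decays like $(1+|v|)^{-3}$ (Lemma \ref{lem:sigma-lam}), so $v_iv_j\sigma^{ij}$ decays like $(1+|v|)^{-1}$ as required, and each further derivative improves the decay by one power. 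For $\sigma^{ij}$ itself, the natural approach is the Littlewood--Paley / dyadic decomposition already set up in the excerpt: write $\mu = \sum_{j\in\Z}\chi_j \mu$ using the partition of unity, and correspondingly $\sigma^{ij}=\sum_j \Phi^{ij}*(\chi_j\mu)$. On the dyadic shell $\{|v|\sim 2^j\}$ the kernel $\Phi^{ij}(z)=\tfrac1{|z|}(\delta_{ij}-\tfrac{z_iz_j}{|z|^2})$ is homogeneous of degree $-1$ and smooth away from the origin; the Gaussian weight $\chi_j\mu$ is supported there and is Gevrey-$\tfrac1s$ regular with constants controlled by $M_\mu$ and $M_\chi$ via \eqref{e-prmu} and \eqref{G-s}. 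One then estimates $\pr_v^\beta(\Phi^{ij}*(\chi_j\mu))(v)$ for $v$ in a given shell by splitting the convolution into the region where $\tilde v$ is near $v$ (kernel singular, Gaussian essentially constant — integrable singularity of $|z|^{-1}$ in $\mathbb R^3$) and the region where $\tilde v$ is far from $v$ (kernel smooth, derivatives fall on $\Phi^{ij}$ producing homogeneity $-1-|\beta|$, Gaussian decay of $\chi_j\mu$ controls the tail). Summing the dyadic pieces — which decay geometrically in $j$ because of the Gaussian — yields the decay $(1+|v|)^{-1-|\beta|}$. The Gevrey bookkeeping: the constants from differentiating $\mu$ are analytic-class ($\Gamma_1$), those from $\chi$ are Gevrey-$\tfrac1{s_0}$, and since $\chi$ is in Gevrey class $\tfrac1{s_0}$ for any $s_0<1$ we may take $s_0$ close enough to $1$ that the product with the analytic Gaussian stays in Gevrey-$\tfrac1s$ for the target $s<\tfrac23$; the product inequality \eqref{bound-product-Gamma} then keeps the Leibniz expansion summable with a geometric constant $M_\sigma^{|\beta|}$.

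The main obstacle I anticipate is the interplay between the non-integrable-at-infinity kernel $\Phi^{ij}$ (it only decays like $|z|^{-1}$) and the requirement that each derivative genuinely improves the $v$-decay by exactly one power uniformly — this is where one cannot be cavalier about which factor in the convolution receives the derivative, and where the homogeneity structure of $\Phi^{ij}$ (degree $-1$, and $z_j\Phi^{ij}=0$, the cancellation \eqref{concel1}) must be exploited to avoid a logarithmic or polynomial loss. The cleanest way to organize this is probably to differentiate under the convolution by putting all derivatives on $\Phi^{ij}$ when $|\tilde v|$ is bounded away from $|v|$ (so $\pr_v^\beta \Phi^{ij}(v-\tilde v)$ is homogeneous of degree $-1-|\beta|$ in $v-\tilde v$ and one then integrates against the rapidly decaying $\chi_j\mu$), and to put derivatives on $\chi_j\mu$ in the near-diagonal region where $\Phi^{ij}$ stays singular but integrable; a change of variables $z=v-\tilde v$ makes the scaling transparent. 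An alternative that sidesteps some of this is to use the Fourier representation \eqref{eq:Fourier-Phi}, $\widehat{\Phi^{ij}}(\xi)=8\pi\xi_i\xi_j/|\xi|^4$, so that $\widehat{\sigma^{ij}}(\xi)=8\pi \tfrac{\xi_i\xi_j}{|\xi|^4}\hat\mu(\xi)$ with $\hat\mu$ Gaussian; then $\pr_v^\beta \sigma^{ij}$ corresponds to multiplying by $(i\xi)^\beta$, and the decay in $v$ must be read off from smoothness/singularity of $\tfrac{\xi_i\xi_j}{|\xi|^4}(i\xi)^\beta\hat\mu(\xi)$ at $\xi=0$ and its rapid decay at infinity — but extracting sharp $(1+|v|)^{-1-|\beta|}$ decay with Gevrey-controlled constants this way is, I think, no easier than the dyadic physical-space argument, so I would commit to the latter.
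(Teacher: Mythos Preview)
Your overall architecture for $\sigma^{ij}$ — dyadic decomposition, putting derivatives on $\Phi^{ij}$ in the far region and on $\mu$ near the singularity — is correct and essentially matches the paper's paraproduct splitting $\sigma^{ij}=T_\mu\Phi^{ij}+T'_{\Phi^{ij}}\mu$. (Incidentally, the cancellation \eqref{concel1} is not needed for the $\sigma^{ij}$ bound itself; the homogeneity of $\Phi^{ij}$ alone delivers the $(1+|v|)^{-1-|\beta|}$ decay when derivatives land on $\Phi^{ij}$.) However, two steps in your proposal would fail as written.

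First, your reduction of $\sigma^i$ and $v_iv_j\sigma^{ij}$ to $\sigma^{ij}$ via the Leibniz rule does not recover the stated decay. From $|\partial_v^\beta\sigma^{ij}|\le M^{|\beta|}\Gamma_s(\beta)(1+|v|)^{-1-|\beta|}$, Leibniz on $v_j\sigma^{ij}$ produces a leading term $|v|\cdot(1+|v|)^{-1-|\beta|}\sim(1+|v|)^{-|\beta|}$, one power short; noting that $\lambda_1\sim(1+|v|)^{-3}$ does not help unless you separately establish Gevrey derivative bounds on $\lambda_1$, which you do not propose. The paper instead uses the cancellation \eqref{concel1} to push the polynomial \emph{inside} the convolution: $\sigma^i=\Phi^{ij}*(v_j\mu)$ and $v_iv_j\sigma^{ij}=\Phi^{ij}*(v_iv_j\mu)=\Phi^{ij}*[\tfrac14\partial_{v_i}\partial_{v_j}\mu+\tfrac12\delta_{ij}\mu]$. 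Since $v_j\mu$ and $v_iv_j\mu$ are linear combinations of derivatives of $\mu$, these reduce to the $\sigma^{ij}$ estimate (with extra derivatives, which only improve decay).

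Second, you misidentify the source of the restriction $s<\tfrac23$. The Gevrey class of the cutoff alone would only force $s\le s_0<1$. The actual loss occurs in the piece $T'_{\Phi^{ij}}\mu$, where derivatives land on $\chi_k\mu$ and one first obtains Gaussian decay $\tilde M_\mu^{|\beta|}\Gamma_{s_0}(\beta)e^{-c_2|v|^2}$. Converting this to the required \emph{algebraic} decay $(1+|v|)^{-1-|\beta|}$ costs a factor $(1+|v|)^{1+|\beta|}e^{-c_2|v|^2}\le C^{|\beta|}\sqrt{|\beta|!}$, and $\Gamma_{s_0}(\beta)\cdot\sqrt{|\beta|!}$ is $(\beta!)^{1/s_0+1/2}$ up to geometric factors, forcing $\tfrac1s=\tfrac1{s_0}+\tfrac12>\tfrac32$.
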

\begin{proof}
For any given $c>0$ and multi-index $\al=(\al_1, \al_2, \al_3)$, by using the Fa\`{a} di Bruno's formula, we have
\begin{align}
\nn&\pr^\al e^{-c|v|^2}=\pr^{\al_1}\pr^{\al_2}\pr^{\al_3} e^{-c(v_1^2+v_2^2+v_3^2)}\\
\nn=&\pr^{\al_1}\pr^{\al_2}\sum_{m_{13}+2m_{23}=\al_3}\fr{\al_3!}{m_{13}!m_{23}!2^{m_{23}}}e^{-c(v_1^2+v_2^2+v_3^2)}(-2cv_3)^{m_{13}}(-2c)^{m_{23}}\\
\nn=&\prod_{1\le k\le 3}\sum_{m_{1k}+2m_{2k}} 2^{m_{1k}}(-c)^{m_{1k}+m_{2k}}\fr{\al_k!}{m_{1k}!m_{2k}!} v_{k}^{m_{1k}}e^{-cv_k^2}.
\end{align}
For any fixed $k\in\{1, 2, 3\}$, thanks to the elementary inequality $\fr{x^n}{e^{x^2}}\le \left(\fr{n}{2e}\right)^{n/2}$ for any fixed $n>0$ and all $x\ge0$ and \eqref{bound-product-Gamma},  we  have
\begin{align*}
    &\Bigg|\sum_{m_{1k}+2m_{2k}=\al_k}\fr{\al_k!}{m_{1k}!m_{2k}!}e^{-cv_k^2}v_k^{m_{1k}}\Bigg|\\
    \le&\sum_{m_{1k}+2m_{2k}=\al_k}\fr{\al_k!}{m_{1k}!(2m_{2k})!}\fr{(2m_{2k})!}{m_{2k}!} \left(e^{-(c-c')v_k^2}|v_k|^{m_{1k}}\right) e^{-c'v_k^2}\\
\le&C\sum_{m_{1k}+2m_{2k}=\al_k}\fr{\al_k!}{m_{1k}!(2m_{2k})!}\Gamma_1(2m_{2k})\fr{(m_{2k}+1)^{12}}{m_{2k}!} (c-c')^{-\fr{m_{1k}}{2}}\left(\fr{m_{1_k}}{2e}\right)^{\fr{m_{1k}}{2}} e^{-c'v_k^2}\\
    \le&C(c-c')^{-\al_k}\sum_{m_{1k}+2m_{2k}=\al_k}\fr{\al_k!}{m_{1k}!(2m_{2k})!}\Gamma_1(2m_{2k})\Gamma_1(m_{1k})e^{-c'v_k^2}\\
    \le& C(c-c')^{-\al_k}\Gamma_1(\al_k)e^{-c'v_k^2},
\end{align*}
where we have used the inequality
\[
\left(\fr{m_{1k}}{2e}\right)^{\fr{m_{1k}}{2}}\le C \left(\left\lfloor\fr{m_{1k}}{2}\right\rfloor+1\right)!\le C \Gamma_1 (m_{1k}),
\]
by Stirling's formula.
Thus we get 
\begin{align*}
  \Big|\pr_v^\al e^{-c|v|^2}\Big|&\leq C (2c+1)^{\alpha_1+\alpha_2+\alpha_3} \prod_{1\le k\le3}\sum_{\substack{m_{1k}+2m_{2k}=\al_k,\\1\le k\le3}}\fr{\al_k!}{m_{1k}!m_{2k}!}e^{-cv_k^2}|v_k|^{m_{1k}}\\
&\leq C\big[(2c+1)(c-c')^{-1}\big]^{\alpha_1+\alpha_2+\alpha_3}\Gamma_1(\alpha_{1})\Gamma_1(\alpha_{2})\Gamma_1(\alpha_{3})e^{-c'|v|^2}\\
&\lesssim C^{|\alpha|}\Gamma_1(\alpha)e^{-c'|v|^2}.
\end{align*}

Now we turn to prove \eqref{eq: est sigma_ij}.
Note that $\pr_v^\beta\chi_l(v)=\begin{cases}\fr{1}{2^{l|\beta|}}\pr_v^\beta\chi(\fr{v}{2^l}),\quad l=0, 1, 2,\cdots,\\
\pr_v^\beta\psi(v), \quad\quad\ \  \ l=-1.\end{cases}$ 
Then in view of \eqref{G-s} and \eqref{e-prmu}, and using \eqref{bound-product-Gamma}, we find that there exists $\tl{M}_\mu\ge M_\mu$ independent of $\beta$, such that
\begin{align}\label{pr_chi_lmu}
    |\pr_v^\beta (\chi_l \mu)(v)|\le\nn&\sum_{\beta'\le\beta}C_\beta^{\beta'}|\pr_v^{\beta'}\chi_l(v)| |\pr_v^{\beta-\beta'}\mu(v)| \\
    \nn\lesssim&\sum_{\beta'\le\beta} C_\beta^{\beta'} \max\left\{M_\psi,M_\chi\right\}^{|\beta'|} \Gamma_{s_0}(\beta')M_\mu^{|\beta-\beta'|}\Gamma_1(\beta-\beta')e^{-\fr34|v|^2}\\
    \les& \max\left\{M_\psi,M_\chi,M_\mu\right\}^{|\beta|}\Gamma_{s_0}(\beta)e^{-\fr34|v|^2}
    \les \tl{M}_\mu^{|\beta|}\Gamma_{s_0}(\beta)e^{-\fr34|v|^2},\quad {\rm for \ \ all }\quad l\in\Z.
\end{align}
It is easy to check that for $\frac{1}{3}\leq |v|\leq 2$, $\Phi^{ij}$ is analytic in $v$, namely, there is a constant $C>0$, such that for any $\beta\in \mathbb{N}^3$, it holds that
\begin{align*}
    |\pr_v^{\beta}\Phi^{ij}(v)|\lesssim C_{\Phi^{ij}}^{|\beta|}\Gamma_1(\beta).
\end{align*}
Then similar to \eqref{pr_chi_lmu}, we have
\begin{align*}
    |\pr_v^{\beta}(\chi\Phi^{ij})(v)|\les&\sum_{\beta'\le\beta}C_\beta^{\beta'}|\pr_v^{\beta'}\chi(v)| |\pr_v^{\beta-\beta'}\Phi^{ij}(v)| \\
    \lesssim&\sum_{\beta'\le\beta} C_\beta^{\beta'} M_\chi^{|\beta'|} \Gamma_{s_0}(\beta')C_{\Phi^{ij}}^{|\beta-\beta'|}\Gamma_1(\beta-\beta')\les \max\left\{M_\chi, C_{\Phi^{ij}}\right\}^{|\beta|}\Gamma_{s_0}(\beta).
\end{align*}
Owing to  the fact that $(\chi_l\Phi^{ij})(v)=\chi(\fr{v}{2^l})\Phi^{ij}(v)=\chi(\fr{v}{2^l})\Phi^{ij}(\fr{v}{2^l})\fr{1}{2^l}$, we get that for $2^{l-1}\leq |v|\leq \fr322^l$,
\be\label{e-prPhi}
\begin{aligned}
    |\pr_v^{\beta}(\chi_l\Phi^{ij})(v)|\lesssim &\fr{1}{2^{l|\beta|+l}}\sup_{\fr13\leq |w|\leq 2}|\pr_v^{\beta}(\chi\Phi^{ij})(w)|\\
    \lesssim &M_{\Phi}^{|\beta|}\Gamma_{s_0}(\beta)|v|^{-1-|\beta|},\quad l=0, 1, 2, 3, \cdots.
\end{aligned}
\ee

We denote $f_k(v)=f(v)\chi_k(v)$ and $f_{<k}(v)=\sum_{k'<k}f_{k'}(v)=\psi(\fr{v}{2^{k'}})f(v)$.
 Then
\begin{align}
\sig^{ij}(v)=&[\Phi^{ij}*\mu](v)\nn=\sum_{k,k'\in\Z}\int_{\R^3}\Phi^{ij}_k(v-v')\mu_{k'}(v')dv'\\
\nn=&\sum_{k\ge3}\int_{\R^3}\Phi^{ij}_{k}(v-v')\mu_{<k-3}(v')dv'+\sum_{k\ge -4}\int_{\R^3}\Phi^{ij}_{<k+4}(v-v')\mu_k(v')dv'\\
=&{\rm T}_{\mu}\Phi^{ij}+{\rm T}'_{\Phi^{ij}}\mu.
\end{align}
On the support of the integrand of ${\rm T}_{\mu}\Phi^{ij}$, it holds that
\[
|v'|\le \fr{3}{16}|v-v'|, \quad{\rm and\  hence}\quad \fr{13}{16} |v-v'|\le |v|\le \fr{19}{16}|v-v'|,
\]
and
\begin{align*}
    \pr_v^\beta{\rm T}_{\mu}\Phi^{ij}=&\sum_{k\ge 3}\int_{\R^3}\pr_v^\beta\Phi^{ij}_{k}(v-v')\mu_{<k-3}(v')dv'\\
    =&\sum_{k\ge 3}\int_{\R^3}\sum_{l\in\Z}\chi_l(v-v')\pr_v^\beta\Phi^{ij}_{k}(v-v')\mu_{<k-3}(v')dv'\\
    =&\sum_{k\ge 3}\sum_{|l-k|\le1}\int_{\R^3}\chi_l(v-v')\pr_v^\beta\Phi^{ij}_{k}(v-v')\mu_{<k-3}(v')dv'.
\end{align*}
Thus, combining this with \eqref{e-prPhi}, we find that
\begin{align}
\big| \pr_v^{\beta}{\rm T}_{\mu}\Phi^{ij}\big|\nn\leq& \int_{\R^3}\sum_{k\ge 3}\sum_{|l-k|\le1}\chi_l(v-v')\big|\pr_v^\beta\Phi^{ij}_{k}(v-v') \big|\mu(v')dv'\\
\nn\leq&M_{\Phi}^{|\beta|}\Gamma_{s_0}(\beta)|v|^{-1-|\beta|}\int_{\R^3}\sum_{k\in\Z}\sum_{l\in\Z}\chi_l(v-v'){\bf 1}_{[-1,1]}(k-l) \mu(v')dv'\\
\les&M_{\Phi}^{|\beta|}\Gamma_{s_0}(\beta)|v|^{-1-|\beta|},
\end{align}
where we have used the Young's inequality for convolution to get
\[
\sum_{k\in\Z}\sum_{l\in\Z}\chi_l(v-v'){\bf 1}_{[-1,1]}(k-l)\le \sum_{k\in\Z}{\bf 1}_{[-1,1]}(k)\sum_{l\in\Z}\chi_l(v-v')\les 1,
\]
due to the fact $\sum_{m\in\Z}\chi_m(\cdot)=1$.

On the support of the integrand of ${\rm T}'_{\Phi^{ij}}\mu$, it holds that
\[
|v-v'|\le 24|v'|, \quad{\rm and\  hence}\quad |v|\le 25|v'|.
\]
Combining this with \eqref{pr_chi_lmu} and $|v|^{|\beta|}e^{-c|v|^2}\leq C^{|\beta|}\sqrt{|\beta|!}$, by taking $s_0$ such that $\fr{1}{s}=\fr{1}{s_0}+\fr{1}{2}$, yields
\begin{align*}
    \left|\pr_v^\beta{\rm T}'_{\Phi^{ij}}\mu\right|\le &\sum_{k\ge-4}\sum_{|k-l|\le1}\int_{\R^3}\left|\Phi^{ij}_{<k-3}(v-v')\right|\chi_l(v')\left|\pr_v^\beta\mu_k(v')\right|dv'\\
    \les & \tl{M}_\mu^{|\beta|}\Gamma_{s_0}(\beta)\int_{\R^3}\sum_{k\in\Z}\sum_{l\in Z}\chi_l(v'){\bf 1}_{[-1,1]}(k-l)\fr{e^{-\fr34|v'|^2}}{|v-v'|}dv'\\
\les&\tl{M}_\mu^{|\beta|}\Gamma_{s_0}(\beta)e^{-c_2|v|^2}\int_{\R^3}\fr{e^{-c_2|v-v'|^2}}{|v-v'|}dv'\les \tl{M}_\mu^{|\beta|}\Gamma_{s_0}(\beta)e^{-c_2|v|^2}\\
\les&M_\sig^{|\beta|}\Gamma_s(\beta)(1+|v|)^{-1-|\beta|}.
\end{align*}

Recalling  \eqref{concel1} and the definition of $\sig^{ij}$ and $\sig^i$ in \eqref{def-sig}, we write
\[
\sig^{ij}v_iv_j=\int_{\R^3}\Phi^{ij}(v-\tl{v})\big(\tl{v}_i\tl{v}_j\mu(\tl{v})\big)d\tl{v}=\Phi^{ij}*(v_iv_j\mu(v))=\Phi^{ij}*\left[\fr14\pr_{v_i}\pr_{v_j}\mu+\fr12\dl_{ij}\mu
\right].
\]
Then \eqref{eq: est sigma_ij} follows immediately.
\end{proof}

\subsection{Product estimates}
In this section, we study the product of functions in the Gevrey class. 
\begin{lem}\label{lem-Gamma-equi}
For any multi-index $\al\in\N^6$, $s\in(0,1]$, there exists a large constant $C$, independent of $\al$ and $s$, such that
\begin{equation}\label{equiv-gamma}
 \fr{1}{C(2\cdot 6^{\fr{1}{s}-1})^{|\al|}}\fr{\Gamma_s(|\al|)}{C_{|\al|}^{\al}}\le \Gamma_s(\al)\le \fr{\Gamma_s(|\al|)}{C_{|\al|}^{\al}}.
\end{equation}
\end{lem}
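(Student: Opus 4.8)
The plan is to reduce the claimed two-sided bound to an explicit ratio and then verify two elementary inequalities. Writing $\alpha=(n_1,\dots,n_6)\in\N^6$ and recalling $\Gamma_s(k)=2^{-25}(k!)^{1/s}(k+1)^{-12}$ together with $C_{|\alpha|}^{\alpha}=|\alpha|!/(n_1!\cdots n_6!)$, I would first compute directly
\[
\frac{\Gamma_s(\alpha)\,C_{|\alpha|}^{\alpha}}{\Gamma_s(|\alpha|)}
=\frac{\prod_{i=1}^6 2^{-25}(n_i!)^{1/s}(1+n_i)^{-12}}{2^{-25}(|\alpha|!)^{1/s}(1+|\alpha|)^{-12}}\cdot C_{|\alpha|}^{\alpha}
=2^{-125}\bigl(C_{|\alpha|}^{\alpha}\bigr)^{1-1/s}\frac{(1+|\alpha|)^{12}}{\prod_{i=1}^6(1+n_i)^{12}},
\]
where the identity $\prod_i n_i!/|\alpha|!=1/C_{|\alpha|}^{\alpha}$ has been used to rewrite the factorial part. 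Thus the lemma is equivalent to
\[
\frac{1}{C(2\cdot6^{1/s-1})^{|\alpha|}}\le 2^{-125}\bigl(C_{|\alpha|}^{\alpha}\bigr)^{1-1/s}\frac{(1+|\alpha|)^{12}}{\prod_{i=1}^6(1+n_i)^{12}}\le 1 .
\]

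For the upper bound I would note that each of the three factors on the right is $\le 1$: clearly $2^{-125}\le1$; since $C_{|\alpha|}^{\alpha}\ge1$ and $1-1/s\le0$ (because $s\le1$) we get $(C_{|\alpha|}^{\alpha})^{1-1/s}\le1$; and expanding $\prod_{i=1}^6(1+n_i)=\sum_{S\subseteq\{1,\dots,6\}}\prod_{i\in S}n_i\ge 1+\sum_i n_i=1+|\alpha|$ gives $(1+|\alpha|)^{12}/\prod_i(1+n_i)^{12}\le1$. For the lower bound I would instead use the companion elementary bounds recorded in the Notations section: $C_{|\alpha|}^{\alpha}\le 6^{|\alpha|}$, hence (again since $1-1/s\le0$) $(C_{|\alpha|}^{\alpha})^{1-1/s}\ge (6^{|\alpha|})^{1-1/s}=(6^{1/s-1})^{-|\alpha|}$; and $1+n_i\le1+|\alpha|$ for each $i$, hence $\prod_i(1+n_i)^{12}\le(1+|\alpha|)^{72}$ and $(1+|\alpha|)^{12}/\prod_i(1+n_i)^{12}\ge(1+|\alpha|)^{-60}$. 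Combining these, the middle quantity is $\ge 2^{-125}(6^{1/s-1})^{-|\alpha|}(1+|\alpha|)^{-60}$, and after cancelling the common factor $(6^{1/s-1})^{-|\alpha|}$ the required inequality reduces to $C\ge 2^{125}(1+|\alpha|)^{60}2^{-|\alpha|}$, which holds with $C:=2^{125}\sup_{k\in\N}(1+k)^{60}2^{-k}<\infty$.

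I expect no real obstacle here: the statement is a bookkeeping identity together with the fact that an exponential dominates a polynomial, which is exactly what makes $\sup_{k\in\N}(1+k)^{60}2^{-k}$ finite. The only points requiring care are (i) keeping track of the sign of $1-1/s$ throughout, so that the monotonicity of $x\mapsto x^{1-1/s}$ is used in the correct direction in each of the two bounds, and (ii) checking that the resulting constant $C$ is genuinely uniform in $s\in(0,1]$ — which it is, since the only $s$-dependence left after the above reductions is absorbed into the factor $(6^{1/s-1})^{-|\alpha|}$ on both sides, which cancels exactly.
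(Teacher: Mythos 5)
Your proof is correct and follows essentially the same route as the paper's: both compute the ratio $\Gamma_s(\alpha)\,C_{|\alpha|}^{\alpha}/\Gamma_s(|\alpha|)=2^{-125}\bigl(C_{|\alpha|}^{\alpha}\bigr)^{1-1/s}(1+|\alpha|)^{12}/\prod_i(1+\alpha_i)^{12}$ explicitly and then invoke $1\le C_{|\alpha|}^{\alpha}\le 6^{|\alpha|}$, $1+|\alpha|\le\prod_i(1+\alpha_i)\le(1+|\alpha|)^6$, and $(1+|\alpha|)^{60}\le C_1 2^{|\alpha|}$. Your exposition is slightly more spelled out in tracking the sign of $1-1/s$, but the decomposition, the key elementary inequalities, and the resulting constant $C=2^{125}\sup_k(1+k)^{60}2^{-k}$ are identical to the paper's.
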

\begin{proof}
Note that
\begin{align*}
\fr{\Gamma_s(\al)}{\fr{\Gamma_s(|\al|)}{C_{|\al|}^{\al}}}=&\fr{2^{-150}(\al_1!)^{\fr{1}{s}-1}\cdots (\al_6!)^{\fr{1}{s}-1}(|\al_1|+1)^{-12}\cdots(|\al_6|+1)^{-12}}{2^{-25}(|\al|!)^{\fr{1}{s}-1}(|\al|+1)^{-12}}\\
=&2^{-125}\left(\fr{1}{C_{|\al|}^{\al}}\right)^{\fr{1}{s}-1}\fr{(|\al|+1)^{12}}{(|\al_1|+1)^{12}\cdots(|\al_6|+1)^{12}}.
\end{align*}
This, together with the facts $1\le C^{\al}_{|\al|}\le 6^{|\al|}$, and  
\[
1\le \fr{(|\al_1|+1)^{12}\cdots(|\al_6|+1)^{12}}{(|\al|+1)^{12}}\le (|\al|+1)^{60}\le C_12^{|\al|}
\]
implies that \eqref{equiv-gamma} holds with $C=2^{125}C_1$.
\end{proof}

\begin{lem}\label{lem-summable}
For all $m\in\N^6$, there holds
    \begin{equation}\label{summable1}
\sum_{n\le m}\fr{C_{m}^{n}C_{|m|}^{m}}{C_{|n|}^{n}C_{|m-n|}^{m-n}}\fr{\Gamma_s(|n|)\Gamma_s(|m-n|)}{\Gamma_s(|m|)} \le1.
    \end{equation}
\end{lem}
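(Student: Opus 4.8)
The plan is to prove \eqref{summable1} by a two-step reduction: first convert the multinomial sum over $n \le m \in \N^6$ into a product of one-dimensional sums over the six components $n_i \le m_i \in \N$, and then prove the resulting scalar inequality. The key algebraic observation is that the quantity
\[
 b_{m,n,s} := \fr{C_m^n C_{|m|}^m}{C_{|n|}^n C_{|m-n|}^{m-n}} \fr{\Gamma_s(|n|)\Gamma_s(|m-n|)}{\Gamma_s(|m|)}
\]
does not factor directly over components because $\Gamma_s$ is evaluated at the \emph{totals} $|n|$, $|m-n|$, $|m|$ rather than at the component multi-indices. So the first thing I would do is use Lemma \ref{lem-Gamma-equi} (the equivalence \eqref{equiv-gamma}) to replace $\Gamma_s(|n|)$ by $C_{|n|}^n \Gamma_s(n)$ up to the factor $C(2\cdot 6^{1/s-1})^{|n|}$, and similarly for the other two totals. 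After this substitution the combinatorial prefactors $C_{|n|}^n$, $C_{|m-n|}^{m-n}$, $C_{|m|}^m$ cancel against those already present in $b_{m,n,s}$, and (since $C(2\cdot 6^{1/s-1})^{|n|} C(2\cdot 6^{1/s-1})^{|m-n|}/(C(2\cdot 6^{1/s-1})^{|m|})$ collapses to a harmless constant — note $|n|+|m-n|=|m|$) one is left with estimating $\sum_{n\le m} C_m^n \Gamma_s(n)\Gamma_s(m-n)$ times a bounded factor, divided by something comparable to $\Gamma_s(m)$.

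\textbf{The component reduction.} Since $C_m^n = \prod_{i=1}^6 \binom{m_i}{n_i}$ and $\Gamma_s(n) = \prod_{i=1}^6 \Gamma_s(n_i)$, $\Gamma_s(m-n) = \prod_{i=1}^6 \Gamma_s(m_i - n_i)$, and $\Gamma_s(m) = \prod_{i=1}^6 \Gamma_s(m_i)$, the whole expression factors over $i$ and the sum over $n \le m$ becomes $\prod_{i=1}^6 \left( \sum_{n_i=0}^{m_i} \binom{m_i}{n_i} \fr{\Gamma_s(n_i)\Gamma_s(m_i-n_i)}{\Gamma_s(m_i)} \right)$ up to the bounded constant tracked above. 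Hence it suffices to prove the scalar statement: there is a constant $c_s < 1$ (or at least a constant, which can then be absorbed by choosing $2^{-25}$ appropriately — note the factor $2^{-25}$ in $\Gamma_s$ is exactly there to make this work) such that for every $k \in \N$,
\[
 \sum_{j=0}^k \binom{k}{j} \fr{\Gamma_s(j)\Gamma_s(k-j)}{\Gamma_s(k)} \le c_s .
\]
This is precisely the content of the already-stated inequality \eqref{bound-product-Gamma} in the appendix, namely $\sum_{j=0}^k \binom{k}{j}\Gamma_s(j)\Gamma_s(k-j) < \Gamma_s(k)$ for $0 < s \le 1$, so this scalar bound may be invoked directly. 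The only care needed is bookkeeping of the various powers of $2$ and of $6^{1/s-1}$: one must check that the aggregate constant coming from Lemma \ref{lem-Gamma-equi} (applied three times) together with the constant $c_s < 1$ from \eqref{bound-product-Gamma}, raised to the sixth power over the six components, still yields a quantity $\le 1$. This is where the large negative power $2^{-25}$ built into $\Gamma_s$ does the heavy lifting: each application of \eqref{bound-product-Gamma} gains a factor strictly below $1$ that beats the polynomial and constant losses.

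\textbf{Main obstacle.} I expect the genuinely delicate point to be not the structural reduction but the \emph{uniformity of the constant in the component count}. After factoring over the six components one gets the sixth power of a scalar bound, so if the scalar bound were merely $\le 1$ rather than $\le c_s < 1$ with room to spare, the constant from Lemma \ref{lem-Gamma-equi} (which contributes $C^6$ and $(2\cdot 6^{1/s-1})^{|m|}$-type growth in $|m|$) could destroy the estimate. The resolution is that the $|m|$-dependent factor $(2\cdot 6^{1/s-1})^{|n|}(2\cdot 6^{1/s-1})^{|m-n|}(2\cdot 6^{1/s-1})^{-|m|} = 1$ exactly because of the additivity $|n| + |m-n| = |m|$, so there is in fact \emph{no} exponential-in-$|m|$ loss — only a fixed multiplicative constant $C^6$, which is absorbed because the refined form of \eqref{bound-product-Gamma} (traceable from its proof, which itself relies on summing $\binom{k}{j}(j!)^{1/s-1}((k-j)!)^{1/s-1}/(k!)^{1/s-1} \le \sum_j \binom{k}{j}^{2-1/s}$, a convergent-type bound, times the polynomial weights and the $2^{-25}$) gives a constant far below $1$. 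So the write-up amounts to: (i) invoke \eqref{equiv-gamma} three times; (ii) cancel the combinatorial factors and observe the $|m|$-power telescopes to $1$; (iii) factor the sum over the six coordinates; (iv) apply \eqref{bound-product-Gamma} in each coordinate; (v) verify the finitely many constants multiply to at most $1$. I would present steps (i)--(iv) in detail and dispatch (v) with the observation that one may, if necessary, enlarge the absolute constant $2^{25}$ hidden in $\Gamma_s$ — it appears nowhere else essentially — though in fact $2^{25}$ is already chosen generously enough.
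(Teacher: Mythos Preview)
Your approach has a genuine gap at the point where you claim the exponential factors telescope. When you invoke \eqref{equiv-gamma} on the three terms, the directions of the inequalities do not align as you assert. For $\Gamma_s(|n|)$ and $\Gamma_s(|m-n|)$ in the numerator you need \emph{upper} bounds, and the only upper bound \eqref{equiv-gamma} provides is $\Gamma_s(|\al|)\le C(2\cdot 6^{1/s-1})^{|\al|}C_{|\al|}^\al\Gamma_s(\al)$, contributing factors $(2\cdot 6^{1/s-1})^{|n|}$ and $(2\cdot 6^{1/s-1})^{|m-n|}$. For $\Gamma_s(|m|)$ in the denominator you need a \emph{lower} bound, and the lower bound from \eqref{equiv-gamma} is simply $\Gamma_s(|m|)\ge C_{|m|}^m\Gamma_s(m)$ --- with \emph{no} compensating $(2\cdot 6^{1/s-1})^{-|m|}$. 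After the three substitutions you are therefore left with an uncontrolled factor $C^2(2\cdot 6^{1/s-1})^{|m|}$; it does not collapse to a constant. The scalar inequality \eqref{bound-product-Gamma} in each coordinate yields only a fixed constant below $1$ (the $j=0$ term alone contributes $2^{-25}$, so the sum does not decay in $k$), and the sixth power of a fixed constant cannot absorb an exponentially growing prefactor. Enlarging $2^{25}$ does not help for the same reason.

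The paper avoids Lemma \ref{lem-Gamma-equi} altogether by the exact identity
\[
\fr{C_m^n C_{|m|}^m}{C_{|n|}^n C_{|m-n|}^{m-n}}=\fr{|m|!}{|n|!\,|m-n|!},
\]
which you can check by expanding all factorials: the $n_i!$ and $(m_i-n_i)!$ cancel \emph{completely}, not merely up to constants. The summand then equals $\big(\fr{|n|!|m-n|!}{|m|!}\big)^{1/s-1}\cdot\fr{2^{-25}(|m|+1)^{12}}{(|n|+1)^{12}(|m-n|+1)^{12}}$, a function of the scalars $|n|$ and $|m|$ only. The first factor is $\le 1$ for $s\le 1$; the second is summed over all $n\le m\in\N^6$ by splitting $|n|\le|m|/2$ versus $|n|>|m|/2$ and then using $(|n|+1)^{12}\ge\prod_i(n_i+1)^2$ to factor the remaining sum over coordinates. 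No constants are lost along the way, and the bound $\le 1$ follows directly.
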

\begin{proof}
Note that for any $i\in \{1, 2, \cdots,6\}$, there holds
\[
\sum_{n_i\in\N}\fr{2^{-3}}{(n_i+1)^2}=2^{-3}\fr{\pi^2}{6}<\fr{1}{2}.
\]
Then
    \begin{align*}
&\sum_{n\le m}\fr{C_{m}^{n}C_{|m|}^{m}}{C_{|n|}^{n}C_{|m-n|}^{m-n}}\fr{\Gamma_s(|n|)\Gamma_s(|m-n|)}{\Gamma_s(|m|)}\\
=&\sum_{n\le m}\fr{\fr{m_1!\cdots m_6!}{n_1!\cdots n_6!(m_1-n_1)!\cdots(m_6-n_6)!}}{\fr{|n|!}{n_1!\cdots n_6!}\fr{|m-n|!}{(m_1-n_1)!\cdots(m_6-n_6)!}}\cdot \fr{|m|!}{m_1!\cdots m_6!}\\
&\times \fr{2^{-25}(|n|!)^\fr{1}{s}(|n|+1)^{-12}2^{-25}(|m-n|!)^\fr{1}{s}(|m-n|+1)^{-12}}{2^{-25}(|m|!)^\fr{1}{s}(|m|+1)^{-12}}\\
=&\sum_{n\le m}\left(\fr{|n|!|m-n|!}{|m|!}\right)^{\fr{1}{s}-1}\fr{2^{-25}(|m|+1)^{12}}{(|n|+1)^{12}(|m-n|+1)^{12}}\\
\le&\left(\sum_{|n|\le \fr{|m|}{2}}+\sum_{\fr{|m|}{2}<|n|\le|m| }\right)\fr{2^{-25}(|m|+1)^{12}}{(|n|+1)^{12}(|m-n|+1)^{12}}\\
\le&2^5\sum_{|n|\le \fr{|m|}{2}}\fr{2^{-18}}{(|n|+1)^{12}}+2^5\sum_{\fr{|m|}{2}<|n|\le|m|}\fr{2^{-18}}{(|m-n|+1)^{12}}\\
\le&2^6\sum_{n\in\N^6}\fr{2^{-18}}{(|n|+1)^{12}}\le 2^6 \prod_{1\le i\le 6}\sum_{n_i\in\N}\fr{2^{-3}}{(n_i+1)^2}\le 1.
   \end{align*}
\end{proof}

\begin{coro}\label{coro-kernel}
    For all $m\in\N^6$, there holds
    \begin{align}\label{summable2}
        \sum_{\substack{n\le m\\ n'\le n}}\fr{C_m^nC_{|m|}^m C_n^{n'}}{C_{|m-n|}^{m-n}C_{|n-n'|}^{n-n'}C_{|n'|}^{n'}}\fr{\Gamma_s(|n'|)\Gamma_s(|n-n'|)\Gamma_s(|m-n|)}{\Gamma_s(|m|)}\le1.
    \end{align}
\end{coro}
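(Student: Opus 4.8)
\textbf{Proof plan for Corollary \ref{coro-kernel}.} The statement is the ``three-factor'' analogue of Lemma \ref{lem-summable}, so the plan is to reduce the triple sum to a product of two applications of the two-factor estimate \eqref{summable1}. First I would rewrite the combinatorial prefactor using the same multinomial cancellation as in the proof of Lemma \ref{lem-summable}: expanding the $C$'s and the $\Gamma_s$'s gives
\begin{align*}
\fr{C_m^nC_{|m|}^m C_n^{n'}}{C_{|m-n|}^{m-n}C_{|n-n'|}^{n-n'}C_{|n'|}^{n'}}\fr{\Gamma_s(|n'|)\Gamma_s(|n-n'|)\Gamma_s(|m-n|)}{\Gamma_s(|m|)}
=\left(\fr{|n'|!\,|n-n'|!\,|m-n|!}{|m|!}\right)^{\fr1s-1}\fr{2^{-50}(|m|+1)^{12}}{(|n'|+1)^{12}(|n-n'|+1)^{12}(|m-n|+1)^{12}},
\end{align*}
after noting all the factorials $n_i!$, $(n_i-n_i')!$, $n_i'!$, $(m_i-n_i)!$ cancel (this is exactly the computation done in Lemma \ref{lem-summable}, just with one extra split).

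Next I would group the multinomial factor $\big(\tfrac{|n'|!|n-n'|!|m-n|!}{|m|!}\big)^{1/s-1}$ as
$\big(\tfrac{|n'|!|n-n'|!}{|n|!}\big)^{1/s-1}\big(\tfrac{|n|!|m-n|!}{|m|!}\big)^{1/s-1}$, and correspondingly insert an extra $\tfrac{2^{-25}(|n|+1)^{12}}{(|n'|+1)^{12}(|n-n'|+1)^{12}}\cdot\tfrac{2^{-25}(|m|+1)^{12}}{(|n|+1)^{12}(|m-n|+1)^{12}}$, which equals exactly the product of prefactors for the pairs $(n',n-n')$ summing to $n$ and $(n,m-n)$ summing to $m$. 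Thus the summand factors as $b_{n,n',s}\,b_{m,n,s}$ in the notation of \eqref{def:a}, and the triple sum becomes
\begin{align*}
\sum_{n\le m}b_{m,n,s}\left(\sum_{n'\le n}b_{n,n',s}\right)\le \sum_{n\le m}b_{m,n,s}\cdot 1\le 1,
\end{align*}
using \eqref{summable1} (i.e. Lemma \ref{lem-summable}) twice — once for the inner sum over $n'\le n$ with $|n|$ fixed, once for the outer sum over $n\le m$.

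I do not expect any real obstacle here; the only point requiring a little care is the bookkeeping in the second step — verifying that the ratio of $(\cdot+1)^{12}$ prefactors splits cleanly as claimed, so that the inserted and cancelled $2^{-25}(|n|+1)^{12}$ factors match and one genuinely recovers a product of two $b$-coefficients rather than something merely bounded by it. Since the definitions of $b_{m,n,s}$ and $b_{m,n,n',s}$ in \eqref{def:a} are set up precisely so that $b_{m,n,n',s}=b_{m,n,s}b_{n,n',s}$ (which is essentially Remark \ref{Rmk: coefficients a-b} read backwards), this identity is structural and the reduction is immediate. An analogous argument with three nested applications of Lemma \ref{lem-summable} would handle the four-factor coefficient $b_{m,n,n',n'',s}$ if needed.
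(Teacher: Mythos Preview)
Your proposal is correct and matches the paper's approach: both simplify the summand to the explicit form and then split it into a product of two two-factor terms, each handled by Lemma \ref{lem-summable}. One small correction: the paper's $b_{m,n,n',s}$ (where $n'\le m-n$) factors as $b_{m,n,s}\,b_{m-n,n',s}$, not $b_{m,n,s}\,b_{n,n',s}$ --- but the summand in \eqref{summable2} (where $n'\le n$) is indeed exactly $b_{m,n,s}\,b_{n,n',s}$, so your factorization of the relevant quantity is right.
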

\begin{proof}
By virtue of the proof of \eqref{summable1}, we find that
    \begin{align*}
        &\sum_{\substack{n\le m\\ n'\le n}}\fr{C_m^nC_{|m|}^m C_n^{n'}}{C_{|m-n|}^{m-n}C_{|n-n'|}^{n-n'}C_{|n'|}^{n'}}\fr{\Gamma_s(|n'|)\Gamma_s(|n-n'|)\Gamma_s(|m-n|)}{\Gamma_s(|m|)}\\
        =&2^{-50}\sum_{\substack{n\le m\\ n'\le n}}\left(\fr{|n'|!|n-n'|!|m-n|!}{|m|!}\right)^{\fr1s-1}\fr{(|m|+1)^{12}}{(|n'|+1)^{12}(|n-n'|+1)^{12}(|m-n|+1)^{12}}\\
        \le&\sup_{m\in\N^6}\sum_{n\le m}\fr{2^{-25}(|m|+1)^{12}}{(|m-n|+1)^{12}(|n|+1)^{12}}\sup_{n\in\N^6}\sum_{n'\le n}\fr{2^{-25}(|n|+1)^{12}}{(|n'|+1)^{12}(|n-n'|+1)^{12}}\le1.
    \end{align*}
\end{proof}

\begin{lem}\label{lem-summable2}
For all $m\in\N^6$ with $|m|\ge1$, there holds
\begin{align}\label{CK-summa}
    \sum_{\substack{ {  0<}n\le m}}\fr{C_m^nC_{|m|}^m }{C_{|n|}^nC_{|m-n+\bar{e}_j|}^{m-n+\bar{e}_j}}\fr{\Gamma_s(|n|)\Gamma_s(|m-n+\bar{e}_j|)}{\Gamma_s(|m|)\sqrt{|m-n+\bar{e}_j||m|}}\le1.
\end{align}
\end{lem}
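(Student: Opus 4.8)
The estimate \eqref{CK-summa} is the analogue of Lemma \ref{lem-summable}, but with one of the two velocity-index blocks shifted by $\bar e_j$ (so $|m-n|$ is replaced by $|m-n+\bar e_j| = |m-n|+1$) and with an extra gain factor $(|m-n+\bar e_j||m|)^{-1/2}$ in the denominator. The idea is to reduce it to the combinatorial core already used in the proof of \eqref{summable1}. First I would rewrite the ratio of combinatorial numbers exactly as in that proof: for $0<n\le m$,
\begin{align*}
\fr{C_m^nC_{|m|}^m}{C_{|n|}^nC_{|m-n+\bar e_j|}^{m-n+\bar e_j}}\fr{\Gamma_s(|n|)\Gamma_s(|m-n+\bar e_j|)}{\Gamma_s(|m|)}
=\Big(\fr{|n|!\,(|m-n|+1)!}{|m|!}\Big)^{\fr1s-1}\fr{2^{-25}(|m|+1)^{12}}{(|n|+1)^{12}(|m-n|+2)^{12}}\cdot R,
\end{align*}
where $R$ is the ratio of the ``plain'' binomial coefficients $C_m^nC^m_{|m|}/(C^n_{|n|}C^{m-n+\bar e_j}_{|m-n+\bar e_j|})$ coming from the identity $C^m_{|m|}\,C^n_m = \big(\tfrac{|n|!|m-n|!}{|m|!}\big)^{-1}C^n_{|n|}C^{m-n}_{|m-n|}$ times the correction from adjoining $\bar e_j$. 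Since $m-n+\bar e_j$ differs from $m-n$ by a single unit in one coordinate, $C^{m-n+\bar e_j}_{|m-n+\bar e_j|}=C^{m-n}_{|m-n|}\cdot\tfrac{|m-n|+1}{(m-n)_{j+3}+1}\ge C^{m-n}_{|m-n|}$ when $(m-n)_{j+3}=0$ and is comparable otherwise; in all cases the factor $R$ is bounded by a universal constant (in fact one checks $R\le 1$ after absorbing the $\big(\tfrac{|n|!|m-n|!}{|m|!}\big)$ into the $\Gamma$-ratio, exactly as in Lemma \ref{lem-summable}). The net effect is that the left side of \eqref{CK-summa} is bounded by a constant times
\begin{align*}
\sum_{0<n\le m}\Big(\fr{|n|!\,(|m-n|+1)!}{|m|!}\Big)^{\fr1s-1}\fr{(|m|+1)^{12}}{(|n|+1)^{12}(|m-n|+2)^{12}}\cdot\fr{1}{\sqrt{(|m-n|+1)(|m|+1)}},
\end{align*}
up to the constants $2^{-25}$ etc.

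Next I would discard the $(\tfrac1s-1)$-power, which is $\le 1$ since $|n|!\,(|m-n|+1)!\le |m|!$ for $0<n\le m$ (here the shift by $\bar e_j$ is harmless because $|m-n|+1\le |m|$ whenever $n\neq 0$, using $|m|\ge1$), together with $0<\tfrac1s-1$. Then, splitting the sum as in the proof of Lemma \ref{lem-summable} into $|n|\le |m|/2$ and $|n|>|m|/2$, in the first region $(|m|+1)^{12}\le 2^{12}(|m-n|+2)^{12}$ and $(|m|+1)^{-1/2}\le 1$, so each term is bounded by $C\,2^{-25}(|n|+1)^{-12}(|m-n|+1)^{-1/2}\le C\,2^{-25}(|n|+1)^{-12}$ since $(|m-n|+1)^{-1/2}\le1$. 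In the second region $(|m|+1)^{12}\le 2^{12}(|n|+1)^{12}$ and similarly the extra $(|m-n|+1)^{-1/2}\le1$, so each term is bounded by $C\,2^{-25}(|m-n|+1)^{-12}$. The two sums over $n\in\N^6$ then factor over the six coordinates and are summable with arbitrarily small value (this is exactly the $\sum 2^{-3}/(n_i+1)^2<1/2$ computation in Lemma \ref{lem-summable}), so after choosing the numerical constants as there, the whole expression is $\le 1$. The point is that the extra gain factor $(|m-n+\bar e_j||m|)^{-1/2}$ only helps (it is $\le1$), and the shift by $\bar e_j$ only perturbs the indices by one unit, which the $(\cdot+1)^{-12}$ weights absorb.

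\textbf{Main obstacle.} The only genuinely delicate point is bookkeeping the ratio of the ``plain'' multivariate binomial coefficients after adjoining $\bar e_j$: one must verify that $C^{m-n+\bar e_j}_{|m-n+\bar e_j|}$ is bounded below by a fixed multiple of $C^{m-n}_{|m-n|}$ and does not introduce any factor growing in $|m|$. This follows from the elementary identity $C^{\gamma+\bar e_j}_{|\gamma|+1} = C^{\gamma}_{|\gamma|}\cdot\frac{|\gamma|+1}{\gamma_{j+3}+1}$ and the bound $\frac{|\gamma|+1}{\gamma_{j+3}+1}\ge 1$, so $C^{m-n+\bar e_j}_{|m-n+\bar e_j|}\ge C^{m-n}_{|m-n|}$; hence dropping it to $C^{m-n}_{|m-n|}$ only increases the sum, and we are back to exactly the configuration of Lemma \ref{lem-summable} with $|m-n|$ replaced by $|m-n|+1$ inside the $\Gamma$-factors and with the harmless extra $(|m-n|+1)^{-1/2}(|m|+1)^{-1/2}$. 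Everything else is the routine two-region splitting already carried out there, so no new ideas are needed.
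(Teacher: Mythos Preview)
Your overall plan---reduce to the combinatorial core of Lemma~\ref{lem-summable}---is correct, and your identity $C^{\gamma+\bar e_j}_{|\gamma|+1}=C^{\gamma}_{|\gamma|}\cdot\tfrac{|\gamma|+1}{\gamma_{j+3}+1}$ is right. But there is a genuine gap in how you handle the leftover factor. Carrying out the computation exactly as in Lemma~\ref{lem-summable} gives
\[
\frac{C_m^nC_{|m|}^m}{C_{|n|}^nC_{|m-n+\bar e_j|}^{m-n+\bar e_j}}\,\frac{\Gamma_s(|n|)\Gamma_s(|m-n|+1)}{\Gamma_s(|m|)}
=\big((m-n)_{j+3}+1\big)\Big(\tfrac{|n|!\,(|m-n|+1)!}{|m|!}\Big)^{\frac1s-1}\frac{2^{-25}(|m|+1)^{12}}{(|n|+1)^{12}(|m-n|+2)^{12}},
\]
so your factor $R$ is $(m-n)_{j+3}+1$, which is \emph{not} bounded by a universal constant (it can be as large as $|m|$). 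Your fallback route---replace $C^{m-n+\bar e_j}_{|m-n+\bar e_j|}$ by the smaller $C^{m-n}_{|m-n|}$ and then discard the $(\tfrac1s-1)$-power as $\le1$---also fails: that replacement introduces an extra factor $(|m-n|+1)^{1/s}$ relative to Lemma~\ref{lem-summable}, and after throwing away the $(\tfrac1s-1)$-power the gain $((|m-n|+1)|m|)^{-1/2}$ alone leaves $(|m-n|+1)^{1/s-1/2}|m|^{-1/2}$, which is unbounded for $s<1$ when $|n|$ is small.

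The fix, which is exactly the paper's argument, is that the square-root gain is not an optional bonus but is precisely matched to $R$: since $(m-n)_{j+3}+1\le|m-n|+1\le|m|$ (the second inequality uses $|n|\ge1$), one has
\[
\frac{(m-n)_{j+3}+1}{\sqrt{(|m-n|+1)\,|m|}}\le1,
\]
and together with $\big(\tfrac{|n|!(|m-n|+1)!}{|m|!}\big)^{\frac1s-1}\le1$ (also needing $|n|\ge1$) the summand is bounded by $\tfrac{2^{-25}(|m|+1)^{12}}{(|n|+1)^{12}(|m-n|+2)^{12}}$, whose sum over $0<n\le m$ is $\le1$ by the splitting argument of Lemma~\ref{lem-summable}. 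In short: keep $R$ explicitly and pair it with the gain factor rather than estimating the two separately.
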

\begin{proof}
For any $n\le m$ with $|n|>0$, we have $|m-n|+1\le |m|$. Then  $\fr{\bar{m}_j-\bar{n}_j+1}{\sqrt{(|m-n|+1)|m|}}\le  1$. Using again the facts $n\le m$ and $|n|\ge1$, we find that
\begin{align*}
    \fr{|n|!(|m-n|+1)!}{|m|!}=\fr{ 2\cdot3\cdots|n|}{(|m-n|+2)(|m-n|+3)\dots(|m-n|+|n|)}\le1.
\end{align*}
Therefore, similar to the proof of \eqref{summable1}, we are led to
\begin{align*}
    &\sum_{\substack{ 0<n\le m}}\fr{C_m^nC_{|m|}^m }{C_{|n|}^nC_{|m-n+\bar{e}_j|}^{m-n+\bar{e}_j}}\fr{\Gamma_s(|n|)\Gamma_s(|m-n+\bar{e}_j|)}{\Gamma_s(|m|)\sqrt{|m-n+\bar{e}_j||m|}}\\
    =&\sum_{\substack{ 0<n\le m}}\left(\fr{|n|!(|m-n|+1)!}{|m|!}\right)^{1-\fr{1}{s}}\fr{2^{-25}(|m|+1)^{12}}{(|n|+1)^{12}(|m-n+\bar{e}_j|+1)^{12}} \fr{(\bar{m}_j-\bar{n}_j+1)}{\sqrt{(|m-n|+1)|m|}}\\
    \le&\sum_{\substack{ 0<n\le m}}\fr{2^{-25}(|m|+1)^{12}}{(|n|+1)^{12}(|m-n|+2)^{12}} 
    \le1.
\end{align*}
This completes the proof.
\end{proof}
\begin{rem}\label{rem-summable}
    It is clear to see that \eqref{CK-summa} still holds with $\bar{e}_j$ replaced by $\tl{e}_j$.
\end{rem}

\begin{lem}\label{lem-convolution}
    Assume that multi-indices  $m, n\in \N^6$, and
    $K(m,n)>0$ satisfies 
    \be\label{summable-a.18}
   \sup_{m\in\N^6} \sum_{n\le m}K(m,n)\le M.
    \ee
    Then
    \begin{align}\label{convo1}
    \sum_{m\in\N^6}\sum_{n\le m}K(m,n)f(n)g(m-n)h(m)\le M\|f\|_{L^2_m}\|g\|_{L^2_m}\|h\|_{L^2_m}.
    \end{align}
\end{lem}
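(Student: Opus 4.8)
\textbf{Proof plan for Lemma \ref{lem-convolution}.} The statement is a weighted discrete Schur test, so the plan is to prove it by a straightforward Cauchy--Schwarz argument in the style of the classical Schur lemma. The only structural point is that the ``kernel'' $K(m,n)$ is supported on the set $\{n \le m\}$ and is controlled by \eqref{summable-a.18}, which is a row-sum bound; the corresponding column-sum bound is needed as well, but it comes for free from the same estimate after a change of summation variable.

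First I would split the weight $K(m,n) = K(m,n)^{1/2} \cdot K(m,n)^{1/2}$ and insert it into the triple sum, so that
\begin{align*}
\sum_{m}\sum_{n\le m} K(m,n) f(n) g(m-n) h(m)
= \sum_{m}\sum_{n\le m} \Big(K(m,n)^{1/2} |f(n)|\,|g(m-n)|\Big)\Big(K(m,n)^{1/2} |h(m)|\Big),
\end{align*}
then apply Cauchy--Schwarz in the joint variable $(m,n)$. This produces the product of
$\big(\sum_m \sum_{n\le m} K(m,n) |f(n)|^2 |g(m-n)|^2\big)^{1/2}$
and
$\big(\sum_m \sum_{n\le m} K(m,n) |h(m)|^2\big)^{1/2}$.
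In the second factor I sum in $n$ first and use \eqref{summable-a.18} directly to get $\big(\sum_m M |h(m)|^2\big)^{1/2} = M^{1/2}\|h\|_{L^2_m}$. That disposes of $h$.

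For the first factor I would like to separate $f$ and $g$, but a single application of Cauchy--Schwarz on the inner $n$-sum is not quite the cleanest route; instead I would do a second Cauchy--Schwarz already at the level of the inner sum, writing $K(m,n)^{1/2}|f(n)|\,|g(m-n)| = (K(m,n)^{1/2}|f(n)|)(K(m,n)^{1/2}|g(m-n)|)$ wait --- cleaner: simply bound $|f(n)|\,|g(m-n)| \le \tfrac12(|f(n)|^2 + |g(m-n)|^2)$ is too lossy for a sharp constant, so instead I keep the product and estimate
\begin{align*}
\sum_m \sum_{n\le m} K(m,n) |f(n)|^2 |g(m-n)|^2
\le \Big(\sup_{m,n} \text{nothing}\Big)\dots
\end{align*}
--- rather, the right move is: by symmetry of Cauchy--Schwarz it suffices to bound $\sum_m\sum_{n\le m} K(m,n)|f(n)|^2|g(m-n)|^2$ by $M^2\|f\|_{L^2}^2\|g\|_{L^2}^2$ would be false dimensionally. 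The correct and simplest organization is to apply Cauchy--Schwarz on the inner sum $\sum_{n\le m}$ with weights $K(m,n)$:
\begin{align*}
\sum_{n\le m} K(m,n)|f(n)||g(m-n)||h(m)|
\le |h(m)| \Big(\sum_{n\le m} K(m,n)|f(n)|^2\Big)^{1/2}\Big(\sum_{n\le m}K(m,n)|g(m-n)|^2\Big)^{1/2}.
\end{align*}
The second bracket is $\le M^{1/2}\|g\|_{L^2_m}$ for each fixed $m$ by \eqref{summable-a.18}. Then I sum in $m$, apply Cauchy--Schwarz in $m$ between $|h(m)|M^{1/2}\|g\|_{L^2_m}$ and $\big(\sum_{n\le m}K(m,n)|f(n)|^2\big)^{1/2}$, obtaining $M^{1/2}\|g\|_{L^2_m}\|h\|_{L^2_m}\big(\sum_m\sum_{n\le m}K(m,n)|f(n)|^2\big)^{1/2}$, and finally interchange the order of summation in the last double sum, summing over $m \ge n$ first: $\sum_n |f(n)|^2 \sum_{m\ge n} K(m,n)$. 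Here I would invoke that the column sum $\sum_{m \ge n} K(m,n)$ is also bounded by $M$ --- this is where one uses \eqref{summable-a.18} in the guise after relabeling, or, if the paper only literally assumes the row bound, one notes the lemma as applied always has $K$ of the explicit combinatorial form for which both sums are $\le M$ by Lemma \ref{lem-summable} and its corollaries. The main (and only real) obstacle is this last point: making sure the column-sum bound is available, which I would handle by remarking that in all invocations $K(m,n)$ equals one of the symmetric kernels $b_{m,n,s}$ etc.\ whose summability is symmetric in the relevant indices, so $\sup_n\sum_{m\ge n}K(m,n)\le M$ holds alongside \eqref{summable-a.18}. Collecting the factors gives the bound $M\|f\|_{L^2_m}\|g\|_{L^2_m}\|h\|_{L^2_m}$, which is \eqref{convo1}.
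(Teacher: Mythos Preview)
Your second organization — Cauchy--Schwarz in the inner $n$-sum splitting $K^{1/2}|f|$ against $K^{1/2}|g|$ — has a real gap. After summing in $m$ you are left needing $\sum_n|f(n)|^2\sum_{m\ge n}K(m,n)\le M\|f\|_{L^2_m}^2$, and the column-sum bound $\sup_n\sum_{m\ge n}K(m,n)\le M$ is \emph{not} a consequence of the row-sum hypothesis \eqref{summable-a.18}: take $K(m,n)=\mathbf{1}_{\{n=0\}}$, which has all row sums equal to $1$ but infinite column sum at $n=0$. (The lemma is nonetheless true for this $K$, so the failure is in the argument, not the statement.) Your fallback of invoking the specific symmetric structure of $b_{m,n,s}$ would salvage the applications but does not prove the lemma as stated.

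The fix is to return to your first organization, which is exactly the paper's proof and which you abandoned prematurely. After the Cauchy--Schwarz that keeps $|f(n)g(m-n)|$ together you reach
\[
M^{1/2}\|h\|_{L^2_m}\Big(\sum_{m}\sum_{n\le m}K(m,n)\,|f(n)|^2|g(m-n)|^2\Big)^{1/2}.
\]
Now swap the sums and, for each fixed $n$, use the \emph{pointwise} bound $K(n+m',n)\le M$ (an immediate consequence of \eqref{summable-a.18}, since every term in a nonnegative sum bounded by $M$ is itself $\le M$) to obtain $\sum_{m'\ge 0}K(n+m',n)|g(m')|^2\le M\|g\|_{L^2_m}^2$. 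Summing in $n$ then yields the remaining factor $M^{1/2}\|f\|_{L^2_m}\|g\|_{L^2_m}$, and \eqref{convo1} follows. No second Cauchy--Schwarz separating $f$ from $g$, and no column-sum bound, is needed.
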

\begin{proof}
Thanks to \eqref{summable-a.18}, by Schur’s test, we obtain
\begin{align*}
    &\sum_{m\in\N^6}\sum_{n\le m}K(m,n)f(n)g(m-n)h(m)\\
    \le&\Big(\sum_{m\in\N^6}\Big|\sum_{n\le m}K(m,n)f(n)g(m-n)\Big|^2\Big)^\fr12\Big(\sum_{m\in\N^6}h^2(m)\Big)^\fr12\\
    \le&\left(\sum_{m\in\N^6}\sum_{n\le m}K(m,n)\sum_{n\le m} K(m,n)|f(n)|^2|g(m-n)|^2\right)^\fr12\Big(\sum_{m\in\N^6}h^2(m)\Big)^\fr12\\
    \le&M^\fr12\left(\sum_{n\in\N^6}|f(n)|^2\sum_{m\ge n} K(m,n)|g(m-n)|^2\right)^\fr12\Big(\sum_{m\in\N^6}h^2(m)\Big)^\fr12\\
    \le&M\Big(\sum_{n\in\N^6}f^2(n)\Big)^\fr12\Big(\sum_{m\in\N^6}g^2(m)\Big)^\fr12\Big(\sum_{m\in\N^6}h^2(m)\Big)^\fr12.
\end{align*}
The proof of this lemma is complete.
\end{proof}
Following the proof of \eqref{convo1} line by line, one easily obtain the following corollary.
\begin{coro}\label{coro-convolution}
Assume that multi-indices  $m, n, n'\in \N^6$, and
    $K(m,n, n')>0$ satisfies 
    \be\label{summable}
   \sup_{m\in\N^6} \sum_{\substack{n\le m\\ n'\le n}}K(m,n,n')\le M.
    \ee
    Then
    \begin{align}\label{convo2}
    \sum_{m\in\N^6}\sum_{\substack{n\le m\\n'\le n}}K(m,n,n')f(n')g(n-n')h(m-n)r(m)\le M\|f\|_{l^2_m}\|g\|_{l^2_m}\|h\|_{l^2_m}\|r\|_{l^2_m}.
    \end{align}    
\end{coro}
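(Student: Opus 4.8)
\textbf{Proof proposal for Corollary~\ref{coro-convolution}.} The plan is to mimic the proof of Lemma~\ref{lem-convolution} almost verbatim, the only change being that the double summation over $n \le m$ is replaced by the nested summation over $n \le m$, $n' \le n$, and that there is now one extra factor in the product. First I would set $S := \sum_{m\in\N^6}\sum_{n\le m, n'\le n}K(m,n,n')\,f(n')g(n-n')h(m-n)r(m)$ and peel off the $r(m)$ factor by Cauchy--Schwarz in $m$, writing $S \le \big(\sum_m |\sum_{n\le m,n'\le n} K(m,n,n') f(n') g(n-n') h(m-n)|^2\big)^{1/2}\|r\|_{l^2_m}$. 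Then, inside the square, I apply Cauchy--Schwarz in the index pair $(n,n')$ against the measure $K(m,n,n')$: using $|\sum_{n\le m,n'\le n} K \cdot A|^2 \le \big(\sum_{n\le m,n'\le n}K(m,n,n')\big)\big(\sum_{n\le m,n'\le n}K(m,n,n')|A|^2\big)$ with $A = f(n')g(n-n')h(m-n)$, the first factor is bounded by $M$ via \eqref{summable} (the Schur-type hypothesis). This produces $S \le M^{1/2}\big(\sum_{m}\sum_{n\le m,n'\le n} K(m,n,n') |f(n')|^2|g(n-n')|^2|h(m-n)|^2\big)^{1/2}\|r\|_{l^2_m}$.

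Next I would reorganize the triple sum. Re-indexing so that the sum over $m$ (for fixed $n',\, n-n'$) becomes a sum over $m-n \ge 0$, and again invoking \eqref{summable} in the form $\sup_{n'}\sum_{\text{(the remaining free indices)}}K \le M$ — more precisely, after fixing $n'$ and summing over $n-n'$ and $m-n$ the inner weight is again $\le M$ uniformly — one peels off $\|h\|_{l^2_m}$ and then $\|g\|_{l^2_m}$ by the same Schur/Fubini bookkeeping exactly as in the last display of the proof of Lemma~\ref{lem-convolution}. What remains is $\|f\|_{l^2_m}$, and collecting the constants gives the total factor $M^{1/2}\cdot M^{1/2} = M$, yielding \eqref{convo2}.

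The only genuinely delicate bookkeeping point — which I would treat as the ``main obstacle,'' though it is routine — is making sure that at each stage of peeling off one of the four $\ell^2$ norms, the relevant partial sum of $K(m,n,n')$ over the not-yet-fixed indices is still controlled by $M$. This follows because \eqref{summable} controls the full sum over $n \le m$ and $n' \le n$ uniformly in $m$, and every partial sum appearing in the argument is dominated by that full sum (all summands are nonnegative and all index constraints are of the form ``$\le$''). Hence no new hypothesis beyond \eqref{summable} is needed, and the constant $M$ propagates through each Cauchy--Schwarz step without deterioration, exactly as the statement asserts. As the excerpt says, the proof follows that of \eqref{convo1} ``line by line,'' so I would simply indicate the modifications above rather than rewriting the whole estimate.
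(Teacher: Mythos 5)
Your overall plan --- Cauchy--Schwarz in $m$ to peel off $r$, Cauchy--Schwarz against $K$ to extract a factor $M^{1/2}$, then Fubini to swap the summation order and peel off $h$, $g$, $f$ --- is exactly the paper's plan for Lemma~\ref{lem-convolution}, and it does prove the Corollary. However, the justification you give at the final step is incorrect, and it is precisely the point you yourself flag as ``delicate.'' After the two Cauchy--Schwarz steps you need, uniformly in $(n,n')$, the bound $\sum_{m\ge n}K(m,n,n')|h(m-n)|^2 \le M\|h\|_{l^2_m}^2$. You assert that \eqref{summable} holds ``in the form $\sup_{n'}\sum K\le M$'' and that ``every partial sum appearing in the argument is dominated by the full sum.'' Neither is true: the sum over $m\ge n$ with $(n,n')$ fixed is not a sub-sum of $\sum_{n\le m,\,n'\le n}K(m,n,n')$ at any fixed $m$, so \eqref{summable} gives no direct control on it. For a concrete counterexample, take $K(m,n,n')=\mathbf{1}_{n=n'=0}$; then $\sup_m\sum_{n\le m,\,n'\le n}K=1$, yet $\sum_m K(m,0,0)=\infty$, so the claimed $\sup_{n'}$-Schur condition fails.

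What does follow from \eqref{summable} and positivity --- and is the ingredient actually used, implicitly, in the last display of the paper's proof of Lemma~\ref{lem-convolution} --- is the \emph{pointwise} bound $K(m,n,n')\le M$: each single term is dominated by the fixed-$m$ sum over $(n,n')$, which is $\le M$. With this, $\sum_{m\ge n}K(m,n,n')|h(m-n)|^2 \le M\sum_{m'\ge 0}|h(m')|^2 = M\|h\|_{l^2_m}^2$ for every $(n,n')$; then the sums over $n-n'$ and $n'$ supply $\|g\|_{l^2_m}^2$ and $\|f\|_{l^2_m}^2$ with no further loss, and $M^{1/2}\cdot M^{1/2}=M$ emerges as you predicted. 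So the high-level structure and the final constant in your proposal are correct; replacing the nonexistent second Schur condition by the pointwise bound $K\le M$ is what actually closes the estimate.
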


\begin{lem}\label{lem-product}
    Let $\iota>0$, $g_1=g_1(x), x\in\T^3, g_2=g_2(x,v), (x,v)\in\T^3\times\R^3$. Then
    \begin{align}\label{product1}
\|g_1g_2\|_{\mathcal{G}^{\lm,N}_{s,\iota}}\le C\left(\|\la \nb_x\ra^2g_1\|_{\mathcal{G}^{\lm,\fr{N}{2}}_{s}}\|g_2\|_{\mathcal{G}^{\lm,N}_{s,\iota}}+\|g_1\|_{\mathcal{G}^{\lm,N}_{s}}\|\la \nb_x\ra^2g_2\|_{\mathcal{G}^{\lm,\fr{N}{2}}_{s,\iota}}\right),
    \end{align}
    and similarly,
    \begin{align}\label{product2}
\|g_1g_2\|_{\mathcal{G}^{\lm,N}_{s,\sig,\iota}}\le C\left(\|\la \nb_x\ra^2g_1\|_{\mathcal{G}^{\lm,\fr{N}{2}}_{s}}\|g_2\|_{\mathcal{G}^{\lm,\sig, N}_{s,\sig,\iota}}+\|g_1\|_{\mathcal{G}^{\lm,N}_{s}}\|\la \nb_x\ra^2g_2\|_{\mathcal{G}^{\lm,\fr{N}{2}}_{s,\sig,\iota}}\right).
    \end{align}
\end{lem}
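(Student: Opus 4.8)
\textbf{Proof plan for Lemma~\ref{lem-product}.}
The plan is to prove the product estimates \eqref{product1} and \eqref{product2} by a dyadic/paraproduct-type decomposition of each derivative $Z^{m+\beta}(g_1g_2)$ into a ``high--low'' piece (where the derivatives land mainly on $g_1$) and a ``low--high'' piece (where they land mainly on $g_2$), and then summing the resulting combinatorial series against the Gevrey weights $a_{m,\lm,s}(t)$. Since $g_1$ is independent of $v$, the vector field $Z=(\nb_x,Y)$ acts on $g_1$ only through its $\nb_x$ component after the elementary identity $Z^m g_1 = (\nb_x,t\nb_x)^m g_1$; hence applying $Z^{m+\beta}$ to the product and using the Leibniz rule gives
\begin{align*}
Z^{m+\beta}(g_1g_2)=\sum_{n\le m}\sum_{\beta'\le\beta}C_m^nC_\beta^{\beta'}\,(\nb_x,t\nb_x)^{n+\beta'}g_1\cdot Z^{m-n+\beta-\beta'}g_2.
\end{align*}
I would split the inner sum over $n,\beta'$ according to whether $|n|+|\beta'|\le \tfrac12(|m|+|\beta|)$ or not; in the first (low-frequency-on-$g_1$) regime I keep $g_1$ in the $\mathcal{G}^{\lm,\fr N2}_s$ norm and $g_2$ in the full $\mathcal{G}^{\lm,N}_s$ norm, and symmetrically in the second regime.

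The key estimate driving the whole argument is the combinatorial bound in Remark~\ref{Rmk: coefficients a-b}, namely $a_{m,\lm,s}(t)C_m^n=b_{m,n,s}\,a_{n,\lm,s}(t)a_{m-n,\lm,s}(t)$ (and its version with $\beta$-combinations), together with the summability of the coefficients $b_{m,n,s}$ furnished by Lemma~\ref{lem-summable}. Concretely, after inserting $a_{m,\lm,s}(t)\kappa^{|\beta|}$ into $\big\|\la v\ra^\iota Z^{m+\beta}(g_1g_2)\big\|_{L^2_{x,v}}$ and distributing it via Remark~\ref{Rmk: coefficients a-b}, each summand becomes a product of three factors: $b_{m,n,s}$ (times $\kappa$-powers, harmless), a factor $a_{n,\lm,s}(t)\kappa^{|\beta'|}\|(\nb_x,t\nb_x)^{n+\beta'}g_1\|_{L^2_x}$ involving $g_1$, and a factor $a_{m-n,\lm,s}(t)\kappa^{|\beta-\beta'|}\|\la v\ra^\iota Z^{m-n+\beta-\beta'}g_2\|_{L^2_{x,v}}$ involving $g_2$. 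To pass from $L^2_x$ to $L^\infty_x$ on the low-frequency factor I would pay two $x$-derivatives (this is where the $\la\nb_x\ra^2$ in the statement comes from: $\tl H^2_x\hookrightarrow L^\infty_x$), which in the high-low regime is harmless because $|n|+|\beta'|$ is already $\le\tfrac12(|m|+|\beta|)$, so $n+\beta'$ together with two extra derivatives still sits below the $\fr N2$ regularity threshold of the $\mathcal{G}^{\lm,\fr N2}_s$ norm (after also using $a_{n+2e_i,\lm,s}(t)\les a_{n,\lm,s}(t)$ up to a constant, since $\lm$ is bounded). Then applying Cauchy--Schwarz and Lemma~\ref{lem-convolution} with the kernel $K(m,n)=b_{m,n,s}$ (whose row sums are $\le1$ by Lemma~\ref{lem-summable}) collapses the triple sum into the product of the two norms; squaring and summing over $|\beta|\le N$ and $m\in\N^6$ finishes \eqref{product1}.

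For \eqref{product2} the only change is that the $L^2_{x,v}(\iota)$ norm on $g_2$ and on the product is replaced by the $\|\cdot\|_{\sig,\iota}$ norm of \eqref{L-norm'}; since $g_1$ depends only on $x$ it commutes with the $v$-dependent weights $\sig^{ij}$, so $\|\la v\ra^\ell[\sig^{ij}\pr_{v_i}(g_1g_2)\pr_{v_j}(g_1g_2)+\cdots]^{1/2}\|_{L^2}\le \|g_1\|_{L^\infty_x}\|g_2\|_{\sig,\iota}$ pointwise in the Leibniz expansion, and the identical combinatorial bookkeeping applies verbatim. I expect the main obstacle to be purely notational: carefully tracking the $\beta$- versus $m$-combinatorics simultaneously (one needs the three-fold identity $a_{m,\lm,s}(t)C_m^nC_{m-n}^{n'}=\cdots$ or, more simply, to treat $\beta'\le\beta$ by the same $b$-trick since $\kappa<1$ makes the $\kappa^{|\beta'|}\kappa^{|\beta-\beta'|}$ split trivial), and verifying that the ``two extra $x$-derivatives'' cost is absorbable on the correct side of the high/low split. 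No genuinely new analytic input is needed beyond Lemmas~\ref{lem-summable}, \ref{lem-convolution} and Remark~\ref{Rmk: coefficients a-b}; the estimate is a standard bilinear paraproduct bound adapted to the vector-field Gevrey norms, and I would present it compactly, citing these three results for the summation step.
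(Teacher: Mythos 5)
Your overall scheme — Leibniz, high/low split, Remark~\ref{Rmk: coefficients a-b} plus Lemmas~\ref{lem-summable}, \ref{lem-convolution} to collapse the combinatorial sums, and $\tl H^2_x\hookrightarrow L^\infty_x$ to explain the $\la\nb_x\ra^2$ — is the right machinery and matches the paper's strategy (the paper phrases the final summation step by testing against an auxiliary $g_3$, which is just Cauchy--Schwarz in disguise). However, your splitting variable is wrong, and this is a genuine gap rather than a notational slip.

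You propose to split the Leibniz sum according to whether $|n|+|\beta'|\le\tfrac12(|m|+|\beta|)$, i.e.\ on the \emph{total} derivative count. But in the norm $\mathcal{G}^{\lm,N}_{s,\iota}$ the two indices play asymmetric roles: $m$ is the Gevrey index, which ranges over all of $\N^6$ and is handled automatically by the identity $a_{m,\lm,s}C_m^n=b_{m,n,s}a_{n,\lm,s}a_{m-n,\lm,s}$ together with the unconditional summability $\sum_n b_{m,n,s}\le1$ of Lemma~\ref{lem-summable}; while $\beta$ is a Sobolev index capped at $N$, and the ``loss to $N/2$'' in the statement is entirely about $\beta$. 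With your combined split one can have, e.g., $|n|=0$, $|\beta|=N$, $|\beta'|=N/2+1$, $|m|=N+2$: this lands in your ``low--on--$g_1$'' regime (since $|n|+|\beta'|\le\tfrac12(|m|+|\beta|)$) yet $|\beta'|>N/2$, so the factor $\kappa^{|\beta'|}a_{n,\lm,s}\|g_1^{(n+\beta')}\|$ simply does not appear in $\|\la\nb_x\ra^2 g_1\|_{\mathcal{G}^{\lm,N/2}_s}$, and the estimate breaks. The paper instead splits \emph{only} on $|\beta'|\le|\beta|/2$ versus $|\beta'|>|\beta|/2$ (with no split in $n$ at all), which is exactly what guarantees that whichever factor is taken in $L^\infty_x$ sits in a space with Sobolev cap $N/2$; the Gevrey $m$-sum is then absorbed by $b_{m,n,s}$ regardless of how $n$ distributes. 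Relatedly, your remark about ``$n+\beta'$ together with two extra derivatives still sits below the $N/2$ regularity threshold'' conflates the unbounded Gevrey index with the bounded Sobolev one: there is no threshold on $|n|$, and you do not actually need the bound $a_{n+2e_i,\lm,s}\les a_{n,\lm,s}$ — since $g_1$ is $v$-independent, $Z^{n+\beta'}$ acting on it commutes with $\la\nb_x\ra^2$, so $\la\nb_x\ra^2 g_1^{(n+\beta')}=Z^{n+\beta'}(\la\nb_x\ra^2 g_1)$ and the two extra derivatives are already built into the norm.

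Your discussion of the $\sig$-version \eqref{product2} is fine: because $g_1$ is $v$-independent, $\nb_v$ falls entirely on $g_2$ and the Leibniz expansion of $\|\cdot\|_{\sig,\iota}$ factors as $\|g_1\|_{L^\infty_x}\|g_2\|_{\sig,\iota}$ (or its $L^2\times L^\infty$ counterpart on the other side of the split), so the same bookkeeping goes through once the split is corrected.
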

\begin{proof}
To begin with, we split $\|\la v\ra^{\iota}Z^{m+\beta}(g_1g_2)\|_{L^2_{x,v}}$ as follows:
\begin{align*}
    &\|\la v\ra^{\iota}Z^{m+\beta}(g_1g_2)\|_{L^2_{x,v}}\\
    \le&\sum_{\beta'\le\beta,|\beta'|\le|\beta|/2}C_\beta^{\beta'}\sum_{n\le m}C_m^n\left\|g_1^{(n+\beta')}\right\|_{L^\infty_x}\left\|\la v\ra^{\iota}g_2^{(m-n+\beta-\beta')}\right\|_{L^2_{x,v}}\\
    &+\sum_{\beta'\le\beta,|\beta'|>|\beta|/2}C_\beta^{\beta'}\sum_{n\le m}C_m^n\left\|g_1^{(n+\beta')}\right\|_{L^2_x}\left\|\la v\ra^{\iota}g_2^{(m-n+\beta-\beta')}\right\|_{L^\infty_xL^2_{v}}
\end{align*}

We will complete the proof by using the duality argument. For any $g_3(x,v)$ with $\|g_3\|_{\mathcal{G}^{\lm,100}_{s,\iota}}\le\infty$, by Lemmas \ref{lem-summable} and \ref{lem-convolution}, we find that
\begin{align*}
    &\sum_{m\in\N^6,|\beta|\le100}\kappa^{2|\beta|}\left(\fr{\lm^{|m|}}{\Gamma_s(|m|)}C_{|m|}^m\right)^2\left\la Z^{m+\beta}(g_1g_2),\la v\ra^{2\iota}g_3^{(m+\beta)}\right\ra_{x,v}\\
    \le&\sum_{m\in\N^6,|\beta|\le100}\sum_{\beta'\le\beta,|\beta'|\le|\beta|/2}C_\beta^{\beta'}\sum_{n\le m}\fr{C_{m}^{n}C_{|m|}^{m}}{C_{|n|}^{n}C_{|m-n|}^{m-n}}\fr{\Gamma_s(|n|)\Gamma_s(|m-n|)}{\Gamma_s(|m|)}\\
    &\times\kappa^{|\beta'|}\left(\fr{\lm^{|n|}}{\Gamma_s(|n|)}C_{|n|}^{n}\right)\left\|g_1^{(n+\beta')}\right\|_{L^\infty_x}\\
    &\times\kappa^{|\beta-\beta'|}\left(\fr{\lm^{|m-n|}}{\Gamma_s(|m-n|)}C_{|m-n|}^{m-n}\right)\left\|\la v\ra^{\iota}g_2^{(m-n+\beta-\beta')}\right\|_{L^2_{x,v}}\\
    &\times\kappa^{|\beta|}\left(\fr{\lm^{|m|}}{\Gamma_s(|m|)}C_{|m|}^m\right)\left\|\la v\ra^{\iota}g_3^{(m+\beta)}\right\|_{L^2_{x,v}}\\
    &+\sum_{m\in\N^6,|\beta|\le100}\sum_{\beta'\le\beta,|\beta'|>|\beta|/2}C_\beta^{\beta'}\sum_{n\le m}\fr{C_{m}^{n}C_{|m|}^{m}}{C_{|n|}^{n}C_{|m-n|}^{m-n}}\fr{\Gamma_s(|n|)\Gamma_s(|m-n|)}{\Gamma_s(|m|)}\\
    &\times\kappa^{|\beta'|}\left(\fr{\lm^{|n|}}{\Gamma_s(|n|)}C_{|n|}^{n}\right)\left\|g_1^{(n+\beta')}\right\|_{L^2_x}\\
    &\times\kappa^{|\beta-\beta'|}\left(\fr{\lm^{|m-n|}}{\Gamma_s(|m-n|)}C_{|m-n|}^{m-n}\right)\left\|\la v\ra^{\iota}g_2^{(m-n+\beta-\beta')}\right\|_{L^\infty_xL^2_{v}}\\
    &\times\kappa^{|\beta|}\left(\fr{\lm^{|m|}}{\Gamma_s(|m|)}C_{|m|}^m\right)\left\|\la v\ra^{\iota}g_3^{(m+\beta)}\right\|_{L^2_{x,v}}\\
    \le&C\left(\|\la \nb_x\ra^2g_1\|_{\mathcal{G}^{\lm,50}_{s}}\|g_2\|_{\mathcal{G}^{\lm,100}_{s,\iota}}+\|g_1\|_{\mathcal{G}^{\lm,100}_{s}}\|\la \nb_x\ra^2g_2\|_{\mathcal{G}^{\lm,50}_{s,\iota}}\right)\|g_3\|_{\mathcal{G}^{\lm,100}_{s,\iota}}.
\end{align*}
The second inequality \eqref{product2} can be proved  in the same way. Indeed, one can replace 
$\left\la Z^{m+\beta}(g_1g_2),\la v\ra^{2\iota}g_3^{(m+\beta)}\right\ra_{x,v}$ in the above inequality by $\left\la Z^{m+\beta}(g_1g_2),g_3^{(m+\beta)}\right\ra_{\sig,\iota}$, and noting the $g_1$ is independent of $v$, we have
\begin{align*}
    &\left\la Z^{m+\beta}(g_1g_2),g_3^{(m+\beta)}\right\ra_{\sig,\iota}
    \le\left\|Z^{m+\beta}(g_1g_2)\right\|_{\sig,\iota}\left\|g^{(m+\beta)}\right\|_{\sig,\iota}\\
    \le&\sum_{\beta'\le\beta,|\beta'|\le|\beta|/2}C_\beta^{\beta'}\sum_{n\le m}C_m^n\left\|g_1^{(n+\beta')}\right\|_{L^\infty_x}\left\|g_2^{(m-n+\beta-\beta')}\right\|_{\sig,\iota}\left\|g^{(m+\beta)}\right\|_{\sig,\iota}\\
    &+\sum_{\beta'\le\beta,|\beta'|>|\beta|/2}C_\beta^{\beta'}\sum_{n\le m}C_m^n\left\|g_1^{(n+\beta')}\right\|_{L^2_x}\left\|\left|g_2^{(m-n+\beta-\beta')}\right|_{\sig,\iota}\right\|_{L^\infty_x}\left\|g^{(m+\beta)}\right\|_{\sig,\iota}.
\end{align*}
Then \eqref{product2} follows immediately.
\end{proof}

\begin{lem}\label{lem-weighted-young}
There is a universal constant $C>1$, such that
\begin{align}\label{weighted-young}
    \nn&\int_{\R^6}\fr{\la\xi\ra^2}{|\xi|^2}\hat{f}(\xi)\hat{g}(\eta-\xi) \hat{h}(\eta)d\xi d\eta\\
    \le &C\min\left\{\big\|\la v\ra^2\la Y\ra^2f\big\|_{L^2}\|g\|_{L^2}, \big\|\la v\ra^2f\big\|_{L^2}\big\|\la Y\ra^2g\big\|_{L^2} \right\}\|h\|_{L^2}.
\end{align}
\end{lem}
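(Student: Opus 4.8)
\textbf{Plan of proof for Lemma~\ref{lem-weighted-young}.} The statement is a weighted Young-type convolution inequality: the kernel $\widehat{\Phi^{ij}}(\xi)=8\pi\xi_i\xi_j/|\xi|^4$ (see \eqref{eq:Fourier-Phi}) contributes a factor comparable to $1/|\xi|^2$ in absolute value, and we have artificially inserted $\la\xi\ra^2/|\xi|^2$ so that the large-$\xi$ part is $O(1)$ and the small-$\xi$ singularity $1/|\xi|^2$ is locally integrable in $\R^3$. The plan is to split $1/|\xi|^2 = \mathbf{1}_{|\xi|\le1}/|\xi|^2 + \mathbf{1}_{|\xi|>1}/|\xi|^2$ and treat the two regimes separately, in each case transferring the weights $\la v\ra^2$ and $\la Y\ra^2$ (which on the Fourier side are the multipliers $\la\nabla_\xi\ra^2$ and $\la\eta-\xi\rangle$-type shifts, recalling $Y=\nabla_v+t\nabla_x$ so $\widehat{Yf}$ corresponds to a derivative-plus-shift) onto the appropriate factor.

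\textbf{Step 1: the regular part $|\xi|>1$.} Here $\la\xi\ra^2/|\xi|^2 \le C$, so the trilinear form is bounded by $C\int_{\R^6}|\hat f(\xi)||\hat g(\eta-\xi)||\hat h(\eta)|\,d\xi\,d\eta$. This is exactly the Fourier-side form of $\int |\check{|\hat f|}|\,|\check{|\hat g|}|\,|\check{|\hat h|}|$ after undoing Plancherel, and by Young's inequality (or directly by Cauchy--Schwarz in $\xi$ and then in $\eta$) it is $\le C\|\hat f\|_{L^1_\xi}\|\hat g\|_{L^2}\|\hat h\|_{L^2}$; and $\|\hat f\|_{L^1_\xi}\lesssim \|\la\xi\ra^2\hat f\|_{L^2_\xi}\lesssim \|\la v\ra^2 f\|_{L^2}$ since $3/2<2$. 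By symmetry in placing the $L^1$ norm on $\hat g$ instead, one also gets the bound $\|\hat f\|_{L^2}\|\la\eta\ra^2\hat g\|_{L^2}\|\hat h\|_{L^2}$, giving both members of the minimum (note that here no $\la Y\ra$ is needed, only $\la v\ra^2$; the stronger weight will be produced by Step~2).

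\textbf{Step 2: the singular part $|\xi|\le 1$.} On this set $\la\xi\ra^2/|\xi|^2 \le 2/|\xi|^2$, and $\mathbf{1}_{|\xi|\le1}/|\xi|^2 \in L^p(\R^3_\xi)$ for all $1\le p<3/2$; fix e.g. $p=6/5$. Write the trilinear form as $\int_{\R^3_\eta}\big((\tfrac{\mathbf{1}_{|\xi|\le1}}{|\xi|^2}\hat f)\ast_\xi \hat g\big)(\eta)\,\hat h(\eta)\,d\eta$ and apply H\"older in $\eta$ followed by Young's convolution inequality: $\|(\tfrac{\mathbf 1}{|\xi|^2}\hat f)\ast\hat g\|_{L^2_\eta}\le \|\tfrac{\mathbf 1}{|\xi|^2}\hat f\|_{L^{6/5}_\xi}\|\hat g\|_{L^{3/2}}$ — but $\hat g\notin L^{3/2}$ in general, so instead keep $\hat g\in L^2$ and put the singular kernel in $L^1$: split once more $\mathbf 1_{|\xi|\le1}/|\xi|^2$ is \emph{not} in $L^1$, so this fails. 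The correct route is to keep $\hat h\in L^2$, $\hat g\in L^2$, and estimate $\|(\tfrac{\mathbf 1_{|\xi|\le1}}{|\xi|^2}\hat f)\ast_\xi \hat g\|_{L^2}\le \|\tfrac{\mathbf 1_{|\xi|\le1}}{|\xi|^2}\hat f\|_{L^1_\xi}\|\hat g\|_{L^2}$ and bound $\|\tfrac{\mathbf 1_{|\xi|\le1}}{|\xi|^2}\hat f\|_{L^1_\xi}\le \big\|\tfrac{\mathbf 1_{|\xi|\le1}}{|\xi|^2}\la\xi\ra^{-\delta}\big\|_{L^2_\xi}\|\la\xi\ra^{\delta}\hat f\|_{L^2_\xi}$ — again $1/|\xi|^2\notin L^2(|\xi|\le1)$ in $\R^3$. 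Hence one genuinely needs to peel off the shift structure: use that $f$ carries the weight $\la\xi\ra^2$ or, via the translation, we may shift the singularity. \emph{The cleanest argument}: by Cauchy--Schwarz in $(\xi,\eta)$ with the weight $\la\xi\ra^2/|\xi|^2$ split as $\big(\la\xi\ra^2/|\xi|^2\big)^{1/2}\cdot\big(\la\xi\ra^2/|\xi|^2\big)^{1/2}$, absorb one half with $\hat f$ (giving $\|\tfrac{\la\xi\ra}{|\xi|}\hat f\|_{L^2}\lesssim\|\la v\ra f\|_{L^2}$, which is finite because $1/|\xi|\in L^2_{\mathrm{loc}}(\R^3)$ — here is where $d=3$ is essential) and the other half requires integrating $\la\xi\ra^2/|\xi|^2$ against $|\hat g(\eta-\xi)|^2|\hat h(\eta)|^2$, which after the change $\zeta=\eta-\xi$ is $\int \la\eta-\zeta\ra^2/|\eta-\zeta|^2 \,|\hat g(\zeta)|^2|\hat h(\eta)|^2\,d\zeta\,d\eta \lesssim \int \la\zeta\ra^2\la\eta\ra^2 \tfrac{1}{|\eta-\zeta|^2}|\hat g(\zeta)|^2|\hat h(\eta)|^2$, and $\sup_\eta\int \tfrac{\mathbf 1}{|\eta-\zeta|^2}\la\zeta\ra^2|\hat g(\zeta)|^2 d\zeta \lesssim \|\la\zeta\ra^2\hat g\|_{L^2}^2$ (Sobolev/Hardy--Littlewood, using $1/|\cdot|^2\in L^{3/2-}_{\mathrm{loc}}$ plus decay), leaving $\|\la\eta\ra\hat h\|$ — too many weights on $h$, which $h$ does not have.

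\textbf{Expected main obstacle and resolution.} The real subtlety, and what the proof must be careful about, is bookkeeping the $1/|\xi|^2$ singularity so that it lands on the factor ($f$, resp. $g$) that carries the $\la Y\ra^2$ weight, while $h$ stays in plain $L^2$. The mechanism is: write $\la\xi\ra^2/|\xi|^2 \le C(\mathbf 1_{|\xi|\le1}/|\xi|^2 + \mathbf 1_{|\xi|>1})$; for the $\mathbf 1_{|\xi|>1}$ piece apply Step~1; for the $\mathbf 1_{|\xi|\le1}/|\xi|^2$ piece, use the convolution/Schur estimate $\big\|\mathbf 1_{|\xi|\le1}|\xi|^{-2}\big\|$ measured in the mixed norm $L^1_\xi$ after first extracting a harmless polynomial weight from $\hat f$: precisely, $\mathbf 1_{|\xi|\le1}|\xi|^{-2}\,|\hat f(\xi)| \le \mathbf 1_{|\xi|\le1}|\xi|^{-2}\,\|\la v\ra^2 f\|_{L^\infty_\xi}$ is false ($f$ is only $L^2$), so instead bound the whole $|\xi|\le1$ contribution by Cauchy--Schwarz keeping $\hat h\in L^2_\eta$ outside, then $\int_{|\xi|\le1}|\xi|^{-2}|\hat f(\xi)| |\hat g(\eta-\xi)| d\xi \le \big(\int_{|\xi|\le1}|\xi|^{-2}|\hat f(\xi)|^2 d\xi\big)^{1/2}\big(\int_{|\xi|\le1}|\xi|^{-2}|\hat g(\eta-\xi)|^2 d\xi\big)^{1/2}$; the first factor is $\lesssim \|\la v\ra f\|_{L^2}$ (since $|\xi|^{-2}\in L^1_{\mathrm{loc}}(\R^3)$ is \emph{not} enough — we need $|\xi|^{-2}\in L^1$ against $|\hat f|^2$, i.e. $|\xi|^{-2}\le |\xi|^{-2}$, which \emph{is} $L^1_{\mathrm{loc}}$ in $\R^3$, good), and then taking $L^2_\eta$ of the second factor and using $\|\,|\xi|^{-2}\mathbf 1_{|\xi|\le1}\ast |\hat g|^2\|_{L^1_\eta} = \|\,|\xi|^{-2}\mathbf 1_{|\xi|\le1}\|_{L^1}\|\hat g\|_{L^2}^2 \lesssim \|g\|_{L^2}^2$. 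Collecting: the $|\xi|\le1$ part is $\lesssim \|\la v\ra f\|_{L^2}\|g\|_{L^2}\|h\|_{L^2}$, and symmetrically $\lesssim \|f\|_{L^2}\|\la v\ra g\|_{L^2}\|h\|_{L^2}$; combining with Step~1 and absorbing $\|\la v\ra f\|$, $\|\la Y\ra f\|$ into $\|\la v\ra^2\la Y\ra^2 f\|$ (and symmetrically for $g$) yields \eqref{weighted-young}. The only place where three spatial dimensions is used is the local integrability $\int_{|\xi|\le1}|\xi|^{-2}\,d\xi <\infty$, which is exactly the reason the weighted kernel $\la\xi\ra^2/|\xi|^2$ behaves like an $L^1$ kernel up to a bounded multiplier; this is the crux and everything else is a routine application of Young's and Schur's inequalities together with the equivalences $\|\la\xi\ra^k\hat f\|_{L^2}\sim\|\la v\ra^k f\|_{L^2}$ and $\la\eta-\xi\ra$-shifts absorbed into $\la Y\ra$.
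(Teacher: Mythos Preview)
Your split into $|\xi|\le1$ and $|\xi|>1$ followed by Young's inequality is exactly the paper's approach, but the execution is derailed by one factual error and one persistent confusion.

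\textbf{The error.} You claim $\mathbf 1_{|\xi|\le1}/|\xi|^2\notin L^1(\R^3)$. This is false: $\int_{|\xi|\le1}|\xi|^{-2}\,d\xi=4\pi\int_0^1dr=4\pi$. (You even invoke this very $L^1$ norm yourself in the final paragraph when bounding $\|\,|\xi|^{-2}\mathbf 1_{|\xi|\le1}*|\hat g|^2\|_{L^1}$, so the proposal is internally inconsistent.) Once this is recognized, the $|\xi|\le1$ piece is immediate and there is no need for any Cauchy--Schwarz splitting of the singularity or Hardy-type argument: the paper simply bounds
\[
\Big\|\fr{\mathbf 1_{|\xi|\le1}}{|\xi|^2}\hat f\Big\|_{L^1_\xi}\le\|\hat f\|_{L^\infty_\xi}\Big\|\fr{\mathbf 1_{|\xi|\le1}}{|\xi|^2}\Big\|_{L^1_\xi}\les\|\hat f\|_{L^\infty_\xi},
\]
and then Young's inequality gives the contribution $\les\|\hat f\|_{L^\infty}\|\hat g\|_{L^2}\|\hat h\|_{L^2}$.

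\textbf{The confusion.} In Step~1 you write $\|\la\xi\ra^2\hat f\|_{L^2_\xi}\les\|\la v\ra^2 f\|_{L^2}$, but Plancherel gives $\|\la\xi\ra^2\hat f\|_{L^2}=\|\la\nabla_v\ra^2 f\|_{L^2}$, which (with the time-shift built into $Y$) is $\|\la Y\ra^2 f\|_{L^2}$, \emph{not} $\|\la v\ra^2 f\|_{L^2}$. The correct dictionary is: $\la v\ra^2$ on $f$ (Fourier-side regularity of $\hat f$) controls $\|\hat f\|_{L^\infty_\xi}$ via $\|\hat f\|_{L^\infty}\le\|f\|_{L^1}\les\|\la v\ra^2 f\|_{L^2}$, whereas $\la Y\ra^2$ on $f$ (Fourier-side decay of $\hat f$) controls $\|\hat f\|_{L^1_\xi}$. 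With this straight, the paper's proof reads off both members of the minimum: the singular piece always uses $\|\hat f\|_{L^\infty}\les\|\la v\ra^2 f\|$, while on the regular piece one places the $L^1$ norm either on $\hat f$ (contributing $\la Y\ra^2$ on $f$, hence the first member $\|\la v\ra^2\la Y\ra^2 f\|\,\|g\|$) or on $\hat g$ (contributing $\la Y\ra^2$ on $g$, hence the second member $\|\la v\ra^2 f\|\,\|\la Y\ra^2 g\|$). This also explains the asymmetry of the weights in the statement.
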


\begin{proof}
Nothing that $\fr{1}{|\xi|^2}\in L^1_{\rm loc}(\R^3)$,  by Young's inequality, it is easy to see that
\begin{align*}
    &\int_{\R^6}\fr{\la\xi\ra^2}{|\xi|^2}\hat{f}(\xi)\hat{g}(\eta-\xi) \bar{\hat{h}}(\eta)d\xi d\eta\le\int_{\R^3}\Big(\int_{|\xi|\le1}+\int_{|\xi|\ge1}\Big)\fr{\la\xi\ra^2}{|\xi|^2}\left|\hat{f}(\xi)\hat{g}(\eta-\xi) \bar{\hat{h}}(\eta)\right|d\xi d\eta\\
    \les &\left\|\fr{{\bf 1}_{|\xi|\le1}}{|\xi|^2}\hat{f}(\xi)\right\|_{L^1_\xi}\|\hat{g}\|_{L^2_\eta}\|\hat{h}\|_{L^2_\eta}+\min\left\{\|\hat{f}\|_{L^1_\eta}\|\hat{g}\|_{L^2_\eta}, \|\hat{f}\|_{L^2_\eta}\|\hat{g}\|_{L^1_\eta}\right\}\|\hat{h}\|_{L^2_\eta}\\
    \les& \|\hat{f}\|_{L^\infty_\eta} \|\hat{g}\|_{L^2_\eta}\|\hat{h}\|_{L^2_\eta}+\min\left\{\|\hat{f}\|_{L^1_\eta}\|\hat{g}\|_{L^2_\eta}, \|\hat{f}\|_{L^2_\eta}\|\hat{g}\|_{L^1_\eta}\right\}\|\hat{h}\|_{L^2_\eta}\\
    \les&\min\left\{\|\hat{f}\|_{L^1_\eta\cap L^\infty_\eta}\|\hat{g}\|_{L^2_\eta}, \|\hat{f}\|_{L^2_\eta\cap L^\infty_\eta}\|\hat{g}\|_{L^1_\eta\cap L^2_\eta} \right\}\|\hat{h}\|_{L^2}.
\end{align*}
Moreover, 
\begin{align*}
    \|\hat{f}\|_{L^1_\eta\cap L^\infty_\eta}\les\big\|\widehat{\la Y\ra^2f}\big\|_{L^2_\eta}+\big\|\la \nb_\eta\ra^2\hat{f}\big\|_{L^2_\eta}= \big\|\la Y\ra^2f\big\|_{L^2}+\big\|\la v\ra^2 f\big\|_{L^2},
\end{align*}

\begin{align*}
     \|\hat{f}\|_{L^2_\eta\cap L^\infty_\eta}\les\big\|\la \nb_\eta\ra^2\hat{f}\big\|_{L^2_\eta}\les \big\|\la v\ra^2f\big\|_{L^2},
\end{align*}
and
\begin{align*}
\|\hat{g}\|_{L^1_\eta\cap L^2_\eta}\les \big\|\la Y\ra^2g\big\|_{L^2}.
\end{align*}
then \eqref{weighted-young} follows immediately.
\end{proof}

\subsection{Properties of the Gevrey class on the physical side}
In this section, we study the Gevrey class on the physical side. We study the relationship between the Gevrey regularity modulator $a_{m,\lambda, s}$ and the derivatives. Roughly speaking, in terms of regularity, $a_{m,\lambda,s}|m|Z^mf\sim a_{m,\lambda,s}\langle Z\rangle^{s}Z^mf$. 
\begin{lem}\label{lem-interp1}
Assume that $0<{\bf a}\le s$, ${\bf b}\in\N$, and $\Theta>0$ satisfying 
\begin{align}\label{lb-Theta}
    {\bf b}\le(s-{\bf a})\Theta, \quad{\rm and}\quad\Theta\le\fr{1}{\bf a}.
\end{align}
Then there exists a positive constant $C$  depending on ${\bf b}$ and the lower bound of $\lm(t)$, such that
\begin{align}\label{interp1}
    \sum_{0<m\in\N^6}\fr{a^2_{m,\lm,s}}{|m|^{2\Theta}}\left\|\la Z\ra^{{\bf b}+{\bf a}\Theta }f^{(m)}\right\|_{L^2_{x,v}}^2\le C\sum_{0<m\in\N^6}a^2_{m,\lm,s}\left\|f^{(m)}\right\|_{L^2_{x,v}}^2.
\end{align}
\end{lem}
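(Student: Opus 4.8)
\textbf{Proof proposal for Lemma \ref{lem-interp1}.}

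The plan is to reduce the claimed bound to a pointwise (in the multi-index $m$) estimate on the combinatorial modulators $a_{m,\lm,s}$, and then to handle the extra derivatives $\la Z\ra^{{\bf b}+{\bf a}\Theta}$ by distributing them among nearby modulators via interpolation. First I would note that $\la Z\ra^{{\bf b}+{\bf a}\Theta}$ is equivalent, up to universal constants, to $\sum_{|\bar\al|\le {\bf b}+1}|Z|^{|\bar\al|}$ combined with a fractional piece of order ${\bf a}\Theta$; more precisely, using $\la Z\ra^{{\bf b}+{\bf a}\Theta}\lesssim \la Z\ra^{{\bf b}}+\la Z\ra^{{\bf b}+1}$ raised to suitable powers and interpolating, it suffices to bound
\begin{align*}
\sum_{0<m\in\N^6}\fr{a^2_{m,\lm,s}}{|m|^{2\Theta}}\Big\|\big(\la Z\ra^{{\bf b}+{\bf a}\Theta}f\big)^{(m)}\Big\|_{L^2_{x,v}}^2
\end{align*}
by splitting $\la Z\ra^{{\bf b}+{\bf a}\Theta}= \big(\la Z\ra^{{\bf b}}\big)^{1-{\bf a}\Theta/({\bf b}+1-{\bf b}{\bf a}\Theta/({\bf b}+1))}\cdots$ — but this bookkeeping is cleaner if one instead writes $\la Z\ra^{{\bf b}+{\bf a}\Theta}f^{(m)}$ in Fourier variables and uses $\la k,kt\ra^{{\bf b}+{\bf a}\Theta}\le \la k,kt\ra^{{\bf b}}\la k,kt\ra^{{\bf a}\Theta}$. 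So the real content is: for $|\bar\al|={\bf b}$,
\begin{align*}
\fr{a_{m,\lm,s}}{|m|^{\Theta}}\big|(k,kt)^{m+\bar\al}\big|^{1}\la k,kt\ra^{{\bf a}\Theta}
\lesssim \sup_{|\bar\gamma|={\bf b}+\lceil {\bf a}\Theta\rceil}\Big(\sum \text{terms}\Big)^{1/2},
\end{align*}
which should follow from the ratio bounds on $a_{m,\lm,s}$.

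The key step is the modulator inequality. For $|\bar\al|={\bf b}$ and $i\in\{4,5,6\}$, a direct computation with $a_{m,\lm,s}(t)=\fr{\lm^{|m|}(t)}{\Gamma_s(|m|)}C_{|m|}^m$ gives, for $m$ with $m_i\ge1$,
\begin{align*}
a_{m,\lm,s}(t) \le C\,\lm(t)\,a_{m-\bar e_i,\lm,s}(t)\,|m|^{\fr1s-1},
\end{align*}
(the factor $|m|^{1/s-1}$ coming from $\Gamma_s(|m|)/\Gamma_s(|m|-1)\approx |m|^{1/s}$ together with $C^m_{|m|}/C^{m-\bar e_i}_{|m|-1}\approx |m|^{-1}$), and iterating ${\bf b}$ times,
\begin{align*}
a_{m,\lm,s}(t)\le C\,\lm(t)^{{\bf b}}\,a_{m-\bar\al,\lm,s}(t)\,|m|^{({\bf b})(\fr1s-1)}.
\end{align*}
One similarly pulls out the extra ${\bf a}\Theta$ worth of derivatives using $\la k,kt\ra^{{\bf a}\Theta}\le |m|^{-1}\cdot(\text{one more derivative})$ iterated; the constraint $\Theta\le 1/{\bf a}$ ensures $\lceil{\bf a}\Theta\rceil$ is controlled, and the constraint ${\bf b}\le (s-{\bf a})\Theta$ is exactly what is needed so that the accumulated power of $|m|$, namely $|m|^{({\bf b}+{\bf a}\Theta)(\fr1s-1)}$ roughly, together with the gain $|m|^{-\Theta}$ on the left and the fact that each derivative contributes a factor like $|m|^{1/s}$ which must beat $|m|^{\Theta}$… Let me be precise: the point is that raising the index by one derivative replaces $a_{m,\lm,s}$ by $\approx \lm a_{m+\bar e_i,\lm,s}|m|^{-1/s}$ (inverse of the previous bound), so paying $\lceil {\bf b}+{\bf a}\Theta\rceil$ derivatives and then using $|m|^{-\Theta}$ gives a net power $|m|^{-\Theta + ({\bf b}+{\bf a}\Theta)/s}\cdot$(lower order), and one checks $-\Theta+({\bf b}+{\bf a}\Theta)/s\le 0\iff {\bf b}\le (s-{\bf a})\Theta$ — which is precisely \eqref{lb-Theta}. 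The surviving powers of $\lm(t)$ are bounded since $\lm(t)$ is bounded above, and the boundedness-below of $\lm(t)$ enters when one wants the estimate uniform; this produces the constant $C$ depending on ${\bf b}$ and $\inf\lm$.

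With the pointwise modulator bound in hand, one sums in $m$: each $a_{m,\lm,s}\|(k,kt)^{m+\bar\gamma}\hat f_k\|$ with $|\bar\gamma|\le {\bf b}+\lceil {\bf a}\Theta\rceil$ is, after re-indexing $m\mapsto m+\bar\gamma$, controlled by $a_{m',\lm,s}\|(k,kt)^{m'}\hat f_k\|$ times a constant, so that $\sum_{0<m}a^2_{m,\lm,s}|m|^{-2\Theta}\|\la Z\ra^{{\bf b}+{\bf a}\Theta}f^{(m)}\|^2 \lesssim \sum_{m'}a^2_{m',\lm,s}\|f^{(m')}\|^2$, using finitely many shifts and Plancherel in $(x,v)$. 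The main obstacle I expect is the careful tracking of the $|m|$-powers and the combinatorial coefficients $C^m_{|m|}$ through the repeated index shifts — in particular verifying that the borderline case ${\bf b}=(s-{\bf a})\Theta$ does not cost a logarithmic loss, which it does not because the fractional part $\Gamma_s$ carries the polynomial correction $(1+|m|)^{-12}$ that absorbs any residual $\log$ or small polynomial factors. Everything else (the algebra-type product estimates, the reduction to Fourier side) is routine and parallels the manipulations already used in Lemma \ref{lem-summable} and Lemma \ref{lem-Gamma-equi}.
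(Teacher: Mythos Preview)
Your overall strategy is the paper's: pass to the Fourier side, interpolate $|Z|^{\mathbf{b}+\mathbf{a}\Theta}$ between $|Z|^{\mathbf{b}}$ and $|Z|^{\mathbf{b}+1}$ with weights $1-\mathbf{a}\Theta$ and $\mathbf{a}\Theta$ (this is precisely where $\Theta\le 1/\mathbf{a}$ is used, to keep $\mathbf{a}\Theta\in[0,1]$), and then bound the modulator ratios $a_{m}/a_{m+\alpha}$ by a power of $|m|$ that is absorbed by $|m|^{-\Theta}$ exactly when $\mathbf{b}\le(s-\mathbf{a})\Theta$. Your final exponent check $-\Theta+(\mathbf{b}+\mathbf{a}\Theta)/s\le 0$ is correct and is the heart of the matter.

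However, your stated modulator ratio has both the $\lambda$ factor and the $|m|$ power inverted. A direct computation gives
\[
\frac{a_{m,\lambda,s}}{a_{m+e_i,\lambda,s}}
=\frac{1}{\lambda}\cdot\frac{\Gamma_s(|m|+1)}{\Gamma_s(|m|)}\cdot\frac{C^m_{|m|}}{C^{m+e_i}_{|m|+1}}
=\frac{1}{\lambda}\,(|m|+1)^{1/s}\Big(\frac{|m|+1}{|m|+2}\Big)^{12}\frac{m_i+1}{|m|+1}
\le \frac{C}{\lambda}\,|m|^{1/s},
\]
not $a_m\approx \lambda\,a_{m+e_i}|m|^{-1/s}$ as you write. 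Taken literally, your intermediate claim would give a net power $|m|^{-\Theta-(\mathbf{b}+\mathbf{a}\Theta)/s}$, which is always bounded and would make the hypothesis \eqref{lb-Theta} superfluous --- so your argument is internally inconsistent at that step, even though you land on the right constraint. Once the ratio direction is corrected, the accumulated factor $\lambda^{-(\mathbf{b}+\mathbf{a}\Theta)}$ also explains why the constant depends on $\inf_t\lambda(t)$. The paper streamlines your opening paragraph by writing directly
\[
a_{m}^2\,|k,\eta+kt|^{2(\mathbf{b}+\mathbf{a}\Theta)}(k,\eta+kt)^{2m}
\le C|m|^{2\Theta}\Big(\sum_{\substack{|\alpha|=\mathbf{b}\\ 1\le i\le 6}}a_{m+\alpha+e_i}^2(k,\eta+kt)^{2(m+\alpha+e_i)}\Big)^{\mathbf{a}\Theta}
\Big(\sum_{|\alpha|=\mathbf{b}}a_{m+\alpha}^2(k,\eta+kt)^{2(m+\alpha)}\Big)^{1-\mathbf{a}\Theta}
\]
and then summing in $m$ via H\"older; this avoids the $\lceil\mathbf{a}\Theta\rceil$ bookkeeping entirely.
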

\begin{proof}
By the multinominal theorem,
\begin{align*}
    &\left|\mathcal{F}_{x,v}\big[|Z|^{{\bf b}+{\bf a}\Theta}f^{(m)}\big]_k(\eta)\right|^2=|k,\eta+kt|^{2{\bf b}+2{\bf a}\Theta}(k,\eta+kt)^{2m}|\hat{f}_k(\eta)|^2\\
    =&\Big(\sum_{i=1}^6(k,\eta+kt)^{2e_i}\Big)^{{\bf a}\Theta}\sum_{|\al|={\bf b}}C_{|\al|}^{\al}(k,\eta+kt)^{2m+2\al}|\hat{f}_k(\eta)|^2\\
    =&\Big(\sum_{\substack{|\al|={\bf b}\\1\le i\le 6}}C_{|\al|}^{\al}(k,\eta+kt)^{2(m+\al+e_i)}|\hat{f}_k(\eta)|^2\Big)^{{\bf a}\Theta}\Big(\sum_{|\al|={\bf b}}C_{|\al|}^{\al}(k,\eta+kt)^{2m+2\al}|\hat{f}_k(\eta)|^2\Big)^{1-{\bf a}\Theta}.
\end{align*}
Then
\begin{align}\label{est-interp1}
    a^2_{m,\lm,s}\left|\mathcal{F}_{x,v}\big[|Z|^{{\bf b}+{\bf a}\Theta}f^{(m)}\big]_k(\eta)\right|^2
    \nn\le&\Big(\sum_{\substack{|\al|={\bf b}\\1\le i\le 6}}a^2_{m+\al+e_i,\lm,s}\big|\mathcal{F}_{x,v}\big[f^{(m+\al+e_i)}\big]_k(\eta)\big|^2\Big)^{{\bf a}\Theta}\\
    \nn&\times\Big(\sum_{|\al|={\bf b}}a^2_{m+\al,\lm,s}\big|\mathcal{F}_{x,v}\big[f^{(m+\al)}\big]_k(\eta)\big|^2\Big)^{1-{\bf a}\Theta}\\
    &\times\max_{\substack{|\al|={\bf b}\\1\le i\le 6}}\Big(C_{|\al|}^{\al}\fr{a_{m,\lm,s}}{a_{m+\al+e_i,\lm,s}}\Big)^{2{\bf a}\Theta}\max_{|\al|={\bf b}}\Big(C_{|\al|}^{\al}\fr{a_{m,\lm,s}}{a_{m+\al,\lm,s}}\Big)^{2(1-{\bf a}\Theta)}.
\end{align}
Moreover, the lower bound of $\Theta$ in \eqref{lb-Theta} ensures that
\begin{align}\label{est-interp2}
    \nn&\max_{\substack{|\al|={\bf b}\\1\le i\le 6}}\Big(C_{|\al|}^{\al}\fr{a_{m,\lm,s}}{a_{m+\al+e_i,\lm,s}}\Big)^{2{\bf a}\Theta}\max_{|\al|={\bf b}}\Big(C_{|\al|}^{\al}\fr{a_{m,\lm,s}}{a_{m+\al,\lm,s}}\Big)^{2(1-{\bf a}\Theta)}\\
    \nn\le&C\max_{\substack{|\al|={\bf b}\\1\le i\le 6}}\Bigg(\big[(|m|+1)(|m|+2)\cdots (|m|+{\bf b}+1)\big]^{\fr1s}\\
    \nn&\times\fr{(m_i+1)\cdots(m_i+\al_i+1)\prod_{1\le j\le6,j\ne i}(m_j+1)\cdots(m_j+\al_j)}{(|m|+1)(|m|+2)\cdots (|m|+{\bf b}+1)}\Bigg)^{2{\bf a}\Theta}\\
    \nn&\times\max_{|\al|={\bf b}}\Bigg(\big[(|m|+1)(|m|+2)\cdots (|m|+{\bf b})\big]^{\fr1s}\\
    \nn&\times\fr{\prod_{1\le i\le6}(m_i+1)\cdots(m_i+\al_i)}{(|m|+1)(|m|+2)\cdots (|m|+{\bf b})}\Bigg)^{2(1-{\bf a}\Theta)}\\
    \le&C\Big(|m|^{({\bf b}+1)\fr{{\bf a}\Theta}{s}}|m|^{{\bf b}\fr{1-{\bf a}\Theta}{s}}\Big)^2=C|m|^{2\fr{{\bf b}+{\bf a}\Theta}{s}}\le C|m|^{2\Theta},
\end{align}
where $C$ is positive constant depending on ${\bf b}$ and the lower bound of $\lm(t)$. It follows that
\begin{align*}
    \sum_{0<m\in\N^6}\fr{a^2_{m,\lm,s}}{|m|^{2\Theta}}\left\||Z|^{b+a\Theta}f^{(m)}\right\|_{L^2_{x,v}}^2\le& C\Bigg(\sum_{0<m\in\N^6}\sum_{\substack{|\al|={\bf b}\\1\le i\le 6}}a^2_{m+\al+e_i,\lm,s}\big\|f^{(m+\al+e_i)}\big\|_{L^2_{x,v}}^2\Bigg)^{{\bf a}\Theta}\\
    &\times\Bigg(\sum_{0<m\in\N^6}\sum_{|\al|={\bf b}}a^2_{m+\al,\lm,s}\big\|f^{(m+\al)}\big\|_{L^2_{x,v}}^2\Bigg)^{1-{\bf a}\Theta}.
\end{align*}
Then \eqref{interp1} follows immediately.
\end{proof}

\begin{coro}\label{coro-CK}
Let $\iota\ge0$.
There exists a positive constant $C$  depending on $s$,  the lower bound of $\lm(t)$ and $\iota$, such that
\begin{align}\label{interp2}
    \nn&\sum_{m\in\N^6}a^2_{m,\lm,s}(t)\left\|\la Z\ra^{\fr{s}{2}}  \left(\la v\ra^{\iota}f^{(m)}(t)\right)\right\|^2_{L^2_{x,v}}\\
    \le& C \sum_{0<|m|\in\N^6}|m|a^2_{m,\lm,s}(t)\left\|\la v\ra^{\iota}f^{(m)}(t)\right\|^2_{L^2_{x,v}}+\|f\|_{\mathcal{G}^{\lm}_{s,\iota}}^2.
\end{align}

\end{coro}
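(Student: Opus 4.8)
\textbf{Proof proposal for Corollary \ref{coro-CK}.}

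The plan is to apply Lemma \ref{lem-interp1} with the parameters $\mathbf{b}=0$, $\mathbf{a}=\tfrac{s}{2}$, and $\Theta=1$. One checks that these satisfy the hypotheses \eqref{lb-Theta}: indeed $\mathbf{b}=0\le (s-\tfrac{s}{2})\cdot 1=\tfrac{s}{2}$, and $\Theta=1\le \tfrac{1}{\mathbf{a}}=\tfrac{2}{s}$ since $s\le 2$ (in fact $s<\tfrac23$). With these choices, $\mathbf{b}+\mathbf{a}\Theta=\tfrac{s}{2}$, so \eqref{interp1} reads
\begin{align*}
    \sum_{0<m\in\N^6}\frac{a_{m,\lm,s}^2(t)}{|m|^2}\big\|\la Z\ra^{\frac s2}\big(\la v\ra^\iota f^{(m)}(t)\big)\big\|_{L^2_{x,v}}^2
    \le C\sum_{0<m\in\N^6}a_{m,\lm,s}^2(t)\big\|\la v\ra^\iota f^{(m)}(t)\big\|_{L^2_{x,v}}^2,
\end{align*}
where we apply Lemma \ref{lem-interp1} to the function $\la v\ra^\iota f$ in place of $f$; note $\la v\ra^\iota$ commutes with the vector fields $Z=(\nabla_x,Y)$, so $(\la v\ra^\iota f)^{(m)}=\la v\ra^\iota f^{(m)}$ and the substitution is clean. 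This is \emph{not} quite what we want, however, because the left side of \eqref{interp2} has no $|m|^{-2}$ weight; the point of Lemma \ref{lem-interp1} is that extra regularity of order $\tfrac s2$ costs at most a factor $|m|$ (heuristically $\la Z\ra^{s/2}Z^m f\sim |m|^{s/2}\cdot|m|^{1-s/2}\cdots$, matching $|m|$ precisely at the endpoint when $\mathbf b+\mathbf a\Theta = s\cdot\Theta$... here $\tfrac s2 = s\cdot\tfrac12$, so actually the natural cost is $|m|^{(\mathbf b+\mathbf a\Theta)/s}=|m|^{1/2}$). So I would instead run Lemma \ref{lem-interp1} with $\Theta=\tfrac12$: then $\mathbf b+\mathbf a\Theta$ would be $\tfrac s4$, not $\tfrac s2$. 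To land exactly on $\tfrac s2$ with cost $|m|$ rather than $|m|^{1/2}$, take $\mathbf a = s$ (allowed since $\mathbf a\le s$), $\mathbf b=0$, $\Theta=\tfrac12$; then $\mathbf b+\mathbf a\Theta=\tfrac s2$ and the left side carries $|m|^{-2\Theta}=|m|^{-1}$, giving
\begin{align*}
    \sum_{0<m\in\N^6}\frac{a_{m,\lm,s}^2(t)}{|m|}\big\|\la Z\ra^{\frac s2}\big(\la v\ra^\iota f^{(m)}\big)\big\|_{L^2_{x,v}}^2
    \le C\sum_{0<m\in\N^6}a_{m,\lm,s}^2(t)\big\|\la v\ra^\iota f^{(m)}\big\|_{L^2_{x,v}}^2,
\end{align*}
provided the hypothesis $\mathbf b\le(s-\mathbf a)\Theta$ holds — but with $\mathbf a=s$ this forces $0\le 0$, which is fine (equality is allowed), and $\Theta=\tfrac12\le\tfrac1s$ needs $s\le2$, true.

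Given the displayed inequality above, I then multiply through: since $\tfrac1{|m|}\cdot|m| = 1$, I cannot directly recover $\sum |m| a_{m}^2 \|\la Z\ra^{s/2}(\cdots)\|^2$ on the left; rather, the statement \eqref{interp2} bounds $\sum_m a_m^2\|\la Z\ra^{s/2}(\la v\ra^\iota f^{(m)})\|^2$ (no $|m|$ weight) by $\sum_{m\neq 0} |m| a_m^2 \|\la v\ra^\iota f^{(m)}\|^2$ plus the low-order term $\|f\|_{\mathcal G^\lm_{s,\iota}}^2$. This is obtained from the displayed inequality by the elementary bound: write $a_m^2\|\la Z\ra^{s/2}(\cdot)\|^2 = |m|\cdot\big(\tfrac{a_m^2}{|m|}\|\la Z\ra^{s/2}(\cdot)\|^2\big)$ — no, that reintroduces $|m|$ on the wrong side. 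The correct route is: for each fixed $m\neq 0$ with $|m|\ge 1$, $a_m^2\|\la Z\ra^{s/2}(\la v\ra^\iota f^{(m)})\|^2 \le |m|\cdot\big(\text{the }m\text{-summand of the LHS with }|m|^{-1}\big)\cdot\text{something}$ — this is circular. Let me instead keep $\Theta=1$, $\mathbf a=\tfrac s2$, $\mathbf b=0$ as in the first paragraph: then Lemma \ref{lem-interp1} gives $\sum_{m\neq0}|m|^{-2}a_m^2\|\la Z\ra^{s/2}(\la v\ra^\iota f^{(m)})\|^2\le C\sum_{m\neq0}a_m^2\|\la v\ra^\iota f^{(m)}\|^2$, and then I multiply the $m$-th term of the target by noting $a_m^2\|\la Z\ra^{s/2}(\cdot)\|^2 \le |m|^2\cdot\big(|m|^{-2}a_m^2\|\la Z\ra^{s/2}(\cdot)\|^2\big)$ — no. The clean statement is simply: apply Lemma \ref{lem-interp1} with $\mathbf b=0$, $\mathbf a=\tfrac s2$, and \emph{any} $\Theta$ with $\tfrac{s/2}{s}\cdot\text{(scaling)}$; choosing $\Theta=1$ and then using $\|\la Z\ra^{s/2}Z^m f\|\lesssim (1+|m|)^{1/2}\|\,\ldots\|$ is exactly the content. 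I will therefore present it as: split off $m=0$ (contributing $\|\la Z\ra^{s/2}(\la v\ra^\iota f)\|_{L^2}^2\lesssim\|f\|_{\mathcal G^\lm_{s,\iota}}^2$, absorbing the half-derivative into one full derivative), and for $m\neq0$ invoke \eqref{est-interp1}–\eqref{est-interp2} of Lemma \ref{lem-interp1}'s proof directly with $\mathbf b=0$, $\mathbf a=\tfrac s2$, $\Theta=1$, which bounds the $m$-summand $a_m^2\|\la Z\ra^{s/2}(\la v\ra^\iota f^{(m)})\|^2$ by $C|m|^{s/s}=C|m|$ times a product of $m$-shifted full-derivative energies; summing and using the modulator comparison $a_{m+e_i,\lm,s}^2\lesssim a_{m,\lm,s}^2$ together with the reindexing $\sum_{m}|m|a_{m+\al+e_i}^2\|f^{(m+\al+e_i)}\|^2\lesssim\sum_{m\neq0}|m|a_m^2\|f^{(m)}\|^2$ (absorbing the shift into the $|m|$, since $|m+\al+e_i|\lesssim|m|+1$) yields \eqref{interp2}.

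The main obstacle — or rather the one bookkeeping point requiring care — is the interplay between the power of $|m|$ produced by the interpolation inequality \eqref{est-interp2} and the reindexing of the sum over shifted multi-indices. Concretely, \eqref{est-interp2} produces a factor $|m|^{2(\mathbf b+\mathbf a\Theta)/s}$, which for $\mathbf b=0,\ \mathbf a=\tfrac s2,\ \Theta=1$ equals $|m|^{1}$, exactly the weight appearing on the right of \eqref{interp2}; I must make sure the Hölder split in \eqref{est-interp1} (between the $(m+\al+e_i)$-energies raised to power $\mathbf a\Theta=\tfrac s2$ and the $(m+\al)$-energies raised to $1-\tfrac s2$, here $\mathbf b=0$ so $\al=0$) combines, after summing in $m$ and applying Hölder in $m$, to the single clean right-hand side — using $a_{m+e_i,\lm,s}(t)\le a_{m,\lm,s}(t)$ (valid since $\lm(t)\le 1$ can be arranged, or more carefully using the explicit form of $a_{m,\lm,s}$) and $|m+e_i|\le |m|+1\le 2|m|$ for $m\neq0$. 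The $m=0$ term is genuinely separate and is controlled by $\|f\|_{\mathcal G^\lm_{s,\iota}}^2$ since a half-derivative is dominated by one full $Z$-derivative plus the function itself, all of which appear (with $\beta$-index $\le N$, in particular $\le$ some small number) in the definition \eqref{def-Gnorm} of $\|\cdot\|_{\mathcal G^\lm_{s,\iota}}$ — I should double-check that the $N=0$ convention for $\mathcal G^\lm_{s,\iota}$ still leaves room for the two extra $x,v$-derivatives needed to estimate $\la Z\ra^{s/2}$; if not, I will simply state the bound with $\mathcal G^{\lm,N}_{s,\iota}$ for $N$ large enough, which is harmless for all later uses.
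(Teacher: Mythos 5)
Your plan eventually converges on the correct mechanism — run the multinomial/Young interpolation of \eqref{est-interp1}--\eqref{est-interp2} with $\mathbf{b}=0$ and $\mathbf{a}\Theta=\tfrac s2$, which produces exactly the factor $|m|^{2(\mathbf{b}+\mathbf{a}\Theta)/s}=|m|$ — and that is what the paper does. But there is one claim that is simply false and it matters: ``$\la v\ra^\iota$ commutes with the vector fields $Z=(\nabla_x,Y)$''. The $Y$-component is $Y_j=\partial_{v_j}+t\partial_{x_j}$, so $[Y_j,\la v\ra^\iota]=\partial_{v_j}\la v\ra^\iota\neq 0$, and hence $(\la v\ra^\iota f)^{(m)}\neq\la v\ra^\iota f^{(m)}$ whenever $\bar m\neq 0$. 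This means you cannot just substitute $\la v\ra^\iota f$ into Lemma~\ref{lem-interp1}: the left side of \eqref{interp1} would then carry $(\la v\ra^\iota f)^{(m)}$, which is not the quantity $\la v\ra^\iota f^{(m)}$ that appears in \eqref{interp2}.

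The paper therefore runs \eqref{est-interp1}--\eqref{est-interp2} by hand on $\la v\ra^\iota f^{(m)}$, and at the step where $Z^{e_i}(\la v\ra^\iota f^{(m)})$ is to be identified with $\la v\ra^\iota f^{(m+e_i)}$ it splits off the commutator $(\partial_{v_j}\la v\ra^\iota)f^{(m)}$ for the three $Y$-directions. This produces an extra summand alongside the factorized main term. The commutator carries no $|m|$-gain but a strictly lower velocity weight, $|\partial_v\la v\ra^\iota|\lesssim\la v\ra^{\iota-1}$, so after H\"older and summing in $m$ it is absorbed into the $\|f\|^2_{\mathcal G^\lm_{s,\iota}}$ term on the right of \eqref{interp2}; that term therefore accounts not only for the $m=0$ piece, as you suppose, but also for all the commutators and the $|m|\le 1$ tail. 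Once you insert that step your sketch closes: the main term reindexes (using $a_{m,\lm,s}/a_{m+e_i,\lm,s}\lesssim|m|^{1/s}$ for $|m|\ge 1$, which yields $a^s_{m,\lm,s}\lesssim |m|\,a^s_{m+e_i,\lm,s}$) to $\sum_{|m|>0}|m|a^2_{m,\lm,s}\|\la v\ra^\iota f^{(m)}\|^2$, and the worry in your last sentence about needing extra $\beta$-derivatives beyond the $N=0$ norm is unfounded, since the half-$Z$-derivative is traded entirely for $m$-shifts, not $\beta$-shifts.
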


\begin{proof}
Taking ${\bf b}=0$ and ${\bf a}\Theta=\fr{s}{2}$   in \eqref{est-interp1} and \eqref{est-interp2} yields, for $|m|>1$,
\begin{align*}
    \nn&a^2_{m,\lm,s}\left|\mathcal{F}_{x,v}\big[|Z|^{\fr{s}{2}}(\la v\ra^{\iota}f^{(m)})\big]_k(\eta)\right|^2
    \\
    =&\Big(\sum_{1\le i\le 6}a^2_{m,\lm,s}\big|\mathcal{F}_{x,v}\big[Z^{e_i}(\la v\ra^{\iota}f^{(m)})\big]_k(\eta)\big|^2\Big)^{\fr{s}{2}}\Big(a^2_{m,\lm,s}\big|\mathcal{F}_{x,v}\big[\la v\ra^{\iota}f^{(m)}\big]_k(\eta)\big|^2\Big)^{1-\fr{s}{2}}\\
    \les&\Big(\sum_{1\le i\le 6}|m|a^2_{m+e_i,\lm,s}\big|\mathcal{F}_{x,v}\big[\la v\ra^{\iota}f^{(m+e_i)}\big]_k(\eta)\big|^2\Big)^{\fr{s}{2}}\Big(|m|a^2_{m,\lm,s}\big|\mathcal{F}_{x,v}\big[\la v\ra^{\iota}f^{(m)}\big]_k(\eta)\big|^2\Big)^{1-\fr{s}{2}}\\
    &+\Big(\sum_{1\le j\le 3}a^2_{m,\lm,s}\big|\mathcal{F}_{x,v}\big[\pr_{v_j}\la v\ra^{\iota}f^{(m)})\big]_k(\eta)\big|^2\Big)^{\fr{s}{2}}\Big(a^2_{m,\lm,s}\big|\mathcal{F}_{x,v}\big[\la v\ra^{\iota}f^{(m)}\big]_k(\eta)\big|^2\Big)^{1-\fr{s}{2}}.
\end{align*}
Clearly, if $|m|=0$, then $\fr{a_{m,\lm,s}}{a_{m+e_i,\lm,s}}$ is bounded from above, and hence
\begin{align*}
    \nn&a^2_{m,\lm,s}\left|\mathcal{F}_{x,v}\big[|Z|^{\fr{s}{2}}(\la v\ra^{\iota}f^{(m)})\big]_k(\eta)\right|^2
    \\
    \les&\Big(\sum_{1\le i\le 6}a^2_{m+e_i,\lm,s}\big|\mathcal{F}_{x,v}\big[\la v\ra^{\iota}f^{(m+e_i)}\big]_k(\eta)\big|^2\Big)^{\fr{s}{2}}\Big(a^2_{m,\lm,s}\big|\mathcal{F}_{x,v}\big[\la v\ra^{\iota}f^{(m)}\big]_k(\eta)\big|^2\Big)^{1-\fr{s}{2}}\\
    &+\Big(\sum_{1\le j\le 3}a^2_{m,\lm,s}\big|\mathcal{F}_{x,v}\big[\pr_{v_j}\la v\ra^{\iota}f^{(m)})\big]_k(\eta)\big|^2\Big)^{\fr{s}{2}}\Big(a^2_{m,\lm,s}\big|\mathcal{F}_{x,v}\big[\la v\ra^{\iota}f^{(m)}\big]_k(\eta)\big|^2\Big)^{1-\fr{s}{2}}.
\end{align*}
It follows from the above two inequalities that
\begin{align*}
    &\sum_{m\in\N^6}a^2_{m,s}(t)\left\||Z|^{\fr{s}{2}}  \left(\la v\ra^{\iota}f^{(m)}(t)\right)\right\|^2_{L^2_{x,v}}\\
    \les&\Big(\sum_{1\le i\le 6}\sum_{0<|m|\in\N^6}|m+e_i|a^2_{m+e_i,\lm,s}\big\|\la v\ra^{\iota}f^{(m+e_i)}\big\|_{L^2_{x,v}}^2\Big)^{\fr{s}{2}}\Big(\sum_{0<|m|\in\N^6}|m|a^2_{m,\lm,s}\big\|\la v\ra^{\iota}f^{(m)}\big\|^2_{L^2_{x,v}}\Big)^{1-\fr{s}{2}}\\
    &+\Big(\sum_{1\le j\le 3}\sum_{m\in\N^6}a^2_{m,\lm,s}\big\|\pr_{v_j}\la v\ra^{\iota}f^{(m)}\big\|^2_{L^2_{x,v}}\Big)^{\fr{s}{2}}\Big(\sum_{m\in\N^6}a^2_{m,\lm,s}\big\|\la v\ra^{\iota}f^{(m)}\big\|^2_{L^2_{x,v}}\Big)^{1-\fr{s}{2}}\\
    &+\sum_{m\in\N^6,0\le |m|\le1}a^2_{m,\lm,s}\big\|\la v\ra^{\iota}f^{(m)}\big\|^2_{L^2_{x,v}},
\end{align*}
which implies \eqref{interp2}.
\end{proof}

\subsection{Bony decomposition with respect to the vector field}\label{sec-Bony}
Let $\varphi\in C_0^\infty(\mathbb{R})$ be such that
$\varphi(\xi)=1$ for $|\xi|\le\fr12$ and $\varphi(\xi)=0$ for $|\xi|\ge\fr34$ and define $\theta(\xi)=\varphi\left(\fr{\xi}{2}\right)-\varphi(\xi)$, supported in the range $\xi\in\left(\fr12,\fr32\right)$. Then we have the partition of unity
\[
1=\varphi(\xi)+\sum_{J\in 2^{\mathbb{N}}}\theta_J(\xi),
\]
where $\theta_J(\xi)=\theta\left(\fr{\xi}{J}\right)$. For $f\in L^2(\mathbb{T}^3\times\mathbb{R}^3)$, recalling that $Z=(\nb_x,\nb_v+t\nb_x)$, let us define
\begin{align*}
f_J&=\theta_J(|Z|)f,\quad
f_{\fr12}=\varphi(|Z|)f,\quad
f_{<J}=f_{\fr12}+\sum_{I\in2^{\mathbb{N}}: I<J}f_{I}=\varphi\left(\fr{|Z|}{J}\right)f.
\end{align*}
Then the Littlewood-Paley decomposition is defined by
\[
f=f_{\fr12}+\sum_{J\in2^{\mathbb{N}}}f_J.
\]
The Littlewood-Paley decomposition in the $x\in\T^3$ variables can be defined analogously (with the vector field $Z$ replaced by $\nb_x$).

Now we are in a position to define the paraproduct decomposition, introduced by Bony \cite{bony1981}. Given suitable functions $f,g$, the product of $fg$ can be decomposed as follows
\begin{align*}
fg=&\mathcal{T}_fg+\mathcal{T}_gf+\mathcal{R}(f,g)\\
=&\sum_{J\ge8}f_{<\fr{J}{8}}g_J+\sum_{J\ge8}g_{<\fr{J}{8}}f_J+\sum_{J\in\mathbb{B}}\sum_{\fr{J}{8}\le J'\le8J}g_{J'}f_J
=\sum_{J\ge8}f_{<\fr{J}{8}}g_J+\sum_{J\in\mathbb{B}}g_{<16J}f_J,
\end{align*}
where all the sums are understood to run over $\mathbb{B}=\{\fr12\}\cup 2^{\N}$. In particular, we write
\begin{align*}
    \mathcal{T}'_gf=\mathcal{T}_gf+\mathcal{R}(f,g)=\sum_{J\in\mathbb{B}}g_{<16J}f_J.
\end{align*}

\subsection{Estimates on the Landau operator}
In this section, we introduce some basic properties of the linear Landau operator $L$ and study the nonlinear Landau operator $\Gamma(g_1,g_2)$ in the Gevrey class. 

\begin{lem}[Lemma 5 in \cite{Guo2002}]\label{Lem: Lemma-5}
    Let $\ell\in \mathbb{R}$. For any $0<\zeta<1$, there is $0<C_\zeta<\infty$, such that 
    \begin{align}\label{low-L-0}
        \left|\left\langle \la v\ra^{2\ell}\partial_{v_i}\sigma^i g_1, g_2\right\rangle_v\right|
        +\left|\left\langle \la v\ra^{2\ell} \mathcal{K}g_1, g_2\right\rangle_v\right|\leq \zeta|g_1|_{\sigma, \ell}|g_2|_{\sigma, \ell}
        +C_\zeta\|\bar{\chi}_{C_\zeta}g_1\|_{L^2_v}\|\bar{\chi}_{C_\zeta}g_2\|_{L^2_v},
    \end{align}
    where
\be\label{def-chi}
\bar{\chi}_R(v)=\bar{\chi}(\fr{v}{R}),\quad \chi: \R^d\rightarrow[0,\infty) \ {\rm is \ smooth}, \bar{\chi}(v)=\bar{\chi}(|v|)=\begin{cases}1,\  {\rm if}\  |v|\le1,\\ 0,\ {\rm if}\  |v|\ge2.\end{cases}
\ee
Moreover, there is $\delta>0$, such that
    \begin{align}\label{coercive-0}
        \langle Lg, g\rangle_v\geq \delta |(\mathrm{I}-\mathrm{P}_0)g|_{\sigma}^2.
    \end{align}
\end{lem}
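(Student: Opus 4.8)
The plan is to establish \eqref{low-L-0} and \eqref{coercive-0} separately, following the strategy of Guo \cite{Guo2002} but keeping track of the polynomial weight $\la v\ra^{2\ell}$; the only extra input beyond the classical argument is the velocity decay of the coefficients recorded in Lemma \ref{lem: sig_ij} and the coercivity inequality \eqref{coercive} of Corollary \ref{coro-sig}.

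\textbf{Step 1: the $\pr_{v_i}\sig^i$ contribution.} By \eqref{eq: est sigma_ij} one has $|\pr_{v_i}\sig^i(v)|\lesssim \la v\ra^{-3}$, so (after an integration by parts if convenient) $\big|\la \la v\ra^{2\ell}\pr_{v_i}\sig^i g_1, g_2\ra_v\big|\lesssim \int_{\R^3}\la v\ra^{2\ell-3}|g_1||g_2|\,dv$. Writing $\la v\ra^{2\ell-3}=\la v\ra^{-2}\big(\la v\ra^{\ell-1/2}\big)\big(\la v\ra^{\ell-1/2}\big)$ and splitting the domain at $|v|=R$: on $\{|v|\ge R\}$ the prefactor $\la v\ra^{-2}\le R^{-2}$, so by \eqref{coercive} this piece is $\lesssim R^{-2}|g_1|_{\sig,\ell}|g_2|_{\sig,\ell}$; on $\{|v|\le R\}$ the integral is $\lesssim_R \|\bar\chi_{2R}g_1\|_{L^2_v}\|\bar\chi_{2R}g_2\|_{L^2_v}$. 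Choosing $R=R(\zeta)$ large enough so that $R^{-2}\le \zeta$ and setting $C_\zeta\sim R(\zeta)$ yields the claimed bound for this term.

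\textbf{Step 2: the $\mathcal{K}$ contribution.} Expand $\mathcal{K}[g_1]$ using \eqref{def-K}; the pairing $\la \la v\ra^{2\ell}\mathcal{K}g_1, g_2\ra_v$ becomes a double integral in $(v,\tl v)$ whose kernel carries the Gaussian factor $\mu^{1/2}(v)\mu^{1/2}(\tl v)=e^{-(|v|^2+|\tl v|^2)/2}$, together with $\Phi^{ij}(v-\tl v)$ (which satisfies $|\Phi^{ij}(z)|\le C/|z|$) and polynomial powers of $v,\tl v$ coming from $\mathcal{K}$ and from \eqref{e-prmu}. Applying the Cauchy--Schwarz inequality first in $(v,\tl v)$ and then in $v$, exactly as in the derivation of \eqref{cs-vtlv} in the proof of Lemma \ref{lem-coercive-L} (and, for the $\pr_{v_j}g_1$ piece of $\mathcal{K}$, either integrating by parts in $\tl v$ or invoking $\|\la v\ra^{-3/2}\nb_v g_1\|_{L^2_v}\lesssim |g_1|_\sig$ from \eqref{coercive}), one obtains a bound of the form $\big|\la \la v\ra^{2\ell}\mathcal{K}g_1, g_2\ra_v\big|\lesssim \int_{\R^3}\mu^{1/8}(v)\la v\ra^{2\ell}\,(\cdots g_1)(\cdots g_2)\,dv$, where the factors $(\cdots g_i)$ are controlled by $\la v\ra^{\ell-1/2}|g_i|$-type quantities. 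Since $\mu^{1/8}(v)\la v\ra^{2\ell}$ decays super-polynomially, the same $|v|\le R$ versus $|v|\ge R$ splitting as in Step 1 finishes the proof of \eqref{low-L-0}: on $\{|v|\ge R\}$ one has $\mu^{1/8}(v)\la v\ra^{2\ell}\le\zeta$ for $R$ large, giving a term $\le\zeta|g_1|_{\sig,\ell}|g_2|_{\sig,\ell}$ via \eqref{coercive}; on $\{|v|\le R\}$ everything reduces to bounded-velocity $L^2_v$ norms, i.e.\ to $C_\zeta\|\bar\chi_{C_\zeta}g_1\|_{L^2_v}\|\bar\chi_{C_\zeta}g_2\|_{L^2_v}$.

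\textbf{Step 3: coercivity \eqref{coercive-0}.} Since $L=-\mathcal{A}-\mathcal{K}$ and $L\mathrm{P}_0=0$, it suffices to bound $\la Lg,g\ra_v$ from below for $g$ with $(\mathrm{I}-\mathrm{P}_0)g=g$. Integrating by parts in $\la-\mathcal{A}g,g\ra_v$ and using \eqref{def-A}, \eqref{positive-sig1}, \eqref{positive-sig2} gives $\la-\mathcal{A}g,g\ra_v=|g|_\sig^2+\la\pr_{v_i}\sig^i g,g\ra_v+(\text{exact boundary-type terms})$; combining this with the $\ell=0$ case of \eqref{low-L-0} already proven yields $\la Lg,g\ra_v\ge(1-\zeta)|g|_\sig^2-C_\zeta\|\bar\chi_{C_\zeta}g\|_{L^2_v}^2$. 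It remains to absorb the last term --- this is a spectral-gap statement and is the step I expect to be the main obstacle. It is proved by contradiction: if \eqref{coercive-0} failed, there would exist $g_n$ with $(\mathrm{I}-\mathrm{P}_0)g_n=g_n$, $|g_n|_\sig=1$, $\la Lg_n,g_n\ra_v\to0$; the bound $|g_n|_\sig=1$ together with \eqref{coercive} keeps $\{g_n\}$ bounded in a weighted $H^1_v$ space, so after passing to a subsequence $g_n\rightharpoonup g$ weakly and $\bar\chi_R g_n\to\bar\chi_R g$ strongly in $L^2_v$ for each $R$. The lower bound above forces $\|\bar\chi_{C_\zeta}g_n\|_{L^2_v}^2\gtrsim(1-\zeta)/C_\zeta$, hence $g\not\equiv0$; weak lower semicontinuity and $L\ge0$ give $\la Lg,g\ra_v=0$, and the entropy-dissipation (sum-of-squares) structure of the Landau operator then forces $g$ to lie in $\mathrm{Null}(L)$, which is spanned by $\sqrt\mu,\ v_i\sqrt\mu,\ |v|^2\sqrt\mu$. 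But $(\mathrm{I}-\mathrm{P}_0)g_n=g_n$ passes to the limit (strongly on compact sets) to give $(\mathrm{I}-\mathrm{P}_0)g=g$, i.e.\ $g$ is orthogonal to $\mathrm{Null}(L)$ --- a contradiction. This produces a uniform $\delta>0$ and completes the proof. Alternatively, one may simply invoke \cite[Lemma~5]{Guo2002} for the unweighted estimate and observe that the weighted refinement is precisely the content of Steps 1--2.
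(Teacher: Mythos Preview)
The paper does not provide its own proof of this lemma; it is stated as a direct citation of \cite[Lemma~5]{Guo2002} and used as a black box. Your sketch correctly outlines Guo's argument: velocity splitting at scale $R$ for the lower-order $\pr_{v_i}\sig^i$ and $\mathcal{K}$ terms (Steps~1--2), followed by a compactness/contradiction argument for the spectral gap (Step~3). One minor correction in Step~3: from the definition \eqref{def-A} and one integration by parts, the identity is $\la -\mathcal{A}g, g\ra_v = |g|_\sig^2 - \la \pr_{v_i}\sig^i g, g\ra_v$ (opposite sign, and there are no boundary terms on $\R^3$ for suitably decaying $g$); this does not affect the remainder of your argument.
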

By \eqref{low-L-0} and  \eqref{coercive}, we immediately have the following corollary.
\begin{coro}\label{coro-upL-0}
Let $\ell\in \mathbb{R}$.
\[
\left\la \la v\ra^{2\ell} Lg_1, g_2 \right\ra_v\les \left|g_1 \right|_{\sig, \ell}\left|g_2\right|_{\sig, \ell}. 
\]
    
\end{coro}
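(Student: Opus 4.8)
\textbf{Proof proposal for Corollary \ref{coro-upL-0}.}
The statement to prove is the two-sided bound $\langle \langle v\rangle^{2\ell} Lg_1, g_2\rangle_v \lesssim |g_1|_{\sigma,\ell}|g_2|_{\sigma,\ell}$, which is an upper (continuity) estimate for the linearized Landau operator in the weighted $|\cdot|_{\sigma,\ell}$ norm. The plan is to expand $L = -\mathcal{A} - \mathcal{K}$ according to \eqref{def-K}--\eqref{def-A}, and to estimate the $\mathcal{A}$-contribution directly by integration by parts and Cauchy--Schwarz, using the coefficient estimates \eqref{eq: est sigma_ij} and the coercivity/equivalence inequality \eqref{coercive}, while the $\mathcal{K}$-contribution (together with the $\partial_{v_i}\sigma^i$ piece of $\mathcal{A}$) is controlled by Lemma \ref{Lem: Lemma-5}, specifically \eqref{low-L-0}.

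First I would write, using \eqref{def-A},
\[
\langle \langle v\rangle^{2\ell} \mathcal{A}g_1, g_2\rangle_v = \langle \langle v\rangle^{2\ell}\partial_{v_i}(\sigma^{ij}\partial_{v_j}g_1), g_2\rangle_v - \langle \langle v\rangle^{2\ell} v_iv_j\sigma^{ij}g_1, g_2\rangle_v + \langle \langle v\rangle^{2\ell}\partial_{v_i}(\sigma^i g_1), g_2\rangle_v.
\]
For the first term I integrate by parts in $v_i$, producing $-\int \sigma^{ij}\partial_{v_j}g_1\,\partial_{v_i}(\langle v\rangle^{2\ell}g_2)dv$; expanding $\partial_{v_i}(\langle v\rangle^{2\ell}g_2) = \langle v\rangle^{2\ell}\partial_{v_i}g_2 + 2\ell v_i\langle v\rangle^{2\ell-2}g_2$ and invoking the eigenvalue representation \eqref{exp-sig} (so that $|\sigma^{ij}\xi_i\zeta_j| \lesssim (1+|v|)^{-1}|\xi||\zeta|$ on the $v^\perp$ block and $(1+|v|)^{-3}$ on the $v$ block, hence uniformly $|\sigma^{ij}\xi_i\zeta_j|\lesssim(1+|v|)^{-1}|\xi||\zeta|$), one bounds this by $\int (1+|v|)^{-1}\langle v\rangle^{2\ell}|\nabla_v g_1||\nabla_v g_2|dv$ plus a similar lower-order term; by the definition \eqref{L-norm} of $|\cdot|_{\sigma,\ell}$ and the equivalence \eqref{coercive} (which gives $|g|_{\sigma,\ell}\gtrsim \|\langle v\rangle^{\ell-3/2}\nabla_v g\|_{L^2_v}+\|\langle v\rangle^{\ell-1/2}g\|_{L^2_v}$, and whose proof also yields the matching upper bounds built into $|\cdot|_{\sigma,\ell}$), this is $\lesssim |g_1|_{\sigma,\ell}|g_2|_{\sigma,\ell}$. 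The second term $\langle \langle v\rangle^{2\ell}v_iv_j\sigma^{ij}g_1,g_2\rangle_v$ equals $\int \lambda_1(v)|v|^2\langle v\rangle^{2\ell}g_1 g_2\,dv$ by \eqref{positive-sig2}, and since $\lambda_1(v)|v|^2 \sim (1+|v|)^{-1}|v|^2 \lesssim \langle v\rangle$, this matches the second term $\int \sigma^{ij}v_iv_j|g|^2$ in \eqref{L-norm} and is controlled by $|g_1|_{\sigma,\ell}|g_2|_{\sigma,\ell}$ as well. The remaining term $\langle \langle v\rangle^{2\ell}\partial_{v_i}(\sigma^i g_1), g_2\rangle_v$ is exactly of the form handled by \eqref{low-L-0} after rewriting $\partial_{v_i}(\sigma^i g_1) = \partial_{v_i}\sigma^i\, g_1 + \sigma^i\partial_{v_i}g_1$; the first summand is directly the $\partial_{v_i}\sigma^i g_1$ piece in Lemma \ref{Lem: Lemma-5}, and the second, using $\sigma^i = v_j\sigma^{ij}$ from \eqref{def-sig} and hence $|\sigma^i|\lesssim 1$, is again absorbed by the weighted Cauchy--Schwarz argument above. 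Finally, the $\mathcal{K}$-term $\langle \langle v\rangle^{2\ell}\mathcal{K}g_1,g_2\rangle_v$ is bounded directly by \eqref{low-L-0}, with the truncated lower-order terms $\|\bar\chi_{C_\zeta}g_i\|_{L^2_v}$ dominated by $|g_i|_{\sigma,\ell}$ via \eqref{coercive} (on $|v|\le 2C_\zeta$ the weight $\langle v\rangle^{\ell-1/2}$ is comparable to a constant).

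I do not expect a genuine obstacle here: this is a routine continuity estimate and every ingredient is already available. The one point requiring mild care is bookkeeping the velocity weights — matching the powers $\langle v\rangle^{2\ell}$, $(1+|v|)^{-1}$ from $\sigma^{ij}$, and the $\langle v\rangle^{\pm}$ shifts from differentiating the weight and from $|v|^2\sigma^{ij}v_iv_j$ — so that everything lands inside the norm $|\cdot|_{\sigma,\ell}$ defined in \eqref{L-norm} with the correct index $\ell$, rather than an index that is off by a half-power. Collecting the $\mathcal{A}$ and $\mathcal{K}$ estimates and summing over $i,j$ yields the claim.
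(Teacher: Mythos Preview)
Your outline is almost right, but there is one genuine gap in the treatment of the principal diffusion term. After integrating by parts you arrive at
\[
\int_{\R^3}\la v\ra^{2\ell}\sig^{ij}\,\pr_{v_j}g_1\,\pr_{v_i}g_2\,dv,
\]
and you propose to bound this using the uniform estimate $|\sig^{ij}\xi_i\zeta_j|\lesssim(1+|v|)^{-1}|\xi||\zeta|$, giving $\int(1+|v|)^{-1}\la v\ra^{2\ell}|\nabla_v g_1||\nabla_v g_2|\,dv$ and hence $\|\la v\ra^{\ell-\fr12}\nabla_v g_1\|_{L^2}\|\la v\ra^{\ell-\fr12}\nabla_v g_2\|_{L^2}$. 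But this last quantity is \emph{not} controlled by $|g_1|_{\sig,\ell}|g_2|_{\sig,\ell}$. The norm $|\cdot|_{\sig,\ell}$ is anisotropic: by \eqref{positive-sig1} and Lemma~\ref{lem:sigma-lam} it only carries weight $\lambda_1\sim(1+|v|)^{-3}$ on the radial component $P_v\nabla_v g$, so $|g|_{\sig,\ell}$ dominates $\|\la v\ra^{\ell-\fr32}P_v\nabla_v g\|_{L^2}$ and $\|\la v\ra^{\ell-\fr12}(I-P_v)\nabla_v g\|_{L^2}$, but not $\|\la v\ra^{\ell-\fr12}\nabla_v g\|_{L^2}$. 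A radial test function with derivative concentrated at $|v|\sim R$ shows the ratio can blow up like $R^2$. Your appeal to ``matching upper bounds built into $|\cdot|_{\sig,\ell}$'' is precisely where this fails.

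The fix is immediate: use the pointwise Cauchy--Schwarz inequality for the positive semidefinite matrix $\sig=(\sig^{ij})$,
\[
\big|\sig^{ij}\pr_{v_j}g_1\,\pr_{v_i}g_2\big|\le\big(\sig^{ij}\pr_{v_i}g_1\pr_{v_j}g_1\big)^{1/2}\big(\sig^{ij}\pr_{v_i}g_2\pr_{v_j}g_2\big)^{1/2},
\]
integrate with weight $\la v\ra^{2\ell}$, and apply Cauchy--Schwarz in $L^2_v$ to land exactly on $|g_1|_{\sig,\ell}|g_2|_{\sig,\ell}$ (this is the first half of the bilinear form $\la\cdot,\cdot\ra_{v;\sig,\ell}$ the paper introduces in the proof of Lemma~\ref{coro-upL-1}). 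The same device handles the $\sig^{ij}v_iv_j g_1g_2$ term. The cross term from $\pr_{v_i}\la v\ra^{2\ell}$, and your use of \eqref{low-L-0} and \eqref{coercive} for $\pr_{v_i}\sig^i$, $\mathcal{K}$, and the truncated low-velocity remainder, are all fine.
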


\begin{lem}[Lemma 6 in \cite{Guo2002}]\label{Lem: Lemma-6}
Let $\ell\in \mathbb{R}$, $|\beta|>0$. For any $0<\zeta<1$, there is $0<C_\zeta<\infty$, such that
\begin{align*}
-\left\langle \la v\ra^{2\ell}\pr^\beta_v\mathcal{A} g, \pr_v^\beta g\right\rangle_v
        \ge|\pr_v^\beta g|^2_{\sig, \ell}-\zeta\sum_{|\beta'|\le |\beta|}\left|\pr_v^{\beta'}g\right|^2_{\sig,\ell}-C_{\zeta}\|\bar{\chi}_{C_\zeta}g\|_{L^2_v}^2,
\end{align*}
and
\begin{align*}
\left|\left\langle \la v\ra^{2\ell} \pr_v^\beta\mathcal{K}g_1, \pr_v^\beta g_2\right\rangle_v\right|\leq\left[\zeta\sum_{|\beta'|\le |\beta|}\left|\pr_v^{\beta'}g_1\right|_{\sig,\ell}+C_{\zeta}\|\bar{\chi}_{C_\zeta}g_1\|_{L^2_v}\right]|\pr_v^\beta g_2|_{\sig,\ell}.
\end{align*}
\end{lem}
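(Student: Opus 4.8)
The statement is precisely Lemma 6 of \cite{Guo2002}, adapted to the weight $\langle v\rangle^{2\ell}$; the plan is to recall how it follows from the explicit structure of $\mathcal{A}$ and $\mathcal{K}$ in \eqref{def-A}--\eqref{def-K}, the decay bounds on $\sigma^{ij}$, $\sigma^i$ and their derivatives from Lemma \ref{lem: sig_ij}, and the coercivity of the $\sigma$-norm \eqref{coercive} of Corollary \ref{coro-sig}. No Gevrey machinery is needed here since $|\beta|$ is a fixed finite multi-index, and the argument is an induction on $|\beta|$.

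\textbf{The $\mathcal{A}$ part.} First I would commute $\partial_v^\beta$ through $\mathcal{A}$, writing $\partial_v^\beta\mathcal{A}g=\partial_{v_i}[\sigma^{ij}\partial_{v_j}\partial_v^\beta g]-v_iv_j\sigma^{ij}\partial_v^\beta g+\partial_{v_i}\sigma^i\,\partial_v^\beta g+\mathcal{C}_\beta[g]$, where $\mathcal{C}_\beta[g]$ collects all commutator terms in which at least one derivative falls on a coefficient $\sigma^{ij}$, $v_iv_j\sigma^{ij}$ or $\sigma^i$; each such term carries an extra decaying coefficient by Lemma \ref{lem: sig_ij} and at most $|\beta|$ velocity derivatives on $g$, hence is of strictly lower order. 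Pairing the three main terms with $\langle v\rangle^{2\ell}\partial_v^\beta g$ and integrating by parts in $v$, the leading quadratic form reproduces $|\partial_v^\beta g|_{\sigma,\ell}^2$ up to: (i) cross terms generated by $\partial_{v_i}\langle v\rangle^{2\ell}=2\ell v_i\langle v\rangle^{2\ell-2}$, controlled by Cauchy--Schwarz together with \eqref{coercive} and split as $\zeta|\partial_v^\beta g|_{\sigma,\ell}^2+C_\zeta\|\bar\chi_{C_\zeta}\partial_v^\beta g\|_{L^2_v}^2$; and (ii) the $\partial_{v_i}\sigma^i$ contribution, which is exactly of the type already bounded in Lemma \ref{Lem: Lemma-5}. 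The commutator block $\mathcal{C}_\beta[g]$ is handled identically using the pointwise estimates of Lemma \ref{lem: sig_ij}, yielding for each piece a small multiple of $\sum_{|\beta'|\le|\beta|}|\partial_v^{\beta'}g|_{\sigma,\ell}^2$ plus a $C_\zeta$-multiple of a compactly supported $L^2$ norm. Finally, a standard interpolation (exactly the step used in \eqref{low-order} of the main text, recalling the cutoff convention \eqref{def-chi}) replaces $\|\bar\chi_{C_\zeta}\partial_v^{\beta'}g\|_{L^2_v}$ for $|\beta'|<|\beta|$ by $\zeta'|\partial_v^{\beta'}g|_{\sigma,\ell}+C_{\zeta'}\|\bar\chi g\|_{L^2_v}$, at the cost of enlarging $C_\zeta$, which collapses all lower-order localized terms down to $\|\bar\chi_{C_\zeta}g\|_{L^2_v}$ and closes the induction.

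\textbf{The $\mathcal{K}$ part.} Here I would use \eqref{def-K} to write $\mathcal{K}g_1=\mu^{-1/2}\partial_{v_i}\big[\mu\int\Phi^{ij}(v-\tilde v)\mu^{1/2}(\tilde v)(\partial_{\tilde v_j}g_1+\tilde v_jg_1)d\tilde v\big]$, then commute $\partial_v^\beta$ through, distributing it among $\mu^{\pm1/2}(v)$, $\Phi^{ij}(v-\tilde v)$, and $g_1$. Using that $\Phi^{ij}\in L^1_{\mathrm{loc}}(\mathbb R^3)$ with only an integrable $|z|^{-1}$ singularity, that each derivative of a Gaussian factor keeps Gaussian decay, and that $\tilde v_j\mu^{1/2}(\tilde v)$ and its derivatives are still Gaussian, every resulting term is an integral operator whose kernel is dominated by $\mu^{1/8}(v)\mu^{1/8}(\tilde v)|v-\tilde v|^{-1}$ times polynomial factors — precisely the structure exploited in the commutator bounds \eqref{cs-vtlv}--\eqref{es-IK}. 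Pairing against $\langle v\rangle^{2\ell}\partial_v^\beta g_2$, applying Cauchy--Schwarz first in $(v,\tilde v)$ using $\||z|^{-1}\mu^{1/8}(v)\mu^{1/8}(\tilde v)\|_{L^2_{v,\tilde v}}<\infty$ and then in the remaining variable, and invoking \eqref{coercive} to dominate weighted $L^2$ norms of $g_1$ and $\nabla_vg_1$ by $\sum_{|\beta'|\le|\beta|}|\partial_v^{\beta'}g_1|_{\sigma,\ell}$, yields the asserted bound. A cutoff $1=\bar\chi_{C_\zeta}+(1-\bar\chi_{C_\zeta})$ then converts the non-localized contribution into an arbitrarily small multiple $\zeta$ of $\sum_{|\beta'|\le|\beta|}|\partial_v^{\beta'}g_1|_{\sigma,\ell}$ (the complement of the compact region carries extra Gaussian decay) plus the $C_\zeta$-localized remainder $\|\bar\chi_{C_\zeta}g_1\|_{L^2_v}$.

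\textbf{Main obstacle.} The delicate point in both parts is ensuring that every term which is not manifestly nonnegative — the weight-derivative cross terms and coefficient commutators for $\mathcal{A}$, and the combination of the $|v-\tilde v|^{-1}$ singularity with up to $|\beta|$ velocity derivatives for $\mathcal{K}$ — genuinely splits as $\zeta\cdot(\text{dissipation})+C_\zeta\cdot(\text{compactly supported }L^2)$, and that commutators never cost more than what the $\sigma$-norm and the decay estimates of Lemma \ref{lem: sig_ij} can afford, so that the induction reducing $\|\bar\chi_{C_\zeta}\partial_v^{\beta'}g\|_{L^2_v}$ for $|\beta'|<|\beta|$ all the way down to $\|\bar\chi g\|_{L^2_v}$ actually closes. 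These are exactly the estimates carried out in \cite{Guo2002}, and the decay bounds together with the coercivity \eqref{coercive} supply everything required.
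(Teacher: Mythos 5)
The paper does not prove this lemma itself; it cites Lemma 6 of Guo's 2002 paper directly, so there is no internal proof to compare against. Your sketch is a faithful and correct reconstruction of Guo's argument: the decomposition of $\partial_v^\beta\mathcal{A}g$ into a leading quadratic form that reproduces $|\partial_v^\beta g|_{\sigma,\ell}^2$ after integration by parts, plus commutator and weight-derivative cross terms controlled by the decay in Lemma~\ref{lem: sig_ij} and absorbed via the same small-ball interpolation used in \eqref{low-order}; and, for $\mathcal{K}$, the Hilbert--Schmidt bound on the kernel $\mu^{1/8}(v)\mu^{1/8}(\tilde v)|v-\tilde v|^{-1}$ with the cutoff $1=\bar\chi_{C_\zeta}+(1-\bar\chi_{C_\zeta})$ exploiting Gaussian decay at large $|v|$. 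One minor remark: the ``induction on $|\beta|$'' you describe is not really needed — since the target bound already allows $\sum_{|\beta'|\le|\beta|}|\partial_v^{\beta'}g|_{\sigma,\ell}^2$ on the right, a single application of the interpolation step $\|\bar\chi_{C_\zeta}\partial_v^{\beta'}g\|_{L^2_v}\le\zeta'|\partial_v^{\beta'}g|_{\sigma,\ell}+C_{\zeta'}\|\bar\chi g\|_{L^2_v}$ to each lower-order term suffices, with no recursion. Otherwise the plan is sound and matches the cited source.
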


\begin{lem}\label{coro-upL-1}
Let $\ell\in \mathbb{R}$, $|\beta|\ge0$.
    \be\label{upL}
    \big\la \la v\ra^{2\ell} \pr_v^\beta Lg_1, g_2 \big\ra_v\les\sum_{|\beta'|\le|\beta|}\big|\pr_v^{\beta'} g_1\big|_{\sig,\ell}|g_2|_{\sig,\ell}.
    \ee
\end{lem}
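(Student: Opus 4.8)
\textbf{Proof proposal for Lemma \ref{coro-upL-1}.} The statement is the natural ``mixed-velocity-derivative'' version of the coercivity/boundedness estimates already recorded in Corollary \ref{coro-upL-0} (the case $|\beta|=0$) and in Lemma \ref{Lem: Lemma-6} (which controls the symmetric pairings $\langle \langle v\rangle^{2\ell}\partial_v^\beta Lg,\partial_v^\beta g\rangle_v$). The plan is to reduce \eqref{upL} to these two inputs by distributing the $|\beta|$ velocity derivatives via the Leibniz rule and estimating each resulting term by a Cauchy--Schwarz argument in the $|\cdot|_{\sigma,\ell}$ norm.

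First I would recall $L=-\mathcal{A}-\mathcal{K}$ with $\mathcal{A}$ and $\mathcal{K}$ given by \eqref{def-K}--\eqref{def-A}, and split $\partial_v^\beta L g_1$ accordingly. For the local part $\mathcal{A}$, integrating by parts in the $v$-integral (to put at most one derivative on $g_2$ and move the remaining ones onto the $\sigma^{ij}$, $\sigma^i$ coefficients and the weight $\langle v\rangle^{2\ell}$), one uses the pointwise bounds on the derivatives of $\sigma^{ij}$, $\sigma^i$ from \eqref{eq: est sigma_ij} together with the coercivity inequality \eqref{coercive}, exactly as in the treatment of the $\mathcal{A}$-commutators $\mathbf{c}_{\mathcal{A},1}^{\alpha,\beta}$, $\mathbf{c}_{\mathcal{A},2}^{\alpha,\beta}$ inside the proof of Lemma \ref{lem-coercive-L} (see \eqref{es-cA1-beta}--\eqref{es-cA2-beta}); the crucial point is that every term is bounded by a product $\big|\partial_v^{\beta'}g_1\big|_{\sigma,\ell}\,|g_2|_{\sigma,\ell}$ with $|\beta'|\le|\beta|$, using that the derivatives falling on the Maxwellian-type coefficients only improve the velocity localization. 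For the nonlocal part $\mathcal{K}$, one writes $\partial_v^\beta \mathcal{K}[g_1]$ by the Leibniz rule as in $\mathbf{c}_{\mathcal{K},i}^{\alpha,\beta}$ of Lemma \ref{lem-coercive-L}, and applies the convolution bound \eqref{bound-product-Gamma}-type estimates together with the pointwise decay of $\Phi^{ij}$ and the Maxwellian factors; a Cauchy--Schwarz in the $(v,\tilde v)$ variables (precisely as in \eqref{cs-vtlv}) produces a bound of the form $\sum_{|\beta'|\le|\beta|}\big|\partial_v^{\beta'}g_1\big|_{\sigma,\ell}\,|g_2|_{\sigma,\ell}$. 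Summing the $\mathcal{A}$ and $\mathcal{K}$ contributions yields \eqref{upL}.

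An alternative, slightly slicker route is to note that $\langle\langle v\rangle^{2\ell}\partial_v^\beta Lg_1,g_2\rangle_v$ is bilinear in $(g_1,g_2)$; by polarization (writing $g_1=\tfrac12(g_1+g_2)-\tfrac12(g_2-g_1)$, or simply expanding $\langle \langle v\rangle^{2\ell}\partial_v^\beta L(g_1+g_2),\,g_1+g_2\rangle_v$ and cancelling the diagonal terms controlled by Lemma \ref{Lem: Lemma-6}), one reduces \eqref{upL} to the already-established symmetric estimate plus the lower-order truncated term $\|\bar\chi_{C_\zeta}\cdot\|_{L^2_v}$ which is absorbed by \eqref{coercive}; some care is needed because the weight is $\langle v\rangle^{2\ell}$ rather than $\langle v\rangle^{2\ell-4|\alpha|}$, but since here there are no $x$-derivatives or vector fields this causes no difficulty.

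The main obstacle I anticipate is purely bookkeeping: keeping track of the velocity-weight powers when integrating by parts in the $\mathcal{A}$ term (each $\partial_{v_i}$ hitting $\langle v\rangle^{2\ell}$ lowers the exponent, which must be reabsorbed using \eqref{coercive} and the extra decay of $\sigma^{ij}\sim\langle v\rangle^{-1}$, $\sigma^{ij}v_iv_j\sim\langle v\rangle$), and verifying that the nonlocal $\mathcal{K}$ estimate closes with the stated sum over $|\beta'|\le|\beta|$ rather than requiring more derivatives on $g_2$. Since all of these manipulations are already carried out in essentially identical form in the proof of Lemma \ref{lem-coercive-L} (with $\alpha=m=0$ there, or with the roles of the two arguments asymmetric as here), I expect no conceptual difficulty, only the need to check that the asymmetric pairing with $g_2$ undifferentiated is handled by Cauchy--Schwarz in $|\cdot|_{\sigma,\ell}$ without loss.
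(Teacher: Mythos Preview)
Your primary approach is correct and matches the paper's proof: split $L=-\mathcal A-\mathcal K$, for the local part integrate by parts and use the decay of $\partial_v^{\beta'}\sigma^{ij}$, $\partial_v^{\beta'}(\sigma^{ij}v_iv_j)$, $\partial_v^{\beta'}\sigma^i$ together with \eqref{coercive}, and for the nonlocal part use Cauchy--Schwarz in $(v,\tilde v)$ exploiting $\Phi^{ij}(v-\tilde v)\mu^{1/4}(v)\mu^{1/4}(\tilde v)\in L^2(\mathbb R^3\times\mathbb R^3)$. The paper writes this out self-contained (see \eqref{pr_vAg1g2} and the $\mathbf{k}_j$ decomposition) rather than by pointer to the commutator estimates in Lemma~\ref{lem-coercive-L}, but the mechanics are identical; your polarization alternative is unnecessary and, as you suspect, more awkward here than the direct computation.
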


\begin{proof}
We only focus on the case when $|\beta|>0$, because the case $|\beta|=0$ is given in Corollary \ref{coro-upL-0}.\par
\noindent{\bf (I): Treatment of $\left\la\la v\ra^{2\ell}\pr_v^\beta\mathcal{A}[g_1], g_2\right\ra_v$.} Recalling \eqref{def-A}, we write
\begin{align}\label{pr_vAg1g2}
\left\la\la v\ra^{2\ell}\pr_v^\beta\mathcal{A}[g_1], g_2\right\ra_v=\nn&-\int_{\R^3}\la v\ra^{2\ell}\Big[\sig^{ij}\pr_{v_j}\pr_v^\beta g_1\pr_{v_i}g_2+\sig^{ij}v_iv_j\pr_v^\beta g_1 g_2\Big] dv\\
\nn&-\sum_{0<\beta'\le\beta}C_\beta^{\beta'}\left\la \la v\ra^{2\ell} \pr_v^{\beta'}\sig^{ij}\pr_v^{\beta-\beta'}\pr_{v_j}g_1, \pr_{v_i}g_2\right\ra_v\\
\nn&-\sum_{0\le\beta''\le\beta}C_\beta^{\beta''}\left\la \pr_{v_i}\la v\ra^{2\ell} \pr_v^{\beta''}\sig^{ij}\pr_v^{\beta-\beta''}\pr_{v_j}g_1, g_2\right\ra_v\\
\nn&-\sum_{0<\beta'\le\beta}C_\beta^{\beta'}\left\la \la v\ra^{2\ell} \pr_v^{\beta'}[\sig^{ij}v_iv_j]\pr_v^{\beta-\beta'}g_1, g_2\right\ra_v\\
&+\sum_{0\le\beta''\le\beta}C_\beta^{\beta''}\left\la \la v\ra^{2\ell} \pr_v^{\beta''}\pr_{v_i}\sig^{i}\pr_v^{\beta-\beta''}g_1, g_2\right\ra_v.
\end{align}
Recalling \eqref{L-norm}, one can define 
\be
\la f,g\ra_{v;\sig,\ell}:=\int_{\R^3}\la v\ra^{2\ell}\big[\sig^{ij}\pr_{v_j}f\pr_{v_i}g+\sig^{ij}v_iv_jfg\big]dv.
\ee
Then $\la \cdot, \cdot\ra_{\sig,\ell}$ is a inner product, since $\sig^{ij}=\sig^{ji}$. By Cauchy-Schwarz inequality,  we have
\[
|\la f,g\ra_{v;\sig,\ell}|\le |f|_{\sig,\ell}|g|_{\sig,\ell}.
\]
Then the first term on the right hand side of \eqref{pr_vAg1g2} can be bounded by $\big|\pr_v^\beta g_1\big|_{\sig,\ell}| g_2|_{\sig,\ell}$.

Next, it is not difficult to verify that
\[
\left|\la v\ra^{2\ell}\pr_v^{\beta'}\sig^{ij}\right|+\left|\pr_{v_i}\la v\ra^{2\ell}\pr_v^{\beta''}\sig^{ij}\right|+\left|\la v\ra^{2\ell} \pr_v^{\beta'}[\sig^{ij}v_iv_j]\right|+\left|\la v\ra^{2\ell} \pr_v^{\beta''}\pr_{v_i}\sig^{i}\right|\les \la v\ra^{2\ell-2},
\]
for any $|\beta'|\ge1$ and $|\beta''|\ge0$. Then the last three terms on the right hand side of \eqref{pr_vAg1g2} can be bounded by
\begin{align*}
&C\int_{\R^3}\la v\ra^{2\ell-2}\big(\big|\pr_v^{\beta-\beta''}\pr_{v_j}g_1\big|+\big|\pr_v^{\beta-\beta'}g_1\big|+\big|\pr_v^{\beta-\beta''}g_1\big|\big)|g_2|dv\\
\le&C\sum_{\al\le \beta}\left(\big\|\la v\ra^{\ell-\fr32}\pr_{v_j}\pr_v^{\al}g_1\big\|_{L^2_v}+\big\|\la v\ra^{\ell-\fr12}\pr_v^{\al}g_1\big\|_{L^2_v}\right)\big\|\la v\ra^{\ell-\fr12}g_2\big\|_{L^2_v}\le C\sum_{\al\le\beta}
\big|\pr_v^{\al}g_1\big|_{\sig, \ell}|g_2|_{\sig,\ell},
\end{align*}
and the second term on the right hand of \eqref{pr_vAg1g2} can be bounded by
\begin{align*}
C\int_{\R^3}\la v\ra^{2\ell-2}\big|\pr_v^{\beta-\beta'}\pr_{v_j}g_1\big||\pr_{v_i}g_2|dv\le&C\big\|\la v\ra^{\ell-\fr12} \pr_v^{\beta-\beta'}\pr_{v_j}g_1\big\|_{L^2_v} \big\|\la v\ra^{\ell-\fr32} \pr_{v_i}g_2\big\|_{L^2_v}\\
\le&C\sum_{|\al|\le|\beta|}\big|\pr_v^{\al}g_1\big|_{\sig,\ell}|g_2|_{\sig,\ell}.
\end{align*}

\noindent{\bf (II): Treatment of $\left\la\la v\ra^{2\ell}\pr_v^\beta\mathcal{K}[g_1], g_2\right\ra_v$.}
Recalling the definition of $\mathcal{K}$, we write
\begin{align*}
    &\left\la\la v\ra^{2\ell}\pr_v^\beta\mathcal{K}[g_1], g_2\right\ra_v\\
    =&\sum_{\substack{\beta'\le \beta\\ \beta''\le\beta'}}C_\beta^{\beta'}C_{\beta'}^{\beta''}\int_{\R^6}\la v\ra^{2\ell}\pr_v^{\beta-\beta'}\mu^{\fr12}(v)\Phi^{ij}(v-\tl{v})\pr_v^{\beta'-\beta''}\mu^{\fr12}(\tl{v})\pr_v^{\beta''}\pr_{v_j}g_1(\tl{v})d\tl{v}\pr_{v_i}g_2(v)dv\\
    &+\sum_{\substack{\beta'\le \beta\\ \beta''\le\beta'}}C_\beta^{\beta'}C_{\beta'}^{\beta''}\int_{\R^6}\la v\ra^{2\ell}\pr_v^{\beta-\beta'}\mu^{\fr12}(v)\Phi^{ij}(v-\tl{v})\pr_v^{\beta'-\beta''}\big(\tl{v}_j\mu^{\fr12}(\tl{v})\big)\pr_v^{\beta''}g_1(\tl{v})d\tl{v}\pr_{v_i}g_2(v)dv\\
    &+\sum_{\substack{\beta'\le \beta\\ \beta''\le\beta'}}C_\beta^{\beta'}C_{\beta'}^{\beta''}\int_{\R^6}\pr_{v_i}\la v\ra^{2\ell}\pr_v^{\beta-\beta'}\mu^{\fr12}(v)\Phi^{ij}(v-\tl{v})\pr_v^{\beta'-\beta''}\mu^{\fr12}(\tl{v})\pr_v^{\beta''}\pr_{v_j}g_1(\tl{v})d\tl{v}g_2(v)dv\\
    &+\sum_{\substack{\beta'\le \beta\\ \beta''\le\beta'}}C_\beta^{\beta'}C_{\beta'}^{\beta''}\int_{\R^6}\pr_{v_i}\la v\ra^{2\ell}\pr_v^{\beta-\beta'}\mu^{\fr12}(v)\Phi^{ij}(v-\tl{v})\pr_v^{\beta'-\beta''}\big(\tl{v}_j\mu^{\fr12}(\tl{v})\big)\pr_v^{\beta''}g_1(\tl{v})d\tl{v}g_2(v)dv\\
    &+\sum_{\substack{\beta'\le \beta\\ \beta''\le\beta'}}C_\beta^{\beta'}C_{\beta'}^{\beta''}\int_{\R^6}\la v\ra^{2\ell}\pr_v^{\beta-\beta'}\big(v_i\mu^{\fr12}(v)\big)\Phi^{ij}(v-\tl{v})\pr_v^{\beta'-\beta''}\mu^{\fr12}(\tl{v})\pr_v^{\beta''}\pr_{v_j}g_1(\tl{v})d\tl{v}g_2(v)dv\\
    &+\sum_{\substack{\beta'\le \beta\\ \beta''\le\beta'}}C_\beta^{\beta'}C_{\beta'}^{\beta''}\int_{\R^6}\la v\ra^{2\ell}\pr_v^{\beta-\beta'}\big(v_i\mu^{\fr12}(v)\big)\Phi^{ij}(v-\tl{v})\pr_v^{\beta'-\beta''}\big(\tl{v}_j\mu^{\fr12}(\tl{v})\big)\pr_v^{\beta''}g_1(\tl{v})d\tl{v}g_2(v)dv\\
    =&\sum_{1\le j\le 6}{\bf k}_j.
\end{align*}
Since $\Phi^{ij}(v-\tl{v})=O(|v|^{-1})\in L^2_{loc}(\R^3)$, Fubini's Theorem implies 
\[
\Phi^{ij}(v-\tl{v})\mu^{\fr14}(v)\mu^{\fr14}(\tl{v})\in L^2(\R^3\times\R^3).
\]
Then ${\bf k}_j, 1\le j\le 6$ can be estimated in the same way. Let us take the first term as an example to estimate as follows: 
\begin{align*}
    {\bf k}_1=&\sum_{\substack{\beta'\le \beta'\\ \beta''\le\beta}}C_\beta^{\beta'}C_{\beta'}^{\beta''}\int_{\R^3}\int_{\R^3}\left(\Phi^{ij}(v-\tl{v})\mu^{\fr14}(v)\mu^{\fr14}(\tl{v})\right)\\
    &\times \underbrace{\left(\mu^{-\fr14}(v)\mu^{-\fr14}(\tl{v})\pr_v^{\beta-\beta'}\mu^{\fr12}(v)\pr_v^{\beta'-\beta''}\mu^{\fr12}(\tl{v})\right)}_{\mu_1^{\beta'\beta''}(v,\tl{v})}\la v\ra^{\ell }\pr_v^{\beta''}\pr_{v_j}g_1(\tl{v})\la v\ra^{\ell }\pr_{v_i}g_2(v)d\tl{v}dv\\
    \le&\sum_{\substack{\beta'\le \beta\\ \beta''\le\beta'}}C_\beta^{\beta'}C_{\beta'}^{\beta''}\left\|\Phi^{ij}(v-\tl{v})\mu^{\fr14}(v)\mu^{\fr14}(\tl{v})\right\|_{L^2(\R^3\times\R^3)}\\
    &\times \left\|\mu_1^{\beta'\beta''}(v,\tl{v})\la v\ra^{\ell }\pr_v^{\beta''}\pr_{v_j}g_1(\tl{v})\la v\ra^{\ell }\pr_{v_i}g_2(v) \right\|_{L^2(\R^3\times\R^3)}\\
    \les&\sum_{\al\le\beta}\left|\pr_v^{\al}g_1\right|_{\sig,\ell}|g_2|_{\sig,\ell}.
\end{align*}
The proof is completed.
\end{proof}
\begin{rem}
In the treatment of the $\mathcal{A}g_1$ involved terms, there is no need to split the integral into two parts: $|v|\le R$ and $|v|\ge R$, and in the treatment of the $\mathcal{K}g_1$ involved terms, there is no need to split $\Phi^{ij}(v-\tl{v})\mu^{\fr14}(v)\mu^{\fr14}(\tl{v})$ into two parts: $\Phi^{ij}(v-\tl{v})\mu^{\fr14}(v)\mu^{\fr14}(\tl{v})-\phi^{ij}(v,\tl{v})$ and $\phi^{ij}(v,\tl{v})$, so that
\begin{gather*}
\left\|\Phi^{ij}(v-\tl{v}) \mu^{\fr14}(v)\mu^{\fr14}(\tl{v})-\phi^{ij}(v,\tl{v})\right\|_{L^2(\R^3\times\R^3)}\le \fr{1}{R},\\
{\rm supp}\, \phi^{ij}(v,\tl{v})\subset \{|v|+|\tl{v}|\le C_R\},
\end{gather*}
because we do not  intend to  extract any small coefficient now.

In addition, one cannot replace $|\beta'|\le|\beta|$ by $\beta'\le \beta$ in \eqref{upL} due to the presence of $\pr_v^{\beta-\beta'}\pr_{v_j}$ with $|\beta'|\ge1$ in the second term on the right hand side of  \eqref{pr_vAg1g2}.

\end{rem}

\begin{lem}\label{lem-NL-collision}
For any $\iota>0$, $\al\in\N^3$, there holds
    \begin{align}\label{est-NG}
        \nn&\sum_{m\in\N^6,|\beta|\le N}\kappa^{2|\beta|}\left(\fr{\lm^{|m|}}{\Gamma_s(|m|)}C_{|m|}^m\right)^2\left\la \pr_v^\al Z^{m+\beta}\Gamma(g_1,g_2),\la v\ra^{2\iota-4|\al|} g_3^{(m+\beta)} \right\ra_{x,v}\\
        \le \nn&C\sum_{\substack{\al'\le\al\\\al''\le\al'}}\Bigg(\left\| \pr_{v}^{\al'-\al''}g_1\right\|_{\mathcal{G}^{\lm,N}_{s,\iota-2|\al'-\al''|}}\|\pr_v^{\al-\al'} g_2\|_{\mathcal{G}^{\lm,N}_{s,{ \sig},\iota-2|\al-\al'|}}\\
        &+\left\|\pr_v^{\al'-\al''}g_1\right\|_{\mathcal{G}^{\lm,N}_{s,{ \sig},\iota-2|\al'-\al''|}}\big\|\pr_v^{\al-\al'} g_2\big\|_{\mathcal{G}^{\lm,N}_{s,\iota-2|\al-\al'|}}\Bigg)\| g_3\|_{\mathcal{G}^{\lm,N}_{s,{ \sig},\iota-2|\al|}}.
    \end{align}
\end{lem}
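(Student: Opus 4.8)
The plan is to estimate the nonlinear Landau operator $\Gamma(g_1,g_2)$ term by term, using the four-term expression \eqref{def-Gamma}, and to transfer the $v$-weights and the $\pr_v^\al$ and $Z^{m+\beta}$ derivatives onto the three factors via Leibniz's rule, then to close everything with the combinatorial summation lemmas (Lemmas \ref{lem-summable}, \ref{lem-convolution}, Corollary \ref{coro-convolution}) exactly as in Lemma \ref{lem-coercive-L}. First I would recall from \eqref{def-Gamma} that $\Gamma(g_1,g_2)$ is a sum of terms of the schematic form $\pr_{v_i}\big([\Phi^{ij}*(\mu^{1/2}\pr_v^\bullet g_1)]\pr_v^\bullet g_2\big)$ and $[\Phi^{ij}*(v_i\mu^{1/2}\pr_v^\bullet g_1)]\pr_v^\bullet g_2$; applying $\pr_v^\al Z^{m+\beta}$ and using that $Z$ commutes with $v$-convolutions up to the Leibniz expansion of the Maxwellian weight (as in the commutator analysis \eqref{commutator-L}), we obtain a triple sum over $\al'\le\al$, $\al''\le\al'$ (splitting the $v$-derivatives between $\mu^{1/2}$, $g_1$ and $g_2$) and over $n\le m$, $n'\le n$, $\beta'\le\beta$, $\beta''\le\beta'$ (splitting the $Z$-derivatives). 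The key pointwise input is the bound
\begin{align*}
    \big|\Phi^{ij}*\big(\pr_v^{\al''}Z^{n''+\beta'''}\mu^{1/2}\,\pr_v^{\al'-\al''}Z^{n'-n''+\beta''-\beta'''}g\big)\big|
    \les \fr{\Gamma_s(|n''|)}{C_{|n''|}^{n''}}\la v\ra^{-1}\big\|\mu^{1/8}\pr_v^{\al'-\al''}Z^{n'-n''+\beta''-\beta'''}g\big\|_{L^2_v},
\end{align*}
which follows from \eqref{e-prmu}, the decay $\Phi^{ij}(z)=O(|z|^{-1})\in L^2_{\rm loc}$, and Cauchy--Schwarz in the $\tl v$ variable, exactly as in the treatment of $\mathcal{K}$-type terms in Lemma \ref{lem-coercive-L} (see \eqref{cs-vtlv}); the factor $\la v\ra^{-1}$ is crucial because it converts a $\|\cdot\|_{L^2(\ell)}$ norm into a $\sig$-norm via \eqref{coercive}.

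Next I would carry out the pairing with $\la v\ra^{2\iota-4|\al|}g_3^{(m+\beta)}$: integrate by parts in the $\pr_{v_i}$ derivatives coming from $\Gamma$ (this is where the weights $\la v\ra^{2\iota-4|\al|}$ get differentiated, producing harmless lower-order $\la v\ra^{-1}$ gains), then apply Cauchy--Schwarz in $(x,v)$, distributing the velocity weight as $\la v\ra^{\iota-2|\al'-\al''|}$ on the $g_1$-factor, $\la v\ra^{\iota-2|\al-\al'|}$ on the $g_2$-factor (in $\sig$-norm), and $\la v\ra^{\iota-2|\al|}$ on the $g_3$-factor (also $\sig$-norm), noting that these exponents are consistent since $|\al'-\al''|+|\al-\al'|+|\al| \le $ the available budget. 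After Cauchy--Schwarz, the ratio $\fr{\lm^{|m|}}{\Gamma_s(|m|)}C_{|m|}^m$ splits via Remark \ref{Rmk: coefficients a-b} into $b_{m,n,n',s}\,a_{n',\lm,s}\,a_{n-n',\lm,s}\,a_{m-n,\lm,s}$ (and the $\Gamma_s(|n''|)/C_{|n''|}^{n''}$ factors combine with the remaining ratio), and I would sum over all indices using Corollary \ref{coro-convolution} (after verifying the summability hypothesis \eqref{summable} holds, which reduces to \eqref{summable2}); the low-frequency/high-frequency split on $\beta$ via $|\beta'|\le|\beta|/2$ versus $|\beta'|>|\beta|/2$ handles the $\kappa^{2|\beta|}$ weights so that one factor carries $\mathcal{G}^{\lm,N}_s$ regularity and only $\mathcal{G}^{\lm,N/2}_s$ control is needed on the lower-order factor (though for the stated inequality, which keeps the full $N$ on all factors, this refinement is unnecessary and one simply distributes $\kappa^{2|\beta|}=\kappa^{2|\beta'|}\kappa^{2|\beta''|}\kappa^{2|\beta-\beta'-\beta''|}$).

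The remaining terms of $\Gamma$ — those without the outer $\pr_{v_i}$, i.e. $[\Phi^{ij}*(v_i\mu^{1/2}\pr_{v_j}g_1)]g_2$ and $[\Phi^{ij}*(v_i\mu^{1/2}g_1)]\pr_{v_j}g_2$ — are handled identically but more easily, since no integration by parts is needed; the convolution bound there uses $|v_i\mu^{1/2}(v)|\les\mu^{1/4}(v)$ and again gives a $\la v\ra^{-1}$ gain from $\Phi^{ij}$. I expect the main obstacle to be purely bookkeeping: keeping the multi-index budget straight so that every $\pr_v$ and every $Z$ lands somewhere legal, and confirming that the $\la v\ra$-weight exponents on the three output factors add up correctly after the integration by parts on the weight $\la v\ra^{2\iota-4|\al|}$ (which for $|\al|\le 1$ or $|\al|\le 2$ only shifts exponents by $O(1)$ and is absorbed into the $\sig$-norm via \eqref{coercive}); the analytic content — the pointwise convolution estimate and the combinatorial summability — is already fully available from Lemma \ref{lem-coercive-L} and the appendix lemmas, so no genuinely new estimate is required, only a careful reassembly. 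One subtlety worth flagging: because $\Gamma$ is bilinear and symmetric-looking but not symmetric, the roles of $g_1$ and $g_2$ are genuinely different ($g_1$ always appears inside the convolution with $\mu^{1/2}$, $g_2$ always outside), which is why the right-hand side of \eqref{est-NG} is a sum of two asymmetric products rather than a symmetric one; I would keep that asymmetry explicit throughout.
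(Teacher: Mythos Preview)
Your plan correctly handles the lower-order pieces ${\rm N}[\Gamma]^{m,\beta}_{(5)}$, ${\rm N}[\Gamma]^{m,\beta}_{(6)}$ (those where $\pr_{v_i}$ lands on the weight $\la v\ra^{2\iota-4|\al|}$), and the combinatorial reassembly via Remark~\ref{Rmk: coefficients a-b} and Corollary~\ref{coro-convolution} is right. But there is a genuine gap for the principal term
\[
{\rm N}[\Gamma]^{m,\beta}_{(1)}=-\int\big[\Phi^{ij}*(\mu^{1/2}\pr_v^{\al'}Z^{n+\beta'}g_1)\big]\,\pr_{v_j}\pr_v^{\al-\al'}g_2^{(m-n+\beta-\beta')}\,\pr_{v_i}g_3^{(m+\beta)}\,\la v\ra^{2\iota-4|\al|}\,dv\,dx,
\]
where \emph{both} $g_2$ and $g_3$ carry a gradient. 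Your convolution bound yields only $\la v\ra^{-1}$; using the isotropic half of \eqref{coercive}, placing $\nb_v\pr_v^{\al-\al'}g_2$ into $|\pr_v^{\al-\al'}g_2|_{\sig,\iota-2|\al-\al'|}$ costs weight $\la v\ra^{\iota-2|\al-\al'|-3/2}$ and likewise $\la v\ra^{\iota-2|\al|-3/2}$ for $g_3$, so you need total exponent $2\iota-2|\al-\al'|-2|\al|-3$ but only have $2\iota-4|\al|-1$. When $\al'=0$ you are short by exactly $\la v\ra^{2}$. This is not bookkeeping: it reflects the anisotropy of the $\sig$-norm (Corollary~\ref{coro-sig}: $\lm_1\sim\la v\ra^{-3}$ for $P_v\nb_v$ versus $\lm_2\sim\la v\ra^{-1}$ for $(I-P_v)\nb_v$), which the crude bound $|\Phi^{ij}*(\cdot)|\les\la v\ra^{-1}$ cannot detect. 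Note also that Lemma~\ref{lem-coercive-L} does \emph{not} contain this estimate: there the diffusion part $\mathcal{A}$ already carries the anisotropic coefficient $\sig^{ij}=\Phi^{ij}*\mu$, and the $\mathcal{K}$ part has a Maxwellian on both sides of the kernel, so neither mechanism is the one needed here.

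The paper (following Guo~\cite{Guo2002}) closes this by localizing to $\{2|\tl v|\le|v|,\,|v|\ge1\}$, Taylor-expanding $\Phi^{ij}(v-\tl v)=\Phi^{ij}(v)-\pr_{v_k}\Phi^{ij}(v)\tl v_k+\tfrac12\pr^2_{v_kv_l}\Phi^{ij}(\bar v)\tl v_k\tl v_l$, and exploiting the null structure $\Phi^{ij}(v)v_j=0$ and $\sum_{i,j}\pr_{v_k}\Phi^{ij}(v)v_iv_j=0$ (from \eqref{concel1}) to decompose each gradient into $P_v$ and $(I-P_v)$ parts. The zeroth-order term $\Phi^{ij}(v)$ then couples only $(I-P_v)\nb_v g_2$ with $(I-P_v)\nb_v g_3$, each controlled in the $\sig$-norm at weight $\la v\ra^{\ell-1/2}$ rather than $\la v\ra^{\ell-3/2}$, so the single $\la v\ra^{-1}$ from $\Phi^{ij}(v)$ suffices; the first- and second-order remainders gain extra $\la v\ra^{-1}$, $\la v\ra^{-2}$ from $\nb\Phi^{ij}$, $\nb^2\Phi^{ij}$, which absorbs the mixed and $P_vP_v$ couplings. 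Once you insert this anisotropic decomposition for terms $(1)$--$(4)$, the rest of your outline goes through.
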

\begin{proof}
Let us denote $\displaystyle\sum_{\al',\beta',n}^{\al,\beta,m}:=\sum_{\al'\le\al}C_{\al}^{\al'}\sum_{\beta'\le\beta}C_{\beta}^{\beta'}\sum_{n\le m}C_m^n$.    Recalling \eqref{def-Gamma}, we write
\begin{align*}
    &\left\la \pr_{v}^{\al}Z^{m+\beta}\Gamma(g_1,g_2),\la v\ra^{2\iota-4|\al|} g_3^{(m+\beta)} \right\ra_{x,v}\\
    =&-\sum_{\al',\beta',n}^{\al,\beta,m}\left\la  \pr_{v}^{\al'}Z^{n+\beta'}[\Phi^{ij}*(\mu^\fr12g_1)]\pr_{v_j} \pr_v^{\al-\al'}g_2^{(m-n+\beta-\beta')},\la v\ra^{2\iota-4|\al|}\pr_{v_i}g_3^{(m+\beta)}\right\ra_{x,v}\\
    &-\sum_{\al',\beta',n}^{\al,\beta,m}\left\la  \pr_{v}^{\al'}Z^{n+\beta'}[\Phi^{ij}*(v_i\mu^\fr12g_1)]\pr_{v_j} \pr_{v}^{\al-\al'}g_2^{(m-n+\beta-\beta')},\la v\ra^{2\iota-4|\al|}g_3^{(m+\beta)}\right\ra_{x,v}\\
    &+\sum_{\al',\beta',n}^{\al,\beta,m}\left\la  \pr_{v}^{\al'}Z^{n+\beta'}[\Phi^{ij}*(\mu^\fr12\pr_{v_j}g_1)] \pr_{v}^{\al-\al'}g_2^{(m-n+\beta-\beta')} ,\la v\ra^{2\iota-4|\al|}\pr_{v_i}g_3^{(m+\beta)}\right\ra_{x,v}\\
    &+\sum_{\al',\beta',n}^{\al,\beta,m}\left\la  \pr_{v}^{\al'}Z^{n+\beta'}[\Phi^{ij}*(v_i\mu^\fr12\pr_{v_j}g_1)] \pr_{v}^{\al-\al'}g_2^{(m-n+\beta-\beta')},\la v\ra^{2\iota-4|\al|}g_3^{(m+\beta)}\right\ra_{x,v}\\
    &-\sum_{\al',\beta',n}^{\al,\beta,m}\left\la   \pr_{v}^{\al'}Z^{n+\beta'}[\Phi^{ij}*(\mu^\fr12 g_1)]\pr_{v_j}\pr_{v}^{\al-\al'} g_2^{(m-n+\beta-\beta')},\pr_{v_i}\la v\ra^{2\iota-4|\al|}g_3^{(m+\beta)}\right\ra_{x,v}\\
    &+\sum_{\al',\beta',n}^{\al,\beta,m}\left\la  \pr_{v}^{\al'}Z^{n+\beta'}[\Phi^{ij}*(\mu^\fr12\pr_{v_j}g_1)] \pr_{v}^{\al-\al'}g_2^{(m-n+\beta-\beta')},\pr_{v_i}\la v\ra^{2\iota-4|\al|}g_3^{(m+\beta)}\right\ra_{x,v}\\
    =&\sum_{1\le i\le 6}{\rm N}[\Gamma]^{m,\beta}_{(i)}.
\end{align*}
By Stirling's approximation, for $\al''\le \al$, $\beta''\le\beta'$ and $n'\le n$, we have
\begin{align}\label{est-Gamma1s}
    \nn &M_\mu^{|\bar{n'}|}\fr{\Gamma_1(\al''+\bar{\beta''}+\bar{n'})}{\Gamma_s(\bar{n'})}\\
    \nn=& M_\mu^{|\bar{n'}|}\prod_{i=1}^3\fr{(n'_{i+3}+\al^{''}_{i}+\beta''_{i+3})!}{(n'_{i+3}!)^{\fr1s}}\fr{(n'_{i+3}+1)^{12}}{(n'_{i+3}+\al''_{i}+\beta''_{i+3}+1)^{12}}\\
    \nn\le&\prod_{i=1}^3\fr{M_\mu^{n'_{i+3}}(n'_{i+3}+1)\cdots(n'_{i+3}+\al''_{i}+\beta''_{i+3})}{(n'_{i+3}!)^{\fr1s-1}}\\
    \le& C\prod_{i=1}^3\fr{M_\mu^{n'_{i+3}}(n'_{i+3}+N+|\al|)^{N+|\al|}}{\Big(\sqrt{2\pi n'_{i+3}}(\fr{n'_{i+3}}{e})^{n'_{i+3}}\Big)^{\fr1s-1}}\le C.
\end{align}
Combining this with \eqref{equiv-gamma} yields
\begin{align}\label{est-Gamma1}
    M_{\mu}^{|\bar{n'}|}\Gamma_1(\al''+\bar{\beta''}+\bar{n'})\le C \Gamma_s(\bar{n'})\le C\Gamma_s(n')\le C\fr{\Gamma_s(|n'|)}{C_{|n'|}^{n'}}. 
\end{align}
It follows from this and \eqref{e-prmu}, by Cauchy-Schwarz inequality, we have
\begin{align}\label{est-convolution}
    \nn&\int_{\R^3}\Phi^{ij}(v-\tl{v})\pr_v^{\al''}Z^{n'+\beta''}\mu^{\fr12}(\tl{v})\pr_{v}^{\al'-\al''}g_1^{(n-n'+\beta'-\beta'')}(t,x,\tl{v})d\tl{v}\\
    \le\nn& M_{\mu}^{|\al''|+|\bar{\beta''}|+|\bar{n'}|}\Gamma_1(\al''+\bar{\beta''}+\bar{n'})\left(\int_{\R^3}|\Phi^{ij}(v-\tl{v})|^2 \mu^{\fr14}(\tl{v})d\tl{v}\right)^{\fr12}\\
    \nn&\times\left\|\mu^\fr{1}{8} \pr_{v}^{\al'-\al''}g_1^{(n-n'+\beta'-\beta'')}\right\|_{L^2_v}\\
    \le& CM_{\mu}^{|\al''|+|\bar{\beta''}|}\fr{\Gamma_s(|n'|)}{C_{|n'|}^{n'}}\la v\ra^{-1}\left\|\mu^\fr{1}{8}\pr_{v}^{\al'-\al''}g_1^{(n-n'+\beta'-\beta'')}\right\|_{L^2_v}.
\end{align}
For ${\rm N}[\Gamma]^{m,\beta}_{(5)}$,  we further write
\begin{align*}
    &{\rm N}[\Gamma]^{m,\beta}_{(5)}\\
    =&-\sum_{\substack{\al'\le\al\\\al''\le\al'}}C_{\al}^{\al'}C_{\al'}^{\al''}\sum_{\substack{\beta'\le\beta\\ \beta''\le\beta'}}C_{\beta}^{\beta'}C_{\beta'}^{\beta''}\sum_{\substack{n\le m\\ n'\le n}}C_m^n C_n^{n'}\int_{\T^3\times\R^3}\int_{\R^3}\Phi^{ij}(v-\tl{v})\pr_v^{\al''}Z^{n'+\beta''}\mu^{\fr12}(\tl{v})\\
    &\times \pr_v^{\al'-\al''}g_1^{(n-n'+\beta'-\beta'')}(t,x,\tl{v})d\tl{v}\\
    &\times \pr_{v_j} \pr_v^{\al-\al'}g_2^{(m-n+\beta-\beta')}(t,x,v)g_3^{(m+\beta)}(t,x,v)\pr_{v_i}\la v\ra^{2\iota-4|\al|}dxdv\\
    \le&C\sum_{\substack{\al'\le\al\\\al''\le\al'}}\sum_{\substack{\beta'\le\beta, |\beta'|\le |\beta|/2\\ \beta''\le\beta'}}\sum_{\substack{n\le m\\ n'\le n}}C_m^n C_n^{n'}\fr{\Gamma_s(|n'|)}{C_{|n'|}^{n'}}\left\|\mu^\fr{1}{8}\pr_v^{\al'-\al''}g_1^{(n-n'+\beta'-\beta'')}\right\|_{L^\infty_xL^2_v}\\
    &\times \left\|\la v\ra^{\iota-2|\al|-\fr32}\pr_{v_j} \pr_v^{\al-\al'}g_2^{(m-n+\beta-\beta')}\right\|_{L^2_{x,v}}\left\|\la v\ra^{\iota-2|\al|-\fr12} g_3^{(m+\beta)}\right\|_{L^2_{x,v}}\\
    &+C\sum_{\substack{\al'\le\al\\\al''\le\al'}}\sum_{\substack{\beta'\le\beta, |\beta'|>|\beta|/2\\ \beta''\le\beta'}}\sum_{\substack{n\le m\\ n'\le n}}C_m^n C_n^{n'}\fr{\Gamma_s(|n'|)}{C_{|n'|}^{n'}}\left\|\mu^\fr{1}{8}\pr_v^{\al'-\al''}g_1^{(n-n'+\beta'-\beta'')}\right\|_{L^2_{x,v}}\\
    &\times \left\|\la v\ra^{\iota-2|\al|-\fr32}\pr_{v_j} \pr_{v}^{\al-\al'}g_2^{(m-n+\beta-\beta')}\right\|_{L^\infty_xL^2_v}\left\|\la v\ra^{\iota-2|\al|-\fr12} g_3^{(m+\beta)}\right\|_{L^2_{x,v}}\\
    =&{\rm N}^{\rm LH}[\Gamma]^{m,\beta}_{(5)}+{\rm N}^{\rm HL}[\Gamma]^{m,\beta}_{(5)}.
\end{align*}
Then thanks to \eqref{summable2} and \eqref{convo2}, we find that
\begin{align}\label{est-NG5LH}
\nn&\sum_{m\in\N^6,|\beta|\le100}\kappa^{2|\beta|}\left(\fr{\lm^{|m|}}{\Gamma_s(|m|)}C_{|m|}^m\right)^2\left|{\rm N}^{\rm LH}[\Gamma]^{m,\beta}_{(5)}\right|\\
\le \nn&C\sum_{\substack{\al'\le\al\\\al''\le\al'}}\sum_{|\beta|\le N}\kappa^{2|\beta|}\sum_{\substack{\beta'\le\beta, |\beta'|\le |\beta|/2\\ \beta''\le\beta'}}\sum_{m\in\N^6}\sum_{\substack{n\le m\\ n'\le n}}b_{m,m-n,n',s}\\
\nn&\times\lm^{|n'|}\left(a_{n-n',\lm,s}(t)\left\|\mu^\fr{1}{4}\pr_v^{\al'-\al''}g_1^{(n-n'+\beta'-\beta'')}\right\|_{L^\infty_xL^2_v}\right)\\
\nn&\times \left(a_{m-n,\lm,s}(t)\left\|\pr_v^{\al-\al'} g_2^{(m-n+\beta-\beta')}\right\|_{\sig,\iota-|\al-\al'|}\right) \left(a_{m,lm,s}(t)\left\| g_3^{(m+\beta)}\right\|_{\sig,\iota-2|\al|}\right)\\
\le& C\Big(\sum_{m\in\N^6}\lm^{2|m|}\Big)^{\fr12}\sum_{\substack{\al'\le\al\\\al''\le\al'}}\left\|\la\nb_x\ra^2 \pr_v^{\al'-\al''}g_1\right\|_{\mathcal{G}^{\lm,\fr{N}{2}}_{s,\iota-|\al'-\al''|}}\| \pr_v^{\al-\al'}g_2\|_{\mathcal{G}^{\lm,N}_{s,{ \sig},\iota-2|\al-\al'|}}\| g_3\|_{\mathcal{G}^{\lm,N}_{s,{ \sig},\iota-2|\al|}}.
\end{align}
Similarly, we have
\begin{align*}
   &\sum_{m\in\N^6,|\beta|\le N}\kappa^{2|\beta|}\left(\fr{\lm^{|m|}}{\Gamma_s(|m|)}C_{|m|}^m\right)^2\left|{\rm N}^{\rm HL}[\Gamma]^{m,\beta}_{(5)}\right|\\
\le & C\Big(\sum_{m\in\N^6}\lm^{2|m|}\Big)^{\fr12}\sum_{\substack{\al'\le\al\\\al''\le\al'}}\left\| \pr_v^{\al'-\al''}g_1\right\|_{\mathcal{G}^{\lm,N}_{s,\iota-|\al'-\al''|}}\| \la\nb_x\ra^2\pr_v^{\al-\al'}g_2\|_{\mathcal{G}^{\lm,\fr{N}{2}}_{s,{ \sig},\iota-2|\al-\al'|}}\| g_3\|_{\mathcal{G}^{\lm,N}_{s,{ \sig},\iota-2|\al|}}.
\end{align*}
Clearly, ${\rm N}[\Gamma]^{m,\beta}_{(6)}$ can be treated in a similar manner as ${\rm N}[\Gamma]^{m,\beta}_{(5)}$. Here we only state the result:
    \begin{align}\label{est-NG6}
        \nn&\sum_{m\in\N^6,|\beta|\le N}\kappa^{2|\beta|}\left(\fr{\lm^{|m|}}{\Gamma_s(|m|)}C_{|m|}^m\right)^2\left|{\rm N}[\Gamma]^{m,\beta}_{(6)}\right|\\
\nn\le & C\sum_{\substack{\al'\le\al\\\al''\le\al'}}\Big(\left\|\la\nb_x\ra^2 \pr_v^{\al'-\al''}g_1\right\|_{\mathcal{G}^{\lm,\fr{N}{2}}_{s,{ \sig},\iota-|\al'-\al''|}}\| \pr_v^{\al-\al'}g_2\|_{\mathcal{G}^{\lm,N}_{s,\iota-2|\al-\al'|}}\\
&+\left\| \pr_v^{\al'-\al''}g_1\right\|_{\mathcal{G}^{\lm,N}_{s,{ \sig},\iota-|\al'-\al''|}}\| \la\nb_x\ra^2\pr_v^{\al-\al'}g_2\|_{\mathcal{G}^{\lm,\fr{N}{2}}_{s,\iota-2|\al-\al'|}}\Big)\| g_3\|_{\mathcal{G}^{\lm,N}_{s,{ \sig},\iota-2|\al|}}.
    \end{align}

For the treatments of ${\rm N}[\Gamma]^{m,\beta}_{(i)}, i=1,2,3,4$, it suffices to focus on the case where $(v,\tl{v})\in\{2|\tl{v}|\le|v|,|v|\ge1\}$ as in \cite{Guo2002}. In the following, we give the details of the treatment of  the most tricky one is ${\rm N}[\Gamma]^{m,\beta}_{(1)}$. To this end, as in \cite{Guo2002}, we expand $\Phi^{ij}(v-\tl{v})$ to get
\begin{align*}
    \Phi^{ij}(v-\tl{v})=\Phi^{ij}(v)-\pr_{v_k}\Phi^{ij}(v)\tl{v}_k+\fr12\pr_{v_kv_l}^2\Phi^{ij}(\bar{v})\tl{v}_k\tl{v}_l,
\end{align*}
where $\bar{v}$ is between $v$ and $v-\tl{v}$ satisfying $\fr12|v|\le|\bar{v}|\le\fr32|v|$ since the case under consideration is $(v,\tl{v})\in \{2|\tl{v}|\le|v|,|v|\ge1\}$. Moreover, now we have the following upper bounds
\begin{align*}
    |\Phi^{ij}(v)|\le C\la v\ra^{-1},\quad |\nb_v\Phi^{ij}(v)|\le C\la v\ra^{-2},\quad |\nb_v^2\Phi^{ij}(v)|\le C\la v\ra^{-3}.
\end{align*}
In view of the cancellation \eqref{concel1}, one deduces that $\sum_{i,j}\pr_{v_k}\Phi^{ij}(v)v_iv_j=0$, and hence
\begin{align*}
    &\Phi^{ij}(v)\pr_{v_j}\pr_{v}^{\al-\al'}g_2^{(m-n+\beta-\beta')}\pr_{v_i}g_3^{(m+\beta)}\\
    =&\Phi^{ij}(v)\Big((I-P_v)\nb_v\pr_{v}^{\al-\al'}g_2^{(m-n+\beta-\beta')}\Big)_j\Big((I-P_v)\nb_vg_3^{(m+\beta)}\Big)_i,
\end{align*}
and
\begin{align*}
    &\pr_{v_k}\Phi^{ij}(v)\pr_{v_j}\pr_{v}^{\al-\al'}g_2^{(m-n+\beta-\beta')}\pr_{v_i}g_3^{(m+\beta)}\\
    =&\pr_{v_k}\Phi^{ij}(v)\Big(P_v\nb_v\pr_{v}^{\al-\al'}g_2^{(m-n+\beta-\beta')}\Big)_j\Big((I-P_v)\nb_vg_3^{(m+\beta)}\Big)_i\\
    &+\pr_{v_k}\Phi^{ij}(v)\Big((I-P_v)\nb_v\pr_{v}^{\al-\al'}g_2^{(m-n+\beta-\beta')}\Big)_j\Big(P_v\nb_vg_3^{(m+\beta)}\Big)_i\\
    &+\pr_{v_k}\Phi^{ij}(v)\Big((I-P_v)\nb_v\pr_{v}^{\al-\al'}g_2^{(m-n+\beta-\beta')}\Big)_j\Big((I-P_v)\nb_vg_3^{(m+\beta)}\Big)_i,
\end{align*}
where $P_v$ is given in \eqref{eq: P_v definition}. 
It follows that
\begin{align*}
    &\left|\Phi^{ij}(v-\tl{v})\pr_{v_j}g_2^{(m-n+\beta-\beta')}\pr_{v_i}g_3^{(m+\beta)} \la v\ra^{2\iota-4|\al|}\right|\\
    \le&C\la \tl{v}\ra^2\Big(\la v\ra^{\iota-2|\al|-\fr12}\big|(I-P_v)\nb_v\pr_{v}^{\al-\al'}g_2^{(m-n+\beta-\beta')}\big|\Big) \Big( \la v\ra^{\iota-2|\al|-\fr12}\big|(I-P_v)\nb_vg_3^{(m+\beta)}\big|\Big)\\
    &+C\la \tl{v}\ra^2\Big(\la v\ra^{\iota-2|\al|-\fr32}\big|P_v\nb_v\pr_{v}^{\al-\al'}g_2^{(m-n+\beta-\beta')}\big|\Big) \Big( \la v\ra^{\iota-2|\al|-\fr12}\big|(I-P_v)\nb_vg_3^{(m+\beta)}\big|\Big)\\
    &+C\la \tl{v}\ra^2\Big(\la v\ra^{\iota-2|\al|-\fr12}\big|(I-P_v)\nb_v\pr_{v}^{\al-\al'}g_2^{(m-n+\beta-\beta')}\big|\Big) \Big( \la v\ra^{\iota-2|\al|-\fr32}\big|P_v\nb_vg_3^{(m+\beta)}\big|\Big)\\
    &+C\la \tl{v}\ra^2\Big(\la v\ra^{\iota-2|\al|-\fr32}\big|\nb_v\pr_{v}^{\al-\al'}g_2^{(m-n+\beta-\beta')}\big|\Big) \Big( \la v\ra^{\iota-2|\al|-\fr32}\big|\nb_vg_3^{(m+\beta)}\big|\Big).
\end{align*}
On the other hand, using \eqref{e-prmu} and \eqref{est-Gamma1} again, we have
\begin{align*}
    &\int_{\R^3}\la\tl{v}\ra^2\left|
    \pr_v^{\al''}Z^{n'+\beta''}\mu^{\fr12}(\tl{v})\pr_v^{\al'-\al''}g_1^{(n-n'+\beta'-\beta'')}(t,x,\tl{v})\right|d\tl{v}\\
    \le& CM_{\mu}^{|\al''|+|\bar{\beta''}|}\fr{\Gamma_s(|n'|)}{C_{|n'|}^{n'}}\int_{\R^3}\la\tl{v}\ra^2\mu^{\fr14}(\tl{v})\left|\pr_v^{\al'-\al''}g_1^{(n-n'+\beta'-\beta'')}(t,x,\tl{v})\right|d\tl{v}\\
    \le&CM_{\mu}^{|\al''|+|\bar{\beta''}|}\fr{\Gamma_s(|n'|)}{C_{|n'|}^{n'}}\left\|\mu^{\fr14}\pr_v^{\al'-\al''}g_1^{(n-n'+\beta'-\beta'')}\right\|_{L^2_v}.
\end{align*}
Substituting the above two inequalites into the expression of ${\rm N}[\Gamma]^{m,\beta}_{(1)}$, using Corollary \ref{coro-sig}, we are led to
\begin{align*}
{\rm N}[\Gamma]^{m,\beta}_{(1)}
\le&C\sum_{\substack{\al'\le\al\\\al''\le\al'}}\sum_{\substack{\beta'\le\beta, |\beta'|\le|\beta|/2|\\ \beta''\le\beta'}}\sum_{\substack{n\le m\\ n'\le n}}C_m^n C_n^{n'}\fr{\Gamma_s(|n'|)}{C_{|n'|}^{n'}}\\
&\times\left\|\mu^{\fr14}\pr_v^{\al'-\al''}g_1^{(n-n'+\beta'-\beta'')}\right\|_{L^\infty_xL^2_v} \left\|\pr_v^{\al-\al'}g_2^{(m-n+\beta-\beta')}\right\|_{\sig,\iota-2|\al-\al'|}\left\|g_3^{(m+\beta)}\right\|_{\sig,\iota-2|\al|}\\
&+C\sum_{\substack{\al'\le\al\\\al''\le\al'}}\sum_{\substack{\beta'\le\beta, |\beta'|>|\beta|/2|\\ \beta''\le\beta'}}\sum_{\substack{n\le m\\ n'\le n}}C_m^n C_n^{n'}\fr{\Gamma_s(|n'|)}{C_{|n'|}^{n'}}\\
&\times\left\|\mu^{\fr14}\pr_v^{\al'-\al''}g_1^{(n-n'+\beta'-\beta'')}\right\|_{L^2_{x,v}} \left\|\left|\pr_v^{\al-\al'}g_2^{(m-n+\beta-\beta')}\right|_{\sig,\iota}\right\|_{L^\infty_x}\left\|g_3^{(m+\beta)}\right\|_{\sig,\iota}.
\end{align*}
By virtue of Corollaries \ref{coro-kernel} and \ref{coro-convolution}, similar to \eqref{est-NG6}, we arrive at
\begin{align}\label{est-NG1}
    \nn&\sum_{m\in\N^6,|\beta|\le N}\kappa^{2|\beta|}\left(\fr{\lm^{|m|}}{\Gamma_s(|m|)}C_{|m|}^{m}\right)^2{\rm N}[\Gamma]^{m,\beta}_{(1)}\\
    \nn\le&C\sum_{\substack{\al'\le\al\\\al''\le\al'}}\Big(\left\|\la\nb_x\ra^2 \pr_v^{\al'-\al''}g_1\right\|_{\mathcal{G}^{\lm,\fr{N}{2}}_{s,\iota-|\al'-\al''|}}\| \pr_v^{\al-\al'}g_2\|_{\mathcal{G}^{\lm,N}_{s,{ \sig},\iota-2|\al-\al'|}}\\
&+\left\| \pr_v^{\al'-\al''}g_1\right\|_{\mathcal{G}^{\lm,N}_{s,\iota-|\al'-\al''|}}\| \la\nb_x\ra^2\pr_v^{\al-\al'}g_2\|_{\mathcal{G}^{\lm,\fr{N}{2}}_{s,{ \sig},\iota-2|\al-\al'|}}\Big)\| g_3\|_{\mathcal{G}^{\lm,N}_{s,{ \sig},\iota-2|\al|}}.
\end{align}

 Finally, we would like to point out that \eqref{est-NG6} or \eqref{est-NG1} holds  with ${\rm N}[\Gamma]^{m,\beta}_{(6)}$ or ${\rm N}[\Gamma]^{m,\beta}_{(1)}$ replaced by ${\rm N}[\Gamma]^{m,\beta}_{(i)}, i=2,3,4$, and hence we get \eqref{est-NG}. The proof of this lemma is completed.
\end{proof}
\begin{rem}\label{rem-NL-collision}
    It is easy to verify that \eqref{est-NG} still holds with $\pr_v$ replaced by $Y$. More precisely, we have
\begin{align*}
        \nn&\sum_{m\in\N^6,|\beta|\le N}\kappa^{2|\beta|}\left(\fr{\lm^{|m|}}{\Gamma_s(|m|)}C_{|m|}^m\right)^2\left\la Y Z^{m+\beta}\Gamma(g_1,g_2),\la v\ra^{2\iota-4} g_3^{(m+\beta)} \right\ra_{x,v}\\
        \le \nn&C\Bigg(\left\| g_1\right\|_{\mathcal{G}^{\lm,N}_{s,\iota}}\| g_2\|_{\mathcal{G}^{\lm,N}_{s,{ \sig},\iota}}+\left\|g_1\right\|_{\mathcal{G}^{\lm,N}_{s,{ \sig},\iota}}\big\| g_2\big\|_{\mathcal{G}^{\lm,N}_{s,\iota}}\Bigg)\| g_3\|_{\mathcal{G}^{\lm,N}_{s,{ \sig},\iota-2}}\\
        &+C\Bigg(\left\|Y g_1\right\|_{\mathcal{G}^{\lm,N}_{s,\iota-2}}\| g_2\|_{\mathcal{G}^{\lm,N}_{s,{ \sig},\iota}}+\left\|Y g_1\right\|_{\mathcal{G}^{\lm,N}_{s,\sig,\iota-2}}\| g_2\|_{\mathcal{G}^{\lm,N}_{s,\iota}}\\
        &+\left\|g_1\right\|_{\mathcal{G}^{\lm,N}_{s,\iota}}\big\|Y g_2\big\|_{\mathcal{G}^{\lm,N}_{s,\sig,\iota-2}}+\left\|g_1\right\|_{\mathcal{G}^{\lm,N}_{s,{ \sig},\iota}}\big\|Y g_2\big\|_{\mathcal{G}^{\lm,N}_{s,\iota-2}}\Bigg)\| g_3\|_{\mathcal{G}^{\lm,N}_{s,{ \sig},\iota-2}}.
    \end{align*}
\end{rem}

\noindent{\bf Acknowledgments}
JB was supported by DMS-2108633.
  RZ is partially  supported by NSF of China under  Grants 12222105.

\bibliographystyle{abbrv.bst} 
\bibliography{references.bib}

\end{document}